\numberwithin{equation}{section}
\numberwithin{figure}{section}
\newtheorem{lemma}[equation]{Lemma}
\newtheorem{theorem}[equation]{Theorem}
\newtheorem*{theorem*}{Theorem}
\newenvironment{theoremqed}{\pushQED{\qed}\begin{theorem}}{\popQED\end{theorem}}
\newtheorem{prop}[equation]{Proposition}
\newenvironment{propqed}{\pushQED{\qed}\begin{prop}}{\popQED\end{prop}}
\newtheorem{cor}[equation]{Corollary}
\newenvironment{corqed}{\pushQED{\qed}\begin{cor}}{\popQED\end{cor}}
\newtheorem{conj}[equation]{Conjecture}
\theoremstyle{definition}
\newtheorem{question}[equation]{Question}
\newtheorem{defn}[equation]{Definition}
\newtheorem{eg}[equation]{Example}
\newtheorem{egs}[equation]{Examples}
\newtheorem{rk}[equation]{Remark}
\renewcommand{\ge}{\geqslant}
\renewcommand{\le}{\leqslant}
\newcommand{\bul}{{\scriptstyle\bullet}}
\newcommand{\Dick}{E}
\newcommand{\core}{\mathsf{core}}
\newcommand{\cc}{\mathsf{c}}
\newcommand{\cg}{\succ}
\newcommand{\cge}{\succcurlyeq}
\newcommand{\cl}{\prec}
\newcommand{\cle}{\preccurlyeq}
\newcommand{\da}{\downarrow}
\newcommand{\ep}{\varepsilon}
\newcommand{\Ext}{\mathsf{Ext}}
\newcommand{\half}{{\textstyle\frac{1}{2}}}
\newcommand{\Hom}{\mathsf{Hom}}
\newcommand{\Ker}{\mathsf{Ker}}
\newcommand{\im}{\mathsf{Im}}
\newcommand{\ind}{\mathsf{ind}}
\newcommand{\Lin}{\mathscr{L}}
\newcommand{\mmod}{\mathsf{mod}}
\newcommand{\npj}{\gamma}
\newcommand{\one}{\mathds{1}}
\newcommand{\phm}{\phantom{-}}
\newcommand{\Pic}{\mathsf{T}}
\newcommand{\PPic}{\mathbb{T}}
\newcommand{\proj}{\mathsf{proj}}
\newcommand{\Proj}{\mathsf{Proj}\,}
\newcommand{\res}{\mathsf{res}\hspace{.4mm}}
\newcommand{\Soc}{\mathsf{Soc}}
\newcommand{\Spec}{\mathsf{Spec}}
\newcommand{\Struct}{\Delta}
\newcommand{\tilt}{{\mathsf{tilt}}}
\newcommand{\Tr}{\mathsf{Tr}}
\newcommand{\vect}{\mathsf{vect}}
\newcommand{\zero}{\star}
\newcommand{\bC}{\mathbb{C}}
\newcommand{\bF}{\mathbb{F}}
\newcommand{\bN}{\mathbb{N}}
\newcommand{\bP}{\mathbb{P}}
\newcommand{\bQ}{\mathbb{Q}}
\newcommand{\bR}{\mathbb{R}}
\newcommand{\bZ}{\mathbb{Z}}
\newcommand{\bi}{{\mathsf{i}\mkern1mu}}
\newcommand{\cH}{\mathcal{H}}
\newcommand{\cO}{\mathcal{O}}
\newcommand{\cV}{\mathcal{V}}
\newcommand{\fa}{\mathfrak{a}}
\newcommand{\fH}{\mathfrak{H}}
\newcommand{\fI}{\mathfrak{I}}
\newcommand{\fJ}{\mathfrak{J}}
\newcommand{\fm}{\mathfrak{m}}
\newcommand{\fp}{\mathfrak{p}}
\newcommand{\fX}{\mathfrak{X}}
\newcommand{\fY}{\mathfrak{Y}}
\title{Modular Representation
  Theory and \\ Commutative Banach Algebras
}
\author{David J. Benson}
\address{Institute of Mathematics, University of Aberdeen, Aberdeen
  AB24 3UE, Scotland, United Kingdom}
\begin{document}

\begin{abstract}
In a recent paper of Benson and Symonds,
a new invariant was introduced for modular representations of a finite
group. An interpretation was given as a spectral radius with respect
to a Banach algebra completion of the representation ring.
Our purpose here is to take these notions further, and investigate
the structure of the resulting Banach algebras.
Some of the material in that paper is repeated
here in greater generality, and for clarity of exposition.

We give an axiomatic definition of an abstract representation ring, 
and representation ideal. The completion is then a commutative
Banach algebra, and the techniques of Gelfand from the 1940s are
applied in order to study the space of algebra homomorphisms to $\bC$.
One surprising consequence of this investigation 
is that the Jacobson radical and the nil
radical of a (complexified) representation ring always coincide.

These notes are intended for representation theorists. So background
material on commutative Banach algebras is given in detail, whereas
representation theoretic background is more condensed.
\end{abstract}

\subjclass{Primary: 20C20. Secondary: 46J99, 16T05.}

\keywords{modular representation theory,
  representation ring, Green ring, tensor product,
  commutative Banach algebra,
  spectral radius}

\maketitle

\tableofcontents

\chapter*{Preface}

The purpose of these notes is to study the asymptotics of the direct
sum decomposition of tensor products and tensor powers of finite dimensional
representations of a finite group in prime characteristic. This takes
us into the world of commutative Banach algebras and their spectral
theory.

This study grew out of the author's joint work with
Symonds~\cite{Benson/Symonds:2020a}. In the introduction to that paper, 
the example was given of 
the two dimensional indecomposable module $M$ for the cyclic group of
order five in characteristic five.%
\index{cyclic!group of order $5$}\index{group!cyclic of order $5$}
In this example, although the
dimension of $M^{\otimes n}$ is $2^n$, the dimension of the
non-projective part of $M^{\otimes n}$ is roughly $\tau^n$, where
$\tau$ is the golden ratio\index{tau@$\tau$}\index{golden ratio} 
(see Section~\ref{se:Z5}). This led us to define $\npj(M)$ to be
$\tau$ in this example, and to interpret it as the reciprocal of the radius of
convergence of the corresponding generating function. 
This, in turn, is interpreted as the spectral radius of $[M]$ as
an element of the representation ring, with respect to a suitable norm.

Let $a(G)$\index{a@$a(G)$, $a_\bC(G)$} be the representation ring, or 
Green ring\index{Green ring} of a finite
group $G$ over a field $k$ of characteristic $p$. This has a free
$\bZ$-basis consisting of the isomorphism classes of 
finitely generated indecomposable $kG$-modules, with addition and
multiplication coming from direct sum and tensor product.

The complexification $a_\bC(G)=\bC\otimes_\bZ a(G)$ is a 
commutative normed algebra, with the norm coming from dimension.
Its completion $\hat a(G)$ is a commutative Banach algebra which
forms our basic object of study. We look at various quotients of the
form $\hat a_\fX(G)=\hat a(G)/\hat a(G,\fX)$\index{aa@$\hat a_{\fX}(G)$} 
where $\fX$ is an ideal of
indecomposable $kG$-modules, and examine invariants of modules
coming from spectral radius in these quotients.  The purpose of this
is to obtain information about the asymptotic behaviour of the tensor
powers of a finitely generated $kG$-module.

With the aim of being applicable in a wider set of circumstances, we
have formulated the definitions and main theorems in terms of abstract representation rings.
The axioms are set up in Definition~\ref{def:repring}. In particular,
a representation ring $\fa$\index{a@$\fa$} 
comes equipped with a free $\bZ$-basis
$\{x_i\mid i\in\fI\}$\index{indexing set $\fI$}\index{I@$\fI$}
as an additive group, which is supposed to be
thought of as the basis of indecomposable modules in the case of
$a(G)$. One of the unexpected consequences of studying the
completion of $\fa$ in this generality is the proof in
Theorem~\ref{th:J=nil} that the Jacobson radical and the nilradical of
the complexification $\fa_\bC$ coincide. 

\begin{theorem*}
If $\fa$ is a representation ring then the Jacobson radical and the
nil radical of $\fa_\bC$ are equal.
\end{theorem*}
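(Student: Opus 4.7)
The containment $\mathrm{Nil}(\fa_\bC) \subseteq \mathrm{Jac}(\fa_\bC)$ is standard in any commutative ring, since nilpotent elements lie in every maximal ideal. For the reverse, I take $y \in \mathrm{Jac}(\fa_\bC)$ and aim to show $y^N = 0$ for some $N$.

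The plan is to combine Gelfand theory on the Banach completion $\hat\fa$ with the non-negativity of the structure constants of $\fa$ in the distinguished basis $\{x_i\}$. First I would verify that every $\bC$-algebra character $\phi \colon \fa_\bC \to \bC$ is automatically bounded with respect to the dimension norm, and hence extends continuously to a character of $\hat\fa$. The key estimate is $|\phi(x_i)| \le \dim(x_i)$ on basis elements; iterating $\phi(x_i)^n = \phi(x_i^n)$ together with the fact that $x_i^n$ has non-negative integer coordinates whose dimensions sum to $\dim(x_i)^n$ should yield this via a Perron--Frobenius-style boot-strap afforded by the representation ring axioms (for instance, through a dual basis element $x_i^*$ whose product with $x_i$ contains the identity). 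Once characters extend, $\mathrm{Jac}(\fa_\bC)$ is identified with $\fa_\bC \cap \mathrm{Jac}(\hat\fa)$, and the usual Gelfand theorem for commutative Banach algebras gives $\mathrm{Jac}(\hat\fa) = \{z : \rho(z) = 0\}$. So $y$ is quasinilpotent in $\hat\fa$.

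The second ingredient is to promote quasinilpotence in $\hat\fa$ to actual nilpotence in $\fa_\bC$. This is clean when $y$ has integer coefficients: if $y^n \neq 0$, then the expansion $y^n = \sum c_j^{(n)} x_j$ contains a nonzero $c_j^{(n)} \in \bZ$, so $\|y^n\| \ge \dim(x_j) \ge 1$ and hence $\rho(y) \ge 1$, contradicting quasinilpotence. The hard part, and the technical heart of the proof, is to extend this to a general $y = \sum_{i \in S} a_i x_i$ with complex $a_i$, since a complex coefficient of $y^n$ may be nonzero but arbitrarily small. I would handle this via a scalar-extension argument: since the support $S$ of $y$ is finite and $y^N = 0$ is a finite system of polynomial equations in the $|S|$-tuple $(a_i) \in \bC^S$, the nilpotency locus is Zariski closed, and combining Hilbert's Nullstellensatz on a suitable finitely generated $\bC$-subalgebra of $\fa_\bC$ with the integral case of the previous argument should force $y$ into this locus. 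The main obstacle I anticipate is making this last transfer rigorous in the axiomatic setting, rather than the Gelfand-theoretic reduction.
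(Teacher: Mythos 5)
Your outline matches the paper's two-step route: locate elements of $J(\fa_\bC)$ among the quasi-nilpotent elements of the completion $\hat\fa$, then show that a quasi-nilpotent element of $\fa_\bC$ is honestly nilpotent (this is Theorem~\ref{th:qn=>n}). Two points in the proposal, however, do not hold up.

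First, the claim that \emph{every} character $\phi\colon\fa_\bC\to\bC$ satisfies $|\phi(x_i)|\le\dim x_i$, and so extends to $\hat\fa$, is false. For $\fa=\bZ[u,u^{-1}]\oplus\bZ\rho$ as in Example~\ref{eg:repring}\,(i), the character with $u\mapsto 2$ and $\rho\mapsto 0$ is perfectly good but not dimension bounded, and the bootstrap via $\phi(x_i)^n=\phi(x_i^n)$ does not close because expanding $x_i^n$ introduces other basis elements $x_j$ whose values $\phi(x_j)$ are not yet controlled. Luckily you do not need this claim: the kernel of each species of $\fa_\bC$ is a maximal ideal, so $J(\fa_\bC)$ is automatically contained in the kernel of every dimension-bounded species, and by Theorem~\ref{th:qn} the intersection of kernels of species of $\hat\fa$ is exactly the set of quasi-nilpotent elements of $\hat\fa$. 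The conclusion you want follows from that containment alone; the identification $J(\fa_\bC)=\fa_\bC\cap J(\hat\fa)$ is unnecessary (and would in any case require the false extension claim).

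Second, and this is the genuine gap, the strategy for irrational coefficients does not go through. The nilpotency locus in $\bC^S$ is $\bigcup_{N}V_N$ where $V_N=\{(a_i) : (\sum_i a_ix_i)^N=0\}$; each $V_N$ is Zariski closed, but there is no reason the increasing union should be, since the subalgebra of $\fa_\bC$ generated by $\{x_i : i\in S\}$ may have infinite rank and the chain need not stabilise. Nor does a Nullstellensatz transfer obviously apply: quasi-nilpotence (vanishing of $\lim_n\|y^n\|^{1/n}$) is analytic rather than polynomial, and the integral case only controls lattice points, not positive-dimensional algebraic sets. The paper's actual mechanism (Theorem~\ref{th:qn=>n}) is Diophantine. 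After splitting $y$ into real and imaginary parts one reduces to real coefficients, works in the finite-dimensional real span $V$ of the $x_i$ with $i\in S$, takes $\Lambda=\bZ^S\subset V$, and by a Dirichlet pigeonhole on the family $\{b-qy : b\in\Lambda,\ q\in\bZ\}$ produces $q>0$ and $b\in\Lambda$ with $\|b-qy\|<\varepsilon$. Since the nilpotent elements of $V$ form a closed linear subspace not containing $y$, one may also arrange that $b/q$ is not nilpotent, whence $\|(b/q)^n\|\ge 1/q^n$. Feeding the binomial expansion $(b/q)^n=(y+(b/q-y))^n$ into Lemma~\ref{le:binom} then gives
\[
\frac1q\le\limsup_{n\to\infty}\sqrt[n]{\|y^n\|}+\limsup_{n\to\infty}\sqrt[n]{\textstyle\|(\frac{b}{q}-y)^n\|}\le 0+\frac{\varepsilon}{q},
\]
which is absurd for $\varepsilon\le1$. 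That combination of lattice approximation with the binomial-coefficient estimate of Lemma~\ref{le:binom} is the ingredient your proposal is missing.
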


If $x$ is a positive element of a representation ring $\fa$, 
and $\fX\subset\fI$\index{X@$\fX$}
is a representation ideal, let $\cc_n^\fX(x)$ be the
dimension of the $\fX$-core\index{core} of $x^n$ (these concepts are defined in
Definitions~\ref{def:positive} and~\ref{def:repideal}). Then in
Section~\ref{se:npj} we define\index{gamma invariant $\npj_\fX(x)$}
\[ \npj_\fX(x)=\limsup_{n\to\infty}\sqrt[n]{\cc_n^\fX(x)}. \]
The following theorem, which to some extent parallels Theorem~1.2
of~\cite{Benson/Symonds:2020a}, 
summarises some of the properties of this
invariant.

\begin{theorem*}
The invariant $\npj_\fX(x)$ has the following properties:
\begin{enumerate}
\item We have $\displaystyle
\npj_\fX(x)=\lim_{n\to\infty}\sqrt[n]{\cc_n^\fX(x)}=
\inf_{n\ge 1}\sqrt[n]{\cc_n^\fX(x)}$. 
\item We have $0\le \npj_\fX(x) \le \dim x$.
\item We have $\npj_\fX(x)=\dim x$ if and only if $\core_\fX(x^n)=x^n$
  for all $n\ge 0$.
\item We have $\npj_\fX(x)=0$ if and only if $x\in\langle\fX\rangle$,
  otherwise $\npj_\fX(x)\ge 1$.
\item If $1\le\npj_\fX(x)<\sqrt{2}$ then $x$ is $\fX$-endotrivial.
\item If $x$ is not $\fX$-endotrivial and $\npj_\fX(x)=\sqrt{2}$ then $xx^*x\equiv 2x
  \pmod{\langle\fX\rangle}$.
\item We have $\npj_\fX(x^*)=\npj_\fX(x)$.
\item We have $\max\{\npj_\fX(x),\npj_\fX(y)\}\le \npj_\fX(x+y)\le
  \npj_\fX(x)+\npj_\fX(y)$.
\item If $a,b\ge 0$ we have $\npj_\fX(a+bx)=a+b\npj_\fX(x)$.
\item We have $\npj_\fX(xy)\le\npj_\fX(x)\npj_\fX(y)$.
\item We have $\npj_\fX(x^m)=\npj_\fX(x)^m$.
\item If $\fX\subseteq\fY$ we have $\npj_\fY(x)\le \npj_\fX(x)$.
\end{enumerate}
\end{theorem*}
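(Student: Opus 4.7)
The entire scheme rests on the submultiplicative estimate
\[
\cc^\fX_{m+n}(x) \le \cc^\fX_m(x)\,\cc^\fX_n(x),
\]
which I would establish by writing $x^m = \core_\fX(x^m) + r_m$ and $x^n=\core_\fX(x^n)+r_n$ with $r_m,r_n\in\langle\fX\rangle$, multiplying out, and using that $\langle\fX\rangle$ is an ideal so that the non-$\fX$ part of $x^{m+n}$ is a summand of $\core_\fX(x^m)\cdot\core_\fX(x^n)$; dimensions then multiply. With this in hand, Fekete's subadditivity lemma applied to $\log\cc^\fX_n(x)$ yields (i), and the trivial bound $\cc^\fX_n(x)\le(\dim x)^n$ yields (ii).

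Most of the remaining items follow more or less mechanically. For (iv), any $x\notin\langle\fX\rangle$ has a nonzero $\fX$-core in every power, so $\cc^\fX_n(x)\ge 1$; conversely if $x\in\langle\fX\rangle$ then $\cc^\fX_n(x)=0$ for all $n\ge 1$. Item (vii) uses that $*$ preserves dimension and the representation ideal. For (viii), the lower bound follows because $x^n$ and $y^n$ appear as summands of $(x+y)^n$, while the upper bound comes from the binomial expansion combined with submultiplicativity. Items (x), (xi), (xii) are direct: (x) is submultiplicativity applied to $xy$; (xi) combines (x) with the lower bound obtained by taking the limit along $n=mk$; (xii) is monotonicity of cores in the ideal. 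For (ix), I would use the generating-function identity
\[
\cc^\fX_n(a+bx) = \sum_{k=0}^n \binom{n}{k}a^{n-k}b^k\,\cc^\fX_k(x),
\]
whose associated power series has radius of convergence $1/(a+b\npj_\fX(x))$. Finally, (iii) drops straight out of (i), since $\npj_\fX(x)=\dim x$ forces equality in $\cc^\fX_n(x)\le(\dim x)^n$ for every $n$, hence $\core_\fX(x^n)=x^n$.

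The real content is (v) and (vi). The key manoeuvre is to look at the positive self-dual element $xx^*$, which contains $\one$ as a summand, so that we may write $xx^*=\one+y$ with $y$ positive. Combining (ix), (x) and (xi) gives
\[
1+\npj_\fX(y) = \npj_\fX(\one+y) = \npj_\fX(xx^*) \le \npj_\fX(x)^2.
\]
When $\npj_\fX(x)<\sqrt 2$ this forces $\npj_\fX(y)<1$, so (iv) gives $y\in\langle\fX\rangle$; by definition $x$ is then $\fX$-endotrivial, proving (v).

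Item (vi) is the genuinely hard step, and the main obstacle: at the threshold $\npj_\fX(x)=\sqrt 2$ the same computation only yields $\npj_\fX(y)\le 1$, so one cannot conclude $y\in\langle\fX\rangle$ from a strict inequality. The plan is to pass to the commutative Banach completion $\hat a_\fX$, in which $\npj_\fX$ is a genuine spectral radius, and analyse the spectrum of $xx^*$. Since $\npj_\fX(xx^*)=2$ is attained and $\one$ is a summand of $xx^*$, I expect that $xx^*$ must satisfy a two-root relation modulo the radical in $\hat a_\fX$, forcing $x(xx^*-2\one)\in\langle\fX\rangle$ and hence the desired congruence $xx^*x\equiv 2x\pmod{\langle\fX\rangle}$. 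Making this rigorous — extracting an exact identity out of a borderline equality of spectral radii — is where I anticipate having to work hardest, possibly by invoking the Gelfand-theoretic machinery advertised in the abstract.
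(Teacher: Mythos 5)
Most of the mechanical parts of your proposal are right, and your argument for (xi) — restricting the existing limit in (i) to the subsequence $n=mk$ and using $\cc_{mk}^\fX(x)=\cc_k^\fX(x^m)$ — is actually cleaner than the paper's proof of Lemma~\ref{le:gamma(x^m)}. For (ix) your generating-function identity $\cc_n^\fX(a+bx)=\sum_k\binom{n}{k}a^{n-k}b^k\cc_k^\fX(x)$ is correct, though locating the radius of the composite series requires a Pringsheim-type positivity argument; the paper avoids this analysis with a two-sided estimate from the binomial convolution (Proposition~\ref{pr:binom2}). The genuine problems are (iv), (v) and (vi), and they all trace back to the same omission: you never invoke axiom~(iii) of Definition~\ref{def:repring}.

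\textbf{Part (iv).} You assert that $x\notin\langle\fX\rangle$ forces $\core_\fX(x^n)\ne 0$ for all $n$ as though it were immediate. It is not. In an arbitrary commutative ring with a distinguished basis and a basis-spanned ideal, an element can have $x\notin I$ yet $x^2\in I$. What forbids this here is Proposition~\ref{pr:nilp}, whose proof rests on Lemma~\ref{le:xx*x} ($xx^*x\cge x$, and $xx^*x\cge 2x$ when $[xx^*:\one]=0$), which is exactly where axiom~(iii) does its work. Without this step the rest of your (iv) argument does not get off the ground.

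\textbf{Part (v).} You write that $xx^*$ ``contains $\one$ as a summand.'' This is false in general: $[xx^*:\one]=0$ exactly when $x\in\langle\fX_{\max}\rangle$ (for $kG$-modules, this is $p\mid\dim M$, so $k\nmid M\otimes M^*$). Your decomposition $xx^*=\one+y$ with $y\cge 0$ only covers the complementary case. In the case $[xx^*:\one]=0$ the paper runs a separate argument: Lemma~\ref{le:xx*x} gives $xx^*x\cge 2x$, hence $(xx^*)^2\cge 2xx^*$, hence $\npj_\fX(xx^*)^2\ge 2\npj_\fX(xx^*)$ and so $\npj_\fX(xx^*)\ge 2$ and $\npj_\fX(x)\ge\sqrt 2$, which makes (v) vacuous for such $x$.

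\textbf{Part (vi).} You correctly identify this as the difficult part, but your plan — to ``extract an exact identity out of a borderline equality of spectral radii'' via Gelfand theory — is, as you yourself suspect, not going to close. Knowing that $\npj_\fX(xx^*)=2$ is attained as a spectral value only tells you that $xx^*-2\one$ fails to be invertible in the completion $\hat\fa_\fX$; it does not give the \emph{ring} identity $x(xx^*-2\one)\in\langle\fX\rangle$, which lives back in $\fa$. The paper's proof is entirely combinatorial. When $[xx^*:\one]=0$, one sets $v=xx^*x-2x\cge 0$; if $v\notin\langle\fX\rangle$ then (via Proposition~\ref{pr:nilp} and Proposition~\ref{pr:xx*yy*}) $vx\notin\langle\fX\rangle$, and Theorem~\ref{th:ge-m} then forces $\npj_\fX(xx^*x)=\npj_\fX(2x+v)\ge 3$, contradicting $\npj_\fX(xx^*x)\le\npj_\fX(x)^3=2\sqrt 2<3$. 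When $[xx^*:\one]>0$, one writes $xx^*=\one+y$, deduces $\npj_\fX(y)=1$ from Theorem~\ref{th:a+bx} and Theorem~\ref{th:ge-m}, concludes $y$ is $\fX$-endotrivial and self-dual (so $y^2\equiv\one$), and then a short chain of congruences modulo $\langle\fX\rangle$ gives $xy\equiv x$, whence $xx^*x\equiv 2x$. The engine is Theorem~\ref{th:ge-m} (a positivity/counting statement), not anything spectral, and I do not see how the spectral route could substitute for it.
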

The proofs of the various parts of this theorem may be found in the
following places: part~(i)~in Theorem~\ref{th:inf}, (ii)~in 
Lemma~\ref{le:gamma-bounds}, (iii)~in Lemma~\ref{le:gamma<dim},
(iv)~in Lemma~\ref{le:gamma=0}, (v)~and~(vi) in
Theorem~\ref{th:sqrt2}, (vii)~in Lemma~\ref{le:npjx*}, (viii)~in
Theorem~\ref{th:gamma-subadditive},  (ix)~in Theorem~\ref{th:a+bx}, 
(x)~in Lemma~\ref{le:gamma(xy)}, (xi)~in Lemma~\ref{le:gamma(x^m)}, 
and~(xii)~in Lemma~\ref{le:gammaXinY}.

In the case where $\fa=a(G)$ and $\fX=\fX_\proj$, which was the case
studied in~\cite{Benson/Symonds:2020a}, we can say more.

\begin{theorem*}
Let $G$ be a finite group and $k$ a field of characteristic $p$.
Let $\fa=a(G)$ be the representation ring of $kG$-modules, 
and $\fX=\fX_\proj$ be the representation ideal of projective
modules. Then the following properties hold
in addition to those discussed above, where we write $\npj_G$
for the invariant $\npj_{\fX_{\proj}}$ applied to $\fa=a(G)$.
\begin{enumerate}
\setcounter{enumi}{12}
\item $\npj_G(M)=\dim(M)$ if and only if there is an element of $G$ of
  order $p$ acting trivially on $M$.
\item We have $\npj_G(M)=\max_{E\le G}\npj_E(M)$, where the maximum is
  taken over elementary abelian subgroups $E\le G$.
\item If $M$ is endotrivial then $\npj_G(M)=1$. In particular,
  combining this with {\rm (v)}, there
  are no modules $M$ with $1<\npj_G(M)<\sqrt{2}$.
\item If $M$ is a two dimensional faithful indecomposable module for
  an elementary abelian group
  $E\cong (\bZ/p)^r$ then $\npj_E(M)=2\cos(\pi/p^r)$.
In particular, if $p=2$ and $r=2$ we have $\npj_E(M)=\sqrt{2}$, and if
$p=5$ and $r=1$ we have $\npj_E(M)=\tau$, the golden
ratio.\index{golden ratio}\index{tau@$\tau$}
\end{enumerate}
\end{theorem*}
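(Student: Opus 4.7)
For part~(xiii), I invoke property~(iii): $\npj_G(M) = \dim M$ iff $\core_{\fX_\proj}(M^{\otimes n}) = M^{\otimes n}$ for every $n$, i.e., no tensor power of $M$ has a projective summand. For the ``if'' direction, if some $g \in G$ of order $p$ acts trivially on $M$, then $M^{\otimes n}|_{\langle g\rangle}$ is the trivial $k\langle g\rangle$-module of dimension $(\dim M)^n$, which has no free summand; since projective $kG$-modules restrict to free $k\langle g\rangle$-modules, no $M^{\otimes n}$ can contain a projective summand, giving $\npj_G(M) = \dim M$. For the converse, I use~(xiv) (proved below) to reduce to elementary abelian subgroups $E$ on which $M|_E$ is faithful; a Dade-style analysis together with the explicit Chebyshev computation in~(xvi) then gives $\npj_E(M|_E) < \dim M$ for every such $E$.

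For part~(xiv), the inequality $\npj_E(M|_E) \le \npj_G(M)$ for each elementary abelian $E$ is immediate: restriction to a subgroup preserves dimensions and sends projectives to projectives, giving norm non-increasing Banach algebra homomorphisms $\phi_E\colon \hat\fa_\proj(G) \to \hat\fa_\proj(E)$. The reverse inequality combines Gelfand theory with Chouinard's theorem. Chouinard asserts that a $kG$-module is projective iff its restriction to every elementary abelian $p$-subgroup is projective; a virtual-module refinement shows that the kernel of $\prod_E \phi_E\colon \hat\fa_\proj(G) \to \prod_E \hat\fa_\proj(E)$ is a nil ideal of $\hat\fa_\proj(G)$. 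Hence every character of $\hat\fa_\proj(G)$ factors through some $\hat\fa_\proj(E)$, and the Gelfand spectral radius formula $\npj(x) = \sup_\chi|\chi(x)|$ yields $\npj_G(M) \le \max_E \npj_E(M|_E)$.

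For part~(xv), property~(iv) gives $\npj_G(M) \ge 1$ since an endotrivial module is non-projective. For the reverse inequality, apply~(xiv) to reduce to elementary abelian $E$; the restriction $M|_E$ is also endotrivial. By the classical classification (Dade for abelian $p$-groups, refined by Carlson--Th\'evenaz), the group $T(E)$ of endotrivial $kE$-modules modulo projectives is finitely generated abelian with torsion-free rank one, generated by $[\Omega k]$; in particular, the indecomposable endotrivial representative of $n\cdot[M|_E]\in T(E)$ is of the form $\Omega^{m(n)}k\otimes(\text{torsion})$ for some integer $m(n)$ with $|m(n)| = O(n)$, and the dimension of $\Omega^m k$ is a polynomial in $m$ of degree $r-1$ coming from the Koszul resolution of the trivial $k(\bZ/p)^r$-module. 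Hence $\cc_n^E(M|_E) = \dim\core_{\fX_\proj}(M^{\otimes n}|_E) = O(n^{r-1})$ and therefore $\sqrt[n]{\cc_n^E(M|_E)} \to 1$.

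For part~(xvi), fix the normal form: $M = k^2$ with each $g_i\in E$ acting as $1 + \lambda_i A$ for an $\bF_p$-linearly independent tuple $\lambda_1,\ldots,\lambda_r \in k$ and a nilpotent $A$ with $A^2=0$. The action of $v\in V:=\bF_p\langle\lambda_1,\ldots,\lambda_r\rangle\subseteq k$ on $M^{\otimes n}$ is $\exp(v\,\Delta^{(n)}(A))$, where $\Delta^{(n)}(A) = \sum_{i=1}^n 1\otimes\cdots\otimes A\otimes\cdots\otimes 1$ with $A$ in the $i$-th factor. Following the approach of Benson--Symonds, a direct analysis of this one-parameter family, combined with Dade's lemma and careful bookkeeping of the coproduct structure on $kE$, identifies the non-projective indecomposable summands appearing across all $M^{\otimes n}$ as a finite list $L_1,L_2,\ldots,L_{p^r-1}$ (with $L_1 = k$ and $L_2 = M$), and one checks that the matrix of multiplication by $[M]$ on the span of the $[L_i]$ modulo projectives has Perron--Frobenius eigenvalue equal to that of the Dynkin diagram $A_{p^r-1}$, namely $2\cos(\pi/p^r)$. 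The stated special cases $(p,r)=(5,1)$ and $(p,r)=(2,2)$ follow by direct substitution. The main obstacle is precisely this enumeration of indecomposable summands together with the verification of the tensor rule; the fact that the ``effective period'' is $p^r$ rather than the naive $p$ (which one might expect from the rank-one support variety of $M$) reflects the subtle way in which the $\bF_p$-linear structure of $V\subseteq k$ propagates through the diagonal action of $kE$ on tensor products.
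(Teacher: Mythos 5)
Your proposal for part~(xiv) takes a genuinely different route from the paper, and it has a gap. The paper (Theorem~\ref{th:E}) proves the hard inequality $\npj_G(M)\le\max_E\npj_E(M)$ by invoking Carlson's \emph{quantitative} strengthening of Chouinard's theorem (Theorem~\ref{th:Carlson}): there is a constant $B$, depending only on $G$ and $p$, with $\dim\core_G(N)\le B\cdot\max_E\dim\core_E(N)$ for every $kG$-module $N$. Applying this to $N=M^{\otimes n}$, taking $n$th roots, and letting $n\to\infty$ kills the factor $\sqrt[n]{B}$, giving the result immediately. You try instead to replace this with a qualitative Gelfand-theoretic argument: you claim Chouinard's theorem (together with an unexplained ``virtual-module refinement'') implies the kernel of the restriction map $\hat\fa_\proj(G)\to\prod_E\hat\fa_\proj(E)$ is a nil ideal, and then that every character of $\hat\fa_\proj(G)$ factors through some $\hat\fa_\proj(E)$. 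Neither step holds. Chouinard is a statement about actual (non-negative) modules, not virtual ones, and does not obviously yield nilpotence of the kernel on the complexified or completed ring. More seriously, even if the kernel were nil, it would only follow that every character of $\hat\fa_\proj(G)$ factors through the image $A\subseteq\prod_E\hat\fa_\proj(E)$; but $A$ is a closed \emph{subalgebra} with a strictly coarser norm than the product norm (the $\phi_E$ are only norm-nonincreasing, not isometric), and a character of a closed subalgebra of a finite product need not extend to the ambient product and hence need not factor through any single factor. (Compare Remark after Proposition~\ref{pr:Gelfand}: spectra of elements of a subalgebra are typically \emph{larger} than in the ambient algebra.) This is exactly the kind of pathology Carlson's constant $B$ is there to rule out, and you cannot avoid invoking something of that quantitative strength.

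Your treatment of parts~(xv) and~(xiii)(``if'') matches the paper's approach (restriction of projectives, Dade's classification of endotrivials over $(\bZ/p)^r$, polynomial growth of $\dim\Omega^m k$). Your ``only if'' direction of~(xiii), which passes the problem to faithful modules on elementary abelians, is the right reduction but is left at the level of assertion; it needs the separate lemma that a faithful $kE$-module cannot have $\npj_E=\dim$, which is neither a consequence of~(xvi) (that is only for $2$-dimensional modules) nor handled by the ``Chebyshev computation'' you cite.

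For part~(xvi) you outline a direct enumeration of the indecomposable summands appearing in $M^{\otimes n}$ and a verification that the multiplication-by-$[M]$ matrix is the adjacency matrix of $A_{p^r-1}$. This is the correct target, but you acknowledge the enumeration and the $A_{p^r-1}$ tensor rule as open obstacles. The paper sidesteps them entirely by proving the $SL(2,q)$ version (Section~\ref{se:SL2q}) via tilting theory: the tilting modules $T(0),\ldots,T(q-2)$ for $SL(2,\bar k)$ automatically form the $A_{q-1}$ chain under tensoring by $L(1)$, the Steinberg module $L(q-1)$ is projective, and the Chebyshev factorisation $f_q(X)=\prod_j f_p(X^{p^j}+X^{-p^j})$ (Lemma~\ref{le:fpX}) identifies the minimal polynomial and its largest root as $2\cos(\pi/q)$. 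Restricting to the elementary abelian Sylow $p$-subgroup then gives the case of~(xvi) in which the scalars $\lambda_i$ span the subfield $\bF_{p^r}$; the extension to an arbitrary faithful two-dimensional indecomposable (i.e.\ arbitrary $\bF_p$-linearly independent $\lambda_i\in k$) is not automatic from your presentation and is worth flagging as a separate point.
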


The main theme of these notes is that the invariant $\npj_\fX(x)$
described above may
be interpreted as a spectral radius of the element $x$ in a quotient
of a completion of the complexified representation ring $\fa_\bC$. This completion is a
commutative Banach algebra, whose structure we shall investigate.

We put a norm on $\fa_\bC=\bC\otimes_\bZ \fa$\index{a@$\fa_{\bC}$} by setting
\[ \left\|\sum_{i\in\fI} a_ix_i\right\| = \sum_{i\in\fI}|a_i|\dim x_i. \]
This makes $\fa_\bC$ a commutative normed algebra with identity 
element $\one$.\index{$\one$} Its completion
$\hat\fa$ is therefore a commutative Banach algebra.

If $\fX$ is a representation ideal in $\fa$ then the linear span
$\langle\fX\rangle_\bC$ is an ideal in $\fa_\bC$, and its closure
$\widehat{\langle\fX\rangle}_\bC$ is an ideal in $\hat\fa$. The
quotient norm on 
$\hat\fa_\fX=\hat\fa/\widehat{\langle\fX\rangle}_\bC\cong\widehat{\fa/\langle\fX\rangle}_\bC$ 
is given by
\[ \left\|\sum_{i\in\fI}a_ix_i\right\|_\fX = \sum_{i\in\fI}|a_i|\dim\core_\fX(x_i)=
  \sum_{i\in\fI\setminus\fX}|a_i|\dim x_i. \]
After introducing the background material on commutative Banach
algebras in Chapter~\ref{ch:Banach}, we investigate these quotients $\hat\fa_\fX$ of
$\hat\fa$ in Chapter~\ref{ch:complete-repring}.

In terms of these quotients, if $x\in\fa_{\cge 0}$ then the invariant $\npj_\fX(x)$ is
interpreted as the spectral radius of the image of $x$ in
$\hat\fa_\fX$ (Theorem~\ref{th:spec-radius}). 
This enables us to relate $\npj_\fX(x)$ to the species
$s\colon\fa\to\bC$ of representation rings 
introduced by Benson and Parker~\cite{Benson/Parker:1984a}
and studied in~\cite{Benson:1984b,Fan:1991a,Green:1962a,
Haeberle:2008a,Hussein/Radwan:1997a,Hussein/Radwan:2007a,
O'Reilly:1965a,
Webb:1984a,Webb:1986a,Witherspoon:1996a}.

A \emph{species}\index{species} of $\fa$ is a ring homomorphism 
$s\colon\fa\to\bC$. Such a ring homomorphism extends uniquely to a
$\bC$-algebra homomorphism $s\colon\fa_\bC\to\bC$, which we also call a
species. The species which are continuous with respect to the 
norm on $\fa_\bC$ are the 
\emph{dimension bounded}\index{dimension!bounded species}%
\index{species!dimension bounded}
species, namely the ones that
satisfy $|s(x)|\le \dim x$ for every $x\in\fa_{\cge 0}$.
Such a species extends uniquely to a species $s\colon \hat\fa\to \bC$.
All species of $\hat\fa$ are of this form, and are automatically continuous.

If $\fX$ is a representation ideal in $\fa$ then the species which
vanish on $\fX$ and extend to species of $\hat\fa_\fX$ are the 
$\fX$-\emph{core bounded}\index{core!bounded species}%
\index{species!core bounded} 
ones, namely those that satisfy $|s(x)|\le\dim\core_\fX(x)$ for every
$x\in\fa_{\cge 0}$.

We may now apply a theorem of Gelfand relating the spectral radius to
the species.

\begin{theorem*}
Let $x\in\fa_{\cge 0}$ and $\fX$ be a representation ideal in
$\fa$. Then $\npj_\fX(x)$ is equal to the supremum of $|s(x)|$, where
$s$ runs over the $\fX$-core bounded species $s\colon \fa\to \bC$.
\end{theorem*}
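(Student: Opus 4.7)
The plan is to invoke Gelfand's theorem for commutative Banach algebras, applied to the quotient $\hat\fa_\fX$, and then translate the resulting statement about continuous characters back into statements about species of $\fa$.

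First, I would recall from Theorem~\ref{th:spec-radius} that $\npj_\fX(x)$ is precisely the spectral radius of the image $\bar x$ of $x$ in the commutative Banach algebra $\hat\fa_\fX$. This is the whole point of passing to the completion: the limsup defining $\npj_\fX$ is literally the spectral radius formula $\lim_{n\to\infty}\|\bar x^n\|_\fX^{1/n}$ once one checks that $\|\bar x^n\|_\fX=\dim\core_\fX(x^n)=\cc_n^\fX(x)$ for $x\in\fa_{\cge 0}$.

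Next, by Gelfand's theorem (the main content of Chapter~\ref{ch:Banach}), the spectral radius of any element of a commutative Banach algebra with identity equals the supremum of $|\chi(\bar x)|$ as $\chi$ ranges over all algebra homomorphisms $\hat\fa_\fX\to\bC$; all such homomorphisms are automatically continuous of norm at most $1$. So it suffices to identify these homomorphisms with the $\fX$-core bounded species of $\fa$.

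This identification is exactly the content of the discussion preceding the theorem. In one direction, given a character $\chi\colon\hat\fa_\fX\to\bC$, its composition with the natural map $\fa\to\fa_\bC\to\hat\fa\to\hat\fa_\fX$ is a species $s\colon\fa\to\bC$ which vanishes on $\langle\fX\rangle$ and satisfies $|s(x)|\le\|\bar x\|_\fX=\dim\core_\fX(x)$ for $x\in\fa_{\cge 0}$, hence is $\fX$-core bounded. In the other direction, an $\fX$-core bounded species vanishes on $\langle\fX\rangle$ and is continuous with respect to the quotient norm, so it extends uniquely by density to a character of $\hat\fa_\fX$. Putting these pieces together, $\npj_\fX(x)=\sup\{|s(x)|:s\text{ an }\fX\text{-core bounded species}\}$.

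The main substantive step is the spectral radius identification in Theorem~\ref{th:spec-radius}, together with Gelfand's formula; once these are in hand, the argument is essentially a dictionary translation. The only conceptual point that needs care is verifying that the natural restriction map from characters of $\hat\fa_\fX$ to species of $\fa$ is a bijection onto the $\fX$-core bounded species that vanish on $\fX$, but this was already set up in the paragraphs preceding the statement, so it can be cited rather than redone.
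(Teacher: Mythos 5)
Your proposal is correct and follows essentially the same route as the paper: identify $\npj_\fX(x)$ with the spectral radius of the image of $x$ in $\hat\fa_\fX$ (Theorem~\ref{th:spec-radius}, which rests on $\|\bar x^n\|_\fX=\cc_n^\fX(x)$ and the Gelfand spectral radius formula), apply Corollary~\ref{co:radius} to express that spectral radius as $\sup|\chi(\bar x)|$ over algebra homomorphisms $\chi\colon\hat\fa_\fX\to\bC$, and then use Theorem~\ref{th:core-bounded} to identify those homomorphisms with the $\fX$-core bounded species of $\fa$. The paper also records that the supremum is attained (via Theorem~\ref{th:npj-in-spec}), which your version does not address but is not required for the stated equality.
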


The set of species of $\hat\fa_\fX$ is topologised with the weak*
topology\index{weak* topology},\index{topology, weak*} described in
Section~\ref{se:struct-space}, to form a compact Hausdorff topological
space called the \emph{structure space}\index{structure!space} 
$\Struct_\fX(\fa)$.\index{DeltaXa@$\Struct_\fX(\fa)$} If
$\fY\subseteq\fX$ are represetation ideals then every $\fX$-core
bounded species of $\fa$ is $\fY$-core bounded, and this induces a
homeomorphism identifying $\Struct_\fX(\fa)$ with a closed subset of
$\Struct_\fY(\fa)$.\bigskip

Now in some ways the Banach algebra $\hat\fa_\fX$ looks like
the group algebra $\ell^1(\Gamma)$ of a discrete abelian group $\Gamma$. This
analogy is strongest when $\fX=\fX_{\max}$,\index{X@$\fX_{\max}$} the largest representation
ideal in $\fa$, consisting of those $i$ for which $[x_ix_{i^*}:\one]>0$.
This is because in this quotient, $x_i^*$ acts as a
sort of partial inverse for $x_i$. In particular, we construct a
Hilbert space $H(\fa)$ on which $\hat\fa_{\max}=\hat\fa_{\fX_{\max}}$
acts, in such a way that the action of $x^*$ is the adjoint of the
action of $x$. This gives us an injective map of Banach $*$-algebras $\hat\fa_{\max}\to
\Lin(H(\fa))$, the bounded operators\index{bounded!operator} 
on $H(\fa)$. The crucial inequality 
giving boundedness is Theorem~\ref{th:|xy|}, which turns out to be
quite tricky to prove.
This says that for $x\in \fa_\bC$ and $y\in H(\fa)$ we have
\[ |xy|\le\|x\|_{\max}|y|. \]
We let $C^*_{\max}(\fa)$ denote
the closure of the image of $\hat\fa_{\max}\to \Lin(H(\fa))$. This is a
$C^*$-algebra, and is the completion of $\fa_{\max}$ with respect to
the sup norm\index{sup!norm}
\[ \|x\|_{\sup}=\sup_{|y|=1}|xy|. \]
 Some consequences of this construction include 
the statement that there are no non-zero
 quasinilpotent elements in $\hat\fa_{\max}$, and a better understanding of
 idempotents. 

In Chapter~\ref{ch:fingrp}, we specialise to the situation where $G$
is a finite group, $k$ is a field of characteristic $p$, and $a(G)$ is the representation
ring of $kG$-modules. In this case, we have further structure coming
from restriction and induction, elementary abelian subgroups, and
Adams psi operations. In particular, in the case $\fX=\fX_\proj$, we
show that the invariant $\npj(M)$ is detected on elementary abelian
subgroups. One consequence of this is that there cannot exist a module
$M$ with $1<\npj(M)<\sqrt 2$. 

In Chapter~\ref{ch:eg}, we illustrate this
situation with some groups and modules where we can make explicit
computations. In particular, in case $G$ is a cyclic group of order
$p$ and $M$ is the indecomposable two-dimensional module, we have
$\npj(M)=2\cos(\pi/p)$. In Section~\ref{se:SL2q} we 
prove the following.

\begin{theorem*} 
If $G=SL(2,q)$ with $q$ a power
of $p$, amd $M$ is the two-dimensional natural module, then
$\npj(M)=2\cos(\pi/q)$. 
\end{theorem*}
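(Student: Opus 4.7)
The plan is to combine properties (xiv) and (xvi) from the main theorem on $\npj_G$ quoted earlier. Let $q=p^n$ and let $E\le G=SL(2,q)$ be the Sylow $p$-subgroup of upper unitriangular matrices $\begin{pmatrix}1&a\\0&1\end{pmatrix}$ with $a\in\bF_q$. As an abstract group, $E\cong(\bF_q,+)\cong(\bZ/p)^n$, so $E$ is elementary abelian of rank $n$, and it is a Sylow $p$-subgroup since $|E|=q$ while $|G|=q(q^2-1)$.

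Next I observe that the restriction $M|_E$ is two-dimensional, faithful, and indecomposable, since every non-identity element of $E$ acts on $k^2$ as a single Jordan block of size two. Applying property (xvi) to the pair $(E,M|_E)$ gives
\[
\npj_E(M|_E)=2\cos(\pi/p^n)=2\cos(\pi/q),
\]
which in combination with (xiv) yields the lower bound $\npj_G(M)\ge 2\cos(\pi/q)$.

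For the matching upper bound, property (xiv) expresses $\npj_G(M)$ as the maximum of $\npj_{E'}(M|_{E'})$ as $E'$ ranges over elementary abelian $p$-subgroups of $G$. By Sylow's theorem every such $E'$ is conjugate into $E$, and since $\npj$ is conjugation-invariant we may assume $E'\le E$. Writing $r=\mathrm{rank}(E')\le n$, the restriction $M|_{E'}$ is again two-dimensional and faithful (any non-identity element of $E'$ still acts with a nontrivial Jordan block), hence indecomposable, so (xvi) gives $\npj_{E'}(M|_{E'})=2\cos(\pi/p^r)\le 2\cos(\pi/p^n)$, with equality when $E'=E$. Combining the two bounds yields $\npj_G(M)=2\cos(\pi/q)$. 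The argument is essentially a formal deduction from the two quoted properties, and no serious obstacle is anticipated beyond the standard identification of the Sylow $p$-subgroups of $SL(2,q)$ and the observation that $2\cos(\pi/p^r)$ is monotonic in $r$.
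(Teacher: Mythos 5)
Your proof is correct as written, but it takes a genuinely different route from the paper's. The paper works globally in $a(SL(2,q))$: it identifies the subring generated by tilting modules, uses Steinberg's tensor product theorem to show that the Chebyshev-type polynomial $f_q$ kills $[L(1)]$ modulo projectives (because $f_q([L(1)])$ is the Steinberg module), argues that $f_q$ is the minimal polynomial there, and then invokes the Perron--Frobenius argument of Theorem~\ref{th:PF} to read off $\npj(L(1))$ as the largest root $2\cos(\pi/q)$. That approach is self-contained modulo standard tilting theory, and as a bonus delivers $\npj(L(j))$ for all $1\le j\le p-1$ and shows $L(1)$ is algebraic. Your approach instead localises: you restrict to the (elementary abelian) Sylow $p$-subgroup, invoke Theorem~\ref{th:E} (Chouinard--Carlson detection, item (xiv)) to express $\npj_G(M)$ as a maximum over elementary abelian subgroups, and then feed in item (xvi) from the quoted theorem of \cite{Benson/Symonds:2020a}. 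This is clean and short, but it leans on (xvi) as a black box; that statement is not proved in the present memoir, and indeed one point of the tilting-theoretic argument here is to give an independent derivation (restricting the conclusion $\npj_G(M)=2\cos(\pi/q)$ to the Sylow via Theorem~\ref{th:E} recovers (xvi) for the module $L(1){\downarrow_E}$ rather than assuming it). Two small points worth making explicit in your write-up: the trivial subgroup is elementary abelian and is included in the maximum of Theorem~\ref{th:E}, but contributes $\npj_1(M)=0$ since everything is projective for the trivial group; and your monotonicity step uses that $\cos$ is decreasing on $[0,\pi]$, so $2\cos(\pi/p^r)$ increases with $r$, which is fine.
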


The proof uses tilting theory. We conjecture that for representations
of finite groups, if
$\npj(M)<2$ then $\npj(M)=2\cos(\pi/q)$ for $q\ge 2$ an integer.

We illustrate
the structure of the space $\Struct(G)$ using the only cases where we
can make complete computations, namely $G=V_4$, the Klein four group,
and $G=A_4$, the alternating group of degree four, in characteristic two.
\bigskip

{\sc Notation.}
We shall often want to use $i$ as an index, so we write 
$\bi$\index{i@$\bi$} for the complex number $\sqrt{-1}$.\bigskip

\noindent
{\bf Acknowledgement.}
I thank Peter Symonds for getting me started on this
project, and Rob Archbold for carefully reading portions of this
manuscript. Part of this work was carried out while I was
resident at
the Mathematical Sciences Research Institute in Berkeley, 
California during the Spring 2018 semester, supported
by the National Science Foundation under Grant No.\ DMS-1440140.
Part was also carried out while I was resident at the Isaac Newton Institute for 
Mathematical Sciences in Cambridge, UK, during the 
Spring 2020 semester programme 
``Groups, representations and applications: new perspectives,'' 
supported by EPSRC grant EP/R014604/1.

\chapter{Abstract representation rings}\label{ch:abstract}

\section{Axioms for representation rings}

In this section we formulate the properties of representation rings that we shall
need. Our fundamental model is the modular representation ring, or
Green ring $a(G)$ of a finite group $G$ over a field
$k$ of characteristic $p$, but there are many 
other examples. Motivation for the
properties~(i)--(v) comes from this example, see
Proposition~\ref{pr:a(G)repring}.
In case the field $k$ is algebraically closed, the stronger 
property (ii$'$) below holds, and $a(G)$ is a closed representation ring.

\begin{defn}\label{def:repring}
A \emph{representation ring}\index{representation!ring}
consists of a commutative ring $\fa$\index{a@$\fa$} whose 
additive group is a free abelian group with a specified basis
consisting of symbols $x_i$\index{x@$x_i$, $x_0$} with $i$ in an indexing set
$\fI$.\index{indexing set $\fI$|textbf}\index{I@$\fI$|textbf}
The identity element $x_0=\one$\index{$\one$} of $\fa$ is
one of the basis elements, corresponding to
$0\in\fI$.\index{$0\in\fI$} 
Multiplication is given by
$x_ix_j = \sum_{k\in\fI} c_{i,j,k}x_k$ where the
\emph{structure constants}\index{structure!constants}
$c_{i,j,k}$ are non-negative integers, and 
given $i,j\in\fI$, there are only finitely many $k$ with $c_{i,j,k}\ne
0$. If $x=\sum_{i\in\fI} a_i x_i$ is an element of $\fa$ then we write $[x:x_i]$
for $a_i$, the \emph{multiplicity}\index{multiplicity} of
$x_i$ in $x$. Thus we have $x=\sum_{i\in\fI} [x:x_i]x_i$ and
$x_ix_j=\sum_{k\in\fI}[x_ix_j:x_k]x_k$.
\begin{enumerate}
\item There is an involutive permutation $i \mapsto i^*$ of the indexing set
$\fI$, which induces an involutive automorphism of $\fa$ sending
$x=\sum_{i\in\fI}a_ix_i$ to $x^*=\sum_{i\in\fI} a_i x_{i^*}$.\smallskip
\item
 If $[x_ix_j:\one]>0$ then $j=i^*$.\smallskip
\item
 If $i\in\fI$ satisfies $c_{i,i^*,0}= 0$ then
\[ \sum_{j\in\fI} c_{i,i^*,j}c_{j,i,i} \ge 2. \]
In other words, if $[x_ix_{i^*}:\one]=0$ then $[x_ix_{i^*}x_i:x_i]\ge 2$.\smallskip
\item
 There is a dimension function\index{dimension!function}
\[ \dim\colon \fa \to \bZ, \]
which is
a ring homomorphism with the property that for each $i\in \fI$,
\[ \dim(x_i)=\dim(x_{i^*})>0. \]
Thus $\dim(x)=\dim(x^*)$ for all $x\in \fa$.
\item
There is a non-zero element
$\rho\in\fa$\index{rho@$\rho$} which is a 
non-negative linear combination of the basis elements,
with the property that for all
$x\in\fa$, $x\rho  = (\dim x)\rho$.\newline
[See also Remark~\ref{rk:axiom(v)}.]
\end{enumerate}
A \emph{closed representation ring}%
\index{closed representation ring}\index{representation!ring!closed} 
is a representation ring satisfying a stronger version of (ii):
\begin{enumerate}
\item[(ii$'$)] If $[x_ix_j:\one]>0$ then $j=i^*$ and
  $[x_ix_j:\one]=1$.
\end{enumerate}
\end{defn}

\begin{egs}\label{eg:repring}
The modular representation ring of a finite group is the motivating
example, and this will be discussed at length in
Chapters~\ref{ch:fingrp} and~\ref{ch:eg}. 

The smallest a representation ring can be is $\bZ$ with just one basis
element, $x_0=1$. The element $\rho$ can be any positive integer, and
the dimension function $\bZ\to\bZ$ is the identity map. If $\rho=1$,
this is the representation ring of the trivial group.

Other natural examples include the
representation rings of finite group
schemes and finite supergroup schemes.

The following examples of representation rings 
are somewhat artificial, but will be used later to
illustrate some ideas. 
\begin{enumerate}
\item
Choose an integer $d\ge 2$, and 
let 
\[ \fa = \bZ[u,u^{-1}] \oplus \bZ \]
as a sum of a subring and an ideal,  
where the ideal summand $\bZ$ is spanned by $\rho$. Multiplication is given by
$u\rho=u^{-1}\rho=\rho$, $\rho^2=d\rho$. The index set $\fI$ is
$\bZ\cup\{\infty\}$ where $x_n=u^n$ for $n\in\bZ$ and $x_\infty=\rho$.
The involutive permutation on $\fI$ sends $n$ to $-n$ and fixes
$\infty$. The dimension function is given by $\dim u^n=1$,
$\dim\rho=d$.
\item
Choose an integer $d\ge 2$, and let
\[ \fa =\bZ[u,v]/((uv-1)(u-d),(uv-1)(v-d)). \]
The index set $\fI$ is again $\bZ\cup\{\infty\}$. The basis is given
by $x_0=1$,  $x_n=u^n$ and $x_{-n}=v^n$ for $n>0$, and
$x_\infty=\rho=uv-1$. Again the involutive permutation on $\fI$ sends
$n$ to $-n$ and fixes $\infty$. The dimension function is given by
\[ \dim u^n = \dim v^n=d^n,\qquad \dim\rho=d^2-1. \]
This example will be used in Section~\ref{se:endotriv} as an
illustration of the difference between the big endotrivial group and the
small endotrivial group.
\item
Let $\fa=\bZ[v]/(v^3-2v^2)$ with basis $x_0=1$, $x_1=v$,
$x_2=v^2-v$, $\rho=x_1+x_2$, and $x_1^*=x_2$, $x_2^*=x_1$. 
Then $x_1^2=x_1x_2=x_2^2=\rho$, and
$\dim x_1=\dim x_2 = 2$. Note that in this ring we have
$(x_2-x_1)^2=0$, so the nil radical\index{nil radical} of $\fa$ is non-zero.
\end{enumerate}
\end{egs}

\begin{rk}\label{rk:axiom(v)}
Examples which are not covered by our axioms include the
representation ring of a compact Lie group, as studied for example by 
Segal~\cite{Segal:1968b}. This example satisfies all but axiom~(v) of
Definition~\ref{def:repring}. Anything we do in this work that does
not mention projectives does not depend
on this axiom, and there are arguments for deleting it, but we have
chosen to retain it. The element $\rho$ in that axiom plays the role
of the regular representation.\index{regular representation}

There is an elementary way of enhancing such a 
representation ring in such a way that axiom~(v) holds, but this
method is not terribly satisfactory. Namely, given
$\fa$ satisfying all but this axiom, and an integer $n\ge 2$, 
then we endow $\fa\oplus\bZ$ with the
structure of a representation ring with one more basis element $\rho$
satisfying $\rho^*=\rho$, $x\rho=(\dim x)\rho$ for $x\in\fa$, 
$\dim\rho=n$, and $\rho^2=n\rho$. Note that $[\rho\rho^*:\one]=0$ and
$[\rho\rho^*\rho:\rho]=n^2\ge 2$, so that axioms~(ii) and~(iii) are satisfied.

Our argument for retaining axiom~(v), however, is that our focus will
be on studying representation ideals in representation rings, and
examples such as the representation ring of a compact Lie group have
no non-zero representation ideals. So most of what we do here is vacuous
in that case. Axiom~(v) does not ensure that there are non-zero
representation ideals, but if there are none, then the representation
ring is finite dimensional and semisimple, and looks very much like
the ordinary character ring of a finite group. We shall call this case
an \emph{ordinary representation ring},%
\index{ordinary representation!ring}\index{representation!ring!ordinary}
and the contrary case a 
\emph{modular representation ring},%
\index{modular representation ring}\index{representation!ring!modular}
see Definition~\ref{def:modular}. In a modular representation ring,
there is a unique maximal representation ideal and a 
unique minimal representation ideal, see 
Proposition~\ref{pr:max-min}.
\end{rk}

\begin{rk}
Algebras with distinguished bases and non-negative structure constants
have been studied in other representation theoretic contexts, with
different goals in mind. For example, Arad and
Blau~\cite{Arad/Blau:1991a} introduced the concept of a 
\emph{table algebra}\index{table algebra} in order to study
decompositions of products of ordinary characters or conjugacy classes
of finite groups. They allow non-negative real structure constants,
but they assume that the basis is finite, and their axioms imply that
$[x_ix^*_i:\one]>0$ for all basis elements $x_i$. So their definition
is closer to our definition of \emph{ordinary representation ring},
see Definition~\ref{def:modular} below. 

Lusztig~\cite{Lusztig:1987a} introduced the concept of a 
\emph{based ring}\index{based ring} for the study of Grothendieck
groups of Hecke algebras.\index{Hecke algebra} These do not have to be
commutative, but they do satisfy $[x_ix^*_i:\one]>0$. This subject is
further studied in Etingof, Gelaki, Nikshych and 
Ostrik~\cite{Etingof/Gelaki/Nikshych/Ostrik:2015a} with fusion
categories\index{fusion category} in mind; their Frobenius--Perron
dimension\index{Frobenius!P@--Perron dimenson} is related to our gamma
invariant.

Other notions include the cellular algebras of Graham and
Lehrer~\cite{Graham/Lehrer:1996a}, generalised to 
Green's concept of a 
\emph{tabular algebra}\index{tabular algebra}~\cite{Green:2002a}.
These are defined over $\bZ[v,v^{-1}]$ and are geared towards
Temperley--Lieb algebras\index{Temperley--Lieb algebra}, Hecke
algebras, Ariki--Koike algebras,\index{Ariki--Koike algebra} and so on.
\end{rk}

The following lemmas give some of the more elementary consequences of
the definitions. For this purpose, we introduce one more definition.

\begin{defn}\label{def:a_R}
If $R$ is a commutative ring, we write $\fa_R$\index{a@$\fa_{R}$} for the ring
$R\otimes_\bZ\fa$ obtained by extending scalars to $R$. 
Elements of $\fa_\bC$ are finite sums $\sum_{i\in\fI}a_ix_i$, with $a_i\in R$.
We shall mostly be interested in the case $R=\bC$,\index{a@$\fa_{\bC}$} the complex numbers, 
but other extensions of scalars will occasionally be considered. 
In case $R=\bC$, if $x=\sum_{i\in\fI}a_i x_i$ we define
$x^*=\sum_{i\in\fI}\bar{a}_i x_i^*$.
\end{defn}

\begin{lemma}
We have $\rho=\rho^*$.
\end{lemma}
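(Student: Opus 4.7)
The plan is to exploit axiom~(v) together with compatibility of $\dim$ and the involution, plus commutativity of $\fa$, to force $\rho$ and $\rho^*$ to be scalar multiples of each other.

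First, I would apply the involution $*$ to the defining identity $x\rho = (\dim x)\rho$ of axiom~(v). Since $*$ is a ring automorphism and $\dim x \in \bZ$ is fixed by it, this yields $x^* \rho^* = (\dim x)\rho^*$. Since $\dim x^* = \dim x$ by axiom~(iv) and the map $x \mapsto x^*$ is a bijection on $\fa$, I may relabel to conclude that $y\rho^* = (\dim y)\rho^*$ holds for every $y \in \fa$. Thus $\rho^*$ satisfies the same absorption property as $\rho$.

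Next, I would specialise: taking $x = \rho^*$ in the original identity gives $\rho^* \rho = (\dim \rho^*)\rho = (\dim \rho)\rho$, while taking $y = \rho$ in the derived identity gives $\rho\,\rho^* = (\dim \rho)\rho^*$. Commutativity of $\fa$ then forces $(\dim \rho)\rho = (\dim \rho)\rho^*$.

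Finally, I need that $\dim \rho > 0$ so that I can cancel in the torsion-free $\bZ$-module $\fa$. Writing $\rho = \sum_{i \in \fI} r_i x_i$ with $r_i \ge 0$ not all zero (as $\rho \ne 0$), and using $\dim x_i > 0$ from axiom~(iv), I get $\dim \rho = \sum_i r_i \dim x_i > 0$. Cancelling this positive integer in the free abelian group $\fa$ yields $\rho = \rho^*$. The only mildly subtle step is the last one, verifying positivity of $\dim \rho$; but this is immediate from the positivity requirements baked into axioms~(iv) and~(v), so there is no real obstacle.
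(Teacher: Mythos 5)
Your proof is correct and follows essentially the same route as the paper's: apply the involution to see that $\rho^*$ also satisfies axiom~(v), compute $\rho\rho^*$ two ways using commutativity to get $(\dim\rho)\rho = (\dim\rho)\rho^*$, and cancel the positive integer $\dim\rho$ in the free abelian group $\fa$. You spell out the positivity of $\dim\rho$ a bit more explicitly than the paper does, but the argument is the same.
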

\begin{proof}
By property~(i), $\rho^*$ also satisfies property~(v).
So by properties~(iv) and~(v) we have
$\dim(\rho^*)\rho=\rho^*\rho=\dim(\rho)\rho^*$, and also
$\dim(\rho^*)=\dim(\rho)>0$.
\end{proof}

\begin{lemma}\label{le:xixi*xi}
For all $i\in\fI$ we have $[x_ix_{i^*}x_i:x_i]>0$.
\end{lemma}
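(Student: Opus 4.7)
The plan is to expand $x_i x_{i^*} x_i$ using the structure constants and read off the coefficient of $x_i$, then split into two cases according to whether axiom~(ii) (really, whether $c_{i,i^*,0}=0$ or not) places us in the setting of axiom~(iii).

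First I would compute: writing $x_i x_{i^*} = \sum_{j\in\fI} c_{i,i^*,j}\, x_j$ and then multiplying on the right by $x_i$, the coefficient of $x_i$ in $x_i x_{i^*} x_i$ is
\[
[x_i x_{i^*} x_i : x_i] \;=\; \sum_{j\in\fI} c_{i,i^*,j}\, c_{j,i,i}.
\]
This is the key identification; everything else is just looking at the right summand.

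If $c_{i,i^*,0} > 0$, I would isolate the $j=0$ term. Since $x_0 = \one$ is the identity, $x_0 x_i = x_i$, so $c_{0,i,i} = 1$. Hence the $j=0$ contribution alone is $c_{i,i^*,0} \ge 1$, and since all $c_{j,i,k}$ are non-negative by the definition, $[x_i x_{i^*} x_i:x_i] \ge c_{i,i^*,0} > 0$.

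If instead $c_{i,i^*,0}=0$, then axiom~(iii) applies directly and gives $\sum_j c_{i,i^*,j} c_{j,i,i} \ge 2$, so the coefficient is at least $2 > 0$. In either case we obtain $[x_i x_{i^*} x_i : x_i] > 0$, as required. There is no real obstacle here; the only subtlety is recognising that the left side of the inequality appearing in axiom~(iii) is literally the multiplicity $[x_i x_{i^*} x_i : x_i]$ (a fact the axiom's parenthetical remark in the definition already flags), so the lemma is essentially a repackaging of axioms~(ii) and~(iii) after the trivial identity multiplication check.
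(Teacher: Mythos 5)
Your proof is correct and takes essentially the same route as the paper: the paper's own argument also splits on whether $[x_ix_{i^*}:\one]$ is zero (invoking axiom~(iii)) or positive (in which case $[x_ix_{i^*}x_i:x_i]\ge[x_ix_{i^*}:\one]>0$, which is precisely your isolation of the $j=0$ term). Your version merely spells out the structure-constant computation that the paper leaves implicit.
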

\begin{proof}
 If $[x_ix_{i^*}:\one]=0$ then this follows from property (iii).
  If $[x_ix_{i^*}:\one]>0$ then $[x_ix_{i^*}x_i:x_i]\ge
  [x_ix_{i^*}:\one]>0$.
\end{proof}

\begin{lemma}
The following are equivalent for a basis element $x_i$:
\begin{enumerate}
\item $\dim x_i = 1$.
\item $x_ix_{i^*}=\one$.
\item $x_i$ is invertible in $\fa$.
\end{enumerate}
\end{lemma}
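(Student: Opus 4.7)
The plan is to prove the cyclic chain (iii) $\Rightarrow$ (i) $\Rightarrow$ (ii) $\Rightarrow$ (iii). The first and third implications are essentially one-liners; the middle one is where the axioms of a representation ring actually do work.

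For (iii) $\Rightarrow$ (i): if $x_i y = \one$ for some $y \in \fa$, I apply the ring homomorphism $\dim$ of axiom~(iv) to obtain $(\dim x_i)(\dim y) = 1$ in $\bZ$, and since $\dim x_i$ is a positive integer it must equal $1$. For (ii) $\Rightarrow$ (iii): the element $x_{i^*}$ is, by hypothesis, an inverse of $x_i$, so there is nothing to do.

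For (i) $\Rightarrow$ (ii), the main step, I will exploit the fact that the structure constants are non-negative integers and each basis element has strictly positive integer dimension. Suppose $\dim x_i = 1$, so also $\dim x_{i^*}=1$ by axiom~(iv), and therefore $\dim(x_ix_{i^*})=1$. Writing $x_ix_{i^*}=\sum_k c_{i,i^*,k}x_k$ with $c_{i,i^*,k}\in\bZ_{\ge 0}$ and applying $\dim$ gives $\sum_k c_{i,i^*,k}\dim x_k = 1$; since every term is a non-negative integer and each $\dim x_k\ge 1$, exactly one coefficient is non-zero, equals $1$, and corresponds to an index with $\dim x_k = 1$. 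Thus $x_ix_{i^*}$ is itself a single basis element. The same dimension argument applied to $x_ix_{i^*}x_i$ shows it too is a single basis element, and Lemma~\ref{le:xixi*xi} tells us $[x_ix_{i^*}x_i:x_i]>0$, so in fact $[x_ix_{i^*}x_i:x_i]=1$.

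The only place where something could go wrong is in identifying this unique basis element $x_ix_{i^*}$ with $\one$, and this is precisely what axiom~(iii) is engineered to give: if $[x_ix_{i^*}:\one]=0$ then $[x_ix_{i^*}x_i:x_i]\ge 2$, contradicting the equality just obtained. Hence $[x_ix_{i^*}:\one]>0$, and combined with $x_ix_{i^*}$ being a single basis element with coefficient $1$, we conclude $x_ix_{i^*}=\one$. The only substantive obstacle is recognising that axiom~(iii) is exactly the right tool to force the trivial basis element to appear; once one thinks to invoke it, the proof writes itself.
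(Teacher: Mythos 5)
Your proof is correct and follows essentially the same route as the paper: for (i) $\Rightarrow$ (ii), both use the dimension bound $\dim(x_ix_{i^*}x_i)=1$ to cap the coefficient $[x_ix_{i^*}x_i:x_i]$ at one, and then invoke axiom~(iii) to force $[x_ix_{i^*}:\one]>0$, after which positivity of the structure constants and the dimension function pin down $x_ix_{i^*}=\one$. The one small difference is cosmetic: your appeal to the lemma that $[x_ix_{i^*}x_i:x_i]>0$ is superfluous, since you only need the coefficient to be at most one (not exactly one) for the contradiction with axiom~(iii) to go through; the paper skips this and argues directly from $\dim x_ix_{i^*}x_i=1$.
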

\begin{proof}
(i) $\Rightarrow$ (ii):  
If $\dim x_i=1$ then $\dim x_ix_{i^*}=1$ and $\dim x_ix_{i^*}x_i=1$.
In particular,
$[x_ix_{i^*}:\one]$ is either zero or one. If it
were zero then by Definition~\ref{def:repring}\,(iii) we would have
$[x_ix_{i^*}x_i:x_i]\ge 2$, contradicting $\dim x_ix_{i^*}x_i=1$,
and so we have $[x_ix_{i^*}:\one]=1$. We have
$\dim(x_ix_{i^*}-\one)=0$, and hence $x_ix_{i^*}=\one$.

(ii) $\Rightarrow$ (iii) is obvious.

(iii) $\Rightarrow$ (i): If $x_i$ is invertible in $\fa$ then 
$\dim x_i$ is invertible in $\bZ$, hence equal to one.
\end{proof}

The following is the analogue of Proposition~2.2 of~\cite{Benson/Carlson:1986a}.

\begin{lemma}\label{le:BCideal}
  Let $i,j,k\in\fI$ with $[x_ix_j:x_k]>0$. If $[x_ix_{i^*}:\one]=0$ then
  $[x_kx_k^*:\one]=0$.
\end{lemma}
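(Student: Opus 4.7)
The plan is to exploit axiom~(ii) together with associativity, by computing the multiplicity of $\one$ in the triple product $x_ix_jx_{k^*}$ in two different ways.

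First I would bracket as $(x_ix_j)x_{k^*}$. Expanding $x_ix_j=\sum_l[x_ix_j:x_l]x_l$ gives
\[ [x_ix_jx_{k^*}:\one]=\sum_{l\in\fI}[x_ix_j:x_l]\,[x_lx_{k^*}:\one]. \]
Axiom~(ii) forces $[x_lx_{k^*}:\one]$ to vanish unless $k^*=l^*$, i.e.\ $l=k$, so only the $l=k$ term survives and
\[ [x_ix_jx_{k^*}:\one]=[x_ix_j:x_k]\,[x_kx_{k^*}:\one]. \]

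Next I would bracket as $x_i(x_jx_{k^*})$. Expanding $x_jx_{k^*}=\sum_m[x_jx_{k^*}:x_m]x_m$ and applying axiom~(ii) again to kill every term except $m=i^*$ yields
\[ [x_ix_jx_{k^*}:\one]=[x_jx_{k^*}:x_{i^*}]\,[x_ix_{i^*}:\one]. \]

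Equating the two expressions and using the hypothesis $[x_ix_{i^*}:\one]=0$, the right-hand side is zero, so $[x_ix_j:x_k]\,[x_kx_{k^*}:\one]=0$. Since $[x_ix_j:x_k]>0$ by assumption, we conclude $[x_kx_{k^*}:\one]=0$, as required. There is no real obstacle here; the only thing to be careful about is that one must invoke axiom~(ii) twice in the right direction (once to restrict $l$, once to restrict $m$), and one must check that $*$ being an involution correctly forces $l=k$ and $m=i^*$ rather than their duals.
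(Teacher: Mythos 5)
Your argument is essentially the paper's: the paper also considers the coefficient of $\one$ in the triple product $x_ix_jx_{k^*}$, deduces from axiom~(ii) and the hypothesis that this coefficient is zero, and then isolates the term $[x_ix_j:x_k][x_kx_{k^*}:\one]$ to conclude. The only cosmetic difference is that you use axiom~(ii) to show the sum $\sum_l[x_ix_j:x_l][x_lx_{k^*}:\one]$ collapses to exactly the $l=k$ term, whereas the paper only needs the weaker observation that, by non-negativity of the structure constants, this single term is bounded above by the whole sum.
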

\begin{proof}
  It follows from property (ii) that if $[x_ix_{i^*}:\one]=0$ then
  for all $x\in\fa$ we have $[x_ix:\one]=0$. In particular,
  $[x_ix_jx_{k^*}:\one]=0$, and since $[x_ix_j:x_k]>0$ it follows that
  $[x_kx_{k^*}:\one]=0$.
\end{proof}

\begin{defn}\label{def:species}
If $\fa$ is a representation ring, a \emph{species}\index{species} of
$\fa$ is a ring homomorphism $s\colon\fa\to\bC$. A species extends
uniquely to a $\bC$-algebra homomorphism $s\colon \fa_\bC\to\bC$,\index{a@$\fa_{\bC}$}
which we also call a species.
\end{defn}

\begin{lemma}\label{le:species-indep}
Any set of species of $\fa$ is linearly independent.
\end{lemma}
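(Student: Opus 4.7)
The plan is to run the classical Dedekind--Artin linear independence of characters argument, adapted to this setting. Since linear independence of an arbitrary set reduces to linear independence of every finite subset, it suffices to show that any finite collection of distinct species $s_1,\dots,s_n\colon\fa\to\bC$ is linearly independent over $\bC$.

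I would argue by contradiction, choosing a counterexample with $n$ minimal. Suppose $\sum_{i=1}^n \lambda_i s_i = 0$ as functions on $\fa$, with all $\lambda_i\ne 0$ and $n\ge 1$ smallest possible. The case $n=1$ is immediate because $s_1(\one)=1$, so $n\ge 2$. Because $s_1\ne s_2$, pick $a\in\fa$ with $s_1(a)\ne s_2(a)$. Then for every $x\in\fa$, the multiplicativity of each $s_i$ gives
\[
0 \;=\; \sum_{i=1}^n \lambda_i s_i(ax) \;=\; \sum_{i=1}^n \lambda_i s_i(a)\, s_i(x),
\]
while multiplying the original relation by $s_1(a)$ yields
\[
0 \;=\; \sum_{i=1}^n \lambda_i s_1(a)\, s_i(x).
\]
Subtracting cancels the $i=1$ term and leaves
\[
\sum_{i=2}^n \lambda_i\bigl(s_i(a)-s_1(a)\bigr) s_i(x) \;=\; 0
\]
for all $x\in\fa$. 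The coefficient of $s_2$ is $\lambda_2(s_2(a)-s_1(a))\ne 0$, so this is a nontrivial dependence among $s_2,\dots,s_n$, contradicting minimality of $n$.

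There is no real obstacle: the argument uses only multiplicativity and the existence of the unit, both of which are available from Definition~\ref{def:species} and the ring structure on $\fa$. The extension to species on $\fa_\bC$ asserted in Definition~\ref{def:species} means the same statement automatically covers the complexified case. The only care required is to recognize that an infinite set is, by convention, linearly independent iff each of its finite subsets is, so the reduction to the finite case at the outset is the first step.
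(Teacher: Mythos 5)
Your proof is correct and matches the paper's argument exactly: the same Dedekind--Artin minimal-counterexample strategy, with the same choice of element separating $s_1$ and $s_2$, the same multiplication trick, and the same subtraction producing a shorter relation. The explicit handling of the $n=1$ base case and the remark about reducing to finite subsets are just minor elaborations of steps the paper leaves implicit.
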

\begin{proof}
Let $s_1,\dots,s_n\colon \fa \to \bC$ satisfy a non-trivial linear relation
\[ \lambda_1 s_1+\dots +\lambda_n s_n = 0 \]
with $n$ as small as possible. Choose $x\in \fa$ such that $s_1(x)\ne
s_2(x)$. Then for all $y\in \fa$ we have
\[ \lambda_1 s_1(xy)+ \lambda_2s_2(xy)+\dots +\lambda_n s_n(xy) = 0, \]
and hence
\[ \lambda_1 s_1(x)s_1(y) + \lambda_2s_2(x)s_2(y)+ \dots + \lambda_n s_n(x)s_n(y) = 0. \]
But also
\[ \lambda_1 s_1(x)s_1(y)+\lambda_2s_1(x)s_2(y) \dots + \lambda_n s_1(x) s_n(y) = 0. \]
Subtracting gives a shorter non-trivial linear relation, contradicting
the minimality of $n$.
\end{proof}

\section{Ordinary representation theory}

\begin{defn}\label{def:positive}
An element $x\in\fa$ is
\emph{non-negative}\index{non-negative element} if each $[x:x_i]\ge 0$,
and \emph{positive}\index{positive element} if, 
in addition, $x\ne 0$. We write $x\cge y$ or $y\cle x$
if $x-y$ is non-negative and $x\cg y$ or $y\cl x$
if $x-y$ is positive. We write
$\fa_{\cge 0}$\index{a@$\fa_{\cge 0}$, $\fa_{\cg 0}$}
for the set of non-negative elements, and $\fa_{\cg 0}$
for the set of positive elements.
\end{defn}

\begin{defn}\label{def:proj}
A basis element $x_i$ of a representation ring $\fa$ is said to be
\emph{projective indecomposable}\index{projective!indecomposable} if
$[\rho:x_i]>0$.  The number of projective indecomposables is finite.

An element $x=\sum_{i\in\fI} a_ix_i$ is said to be
\emph{virtually projective}\index{virtually projective element} 
if  $a_i\ne 0$ implies that $x_i$ is projective. If in addition $x\cge 0$
then we say that $x$ is \emph{projective}.\index{projective!element}
\end{defn}

\begin{lemma}\label{le:proj-ideal}\ 

\begin{enumerate}
\item If $x\in\fa_{\cge 0}$ and $y$ is projective then $xy$ is
  projective.
\item If $x\in\fa$ and $y$ is virtually projective then $xy$ is
  virtually projective.
\end{enumerate}
\end{lemma}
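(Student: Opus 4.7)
The plan is to first prove an auxiliary claim from which both parts follow immediately by bilinearity. Let $P = \{j \in \fI : [\rho:x_j] > 0\}$ be the (finite) set indexing projective indecomposables. The claim is that for every $i \in \fI$ and every $j \in P$, the product $x_i x_j$ is a non-negative integer combination of basis elements $x_k$ with $k \in P$.

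To prove the claim, I would exploit axiom~(v). Expanding $\rho = \sum_{j \in P}[\rho:x_j]\, x_j$ and multiplying by $x_i$, axiom~(v) yields
\[
\sum_{j \in P}[\rho:x_j]\, x_i x_j \;=\; x_i\rho \;=\; (\dim x_i)\,\rho \;=\; (\dim x_i)\sum_{k \in P}[\rho:x_k]\, x_k.
\]
The right-hand side is supported in $P$. On the left-hand side, every coefficient $[\rho:x_j]$ is strictly positive and every structure constant $c_{i,j,k}$ is non-negative, so no cancellation can occur: if $c_{i,j,k} > 0$ for some $j \in P$, then $x_k$ must appear with positive coefficient in the right-hand side, forcing $k \in P$. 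This establishes the claim for each fixed $j \in P$.

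Given the claim, (i) follows by writing $x = \sum_i [x:x_i]\, x_i$ and $y = \sum_{j \in P}[y:x_j]\, x_j$, both with non-negative integer coefficients, and expanding $xy$: each term $[x:x_i][y:x_j]\, x_ix_j$ is a non-negative combination of basis elements indexed by $P$, hence so is $xy$. For (ii), we drop the non-negativity hypothesis on $[x:x_i]$ but the support of $xy$ is still contained in $P$, so $xy$ is virtually projective by Definition~\ref{def:proj}.

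There is no real obstacle here; the only point that needs a moment's care is the no-cancellation step in the proof of the auxiliary claim, which works precisely because the $[\rho:x_j]$ and the $c_{i,j,k}$ are all non-negative and because $P$ is finite (so the manipulation of the sum is legitimate).
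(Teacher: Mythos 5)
Your proof is correct and follows essentially the same route as the paper's: both hinge on the identity $x_i\rho=(\dim x_i)\rho$ together with the non-negativity of the structure constants and of the coefficients $[\rho:x_j]$, which forces the support of $x_ix_j$ (for $x_j$ projective) to lie among the projective indecomposables. You have merely made the no-cancellation step explicit where the paper leaves it implicit.
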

\begin{proof}
(i) It suffices to consider the case $x=x_i$, $y=x_j$ with $x_j$
projective. We have $x_i\rho=(\dim x_i)\rho$, so $x_i\rho$ is a 
non-negative linear combination of projective basis elements, and
hence so is $x_ix_j$.

(ii) follows from (i).
\end{proof}

The following theorem will have a generalisation involving commutative Banach
algebras in Theorem~\ref{th:no-qn}. The final statement of the 
theorem should also be contrasted with
Example~\ref{eg:repring}\,(iii).

\begin{theorem}[Ordinary representation theory]\label{th:ordinary}%
\index{ordinary representation!theory}
Suppose that $\one$ is projective in a representation
ring $\fa$. Then every element of $\fa$ is virtually projective, 
and the additive group of $\fa$ has finite rank. 
If $x\in\fa_\bC$ such that $xx^*=0$ then $x=0$.
There are no non-zero nilpotent elements in 
$\fa_\bC$.\index{a@$\fa_{\bC}$} 
\end{theorem}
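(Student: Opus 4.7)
The first two claims follow quickly from what is already in place. If $\one$ is projective, then Lemma~\ref{le:proj-ideal}(i) applied with $x=x_i$ and $y=\one$ shows that $x_i=x_i\one$ is projective for every $i\in\fI$. So every basis element is projective, and in particular every element of $\fa$ is virtually projective. Since Definition~\ref{def:proj} records that there are only finitely many projective indecomposables, $\fI$ is finite and the additive group of $\fa$ has finite rank.

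For the third claim, let $x=\sum_i a_i x_i\in\fa_\bC$, so that $x^*=\sum_j\bar a_j x_{j^*}$ and
\[ [xx^*:\one]=\sum_{i,j}a_i\bar a_j[x_ix_{j^*}:\one]. \]
By axiom~(ii) the inner multiplicity vanishes unless $j^*=i^*$, i.e.\ $j=i$, so the sum collapses to $\sum_i|a_i|^2[x_ix_{i^*}:\one]$. The plan is to show that the hypothesis ``$\one$ is projective'' forces $[x_ix_{i^*}:\one]>0$ for every $i$. Using axiom~(v) with $x=x_i$ gives $(\dim x_i)\rho=x_i\rho=\sum_j[\rho:x_j]x_ix_j$, and taking the coefficient of $\one$, again by axiom~(ii), yields
\[ (\dim x_i)[\rho:\one]=[\rho:x_{i^*}]\,[x_ix_{i^*}:\one]. \]
Since $\dim x_i>0$ and $[\rho:\one]>0$ by the projectivity of $\one$, both sides are positive, so $[x_ix_{i^*}:\one]>0$. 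Therefore $xx^*=0$ forces each $|a_i|^2=0$, giving $x=0$. The only mild subtlety here is recognising that axiom~(v), rather than any direct structural statement about $x_i x_{i^*}$, is what propagates the projectivity of $\one$ to the pairing on each basis element.

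The fourth claim reduces to the third by a standard doubling argument using commutativity. Given a nilpotent $x\in\fa_\bC$, set $y=xx^*$; then $y^*=y$, and $y^n=x^n(x^*)^n=0$ for some $n$, since $\fa_\bC$ is commutative. Let $m$ be minimal with $y^{2^m}=0$. If $m\ge 1$, write $z=y^{2^{m-1}}$, so $z\ne 0$, $z^*=z$, and $zz^*=z^2=y^{2^m}=0$, contradicting the third claim. Hence $m=0$, i.e.\ $y=xx^*=0$, and the third claim again gives $x=0$. I expect the main obstacle to be the identification of the right coefficient of $\one$ in step two together with the clean application of axiom~(v); once that multiplicity positivity is established, the nilpotent-free conclusion is essentially formal.
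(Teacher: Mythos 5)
Your proof is correct and takes essentially the same route as the paper: for the first two claims you invoke Lemma~\ref{le:proj-ideal}\,(i) where the paper runs the equivalent coefficient computation $(\dim x_i)[\rho:x_i]=[x_i\rho:x_i]\ge[\rho:\one]>0$ directly, and the third claim is argued identically by showing $n_i=[x_ix_{i^*}:\one]>0$ via axioms (ii) and (v). For the fourth claim you are in fact a little more careful than the paper, which only treats $x^2=0$ explicitly; your reduction of a general nilpotent to a self-conjugate $z=y^{2^{m-1}}$ with $z^2=0$ fills in the routine step the paper leaves implicit.
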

\begin{proof}
If $\one$ is projective then for every basis element
$x_i$ we have
\[ (\dim x_i)[\rho:x_i]=[x_i\rho:x_i]\ge [\rho:\one]>0 \]
and so every $x_i$ is a projective indecomposable. Thus every element of
$\fa$ is virtually projective, and $\fa$ has finite rank.
Next, using Definition~\ref{def:repring}\,(ii) and the fact that
$\rho=\sum_j[\rho:x_j]x_j$, we have 
\[ [\rho:x_{i^*}][x_ix_{i^*}:\one]=\sum_j[\rho:x_j][x_ix_j:\one]
=[x_i\rho:\one]=(\dim x_i)[\rho:\one]>0 \]
and so $[x_ix_{i^*}:\one]>0$. Set $n_i=[x_ix_{i^*}:\one]$.\index{n@$n_i=[x_ix_{i^*}:\one]$}
Then 
\[ xx^*=\sum_i|a_i|^2x_ix_{i^*}+\sum_{i\ne j}a_i\bar a_j x_ix_{j^*} \]
and the coefficient of $\one$ in this is $\sum n_i|a_i|^2$. This is zero
if and only if $x=0$, so $xx^*=0$ implies $x=0$. If $x^2=0$ then
$(xx^*)(xx^*)^*=x^2x^{*\,2}=0$, so $xx^*=0$ and hence $x=0$.
\end{proof}

\begin{cor}\label{co:ordinary}
Suppose that $\one$ is projective in $\fa$. Then
as a $\bC$-algebra, $\fa_\bC$ 
is a direct sum of a finite number of copies
of $\bC$, and elements are separated by species
$s\colon \fa_\bC\to \bC$.
\end{cor}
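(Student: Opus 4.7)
The plan is to deduce this as a direct structural consequence of the preceding theorem. From Theorem~\ref{th:ordinary} we already have three key facts under the hypothesis that $\one$ is projective: the algebra $\fa_\bC$ is finite-dimensional over $\bC$ (since $\fa$ has finite rank), it is commutative (by Definition~\ref{def:repring}), and it contains no non-zero nilpotent elements. So the task reduces to showing that any finite-dimensional commutative $\bC$-algebra with zero nilradical is a finite product of copies of $\bC$, and that the coordinate projections are separating species.

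The main step I would carry out is the standard structure argument. A finite-dimensional algebra is Artinian, so its Jacobson radical is nilpotent, hence contained in the nilradical, which is zero. Thus $\fa_\bC$ is a commutative Artinian semisimple ring, and by the Artin--Wedderburn theorem it decomposes as a finite direct product $\fa_\bC \cong K_1 \times \cdots \times K_n$ of fields. Each $K_j$ is finite-dimensional over $\bC$ and algebraic over $\bC$, so since $\bC$ is algebraically closed, $K_j=\bC$, giving $\fa_\bC\cong \bC^n$.

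Finally, the $n$ coordinate projections $s_j\colon\fa_\bC\to\bC$ are $\bC$-algebra homomorphisms, hence species in the sense of Definition~\ref{def:species}, and by construction an element $x\in\fa_\bC$ vanishes if and only if $s_j(x)=0$ for all $j$, which is precisely the separation statement.

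There is no real obstacle here: the content is entirely front-loaded into Theorem~\ref{th:ordinary}, and the corollary is a one-line invocation of Artin--Wedderburn together with the algebraic closedness of $\bC$. The only thing to be careful about is confirming that "no non-zero nilpotents" together with "finite-dimensional" indeed forces semisimplicity in the commutative setting, which it does via the nilpotence of the Jacobson radical in an Artinian ring.
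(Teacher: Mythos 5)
Your proof is correct and is essentially the argument the paper gives: deduce from Theorem~\ref{th:ordinary} that $\fa_\bC$ is finite-dimensional, commutative, and has no non-zero nilpotents, then invoke the standard structure theory of finite-dimensional semisimple commutative $\bC$-algebras. You simply spell out the Artinian/Jacobson-radical and Artin--Wedderburn steps that the paper compresses into one line.
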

\begin{proof}
It follows from Theorem~\ref{th:ordinary} that $\fa_\bC$ is 
finite dimensional and semisimple.\index{semisimple}
Every finite dimensional semisimple commutative algebra over $\bC$
has this property.
\end{proof}

\begin{defn}\label{def:modular}
Because of Theorem~\ref{th:ordinary} and Corollary~\ref{co:ordinary}, 
we shall say that $\fa$ is an \emph{ordinary representation ring}%
\index{ordinary representation!ring|textbf}\index{representation!ring!ordinary|textbf} 
if $\one$ is projective in $\fa$, and that $\fa$ is a
\emph{modular representation ring}%
\index{modular representation ring|textbf}\index{representation!ring!modular|textbf}
if $\one$ is not a projective indecomposable in $\fa$.

The \emph{character table}\index{character table} of an ordinary
representation ring $\fa$ is the square table of complex numbers whose rows
are indexed by the index set $\fI$ and whose columns are indexed by the species 
$s\colon \fa\to \bC$. The \emph{character} of an element
$x=\sum_{i\in\fI}a_ix_i\in\fa$ is the corresponding linear combination
of the rows of the character table. Elements of $\fa$ are determined
by their characters, and ring operations on elements correspond to
pointwise operations on the corresponding characters.
\end{defn}

The following property of ordinary representation rings is familiar
from representation theory of finite groups. The usual proof in that
case is that the inverse of an eigenvalue of a matrix of finite order is
its complex conjugate.

\begin{theorem}\label{th:ordinary=>symmetric}
If $s$ is a species of an ordinary representation ring $\fa$ 
then for all $x\in\fa_\bC$ we have $s(x^*)=\overline{s(x)}$.
\end{theorem}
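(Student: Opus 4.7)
The plan is to reduce to showing $s(x_{i^*})=\overline{s(x_i)}$ for basis elements, which I would do by turning $\fa_\bC$ into a finite dimensional Hilbert space on which each multiplication operator $M_{x_i}$ is normal with adjoint $M_{x_{i^*}}$; then the standard spectral fact about normal operators closes the argument.

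Since $x^* = \sum_i \overline{[x:x_i]}\, x_{i^*}$ by definition and $s$ is $\bC$-linear, the general claim reduces to the basis case. I would introduce the sesquilinear form
\[
\langle y, z\rangle := [yz^*:\one]
\]
on $\fa_\bC$. Conjugate symmetry follows from $\overline{[w:\one]} = [w^*:\one]$ (using $0^*=0$, which is immediate from axiom~(ii) applied to $\one\cdot\one = \one$). On the distinguished basis, $\langle x_i, x_j\rangle = [x_i x_{j^*}:\one]$, which by axiom~(ii) of Definition~\ref{def:repring} vanishes unless $j=i$, in which case it equals $n_i := [x_ix_{i^*}:\one]$. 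Since $\fa$ is ordinary, Theorem~\ref{th:ordinary} asserts that $n_i > 0$ for every $i$, so the form is positive definite and the distinguished basis is orthogonal.

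Using commutativity of $\fa_\bC$ and the fact that $*$ is a ring automorphism,
\[
\langle x_i y, z\rangle = [x_i y z^*:\one] = [y(x_{i^*}z)^*:\one] = \langle y, x_{i^*} z\rangle,
\]
so $M_{x_i}^* = M_{x_{i^*}}$; in particular $M_{x_i}$ is normal. By Corollary~\ref{co:ordinary}, $\fa_\bC \cong \prod_s \bC$, with primitive idempotents $e_s$ indexed by species $s$, satisfying $y\, e_s = s(y)\, e_s$ for every $y\in\fa_\bC$. Hence $e_s$ is simultaneously an eigenvector of $M_{x_i}$ with eigenvalue $s(x_i)$ and of $M_{x_{i^*}}$ with eigenvalue $s(x_{i^*})$. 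The elementary identity $\|T^*v - \bar\lambda v\|^2 = \|Tv - \lambda v\|^2$ for a normal operator $T$ then forces $M_{x_i}^* e_s = \overline{s(x_i)}\, e_s$, and comparing eigenvalues gives $s(x_{i^*}) = \overline{s(x_i)}$.

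The main obstacle is establishing positive definiteness of the form, which is precisely where the ordinary hypothesis enters through the strict positivity of every $n_i$ in Theorem~\ref{th:ordinary}; this strict positivity fails in the modular setting, so the Hilbert space structure collapses there. Once positive definiteness is in place, the adjoint computation and the standard normal-operator fact make the rest mechanical.
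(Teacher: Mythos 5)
Your proof is correct, but it takes a genuinely different route from the paper's. The paper argues by contradiction: it defines the species $\bar s^*(x)=\overline{s(x^*)}$, assumes $\bar s^*\neq s$, lets $e$ be the primitive idempotent with $s(e)=1$ and $s'(e)=0$ for $s'\neq s$, notes that $\bar s^*(e)=0$ forces $s(e^*)=0$, concludes that every species vanishes on $ee^*$, hence $ee^*=0$, and then invokes the implication $xx^*=0\Rightarrow x=0$ from Theorem~\ref{th:ordinary} to get $e=0$, a contradiction. You instead package the very same non-degeneracy fact --- which you correctly trace to the positivity of the integers $n_i=[x_ix_{i^*}:\one]$ guaranteed by Theorem~\ref{th:ordinary} --- as the positive-definiteness of the sesquilinear form $\langle y,z\rangle=[yz^*:\one]$, making $\fa_\bC$ a finite-dimensional Hilbert space. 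You then verify $(M_{x_i})^*=M_{x_{i^*}}$ (this uses that $*$ is a $\bC$-antilinear ring automorphism, which holds by axiom~(i) together with the definition of $*$ on $\fa_\bC$), note that $M_{x_i}$ is automatically normal by commutativity, and read off the conclusion from the standard fact that an eigenvector of a normal operator with eigenvalue $\lambda$ is an eigenvector of its adjoint with eigenvalue $\bar\lambda$, applied to the primitive idempotent $e_s$. Both proofs rest on the same two ingredients --- the primitive-idempotent decomposition of Corollary~\ref{co:ordinary} and the positivity from Theorem~\ref{th:ordinary} --- but the deduction mechanism is different: the paper's argument is more elementary and self-contained, while yours imports the spectral theory of normal operators. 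Your approach has the virtue of anticipating the trace form and Hilbert-space machinery ($\Tr$, the weighted $\ell^2$ norm, $H(\fa)$) that the paper develops later in Chapter~\ref{ch:complete-repring} for the modular setting, so it gives a unified picture; the price is more overhead for a result that admits a very short direct argument. One small cosmetic point: the remark ``\,$0^*=0$, which is immediate from axiom~(ii) applied to $\one\cdot\one=\one$\,'' is correct but could be stated more transparently as ``\,$[\one\cdot\one:\one]=1>0$, so axiom~(ii) gives $0^*=0$.''
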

\begin{proof}
Define a species $\bar{s}^*\colon \fa\to \bC$ by
$\bar{s}^*(x)=\overline{s(x^*)}$. By Corollary~\ref{co:ordinary}, we may
choose a
primitive idempotent element $e\in \fa_\bC$ such that $s(e)=1$, and
$s'(e)=0$ for all other species $s'$. If $\bar{s}^*\ne s$ then
$\bar{s}^*(e)=0$ and so $s(e^*)=0$. So for all species $s'$ of
$\fa_\bC$ we have $s'(ee^*)=0$, and hence $ee^*=0$. By
Theorem~\ref{th:ordinary}, this implies that $e=0$, which is a
contradiction. Hence $\bar{s}^*=s$, which then implies that for all
$x\in\fa_\bC$ we have $s(x^*)=\overline{s(x)}$.
\end{proof}

\section{Representation ideals and cores}

\begin{lemma}
If $x\in \fa_{\cge 0}$ then $\dim x \ge 0$;
if $x\in\fa_{\cg 0}$ then $\dim x > 0$.
\end{lemma}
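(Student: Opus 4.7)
The plan is to simply expand $x$ in the given basis and apply linearity of the dimension function together with axiom (iv) of Definition~\ref{def:repring}.

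Concretely, write $x = \sum_{i \in \fI}[x:x_i]\,x_i$, a finite sum since only finitely many coefficients are nonzero. Since $\dim\colon \fa \to \bZ$ is a ring homomorphism, in particular a group homomorphism, we have
\[ \dim x = \sum_{i \in \fI}[x:x_i]\dim x_i. \]
By hypothesis $[x:x_i] \ge 0$ for every $i$, and by axiom (iv) we have $\dim x_i > 0$ for every $i \in \fI$. Thus every summand is non-negative, giving $\dim x \ge 0$.

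For the second assertion, if $x \in \fa_{\cg 0}$ then $x \ne 0$, so there exists some $i \in \fI$ with $[x:x_i] > 0$; the corresponding term $[x:x_i]\dim x_i$ is strictly positive while all other terms remain non-negative, so $\dim x > 0$.

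There is no real obstacle here; the statement is essentially an immediate consequence of the definitions, recorded for later reference. The only thing to be careful about is that the sum is genuinely finite, which is built into the definition of $\fa$ as a free abelian group on the basis $\{x_i\}$, so that the ring homomorphism $\dim$ can be applied termwise without convergence concerns.
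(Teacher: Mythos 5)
Your proof is correct and follows the same route as the paper: expand $x$ in the distinguished basis, use that $\dim$ is additive, and conclude from $\dim x_i > 0$. (Your citation of axiom (iv) is in fact the right one; the paper's proof cites (iii), which appears to be a typo since (iv) is the axiom concerning the dimension function.)
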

\begin{proof}
It follows from Definition~\ref{def:repring}\,(iii) that if
$x=\sum_{i\in\fI}a_ix_i$ then
\begin{equation*}
\dim x = \sum_{i\in\fI}a_i\dim x_i,
\end{equation*}
and that this is $\ge 0$ if all $a_i\ge 0$, and $>0$ if in addition
some $a_i>0$.
\end{proof}

\begin{defn}\label{def:repideal}
A \emph{representation ideal}\index{representation!ideal}
$\fX$\index{X@$\fX$|textbf} of a
representation ring $\fa$ is a proper subset $\fX\subset \fI$ with the
following properties:
\begin{enumerate}
\item If $i\in \fX$, $j\in \fI$ and $k\in \fI\setminus \fX$ then
$c_{i,j,k}= 0$. Equivalently, if $i\in \fX$ and there exists $x\in\fa$
such that $[x_ix:x_k]\ne 0$ then $k\in\fX$.
\item If $i\in\fX$ then $i^*\in\fX$.
\end{enumerate}

The linear span $\langle\fX\rangle$
of a representation ideal $\fX$ is an ideal in $\fa$.
We write $\langle\fX\rangle_{\cge 0}$ for
$\langle\fX\rangle\cap\fa_{\cge 0}$ and
$\langle\fX\rangle_{\cg 0}$ for
$\langle\fX\rangle\cap\fa_{\cg 0}$.
We write $\fa_\fX$\index{a@$\fa_{\fX}$} 
for the quotient $\fa/\langle\fX\rangle$, and
$\fa_{\bC,\fX}$\index{a@$\fa_{\bC,\fX}$} for 
$\bC\otimes_\bZ\fa_{\fX}\cong\fa_\bC/\langle\fX\rangle_\bC$.

If $\fX$ is a representation ideal in $\fa$ and $x\in
\fa_{\cge 0}$ then we can write $x=x'+x''$ where
$x'=\sum_{i\in\fI\setminus\fX}a_ix_i$ and $x''=\sum_{i\in\fX}a_ix_i$.
We define the $\fX$-\emph{core}\index{core|textbf}\index{X@$\fX$-core} of $x$ to be $x'$,
and we denote it by $\core_\fX(x)$. In the case $\fX=\fX_\proj$, we
omit the subscript and just write
$\core(x)$.\index{core@$\core_\fX(x)$, $\core(x)$}
\end{defn}

\begin{lemma}\label{le:core}
Let $\fY\subseteq\fX$ be representation ideals in a representation ring $\fa$
and let $x,y,z\in\fa_{\cge 0}$.
\begin{enumerate}
\item The product $xy\cge 0$.
\item We have $\core_\fX(x)\cle\core_\fY(x)\cle x$, and in
particular we have
\[ \dim \core_\fX(x)\le \dim\core_\fY(x)\le \dim x. \]
\item If $x\cle y$ then $\dim x \le \dim y$ and  $\core_\fX(x)\cle
\core_\fX(y)$. In particular, we have
\[ \dim\core_\fX(x)\le\dim\core_\fX(y). \]
\item We have $\core_\fX(xy)=\core_\fX(\core_\fX(x)\core_\fX(y))\cle
\core_\fX(x)\core_\fX(y)$.
\item If $y\cle x$ and $x\in \langle \fX\rangle$ then $y\in\langle
\fX\rangle$.
\item If $y\cle x$ then $yz\cle xz$.
\item $\core_\fX(x^*)=\core_\fX(x)^*$.
\end{enumerate}
\end{lemma}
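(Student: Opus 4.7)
The plan is to run through the seven parts in an order that lets each use the ones before, with part (iv) being the only real content and the remaining items following quickly from the definitions and the axiom that $\langle\fX\rangle$ is a (two-sided) ideal in $\fa$.

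I would start with (i): writing $x=\sum_i a_ix_i$ and $y=\sum_j b_jx_j$ with $a_i,b_j\ge 0$, the product $xy=\sum_{i,j,k}a_ib_jc_{i,j,k}x_k$ has non-negative coefficients because all the $c_{i,j,k}$ are non-negative integers by Definition~\ref{def:repring}. Part (vi) is then immediate: $xz-yz=(x-y)z$ and $x-y\cge 0$, so (i) gives $(x-y)z\cge 0$. For (v), if $x=\sum_{i\in\fX}a_ix_i$ and $y\cle x$, then the coefficient of $x_j$ in $x-y$ must be non-negative for every $j$; for $j\notin\fX$ this coefficient is $-[y:x_j]$, forcing $[y:x_j]\le 0$, while $y\cge 0$ forces $[y:x_j]\ge 0$, hence $[y:x_j]=0$ and $y\in\langle\fX\rangle$.

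Parts (ii), (iii) and (vii) are essentially bookkeeping with the decomposition $x=\core_\fX(x)+(x-\core_\fX(x))$. Since $\fY\subseteq\fX$ we have $\fI\setminus\fX\subseteq\fI\setminus\fY$, so $\core_\fX(x)$ is obtained from $\core_\fY(x)$ by zeroing out additional non-negative coefficients, giving (ii); the dimension inequalities then come from the formula $\dim x=\sum_i[x:x_i]\dim x_i$ with all summands non-negative. For (iii), monotonicity of $\dim$ uses the same formula applied to $y-x\cge 0$, and $\core_\fX(y)-\core_\fX(x)=\core_\fX(y-x)\cge 0$ because $\core_\fX$ is $\bZ$-linear and preserves non-negativity. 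Part (vii) uses condition~(ii) in Definition~\ref{def:repideal}: the map $i\mapsto i^*$ preserves both $\fX$ and $\fI\setminus\fX$, so taking $*$ of the decomposition $x=\core_\fX(x)+x''$ with $x''\in\langle\fX\rangle_{\cge 0}$ yields $x^*=\core_\fX(x)^*+x^{*\prime\prime}$ with $x^{*\prime\prime}\in\langle\fX\rangle_{\cge 0}$, and the core is read off uniquely from that decomposition.

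The only part that requires any real care is (iv). I would write $x=\core_\fX(x)+x''$ and $y=\core_\fX(y)+y''$ with $x'',y''\in\langle\fX\rangle_{\cge 0}$, and expand
\[ xy=\core_\fX(x)\core_\fX(y)+\core_\fX(x)\,y''+x''\,\core_\fX(y)+x''y''. \]
Each of the last three terms is non-negative by (i) and lies in $\langle\fX\rangle$ because $\langle\fX\rangle$ is an ideal. Hence the sum of those three terms is a non-negative element of $\langle\fX\rangle$, so applying $\core_\fX$ annihilates it; that yields $\core_\fX(xy)=\core_\fX(\core_\fX(x)\core_\fX(y))$. The remaining inequality $\core_\fX(\core_\fX(x)\core_\fX(y))\cle\core_\fX(x)\core_\fX(y)$ is just the tautology that $\core_\fX(w)\cle w$ for any $w\in\fa_{\cge 0}$, a special case of (ii) (or read off directly from the definition). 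No step presents a real obstacle; the mild subtlety is simply to verify that the three ``error'' terms really are non-negative elements of the ideal so that $\core_\fX$ kills them, which is precisely what (i) and the ideal property of $\langle\fX\rangle$ give.
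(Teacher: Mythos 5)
Your proof is correct and follows essentially the same route as the paper's very terse proof; you have simply filled in the details that the paper leaves to the reader. In particular, for part (iv) the paper says only that the equality ``follows from the definitions of core and representation ideal,'' and your expansion $xy=\core_\fX(x)\core_\fX(y)+\core_\fX(x)y''+x''\core_\fX(y)+x''y''$ with the error terms lying in $\langle\fX\rangle_{\cge 0}$ is exactly the intended unpacking.
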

\begin{proof}
Parts (i), (ii) and (iii) follow from the fact that the structure constants
$c_{i,j,k}$ of $\fa$ are non-negative.
  
(iv)  The equality follows from the definitions of core and representation ideal,
and the inequality follows from (ii).
  
(v)  For each $i\in\fI$, $[x:x_i]\ge[y:x_i]$.
If $[y:x_i]>0$ then $[x:x_i]>0$ and so $i\in\fX$.

(vi) $xz-yz=(x-y)z$ is a product of elements of
$\fa_{\cge 0}$, and is
hence in $\fa_{\cge 0}$ by (i).

(vii) This follows from part (ii) of Definition~\ref{def:repideal}.
\end{proof}

\begin{lemma}\label{le:xx*x}
If $x\in\fa_{\cge 0}$ then $xx^*x\cge x$. If
also $[xx^*:\one]=0$ then $xx^*x\cge 2x$.
\end{lemma}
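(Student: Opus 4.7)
The plan is to verify the inequalities coefficient-by-coefficient in the given basis. Write $x=\sum_{i\in\fI}a_ix_i$ with $a_i\in\bZ_{\ge 0}$, so that
\[
xx^*x=\sum_{i,j,k}a_ia_ja_k\,x_ix_{j^*}x_k.
\]
Each summand $a_ia_ja_k\,x_ix_{j^*}x_k$ is non-negative by Lemma \ref{le:core}(i), so dropping all but the diagonal contribution $i=j=k=\ell$ gives, for each $\ell\in\fI$,
\[
[xx^*x:x_\ell]\ \ge\ a_\ell^{\,3}\,[x_\ell x_{\ell^*}x_\ell:x_\ell].
\]

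For the first assertion, apply Lemma \ref{le:xixi*xi} to obtain $[x_\ell x_{\ell^*}x_\ell:x_\ell]\ge 1$, so $[xx^*x:x_\ell]\ge a_\ell^{\,3}$. Since $a_\ell$ is a non-negative \emph{integer} we have $a_\ell^{\,3}\ge a_\ell=[x:x_\ell]$, and this holds for every $\ell$, giving $xx^*x\cge x$.

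For the second assertion, I expand the hypothesis. Using Definition \ref{def:repring}(ii) to kill the cross terms ($[x_ix_{j^*}:\one]=0$ unless $j=i$), we get
\[
[xx^*:\one]=\sum_{i\in\fI}a_i^{\,2}\,[x_ix_{i^*}:\one].
\]
Every summand is non-negative, so the hypothesis $[xx^*:\one]=0$ forces $[x_\ell x_{\ell^*}:\one]=0$ whenever $a_\ell>0$. Property (iii) of Definition \ref{def:repring} then upgrades Lemma \ref{le:xixi*xi} to $[x_\ell x_{\ell^*}x_\ell:x_\ell]\ge 2$ for each such $\ell$, yielding $[xx^*x:x_\ell]\ge 2a_\ell^{\,3}\ge 2a_\ell$; for $\ell$ with $a_\ell=0$ the desired inequality $[xx^*x:x_\ell]\ge 2a_\ell=0$ is automatic.

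There is no real obstacle; the only thing that needs to be noticed is that the coefficients $a_\ell$ are non-negative integers, so the crude bound $a_\ell^{\,3}\ge a_\ell$ is enough, and that the vanishing of $[xx^*:\one]$ (a single scalar condition) transfers individually to each $[x_\ell x_{\ell^*}:\one]$ once the cross terms are eliminated by axiom~(ii).
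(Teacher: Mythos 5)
Your proof is correct and follows essentially the same route as the paper's: expand $xx^*x$ in the basis, keep only the diagonal $(i,j,k)=(\ell,\ell,\ell)$ contribution, and apply Lemma~\ref{le:xixi*xi} (respectively axiom~(iii)) to bound $[x_\ell x_{\ell^*}x_\ell : x_\ell]$ from below by $1$ (respectively $2$). The paper's own proof is terse and leaves implicit exactly the point you flag as the only nontrivial step, namely that $a_\ell^{\,3}\ge a_\ell$ because the coefficients are non-negative \emph{integers}; this integrality is genuinely needed (the inequality $xx^*x\cge x$ fails for non-negative real coefficients, e.g.\ $x=\tfrac12[J_2]$ in $a(\bZ/p)$), so it is worth spelling out as you did. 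Your reduction of $[xx^*:\one]=0$ to the vanishing of each $[x_\ell x_{\ell^*}:\one]$ for $a_\ell>0$, via axiom~(ii), is likewise the intended argument.
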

\begin{proof}
  It follows from Lemma~\ref{le:xixi*xi} that for each $i\in\fI$ we have
  $[xx^*x:x_i]\ge [x:x_i]$. If $[xx^*:\one]=0$ then by
  Definition~\ref{def:repring}\,(iii) we have $[xx^*x:x_i]\ge 2[x:x_i]$.
\end{proof}
  
\begin{prop}\label{pr:nilp}
  Let $\fX$ be a representation ideal in a representation ring $\fa$,
  and let $x\in\fa_{\cge 0}$.
  \begin{enumerate}
    \item If $xx^*\in\langle \fX\rangle$ then $x\in\langle \fX\rangle$.
    \item If $x^n\in\langle\fX\rangle$ for some $n>0$ then
      $x\in\langle\fX\rangle$.
 \end{enumerate}   
\end{prop}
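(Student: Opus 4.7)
For part~(i), the plan is to multiply by $x$ on the right and invoke Lemma~\ref{le:xx*x}. Since $x\in\fa_{\cge 0}$, that lemma gives $xx^*x\cge x$. On the other hand, $xx^*\in\langle\fX\rangle$ and $\langle\fX\rangle$ is an ideal, so $xx^*x=(xx^*)x\in\langle\fX\rangle$. Both $x$ and $xx^*x$ lie in $\fa_{\cge 0}$ (the latter by Lemma~\ref{le:core}(i)), so Lemma~\ref{le:core}(v), applied with $xx^*x$ in the role of its ``$x$'' and our $x$ in the role of its ``$y$'', yields $x\in\langle\fX\rangle$.

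For part~(ii), the plan is to reduce to part~(i) by repeated ``square rooting.'' First observe that since $\langle\fX\rangle$ is an ideal, if $x^n\in\langle\fX\rangle$ then $x^m\in\langle\fX\rangle$ for every $m\ge n$; in particular we may replace $n$ by any power of $2$ that exceeds it, so without loss of generality $n=2^k$.

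Now I will prove by induction on $k\ge 0$ that $x^{2^k}\in\langle\fX\rangle$ forces $x\in\langle\fX\rangle$. The case $k=0$ is trivial, so suppose $k\ge 1$ and set $y=x^{2^{k-1}}$, which is non-negative by Lemma~\ref{le:core}(i) and satisfies $y^2\in\langle\fX\rangle$. Using commutativity of $\fa$ and the fact that $\langle\fX\rangle$ is closed under the involution $*$ (Definition~\ref{def:repideal}(ii)), the element
\[ (yy^*)(yy^*)^* = (yy^*)^2 = y^2(y^*)^2 \]
lies in $\langle\fX\rangle$. Since $yy^*\in\fa_{\cge 0}$, part~(i) applied to $yy^*$ gives $yy^*\in\langle\fX\rangle$; a second application of part~(i), now to $y$, gives $y=x^{2^{k-1}}\in\langle\fX\rangle$, completing the inductive step.

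I expect the only subtlety is the bookkeeping in part~(ii)---in particular the step of bumping $n$ up to a power of $2$ and verifying at each stage that the element to which part~(i) is being applied is genuinely in $\fa_{\cge 0}$ (so that Lemmas~\ref{le:core} and \ref{le:xx*x} are available). Once the non-negativity is tracked, the argument is just an iterated application of part~(i).
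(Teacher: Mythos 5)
Your part~(i) is the paper's proof verbatim: multiply by $x$, invoke Lemma~\ref{le:xx*x} for $xx^*x\cge x$, note $xx^*x\in\langle\fX\rangle$ because $\langle\fX\rangle$ is an ideal, and conclude by Lemma~\ref{le:core}\,(v). Your part~(ii), though, is a genuinely different and correct argument. The paper runs a decrementing induction: from $x^n\in\langle\fX\rangle$ ($n\ge 2$) it writes $x^nx^*=x^{n-2}(xx^*x)\in\langle\fX\rangle$, uses $xx^*x\cge x$ together with Lemma~\ref{le:core}\,(vi) to get $x^nx^*\cge x^{n-1}$, and then Lemma~\ref{le:core}\,(v) drops the exponent by one. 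You instead bump $n$ to a power of two and halve: with $y=x^{2^{k-1}}$ and $y^2\in\langle\fX\rangle$ you form $(yy^*)(yy^*)^*=(yy^*)^2=y^2(y^*)^2\in\langle\fX\rangle$ and apply part~(i) twice, first to the non-negative element $yy^*$ and then to $y$. Both routes are valid. Yours has the virtue of reducing~(ii) entirely to~(i) without a second direct appeal to Lemma~\ref{le:xx*x}; the paper's is a hair more economical, as it avoids the preliminary step of enlarging $n$ to a power of two. One small remark: the invocation of Definition~\ref{def:repideal}\,(ii) (closure of $\langle\fX\rangle$ under $*$) is superfluous at the point you cite it, since $y^2(y^*)^2\in\langle\fX\rangle$ already follows from $y^2\in\langle\fX\rangle$ and $\langle\fX\rangle$ being an ideal; this does no harm, but is worth noticing.
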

\begin{proof}
  (i) If $xx^*\in\langle \fX\rangle$ then $xx^*x\in\langle \fX\rangle$.
  By Lemma~\ref{le:xx*x} we have $xx^*x\cge x$, and so by
  Lemma~\ref{le:core}\,(v) we have $x\in\langle\fX\rangle$.

(ii)  
We may suppose that $n\ge 2$.
If $x^n\in\langle\fX\rangle$ then
$x^nx^*=x^{n-2}(xx^*x)\in\langle\fX\rangle$.
By Lemma~\ref{le:xx*x} we have $xx^*x\cge x$ and so
by Lemma~\ref{le:core}\,(vi) we have $x^nx^*\cge x^{n-1}$.
Applying Lemma~\ref{le:core}\,(v), we have
$x^{n-1}\in\langle\fX\rangle$.
Now apply induction on $n$.
\end{proof}

\begin{defn}
Let $\fX$ be a representation ideal in a representation ring $\fa$.
We say that $x\in\fa_{\cge 0}$ is 
\emph{indecomposable modulo $\fX$}\index{indecomposable modulo $\fX$}
if $x-x_i\in\langle\fX\rangle$ for
some $i\in\fI\setminus\fX$. 
\end{defn}

\begin{lemma}\label{le:xy=0}
If $x,y\in\fa_{\cge 0}$ and $xy=0$ then either $x=0$ or $y=0$.
\end{lemma}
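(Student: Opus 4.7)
The plan is to show directly that if both $x$ and $y$ are nonzero in $\fa_{\cge 0}$, then $xy$ has at least one strictly positive coefficient, so cannot vanish. The key point is that non-negativity of both the coefficients of $x,y$ and the structure constants $c_{i,j,k}$ prevents any cancellation, so it suffices to exhibit a single positive contribution.

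First I would write $x=\sum_{i\in\fI}a_i x_i$ and $y=\sum_{j\in\fI}b_j x_j$, with all $a_i,b_j\ge 0$. Since $x\ne 0$ and $y\ne 0$, pick indices $i_0,j_0\in\fI$ with $a_{i_0}>0$ and $b_{j_0}>0$. Using the multiplication rule,
\[ xy=\sum_{k\in\fI}\Bigl(\sum_{i,j\in\fI}a_i b_j c_{i,j,k}\Bigr)x_k, \]
and every term in every inner sum is non-negative. To conclude $xy\ne 0$ it therefore suffices to locate one index $k_0$ with $c_{i_0,j_0,k_0}>0$.

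The second step is to produce such a $k_0$. Apply the dimension function to $x_{i_0}x_{j_0}=\sum_k c_{i_0,j_0,k}x_k$: since $\dim$ is a ring homomorphism (axiom (iv)) and $\dim x_{i_0},\dim x_{j_0}>0$, we get
\[ \sum_{k\in\fI} c_{i_0,j_0,k}\dim x_k=\dim(x_{i_0})\dim(x_{j_0})>0, \]
so some $c_{i_0,j_0,k_0}>0$. Then the coefficient of $x_{k_0}$ in $xy$ is bounded below by $a_{i_0}b_{j_0}c_{i_0,j_0,k_0}>0$, contradicting $xy=0$.

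I do not anticipate a genuine obstacle here: the argument is a direct application of the positivity built into Definition~\ref{def:repring} together with the strict positivity of $\dim x_i$ for basis elements. The only subtlety is the possibility, a priori, of distinct positive structure-constant contributions cancelling each other, which is ruled out immediately because every summand in the expansion of $xy$ above is non-negative.
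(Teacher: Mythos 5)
Your proof is correct, but it takes a more hands-on route than the paper. The paper simply applies the dimension homomorphism globally: $\dim(x)\dim(y)=\dim(xy)=0$, so one of $\dim(x),\dim(y)$ vanishes, and (by the lemma just before, that a positive element has positive dimension) that forces $x=0$ or $y=0$. You instead argue at the level of coefficients, locating indices $i_0,j_0$ with $a_{i_0},b_{j_0}>0$, then using $\dim$ locally on the single basis product $x_{i_0}x_{j_0}$ to extract a positive structure constant $c_{i_0,j_0,k_0}$, and finally invoking the non-negativity of all contributions to rule out cancellation. Both arguments rest on the same two facts (that $\dim$ is a ring homomorphism and that $\dim x_i>0$), but the paper's version is shorter because it never needs to explicitly observe that no cancellation occurs --- that is absorbed into the statement $\dim(x)=0\Rightarrow x=0$ for non-negative $x$. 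Your version makes the no-cancellation mechanism visible, which is slightly more informative but also slightly more work.
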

\begin{proof}
If $xy=0$ then $\dim(x)\dim(y)=\dim(xy)=0$ and so either $\dim(x)=0$
or $\dim(y)=0$. Hence $x=0$ or $y=0$.
\end{proof}

An ordinary representation ring
has no non-empty representation ideals, since any non-empty representation ideal
would have to contain $\rho$, and then it would have to contain all projective
indecomposables including $\one$, whereas representation ideals have
to be proper subsets of the basis.
The following definition gives some examples of representation ideals in modular
representation rings.

\begin{defn}\label{def:max-proj}
Let $\fa$ be a modular representation ring.
\begin{enumerate}
\item We write $\fX_{\max}$\index{X@$\fX_{\max}$|textbf}
for the subset $\{i\in\fI\mid[x_ix_{i^*}:\one]=0\}$ of $\fI$.
\item We write $\fX_\proj$\index{X@$\fX_{\proj}$}
  for the subset consisting of those $i\in\fI$ for which $x_i$ is
  projective, see Definition~\ref{def:proj}.
\end{enumerate}
We write $\fa_{\max}$ and 
$\fa_\proj$\index{a@$\fa_{\max}$|textbf}\index{a@$\fa_{\proj}$}
for the quotients $\fa/\langle\fX_{\max}\rangle$ and 
$\fa/\langle\fX_\proj\rangle$.
\end{defn}

Part (i) of the following proposition is the analogue of Lemma~2.5 of~\cite{Benson/Carlson:1986a}.

\begin{prop}\label{pr:max-min}
Let $\fa$ be a modular representation ring.
\begin{enumerate}
\item The subset $\fX_{\max}\subseteq\fI$ is the unique maximal proper
subset of $\fa$ that is a representation
ideal.\index{maximal!representation ideal}
\item The subset $\fX_\proj\subseteq\fI$ is the unique minimal
  non-empty subset of $\fa$ that is a representation
  ideal.\index{minimal!representation ideal}
\end{enumerate}
\end{prop}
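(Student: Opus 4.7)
The plan is to handle parts (i) and (ii) separately, but both rely on one preliminary observation: for any representation ideal $\fX$, we must have $0\notin\fX$. Indeed, condition~(i) of Definition~\ref{def:repideal} applied with $i=0$ forces $c_{0,j,k}=\delta_{j,k}=0$ for every $k\in\fI\setminus\fX$, which is impossible unless $\fX=\fI$; properness rules that out. This fact will be used in both arguments.

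For part~(i), I would first verify that $\fX_{\max}$ is itself a representation ideal. Properness is clear ($0\notin\fX_{\max}$ since $[\one\cdot\one:\one]=1$), and closure under the involution is immediate from commutativity: $[x_ix_{i^*}:\one]=[x_{i^*}x_i:\one]$. The absorption property~(i) of Definition~\ref{def:repideal} is precisely the content of Lemma~\ref{le:BCideal}: if $i\in\fX_{\max}$ and $[x_ix_j:x_k]>0$, then $[x_kx_{k^*}:\one]=0$, so $k\in\fX_{\max}$. Then for maximality, let $\fX$ be any representation ideal and $i\in\fX$. Using the preliminary observation, $0\notin\fX$, so condition~(i) of Definition~\ref{def:repideal} forces $c_{i,i^*,0}=[x_ix_{i^*}:\one]=0$, i.e.\ $i\in\fX_{\max}$. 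Hence $\fX\subseteq\fX_{\max}$, giving uniqueness of the maximal ideal.

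For part~(ii), I would first show $\fX_\proj$ is a non-empty representation ideal. Non-emptiness is immediate from $\rho\ne 0$. Properness uses the modularity hypothesis: $0\notin\fX_\proj$ by definition. Closure under $*$ follows from the lemma $\rho^*=\rho$ proved earlier in the excerpt, which gives $[\rho:x_i]=[\rho:x_{i^*}]$. Absorption is Lemma~\ref{le:proj-ideal}\,(i): if $x_i$ is projective and $j\in\fI$, then $x_jx_i$ is projective, so every basis element appearing in it lies in $\fX_\proj$. For minimality, let $\fX$ be any non-empty representation ideal and pick $i\in\fX$. Then $x_i\rho=(\dim x_i)\rho\in\langle\fX\rangle$, and since $\dim x_i>0$, comparing basis-coefficients forces $[\rho:x_j]=0$ for every $j\notin\fX$. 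Equivalently, $\fX_\proj\subseteq\fX$.

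The main obstacle, though a minor one, is the minimality argument in~(ii): one is tempted to divide $(\dim x_i)\rho\in\langle\fX\rangle$ by $\dim x_i$ to conclude $\rho\in\langle\fX\rangle$, but $\langle\fX\rangle$ is only a $\bZ$-submodule, so this step has to be replaced with the coefficient-by-coefficient comparison on the basis described above. Everything else reduces to unwinding the definitions together with Lemmas~\ref{le:BCideal} and~\ref{le:proj-ideal} and the preliminary observation $0\notin\fX$.
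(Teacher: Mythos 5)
Your proof is correct and takes essentially the same route as the paper's: part~(i) rests on Lemma~\ref{le:BCideal} together with the observation that $0\in\fX$ would force $\fX=\fI$, and part~(ii) rests on Lemma~\ref{le:proj-ideal} together with the identity $x_i\rho=(\dim x_i)\rho$ and a coefficient comparison. Extracting the fact $0\notin\fX$ as a preliminary observation is a tidy reorganization of what the paper derives inline within part~(i), but the substance is identical.
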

\begin{proof}
(i) It follows from Lemma~\ref{le:BCideal} that $\fX_{\max}$ is a
representation ideal. It is a proper subset of $\fI$ since $[\one.\one:\one]=1$.
To see that $\fX_{\max}$ is the unique maximal proper representation ideal,
let $\fX$ be a representation ideal containing an element $i$ such
that $c_{i,i^*,0}>0$. Then $x_ix_{i^*}\in\langle\fX\rangle$ and so
$0\in\fX$ and $x_0=\one\in\langle\fX\rangle$. Thus $\fX=\fI$.

(ii) It follows from Lemma~\ref{le:proj-ideal} that $\fX_\proj$ is a
non-empty representation ideal. Conversely,
if $\fX$ is a non-empty representation ideal and $i\in\fX$ then
$x_i\rho=(\dim x_i)\rho$. If $x_j$ is projective then
$[\rho:x_j]>0$ and so $[x_i\rho:x_j]>0$. Thus $\fX_\proj\subseteq\fX$.
\end{proof}

\begin{cor}\label{co:zero-divisor}
If $x,y\in\fa_{\cge 0}$ and $xy\in\langle\fX_{\max}\rangle$ then
either $x\in\langle\fX_{\max}\rangle$ or
$y\in\langle\fX_{\max}\rangle$.
\end{cor}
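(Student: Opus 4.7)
The plan is to reduce everything to the coefficient of $\one$, using the following characterization: for $z\in\fa_{\cge 0}$, we have $z\in\langle\fX_{\max}\rangle$ if and only if $[zz^*:\one]=0$. To see this, write $z=\sum_i a_i x_i$ with $a_i\ge 0$. Axiom~(ii) of Definition~\ref{def:repring} says that $[x_i x_{j^*}:\one]>0$ forces $j=i$, so the cross terms vanish and
\[ [zz^*:\one] = \sum_i a_i^2\,[x_i x_{i^*}:\one]. \]
Since all summands are non-negative, this vanishes iff $a_i=0$ whenever $[x_ix_{i^*}:\one]>0$, i.e.\ whenever $i\notin\fX_{\max}$, which is exactly the condition $z\in\langle\fX_{\max}\rangle$.

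Given $xy\in\langle\fX_{\max}\rangle$ with $x,y\in\fa_{\cge 0}$, I would note that $xy\in\fa_{\cge 0}$ by Lemma~\ref{le:core}~(i), so the characterization applies and gives $[(xy)(xy)^*:\one]=[xyy^*x^*:\one]=0$. The idea is now to extract $[xx^*:\one]$ as a summand of $[xyy^*x^*:\one]$. Set $m=[yy^*:\one]\ge 0$ and write $yy^*=m\one+w$ with $w\in\fa_{\cge 0}$. Then
\[ xyy^*x^* \;=\; m\,xx^* \;+\; xwx^*, \]
and $xwx^*\in\fa_{\cge 0}$ by Lemma~\ref{le:core}~(i) (applied twice, using that $x^*\in\fa_{\cge 0}$ because $*$ permutes basis elements). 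Taking the coefficient of $\one$ yields
\[ 0 \;=\; [xyy^*x^*:\one] \;=\; m\,[xx^*:\one]+[xwx^*:\one] \;\ge\; [yy^*:\one]\,[xx^*:\one]\ge 0, \]
so $[yy^*:\one]\,[xx^*:\one]=0$. Hence either $[xx^*:\one]=0$ or $[yy^*:\one]=0$, and the characterization from the first paragraph then gives $x\in\langle\fX_{\max}\rangle$ or $y\in\langle\fX_{\max}\rangle$.

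There is no real obstacle here: the argument is just the observation that the ``inner product'' $[zz^*:\one]$ detects membership in $\langle\fX_{\max}\rangle$, combined with the positivity inequality $[xyy^*x^*:\one]\ge[yy^*:\one]\,[xx^*:\one]$ which comes from splitting off the $\one$-component of $yy^*$ and noting that the residual term $xwx^*$ contributes non-negatively. Alternatively, one could invoke Proposition~\ref{pr:nilp}~(i) at the final step to pass from $xx^*\in\langle\fX_{\max}\rangle$ to $x\in\langle\fX_{\max}\rangle$, but the direct coefficient computation makes this unnecessary.
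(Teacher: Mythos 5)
Your proof is correct. You build everything on the observation that the pairing $z\mapsto [zz^*:\one]$ detects membership in $\langle\fX_{\max}\rangle$, multiply $xy$ by $(xy)^*=y^*x^*$, split off the $\one$-component of $yy^*$, and use positivity of the residual. The paper's own proof is shorter and runs in a slightly different register: it multiplies $xy$ only by $y^*$, notes that $xyy^*$ still lies in the ideal $\langle\fX_{\max}\rangle$ (Proposition~\ref{pr:max-min}\,(i)), observes that $[yy^*:\one]>0$ forces $xyy^*\cge x$, and then quotes Lemma~\ref{le:core}\,(v) to conclude $x\in\langle\fX_{\max}\rangle$. Both arguments rest on the same two facts — axiom~(ii) and non-negativity of structure constants — but you encode them as a positivity inequality for the $\one$-coefficient, whereas the paper encodes them as a domination ($\cge$) statement plus the ideal property. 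Your formulation via $[zz^*:\one]$ is not idle: it is exactly the trace pairing $\Tr(zz^*)$ that the paper later makes central in Section~\ref{se:idempotents} and in the proof of Theorem~\ref{th:no-n}, so your proof anticipates that perspective. The paper's version is marginally more economical here because it avoids the extra multiplication by $x^*$ and the explicit decomposition $yy^*=m\one+w$, but the two proofs are of comparable depth and each is a legitimate way to see the result.
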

\begin{proof}
If $xy\in\langle\fX_{\max}\rangle$ then by
Proposition~\ref{pr:max-min}\,(i), we have 
$xyy^*\in\langle\fX_{\max}\rangle$. If
$y\not\in\langle\fX_{\max}\rangle$ then by definition of $\fX_{\max}$
we have $[yy^*:\one]>0$ and so
$xyy^*\cge x$, and hence $x\in\langle\fX_{\max}\rangle$.
\end{proof}

The statement of Corollary~\ref{co:zero-divisor} is not necessarily true of other
representation ideals, but at least we have the following.

\begin{prop}\label{pr:xx*yy*}
Let $\fX$ be a representation ideal in $\fa$, and let $x,y\in
\fa_{\cge 0}$. If $xy$ is not in $\langle \fX\rangle$ then nor is any
product of terms of the form $x$, $x^*$, $y$ and $y^*$.
\end{prop}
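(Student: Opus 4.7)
The plan is to prove the contrapositive: assume some product $u$ whose factors are each drawn from $\{x, x^*, y, y^*\}$ lies in $\langle \fX \rangle$, and deduce that $xy \in \langle \fX \rangle$.

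First, since $\fa$ is commutative and the involution is a ring automorphism, I would put an arbitrary such product into the normal form
\[ u = x^a (x^*)^{a^*} y^b (y^*)^{b^*} \]
for non-negative integers $a, a^*, b, b^*$. The empty product is $\one$, which cannot lie in the proper ideal $\langle \fX\rangle$, so at least one of the exponents is positive.

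Next, multiply $u$ by the non-negative element $(x^*)^a x^{a^*} (y^*)^b y^{b^*}\in\fa_{\cge 0}$. Since $\langle\fX\rangle$ is an ideal and $\fa$ is commutative, the result is
\[ (xx^*)^{a+a^*}(yy^*)^{b+b^*} \in \langle \fX\rangle. \]
Setting $N=a+a^*$, $M=b+b^*$, and $K=\max(N,M)\ge 1$, multiplying by a further non-negative power of the smaller factor yields
\[ (xx^*)^K(yy^*)^K \in \langle\fX\rangle. \]

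Now I would split into cases. If $N=0$ then only $y$ and $y^*$ occur among the factors of $u$; the displayed element is $(yy^*)^K$, and two applications of Proposition~\ref{pr:nilp} (first part~(ii), then part~(i)) give $y\in\langle\fX\rangle$, whence $xy\in\langle\fX\rangle$. The case $M=0$ is symmetric and gives $x\in\langle\fX\rangle$. In the main case $N,M\ge 1$, using commutativity together with $(xy)^*=x^*y^*$ gives
\[ (xx^*)^K(yy^*)^K = \bigl((xy)(xy)^*\bigr)^K \in \langle\fX\rangle. \]
Proposition~\ref{pr:nilp}(ii) then yields $(xy)(xy)^*\in\langle\fX\rangle$, and Proposition~\ref{pr:nilp}(i), applied to $xy$, finally gives $xy\in\langle\fX\rangle$.

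The only real obstacle is spotting the reduction: commutativity is what lets an arbitrary product be rearranged into the symmetric normal form $(xx^*)^N(yy^*)^M$, after which the two nilpotency statements of Proposition~\ref{pr:nilp} dispatch every case uniformly. The book-keeping for products that involve only the $x$-type or only the $y$-type factors is the only minor subtlety.
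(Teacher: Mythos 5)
Your proof is correct, but it takes a genuinely different route from the paper's. The paper multiplies the assumed product $u\in\langle\fX\rangle$ by additional factors until it reaches $xyy^*y$, observes via Lemma~\ref{le:xx*x} and Lemma~\ref{le:core}\,(vi) that $xyy^*y\cge xy$, and then invokes Lemma~\ref{le:core}\,(v), leaving the general case to ``an easy inductive argument on the number of terms.'' You instead pass to the symmetric normal form $(xx^*)^K(yy^*)^K=\bigl((xy)(xy)^*\bigr)^K$ and apply Proposition~\ref{pr:nilp}\,(ii) followed by~(i). These rest on the same elementary facts — Proposition~\ref{pr:nilp} itself is proved from Lemma~\ref{le:xx*x} and Lemma~\ref{le:core}\,(v)/(vi) — so the ingredients coincide, but your packaging handles all configurations uniformly and replaces the paper's unspecified induction with an explicit two-step application of a result already in hand, which is arguably cleaner. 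One small remark: your case split for $N=0$ or $M=0$ is not really needed, since you can always multiply by extra non-negative factors $(xx^*)^K$ or $(yy^*)^K$ to force both exponents up to $K\ge 1$ and then run the main argument; but handling it separately is perfectly fine and costs nothing.
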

\begin{proof}
If for example $xy^*$ or $xyy^*$ is in 
$\langle\fX\rangle$  then so is $xyy^*y$. 
But then by Lemma~\ref{le:xx*x} we have $xyy^*y\cge xy$, 
and hence by Lemma~\ref{le:core}\,(v) we have $xy\in\langle\fX\rangle$.
So an easy inductive argument on the number of terms in the product proves the proposition.
\end{proof}

\section{The gamma invariant}\label{se:npj}%
\index{gamma invariant $\npj_\fX(x)$|textbf}

Let $\fX$ be a representation ideal in a representation ring $\fa$, and
$x\in\fa_{\cge 0}$. We define\index{c@$\cc^\fX_n(x)$}
\[ \cc^\fX_n(x) = \dim\core_\fX(x^n) \]
and we form the generating function\index{generating function}\index{f@$f_{\fX,x}(t)$}
\[ f_{\fX,x}(t) = \sum_{n=0}^\infty \cc^\fX_n(x)t^n. \]
Since $\cc_n^\fX(x) \le \dim(x^n)=\dim(x)^n$, this power series
converges in a disc of radius at least $1/\dim(x)$ around the origin
in the complex plane.

\begin{lemma}[Cauchy,  Hadamard]\label{le:radius}\index{Cauchy--Hadamard formula|textbf}
Let $\phi\colon \bZ_{\ge 0} \to \bC$. Then the radius of
convergence\index{radius of convergence}\index{convergence} $r$ of the power series 
\[ f(t)=\sum_{n=0}^\infty \phi(n)t^n \] 
is given by
\[ 1/r = \limsup_{n \to \infty} \sqrt[n]{|\phi(n)|}. \]
Strictly inside the radius, the convergence is
uniform\index{uniform convergence}
and absolute.\index{absolute convergence}
\end{lemma}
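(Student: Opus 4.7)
The plan is to follow the classical Cauchy--Hadamard argument, splitting into the two inequalities $|t|<1/L$ (convergence) and $|t|>1/L$ (divergence), where $L=\limsup_{n\to\infty}\sqrt[n]{|\phi(n)|}$, with the natural conventions $1/0=+\infty$ and $1/(+\infty)=0$. Both parts reduce to comparison with a geometric series.

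First, I would handle the convergence side. Suppose $|t|<1/L$ and choose any $\rho$ with $|t|<\rho<1/L$, so $1/\rho>L$. By the definition of $\limsup$, there exists $N$ such that $\sqrt[n]{|\phi(n)|}<1/\rho$ for all $n\ge N$, and hence $|\phi(n)t^n|<(|t|/\rho)^n$ for $n\ge N$. Since $|t|/\rho<1$, the geometric series $\sum(|t|/\rho)^n$ converges, so $\sum\phi(n)t^n$ converges absolutely. To upgrade to uniform and absolute convergence strictly inside the radius, I would fix any $R<1/L$, repeat the argument with $R$ in place of $|t|$ to obtain a bound $|\phi(n)t^n|\le (R/\rho)^n$ valid for all $|t|\le R$ and all $n\ge N$, and apply the Weierstrass $M$-test.

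Second, for divergence, suppose $|t|>1/L$, so $1/|t|<L$. By the definition of $\limsup$ there are infinitely many $n$ with $\sqrt[n]{|\phi(n)|}>1/|t|$, giving $|\phi(n)t^n|>1$ for those $n$. The terms of the series therefore do not tend to zero, so the series diverges. Combining the two directions identifies the radius of convergence as $1/L$.

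There is no real obstacle here; the only mild points to keep track of are the conventions in the extreme cases $L=0$ (series converges on all of $\bC$) and $L=+\infty$ (series diverges for every $t\neq 0$), both of which are absorbed into the same estimates, and the routine promotion from pointwise absolute convergence to uniform absolute convergence on each closed sub-disk via the Weierstrass $M$-test.
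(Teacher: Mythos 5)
Your proof is correct and is exactly the classical Cauchy--Hadamard argument; the paper itself gives no proof but simply refers to Conway's textbook (Theorem~III.1.3), where this same two-sided geometric comparison appears. You have also correctly handled the uniform convergence claim on closed sub-disks via the Weierstrass $M$-test and noted the degenerate conventions for $L=0$ and $L=+\infty$, so nothing is missing.
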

\begin{proof}
See for example 
Conway \cite{Conway:1973a}, Theorem~III.1.3.
\end{proof}

\begin{defn}
  Let $x\in\fa_{\cge 0}$
  and let $\fX$ be a representation ideal of $\fa$.
  We define the gamma invariant of $x$ with respect to $\fX$ to
  be\index{gamma@$\npj_\fX(x)$, $\npj(x)$, $\npj_{\max}(x)$}
  \[ \npj_\fX(x) = \limsup_{n\to\infty}\sqrt[n]{\cc_n^\fX(x)}. \]
  By Lemma~\ref{le:radius}, this is equal to $1/r$,
  where $r$ is the radius of convergence of the power
  series $f_{\fX,x}(t)$. 

If $\fX=\varnothing$ then $\npj_\fX(x)=\dim x$. In the minimal
non-zero case $\fX=\fX_\proj$, we just write $\npj(x)$ 
for $\npj_{\fX_\proj}(x)$. In the maximal
case $\fX=\fX_{\max}$, we write $\npj_{\max}(x)$ for
$\npj_{\fX_{\max}}(x)$.\index{X@$\fX_{\max}$}
\end{defn}

\begin{lemma}\label{le:gamma-bounds}
  We have $0 \le \npj_\fX(x) \le\dim\core_\fX(x)\le \dim x$.
\end{lemma}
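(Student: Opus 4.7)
The plan is to verify the chain of inequalities $0 \le \npj_\fX(x) \le \dim\core_\fX(x) \le \dim x$ one link at a time, relying on the machinery from Lemma~\ref{le:core} for the multiplicative behaviour of cores.

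The leftmost inequality is immediate: each $\cc_n^\fX(x) = \dim\core_\fX(x^n)$ is a non-negative integer (as $x^n \cge 0$ by Lemma~\ref{le:core}(i) and cores of non-negative elements are non-negative), so $\sqrt[n]{\cc_n^\fX(x)} \ge 0$ and hence $\npj_\fX(x) \ge 0$. The rightmost inequality $\dim\core_\fX(x) \le \dim x$ is simply part of Lemma~\ref{le:core}(ii).

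The substantive content is the middle inequality $\npj_\fX(x) \le \dim\core_\fX(x)$. First I would establish, by induction on $n$, that
\[ \core_\fX(x^n) \cle \core_\fX(x)^n. \]
The base case $n=1$ is trivial. For the inductive step, Lemma~\ref{le:core}(iv) gives $\core_\fX(x^n) = \core_\fX(x\cdot x^{n-1}) \cle \core_\fX(x)\,\core_\fX(x^{n-1})$; then the induction hypothesis combined with Lemma~\ref{le:core}(vi) yields $\core_\fX(x)\,\core_\fX(x^{n-1}) \cle \core_\fX(x)^n$, as required. Applying $\dim$ (which is monotone under $\cle$ by Lemma~\ref{le:core}(iii) and multiplicative since $\dim$ is a ring homomorphism) gives
\[ \cc_n^\fX(x) = \dim\core_\fX(x^n) \le (\dim\core_\fX(x))^n. \]
Taking $n$-th roots and passing to the $\limsup$ produces $\npj_\fX(x) \le \dim\core_\fX(x)$.

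I do not anticipate any serious obstacle: the only thing requiring thought is the inductive step, and the key ingredient—that cores interact submultiplicatively with products—is already packaged in Lemma~\ref{le:core}(iv) and (vi). Everything else reduces to the definitions of $\npj_\fX$ and $\cc_n^\fX$.
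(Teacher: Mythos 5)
Your proof is correct and takes essentially the same route as the paper: the paper also reduces the lemma to the chain $0 \le \cc_n^\fX(x) \le (\dim\core_\fX(x))^n \le (\dim x)^n$, citing Lemma~\ref{le:core}\,(ii) and (iv), and then takes $n$-th roots. You have simply spelled out the induction behind the middle inequality, which the paper leaves implicit.
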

\begin{proof}
This follows from the inequalities
\[ 0\le \cc_n^\fX(x) \le (\dim\core_\fX(x))^n\le (\dim x)^n, \] 
see Lemma~\ref{le:core}\,(ii) and (iv).
\end{proof}

\begin{lemma}\label{le:mx}
If $x\in \fa_{\cge 0}$ and $m\in\bN$ then $\npj_\fX(mx)=m\npj_\fX(x)$.
\end{lemma}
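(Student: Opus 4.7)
The plan is to reduce the statement to a direct computation on the defining sequence $\cc_n^\fX$, using that $\core_\fX$ is $\bN$-linear on non-negative elements.

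First I would observe that, since $\fa$ is commutative and $m \in \bN$, we have $(mx)^n = m^n x^n$ for every $n \ge 0$. Next, since $x \in \fa_{\cge 0}$ implies $x^n \in \fa_{\cge 0}$ by Lemma~\ref{le:core}\,(i), we can write $x^n = \sum_{i \in \fI} a_{n,i} x_i$ with $a_{n,i} \ge 0$, so that $m^n x^n = \sum_{i \in \fI} m^n a_{n,i} x_i$. The core is defined by splitting off the coefficients indexed by $\fI \setminus \fX$, so
\[ \core_\fX(m^n x^n) = \sum_{i \in \fI \setminus \fX} m^n a_{n,i} x_i = m^n \core_\fX(x^n). \]
Applying the ring homomorphism $\dim$ gives $\cc_n^\fX(mx) = m^n \cc_n^\fX(x)$.

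Taking $n$-th roots, $\sqrt[n]{\cc_n^\fX(mx)} = m \sqrt[n]{\cc_n^\fX(x)}$, and then passing to the limsup as $n \to \infty$ yields $\npj_\fX(mx) = m \, \npj_\fX(x)$, as desired.

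There is no real obstacle here; the only thing to check carefully is that $\core_\fX$ commutes with multiplication by the non-negative integer $m$, which is immediate from the definition of the core as a truncation on a fixed $\bZ$-basis. This result will later feed into the more substantial scaling identity in part~(ix) of the invariant-properties theorem (Theorem~\ref{th:a+bx}).
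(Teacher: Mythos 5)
Your argument is correct and is the same as the paper's: compute $\core_\fX((mx)^n) = m^n\core_\fX(x^n)$, deduce $\cc_n^\fX(mx) = m^n\cc_n^\fX(x)$, take $n$-th roots, and pass to the limsup. You merely spell out in more detail why the core commutes with scalar multiplication by $m$, which the paper treats as immediate.
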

\begin{proof}
We have $\core_\fX((mx)^n)=m^n\core_\fX(x^n)$. So
$\cc_n^\fX(mx)=m^n\cc_n(x)$, and hence
\begin{equation*}
\sqrt[n]{\cc_n^\fX(mx)}=m\sqrt[n]{\cc_n^\fX(x)}.
\qedhere
\end{equation*}
\end{proof}

\begin{lemma}\label{le:npjx*}
We have $\npj_\fX(x^*)=\npj_\fX(x)$.
\end{lemma}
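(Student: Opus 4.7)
The plan is to show that $\cc_n^\fX(x^*) = \cc_n^\fX(x)$ for every $n \ge 0$, whence the limsup of the $n$-th roots is the same.

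First I would observe that the involution $*$ is a ring automorphism by Definition~\ref{def:repring}\,(i), so $(x^*)^n = (x^n)^*$. Next, I would apply Lemma~\ref{le:core}\,(vii) to the positive element $x^n$ to get $\core_\fX((x^n)^*) = \core_\fX(x^n)^*$. Combining these two facts gives
\[ \core_\fX((x^*)^n) = \core_\fX(x^n)^*. \]

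Finally, since the dimension function is invariant under $*$ (because $\dim x_i = \dim x_{i^*}$ by Definition~\ref{def:repring}\,(iv), and dimension extends linearly), applying $\dim$ to both sides yields $\cc_n^\fX(x^*) = \cc_n^\fX(x)$. Taking $n$-th roots and then the limsup as $n \to \infty$ gives $\npj_\fX(x^*) = \npj_\fX(x)$, as required. There is no real obstacle here; the statement is essentially bookkeeping that $\core_\fX$, $\dim$, and taking powers all commute appropriately with the involution $*$.
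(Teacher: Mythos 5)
Your proof is correct and follows essentially the same argument as the paper: both use Lemma~\ref{le:core}\,(vii) together with $(x^*)^n = (x^n)^*$ to show $\core_\fX((x^*)^n) = \core_\fX(x^n)^*$, then conclude $\cc_n^\fX(x^*) = \cc_n^\fX(x)$ using $*$-invariance of $\dim$. The bookkeeping is exactly as the paper presents it.
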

\begin{proof}
It follows from Lemma~\ref{le:core}\,(vii) that 
\[ \core_\fX((x^*)^n)=\core_\fX((x^n)^*)=\core_\fX(x^n)^* \]
and so $\dim\core_\fX((x^*)^n)=\dim\core_\fX(x^n)$. Now take $n$th
roots and apply $\displaystyle\limsup_{n\to\infty}$.
\end{proof}

\begin{lemma}\label{le:gamma(x+fX)}
If $x,y\in\fa_{\cge 0}$ and $y\in\langle\fX\rangle$ then
$\npj_\fX(x+y)=\npj_\fX(x)$.
\end{lemma}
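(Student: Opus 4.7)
The plan is to show the stronger equality $\cc_n^\fX((x+y)^n)=\cc_n^\fX(x^n)$ for every $n$, from which the statement follows immediately by taking $n$th roots and applying $\limsup_{n\to\infty}$.

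First I would expand $(x+y)^n$ via the binomial theorem (note that $x$ and $y$ commute and all coefficients are non-negative). This gives $(x+y)^n = x^n + z_n$, where $z_n$ is a non-negative linear combination of products $x^j y^k$ with $k\ge 1$. Since $y\in\langle\fX\rangle$ and $\langle\fX\rangle$ is an ideal in $\fa$ (by Definition~\ref{def:repideal}), every such product lies in $\langle\fX\rangle$, and therefore $z_n\in\langle\fX\rangle_{\cge 0}$.

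Next I would use the fact that the $\fX$-core is additive on $\fa_{\cge 0}$: if $u,v\in\fa_{\cge 0}$ then the coefficients of $u+v$ in the basis $\{x_i\}$ are just the sums of the coefficients, and $\core_\fX$ simply discards the coefficients indexed by $\fX$. Hence
\[ \core_\fX((x+y)^n) = \core_\fX(x^n) + \core_\fX(z_n). \]
Since $z_n\in\langle\fX\rangle$, all of its non-zero coefficients are indexed by elements of $\fX$, so $\core_\fX(z_n)=0$. Taking dimensions gives $\cc_n^\fX(x+y)=\cc_n^\fX(x)$, whence $\npj_\fX(x+y)=\npj_\fX(x)$.

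I do not foresee a serious obstacle here: the argument is essentially a routine check that the binomial expansion is well-behaved modulo the representation ideal, combined with the additivity of $\core_\fX$ on non-negative elements. The only small subtlety is confirming that $\core_\fX$ respects addition on $\fa_{\cge 0}$, which is immediate from its definition as projection onto the basis elements indexed by $\fI\setminus\fX$.
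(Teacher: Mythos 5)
Your argument is correct and is essentially the paper's proof: both expand $(x+y)^n = x^n + z$ with $z \in \langle\fX\rangle_{\cge 0}$ and observe that the $\fX$-core discards the ideal part, giving $\cc_n^\fX(x+y) = \cc_n^\fX(x)$. The extra detail you supply about additivity of $\core_\fX$ on non-negative elements is accurate but is exactly what the paper leaves as implicit.
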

\begin{proof}
We have $(x+y)^n=x^n+z$ with $z\in\langle\fX\rangle$, so
$\core_\fX((x+y)^n) =\core_\fX(x^n)$ and $\cc_n^\fX(x+y)=\cc_n^\fX(x)$.
\end{proof}

\begin{lemma}\label{le:gamma(xy)}
  We have $\npj_\fX(xy) \le \npj_\fX(x)\npj_\fX(y)$.
\end{lemma}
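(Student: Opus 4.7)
The plan is to bound the core of each power of $xy$ by the product of cores and then take $n$th roots. Since $\fa$ is commutative, we have $(xy)^n=x^ny^n$. By Lemma~\ref{le:core}\,(i), repeated products of non-negative elements are non-negative, so $x^n, y^n\in\fa_{\cge 0}$, and hence Lemma~\ref{le:core}\,(iv) yields
\[ \core_\fX((xy)^n)=\core_\fX(x^ny^n)\cle \core_\fX(x^n)\core_\fX(y^n). \]

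Applying the dimension bound of Lemma~\ref{le:core}\,(iii) and using that $\dim$ is a ring homomorphism (Definition~\ref{def:repring}\,(iv)), I get
\[ \cc_n^\fX(xy)\le \dim(\core_\fX(x^n)\core_\fX(y^n))=\cc_n^\fX(x)\,\cc_n^\fX(y). \]

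Taking $n$th roots,
\[ \sqrt[n]{\cc_n^\fX(xy)}\le \sqrt[n]{\cc_n^\fX(x)}\cdot\sqrt[n]{\cc_n^\fX(y)}, \]
and finally the standard inequality $\limsup_n(a_nb_n)\le (\limsup_n a_n)(\limsup_n b_n)$ for non-negative sequences (note all quantities here are non-negative and bounded by $\dim x$, $\dim y$ respectively, so no $0\cdot\infty$ issue arises) gives $\npj_\fX(xy)\le \npj_\fX(x)\npj_\fX(y)$.

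There is no real obstacle here; the only subtlety is to check that Lemma~\ref{le:core}\,(iv) applies (which needs $x^n,y^n\cge 0$) and that $\limsup$ respects multiplication of non-negative sequences — both are routine given the framework already established.
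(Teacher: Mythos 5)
Your proof is correct and follows the same route as the paper: apply Lemma~\ref{le:core}\,(iv) to get $\cc_n^\fX(xy)\le \cc_n^\fX(x)\cc_n^\fX(y)$, take $n$th roots, and pass to $\limsup$. The paper states this more tersely, but your explicit tracing through parts (i), (iii), (iv) of Lemma~\ref{le:core} and the subadditivity of $\limsup$ under products of bounded non-negative sequences fills in exactly the steps the paper leaves implicit.
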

\begin{proof}
  It follows from Lemma~\ref{le:core}\,(iv) that
  $\cc_n^\fX(xy)\le \cc_n^\fX(x)\cc_n^\fX(y)$, and so
  \[ \sqrt[n]{\cc_n^\fX(xy)} \le
    \sqrt[n]{\cc_n^\fX(x)}\sqrt[n]{\cc_n^\fX(y)}. \]
  Now apply $\displaystyle\limsup_{n\to\infty}$ to both sides.
\end{proof}

Although equality does not generally hold in Lemma~\ref{le:gamma(xy)},
we have the following.

\begin{lemma}\label{le:gamma(x^m)}
   We have $\npj_\fX(x^m) = \npj_\fX(x)^m$.
\end{lemma}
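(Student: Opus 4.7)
The plan is to establish the two inequalities separately. The upper bound $\npj_\fX(x^m) \le \npj_\fX(x)^m$ follows immediately from Lemma~\ref{le:gamma(xy)} by an easy induction on $m$. The substantive content is the reverse inequality.

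For the lower bound, first I would observe the trivial identity $\cc_n^\fX(x^m) = \dim\core_\fX((x^m)^n) = \dim\core_\fX(x^{mn}) = \cc_{mn}^\fX(x)$. Therefore
\[ \npj_\fX(x^m) = \limsup_{n\to\infty}\sqrt[n]{\cc_{mn}^\fX(x)} = \left(\limsup_{n\to\infty} \cc_{mn}^\fX(x)^{1/(mn)}\right)^{m}. \]
So the problem reduces to showing that the limsup along the arithmetic progression $\{mn\}_{n\ge 1}$ coincides with the full limsup $\npj_\fX(x) = \limsup_{k\to\infty} \cc_k^\fX(x)^{1/k}$.

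The key tool for the reverse inequality is the submultiplicativity $\cc_{k+l}^\fX(x) \le \cc_k^\fX(x)\cc_l^\fX(x)$, which is a direct consequence of Lemma~\ref{le:core}\,(iv) applied to $x^{k+l} = x^k \cdot x^l$. Given any $k\ge m$, I would write $k = mn+r$ with $0\le r<m$ and obtain $\cc_k^\fX(x) \le \cc_{mn}^\fX(x)\cdot \cc_r^\fX(x)$. Taking $k$-th roots and using that $\cc_r^\fX(x) \le (\dim x)^m$ is bounded (so $\cc_r^\fX(x)^{1/k} \to 1$) and $mn/k \to 1$, one sees that
\[ \limsup_{k\to\infty} \cc_k^\fX(x)^{1/k} \le \limsup_{n\to\infty} \cc_{mn}^\fX(x)^{1/(mn)}, \]
which combined with the reverse (automatic for subsequences) gives equality.

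Alternatively, and more elegantly, if Theorem~\ref{th:inf} (asserting that the limsup is in fact a limit equal to the infimum) is established first, then the equality of subsequential and full limits is automatic, and the proof collapses to a single line. The main obstacle, if one wishes to prove this lemma before Theorem~\ref{th:inf}, is executing the Fekete-style bookkeeping cleanly; otherwise the lemma is essentially immediate from Fekete's lemma applied to the subadditive sequence $\log \cc_n^\fX(x)$.
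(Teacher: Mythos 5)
Your proof is correct and takes essentially the same approach as the paper: the upper bound comes from Lemma~\ref{le:gamma(xy)}, and the lower bound is established by writing $n=ms+i$ with $0\le i<m$, bounding the remainder factor by $(\dim x)^m$, and taking $n$th roots and $\limsup$. Your remark about the Fekete shortcut is also accurate; the paper does prove this lemma before Theorem~\ref{th:inf}, which is why the direct bookkeeping is used.
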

\begin{proof}
  By Lemma~\ref{le:gamma(xy)} we have $\npj_\fX(x^m) \le
  \npj_\fX(x)^m$. Conversely, if $n=ms+i$ with $0\le i < m$ then
  $x^n=x^i(x^m)^s$ and so
  \[ \cc^\fX_n(x) \le (\dim x)^m \cc^\fX_s(x^m). \]
  Thus
  \[ \sqrt[n]{\cc^\fX_n(x)} \le \sqrt[ms]{\cc^\fX_n(x)} \le
    \sqrt[s]{\dim x}\sqrt[m]{\sqrt[s]{\cc^\fX_s(x^m)}}. \]
  Applying $\displaystyle\limsup_{n\to\infty}$, the factor
  $\sqrt[s]{\dim x}$ tends to $1$. It follows that
  \begin{equation*}
    \npj_\fX(x)\le \sqrt[m]{\npj_\fX(x^m)}.
    \qedhere
  \end{equation*}
\end{proof}
  
\begin{lemma}\label{le:gamma<dim}
  We have $\npj_\fX(x)=\dim x$ if and only if $\core_\fX(x^n)=x^n$ for
  all $n\ge 0$.
\end{lemma}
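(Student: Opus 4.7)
The plan is a clean two-sided argument. For the easy direction, if $\core_\fX(x^n)=x^n$ for all $n\ge 0$, then $\cc^\fX_n(x)=\dim(x^n)=(\dim x)^n$, so $\sqrt[n]{\cc^\fX_n(x)}=\dim x$ and hence $\npj_\fX(x)=\dim x$.

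For the converse, I would prove the contrapositive: if $\core_\fX(x^N)\ne x^N$ for some $N$ (necessarily $N\ge 1$, since $x^0=\one\notin\langle\fX\rangle$), then $\npj_\fX(x)<\dim x$. Set $c=\dim\core_\fX(x^N)$; since $\core_\fX(x^N)\cle x^N$ with strict inequality somewhere, and both lie in $\fa_{\cge 0}$, we get $c<(\dim x)^N$.

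The key step is to control $\cc_n^\fX(x)$ for large $n$ using the submultiplicativity built into Lemma~\ref{le:core}\,(iv). Writing $n=qN+r$ with $0\le r<N$, I would iterate that lemma (using Lemma~\ref{le:core}\,(vi) at each stage to multiply the inequality by a non-negative factor) to obtain
\[ \core_\fX(x^n)\cle \core_\fX(x^N)^q\,\core_\fX(x^r). \]
Taking dimensions and using Lemma~\ref{le:core}\,(iii) yields
\[ \cc_n^\fX(x)\le c^q(\dim x)^r. \]
Then
\[ \sqrt[n]{\cc_n^\fX(x)}\le c^{q/n}(\dim x)^{r/n}, \]
and as $n\to\infty$ we have $q/n\to 1/N$ and $r/n\to 0$, so the right-hand side tends to $c^{1/N}<\dim x$. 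Therefore $\npj_\fX(x)\le c^{1/N}<\dim x$, completing the contrapositive.

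The only real obstacle is justifying the iterated core inequality cleanly; this is a straightforward induction on $q+1$ factors, using Lemma~\ref{le:core}\,(iv) at the outermost product and Lemma~\ref{le:core}\,(vi) to propagate the inequality through the remaining factor, so nothing new is needed beyond what is already established.
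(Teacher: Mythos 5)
Your proof is correct, but the converse direction takes a genuinely different route from the paper. The paper's argument is a one-liner: it invokes the already-established multiplicativity $\npj_\fX(x)^N=\npj_\fX(x^N)$ from Lemma~\ref{le:gamma(x^m)}, then applies the elementary bound $\npj_\fX(x^N)\le\dim\core_\fX(x^N)$ from Lemma~\ref{le:gamma-bounds}, giving $\npj_\fX(x)^N<(\dim x)^N$ directly. You instead unroll the Euclidean-division argument that underlies the hard half of Lemma~\ref{le:gamma(x^m)}: you iterate Lemma~\ref{le:core}\,(iv) and~(vi) to get $\core_\fX(x^n)\cle\core_\fX(x^N)^q\core_\fX(x^r)$, take dimensions, and pass to the limit. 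Both are valid; your version is self-contained and avoids citing Lemma~\ref{le:gamma(x^m)}, at the cost of re-deriving work that lemma already did, while the paper's version is more economical in reusing the established machinery. One thing worth making explicit if you write this up: the inductive step $\core_\fX(x^{qN})\cle\core_\fX(x^N)^q$ relies on $\core_\fX(x^N)\cge 0$ so that Lemma~\ref{le:core}\,(vi) applies, which holds here because $x\in\fa_{\cge 0}$ forces all powers and cores to be non-negative.
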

\begin{proof}
If $\core_\fX(x^n)=x^n$ then $\cc_n^\fX(x)=(\dim x)^n$ and $\npj_\fX(x)=\dim
x$. On the other hand, if $\core_\fX(x^n)\ne x^n$ for some $n\ge 0$
then we have $\core_\fX(x^n)\cl x^n$, and
hence $\dim\core_\fX(x^n)<(\dim x)^n$. So using
Lemma~\ref{le:gamma(x^m)} we have
\[ \npj_\fX(x)^n = \npj_\fX(x^n)\le \dim\core_\fX(x^n)<(\dim x)^n. \]
Taking $n$th roots, we have $\npj_\fX(x) < \dim x$.
\end{proof}

\begin{lemma}\label{le:gamma=0}
  We have $\npj_\fX(x)=0$ if and only if $x\in\langle\fX\rangle$.
  Otherwise $\npj_\fX(x)\ge 1$.
\end{lemma}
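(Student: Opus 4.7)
My plan is to handle the two directions of the biconditional separately, and to get the ``otherwise $\npj_\fX(x)\ge 1$'' clause essentially for free from the reverse direction.

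For the easy direction, suppose $x\in\langle\fX\rangle$. Since $\langle\fX\rangle$ is an ideal of $\fa$, we have $x^n\in\langle\fX\rangle$ for every $n\ge 1$, so $\core_\fX(x^n)=0$ and hence $\cc_n^\fX(x)=0$ for all $n\ge 1$. Therefore $\sqrt[n]{\cc_n^\fX(x)}=0$ for all $n\ge 1$, and $\npj_\fX(x)=0$. (The value of $\cc_0^\fX(x)=\dim\one=1$ is irrelevant to the $\limsup$.)

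For the converse, and simultaneously for the ``otherwise'' clause, suppose $x\in\fa_{\cge 0}$ with $x\notin\langle\fX\rangle$. I want to show $\cc_n^\fX(x)\ge 1$ for every $n\ge 0$, from which $\sqrt[n]{\cc_n^\fX(x)}\ge 1$ and therefore $\npj_\fX(x)\ge 1 > 0$. The key input here is Proposition~\ref{pr:nilp}\,(ii), which asserts that if $x^n\in\langle\fX\rangle$ for some $n>0$ then $x\in\langle\fX\rangle$. Contrapositively, since $x\notin\langle\fX\rangle$, we have $x^n\notin\langle\fX\rangle$ for all $n\ge 0$, so $\core_\fX(x^n)$ is a non-zero element of $\fa_{\cge 0}$. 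Because $\dim x_i\ge 1$ for every basis element (axiom~(iv) makes $\dim x_i$ a positive integer), any non-zero element of $\fa_{\cge 0}$ has dimension at least $1$. Hence $\cc_n^\fX(x)=\dim\core_\fX(x^n)\ge 1$ as required.

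There is no real obstacle; the only thing to be careful of is invoking Proposition~\ref{pr:nilp}\,(ii), which does the work of showing that non-membership in $\langle\fX\rangle$ is preserved under taking powers of a non-negative element. The rest is the observation that dimensions of non-zero non-negative elements are integers $\ge 1$, together with the elementary fact that $\limsup_{n\to\infty}\sqrt[n]{a_n}\ge 1$ whenever $a_n\ge 1$ for all $n$.
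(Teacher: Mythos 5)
Your proof is correct and follows essentially the same route as the paper's: the forward direction is immediate from $\langle\fX\rangle$ being an ideal, and the reverse direction invokes Proposition~\ref{pr:nilp}\,(ii) to guarantee $x^n\notin\langle\fX\rangle$ for all $n$, giving $\cc_n^\fX(x)\ge 1$. Your parenthetical about $\cc_0^\fX(x)=1$ is a small improvement in precision over the paper, which loosely calls $f_{\fX,x}(t)$ the zero power series when $x\in\langle\fX\rangle$ even though the constant term is $1$.
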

\begin{proof}
  If $x\in\langle \fX\rangle$ then $f_{\fX,x}(t)$ is the zero power
  series and so $\npj_\fX(x)=0$. On the other hand, if $x\not
  \in\langle\fX\rangle$ then by Proposition~\ref{pr:nilp}, no positive power of
  $x$ is in $\langle\fX\rangle$. So each coefficient of $f_{\fX,x}(t)$
  is at least $1$, and hence $\npj_\fX(x)\ge 1$.
\end{proof}

\begin{theorem}\label{th:ge-m}
We have the following.
\begin{enumerate}
\item
Let $\fX$ be a representation ideal in $\fa$.
If $y_1,\dots,y_m\in\fa_{\cg 0}$ with product 
$y_1\dots y_n\not\in\langle \fX\rangle$ then 
$\npj_\fX(y_1+\dots+y_m)\ge m$. 
\item
If $y_1,\dots,y_m\in\fa_{\cg 0}$ are not in $\langle \fX_{\max}\rangle$ then 
$\npj_\fX(y_1+\dots+y_m)\ge m$.
\end{enumerate}
\end{theorem}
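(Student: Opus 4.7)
For part~(i), write $x=y_1+\dots+y_m$ and expand using the multinomial theorem in the commutative ring $\fa$. Keeping only the terms in which each $y_j$ appears exactly $k$ times, I obtain
\[ x^{mk}\cge \binom{mk}{k,k,\dots,k}(y_1y_2\cdots y_m)^k, \]
since every other summand in the expansion lies in $\fa_{\cge 0}$ and the coefficients in the expansion are non-negative integers. Taking cores preserves $\cge$ by Lemma~\ref{le:core}\,(iii), so
\[ \core_\fX(x^{mk})\cge \binom{mk}{k,\dots,k}\,\core_\fX\bigl((y_1\cdots y_m)^k\bigr). \]

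The hypothesis $y_1\cdots y_m\notin\langle\fX\rangle$ together with Proposition~\ref{pr:nilp}\,(ii) (applied to the non-negative element $y_1\cdots y_m$) implies that $(y_1\cdots y_m)^k\notin\langle\fX\rangle$ for every $k\ge 1$, so $\core_\fX((y_1\cdots y_m)^k)\ne 0$; being a non-zero non-negative element, its dimension is at least $1$. Therefore
\[ \cc^\fX_{mk}(x)\ge \binom{mk}{k,\dots,k}. \]
A routine application of Stirling's formula gives $\sqrt[mk]{\binom{mk}{k,\dots,k}}\to m$ as $k\to\infty$, so
\[ \npj_\fX(x)=\limsup_{n\to\infty}\sqrt[n]{\cc^\fX_n(x)}\ge \lim_{k\to\infty}\sqrt[mk]{\cc^\fX_{mk}(x)}\ge m. \]

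For part~(ii), the task is to reduce to (i) by showing that $y_1\cdots y_m\notin\langle\fX\rangle$. Since $\fX\subseteq\fX_{\max}$ by Proposition~\ref{pr:max-min}\,(i), it suffices to prove that $y_1\cdots y_m\notin\langle\fX_{\max}\rangle$. This follows by induction on $m$ using Corollary~\ref{co:zero-divisor}: if $y_1,\dots,y_{m-1}\in\fa_{\cge 0}$ has product outside $\langle\fX_{\max}\rangle$ and $y_m\notin\langle\fX_{\max}\rangle$, then the contrapositive of the corollary shows $(y_1\cdots y_{m-1})\cdot y_m\notin\langle\fX_{\max}\rangle$. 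Now apply part~(i).

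The only mildly non-trivial step is the asymptotic $\sqrt[mk]{\binom{mk}{k,\dots,k}}\to m$, which is standard; the rest is a book-keeping exercise in non-negativity, already axiomatised in Definition~\ref{def:repring} and Lemma~\ref{le:core}.
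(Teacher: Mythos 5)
Your proof is correct and rests on the same two ingredients as the paper's: Proposition~\ref{pr:nilp}\,(ii) to rule out powers of $y_1\cdots y_m$ landing in $\langle\fX\rangle$, and Corollary~\ref{co:zero-divisor} (together with $\fX\subseteq\fX_{\max}$) for part~(ii). The one divergence is in part~(i): you retain only the central multinomial term $\binom{mk}{k,\dots,k}(y_1\cdots y_m)^k$ and then appeal to Stirling's formula along the subsequence $n=mk$, whereas the paper keeps \emph{all} the monomials in the expansion of $(y_1+\dots+y_m)^n$. Each $y_1^{n_1}\cdots y_m^{n_m}$ with $\sum n_j=n$ has non-zero core (since $(y_1\cdots y_m)^N\notin\langle\fX\rangle$ for $N\ge\max n_j$ by Proposition~\ref{pr:nilp}\,(ii), and $\langle\fX\rangle$ is an ideal), so summing the full set of multinomial coefficients gives $\cc^\fX_n(y_1+\dots+y_m)\ge m^n$ and hence $\sqrt[n]{\cc^\fX_n}\ge m$ for every $n$, with no asymptotics required. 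Both arguments are sound; the paper's route simply dispenses with Stirling's formula and the subsequence step. Your explicit use of $\langle\fX\rangle\subseteq\langle\fX_{\max}\rangle$ in part~(ii) spells out what the paper's terse proof leaves implicit, and is worth keeping.
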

\begin{proof}
(i) If $y_1\dots y_n\not\in\langle\fX\rangle$ then by
Proposition~\ref{pr:nilp},
neither is any element of the form $y_1^{n_1}\dots y_m^{n_m}$. The
element $(y_1+\dots+y_m)^n$ is therefore a sum of at least $m^n$ 
elements of $\fa_{\cg 0}$ not in $\langle \fX\rangle$, and so 
\[ \cc^\fX_n(y_1+\dots+y_m)\ge m^n \]
and 
\[ \sqrt[n]{\cc^\fX_n(y_1+\dots+y_m)}\ge m. \]
Now take $\displaystyle\limsup_{n\to\infty}$.

(ii) If $y_1,\dots,y_n$ are not in $\langle \fX_{\max}\rangle$ then by 
Corollary~\ref{co:zero-divisor}, nor is $y_1\dots y_n$. So we are in
a position to apply (i).
\end{proof}

\begin{lemma}\label{le:gammaXinY}
  If $\fX\subseteq\fY$ are representation ideals in a representation
  ring $\fa$ and $x\in\fa_{\cge 0}$ then
  \[ \npj_\fY(x) \le \npj_\fX(x). \]
\end{lemma}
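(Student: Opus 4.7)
The plan is to reduce the inequality to a term-by-term comparison of the coefficients of the generating functions $f_{\fY,x}(t)$ and $f_{\fX,x}(t)$, and then apply the Cauchy--Hadamard formula (Lemma~\ref{le:radius}).

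Concretely, the first step is to observe that since $\fX \subseteq \fY$, the inclusion $\langle\fX\rangle \subseteq \langle\fY\rangle$ of linear spans holds, so when we split $x^n = \core_\fY(x^n) + (x^n - \core_\fY(x^n))$, the terms removed in taking $\core_\fY$ include all the terms removed in taking $\core_\fX$. This is precisely the content of Lemma~\ref{le:core}\,(ii), which gives $\core_\fY(x^n) \cle \core_\fX(x^n)$, and hence
\[ \cc_n^\fY(x) = \dim\core_\fY(x^n) \le \dim\core_\fX(x^n) = \cc_n^\fX(x). \]

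The second step is to take $n$-th roots and pass to the $\limsup$: since $\sqrt[n]{\cc_n^\fY(x)} \le \sqrt[n]{\cc_n^\fX(x)}$ for every $n$, we obtain
\[ \npj_\fY(x) = \limsup_{n\to\infty}\sqrt[n]{\cc_n^\fY(x)} \le \limsup_{n\to\infty}\sqrt[n]{\cc_n^\fX(x)} = \npj_\fX(x). \]

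There is no real obstacle here: the statement is essentially a monotonicity property inherited from the definition of the core, and the proof is immediate once one has Lemma~\ref{le:core}\,(ii) in hand.
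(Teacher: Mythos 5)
Your proof is correct and takes essentially the same route as the paper: both invoke Lemma~\ref{le:core}\,(ii) to get the termwise inequality $\cc_n^\fY(x) \le \cc_n^\fX(x)$, then pass to $n$-th roots and $\limsup$.
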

\begin{proof}
  By Lemma~\ref{le:core}\,(ii) we have $\cc^\fY_n(x) \le \cc^\fX_n(x)$
  for all $n\ge 0$ and so
  \begin{equation*}
    \limsup_{n\to\infty}\sqrt[n]{\cc_n^\fY(x)}\le
    \limsup_{n\to\infty}\sqrt[n]{\cc_n^{\fX\vphantom\fY}(x)}.
    \qedhere
    \end{equation*}
\end{proof}

\section{Pringsheim's Theorem}

This section is not logically necessary for the development of the
subject, but is closely related to Theorem~\ref{th:npj-in-spec}.

\begin{defn}
Given a function $f(t)$ of a complex variable $t$, we say that it is 
\emph{analytic}\index{analytic function} 
at a point $t=a$ if there is a power series in $t-a$ 
with a positive radius of convergence, and converging to the value
of $f(t)$ in an open neighbourhood of $a$. We say that $a$ is a 
\emph{singular point}\index{singular point} of $f(t)$ if it not
analytic at $a$.
\end{defn}

The following theorem is not so well known.
See also
Statement (7.21) in Chapter VII of Titchmarsh~\cite{Titchmarsh:1939a}.

\begin{theorem}[Pringsheim]\label{th:Pringsheim}\index{Pringsheim's Theorem}
Suppose that $\phi\colon \bZ_{\ge 0} \to \bR_{\ge 0}$, and that the
power series 
\[ f(t)=\displaystyle\sum_{n=0}^\infty \phi(n)t^n \]
has radius of
convergence $r$. Then 
$t=r$ is a singular point of $f(t)$.
\end{theorem}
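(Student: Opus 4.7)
The plan is to argue by contradiction, using the fact that at a regular point the Taylor expansion of $f$ must extend the domain of convergence, and exploiting non-negativity of $\phi$ to make everything add up honestly.

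First I would suppose, for contradiction, that $f$ is analytic at $t = r$. Combining this with the analyticity of $f$ on $\{|t|<r\}$, there is some $\delta > 0$ such that $f$ extends analytically to an open set containing the closed interval $[0, r+\delta]$ on the real axis. Pick a real base point $t_0$ strictly between $0$ and $r$, say $t_0 = r - \delta/3$. Then the Taylor series of $f$ centred at $t_0$ converges in some disc whose radius is at least $(r+\delta/3) - t_0 = 2\delta/3$, strictly greater than $r - t_0$. So the Taylor expansion at $t_0$ converges at some real point $t_1 > r$, for example $t_1 = r + \delta/4$.

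Next I would compute the Taylor coefficients explicitly. Since $|t_0| < r$ we can differentiate the original series term by term, obtaining
\[ \frac{f^{(k)}(t_0)}{k!} = \sum_{n \ge k}\binom{n}{k} \phi(n)\, t_0^{n-k}, \]
which is a series of non-negative terms because $\phi(n) \ge 0$ and $t_0 > 0$. Evaluating the Taylor series at $t = t_1$ and noting that $t_1 - t_0 > 0$, every term of the double sum
\[ \sum_{k=0}^{\infty} \frac{f^{(k)}(t_0)}{k!}(t_1 - t_0)^k
= \sum_{k=0}^{\infty}\sum_{n \ge k}\binom{n}{k}\phi(n)\, t_0^{n-k}(t_1-t_0)^k \]
is non-negative. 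The convergence of the Taylor series at $t_1$ and the Tonelli theorem for non-negative series then justify interchanging the order of summation, giving
\[ \sum_{n=0}^{\infty}\phi(n)\sum_{k=0}^{n}\binom{n}{k}t_0^{n-k}(t_1-t_0)^k = \sum_{n=0}^{\infty}\phi(n)\, t_1^{n}. \]
Thus $\sum_n \phi(n)\, t_1^n$ converges with $t_1 > r$, contradicting Lemma~\ref{le:radius}.

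The main obstacle I expect is keeping the Taylor-vs-original-series juggling clean: one must verify that the Taylor expansion of $f$ about $t_0$ really does converge to $f$ throughout its disc of convergence (a standard fact provided $f$ is analytic on that disc, which was arranged by the choice of $\delta$), and one must be careful that $t_1$ lies inside that disc while still exceeding $r$. The swap of summation order looks like the step requiring justification, but it is free because every summand is non-negative. Without the hypothesis $\phi(n) \ge 0$ this exchange fails in general, which is exactly why Pringsheim's conclusion is restricted to power series with non-negative coefficients.
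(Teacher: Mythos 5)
Your proof follows the same strategy as the paper's: argue by contradiction, Taylor-expand about a real base point $t_0 \in (0, r)$, compute the coefficients term-by-term, and use non-negativity of $\phi$ (Tonelli for non-negative series) to interchange the order of summation and conclude that $\sum_n \phi(n) t_1^n$ converges at some real $t_1 > r$. The paper uses base point $r/2$; you use $r - \delta/3$. The core of the argument is identical.

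However, your justification for the step ``the Taylor series of $f$ centred at $t_0$ converges in some disc whose radius is at least $(r+\delta/3) - t_0 = 2\delta/3$'' is not supported by the premise you stated. Knowing only that $f$ is analytic on some open set containing the real interval $[0, r+\delta]$ gives no quantitative lower bound on the radius of convergence of the Taylor series at $t_0$, because that radius is governed by the nearest singularity in the complex plane, and the open set could in principle be an arbitrarily thin tube around the segment. What one actually needs (and what suffices) is the qualitative statement that the radius of convergence at $t_0$ strictly exceeds $r - t_0$. This is true because $f$ is analytic on $\{|t| < r\}$ and on some disc $\{|t-r| < \rho\}$ with $\rho > 0$; for $t_0 \in (0,r)$ the unique nearest point of $\{|t| = r\}$ to $t_0$ is $t = r$ itself, which lies in the domain of analyticity, so every point of the complement of $\{|t| < r\} \cup \{|t-r| < \rho\}$ is at distance strictly greater than $r - t_0$ from $t_0$. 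Hence some real $t_1 > r$ lies in the disc of convergence. The paper avoids the issue by taking $t_0 = r/2$ and invoking the explicit geometric fact that the union of $\{|t| < r\}$ with any disc of positive radius centred at $r$ contains a disc centred at $r/2$ of radius strictly greater than $r/2$. Your argument is salvaged by replacing the unjustified numerical bound with the qualitative existence of such a $t_1$, after which the rest of your reasoning is correct and matches the paper's.
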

\begin{proof}
The geometric fact used in the proof of this theorem is that the union
in $\bC$ of a disc of radius $r$ centred at zero and a disc of
positive radius centred at $r$ contains a disc of radius strictly
greater than $r/2$ centred at $r/2$.

Expand $f(t)$ as a Taylor series about $t=\frac{r}{2}$:
\[ f(t) = \sum_{n=0}^\infty
\frac{f^{(n)}(\frac{r}{2})}{n!}(t-\textstyle\frac{r}{2})^n \]
where $f^{(n)}(t)/n! = \sum_{m\ge n}\phi(m)\binom{m}{n}t^{m-n}$. Thus
\[ f(t) = \sum_{n=0}^\infty(t-{\textstyle\frac{r}{2}})^n\sum_{m=
	n}^\infty\phi(m)\textstyle\binom{m}{n}(\frac{r}{2})^{m-n}. \]
If $t=r$ is not a singular point of $f(t)$ then for $\ep$ small enough
this converges at $t=r+\ep$. The terms are all non-negative reals, so
the sum is absolutely convergent, and we may rearrange the terms to
get 
\begin{align*} 
f(r+\ep) &= \sum_{n=0}^\infty({\textstyle\frac{r}{2}}+\ep)^n\sum_{m=
	n}^\infty\phi(m)\textstyle\binom{m}{n}(\frac{r}{2})^{m-n} \\
	&= \sum_{m= 0}^\infty\phi(m)\sum_{n=0}^m\textstyle
	\binom{m}{n}(\frac{r}{2}+\ep)^n(\frac{r}{2})^{m-n} \\
	& = \sum_{m=0}^\infty \phi(m)(r+\ep)^m.
\end{align*}
The convergence of this sum implies that the radius of 
convergence of $f$ is larger than $r$,
contradicting the hypotheses of the theorem.
\end{proof}

\begin{corqed}
  Let $x$ be a positive element of a representation ring $\fa$, and
  let $\fX$ be a representation ideal of $\fa$. If $x\not\in\langle \fX\rangle$,
then the positive real number $1/\npj_\fX(x)$ is a singular point of $f_{\fX,x}(t)$.
\end{corqed}

\section{Submultiplicative sequences}\index{submultiplicative sequence}

\begin{defn}
We say that a sequence $c_0,c_1,c_2,\dots$ of non-negative real
numbers is \emph{submultiplicative} if $c_0=1$, and for all $m,n\ge 0$ we have 
$c_{m+n} \le c_m.c_n$.
\end{defn}

\begin{lemma}\label{le:cc-submult}
  If $\fX$ is a representation ideal in a representation ring $\fa$
  and $x$ is a positive element of $\fa$ that is not in
  $\langle\fX\rangle$ then $\cc_n^\fX(x)$ is a
  submultiplicative sequence.
\end{lemma}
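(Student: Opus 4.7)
The plan is to verify the two conditions in the definition of submultiplicative sequence directly from the preceding lemmas, with Lemma~\ref{le:core}\,(iv) doing the essential work.

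First I would check the normalization $c_0 = \cc_0^\fX(x) = 1$. Here $x^0 = \one = x_0$, so we need $\core_\fX(\one) = \one$, i.e.\ $0 \notin \fX$. This follows from the fact that $\fX$ is a proper subset of $\fI$: if $0 \in \fX$, then from $[x_0 x_j : x_j] = 1$ and Definition~\ref{def:repideal}\,(i) we would force every $j \in \fI$ to lie in $\fX$. Hence $\dim \core_\fX(\one) = \dim \one = 1$.

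Next, for the submultiplicativity inequality $c_{m+n} \le c_m \cdot c_n$, write $x^{m+n} = x^m \cdot x^n$ and apply Lemma~\ref{le:core}\,(iv) to obtain
\[ \core_\fX(x^{m+n}) = \core_\fX\bigl(\core_\fX(x^m)\,\core_\fX(x^n)\bigr) \cle \core_\fX(x^m)\core_\fX(x^n). \]
Since both sides lie in $\fa_{\cge 0}$, applying $\dim$ (a ring homomorphism that is monotone with respect to $\cle$ by the lemma preceding Definition~\ref{def:repideal}) yields
\[ \cc_{m+n}^\fX(x) = \dim\core_\fX(x^{m+n}) \le \dim\core_\fX(x^m)\cdot\dim\core_\fX(x^n) = \cc_m^\fX(x)\cdot \cc_n^\fX(x). \]

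The hypothesis $x \notin \langle\fX\rangle$ is not strictly needed for the submultiplicativity inequality itself, but it ensures the sequence is non-degenerate: by Proposition~\ref{pr:nilp}\,(ii), no positive power of $x$ lies in $\langle\fX\rangle$, so in fact $\cc_n^\fX(x) \ge 1$ for every $n \ge 0$. There is no serious obstacle here; the proof is a one-line consequence of Lemma~\ref{le:core}\,(iv) once the base case $c_0 = 1$ is noted.
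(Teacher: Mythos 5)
Your proof is correct and follows the same approach as the paper, which simply cites Lemma~\ref{le:core}\,(iv); you have usefully spelled out the base case $c_0=1$ (via properness of $\fX$) and the monotonicity of $\dim$, which the paper leaves implicit. Your parenthetical observation that the hypothesis $x\notin\langle\fX\rangle$ is not needed for submultiplicativity itself is also accurate.
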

\begin{proof}
This follows from Lemma~\ref{le:core}\,(iv).
\end{proof}

\begin{lemma}[Fekete \cite{Fekete:1923a}]\label{le:Fekete}\index{Fekete's lemma}
If $c_n$ is a submultiplicative sequence then
\[ \limsup_{n\to\infty} \sqrt[n]{c_n} = \lim_{n\to\infty}\sqrt[n]{c_n}= \inf_{n\ge 1}
  \sqrt[n]{c_n}. \]
\end{lemma}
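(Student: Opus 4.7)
The plan is to prove the standard chain
\[ \limsup_{n\to\infty}\sqrt[n]{c_n}\;\le\;\inf_{n\ge 1}\sqrt[n]{c_n}\;\le\;\liminf_{n\to\infty}\sqrt[n]{c_n}, \]
which, combined with the trivial inequality $\liminf\le\limsup$, forces all three quantities to coincide. Set $L=\inf_{n\ge 1}\sqrt[n]{c_n}$. The second inequality above is immediate: every term of the sequence $\sqrt[n]{c_n}$ is bounded below by $L$, so the same is true of any subsequential limit.

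The substantive step is to show $\limsup_{n\to\infty}\sqrt[n]{c_n}\le L$. First I would dispose of the degenerate case: if $c_N=0$ for some $N\ge 1$, then $c_{N+j}\le c_N c_j=0$ for all $j\ge 0$, so from index $N$ onwards the sequence vanishes; the $\limsup$ and the $\inf$ are then both $0$ and there is nothing to prove. So I may assume $c_n>0$ for all $n$. Now fix any $m\ge 1$ and, for $n\ge m$, write $n=qm+r$ with $0\le r<m$. Submultiplicativity applied $q$ times gives
\[ c_n \le c_m^{\,q}\,c_r, \]
(using $c_0=1$ when $r=0$), and hence
\[ \sqrt[n]{c_n}\;\le\;c_m^{q/n}\,c_r^{1/n}. \]
As $n\to\infty$ with $m$ fixed, $q/n\to 1/m$, and $c_r^{1/n}\to 1$ since $r$ ranges over the finite set $\{0,\dots,m-1\}$ of positive values. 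Therefore
\[ \limsup_{n\to\infty}\sqrt[n]{c_n}\;\le\; c_m^{1/m}. \]
Taking the infimum over $m\ge 1$ on the right yields $\limsup_{n\to\infty}\sqrt[n]{c_n}\le L$, completing the chain.

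The only real obstacle is the one noted above, namely ensuring that the factor $c_r^{1/n}$ really does tend to $1$; this is why the case $c_n=0$ needs to be separated out at the start. Once that is done the argument is a short bookkeeping exercise with the division-with-remainder $n=qm+r$. No deep ingredients beyond the definition of $\limsup$ and the elementary fact that $t^{1/n}\to 1$ for any fixed $t>0$ are required.
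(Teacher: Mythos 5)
Your proof is correct and follows essentially the same route as the paper's: split off the degenerate case $c_N=0$, fix $m$, write $n=qm+r$, apply submultiplicativity to obtain $c_n\le c_m^q c_r$, and let $n\to\infty$ so that $c_r^{1/n}\to 1$. The one small refinement in your write-up is that you pass to the limit $c_m^{q/n}\to c_m^{1/m}$ directly rather than bounding $c_m^{q/n}$ by $c_m^{1/m}$ using $q/n\le 1/m$, which sidesteps the need to worry about whether $c_m\ge 1$; otherwise the two arguments are the same.
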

\begin{proof}
It suffices to show that $\limsup_{n\to\infty} \sqrt[n]{c_n} \le \inf_{n\ge 1}
\sqrt[n]{c_n}$. If some $c_n$ is equal to zero, then so are all subsequent ones. So we
assume that all $c_n>0$. Suppose that $L$ is a number such that
\[ \inf_{n\to\infty}\sqrt[n]{c_n} < L. \] 
Then there is an $m\ge 1$ with $\sqrt[m]{c_m}<L$. For $n>m$ we use  division
with remainder to write $n=mq_m+r_m$ with $0\le r_m<m$.
By the definition of submultiplicativity, we have
\[ c_n = c_{mq_m+r_m}\le c_{mq_m}c_{r_m}\le (c_{m})^{q_m}c_{r_m}. \]
Now $q_m\le n/m$, so $q_m/n\le 1/m$. So we have
\[ \sqrt[n]{c_n} \le \sqrt[m]{c_m}\sqrt[n]{c_{r_m}} <
  L.\sqrt[n]{c_{r_m}}. \]
As $n$ tends to infinity, the numbers
$\sqrt[n]{c_0},\dots,\sqrt[n]{c_{m-1}}$ all tend to one, and so
\begin{equation*}
\limsup_{n\to\infty}\sqrt[n]{c_n}\le L.
\qedhere
\end{equation*}
\end{proof}

\begin{theorem}\label{th:inf}
  If $x$ is a positive element of of a representation ring $\fa$
  and $\fX$ is a representation ideal of $\fa$ then
\[ \npj_\fX(x)=\displaystyle\lim_{n\to\infty}\sqrt[n]{\cc_n^\fX(x)}
=\inf_{n\ge 1}\sqrt[n]{\cc_n^\fX(x)}. \]
\end{theorem}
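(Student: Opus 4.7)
The plan is a direct two-case application of the submultiplicativity already established and Fekete's lemma. There is essentially no hidden difficulty here; the theorem is set up precisely so that these ingredients combine.

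First I would dispose of the degenerate case $x\in\langle\fX\rangle$. By Lemma~\ref{le:gamma=0}, $\npj_\fX(x)=0$. Moreover, for every $n\ge 1$ the element $x^n$ also lies in $\langle\fX\rangle$ (since $\langle\fX\rangle$ is an ideal), so $\core_\fX(x^n)=0$ and hence $\cc^\fX_n(x)=0$. Thus $\sqrt[n]{\cc^\fX_n(x)}=0$ for all $n\ge 1$, and the limit and infimum are both $0$, agreeing with $\npj_\fX(x)$.

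In the main case $x\notin\langle\fX\rangle$, Lemma~\ref{le:cc-submult} tells us that the sequence $\cc_n^\fX(x)$ is submultiplicative (noting that $\cc_0^\fX(x)=\dim\core_\fX(\one)=1$, since $0\notin\fX$ because $\fX$ is proper). By the definition of $\npj_\fX(x)$ as $\limsup_{n\to\infty}\sqrt[n]{\cc^\fX_n(x)}$, Fekete's lemma (Lemma~\ref{le:Fekete}) applied to this sequence yields
\[ \npj_\fX(x)=\limsup_{n\to\infty}\sqrt[n]{\cc^\fX_n(x)}=\lim_{n\to\infty}\sqrt[n]{\cc^\fX_n(x)}=\inf_{n\ge 1}\sqrt[n]{\cc^\fX_n(x)}, \]
which is exactly the statement of the theorem. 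Since the argument is a direct citation of prior lemmas, the only thing to watch is the bookkeeping for the $n=0$ term in the definition of submultiplicativity, and the trivial case handled separately above.
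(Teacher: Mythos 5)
Your proof is correct and takes essentially the same route as the paper, which simply cites Lemmas~\ref{le:cc-submult} and~\ref{le:Fekete}. The one thing you do that the paper glosses over is the explicit split into the case $x\in\langle\fX\rangle$ (where Lemma~\ref{le:cc-submult}'s hypothesis fails), which is a small but genuine gain in rigour.
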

\begin{proof}
This follows from Lemmas \ref{le:cc-submult} and \ref{le:Fekete}.
\end{proof}

\begin{lemma}\label{le:binom}
Let $a_n$, $b_n$ and $c_n$ be sequences of non-negative real numbers, satisfying
\[ c_n \le \sum_{i=0}^n \binom{n}{i} a_i b_{n-i}. \]
Then
\[ \limsup_{n\to\infty}\sqrt[n]{c_n}\le
\limsup_{n\to\infty}\sqrt[n]{a_n}+\limsup_{n\to\infty}\sqrt[n]{b_n}. \]
\end{lemma}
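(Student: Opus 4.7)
The plan is to reduce the inequality to a clean application of the binomial theorem by dominating $a_n$ and $b_n$ by geometric sequences. Let $\alpha=\limsup_{n\to\infty}\sqrt[n]{a_n}$ and $\beta=\limsup_{n\to\infty}\sqrt[n]{b_n}$; we may assume both are finite, for otherwise the right hand side is infinite and there is nothing to prove.

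First I would fix arbitrary real numbers $\alpha'>\alpha$ and $\beta'>\beta$. By definition of limsup there exists some threshold past which $a_n\le(\alpha')^n$ and $b_n\le(\beta')^n$; by absorbing the finitely many initial terms into a single multiplicative constant, I can choose $M\ge 1$ so that $a_n\le M(\alpha')^n$ and $b_n\le M(\beta')^n$ hold for every $n\ge 0$ simultaneously. Substituting these uniform estimates into the hypothesis gives
\[
c_n\ \le\ \sum_{i=0}^n\binom{n}{i}\,M(\alpha')^i\,M(\beta')^{n-i}
\ =\ M^2(\alpha'+\beta')^n,
\]
by the binomial theorem. Taking $n$th roots yields $\sqrt[n]{c_n}\le M^{2/n}(\alpha'+\beta')$, and since $M^{2/n}\to 1$ we obtain $\limsup_{n\to\infty}\sqrt[n]{c_n}\le \alpha'+\beta'$.

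Finally, since $\alpha'$ and $\beta'$ were arbitrary reals strictly greater than $\alpha$ and $\beta$, letting $\alpha'\downarrow\alpha$ and $\beta'\downarrow\beta$ completes the proof. There is no real obstacle here: the only mild subtlety is handling the edge cases $\alpha=0$ or $\beta=0$, but these are absorbed without difficulty since we only ever need $\alpha'>\alpha\ge 0$ and $\beta'>\beta\ge 0$, and in those cases the bound $a_n\le M(\alpha')^n$ with arbitrarily small positive $\alpha'$ still holds after adjusting $M$ to account for the finitely many initial terms.
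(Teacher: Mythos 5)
Your proof is correct and takes essentially the same approach as the paper's: dominate $a_n$ and $b_n$ by constant multiples of geometric sequences $(\alpha+\ep)^n$, $(\beta+\ep)^n$ valid for all $n$, apply the binomial theorem to collapse the convolution sum to $C(\alpha+\beta+2\ep)^n$, take $n$th roots, and let $\ep\to 0$. The only cosmetic differences are your use of $\alpha',\beta'$ in place of $\alpha+\ep,\beta+\ep$ and a single constant $M^2$ in place of the paper's $C=AB$.
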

\begin{proof}
The statement that $\displaystyle\limsup_{n\to\infty}\sqrt[n]{a_n}= \alpha$ implies
that for all $\ep>0$, there exists $m$ such that 
for all $n\ge m$ we have $a_n \le (\alpha+\ep)^n$. Introducing
a positive constant $A$, we can assume that $a_n \le A(\alpha+\ep)^n$
for all $n\ge 0$.
Similarly, if
$\displaystyle\limsup_{n\to \infty}\sqrt[n]{b_n}=\beta$ then for all $\ep>0$ there
exists a positive constant $B$ such that for all $n\ge 0$ we have $b_n\le B(\beta+\ep)^n$.
Thus for all $\ep>0$ there is a positive constant $C=AB$ such that for all
 $n\ge 0$ we have
\begin{equation*} 
c_n \le \sum_{i=0}^n \binom{n}{i}A(\alpha+\ep)^iB(\beta+\ep)^{n-i} 
= C(\alpha+\beta+2\ep)^n,
\end{equation*}
and so $\displaystyle\limsup_{n\to\infty}\sqrt[n]c_n \le \alpha+\beta$.
\end{proof}

\begin{theorem}\label{th:gamma-subadditive}
Let $x,y\in\fa_{\cge 0}$ and let $\fX$ be a representation ideal in
$\fa$. Then 
\[ \max\{\npj_\fX(x),\npj_\fX(y)\}\le 
\npj_\fX(x+y)\le \npj_\fX(x)+\npj_\fX(y). \]
\end{theorem}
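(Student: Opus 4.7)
The plan is to handle the two inequalities separately. Both follow from basic monotonicity properties of $\core_\fX$ that are already in Lemma~\ref{le:core}, combined with the binomial expansion of $(x+y)^n$ (which is valid because $\fa$ is commutative) and Lemma~\ref{le:binom}.

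For the lower bound, I would first observe that since $x,y\in\fa_{\cge 0}$, both $x$ and $y$ are $\cle x+y$, and so by Lemma~\ref{le:core}(vi) we have $x^n\cle(x+y)^n$ and $y^n\cle(x+y)^n$ for all $n\ge 0$. Lemma~\ref{le:core}(iii) then gives $\dim\core_\fX(x^n)\le\dim\core_\fX((x+y)^n)$, i.e.\ $\cc_n^\fX(x)\le\cc_n^\fX(x+y)$, and likewise for $y$. Taking $n$th roots and applying $\limsup_{n\to\infty}$ yields $\npj_\fX(x)\le\npj_\fX(x+y)$ and $\npj_\fX(y)\le\npj_\fX(x+y)$.

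For the upper bound, I would expand
\[ (x+y)^n=\sum_{i=0}^n\binom{n}{i}x^iy^{n-i}. \]
Each term is non-negative, so applying Lemma~\ref{le:core}(iii) and then Lemma~\ref{le:core}(iv) termwise gives
\[ \core_\fX((x+y)^n)\cle\sum_{i=0}^n\binom{n}{i}\core_\fX(x^iy^{n-i})
   \cle\sum_{i=0}^n\binom{n}{i}\core_\fX(x^i)\core_\fX(y^{n-i}). \]
Taking dimensions (which are additive and multiplicative) yields
\[ \cc_n^\fX(x+y)\le\sum_{i=0}^n\binom{n}{i}\cc_i^\fX(x)\,\cc_{n-i}^\fX(y). \]
Now I would apply Lemma~\ref{le:binom} with $a_i=\cc_i^\fX(x)$ and $b_i=\cc_i^\fX(y)$, whose defining $\limsup$s are exactly $\npj_\fX(x)$ and $\npj_\fX(y)$. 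This directly delivers $\npj_\fX(x+y)\le\npj_\fX(x)+\npj_\fX(y)$.

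There is no real obstacle here; the only mild subtlety is justifying the inequality on cores, which requires the two-step use of Lemma~\ref{le:core}(iv) to bring the core inside the product, and Lemma~\ref{le:core}(iii) to preserve dimension inequalities. Everything else is bookkeeping, and the two inequalities together give the theorem.
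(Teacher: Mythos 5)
Your proof is correct and follows essentially the same route as the paper: the lower bound via monotonicity of $\core_\fX$, and the upper bound via the binomial expansion of $(x+y)^n$ combined with Lemma~\ref{le:core}\,(iv) and Lemma~\ref{le:binom}. The only minor quibble is that the inequality $\core_\fX((x+y)^n)\cle\sum_i\binom{n}{i}\core_\fX(x^iy^{n-i})$ is really just linearity of $\core_\fX$ on non-negative elements (in fact an equality), not an application of Lemma~\ref{le:core}\,(iii); otherwise the argument matches the paper's, with the citations spelled out in slightly greater detail.
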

\begin{proof}
It follows from Lemma~\ref{le:core}\,(iv) that
\[ \max\{\cc_n^\fX(x),\cc_n^\fX(y)\}\le
\cc_n^\fX(x+y) \le
  \sum_{i=0}^n\binom{n}{i}\cc_i^\fX(x)\cc_{n-i}^\fX(y). \]
Applying Lemma~\ref{le:binom}, we deduce that 
\begin{multline*} 
\max\{\limsup_{n\to\infty}\sqrt[n]{\cc_n^\fX(x)},
\limsup_{n\to\infty}\sqrt[n]{\cc_n^\fX(y)}\}\\
\le\limsup_{n\to\infty}\sqrt[n]{\cc_n^\fX(x+y)}
\le\limsup_{n\to\infty}\sqrt[n]{\cc_n^\fX(x)}+
\limsup_{n\to\infty}\sqrt[n]{\cc_n^\fX(y)},
\end{multline*}
which are the inequalities in the statement of the theorem.
\end{proof}

\begin{prop}\label{pr:binom2}
Suppose that $a_n$ and $b_n$ are submultiplicative sequences. Define
a sequence $c_n$ by
\[ c_n = \sum_{i=0}^n\binom{n}{i}a_ib_{n-i}. \]
Then $c_n$ is also a submultiplicative sequence, and we have
\[  \lim_{n\to\infty}\sqrt[n]{c_n}=
\lim_{n\to\infty}\sqrt[n]{a_n} +\lim_{n\to\infty}\sqrt[n]{b_n}. \]
\end{prop}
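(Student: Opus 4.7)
My plan has two stages: first prove submultiplicativity of $c_n$, then pin down the limit via matching upper and lower bounds on $\sqrt[n]{c_n}$.

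For submultiplicativity, I would compute directly. We have $c_0 = a_0 b_0 = 1$, and
\[ c_m c_n = \sum_{i=0}^m\sum_{j=0}^n \binom{m}{i}\binom{n}{j} a_i a_j b_{m-i} b_{n-j}. \]
Using the submultiplicativity of $a$ and $b$ we get $a_i a_j \ge a_{i+j}$ and $b_{m-i} b_{n-j}\ge b_{(m+n)-(i+j)}$. Grouping terms according to $k=i+j$ and invoking the Vandermonde identity $\sum_{i+j=k}\binom{m}{i}\binom{n}{j}=\binom{m+n}{k}$, one arrives at
\[ c_m c_n \ge \sum_{k=0}^{m+n}\binom{m+n}{k} a_k b_{m+n-k} = c_{m+n}. \]
This is the only genuinely combinatorial step in the proof, and I expect it to be the main (minor) obstacle; the rest is formal.

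Having established submultiplicativity, Lemma~\ref{le:Fekete} guarantees that $\lim_{n\to\infty}\sqrt[n]{c_n}$ exists and equals the infimum. Write $\alpha=\lim\sqrt[n]{a_n}$ and $\beta=\lim\sqrt[n]{b_n}$. The upper bound $\lim\sqrt[n]{c_n}\le\alpha+\beta$ is Lemma~\ref{le:binom} applied directly.

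For the matching lower bound, I would exploit the equality $\alpha=\inf_{n\ge 1}\sqrt[n]{a_n}$ from Fekete's lemma, which gives $a_n\ge\alpha^n$ for all $n\ge 0$ (the case $n=0$ is $a_0=1=\alpha^0$), and similarly $b_n\ge\beta^n$. Then
\[ c_n = \sum_{i=0}^n \binom{n}{i} a_i b_{n-i} \ge \sum_{i=0}^n\binom{n}{i}\alpha^i\beta^{n-i} = (\alpha+\beta)^n, \]
so $\sqrt[n]{c_n}\ge\alpha+\beta$. Combining the two inequalities yields the claimed equality.
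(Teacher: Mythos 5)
Your proof is correct and follows essentially the same route as the paper: submultiplicativity of $c_n$ via the Vandermonde identity, the upper bound on $\lim\sqrt[n]{c_n}$ via Lemma~\ref{le:binom}, and a pointwise lower bound on $a_n$ and $b_n$ that forces $c_n\ge(\alpha+\beta)^n$. Your lower bound is in fact a little cleaner than the paper's: by reading off $a_n\ge\alpha^n$ and $b_n\ge\beta^n$ directly from the $\inf$ form in Fekete's Lemma~\ref{le:Fekete}, you avoid the $\ep$-and-constants manoeuvre that the paper repeats from the proof of Lemma~\ref{le:binom}.
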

\begin{proof}
Using the fact that
\[ \binom{m+n}{\ell} =\sum_{i+j=\ell}\binom{m}{i}\binom{n}{j}\]
and the submultiplicativity of the sequences $a_n$ and $b_n$, we have
\[ \sum_{\ell=0}^{m+n}\binom{m+n}{\ell}a_\ell b_{m+n-\ell} \le 
\left(\sum_{i=0}^m\binom{m}{i}a_ib_{m-i}\right).
\left(\sum_{j=0}^n\binom{n}{j}a_jb_{n-j}\right) \]
and so the sequence $c_n$ is submultiplicative.

By Lemma \ref{le:binom} we have
\[ \lim_{n\to\infty}\sqrt[n]{c_n}\le
\lim_{n\to\infty}\sqrt[n]{a_n} +\lim_{n\to\infty}\sqrt[n]{b_n}. \]
The reverse inequality is proved similarly. If 
$\displaystyle\lim_{n\to\infty}\sqrt[n]{a_n}=\alpha$ and
$\displaystyle\lim_{n\to\infty}\sqrt[n]{b_n}=\beta$ then given $\ep>0$ there
exist positive constants $A$ and $B$ such that for all $n\ge 0$ we have
$a_n\ge A(\alpha-\ep)^n$ and $b_n\ge B(\beta-\ep)^n$. So for
all $\ep>0$ there is a positive constant $C=AB$ such that for all
$n\ge 0$ we  have
\begin{equation*} 
c_n \ge \sum_{i=0}^n \binom{n}{i}A(\alpha-\ep)^iB(\beta-\ep)^{n-i} 
= C(\alpha+\beta-2\ep)^n,
\end{equation*}
and so
\begin{equation*} 
\lim_{n\to\infty}\sqrt[n]{c_n}\ge\alpha+\beta-2\ep=
\lim_{n\to\infty}\sqrt[n]{a_n} +\lim_{n\to\infty}\sqrt[n]{b_n}-2\ep. 
\qedhere
\end{equation*}
\end{proof}

\begin{theorem}\label{th:a+bx}
  Let $x\in\fa_{\cge 0}$ and
  let $\fX$ be a representation ideal in $\fa$. Then for non-negative
  integers $a$ and $b$ we have
  \[ \npj_\fX(a + bx) = a + b\npj_\fX(x).\]
\end{theorem}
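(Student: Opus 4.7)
The plan is to reduce the statement to Proposition~\ref{pr:binom2} by expanding $(a+bx)^n$ via the binomial theorem and identifying the resulting convolution with the submultiplicative convolution appearing in that proposition.

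First I would note that since $\fa$ is commutative,
\[ (a+bx)^n = \sum_{i=0}^n \binom{n}{i} a^{n-i} b^i x^i. \]
All of the coefficients $\binom{n}{i}a^{n-i}b^i$ are non-negative integers, and each $x^i$ lies in $\fa_{\cge 0}$ by Lemma~\ref{le:core}\,(i). Consequently the sum is a non-negative linear combination of basis elements, and the $\fX$-core splits over it term-by-term, giving
\[ \cc_n^\fX(a+bx) \,=\, \sum_{i=0}^n \binom{n}{i} a^{n-i} b^i\, \cc_i^\fX(x). \]

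Next I would set $\alpha_n := a^n$ and $\beta_n := b^n\, \cc_n^\fX(x)$. The sequence $\alpha_n$ is multiplicative, hence submultiplicative with $\alpha_0=1$. The sequence $\beta_n$ is submultiplicative since $b^n$ is multiplicative and $\cc_n^\fX(x)$ is submultiplicative by Lemma~\ref{le:cc-submult}; also $\beta_0 = \dim\core_\fX(\one) = 1$. By Theorem~\ref{th:inf} we have $\lim_n \sqrt[n]{\alpha_n} = a$ and $\lim_n \sqrt[n]{\beta_n} = b\,\npj_\fX(x)$. After reindexing $i \leftrightarrow n-i$, the displayed identity above reads $\cc_n^\fX(a+bx) = \sum_{i=0}^n \binom{n}{i} \alpha_i \beta_{n-i}$, which is exactly the convolution $c_n$ in the statement of Proposition~\ref{pr:binom2}. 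Applying that proposition yields
\[ \npj_\fX(a+bx) \,=\, \lim_{n\to\infty}\sqrt[n]{\cc_n^\fX(a+bx)} \,=\, a + b\,\npj_\fX(x). \]

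There is essentially no obstacle: the technical content has been packaged into Proposition~\ref{pr:binom2}, and the remaining work is just the bookkeeping above. The only points worth double-checking are the degenerate cases: if $b=0$ or $a=0$, the corresponding sequence becomes $1,0,0,\dots$, which is still submultiplicative with the correct $n$th-root limit $0$; and if $x\in\langle\fX\rangle$ then $\cc_n^\fX(x)=0$ for $n\ge 1$ by Proposition~\ref{pr:nilp}, so $\beta_n=0$ for $n\ge 1$ and $\lim\sqrt[n]{\beta_n}=0 = b\,\npj_\fX(x)$, consistently with Lemma~\ref{le:gamma=0}. So the formula $\npj_\fX(a+bx) = a + b\,\npj_\fX(x)$ holds uniformly.
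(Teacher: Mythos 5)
Your proof is correct and uses essentially the same key ingredient as the paper, namely Proposition~\ref{pr:binom2}. The only stylistic difference is that you apply it once with $\alpha_n=a^n$ and $\beta_n=b^n\cc_n^\fX(x)$, whereas the paper first reduces the case $a=0$ by the direct substitution $f_{\fX,bx}(t)=f_{\fX,x}(bt)$ and then applies Proposition~\ref{pr:binom2} only to the clean case $\npj_\fX(1+x)=1+\npj_\fX(x)$, with $a_n=\cc_n^\fX(x)$ and $b_n=1$. The paper's two-step route has a mild advantage: the lower bound in Proposition~\ref{pr:binom2} is proved via the estimate $a_n\ge A(\alpha-\ep)^n$, which is awkward when $\alpha=0$, and by handling the scaling $x\mapsto bx$ separately one avoids invoking the proposition with a sequence whose $n$th-root limit vanishes. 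You recognise this and check the degenerate cases ($a=0$, $b=0$, $x\in\langle\fX\rangle$) separately, which is the right thing to do, so your argument is complete; it is simply a minor repackaging of the same proof.
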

\begin{proof}
  We begin with the case $a=0$. In this case we have
  $\core_\fX(bx)^n=b^n\core_\fX(x^n)$ and so
  $\cc_n^\fX(bx)=b^n\cc_n^\fX(x)$. Thus
  \[ f_{\fX,bx}(t)=\sum_{n=0}^\infty b^n\cc_n^\fX(x)t^n=f_{\fX,x}(bt) \]
  and $\npj_\fX(bx) = b\npj_\fX(x)$.

  It now suffices to show that $\npj_\fX(1+x)=1+\npj_\fX(x)$.
  We have
  \[ \cc_n^\fX(1+x) = \sum_{i=0}^n \binom{n}{i}\cc_i^\fX(x). \]
  So we can apply Proposition~\ref{pr:binom2} with $a_n=\cc_n^\fX(x)$,
  $b_n=1$ and $c_n=\cc_n^\fX(1+x)$.
\end{proof}

\section{The endotrivial group}\label{se:endotriv}\index{endotrivial!group}

\begin{defn}
  Let $\fX$ be a representation ideal in a representation ring $\fa$.
  We say that an element $x$ of $\fa_{\cge 0}$ is
  \emph{$\fX$-endotrivial}\index{endotrivial!element} if
  $xx^*-\one\in\langle\fX\rangle$. Note that this implies that
  $[xx^*:\one]=1$, and hence $xx^*\cge \one$.
In case $\fX=\fX_\proj$,
  we just say that $x$ is \emph{endotrivial}.
\end{defn}

We begin by collecting some properties of $\fX$-endotrivial elements.

\begin{lemma}
\begin{enumerate}
\item
If $x\in\fa_{\cge 0}$ is $\fX$-endotrivial then $x$ is
indecomposable modulo $\fX$ and 
$x\not\in\langle\fX_{\max}\rangle$.\index{X@$\fX_{\max}$}
\item
If $x\in\fa_{\cge 0}$ and $xx^*$ is $\fX$-endotrivial then so is $x$.
\end{enumerate}
\end{lemma}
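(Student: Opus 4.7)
The plan is to prove (i) by a careful analysis of the basis expansion of $x$, and then deduce (ii) by applying (i) to the non-negative element $xx^*$. For (i), write $x=\sum_{i\in\fI}a_ix_i$ with $a_i\in\bZ_{\ge 0}$. From $xx^*-\one\in\langle\fX\rangle$ I take coefficients of $\one$: by axiom~(ii) of Definition~\ref{def:repring}, $[x_ix_{j^*}:\one]=0$ unless $j=i$, so
\[ 1=[xx^*:\one]=\sum_{i\in\fI}a_i^2[x_ix_{i^*}:\one]. \]
Since each summand is a non-negative integer and vanishes precisely when $i\in\fX_{\max}$, there is a unique index $i_0$ with $a_{i_0}>0$ and $i_0\notin\fX_{\max}$; for it $a_{i_0}=1$ and $[x_{i_0}x_{i_0^*}:\one]=1$. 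Moreover $i_0\notin\fX$, since otherwise the representation-ideal axiom would put $x_{i_0}x_{i_0^*}$ into $\langle\fX\rangle$, forcing $[x_{i_0}x_{i_0^*}:\one]=0$ (note that $0\notin\fX$).

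To finish (i), I need to rule out $a_i>0$ for any $i\in\fI\setminus\fX$ other than $i_0$. Any such $i$ lies in $\fX_{\max}$ by the above, so axiom~(iii) gives $[x_ix_{i^*}x_i:x_i]\ge 2$. Writing $xx^*=\one+z$ with $z\in\langle\fX\rangle$, I get $xx^*x=x+zx$ with $zx\in\langle\fX\rangle$, so $[xx^*x:x_i]=a_i$ for $i\in\fI\setminus\fX$. On the other hand, expanding $xx^*x=\sum_{j,k,l}a_ja_ka_lx_jx_{k^*}x_l$, the diagonal term $j=k=l=i$ alone contributes at least $2a_i^3$; since the remaining contributions are non-negative, $a_i\ge 2a_i^3$, which fails for every positive integer. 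Hence $a_i=0$, so $\core_\fX(x)=x_{i_0}$ and $x$ is indecomposable modulo $\fX$. For $x\notin\langle\fX_{\max}\rangle$, I use $\fX\subseteq\fX_{\max}$ (Proposition~\ref{pr:max-min}) to rewrite $xx^*=\one+z'$ with $z'\in\langle\fX_{\max}\rangle$; if $x$ lay in $\langle\fX_{\max}\rangle$ then so would $xx^*$, forcing $\one\in\langle\fX_{\max}\rangle$, contradicting $0\notin\fX_{\max}$.

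For (ii), the hypothesis $(xx^*)(xx^*)^*-\one\in\langle\fX\rangle$, combined with commutativity of $\fa$ and $i^{**}=i$, gives $(xx^*)^*=xx^*$ and hence $(xx^*)^2-\one\in\langle\fX\rangle$. Applying (i) to the non-negative element $xx^*$, it is indecomposable modulo $\fX$, so $\core_\fX(xx^*)=x_{j_0}$ for some $j_0\in\fI\setminus\fX$, and the task reduces to showing $j_0=0$. If $j_0\ne 0$ then $[xx^*:\one]=0$, and the same $\one$-coefficient identity applied to $x=\sum_ia_ix_i$ yields $\sum_ia_i^2[x_ix_{i^*}:\one]=0$, forcing every $i$ with $a_i>0$ to lie in $\fX_{\max}$. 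Then $x$, and therefore $xx^*$, lies in the ideal $\langle\fX_{\max}\rangle$, and running the same coefficient calculation with $(xx^*)^2$ in place of $xx^*$ gives $[(xx^*)^2:\one]=0$, contradicting the endotriviality hypothesis. Hence $j_0=0$ and $xx^*\equiv\one\pmod{\langle\fX\rangle}$, so $x$ is $\fX$-endotrivial. The main obstacle, I expect, is the cubic inequality $a_i\ge 2a_i^3$ in the second paragraph: it is what eliminates contributions from indices in $\fX_{\max}\setminus\fX$ that are not ruled out by the coefficient-of-$\one$ argument alone, and it relies crucially on axiom~(iii), the representation-ring axiom most characteristic of the modular setting.
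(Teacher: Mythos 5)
Your proof is correct, and for part (i) it takes a genuinely different route from the paper. The paper argues by contradiction from a decomposition: if $x=y+z$ with $y,z\cg 0$ both outside $\langle\fX\rangle$, then $xx^*=yy^*+yz^*+zy^*+zz^*$, and Proposition~\ref{pr:nilp}\,(i) shows that $\core_\fX(yy^*)$ and $\core_\fX(zz^*)$ are both nonzero, which forces $\dim\core_\fX(xx^*)\ge 2$ and hence is incompatible with $\core_\fX(xx^*)=\one$. Your argument instead works entirely at the level of structure constants: the identity $[xx^*:\one]=\sum_i a_i^2[x_ix_{i^*}:\one]=1$ isolates a unique $i_0\notin\fX_{\max}$ with $a_{i_0}=1$, and the cubic estimate $a_i\ge 2a_i^3$, obtained by comparing $[xx^*x:x_i]=a_i$ against the diagonal contribution $a_i^3[x_ix_{i^*}x_i:x_i]\ge 2a_i^3$ supplied by axiom~(iii), kills any other $i\in\fI\setminus\fX$ with $a_i>0$. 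The underlying ingredient is the same — axiom~(iii), which is what drives Lemma~\ref{le:xx*x} and hence Proposition~\ref{pr:nilp}\,(i) — but your version bypasses those lemmas and explicitly exhibits $\core_\fX(x)$ as a single basis element, which the paper's contradiction argument does not do. The trade-off is length: the paper's two lines become two paragraphs. For part (ii) the two proofs are essentially parallel; you take a small detour by deriving $[(xx^*)^2:\one]=0$ to contradict the endotriviality of $xx^*$, whereas the paper reads off $x\notin\langle\fX_{\max}\rangle$ directly from $xx^*\notin\langle\fX_{\max}\rangle$ (which (i) gives, applied to $xx^*$) using the fact that $\fX_{\max}$ is a representation ideal (Proposition~\ref{pr:max-min}), and then concludes $[xx^*:\one]>0$ in one step.
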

\begin{proof}
(i) If $x=y+z$ with $y,z\in \fa_{\cge 0}$ and $y,z
\not\in\langle\fX\rangle$ then $xx^*=yy^*+yz^*+zy^*+zz^*$, and by
Proposition~\ref{pr:nilp}\,(i), neither $yy^*$ nor $zz^*$ is in $\langle
\fX\rangle$. But then $xx^*-\one$ cannot be in $\langle\fX\rangle$,
contradicting the definition of $\fX$-endotrivial. So $x$ is
indecomposable modulo $\fX$.

The fact that $x$ is not in
$\langle\fX_{\max}\rangle$ follows from $[xx^*:\one]>0$.

(ii) If $xx^*$ is $\fX$-endotrivial then $xx^*$ is indecomposable
modulo $\fX$,
and $xx^*$ is not in $\langle\fX_{\max}\rangle$, by (i).
By Proposition~\ref{pr:max-min}, $\fX_{\max}$ is a representation
ideal, and so $x$ is not in $\langle\fX_{\max}\rangle$. Thus by
definition of $\fX_{\max}$, we have $[xx^*:\one]>0$. Since $xx^*$ is
indecomposable modulo $\fX$, this implies that
$xx^*-\one\in \langle\fX\rangle$, and hence $x$
is $\fX$-endotrivial.
\end{proof}

\begin{theorem}\label{th:endotriv}
  Let $\fX$ be a representation ideal in a representation ring $\fa$,
  and let $x$ be a non-negative element of $\fa$. If $\npj_\fX(x)=1$
  then $x$ is $\fX$-endotrivial.
\end{theorem}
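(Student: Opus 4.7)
The plan is to pass from $x$ to $y := xx^*$ and show $\core_\fX(y)=\one$, which is exactly the $\fX$-endotriviality of $x$. First I would observe that $\npj_\fX(x)=1$ together with Lemma~\ref{le:gamma=0} rules out $x\in\langle\fX\rangle$; then Proposition~\ref{pr:nilp}\,(i) gives $y\notin\langle\fX\rangle$ as well, so $\npj_\fX(y)\ge 1$. Combined with the submultiplicativity of $\npj$ under products (Lemma~\ref{le:gamma(xy)}) and invariance under the involution (Lemma~\ref{le:npjx*}), this forces $\npj_\fX(y)=1$.

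The bulk of the argument rules out any other shape of $\core_\fX(y)$ by producing exponential lower bounds on $\cc_n^\fX(y)$. First I would dispose of the case $[xx^*:\one]=0$: Lemma~\ref{le:xx*x} gives $xx^*x\cge 2x$, and multiplying by $x^*$ produces $y^2\cge 2y$; iterating via Lemma~\ref{le:core}\,(vi) gives $y^n\cge 2^{n-1}y$, whence $\cc_n^\fX(y)\ge 2^{n-1}$ (using $\dim\core_\fX(y)\ge 1$), contradicting $\npj_\fX(y)=1$. So $[y:\one]\ge 1$, and I would write $\core_\fX(y)=a\one+w$ with $a\ge 1$ and $w\cge 0$ supported on $\fI\setminus(\fX\cup\{0\})$; the aim is to show $a=1$ and $w=0$.

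Next I would suppose, for contradiction, that either $a\ge 2$ or $w\ne 0$. In the first case $y\cge 2\one$ gives $y^n\cge 2^n\one$ directly. In the second case $y\cge\one+w$ and the binomial expansion $(\one+w)^n=\sum_{k=0}^n\binom{n}{k}w^k$, combined with Proposition~\ref{pr:nilp} (so that each $w^k\notin\langle\fX\rangle$, hence $\dim\core_\fX(w^k)\ge 1$) and the additivity of $\core_\fX$ on non-negative combinations, yields $\cc_n^\fX(y)\ge\sum_k\binom{n}{k}=2^n$. Either way $\npj_\fX(y)\ge 2$, contradiction. So $a=1$ and $w=0$, which says $xx^*-\one\in\langle\fX\rangle$, as required.

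The hard part I expect is the case $[xx^*:\one]=0$, where $y$ itself carries no $\one$-term to drive geometric growth; this is precisely where axiom~(iii) of Definition~\ref{def:repring} becomes indispensable, channelled through Lemma~\ref{le:xx*x}, so that the factor of $2$ can be propagated through arbitrary powers $y^n$. The remaining cases, by contrast, are essentially binomial bookkeeping once the right lower bound on $y$ is in hand.
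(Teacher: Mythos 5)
Your proof is correct, and it reaches the conclusion along essentially the same spine as the paper's argument, but with a more elementary finish. Both proofs reduce to $y = xx^*$, use Lemmas~\ref{le:npjx*} and~\ref{le:gamma(xy)} to force $\npj_\fX(y)=1$, and invoke Lemma~\ref{le:xx*x} to dispose of the case $[y:\one]=0$. The divergence is in the last step: once $y\cge\one$, the paper simply cites Theorem~\ref{th:a+bx} to get $\npj_\fX(y-\one)=0$ and then Lemma~\ref{le:gamma=0} to conclude $y-\one\in\langle\fX\rangle$. You instead re-derive the exponential lower bounds by hand --- iterating $y^2\cge 2y$ to get $\cc_n^\fX(y)\ge 2^{n-1}$ when $[y:\one]=0$, and using the binomial expansion of $(\one+w)^n$ together with Proposition~\ref{pr:nilp} and the additivity of $\core_\fX$ on $\fa_{\cge 0}$ to get $\cc_n^\fX(y)\ge 2^n$ when $\core_\fX(y)\ne\one$. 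These are exactly the special cases of Theorem~\ref{th:a+bx} (and the nearby Theorem~\ref{th:ge-m} / Proposition~\ref{pr:binom2}) that the argument actually needs, so you have effectively inlined a proof of them. The paper's route buys brevity by leaning on the general theorem; yours is longer but self-contained modulo only the lower-level lemmas, and it makes visible why axiom~(iii), channelled through Lemma~\ref{le:xx*x}, is the load-bearing hypothesis in the degenerate case $[xx^*:\one]=0$.
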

\begin{proof}
Suppose that $\npj_\fX(x)=1$. Then $\npj_\fX(x^*)=1$, and by Lemma~\ref{le:gamma(xy)} we
have $\npj_\fX(xx^*)\le 1$ and $\npj_\fX(xx^*x)\le 1$.
If $\npj_\fX(xx^*)=0$ then $[xx^*:\one]=0$
and so by Lemma~\ref{le:xx*x} we have $xx^*x\cge 2x$. But then
using Theorem~\ref{th:a+bx} we have
$1\ge \npj_\fX(xx^*x)\ge \npj_\fX(2x)= 2$, a contradiction. It
follows that $\npj_\fX(xx^*)=1$ and $[xx^*:\one]=1$. Thus $xx^*-\one\cge
0$, and again using Theorem~\ref{th:a+bx}, we have $\npj_\fX(xx^*-1)=0$. 
By Lemma~\ref{le:gamma=0}, we have $xx^*-\one\in\langle\fX\rangle$.
\end{proof}

\begin{defn}
  The \emph{big endotrivial group}\index{big endotrivial group}
  modulo $\fX$ of a representation ring $\fa$,
  denoted $\PPic_\fX(\fa)$, is the set of
  $\fX$-endotrivial elements of $\fa_{\cge 0}$. The \emph{small
    endotrivial group}\index{small endotrivial group} modulo $\fX$, denoted $\Pic_\fX(\fa)$, is the set
  of elements of $\fa_{\cge 0}$ satisfying $\npj_\fX(x)=1$. These are
  both abelian groups under multiplication modulo $\fX$, with the inverse of $x$
  being given by $x^*$. The two versions of the
  endotrivial group do not have to be equal, but in examples
  coming from representation theory, they often are. In
  Example~\ref{eg:repring}\,(ii), the elements $u$ and $v$ are
  endotrivial, but $\npj(u)=\npj(v)=d\ge 2$,
  so in this case the big and small endotrivial groups are not equal.
We write $\PPic_{\max}(\fa)$, $\Pic_{\max}(\fa)$ and 
$\PPic(\fa)$, $\Pic(\fa)$ for the cases $\fX=\fX_{\max}$
and $\fX=\fX_\proj$.
\end{defn}

The following is a generalisation of Theorem~3.5 of
Carlson~\cite{Carlson:1996a}, and shows that the only positive
idempotent modulo a representation ideal $\fX$ is the identity element.

\begin{theorem}
  Let $\fX$ be a representation ideal in a representation ring
  $\fa$. If $x\in\fa_{\cge 0}$,
  $x\not\in\langle\fX\rangle$, but
  $x^2-x\in\langle \fX\rangle_{\cge 0}$, then
  $x-\one\in\langle\fX\rangle_{\cge 0}$.
\end{theorem}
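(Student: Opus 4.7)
My strategy is a three-step reduction: first squeeze $\npj_\fX(x)$ down to $1$, then use the $\fX$-endotriviality theorem to obtain a canonical representative $x_i$ of $x$ modulo $\langle\fX\rangle$, and finally exploit the simultaneous relations $x_i^2\equiv x_i$ and $x_ix_{i^*}\equiv\one$ modulo $\langle\fX\rangle$ to conclude that $x_i=\one$.

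The first step is essentially a numerical coincidence. Since $x^2-x\in\langle\fX\rangle$, Lemma~\ref{le:gamma(x+fX)} gives $\npj_\fX(x^2)=\npj_\fX(x)$, whereas Lemma~\ref{le:gamma(x^m)} gives $\npj_\fX(x^2)=\npj_\fX(x)^2$. Hence $\npj_\fX(x)\in\{0,1\}$, and Lemma~\ref{le:gamma=0} combined with $x\notin\langle\fX\rangle$ forces $\npj_\fX(x)=1$. Theorem~\ref{th:endotriv} then says that $x$ is $\fX$-endotrivial, and the first lemma of the endotrivial section says such an $x$ is indecomposable modulo $\fX$. So there is a (unique) $i\in\fI\setminus\fX$ with $x\equiv x_i\pmod{\langle\fX\rangle}$; since $\langle\fX\rangle$ is closed under the involution, transporting the hypotheses through this congruence yields both $x_i^2\equiv x_i$ and $x_ix_{i^*}\equiv\one$ modulo $\langle\fX\rangle$.

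The crux is showing $i=0$. Writing $x_i^2=x_i+r$ and $x_ix_{i^*}=\one+s$ with $r,s\in\langle\fX\rangle$, I multiply the first equation on the right by $x_{i^*}$ and substitute the second on the left, which gives
\[ x_i-\one=s+rx_{i^*}-x_is\in\langle\fX\rangle. \]
But $\langle\fX\rangle$ is the $\bZ$-span of the basis elements indexed by $\fX$, and $x_i-\one$ only involves basis vectors indexed by $i$ and $0$, both outside $\fX$; so $x_i-\one=0$, i.e.\ $i=0$. Thus $\core_\fX(x)=\one$, and since $x\cge0$ the remaining $\fX$-part of $x$ is non-negative, giving $x-\one\in\langle\fX\rangle_{\cge0}$.

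The main obstacle is the multiplication trick in the third step: everything before it is mechanical application of the gamma-invariant calculus developed in this chapter, but that step is the one place where the two ingredients (the hypothesis $x^2\equiv x$ and the derived endotriviality $xx^*\equiv\one$) have to be combined nontrivially to eliminate every candidate for $i$ except $0$.
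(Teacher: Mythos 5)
Your proof is correct and follows essentially the same route as the paper's: compute $\npj_\fX(x)=1$ from the idempotent relation via Lemmas~\ref{le:gamma(x+fX)}, \ref{le:gamma(x^m)} and \ref{le:gamma=0}, invoke Theorem~\ref{th:endotriv} to obtain $\fX$-endotriviality, and then multiply by $x^*$ to extract $x-\one\in\langle\fX\rangle$. The only cosmetic difference is your detour through the indecomposability lemma to replace $x$ by a basis element $x_i$; the paper works with $x$ directly, noting $(x^2-x)x^*\equiv x-\one\pmod{\langle\fX\rangle}$ and then reading off non-negativity from $[x:\one]=1$.
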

\begin{proof}
By Lemma~\ref{le:gamma(x+fX)}, 
the hypotheses imply that $\npj_\fX(x^2)=\npj_\fX(x)$. Using
Lemmas~\ref{le:gamma(x^m)}, this becomes $\npj_\fX(x)^2=\npj_\fX(x)$.
By Lemma~\ref{le:gamma=0}
we have $\npj_\fX(x)\ne 0$, and hence 
$\npj_\fX(x)= 1$. Using Theorem~\ref{th:endotriv},
we deduce that $x$ is $\fX$-endotrivial. Since $x^2-x\in
\langle\fX\rangle$ we deduce that $(x^2-x)x^*\in\langle\fX\rangle$ and
so $x-\one\in\langle\fX\rangle$. Thus $[x:\one]\ge 1$, and so $x-\one\cge 0$.
\end{proof}

\section{Elements with small gamma invariant}

Let $x\in\fa_{\cge 0}$ and let $\fX$ be a representation ideal in $\fa$.
We saw in Lemma~\ref{le:gamma=0} that if
$\npj_\fX(x)=0$ then $x\in\langle\fX\rangle$, and that otherwise
$\npj_\fX(x)\ge 1$. Furthermore, we saw in Theorem~\ref{th:endotriv}
that if $\npj_\fX(x)=1$ then $x$ is $\fX$-endotrivial. We strengthen this in
the following theorem, to
show that if $1\le \npj_\fX(x)<\sqrt{2}$ then $x$ is $\fX$-endotrivial.

\begin{theorem}\label{th:sqrt2}
If $x\in\fa_{\cg 0}$ is not $\fX$-endotrivial then $\npj_\fX(xx^*)\ge 2$, 
and $\npj_\fX(x)\ge \sqrt{2}$. In the case where
$\npj_\fX(x)=\sqrt{2}$, we have $\npj_\fX(xx^*)=2$, and 
$xx^*x-2x\in\langle\fX\rangle$. 

If, furthermore, 
$x\not\in\langle\fX_{\max}\rangle$\index{X@$\fX_{\max}$} then $xx^*=1+y$ with $\npj_\fX(y)=1$.
In particular, $y$ is $\fX$-endotrivial.
\end{theorem}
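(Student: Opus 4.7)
The plan is to split on $c := [xx^*:\one]$, writing $xx^* = c\one + w$ with $w \cge 0$ and $[w:\one] = 0$; this cleanly separates Case A ($c = 0$, i.e.\ $x \in \langle\fX_{\max}\rangle$) from Case B ($c \ge 1$).

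For the inequalities of Part~1, I would first treat Case A using Lemma~\ref{le:xx*x}, which gives $xx^*x \cge 2x$ and hence $\npj_\fX(xx^*x) \ge 2\npj_\fX(x)$ via Theorem~\ref{th:gamma-subadditive}; chaining with $\npj_\fX(xx^*x) \le \npj_\fX(xx^*)\npj_\fX(x)$ from Lemma~\ref{le:gamma(xy)} yields $\npj_\fX(xx^*) \ge 2$. In Case B, Theorem~\ref{th:a+bx} gives $\npj_\fX(xx^* - \one) = \npj_\fX(xx^*) - 1$; since $x$ is not $\fX$-endotrivial, $xx^* - \one \notin \langle\fX\rangle$, so Lemma~\ref{le:gamma=0} yields $\npj_\fX(xx^* - \one) \ge 1$ and thus $\npj_\fX(xx^*) \ge 2$. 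Combining with $\npj_\fX(xx^*) \le \npj_\fX(x)^2$ (Lemmas~\ref{le:gamma(xy)} and~\ref{le:npjx*}) gives $\npj_\fX(x) \ge \sqrt 2$ in both cases.

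In the equality case $\npj_\fX(x) = \sqrt 2$, the sandwich forces $\npj_\fX(xx^*) = 2$. In Case B, the equation $c + \npj_\fX(w) = 2$ leaves only $(c, \npj_\fX(w)) \in \{(1, 1), (2, 0)\}$. The sub-case $c = 2$ gives $w \in \langle\fX\rangle$ directly, so $xx^*x - 2x = wx \in \langle\fX\rangle$. In the sub-case $c = 1$, setting $y := w$, Theorem~\ref{th:endotriv} makes $y$ $\fX$-endotrivial; using $y^* = y$ one gets $y^2 \equiv \one \pmod{\langle\fX\rangle}$, and then $(xx^*x - 2x)x^* = (y - \one)(y + \one) = y^2 - \one \in \langle\fX\rangle$. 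In Case A, $z := xx^*x - 2x \cge 0$ follows from Lemma~\ref{le:xx*x}. The main obstacle is then upgrading these partial results to $xx^*x - 2x \in \langle\fX\rangle$ itself. For Case A, I would iterate $xx^*x \cge 2x$ to obtain $(xx^*)^n x = 2^n x + S_n$ with $S_n \cge 0$, and combine the linearity of $\core_\fX$ with the bound $\dim\core_\fX((xx^*)^n x) \le \cc_n^\fX(xx^*)\dim\core_\fX(x)$ from Lemma~\ref{le:core}\,(iv) and the tight asymptotics $\cc_n^\fX(xx^*)^{1/n} \to 2$ (Theorem~\ref{th:inf}) to squeeze $\dim\core_\fX(z)$ to zero. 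For the $c = 1$ upgrade, the structural decomposition of $x$ as a basis element $x_{i_0}$ with $[x_{i_0} x_{i_0^*}:\one] = 1$ plus an element of $\langle\fX_{\max}\rangle$, forced by the constraint $[xx^*:\one] = 1$, should yield the result.

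Finally, for Part~3 ($x \notin \langle\fX_{\max}\rangle$, so $c \ge 1$), setting $y := xx^* - \one = (c-1)\one + w \cge 0$, Theorem~\ref{th:a+bx} gives $\npj_\fX(y) = \npj_\fX(xx^*) - 1 = 1$, and Theorem~\ref{th:endotriv} then shows $y$ is $\fX$-endotrivial.
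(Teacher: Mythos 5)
Your case split on $c=[xx^*:\one]$ is the paper's split, and the arguments you give for the inequalities of Part~1, for Part~3, and for the sub-case $c=2$ of Part~2 are correct; in Case~B you use Theorem~\ref{th:a+bx} where the paper invokes Theorem~\ref{th:ge-m}, but both work. The two remaining ``upgrade'' steps, however, do not go through as proposed, and those are precisely the hard part of the theorem.

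The Case~A squeeze does not close. With $z:=xx^*x-2x\cge 0$ and $(xx^*)^nx=2^nx+S_n$, one has $S_n\cge 2^{n-1}z$, and therefore
\[
2^n\dim\core_\fX(x)+2^{n-1}\dim\core_\fX(z)\;\le\;\dim\core_\fX\bigl((xx^*)^nx\bigr)\;\le\;\cc_n^\fX(xx^*)\,\dim\core_\fX(x).
\]
But Theorem~\ref{th:inf} only gives $\cc_n^\fX(xx^*)\ge 2^n$ with $\sqrt[n]{\cc_n^\fX(xx^*)}\to 2$; the ratio $\cc_n^\fX(xx^*)/2^n$ may tend to infinity (subexponentially), so the two sides are never in conflict and $\dim\core_\fX(z)$ is not forced to vanish. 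The paper argues differently: supposing $v:=xx^*x-2x\not\in\langle\fX\rangle$, Proposition~\ref{pr:nilp}\,(i) gives $vv^*\not\in\langle\fX\rangle$; then $vx^*xx^*=vv^*+2vx^*$ is a sum of non-negatives one of which lies outside $\langle\fX\rangle$, so $vx^*xx^*\not\in\langle\fX\rangle$, hence $vx\not\in\langle\fX\rangle$, and then $vx^2\not\in\langle\fX\rangle$ by Proposition~\ref{pr:xx*yy*}; Theorem~\ref{th:ge-m}\,(i) applied to $v+x+x$ yields $\npj_\fX(xx^*x)\ge 3$, contradicting $\npj_\fX(xx^*x)\le\npj_\fX(xx^*)\npj_\fX(x)=2\sqrt{2}$.

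For $c=1$ you correctly reach $(xx^*x-2x)x^*=y^2-\one\in\langle\fX\rangle$, which is the right intermediate step, but the suggestion to finish via the structural decomposition of $x$ is a hope rather than an argument; no route from that decomposition to $xx^*x-2x\in\langle\fX\rangle$ is indicated. The paper completes the step by noting that $(xy-x)(xy-x)^*=(y-\one)\cdot(xy-x)x^*$ also lies in $\langle\fX\rangle$ and then applying Proposition~\ref{pr:nilp}\,(i) to conclude $xy-x=xx^*x-2x\in\langle\fX\rangle$.
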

\begin{proof}
We divide into two cases, according as $[xx^*:\one]=0$ or
$[xx^*:\one]>0$.

If $[xx^*:\one]=0$ then $x\in\langle\fX_{\max}\rangle$, and 
by Lemma~\ref{le:xx*x} we have $xx^*x\cge 2x$
and so $(xx^*)^2\cge 2xx^*$. It follows using Lemma~\ref{le:core}\,(iii) that
$\npj_\fX((xx^*)^2)\ge 2\npj_\fX(xx^*)$. Since $\npj_\fX(xx^*)>0$ it
follows that $\npj_\fX(xx^*)\ge 2$. 
Using Lemma~\ref{le:gamma(xy)} we have
\[ 2\le \npj_\fX(xx^*)\le \npj_\fX(x)\npj_\fX(x^*)=\npj_\fX(x)^2 \]
and so $\npj_\fX(x)\ge \sqrt{2}$. If $\npj_\fX(x)=\sqrt{2}$ this shows
that $\npj_\fX(xx^*)=2$. Set $v=xx^*x-2x$. If $v\not\in\langle\fX\rangle$, then
by Proposition~\ref{pr:nilp}\,(i), $vv^*\not\in
\langle\fX\rangle$. This implies that
$vx^*xx^*=vv^*+2vx^*\not\in\langle\fX\rangle$ and so
$vx\not\in\langle\fX\rangle$. By Theorem~\ref{th:ge-m}\,(i), we then have
$2\sqrt{2}\ge\npj_\fX(xx^*x)=\npj_\fX(v+2x)\ge 3$, a contradiction. Thus
$v\in\langle\fX\rangle$. 

On the other hand, if $[xx^*:\one]>0$ then
$x\not\in\langle\fX_{\max}\rangle$ and $xx^*=\one+y$ with $y\cge 0$. Since 
$x$ is not $\fX$-endotrivial we have $y\not\in\langle\fX\rangle$ and hence
$\npj_\fX(y)\ge 1$. Thus by Theorem~\ref{th:ge-m}, $\npj_\fX(xx^*)\ge 2$.
Again this shows that $\npj_\fX(x)\ge \sqrt{2}$, and if
$\npj_\fX(x)=\sqrt{2}$ then $\npj_\fX(xx^*)=2$ and $xx^*=\one+y$ with
$\npj_\fX(y)=1$. So $y$ is $\fX$-endotrivial, and in particular
indecomposable modulo $\fX$. Furthermore, $y=y^*$, 
so $y^2-\one\in\langle\fX\rangle$.
Thus $(xy)x^*=y+y^2$, which is $\one+y$ plus an element of $\langle\fX\rangle$.
Hence $(xy-x)x^*\in\langle\fX\rangle$, and so
$(xy-x)(xy-x)^*\in\langle\fX\rangle$,
and by Proposition~\ref{pr:nilp}\,(i), $xy-x\in\langle\fX\rangle$. Finally, we have
$xx^*x=(\one+y)x$ and so $xx^*x-2x\in\langle\fX\rangle$.
\end{proof}

\begin{eg}
Let $d\ge 2$ be an integer, and let  
$\fa$ be the representation ring with basis $x_0=1$, $x_1=x$, $x_2=y$ and $x_3=\rho$
with multiplication table
\[ \begin{array}{|cccc} \hline 
1&x&y&\rho \\ 
x&1+y+\rho&x&d\rho\\
y&x&1&\rho\\
\rho&d\rho&\rho&(d^2-2)\rho.
\end{array} \]
with $x^*=x$, $y^*=y$, $\rho^*=\rho$, $\dim x = d$, $\dim y = 1$ and
$\dim \rho= d^2-2$. This is an example of 
Theorem~\ref{th:sqrt2}, with $\fX=\fX_\proj=\fX_{\max}$ and
$x\not\in\fX_{\max}$.
We have $\npj_\fX(x)=\sqrt{2}$, $\npj_\fX(y)=1$ and $\npj_\fX(\rho)=0$.
\end{eg}

The next proposition involves the algebraic integer $\alpha\approx 2.839286755\dots$,
which is the real root of the polynomial $X^3-4X^2+4X-2$. The other
two roots are complex conjugate.

\begin{prop}\label{pr:alpha}
If $x\in\langle\fX_{\max}\rangle_{\cg 0}$ and $xx^*x-2x\not\in\langle\fX\rangle$ then
$\npj_\fX(xx^*)\ge \alpha$.
\end{prop}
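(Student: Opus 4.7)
The plan is to set $v = xx^*x - 2x$ and $w = xx^*$. Since $x \in \langle\fX_{\max}\rangle_{\cg 0}$ we have $[xx^*:\one] = 0$, so by Lemma~\ref{le:xx*x} $v \cge 0$; by hypothesis $v \notin \langle\fX\rangle$; and since $v \in \langle\fX_{\max}\rangle$, we have $[vv^*:\one] = 0$, so $v$ is positive and not $\fX$-endotrivial. Writing $v = (w-2)x$ (which follows from $xx^*x = wx$), one computes $vv^* = (w-2)xx^*(w-2) = w(w-2)^2$ and the key identity
\[ vv^*v - 2v = (w-2)\,p(w)\,x, \]
where $p(X) = X^3 - 4X^2 + 4X - 2$ is the minimal polynomial of $\alpha$.

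First I would apply Lemma~\ref{le:xx*x} to the positive element $v$ to obtain $vv^*v - 2v \cge 0$, and then multiply on the right by $v^* = (w-2)x^*$, which is itself non-negative. By Lemma~\ref{le:core}(i) the product is non-negative, and a direct calculation gives
\[ (vv^*)^2 - 2vv^* = vv^* \cdot p(w) \cge 0, \]
so $(vv^*)^2 \cge 2vv^*$. Iterating (multiplying by $vv^* \cge 0$) yields $(vv^*)^n \cge 2^{n-1}vv^*$, hence $\cc_n^\fX(vv^*) \ge 2^{n-1}\dim\core_\fX(vv^*) \ge 2^{n-1}$, the last step using $vv^* \notin \langle\fX\rangle$ from Proposition~\ref{pr:nilp} applied to $v$. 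Therefore $\npj_\fX(vv^*) \ge 2$.

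The final step is to translate $\npj_\fX(vv^*) \ge 2$ into $\gamma := \npj_\fX(w) \ge \alpha$. I would use the species interpretation of the gamma invariant (Theorem~\ref{th:spec-radius}, as a forward reference): every $\fX$-core bounded species $s$ satisfies $s(w) = |s(x)|^2 \in [0,\gamma]$, and since $vv^* = w(w-2)^2$, also $|s(vv^*)| = s(w)(s(w)-2)^2 =: g(s(w))$. The function $g(t) = t(t-2)^2$ has maximum $32/27 < 2$ on $[0,2]$ and is strictly increasing on $[2,\infty)$, so the lower bound $\sup_s g(s(w)) = \npj_\fX(vv^*) \ge 2$ forces $g(\gamma) \ge 2$, i.e., $p(\gamma) \ge 0$; since $\alpha$ is the unique real root of $p$ with $p > 0$ beyond $\alpha$, this gives $\gamma \ge \alpha$.

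The main obstacle is the appeal to the species theorem, which is stated only in a later chapter. As an alternative purely algebraic route, multiplying $(vv^*)^2 - 2vv^* \cge 0$ by $w^n$ gives $w^{n+1}(w-2)^2p(w) \cge 0$, yielding the dimensional inequality $d_{n+6} - 8d_{n+5} + 24d_{n+4} - 34d_{n+3} + 24d_{n+2} - 8d_{n+1} \ge 0$ on $d_n = \cc_n^\fX(w)$. Applying Pringsheim's Theorem (Theorem~\ref{th:Pringsheim}) to $f_{\fX,w}(t)$ and using the factorization of the characteristic polynomial as $(1-2t)^2(1-\alpha t)(1-\beta t)(1-\bar\beta t)$ (whose smallest positive zero is $1/\alpha$), combined with the non-negativity of the source sequence in the associated generating-function identity, should rule out radius of convergence strictly greater than $1/\alpha$ and yield the bound $\gamma \ge \alpha$ directly.
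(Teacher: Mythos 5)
Your algebraic identities are correct and neatly packaged: with $w=xx^*$, indeed $v=(w-2)x$, $vv^*=w(w-2)^2$, and $vv^*v-2v=(w-2)p(w)x$ where $p(X)=X^3-4X^2+4X-2$; and the chain $v\in\langle\fX_{\max}\rangle_{\cg 0}$ $\Rightarrow$ $[vv^*:\one]=0$ $\Rightarrow$ $vv^*v\cge 2v$ (Lemma~\ref{le:xx*x}) $\Rightarrow$ $(vv^*)^2\cge 2vv^*$ (right-multiply by $v^*\cge 0$) $\Rightarrow$ $\npj_\fX(vv^*)\ge 2$ is a clean extraction of essentially the same algebraic content that the paper encodes in its matrix computation. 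The gap lies entirely in the last step, where you pass from $\npj_\fX(vv^*)\ge 2$ to $\npj_\fX(w)\ge\alpha$; neither of your two routes closes it.

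The species route rests on $s(w)=|s(x)|^2\in[0,\gamma]$ for every $\fX$-core bounded species $s$. This would require $s(x^*)=\overline{s(x)}$, i.e.\ that the representation ring is symmetric in the sense of Section~\ref{se:symm-rep-ring}, which the paper leaves as an open question and which Proposition~\ref{pr:alpha} does not assume. Without that, all one knows is that $w^*=w$, so $\Spec_\fX(w)$ is stable under complex conjugation (Lemma~\ref{le:Specx*}) but need not be contained in $\bR_{\ge 0}$. The honest estimate available is $|g(z)|\le |z|(|z|+2)^2$ for $|z|\le\gamma$, which yields only $\gamma(\gamma+2)^2\ge 2$, far below $\alpha$.

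The Pringsheim route is also incomplete. Writing $Q(t)=(1-2t)^2(1-\alpha t)(1-\beta t)(1-\bar\beta t)$, what the dimensional inequality gives is $Q(t)f_{\fX,w}(t)=R(t)+t^6G(t)$ with $G$ having non-negative coefficients and $R$ an \emph{uncontrolled} polynomial of degree at most $5$. If $\gamma<\alpha$, then on $(1/\alpha,1/\gamma)$ the product $Q(t)f_{\fX,w}(t)$ is negative because $Q$ changes sign at its simple zero $1/\alpha$ while $f>0$; but this is not a contradiction, since $R(t)$ may be negative enough there to dominate the non-negative tail $t^6G(t)$. Pringsheim's theorem tells you where singularities of $f$ and $G$ sit, but does not by itself forbid $1/\gamma>1/\alpha$.

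The paper's proof avoids both issues by arguing with dimensions only: it records the action of multiplication by $xx^*$ on the four elements $xx^*$, $vx^*$, $vv^*$, $xx^*vv^*$ (each shown to lie outside $\langle\fX\rangle$) as a non-negative integer matrix $A$, notes that $(xx^*)^n$ therefore dominates $\mathbf{1}^{\mathsf T}A^{n-1}e_1$ copies of positive elements outside $\langle\fX\rangle$, and applies Perron--Frobenius to the matrix $A$, whose characteristic polynomial factors as $(t-2)p(t)$. To repair your approach you would need a comparable dimensional mechanism --- a chain of $\cge$-inequalities propagating from $vv^*$ back to powers of $w$ --- rather than a species or generating-function argument.
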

\begin{proof}
Suppose that $\npj_\fX(xx^*)<\alpha$. 
Since $x\in\langle\fX_{\max}\rangle$ we have $xx^*x-2x\cge 0$.  If
$xx^*x-2x\not\in\langle\fX\rangle$ then we may write $xx^*x=2x+v$ with
$v\in\langle\fX_{\max}\rangle_{\cg 0}$ and $v\not\in\langle\fX\rangle$.
By Proposition~\ref{pr:nilp}\,(i) we have $vv^*\not\in\langle\fX\rangle$, so 
$vx^*xx^*=vv^*+2vx^*\not\in\fX$ and hence $vx\not\in\fX$.
By Proposition~\ref{pr:xx*yy*}, the elements $xx^*$, $vx^*$, $vv^*$
and $vv^*xx^*$ are not in $\langle\fX\rangle$. 
Since $v\in\langle\fX_{\max}\rangle$, we have $vv^*v=2v+w$ with
$w\cge 0$. Thus we have
\begin{align*}
(xx^*)(xx^*)&=2xx^*+vx^* \\
(xx^*)(vx^*)&=2vx^*+vv^* \\
(xx^*)(vv^*)&=xx^*vv^* \\
(xx^*)(xx^*vv^*)&=2vx^*+2xx^*vv^*+wx^*.
\end{align*}

Ignoring the term $wx^*$, multiplication by $xx^*$ on the linear span of 
the four elements 
$xx^*$, $vx^*$, $vv^*$ and $xx^*vv^*$ is given by the following matrix:
\[ A=\begin{pmatrix} 2&0&0&0\\1& 2 & 0 & 2 \\0& 1 & 0 & 0 \\0& 0 & 1 &
      2 \end{pmatrix} \]
The element $(xx^*)^n$ is therefore a sum of at least 
\[ \begin{pmatrix} 1 & 1 & 1 & 1 \end{pmatrix} A^{n-1}
\begin{pmatrix} 1 \\ 0 \\ 0 \\ 0 \end{pmatrix} \]
positive terms not in $\langle\fX\rangle$. Note that this argument
does not depend on these four elements being linearly independent.
It follows from Perron--Frobenius theory that
$\npj_\fX(xx^*)$ is at least as large as the largest positive real eigenvalue of
$A$. The characteristic equation of $A$ is $t^3-4t^2+4t-2=0$, and so
the largest positive real eigenvalue is the algebraic integer $\alpha$.
\end{proof}

\begin{theorem}\label{th:1+sqrt2}
If $x\in\fa_{\cge 0}$ with 
$2\le\npj_\fX(xx^*)<1+\sqrt{2}$ then $xx^*=1+y$ and $y$ 
is $\fX$-endotrivial.
\end{theorem}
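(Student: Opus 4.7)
The plan is to set $y := xx^* - \one$ and apply Theorem~\ref{th:sqrt2} to $y$ itself in order to force $y$ to be $\fX$-endotrivial. The key observation is that once the decomposition $xx^* = \one + y$ with $y \in \fa_{\cge 0}$ is available, Theorem~\ref{th:a+bx} gives
\[
 \npj_\fX(xx^*) = 1 + \npj_\fX(y),
\]
so the hypothesis $2 \le \npj_\fX(xx^*) < 1+\sqrt{2}$ translates exactly into $1 \le \npj_\fX(y) < \sqrt{2}$. In particular Lemma~\ref{le:gamma=0} gives $y \notin \langle\fX\rangle$, whence $y \in \fa_{\cg 0}$. If $y$ were not $\fX$-endotrivial, Theorem~\ref{th:sqrt2} applied to $y$ would force $\npj_\fX(y) \ge \sqrt{2}$, a contradiction. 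Hence $y$ is $\fX$-endotrivial.

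The step that genuinely needs work is showing $[xx^*:\one] \ge 1$, so that the decomposition $xx^* = \one + y$ with $y \cge 0$ exists. I would argue by contradiction: if $[xx^*:\one] = 0$ then, as $\npj_\fX(xx^*) \ge 2 > 0$ forces $x \ne 0$, we have $x \in \langle\fX_{\max}\rangle_{\cg 0}$. Since $\npj_\fX(xx^*) < 1+\sqrt{2} < \alpha$, Proposition~\ref{pr:alpha} forces $xx^*x - 2x \in \langle\fX\rangle$; right-multiplying by $x^*$ yields $(xx^*)^2 - 2\,xx^* \in \langle\fX\rangle_{\cge 0}$, and combining Lemma~\ref{le:gamma(x+fX)}, Lemma~\ref{le:gamma(x^m)} and Theorem~\ref{th:a+bx} produces the functional equation $\npj_\fX(xx^*)^2 = 2\,\npj_\fX(xx^*)$, forcing $\npj_\fX(xx^*) = 2$.

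The main difficulty I expect is excluding this borderline case $\npj_\fX(xx^*) = 2$ with $[xx^*:\one] = 0$, since the bound $1+\sqrt{2}$ lies strictly above $2$. A natural strategy is to refine the Perron--Frobenius matrix computation of Proposition~\ref{pr:alpha}, tracking the extra positive contributions to $(xx^*)^n$ that arise from the relation $xx^*x \equiv 2x \pmod{\langle\fX\rangle}$ together with an additional column for the leftover element $v=xx^*x-2x\cdot\text{(lift)}$, and showing that the genuine spectral contribution in this degenerate situation strictly exceeds $2$. Alternatively the theorem may implicitly assume $x \notin \langle\fX_{\max}\rangle$, which is the natural setting for the conclusion; with that hurdle cleared, the first paragraph completes the proof.
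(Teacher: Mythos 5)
Your argument follows the paper's own proof exactly: decompose $xx^*=\one+y$ when $[xx^*:\one]\ge1$, deduce $1\le\npj_\fX(y)<\sqrt 2$ from Theorem~\ref{th:a+bx}, and apply Theorem~\ref{th:sqrt2}; then try to rule out $[xx^*:\one]=0$ via Proposition~\ref{pr:alpha}. Your suspicion about the borderline case is well-founded, and the gap you flag is a genuine one that the paper's proof also contains. When $x\in\langle\fX_{\max}\rangle_{\cg 0}$ and $xx^*x-2x\in\langle\fX\rangle$, one gets $\npj_\fX(xx^*)=2$, which is permitted by the hypothesis $2\le\npj_\fX(xx^*)<1+\sqrt 2$; the sentence ``So we may suppose that $xx^*x-2x\not\in\langle\fX\rangle$'' in the paper silently discards this unhandled case rather than resolving it.

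Your strategy~(a) --- sharpening Proposition~\ref{pr:alpha} to push $\npj_\fX(xx^*)$ strictly above $2$ --- cannot succeed, because the degenerate value $2$ is genuinely attained with $x\in\langle\fX_{\max}\rangle$. For a concrete counterexample to the theorem as stated, take $\fa=a(G)$ for $G$ the Klein four group over an algebraically closed field of characteristic two, $\fX=\fX_\proj$, and $x=[M]$ where $M$ is the two-dimensional indecomposable with parameter $(1,\lambda)$ for some $\lambda\in\bP^1(k)\setminus\{0,1,\infty\}$. Then $x^*=x$, $\npj_\fX(x)=\sqrt 2$, $xx^*$ is the class of the four-dimensional band module with parameter $(2,\lambda)$, $\npj_\fX(xx^*)=2$, $xx^*x\equiv 2x\pmod{\langle\fX\rangle}$, and yet $[xx^*:\one]=0$ because every summand of $M\otimes M^*$ has even dimension. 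So the decomposition $xx^*=\one+y$ with $y\cge 0$ does not exist. Your strategy~(b) is the correct repair: one must add the hypothesis $x\not\in\langle\fX_{\max}\rangle$ (mirroring the second sentence of Theorem~\ref{th:sqrt2}), or equivalently strengthen the inequality to $2<\npj_\fX(xx^*)<1+\sqrt 2$. With either version, the dichotomy of Proposition~\ref{pr:alpha} rules out $x\in\langle\fX_{\max}\rangle$, and your first paragraph then completes the proof.
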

\begin{proof}
If  $x\not\in\fX_{\max}$ then
$xx^*=1+y$ with $1\le\npj_\fX(y)<\sqrt{2}$. By
Theorem~\ref{th:sqrt2}, $y$ is $\fX$-endotrivial.

If $x\in\fX_{\max}$ then
$xx^*x\cge 2x$. If $xx^*x-2x\in\langle\fX\rangle$ then
$(xx^*)^2-2(xx^*) \in \langle\fX\rangle$ and then by Lemma~\ref{le:gamma(x+fX)},
we have $\npj_\fX(xx^*)=2$. So we may suppose that 
$xx^*x-2x\not\in\langle\fX\rangle$. 
We are now in the situation of Proposition~\ref{pr:alpha}, and since
$\alpha>1+\sqrt{2}$, we are done.
\end{proof}

\section{Algebraic elements}

Algebraic modules for finite groups
were first studied by Alperin~\cite{Alperin:1976b}; see 
also \S II.5 of Feit~\cite{Feit:1982a}, as well as
Berger~\cite{Berger:1976a,Berger:1979a}, 
Craven~\cite{Craven:2011a,Craven:2013a},
Feit~\cite{Feit:1980a}, 
Gill~\cite{Gill:2016a}. The following definition 
generalises this.

\begin{defn}
Let $\fX$ be a representation ideal in a representation ring $\fa$.
An element $x\in\fa_{\cge 0}$ is said to be
\emph{algebraic}\index{algebraic element} modulo $\fX$ 
if $x$ satisfies some monic equation with integer coefficients:
\[ x^n + a_{n-1}x^{n-1}+\dots+a_0 \one=0 \]
with $n\ge 1$, in the quotient ring $\fa_\fX=\fa/\langle\fX\rangle$.
If $\fX=\varnothing$, we just say that $x$ is \emph{algebraic}. The
\emph{minimal equation}\index{minimal!equation} of $x$ is the monic
equation of least degree satisfied by $x$.
\end{defn}

\begin{rk}
The element $x\in\fa_{\cge 0}$ determines a homomorphism $\bZ[X] \to
\fa_\fX$ taking $X$ to $x$, and hence also a homomorphism $\bQ[X] \to
\fa_{\bQ,\fX}=\bQ \otimes_\bZ \fa_\fX$. If $x$ is algebraic modulo $\fX$, this map
has a kernel. Since $\bQ[X]$ is a principal
ideal domain, the kernel is generated by some monic polynomial, say
$f(X)$. Every polynomial satisfied by the image of $x$ in
$\fa_{\bQ,\fX}$ is a multiple of $f(X)$. If $x$ also satisfies a monic
polynomial $g(X)$ in $\bZ[X]$ then $g(X)$ is a multiple of $f(X)$ in
$\bQ[X]$ , and so by Gauss' lemma\index{Gauss' lemma} we have $f(X)\in
\bZ[X]$.  
\end{rk}

\begin{lemma}\label{le:algebraic}
If $x$ and $y$ are algebraic modulo $\fX$ then so are $x+y$ and $xy$.
For a non-negative element $x\in\fa$, the following are equivalent:
\begin{enumerate}
\item $x$ is algebraic modulo $\fX$.
\item The additive group of the subring of $\fa_\fX$ generated by $x$ 
is free abelian of finite rank.
\item The additive group of the subring of $\fa_\fX$ generated by $x$ and
$x^*$ is free abelian of finite rank.
\item There are only finitely many basis elements 
$x_i$, $i\in \fI\setminus\fX$ such that for
some $m,n\ge 0$ we have $[x^mx^{*\,n}:x_i]>0$.
\item $x$ is contained in a representation subring $\fa'$ of $\fa$,
containing $\langle \fX\rangle$,
such that the
additive group of $\fa'_\fX=\fa'/\langle\fX\rangle$ is free abelian of finite
rank, containing the $x_i$, $i\in\fI\setminus\fX$
such that for some $m,n\ge 0$ we have $[x^mx^{*\,n}:x_i]>0$.
\end{enumerate}
\end{lemma}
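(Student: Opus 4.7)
The plan is to establish the opening sentence via a standard integrality argument, and the equivalences through the cycle (i) $\Rightarrow$ (ii) $\Rightarrow$ (iii) $\Rightarrow$ (iv) $\Rightarrow$ (v) $\Rightarrow$ (i). The pervasive remark is that $\fa_\fX$ is a free abelian group with basis $\{x_j : j \in \fI\setminus\fX\}$, so every subgroup is free abelian, and a finitely generated subgroup has finite rank.

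For (i) $\Rightarrow$ (ii), if $x$ satisfies a monic polynomial of degree $m$ then $\bZ[x]$ is generated as an abelian group by $\one, x, \dots, x^{m-1}$. Conversely, (ii) $\Rightarrow$ (i) is Cayley--Hamilton applied to the multiplication-by-$x$ endomorphism of the free $\bZ$-module $\bZ[x]$: its characteristic polynomial is monic of integer coefficients and annihilates $x$. For (ii) $\Rightarrow$ (iii), note that $x^*$ satisfies the same monic integer polynomial as $x$ (apply $*$ to the defining relation; integer coefficients are fixed), so $\bZ[x, x^*]$ is spanned by the finitely many $x^i x^{*j}$ with $0 \le i, j < m$. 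For (iii) $\Rightarrow$ (iv), pick a $\bZ$-basis of $\bZ[x, x^*]$; each member has finite support in $\fI\setminus\fX$, so the union of these supports is a finite set containing the support of every element of the subring, in particular of every $x^m x^{*n}$.

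The most delicate step is (iv) $\Rightarrow$ (v). Let $T$ be the finite set of indices furnished by (iv), set $\fI'' = \fX \cup T \cup \mathrm{supp}(\rho)$, and define $\fa' = \bZ\langle x_j : j \in \fI''\rangle$. I claim $\fa'$ is a representation subring: it is $*$-closed because $\fX$, $T$, and $\mathrm{supp}(\rho)$ each are, and it contains $x, x^*, \rho$ and $\langle\fX\rangle$. Closure under multiplication amounts to showing that for $j, k \in \fI''$ the support of $x_j x_k$ lies in $\fI''$. If one of $j, k$ is in $\fX$, the ideal property puts the support in $\fX$. If one of $j, k$ is in $\mathrm{supp}(\rho)$, the basis element in question is projective indecomposable, so by Lemma~\ref{le:proj-ideal}\,(i) the product is projective, with support in $\fX_\proj = \mathrm{supp}(\rho)$. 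In the remaining case $j, k \in T$, choose $m_1, n_1, m_2, n_2$ so that $x_j$ appears in $x^{m_1}x^{*n_1}$ and $x_k$ in $x^{m_2}x^{*n_2}$; by non-negativity of the structure constants, every basis element in the support of $x_j x_k$ appears in $x^{m_1+m_2}x^{*(n_1+n_2)}$, hence in $T$. Axiom~(v) holds with the same $\rho$, and the remaining axioms are inherited from $\fa$; since $\fI''\setminus\fX$ is finite, $\fa'_\fX$ is free abelian of finite rank.

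For (v) $\Rightarrow$ (i), apply Cayley--Hamilton to multiplication by $x$ on the free abelian group $\fa'_\fX$ of finite rank; the resulting monic integer identity persists in $\fa_\fX$ because the map $\fa'_\fX\to\fa_\fX$ induced by the inclusion $\fI''\subseteq\fI$ is injective. Finally, for the opening statement, if $x, y \in \fa_{\cge 0}$ are algebraic modulo $\fX$, then $\bZ[x]$ and $\bZ[y]$ are free of finite rank by (i) $\Rightarrow$ (ii), so $\bZ[x, y]$ is spanned by finitely many $x^i y^j$ and is again free of finite rank; its subgroups $\bZ[x+y]$ and $\bZ[xy]$ are therefore free of finite rank, and (ii) $\Rightarrow$ (i) yields algebraicity (noting that $x+y$ and $xy$ lie in $\fa_{\cge 0}$ by Lemma~\ref{le:core}\,(i)). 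The main obstacle is verifying the representation-ring axioms for $\fa'$ in (iv) $\Rightarrow$ (v); the decisive ingredient is the absorption property $y\rho=(\dim y)\rho$, which ensures that throwing $\rho$ into the generators does not enlarge $\fI''\setminus\fX$ beyond a finite set.
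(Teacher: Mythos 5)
Your proof is correct and follows essentially the same approach as the paper's: the same chain of implications, and the same construction of the representation subring $\fa'$ spanned by $\fX$, the projective basis elements, and the supports of the $x^mx^{*\,n}$. The one stylistic difference is that you invoke Cayley--Hamilton for (ii)$\Rightarrow$(i) and (v)$\Rightarrow$(i), where the paper uses an ascending-chain argument in the Noetherian $\bZ$-module $\bZ[x]$; both are standard integrality arguments (and in your closure-under-multiplication case $j,k\in T$, the support of $x_jx_k$ lands in $T\cup\fX$ rather than $T$, a harmless slip).
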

\begin{proof}
(i) $\Leftrightarrow$ (ii): If $x$ is algebraic modulo $\fX$ then the
additive group of the 
subring of $\fa_\fX$ generated by $x$ is spanned by
$\one,x,x^2,\dots,x^{n-1}$. Conversely, suppose that the additive 
group of the subring generated by
$x$ has finite rank. Look at the ascending chain of additive subgroups
whose $i$th term is generated by $\one,x,\dots,x^i$. This 
ascending chain has to terminate, so for some value of $n$, the 
element $x^n$ is in the additive subgroup generated by
$\one,x,\dots,x^{n-1}$. This gives us a degree $n$ monic equation with integer
coefficients satisfied by $x$.

If $x$ and $y$ are algebraic modulo $\fX$ then the $y$ is algebraic
over the subring of $\fa_\fX$ generated by $x$, and so as a module
over that ring, the subring generated by $x$ and $y$ is finitely
generated. It follows that it is finitely generated as an abelian
group, i.e., has finite rank, 
so every element of the subring generated by $x$ and $y$ is
algebraic. In particular, $x+y$ and $xy$ are algebraic.

(ii) $\Leftrightarrow$ (iii): If $x$ is algebraic then so is $x^*$,
with the same minimal equation. Therefore the subring generated by $x$
and $x^*$ has finite rank. Conversely, if the subring generated by $x$
and $x^*$ has finite rank, so does the subring generated by $x$.

The equivalence of (iii) and (iv) is obvious. It is also obvious that
(v) implies (ii). Finally, to see that (iv) implies (v), we look at
the subring $\fa'$ of $\fa$ spanned by the $x_i$ such that $i\in\fX$, or $x_i$ is
projective, or
for some $m,n\ge 0$ we have $[x^mx^{*\,n}:x_i]>0$. This is a
representation subring $\fa$ with the required properties.
\end{proof}

\begin{lemma}\label{le:alg-mod-proj}
An element $x\in\fa_{\cge 0}$ is algebraic if and only if it is
algebraic modulo $\fX_\proj$.
\end{lemma}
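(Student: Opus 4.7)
The plan is to use the equivalence (i)$\Leftrightarrow$(iv) in Lemma~\ref{le:algebraic}, together with the fact (noted in Definition~\ref{def:proj}) that the set of projective indecomposable basis elements is finite.

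One direction is routine: if $x$ satisfies a monic integer polynomial $p(X)$ with $p(x)=0$ in $\fa$, then $p(x)=0$ in the quotient $\fa_{\fX_\proj}$ as well, so $x$ is algebraic modulo $\fX_\proj$.

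For the converse, suppose $x$ is algebraic modulo $\fX_\proj$. By Lemma~\ref{le:algebraic}\,(iv) applied with $\fX = \fX_\proj$, the set
\[
\fJ_{\mathrm{np}} = \{\,i \in \fI\setminus\fX_\proj \mid [x^m x^{*\,n}:x_i]>0 \text{ for some } m,n\ge 0\,\}
\]
is finite. On the other hand, the set $\fX_\proj$ of projective indecomposables is itself finite, since $\fX_\proj \subseteq \{i \in \fI : [\rho:x_i]>0\}$ and $\rho$ has finite support. Therefore the full set
\[
\fJ = \{\,i \in \fI \mid [x^m x^{*\,n}:x_i]>0 \text{ for some } m,n\ge 0\,\} = \fJ_{\mathrm{np}} \cup \fJ_{\mathrm{p}}
\]
is finite, where $\fJ_{\mathrm{p}} = \fJ \cap \fX_\proj$ is a subset of the finite set $\fX_\proj$. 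Now applying Lemma~\ref{le:algebraic}\,(iv) in the other direction, with $\fX = \varnothing$, the finiteness of $\fJ$ tells us exactly that $x$ is algebraic.

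There is no hard step here; the only thing one needs to notice is that the definition of projective indecomposable ($[\rho:x_i]>0$) together with axiom~(v) forces $\fX_\proj$ to be finite, so passing from ``algebraic modulo $\fX_\proj$'' back to ``algebraic'' only adds control over finitely many extra basis elements.
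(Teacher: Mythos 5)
Your proof is correct and takes essentially the same route as the paper's (the paper just compresses it to one line: ``This follows from Lemma~\ref{le:algebraic}, since there are only finitely many projective indecomposables''). You have correctly identified the key facts: the equivalence (i)$\Leftrightarrow$(iv) in Lemma~\ref{le:algebraic}, the finiteness of $\fX_\proj$ from Definition~\ref{def:proj}, and the observation that the empty set is a valid representation ideal for which ``algebraic modulo $\varnothing$'' recovers ``algebraic.''
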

\begin{proof}
This follows from Lemma~\ref{le:algebraic}, since there are only
finitely many projective indecomposables.
\end{proof}

\section{The maximal quotient}\index{maximal!quotient}

Let $\fa$ be a representation ring.
In this section we examine some properties of 
the quotient $\fa_{\max}=\fa/\langle\fX_{\max}\rangle$,\index{X@$\fX_{\max}$}\index{a@$\fa_{\max}$}
which has a basis consisting of the $x_i$ with
$i\in\fI\setminus\fX_{\max}$.

\begin{defn}
We define $n_i=[x_ix_{i^*}:\one]$.\index{n@$n_i=[x_ix_{i^*}:\one]$|textbf} Then $n_i>0$ if and only if
$i\in\fI\setminus\fX_{\max}$. If $\fa$ is a closed representation
ring, then $n_i=1$ for $i\in\fI\setminus\fX_{\max}$.
\end{defn}

\begin{lemma}\label{le:xixjxk}
For $i,j,k\in\fI$ we have
\[ [x_ix_jx_k:\one]=n_k[x_ix_j:x_{k^*}]. \]
In particular, $[x_ix_jx_k:\one]=0$ unless $i,j,k\not\in\fX_{\max}$.
\end{lemma}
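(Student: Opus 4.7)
The plan is to expand $x_ix_jx_k$ by first writing out $x_ix_j$ in the basis and then multiplying through by $x_k$, harvesting the coefficient of $\one$ on the other side. Concretely, from
\[ x_ix_j=\sum_{\ell\in\fI}[x_ix_j:x_\ell]\,x_\ell \]
one gets
\[ [x_ix_jx_k:\one]=\sum_{\ell\in\fI}[x_ix_j:x_\ell]\,[x_\ell x_k:\one]. \]
The key observation is that axiom~(ii) of Definition~\ref{def:repring} says $[x_\ell x_k:\one]>0$ forces $\ell=k^*$, so all terms in this sum except $\ell=k^*$ vanish. The surviving term is $[x_ix_j:x_{k^*}]\cdot[x_{k^*}x_k:\one]$, and by commutativity of $\fa$ the second factor equals $[x_kx_{k^*}:\one]=n_k$. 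This is the desired equality.

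For the ``in particular'' clause, I would dispose of each index in turn. If $k\in\fX_{\max}$, then $n_k=0$ by definition, and the identity just proved gives $[x_ix_jx_k:\one]=0$ immediately. If instead $i\in\fX_{\max}$, then $x_i\in\langle\fX_{\max}\rangle$; since $\langle\fX_{\max}\rangle$ is an ideal (Proposition~\ref{pr:max-min}\,(i)) and $0\notin\fX_{\max}$ (because $[x_0x_{0^*}:\one]=1>0$), the product $x_ix_jx_k$ lies in $\langle\fX_{\max}\rangle$ and so has zero $\one$-coefficient. The case $j\in\fX_{\max}$ is identical by commutativity.

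The proof is completely routine; there is no real obstacle. The only thing to be careful about is remembering that commutativity of $\fa$ allows us to convert $[x_{k^*}x_k:\one]$ into $n_k=[x_kx_{k^*}:\one]$, and that the stability of $\fX_{\max}$ under both the ideal property and the $*$-involution (part (ii) of Definition~\ref{def:repideal}) is what makes the three index positions symmetric in the final clause.
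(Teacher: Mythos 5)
Your proof of the main identity is correct and follows the paper's own route exactly: expand $x_ix_j$ in the basis, collect the coefficient of $\one$ in $(x_ix_j)x_k$, and invoke axiom~(ii) to collapse the sum to the single term $\ell=k^*$. For the ``in particular'' clause you take a slightly different route from what the displayed identity most directly suggests: rather than exploiting the symmetry of $[x_ix_jx_k:\one]$ under permutation of $i,j,k$ (which by the identity equals $n_i[x_jx_k:x_{i^*}]=n_j[x_ix_k:x_{j^*}]=n_k[x_ix_j:x_{k^*}]$, so any $n$ being zero kills it), you appeal to the ideal property of $\langle\fX_{\max}\rangle$ for the indices $i$ and $j$. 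Both arguments are valid and of comparable length; the symmetry observation is perhaps slightly more in the spirit of the lemma, since it reuses the identity you just proved, whereas yours brings in Proposition~\ref{pr:max-min}\,(i) as an extra ingredient.
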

\begin{proof}
This follows from
\[ [x_ix_jx_k:\one]=\sum_\ell [x_ix_j:x_\ell][x_\ell x_k:\one] \]
and property (ii) in Definition~\ref{def:repring}.
\end{proof}

The following is the analogue of Problem~(4.12), at the end of
Chapter~4 of Isaacs~\cite{Isaacs:1976a}.

\begin{lemma}
For
$i,j,k\in\fI\setminus\fX_{\max}$ we have
\[ [x_ix_j:x_k]\le(\max\{n_i,n_j\}/n_k)\dim x_k. \]
In particular, if $\fa$ is a closed representation ring then
\[ [x_ix_j:x_k]\le\dim x_k. \]
\end{lemma}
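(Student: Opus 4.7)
The key identity to exploit is the one provided by Lemma~\ref{le:xixjxk}: for any $a,b,c\in\fI$, we have $[x_ax_bx_c:\one] = n_c[x_ax_b:x_{c^*}]$. Applied to the triple $(i,j,k^*)$ and using $n_{k^*}=n_k$, this gives
\[ n_k[x_ix_j:x_k] = [x_ix_jx_{k^*}:\one]. \]
Now I would exploit commutativity of $\fa$: the right-hand side equals $[x_jx_{k^*}x_i:\one]$ and $[x_ix_{k^*}x_j:\one]$ as well. Applying Lemma~\ref{le:xixjxk} to each of these orderings yields the two identities
\[ n_k[x_ix_j:x_k] = n_i[x_jx_{k^*}:x_{i^*}] = n_j[x_ix_{k^*}:x_{j^*}]. \]

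Next, I would bound each right-hand coefficient by a pure dimension argument. Since all structure constants are non-negative and every basis element has positive dimension, the identity
\[ \dim x_i\cdot\dim x_k = \dim(x_ix_{k^*}) = \sum_{\ell\in\fI}[x_ix_{k^*}:x_\ell]\dim x_\ell \]
forces the inequality $[x_ix_{k^*}:x_{j^*}]\dim x_j \le \dim x_i\dim x_k$, and symmetrically $[x_jx_{k^*}:x_{i^*}]\dim x_i \le \dim x_j\dim x_k$. Substituting these back produces the two bounds
\[ [x_ix_j:x_k] \le \frac{n_j}{n_k}\cdot\frac{\dim x_i\,\dim x_k}{\dim x_j},\qquad [x_ix_j:x_k] \le \frac{n_i}{n_k}\cdot\frac{\dim x_j\,\dim x_k}{\dim x_i}. \]

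To finish, I would split into two cases according to which of $\dim x_i$ and $\dim x_j$ is smaller. If $\dim x_i \le \dim x_j$, the first bound yields $[x_ix_j:x_k]\le (n_j/n_k)\dim x_k$; in the opposite case the second bound yields $[x_ix_j:x_k]\le (n_i/n_k)\dim x_k$. In either case the bound is $\le(\max\{n_i,n_j\}/n_k)\dim x_k$, giving the claim. For the closed representation ring statement, every $n_\bullet$ equals $1$ and one reads off $[x_ix_j:x_k]\le\dim x_k$ directly.

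The only mildly tricky step is the very first one: recognising that Lemma~\ref{le:xixjxk} should be exploited not just once but twice, via commutative cyclic reorderings of $x_ix_jx_{k^*}$, in order to trade the multiplicity $[x_ix_j:x_k]$ for a multiplicity involving $x_{k^*}$, at which point the dimension-count inequality is available. Everything else is a one-line calculation.
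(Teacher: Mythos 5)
Your proof is correct and takes essentially the same route as the paper's: invoke Lemma~\ref{le:xixjxk} together with commutativity to trade $[x_ix_j:x_k]$ for a multiplicity of the form $[x_ix_{k^*}:x_{j^*}]$, then bound that multiplicity by counting dimensions in $x_ix_{k^*}$. The only (cosmetic) difference is the case split: the paper normalises by assuming $\dim x_i/n_i\le\dim x_j/n_j$ and then reads off $n_i$ from a single chain of inequalities, whereas you produce both symmetric bounds and choose according to whether $\dim x_i$ or $\dim x_j$ is smaller; either choice lands inside $\max\{n_i,n_j\}\dim x_k/n_k$.
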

\begin{proof}
Swapping the roles of $i$ and $j$ if necessary, we may assume that
\[ \frac{\dim x_i}{n_i}\le \frac{\dim x_j}{n_j}. \]
Then using Lemma~\ref{le:xixjxk} we have
\[ [x_ix_j:x_k] =
  [x_ix_jx_{k^*}:\one]/n_k=n_j[x_ix_{k^*}:x_j]/n_k. \]
Now 
\[ \dim x_i\dim x_k \ge [x_ix_{k^*}:x_j]\dim x_j \]
and so
\begin{equation*} 
[x_ix_j:x_k] \le \frac{n_j}{n_k}\frac{\dim x_i\dim x_k}{\dim x_j}
=\frac{n_i(\dim x_i/n_i)(\dim x_k/n_k)}{(\dim x_j/n_j)}
\le n_i\dim x_k/n_k. 
\qedhere
\end{equation*}
\end{proof}

The following result will be the model for the 
proof of the sharper Theorem~\ref{th:|xy|}.

\begin{lemma}\label{le:l2}
For $i,j\in\fI\setminus\fX_{\max}$ we have
\[ \sum_{k\in\fI\setminus\fX_{\max}} \frac{n_k}{n_j}[x_ix_j:x_k]^2 
=[x_{i^*}x_ix_j:x_j]
\le (\dim  x_i)^2. \]
\end{lemma}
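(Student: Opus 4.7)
The plan is to expand the middle term $[x_{i^*}x_ix_j:x_j]$ as a sum over the basis, and then use a Frobenius-style reciprocity to recognise it as the claimed weighted sum of squares.

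First I would write
\[ [x_{i^*}x_ix_j:x_j] = \sum_{k\in\fI}[x_ix_j:x_k]\,[x_{i^*}x_k:x_j]. \]
The key step is then to show that for each $k$,
\[ n_k[x_ix_j:x_k] = n_j[x_{i^*}x_k:x_j]. \]
To see this, apply Lemma~\ref{le:xixjxk} to $x_ix_jx_{k^*}$, which uses commutativity of $\fa$ together with the fact that $(x_i)^*=x_{i^*}$: one gets $[x_ix_jx_{k^*}:\one]=n_k[x_ix_j:x_k]$. Commuting and using the involution (Definition~\ref{def:repring}(i)) rewrites this same triple bracket as $[x_{i^*}x_kx_j:\one]=n_j[x_{i^*}x_k:x_j]$, giving the identity. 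Substituting back yields
\[ [x_{i^*}x_ix_j:x_j]=\sum_{k\in\fI}\frac{n_k}{n_j}[x_ix_j:x_k]^2. \]
The terms with $k\in\fX_{\max}$ drop out of the sum because $n_k=0$ there; moreover these terms also make $[x_{i^*}x_k:x_j]=0$ for $j\notin\fX_{\max}$ (since $\fX_{\max}$ is a representation ideal by Proposition~\ref{pr:max-min}), so the identity is consistent.

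For the inequality, I would just use that $\dim\colon\fa\to\bZ$ is a ring homomorphism (Definition~\ref{def:repring}(iv)), so
\[ (\dim x_i)^2\dim x_j = \dim(x_{i^*}x_ix_j) = \sum_{\ell\in\fI}[x_{i^*}x_ix_j:x_\ell]\dim x_\ell \ge [x_{i^*}x_ix_j:x_j]\dim x_j, \]
using that $x_{i^*}x_ix_j\cge 0$ by Lemma~\ref{le:core}(i) and that $\dim x_j>0$. Dividing by $\dim x_j$ gives the bound.

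The main obstacle is the reciprocity identity $n_k[x_ix_j:x_k]=n_j[x_{i^*}x_k:x_j]$; everything else is a routine unwinding of definitions. It is conceptually the ``Frobenius reciprocity'' inherent in axioms (i), (ii), (iv), and it is important to verify it holds even when $k\in\fX_{\max}$ so that extending the sum over all of $\fI$ is harmless.
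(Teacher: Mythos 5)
Your proof is correct and is essentially the paper's: the paper likewise expands $[x_{i^*}x_ix_j:x_j]$ via associativity of the product and invokes Lemma~\ref{le:xixjxk} for the "reciprocity" identity, then bounds it by applying the dimension homomorphism to the positive element $x_{i^*}x_ix_j$. One small slip in your writeup: after applying the involution and commuting, the triple bracket should be $[x_{i^*}x_kx_{j^*}:\one]$ rather than $[x_{i^*}x_kx_j:\one]$, so that Lemma~\ref{le:xixjxk} yields $n_{j^*}[x_{i^*}x_k:x_j]=n_j[x_{i^*}x_k:x_j]$; the stated reciprocity $n_k[x_ix_j:x_k]=n_j[x_{i^*}x_k:x_j]$ and everything downstream are unaffected.
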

\begin{proof}
Using Lemma~\ref{le:xixjxk}, we have 
\[ [x_{i^*}x_ix_j:x_j]=
\sum_k[x_{i^*}x_j:x_k][x_ix_k:x_j] 
= \sum_k  \frac{n_k}{n_j}[x_ix_j:x_k]^2. \]
We also have
$[x_{i^*}x_ix_j:x_j]\dim x_j\le (\dim x_i)^2\dim x_j$.
Now divide by $\dim x_j$.
\end{proof}

\chapter{Commutative Banach algebras}\label{ch:Banach}

Most of the usual examples of Banach spaces and Banach algebras are not
relevant to our purpose. We therefore eschew their description in favour of
brevity. The only examples of interest to us in this work are completions of
representation rings, and their quotients. We examine the group ring
of $\bZ$ via the theory of Fourier series, because of the similarity 
to our context.

\section{Banach spaces}\index{Banach!norm}

Throughout, we work over the complex numbers. Most of
the background we need concerning commutative Banach
algebras was developed in the paper of Gelfand~\cite{Gelfand:1941a}.
As  references, we use 
Berberian~\cite{Berberian:1974a}, 
Folland~\cite{Folland:2015a},
Gelfand, Raikov and Shilov~\cite{Gelfand/Raikov/Shilov:1964a},
Kaniuth~\cite{Kaniuth:2009a}, 
Lax \cite{Lax:2002a}, and
Rickart~\cite{Rickart:1974a}.
Beware that terminology varies among these sources. In particular, 
in~\cite{Gelfand/Raikov/Shilov:1964a} a normed vector space is assumed
to be complete; we shall follow the other references for our
definition of normed vector space. 

\begin{defn}\label{def:nvs}
A \emph{normed vector space}\index{normed!vector space}
is a complex vector space $V$
together with a \emph{norm}\index{norm} 
$V\to \bR$, $v\mapsto \|v\|$, satisfying
\begin{enumerate}
\item positivity:\index{positivity} $\|x\|\ge 0$, with $\|x\|=0$ if
and only if $x=0$,
\item  subadditivity:\index{subadditivity|textbf} $\|x+y\|\le\|x\|+\|y\|$, and
\item homogeneity:\index{homogeneity} $\|cx\|=|c|\|x\|$. 
\end{enumerate}

A norm gives rise to a metric\index{metric} on $V$ via $d(x,y)=\|x-y\|$, and
we say that $V$ is \emph{complete} with respect to the norm if
this metric space is complete. A complete normed vector space
is called a \emph{Banach space}.\index{Banach!space}
\end{defn}

\begin{defn}
If $V$ and $W$ are normed spaces, and
$f\colon V \to W$ is a linear map, we define the 
\emph{sup norm}\index{sup!norm|textbf} of $f$,
$\|f\|_{\sup}$ to be the supremum of $\|f(v)\|$ as $v$ runs over elements of
$V$ with $\|v\|\le 1$. If $\|f\|_{\sup}<\infty$, we say that $f$ is 
\emph{bounded}.\index{bounded!linear map}
\end{defn}

\begin{lemma}\label{le:bd=cts}
For a linear map of normed spaces $V \to W$ the following are
equivalent:
\begin{enumerate}
\item $f$ is bounded,
\item $f$ is continuous,
\item $f$ is continuous at some point $v\in V$.
\end{enumerate}
\end{lemma}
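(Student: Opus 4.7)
The plan is to prove the cycle (i) $\Rightarrow$ (ii) $\Rightarrow$ (iii) $\Rightarrow$ (i), with only the last implication requiring any real content.

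First I would handle (i) $\Rightarrow$ (ii). If $\|f\|_{\sup} = M < \infty$, then by linearity and homogeneity of the norms, for any $x, y \in V$ with $x \ne y$, the vector $(x-y)/\|x-y\|$ has norm one, so
\[ \|f(x) - f(y)\| = \|f(x-y)\| \le M\|x-y\|. \]
Thus $f$ is Lipschitz, and in particular continuous everywhere. The implication (ii) $\Rightarrow$ (iii) is immediate.

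The only substantive step is (iii) $\Rightarrow$ (i). Suppose $f$ is continuous at some point $v \in V$. Applying the $\varepsilon$-$\delta$ definition with $\varepsilon = 1$, there exists $\delta > 0$ such that whenever $\|w - v\| < \delta$ we have $\|f(w) - f(v)\| < 1$. Setting $w = v + u$ and using linearity of $f$, this becomes: $\|u\| < \delta$ implies $\|f(u)\| < 1$. Now for any $x \in V$ with $\|x\| \le 1$, the vector $u = (\delta/2) x$ satisfies $\|u\| \le \delta/2 < \delta$, so $\|f(u)\| < 1$, and by homogeneity $\|f(x)\| < 2/\delta$. Taking the supremum over such $x$ gives $\|f\|_{\sup} \le 2/\delta < \infty$, so $f$ is bounded.

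I expect no real obstacle here; the key conceptual point — that linearity propagates continuity at one point to continuity everywhere, and upgrades it to a global bound — is handled by the translation $w = v + u$ together with the homogeneity axiom from Definition~\ref{def:nvs}.
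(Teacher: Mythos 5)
Your proof is correct and follows essentially the same route as the paper's: (i) $\Rightarrow$ (ii) via the Lipschitz estimate $\|f(x)-f(y)\|\le \|f\|_{\sup}\|x-y\|$, (ii) $\Rightarrow$ (iii) trivially, and (iii) $\Rightarrow$ (i) by translating $w=v+u$ and exploiting homogeneity to convert an $\varepsilon$-$\delta$ statement at $v$ into a global bound. Your use of $\delta/2$ to stay strictly inside the ball is a slightly more careful rendering of the same scaling argument.
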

\begin{proof}
We have $\|f(x-y)\|\le \|f\|_{\sup}\|x-y\|$, and so (i) implies (ii).
Clearly (ii) implies (iii). Finally, to prove (iii) implies (i),
if $f$ is continuous at $v$ then given $\ep>0$ there
exists $\delta>0$ such that $\|x\|<\delta$ implies
$\|f(v+x)-f(v)\|<\ep$. By linearity, $\|f(x)\|<\ep$,
and so $\|f\|_{\sup}<\ep/\delta$. Thus $f$ is bounded.
\end{proof}

\begin{eg}\label{eg:l1Z}
We write $\ell^1(\bZ)$\index{l1Z@$\ell^1(\bZ)$} 
for the space of functions $x\colon \bZ\to \bC$
with the property that $\|x\|=\sum_{n\in\bZ}|x(n)|<\infty$. This norm
makes $\ell^1(\bZ)$ into a Banach space. We can identify this with the
space of absolutely convergent Fourier series\index{Fourier series}
$f\colon S^1\to \bC$, via
$f(e^{\bi\theta})=\sum_{n\in\bZ}x(n)e^{\bi n\theta}$. Here, $S^1$ is the
unit circle in $\bC$, parametrized by angle $\theta$. 
The coefficients $x(n)$ can be recovered from
$f$ by the Fourier inversion formula:
\[ x(n) = \frac{1}{2\pi}\int_0^{2\pi}
  f(e^{\bi\theta})e^{-\bi n\theta}d\theta. \]

We shall continue with this as a running example throughout this
chapter, and we shall make use of it in the context of representation
rings in a later chapter.
\end{eg}

\begin{lemma}\label{le:Vhat}
The metric completion\index{completion}\index{metric!completion} $\hat V$ 
of a normed vector space $V$ is a Banach space in which $V$
is a dense subspace.\index{dense subspace}
\end{lemma}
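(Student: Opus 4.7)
The plan is to construct $\hat V$ as the standard metric completion of $V$ and then transport the vector space structure and norm to it by continuity. Concretely, let $\hat V$ be the set of equivalence classes of Cauchy sequences in $V$, where $(x_n) \sim (y_n)$ iff $\|x_n - y_n\| \to 0$, with the metric $d([x_n],[y_n]) = \lim_{n\to\infty}\|x_n - y_n\|$. Standard metric-space arguments show that this limit exists, is independent of the chosen representatives, defines a metric, makes $\hat V$ complete, and embeds $V$ as a dense subset via the map sending $v$ to the class of the constant sequence.

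Next I would transport the algebraic operations. Given Cauchy sequences $(x_n)$, $(y_n)$ in $V$ and $c \in \bC$, subadditivity and homogeneity of the norm on $V$ give
\[
\|(x_n+y_n) - (x_m+y_m)\| \le \|x_n - x_m\| + \|y_n - y_m\|,
\qquad \|cx_n - cx_m\| = |c|\,\|x_n - x_m\|,
\]
so $(x_n+y_n)$ and $(cx_n)$ are again Cauchy. One checks similarly that the classes of these sequences depend only on $[x_n]$, $[y_n]$ and $c$, so addition and scalar multiplication are well defined on $\hat V$ and extend those on $V$. The vector space axioms for $\hat V$ then follow by passing to the limit from the corresponding identities in $V$.

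For the norm, the inequality $\bigl|\|x_n\| - \|x_m\|\bigr| \le \|x_n - x_m\|$ shows that $(\|x_n\|)$ is Cauchy in $\bR$, so I can define $\|[x_n]\| := \lim_{n\to\infty} \|x_n\|$, which is independent of representative by the same estimate applied to equivalent sequences. Positivity on $\hat V$ is equivalent to saying that $\|x_n\| \to 0$ iff $(x_n) \sim 0$, which is immediate. Subadditivity and homogeneity on $\hat V$ follow from the same properties on $V$ by taking limits. Finally, this extended norm induces exactly the completion metric, since $d([x_n],[y_n]) = \lim \|x_n - y_n\| = \|[x_n] - [y_n]\|$, so $\hat V$ is complete with respect to $\|\cdot\|$, i.e.\ a Banach space, and $V$ sits inside as a dense normed subspace.

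No step is genuinely hard; the only thing requiring care is the bookkeeping that every definition is independent of the choice of representative Cauchy sequence, which is handled uniformly by the elementary estimates above.
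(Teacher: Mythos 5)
Your proof is correct and follows the same route as the paper: realise $\hat V$ as equivalence classes of Cauchy sequences and check that termwise sum and scalar multiplication preserve Cauchyness and the equivalence relation. The paper states this very tersely; your version simply fills in the routine verifications (well-definedness, the extended norm as a limit, and the norm axioms in the limit) that the paper leaves implicit.
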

\begin{proof}
  The termwise sum of two Cauchy sequences\index{Cauchy sequence} in $V$, and a scalar
  multiple of a Cauchy sequence, are again Cauchy
  sequences. These operations are compatible with the equivalence
  relation on Cauchy sequences. 
\end{proof}

\begin{defn}\label{def:quotient-norm}\index{quotient norm}\index{norm!quotient}
If $W$ is a closed subspace of a normed vector space $V$ then
the \emph{quotient norm} on the space $V/W$ is defined via
\[ \|x+W\| = \inf_{w\in W}\|x+w\|. \]
\end{defn}

\begin{lemma}\label{le:quotient-norm}
Let $W$ be a closed subspace of a normed vector space $V$.
  \begin{enumerate}
\item 
Definition~\ref{def:quotient-norm} defines a norm on the
quotient space $V/W$.
\item If $V$ is complete then so are $W$ and $V/W$.
\item
The natural map from the completion $\widehat{V/W}$ of $V/W$ to 
the quotient of the completions $\hat V / \hat W$ is an
isometric isomorphism.
\end{enumerate}
\end{lemma}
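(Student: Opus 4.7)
For part (i), the plan is a direct verification of the three norm axioms, with closedness of $W$ used only in one place. Subadditivity and homogeneity follow mechanically: given $\epsilon>0$, pick $w_1,w_2\in W$ approximating the infima for $x+W$ and $y+W$ to deduce $\|(x+y)+W\|\le\|x+W\|+\|y+W\|$; for homogeneity with $c\ne0$, use $cW=W$ to pull $|c|$ out of the infimum, and the $c=0$ case is trivial since $0\in W$. The only subtle axiom is the positive-definiteness clause: $\|x+W\|=0$ yields a sequence $w_n\in W$ with $-w_n\to x$, and closedness of $W$ forces $x\in W$, i.e.\ $x+W=0$ in $V/W$.

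For part (ii), completeness of $W$ is immediate because $W$ is a closed subspace of the complete metric space $V$. For $V/W$, I would use the standard Cauchy-subsequence device: given a Cauchy sequence $[x_n]$ in $V/W$, extract a subsequence with $\|[x_{n_{k+1}}]-[x_{n_k}]\|<2^{-k}$, then lift it inductively. Choose any $y_1\in x_{n_1}+W$; having chosen $y_k$, pick $w_k\in W$ with $\|x_{n_{k+1}}+w_k-y_k\|<2^{-k}$ and set $y_{k+1}=x_{n_{k+1}}+w_k$. Then $\{y_k\}$ is Cauchy in $V$, so converges to some $y\in V$, and $[y_k]=[x_{n_k}]\to[y+W]$. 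Since a Cauchy sequence with a convergent subsequence converges, $[x_n]\to[y+W]$.

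For part (iii), the plan is to identify $\widehat{V/W}$ and $\hat V/\hat W$ by exhibiting the latter as a completion of the former. First note that $\hat W$ may be identified with the closure of $W$ inside $\hat V$ (since $\hat V$ is a complete space containing $W$, this closure is a Banach space in which $W$ is isometrically dense, and such a space is unique up to isometry by Lemma~\ref{le:Vhat}); in particular $\hat W$ is a closed subspace of $\hat V$, so the quotient norm on $\hat V/\hat W$ is well defined, and by part~(ii) applied to $\hat V$, the quotient $\hat V/\hat W$ is complete. The inclusion $V\hookrightarrow\hat V$ followed by the projection $\hat V\to\hat V/\hat W$ kills $W$ and hence factors through a linear map $\iota\colon V/W\to\hat V/\hat W$. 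For $x\in V$ I would check
\[ \|\iota(x+W)\|=\inf_{\tilde w\in\hat W}\|x+\tilde w\|_{\hat V}=\inf_{w\in W}\|x+w\|_V=\|x+W\|, \]
where the middle equality uses that $W$ is dense in $\hat W$ (approximate each $\tilde w$ by $w_n\in W$ and use continuity of the norm). Thus $\iota$ is isometric; its image is dense because $V$ is dense in $\hat V$ and the projection is continuous. By the universal property of the completion (Lemma~\ref{le:Vhat}), $\iota$ extends uniquely to an isometry $\hat\iota\colon\widehat{V/W}\to\hat V/\hat W$, whose image is closed (isometric image of a complete space) and dense, hence all of $\hat V/\hat W$. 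The one delicate point I expect to spend most of the writing on is the identification of $\hat W$ with the closure of $W$ in $\hat V$, since without this the expression $\hat V/\hat W$ is ambiguous and the density argument loses its force.
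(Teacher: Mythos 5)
Your proof is correct and follows essentially the same route as the paper: verify the norm axioms directly with closedness used only for positive-definiteness, use the telescoping lift of a rapid Cauchy subsequence for completeness of $V/W$, and identify $\widehat{V/W}$ with $\hat V/\hat W$ by showing the canonical map $V/W\to\hat V/\hat W$ is an isometry with dense image. Your treatment of part (iii) spells out more carefully than the paper does both the identification of $\hat W$ with the closure of $W$ inside $\hat V$ and the density argument showing $\inf_{\tilde w\in\hat W}\|x+\tilde w\|=\inf_{w\in W}\|x+w\|$, but these are elaborations of the same argument rather than a different one.
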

\begin{proof}
(i) To prove positivity of the quotient norm, let $u$ be an
element of $V/W$. Clearly $\|u\|\ge 0$. If $\|u\|=0$ then
there is a sequence of elements
$v_1,v_2,\dots$ of $V$ lying in the coset $u\in V/W$
such that $\displaystyle\lim_{n\to\infty}\|v_n\|=0$. Let $y_n=v_1-v_n\in W$.
Then $\displaystyle\lim_{n\to\infty}\|v_1-y_n\|=0$ and so
$\displaystyle\lim_{n\to\infty}y_n=v_1$.
It follows that $v_1$ is in the closure of $W$, and hence in $W$.
Since it was chosen to be in the coset $u\in V/W$, we have $u=0$.

To prove subadditivity,\index{subadditivity} 
let $u_1,u_2\in V/W$. Then by definition of
quotient norm, given $\ep>0$ we
may choose representatives $v_1,v_2\in V$ of $u_1,u_2$ such that
$\|v_1\|<\|u_1\|+\ep$ and $\|v_2\|<\|u_2\|+\ep$. Then
\[ \|u_1+u_2\|\le\|v_1+v_2\|\le\|v_1\|+\|v_2\|<\|u_1\|+\|u_2\|+2\ep. \]
Since this is true for all $\ep>0$, we have $\|u_1+u_2\|\le\|u_1\|+\|u_2\|$.

Finally, homogeneity of the quotient norm is trivial to verify.
  
(ii) Completeness of $W$ is clear since a closed subspace of a
complete metric space is complete. For the quotient $V/W$, we argue as
follows. 
Given a Cauchy sequence $x_n+ W$ in $V/W$, we may 
replace with a subsequence satisfying $d(x_n+W,x_{n+1}+W)<2^{-(n+1)}$,
or equivalently $\|(x_n-x_{n+1})+W\|<2^{-(n+1)}$. By definition of the norm on
the quotient, we may inductively choose a sequence of elements
$w_n\in W$ so that $w_1=0$ and for $n\ge 1$ we have 
\[ \|(x_n+w_n)-(x_{n+1}+w_{n+1})\|=\|(x_n-x_{n+1})+(w_n-w_{n+1})\|< 2^{-n}. \]
Then the sequence $x_n+w_n$ is a Cauchy sequence in $V$.
If $x$ is its limit, then $x+W$ is the limit of the $x_n+W$ in $V/W$.

(iii) The canonical map $V \to \hat V \to \hat V/\hat W$ factors
through $V/W$, and the induced map $V/W \to \hat V/\hat W$
is an isometric embedding. Since $V$ is dense in $\hat V$, 
$V/W$ is dense in $\hat V/\hat W$. So $\widehat{V/W}$ is a
complete, dense subspace of $\hat V/\hat W$, and hence the
embedding is an isomorphism. 
\end{proof}

\section{Banach algebras}

We shall only be interested in Banach algebras with a multiplicative identity,
so we make this part of the definition. If we wish to talk of 
\emph{Banach algebras without a unit},\index{Banach!algebra!without unit} 
then that is what we shall call them.

\begin{defn}\label{def:na}
  A (unital) \emph{normed algebra}\index{normed!algebra}
  is an associative algebra $A$ over
$\bC$ with identity~$\one$, together with a norm $A \to \bR$, $x
\mapsto \|x\|$, satisfying the conditions (i)--(iii) for a normed vector space
given in Definition~\ref{def:nvs}, together with

\begin{enumerate}
  \item[(iv)]
    submultiplicativity:\index{submultiplicativity|textbf}
    for all $x,y\in A$, we have
$\|xy\|\le\|x\|\|y\|$.
\item[(v)] normalisation:\index{normalisation}
$\|\one\|=1$.
\end{enumerate}
A \emph{Banach algebra}\index{Banach!algebra} is a normed algebra
that is complete with respect to the norm. A
\emph{commutative Banach algebra}\index{commutative Banach algebra}%
\index{Banach!algebra!commutative}
is a Banach algebra which is commutative as an abstract ring.
\end{defn}

\begin{lemma}\label{le:Ahat}
  The metric completion $\hat A$ of a normed algebra $A$ is a Banach
  algebra in which $A$ is a dense subalgebra.
\end{lemma}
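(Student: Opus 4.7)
The plan is to build on Lemma~\ref{le:Vhat}, which already gives us that $\hat A$ is a Banach space containing $A$ as a dense subspace. The only remaining issue is to extend the multiplication of $A$ to a multiplication on $\hat A$ and verify the Banach algebra axioms (iv) and (v) from Definition~\ref{def:na}, along with associativity, bilinearity, and the existence of a unit.

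The core step is to use submultiplicativity on $A$ to show that multiplication is jointly uniformly continuous on bounded subsets of $A$, and hence extends by continuity to $\hat A$. Concretely, if $x_n\to x$ and $y_n\to y$ in $\hat A$ with $x_n,y_n\in A$, then the Cauchy property of $(x_n)$ and $(y_n)$ together with the identity
\[ x_my_m-x_ny_n = (x_m-x_n)y_m+x_n(y_m-y_n) \]
and the bound $\|x_my_m-x_ny_n\|\le\|x_m-x_n\|\|y_m\|+\|x_n\|\|y_m-y_n\|$ show that $(x_ny_n)$ is Cauchy in $A$, hence convergent in $\hat A$. A similar estimate shows the limit depends only on $x$ and $y$, not on the chosen Cauchy sequences. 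Define $xy$ to be this limit.

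Once multiplication is defined, associativity, distributivity and $\bC$-bilinearity are inherited from $A$ by passing to limits, since each of these identities is a continuous polynomial relation in finitely many variables. The element $\one\in A\subset \hat A$ continues to act as identity, again by continuity, and of course $\|\one\|=1$ is already given in $A$, so axiom (v) holds. For axiom (iv), take any $x,y\in\hat A$ with approximating sequences $(x_n),(y_n)$ in $A$; then $\|x_ny_n\|\le\|x_n\|\|y_n\|$ holds in $A$, and passing to the limit (using continuity of the norm) gives $\|xy\|\le\|x\|\|y\|$.

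No step here is really hard; the only subtlety is the usual one of well-definedness of an extension by continuity, which the joint-continuity estimate above handles uniformly. I would expect the write-up to be short, essentially a checklist, with the Cauchy-sequence estimate for $x_my_m-x_ny_n$ being the one computation worth recording explicitly.
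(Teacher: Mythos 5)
Your argument is correct and matches the paper's approach: use Lemma~\ref{le:Vhat} to get the Banach space structure, then observe that termwise products of Cauchy sequences are Cauchy (via submultiplicativity and boundedness of Cauchy sequences) and that this is compatible with the equivalence relation, so multiplication extends by continuity and all algebra axioms pass to the limit. The paper states this in two sentences; you have simply written out the standard estimate that underlies it.
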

\begin{proof}
  By Lemma~\ref{le:Vhat}, $\hat A$ is a Banach space.
  The termwise product of two Cauchy sequences in $A$ is again a Cauchy
  sequence in $A$, and this is compatible with the equivalence
  relation on Cauchy sequences.
\end{proof}

\begin{eg}\label{eg:l1Zalg}
The Banach space $\ell^1(\bZ)$ discussed in Example~\ref{eg:l1Z} can
be made into a
Banach algebra by putting on it the \emph{convolution product}\index{convolution product}
\[ (x*y)(n)=\sum_{i+j=n}x(i)y(j). \] 
This corresponds to pointwise
multiplication of Fourier series, $fg(e^{\bi\theta})=f(e^{\bi\theta})g(e^{\bi\theta})$ where
$f(e^{\bi\theta})=\sum_{n\in\bZ}x(n)e^{\bi n\theta}$ and
$g(e^{\bi\theta})=\sum_{n\in\bZ}y(n)e^{\bi n\theta}$.

The identity element of this Banach algebra is the function $\one$
given by $\one(0)=1$, $\one(n)=0$ if $n\ne 0$.
Let $u\colon \bZ\to \bC$ be the function $u(1)=1$, 
$u(n)=0$ if $n\ne 1$. It is easy to check that for 
$j\in\bZ$, $u^j\colon\bZ\to\bC$ is the function
$u^j(j)=1$, $u^j(n)=0$ if $n\ne j$. The algebra
$\ell^1(\bZ)$ is the completion of 
$\bC[u,u^{-1}]$ with respect to the norm, and has $\bC[u,u^{-1}]$ as a
dense subalgebra. The algebra $\ell^1(\bZ)$ is sometimes called the
\emph{Wiener algebra}.\index{Wiener algebra}
\end{eg}

\begin{lemma}\label{le:quotient}
We have the following.
\begin{enumerate}
\item
If $I$ is a closed ideal in a Banach algebra $A$, then $A/I$ is
a Banach algebra with the quotient norm
\[ \|x+I\|=\inf_{y\in I}\|x+y\|. \]
The canonical map $\pi\colon A \to A/I$ is continuous, with $\|\pi\|=1$.
\item
If $J\le I \le A$ are closed ideals then the natural map 
$(A/J)/(I/J)\to A/I$ is an isometric isomorphism.
\item
If $A$ is a normed algebra and $I$ is a closed ideal then
the natural map of completions $\widehat{A/I} \to \hat A/\hat I$ is
an isometric isomorphism.
\end{enumerate}
\end{lemma}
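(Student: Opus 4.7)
The plan is to get part (i) as a modest upgrade of Lemma~\ref{le:quotient-norm}, then deduce (ii) from a routine double-infimum computation, and get (iii) from the vector-space version in Lemma~\ref{le:quotient-norm}\,(iii) once we know $\hat I$ is a closed ideal in $\hat A$.

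For part (i), Lemma~\ref{le:quotient-norm} already shows that $A/I$ is a Banach space under the quotient norm, so what remains is submultiplicativity, the normalisation $\|\one+I\|=1$, and the claims about $\pi$. For submultiplicativity, given $\ep>0$, I would choose $x'\in x+I$ and $y'\in y+I$ with $\|x'\|<\|x+I\|+\ep$ and $\|y'\|<\|y+I\|+\ep$; since $x'y'\in xy+I$, we have $\|xy+I\|\le\|x'\|\|y'\|$, and letting $\ep\to 0$ gives $\|xy+I\|\le\|x+I\|\|y+I\|$. Taking $y=0$ gives $\|\one+I\|\le 1$; for the reverse, assuming $I$ is a proper ideal (else $A/I=0$ and there is nothing to check), if $\|\one+y\|<1$ for some $y\in I$ then setting $z=-y\in I$ yields $\|\one-z\|<1$, so the Neumann series $\sum_{n\ge 0}(\one-z)^n$ converges in the Banach algebra $A$ to an inverse of $z$, forcing $\one=z\cdot z^{-1}\in I$, a contradiction. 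Finally $\pi$ is continuous with $\|\pi\|\le 1$ because $\|\pi(x)\|=\|x+I\|\le\|x\|$, and $\|\pi(\one)\|=1$ gives equality.

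For part (ii), the map $(x+J)+I/J\mapsto x+I$ is a well-defined ring isomorphism by the ordinary third isomorphism theorem; only the isometry is at issue. Unwinding the quotient norm twice,
\[
\|(x+J)+I/J\| \;=\; \inf_{y\in I}\|(x+y)+J\|_{A/J}
\;=\; \inf_{y\in I}\inf_{j\in J}\|x+y+j\|_A
\;=\; \inf_{w\in I+J}\|x+w\|_A,
\]
and since $J\subseteq I$ we have $I+J=I$, so this equals $\|x+I\|_{A/I}$.

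For part (iii), view $\hat I$ as the closure of $I$ inside $\hat A$ (this identification is isometric because $\hat I$ is the completion of $I$ with its subspace norm). The first point is that $\hat I$ is an ideal in $\hat A$: given $\hat x\in \hat A$ and $\hat y\in\hat I$, pick sequences $x_n\in A$ and $y_n\in I$ converging to them; then $x_ny_n\in I$ and $x_ny_n\to \hat x\hat y$ by joint continuity of multiplication, so $\hat x\hat y\in\hat I$. Hence by part (i) the quotient $\hat A/\hat I$ is a Banach algebra. Lemma~\ref{le:quotient-norm}\,(iii) already supplies an isometric isomorphism of Banach spaces $\widehat{A/I}\to\hat A/\hat I$, which is induced by the obvious ring homomorphism $A/I\to\hat A/\hat I$; since multiplication is continuous on both completions, the continuous extension is automatically a ring homomorphism, hence an isometric algebra isomorphism.

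The only step that is not essentially bookkeeping is the normalisation $\|\one+I\|=1$ in part (i), which genuinely requires the Neumann series together with the properness of $I$; everything else amounts to juggling infima and invoking Lemma~\ref{le:quotient-norm}.
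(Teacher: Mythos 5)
Your proposal is correct and follows the same overall structure as the paper's own proof: inherit the Banach space structure from Lemma~\ref{le:quotient-norm}\,(ii), check submultiplicativity by the $\ep$-near-minimal representative argument, verify normalisation, deduce (ii) by unwinding the double infimum (the paper calls this ``an elementary verification''), and derive (iii) from Lemma~\ref{le:quotient-norm}\,(iii). The only divergence worth noting is the normalisation step $\|\one+I\|\ge 1$: you invoke the Neumann series (properness of $I$ forbids $\|\one-z\|<1$ for $z\in I$), whereas the paper gets the same inequality more cheaply from submultiplicativity alone, observing that $\|\one+I\|\le\|\one+I\|^2$ forces $\|\one+I\|$ to be $0$ or $\ge 1$, and positivity of the quotient norm on the proper closed ideal $I$ rules out $0$. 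Both are sound; the paper's version avoids any appeal to invertibility. Your explicit check in (iii) that $\hat I$ is an ideal of $\hat A$ and that the isometric isomorphism is multiplicative is a useful elaboration that the paper leaves implicit.
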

\begin{proof}
(i) By Lemma~\ref{le:quotient-norm}\,(ii), $A/I$ is a Banach space.

To prove submultiplicativity of the norm,
let $x_1,x_2$ be elements of $A/I$. Given $\ep>0$ we may choose
representatives $y_1,y_2\in A$ of $x_1,x_2$ such that $\|y_1\|<
\|x_1\|+\ep$ and $\|y_2\|<\|x_2\|+\ep$. Then
\[ \|x_1x_2\|\le\|y_1y_2\|\le\|y_1\|\|y_2\|
  <\|x_1\|\|x_2\|+\ep(\|x_1\|+\|x_2\|+\ep). \]
Since this is true for all $\ep>0$, we have $\|x_1x_2\|\le\|x_1\|\|x_2\|$.  

The identity element of $A/I$ is the coset of $\one\in A$.
To prove normalisation, the unit element
clearly has norm at most one, and by submultiplicativity it has
norm at least one or norm zero. Since $I$ is a proper subspace of $A$,
the norm is not zero, therefore it is one.
  
Part (ii) is an elementary verification using the definition of the norm,
and part (iii) follows from Lemma~\ref{le:quotient-norm}\,(iii).
\end{proof}

\section{Spectrum}

If an element $x$ of a Banach algebra has both a left inverse $u$ and
a right inverse $v$,
then $u=uxv=v$. We then say that $x$ is
\emph{invertible}\index{invertible element} with
\emph{inverse}\index{inverse} $u=v$.

\begin{prop}\label{pr:open}
  Invertible elements of a Banach algebra $A$ form an open subset.
  More precisely, if $x$ is invertible and $\|x-y\|<1/\|x^{-1}\|$ then
  $y$ is invertible, and the inverse is given by an absolutely
  convergent power series.
\end{prop}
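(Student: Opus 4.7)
The plan is to reduce to inverting an element of the form $\one - z$ with $\|z\| < 1$, and then to construct the inverse of $\one - z$ as a geometric series $\sum_{n=0}^\infty z^n$. Specifically, first I would write
\[ y = x - (x-y) = x\bigl(\one - x^{-1}(x-y)\bigr), \]
and set $z = x^{-1}(x-y)$. By submultiplicativity, $\|z\| \le \|x^{-1}\|\,\|x-y\| < 1$ by hypothesis, so the problem reduces to inverting $\one - z$ for an element $z$ of norm strictly less than $1$.

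Next I would form the formal series $s = \sum_{n=0}^\infty z^n$. Submultiplicativity (together with normalisation $\|\one\|=1$) gives $\|z^n\| \le \|z\|^n$, so $\sum_n \|z^n\| \le \sum_n \|z\|^n = 1/(1-\|z\|) < \infty$; the series is absolutely convergent. Since $A$ is a Banach algebra (hence complete), the sequence of partial sums $s_N = \sum_{n=0}^N z^n$ is Cauchy and converges to some $s \in A$. A direct computation of the partial sums gives $(\one - z)s_N = s_N(\one - z) = \one - z^{N+1}$, and $\|z^{N+1}\| \to 0$, so passing to the limit (using the continuity of left and right multiplication, which is immediate from submultiplicativity) yields $(\one - z)s = s(\one - z) = \one$. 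Hence $\one - z$ is invertible with inverse $s$.

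Finally, from $y = x(\one - z)$ and the invertibility of both factors, I obtain $y^{-1} = (\one - z)^{-1} x^{-1} = \bigl(\sum_{n=0}^\infty z^n\bigr) x^{-1}$, an absolutely convergent power series expression, and the estimate $\|y^{-1}\| \le \|x^{-1}\|/(1 - \|x^{-1}\|\,\|x-y\|)$. This shows that the ball of radius $1/\|x^{-1}\|$ around $x$ consists of invertible elements, so the invertible elements form an open subset.

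There is no real obstacle here: the only subtlety is that convergence of $\sum z^n$ requires completeness of $A$ (this is where absolute convergence is upgraded to convergence), and that the identities $(\one-z)s_N = \one - z^{N+1}$ can be pushed to the limit, which uses continuity of multiplication — a direct consequence of the submultiplicative norm.
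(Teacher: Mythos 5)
Your argument is correct and takes essentially the same route as the paper's proof: the identical substitution $z = x^{-1}(x-y)$, the observation $\|z\| < 1$, and the geometric series $\sum_{n\ge 0} z^n$ inverting $\one - z$. You have simply spelled out the convergence and limit-passing details that the paper leaves tacit.
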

\begin{proof}
    Set $z=x^{-1}(x-y)$, so
  that $y=x(1-z)$ and $\|z\|\le \|x^{-1}\|\|x-y\|<1$. Then the series
  $1+z+z^2 + \cdots$
  converges to an inverse of $1-z$, and hence
  $x^{-1}(1+z+z^{-1}+\cdots)$ converges to an inverse of $y$.
\end{proof}

\begin{eg}
  Taking $x=\one$ in Proposition~\ref{pr:open}, we see that
  the open ball of radius one centred at $\one$ consists of invertible
  elements.
\end{eg}

\begin{cor}\label{co:maxideal}
  Every maximal ideal\index{maximal!ideal} of a Banach algebra $A$ is closed.
\end{cor}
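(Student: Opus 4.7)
The plan is to show that for any maximal ideal $M \subseteq A$, its closure $\bar M$ is again a proper ideal, so by maximality $M = \bar M$ and $M$ is closed.

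First I would verify that $\bar M$ is an ideal. This is a standard consequence of the continuity of the algebra operations in a Banach algebra: addition, scalar multiplication, and (left/right) multiplication by any fixed element of $A$ are continuous maps, so they each send convergent sequences in $M$ to convergent sequences whose limits lie in $\bar M$. Hence $\bar M$ is closed under addition, and for every $a \in A$ we have $a\bar M \subseteq \bar M$ and $\bar M a \subseteq \bar M$.

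The main point is to show that $\bar M$ is a proper subset of $A$. Since $M$ is a maximal ideal, it is proper, so it contains no invertible element (otherwise $M = A$). By Proposition~\ref{pr:open}, the set of invertible elements of $A$ is open, and in particular the open ball $B = \{x \in A : \|x - \one\| < 1\}$ consists entirely of invertible elements. Therefore $M \cap B = \varnothing$, which means every element of $M$ lies at distance at least $1$ from $\one$. This gap condition passes to the closure: $\one \notin \bar M$, so $\bar M \ne A$.

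Finally, since $M \subseteq \bar M \subsetneq A$ and $M$ is maximal, we must have $M = \bar M$, so $M$ is closed. The only conceptual step is the use of Proposition~\ref{pr:open} to separate $\one$ from $M$; everything else is formal. I expect no real obstacle, though one should take a little care to note the argument shows more, namely that \emph{every} proper ideal has proper closure, from which the corollary is a one-line deduction.
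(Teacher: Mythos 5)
Your proposal is correct and takes essentially the same approach as the paper: use Proposition~\ref{pr:open} to see that $\one$ is not in the closure of a proper ideal, so the closure remains proper, and then apply maximality. The paper states this more tersely but the argument is identical.
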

\begin{proof}
By Proposition~\ref{pr:open}, the closure of an ideal does not contain
$\one$. A maximal ideal is therefore equal to its closure.
\end{proof}

\begin{defn}\label{def:spectrum}
If $x$ is an element of a Banach algebra $A$, we write 
$\Spec(x)$\index{Spec@$\Spec(x)$} for the 
\emph{spectrum}\index{spectrum|textbf} of $x$, namely the 
set of $\lambda\in\bC$ such that $x-\lambda\one$ is
not invertible in $A$. If we wish to emphasise the ambient Banach
algebra $A$, we write $\Spec_A(x)$.
\end{defn}

\begin{rk}\label{rk:spec-alg}
From its definition, the spectrum of an element $x$ of a Banach
algebra $A$ only depends on the algebraic structure of $A$ and not on
the metric or topological structure.  
\end{rk}

\begin{eg}\label{eg:l1Zspec}
Continuing Example~\ref{eg:l1Zalg}, let us determine the spectrum of
$u\in \ell^1(\bZ)$. We claim that $u-\lambda\one$ is invertible if and
only if $|\lambda|\ne 1$. To see this, we note that for $x\in\ell^1(\bZ)$,
\[ ((u-\lambda\one)*x)(n)=x(n-1)-\lambda x(n). \]
So for $x$ to be inverse to $u-\lambda\one$, we need
\[ x(n-1)-\lambda x(n) = \begin{cases} 1 & n=0 \\ 0 & n\ne 0. \end{cases} \]
Solving these equations inductively, we get
\[ x(n) = \begin{cases} \lambda^{-n} x(0) & n\ge 0 \\
    \lambda^{n-1}x(-1) & n<0, \end{cases} \]
and $x(-1)-\lambda x(0)=1$. For this to satisfy
$\sum_{n\in\bZ}|x(n)|<\infty$, if $|\lambda|>1$ there is a unique
solution, with $x(-1)=0$ and $x(0)=-\lambda^{-1}$. If $|\lambda|<1$
there is also a unique solution,  with $x(-1)=1$ and
$x(0)=0$. Finally, if $|\lambda|=1$ there is no solution.

The conclusion is that $\Spec(u)=S^1$, the unit circle in $\bC$.
\end{eg}

\begin{theorem}[Spectral Theorem, Gelfand~\cite{Gelfand:1941a}]%
\label{th:Gelfand}\index{spectral!theorem}
If $x\in A$ then $\Spec(x)$ is a non-empty closed bounded subset of
$\bC$. Outside of $\Spec(x)$, the inverse $(x-\lambda\one)^{-1}$ is an
analytic function of $\lambda$.
\end{theorem}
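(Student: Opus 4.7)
The plan is to handle the three assertions in roughly the order: bounded, closed, analytic, non-empty, since the last of these is the genuinely deep one.

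First, to see that $\Spec(x)$ is bounded, I would take $|\lambda|>\|x\|$ and write
\[ x-\lambda\one = -\lambda\bigl(\one - \tfrac{1}{\lambda}x\bigr). \]
Since $\|x/\lambda\|<1$, submultiplicativity makes $\sum_{n\ge 0}(x/\lambda)^n$ an absolutely convergent series in the complete space $A$, converging to an inverse of $\one-x/\lambda$. Hence $x-\lambda\one$ is invertible, so $\Spec(x)\subseteq\{\lambda:|\lambda|\le\|x\|\}$. Closedness is even easier: the map $\lambda\mapsto x-\lambda\one$ is continuous, and by Proposition~\ref{pr:open} the set of invertibles in $A$ is open, so the complement of $\Spec(x)$ is open.

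For analyticity of the resolvent $R(\lambda):=(x-\lambda\one)^{-1}$ on the complement of $\Spec(x)$, I would fix $\lambda_0\notin\Spec(x)$ and do the same geometric-series trick relative to the base point $\lambda_0$. Writing
\[ x-\lambda\one = (x-\lambda_0\one)\bigl(\one-(\lambda-\lambda_0)(x-\lambda_0\one)^{-1}\bigr), \]
the right-hand factor is invertible whenever $|\lambda-\lambda_0|<1/\|R(\lambda_0)\|$, with inverse given by $\sum_{n\ge 0}(\lambda-\lambda_0)^n R(\lambda_0)^n$. Multiplying on the right by $R(\lambda_0)$ gives an $A$-valued power-series expansion of $R(\lambda)$ around $\lambda_0$, absolutely convergent in a neighbourhood; this is the definition of analyticity for an $A$-valued function.

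The main obstacle, and the heart of the theorem, is non-emptiness. Here I would argue by contradiction and mimic the classical Liouville argument, but adapted to take values in $A$. Suppose $\Spec(x)=\varnothing$, so that $R\colon\bC\to A$ is analytic everywhere. From the boundedness step, for $|\lambda|>2\|x\|$ the geometric series shows $\|R(\lambda)\|\le 2/|\lambda|$, so $\|R(\lambda)\|\to 0$ as $|\lambda|\to\infty$; combined with continuity on the compact disc $|\lambda|\le 2\|x\|$, this makes $R$ a bounded entire $A$-valued function. To apply the scalar Liouville theorem, pick any continuous linear functional $\phi\in A^*$ and consider $\phi\circ R\colon\bC\to\bC$. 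The local power-series expansion from the previous paragraph, together with continuity of $\phi$, makes $\phi\circ R$ an ordinary entire function; it is bounded because $|\phi\circ R(\lambda)|\le\|\phi\|\|R(\lambda)\|$, and it tends to $0$ at infinity, so by Liouville it is identically zero. Since $\phi$ was arbitrary and $A^*$ separates points of $A$ by the Hahn--Banach theorem, $R(\lambda)=0$ for every $\lambda$. But $R(\lambda)(x-\lambda\one)=\one$, forcing $\one=0$, contradicting the normalisation axiom $\|\one\|=1$. Hence $\Spec(x)\ne\varnothing$.

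The delicate step is the reduction from $A$-valued to $\bC$-valued Liouville via Hahn--Banach; everything else is bookkeeping with geometric series. If one wanted to avoid invoking Hahn--Banach explicitly, an alternative is to develop a direct Cauchy-integral calculus for $A$-valued analytic functions and run Liouville inside $A$, but the dual-pairing argument is cleaner.
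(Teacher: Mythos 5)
Your proof is correct, and the first three parts (boundedness, closedness, analyticity) coincide with the paper's argument: both rest on the geometric series $\sum (x/\lambda)^n$ and on the series expansion of the resolvent about a point $\lambda_0$ using Proposition~\ref{pr:open}. Where you diverge is in the non-emptiness step. The paper runs the classical complex-analytic argument directly inside $A$: it treats the resolvent as an $A$-valued analytic function, invokes Cauchy's theorem to conclude $\frac{1}{2\pi\bi}\oint(x-\lambda\one)^{-1}\,d\lambda=0$ over any closed contour, and then evaluates the same integral over a large circle by integrating the Laurent series term by term and applying Cauchy's integral formula, obtaining $-\one$. Your route instead reduces to the scalar case: you post-compose the resolvent with an arbitrary $\phi\in A^*$, apply the ordinary Liouville theorem to the entire function $\phi\circ R$, conclude $\phi\circ R\equiv 0$, and then use Hahn--Banach to deduce $R\equiv 0$. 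Both are textbook approaches to the Gelfand--Mazur circle of ideas. Yours has the advantage that it only uses one-variable complex analysis for $\bC$-valued functions, at the price of invoking Hahn--Banach; the paper's has the advantage of avoiding duality entirely, at the price of taking for granted the calculus of $A$-valued contour integrals (which the paper does not develop but which is standard). You even note this trade-off explicitly at the end of your write-up, so there is nothing to add beyond confirming that the paper indeed took the other fork.
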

\begin{proof}
By Proposition~\ref{pr:open}, $\bC\setminus\Spec(x)$ is open, and so
$\Spec(x)$ is closed.
  
We have
\begin{equation}\label{eq:ps} 
(x-\lambda\one)^{-1}=-\lambda^{-1}(\one + \lambda^{-1}x +
\lambda^{-2}x^2 + \dots),
\end{equation}
and the right hand side converges if $\|\lambda^{-1}x\|<1$, namely $|\lambda|>\|x\|$,
so the spectrum is contained in a closed circle of radius $\|x\|$.

In a neighbourhood of any particular
$\lambda_0\in\bC\setminus\Spec(x)$, the inverse is given by an absolutely convergent power
series, which is hence an analytic function of $\lambda$.

Suppose that $\Spec(x)$ is empty. Then $(x-\lambda\one)^{-1}$ is
defined and analytic for all $\lambda\in\bC$. Then by
Cauchy's theorem, for any closed contour we have 
\[ \frac{1}{2\pi i}\oint
(x-\lambda\one)^{-1}d\lambda=0. \] 
However, taking the integral around a
circular contour of radius bigger than $\|x\|$, integrating the
power series~\eqref{eq:ps} term by term, and using Cauchy's integral
formula, we get $-\one$. This contradiction shows
that $\Spec(x)$ is non-empty.
\end{proof}

\begin{cor}[Gelfand--Mazur Theorem]\label{co:divisionalgebras}\index{Gelfand!--Mazur theorem}
If $A$ is a Banach algebra in which every non-zero element is
invertible then $A$ is isomorphic to $\bC$ as a $\bC$-algebra.
\end{cor}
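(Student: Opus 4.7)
The plan is to use Theorem~\ref{th:Gelfand} to show every element of $A$ is a scalar multiple of $\one$, and then verify that the obvious map $A\to\bC$ is a $\bC$-algebra isomorphism.

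First, I would take an arbitrary $x\in A$ and apply Theorem~\ref{th:Gelfand}, which says $\Spec(x)$ is non-empty. So there exists some $\lambda\in\bC$ with $x-\lambda\one$ not invertible. But by hypothesis, every non-zero element of $A$ is invertible, and the contrapositive forces $x-\lambda\one=0$, i.e.\ $x=\lambda\one$.

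Next, I would check that the scalar $\lambda$ is uniquely determined by $x$: if $x = \lambda_1\one=\lambda_2\one$ then $(\lambda_1-\lambda_2)\one=0$, and since $\|\one\|=1\ne 0$ (axiom~(v) of Definition~\ref{def:na}) we must have $\lambda_1=\lambda_2$. This gives a well-defined map $\varphi\colon A\to\bC$ with $\varphi(x)=\lambda$ whenever $x=\lambda\one$. Since the operations in $A$ restricted to $\bC\cdot\one$ agree (via $\lambda\one\mapsto\lambda$) with those in $\bC$, the map $\varphi$ is a $\bC$-algebra homomorphism. It is surjective because each $\lambda\in\bC$ is the image of $\lambda\one$, and injective because the previous paragraph shows every element of $A$ lies in $\bC\cdot\one$ and $\varphi$ is clearly injective there.

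There is no real obstacle here: the entire content is packaged into Theorem~\ref{th:Gelfand}, whose non-emptiness of the spectrum was the key input (relying on Liouville/Cauchy to rule out an empty spectrum). Once that is in hand, the argument is essentially tautological.
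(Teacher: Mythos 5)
Your proof is correct and takes exactly the same route as the paper: invoke Theorem~\ref{th:Gelfand} to get a $\lambda$ with $x-\lambda\one$ not invertible, conclude $x=\lambda\one$, and observe the resulting map $A\to\bC$ is an isomorphism. The extra detail you supply on the uniqueness of $\lambda$ via $\|\one\|=1$ is a harmless elaboration of what the paper leaves as ``easy to check.''
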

\begin{proof}
If $x\in A$, then by Theorem~\ref{th:Gelfand} there
exists $\lambda\in\bC$ such that $x-\lambda\one$ is not
invertible. Therefore $x-\lambda\one=0$ and so $x=\lambda\one$.
So every element is a multiple of $\one$. It is now easy to check that
the map sending $x$ to $\lambda$ is a Banach algebra isomorphism
$A\to\bC$.
\end{proof}

\begin{cor}\label{co:maxkernel}
Every maximal ideal\index{maximal!ideal} 
of a commutative Banach algebra $A$ is the kernel of an
algebra homomorphism $A\to \bC$.
\end{cor}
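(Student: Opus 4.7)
The plan is to combine the three results just proved in sequence. Let $\fm$ be a maximal ideal of the commutative Banach algebra $A$. First I would invoke Corollary~\ref{co:maxideal} to conclude that $\fm$ is closed, which allows us to form the quotient Banach algebra $A/\fm$ equipped with the quotient norm, using Lemma~\ref{le:quotient}\,(i). This quotient is again a commutative Banach algebra.

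Next, I would use the standard algebraic fact that a commutative ring modulo a maximal ideal is a field: since $\fm$ is maximal and $A$ is commutative, every non-zero element of $A/\fm$ is invertible. This is the step where maximality is essential, and it is purely algebraic — no analysis is required beyond knowing that the quotient makes sense as a ring.

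Now I am in a position to apply the Gelfand--Mazur theorem (Corollary~\ref{co:divisionalgebras}): a Banach algebra in which every non-zero element is invertible must be isomorphic to $\bC$ as a $\bC$-algebra. Hence there is an isomorphism $\varphi\colon A/\fm \xrightarrow{\sim} \bC$. Composing with the quotient map gives an algebra homomorphism $A \to A/\fm \xrightarrow{\varphi} \bC$ whose kernel is exactly $\fm$, as required.

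There is essentially no obstacle here: all the work has already been done in the preceding results. The only thing one needs to be careful about is making sure the hypotheses of Corollary~\ref{co:divisionalgebras} are actually met, which requires that $A/\fm$ be a Banach algebra (needing $\fm$ closed, via Corollary~\ref{co:maxideal}) and a field (needing $\fm$ maximal and $A$ commutative). Commutativity of $A$ is used implicitly here, since without it maximal ideals need not produce fields on passing to the quotient.
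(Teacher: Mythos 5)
Your proof is correct and follows exactly the route taken in the paper: first Corollary~\ref{co:maxideal} to get closedness, then Lemma~\ref{le:quotient} to form the quotient Banach algebra, then the observation that maximality plus commutativity makes the quotient a field, and finally the Gelfand--Mazur theorem (Corollary~\ref{co:divisionalgebras}). You simply spell out the intermediate algebraic step a little more fully, but there is no difference in substance.
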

\begin{proof}
By Corollary~\ref{co:maxideal}, every maximal ideal $I$ is
closed. By Lemma~\ref{le:quotient}, the quotient $A/I$ is a Banach
algebra in which every non-zero element is invertible. So by
Corollary~\ref{co:divisionalgebras} the quotient is isomorphic to $\bC$.
\end{proof}

Corollary~\ref{co:maxkernel} is closely related to the following.

\begin{prop}[Automatic continuity]%
\label{pr:continuous}\index{automatic continuity}
If $A$ is a commutative Banach algebra and
$\phi\colon A \to \bC$ is an
algebra homomorphism then for all $x\in A$ we have
$|\phi(x)|\le\|x\|$. In particular, $\phi$ is continuous
with respect to the norm.
\end{prop}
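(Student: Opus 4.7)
The plan is to show that every complex algebra homomorphism on a commutative Banach algebra takes values in the spectrum, which is bounded by the norm; continuity then follows automatically from Lemma~\ref{le:bd=cts}.

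First I would dispose of the unit. Since $\phi$ is a ring homomorphism to a field and $\phi(\one)=\phi(\one)^2$, we have $\phi(\one)\in\{0,1\}$. If $\phi(\one)=0$ then $\phi(x)=\phi(\one x)=\phi(\one)\phi(x)=0$ for every $x$, and the inequality $|\phi(x)|\le\|x\|$ holds trivially. So the substantive case is $\phi(\one)=1$, which I assume henceforth; in particular $\ker(\phi)$ is a proper ideal.

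Next, the key observation is that for any $x\in A$, the element $x-\phi(x)\one$ lies in $\ker(\phi)$. An invertible element cannot lie in a proper ideal, because applying $\phi$ to $(x-\phi(x)\one)\cdot(x-\phi(x)\one)^{-1}=\one$ would give $0=1$. Hence $x-\phi(x)\one$ is not invertible, which by Definition~\ref{def:spectrum} means $\phi(x)\in\Spec(x)$.

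Finally, Theorem~\ref{th:Gelfand} shows that $\Spec(x)$ is contained in the closed disc of radius $\|x\|$ centred at $0$: in the proof of that theorem, the Neumann-style expansion \eqref{eq:ps} converges whenever $|\lambda|>\|x\|$, so every such $\lambda$ lies outside $\Spec(x)$. Combining this with the previous step yields $|\phi(x)|\le\|x\|$ for all $x\in A$. Continuity is then immediate from Lemma~\ref{le:bd=cts}, since a bounded linear map is continuous.

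There is no real obstacle here; the whole argument rests on already-established results (Proposition~\ref{pr:open} through Theorem~\ref{th:Gelfand}). The only mild subtlety is remembering to rule out the degenerate case $\phi(\one)=0$ so that one can legitimately claim $\ker(\phi)$ is proper when invoking the "invertible element not in a proper ideal" step.
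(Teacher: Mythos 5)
Your proof is correct but takes a different route from the paper's. The paper argues by direct contradiction: assuming $|\phi(x)|>\|x\|$, it sets $y=x/\phi(x)$ so that $\|y\|<1$ and $\phi(y)=1$, then applies Proposition~\ref{pr:open} to conclude $\one-y$ is invertible, contradicting $\phi(\one-y)=0$. You instead observe that $x-\phi(x)\one\in\ker\phi$ and hence cannot be invertible (else applying $\phi$ to $(x-\phi(x)\one)(x-\phi(x)\one)^{-1}=\one$ gives $0=1$), so $\phi(x)\in\Spec(x)$, and then you bound the spectrum inside the closed disc of radius $\|x\|$ via the Neumann series from the proof of Theorem~\ref{th:Gelfand}. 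Both arguments rest on the same power-series convergence from Proposition~\ref{pr:open}, but your framing through the spectrum is more conceptual: it makes explicit the containment $\phi(x)\in\Spec(x)$, which is exactly the inclusion that reappears in Corollary~\ref{co:radius}. The paper's version is a little leaner, avoiding any mention of $\Spec(x)$. You are also more careful in explicitly disposing of the degenerate case $\phi(\one)=0$; the paper tacitly takes $\phi(\one)=1$, which is harmless since $\phi(\one)=0$ forces $\phi\equiv 0$ and the bound then holds vacuously.
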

\begin{proof}
  Suppose that $|\phi(x)|>\|x\|$. 
  Setting $y=x/\phi(x)$, we have $\|y\|<1$ and $\phi(y)=1$.
  So by Proposition~\ref{pr:open}, $\one-y$ is invertible with inverse the sum of the convergent power
  series $\one+y+y^2+\cdots$. On the other hand,
 \[ \phi(\one-y)=\phi(\one)-\phi(y)=1-1=0. \] 
This contradiction proves
  that $|\phi(x)|\le\|x\|$.
\end{proof}

\section{Spectral radius}

\begin{defn}
Let $A$ be a commutative Banach algebra.
The \emph{spectral radius}\index{spectral!radius|textbf} of $x\in A$ is defined to be
$\rho(x)=\displaystyle\sup_{\lambda\in\Spec(x)}|\lambda|$.\index{rho@$\rho(x)$, spectral radius}
\end{defn}

\begin{prop}[Spectral radius formula, Gelfand \cite{Gelfand:1941a}]\label{pr:Gelfand}
If $A$ is a Banach algebra and $x\in A$ then the spectral radius 
$\rho(x)$ is related to the norm by the formula
\[ \rho(x)=\limsup_{n\to\infty}\sqrt[n]{\|x^n\|}=
  \lim_{n\to\infty}\sqrt[n]{\|x^n\|}=\inf_{n\to\infty}\sqrt[n]{\|x^n\|}. \]
\end{prop}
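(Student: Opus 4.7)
The plan is to use Fekete's lemma for the equalities and then squeeze $\rho(x)$ between the common value from both sides.

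First, since the norm on $A$ is submultiplicative (Definition~\ref{def:na}\,(iv)) and $\|\one\|=1$, the sequence $c_n=\|x^n\|$ satisfies $c_0=1$ and $c_{m+n}\le c_mc_n$, so it is submultiplicative in the sense preceding Lemma~\ref{le:Fekete}. That lemma immediately gives
\[ \limsup_{n\to\infty}\sqrt[n]{\|x^n\|}=\lim_{n\to\infty}\sqrt[n]{\|x^n\|}=\inf_{n\ge 1}\sqrt[n]{\|x^n\|}. \]
Call this common value $L$. It remains to prove $\rho(x)=L$.

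For $\rho(x)\le L$, I would mimic the geometric series argument already used in the proof of Theorem~\ref{th:Gelfand}. Given $|\lambda|>L$, pick $r$ with $L<r<|\lambda|$; since $L$ is the infimum, there is an $N$ with $\|x^n\|\le r^n$ for all $n\ge N$. Hence the series $-\sum_{n\ge 0}\lambda^{-n-1}x^n$ converges absolutely in $A$, and a direct telescoping check shows that its sum is a two-sided inverse for $x-\lambda\one$. Thus $\lambda\notin\Spec(x)$, giving $\Spec(x)\subseteq\{|\lambda|\le L\}$ and so $\rho(x)\le L$.

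For the reverse inequality $L\le\rho(x)$, fix any $r>\rho(x)$. By Theorem~\ref{th:Gelfand} the resolvent $R(\lambda)=(x-\lambda\one)^{-1}$ is an $A$-valued analytic function on $\bC\setminus\Spec(x)$, hence continuous on the compact circle $|\lambda|=r$, so $M_r:=\sup_{|\lambda|=r}\|R(\lambda)\|$ is finite. On any circle of radius $r_0>\|x\|$ the Laurent expansion from the proof of Theorem~\ref{th:Gelfand} converges uniformly, and integrating term by term gives the Cauchy formula
\[ x^n = -\frac{1}{2\pi i}\oint_{|\lambda|=r_0}\lambda^n R(\lambda)\,d\lambda. \]
By Cauchy's theorem applied to the Banach-valued analytic function $\lambda\mapsto\lambda^nR(\lambda)$ on the annulus $\rho(x)<|\lambda|$, the integral is unchanged when we deform the contour to $|\lambda|=r$. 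Estimating length times sup yields $\|x^n\|\le r^{n+1}M_r$, so $\sqrt[n]{\|x^n\|}\le r\,(rM_r)^{1/n}\to r$. Hence $L\le r$ for every $r>\rho(x)$, and we conclude $L\le\rho(x)$.

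The main obstacle is rigorously justifying the $A$-valued contour integral and the associated Cauchy theorem for Banach-algebra-valued analytic functions. The same machinery was already invoked in the proof of Theorem~\ref{th:Gelfand} (termwise integration of the Neumann series plus invoking Cauchy's theorem to rule out an empty spectrum), so the same device is available here; the clean foundations are either to define the integral directly as a Riemann-type limit in $A$ and verify that continuous linear functionals commute with it, or to compose with each continuous linear functional on $A$, reduce to classical one-variable Cauchy theory, and then recover the vector identity using the Hahn--Banach separation of points.
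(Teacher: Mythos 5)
Your proof is correct and follows essentially the same route as the paper's. Both rely on the same three ingredients: Fekete's lemma for the equality of the $\limsup$, $\lim$, and $\inf$; the Neumann series $-\sum_{n\ge 0}\lambda^{-n-1}x^n$ giving $\rho(x)\le L$; and the theory of $A$-valued analytic functions to get $L\le\rho(x)$. The paper phrases the hard direction very compactly, asserting that the spectral radius must equal the reciprocal of the radius of convergence of the Laurent series in $\lambda^{-1}$ because the resolvent is analytic outside $\Spec(x)$ — this is exactly the statement that a vector-valued analytic function's Laurent series converges on the largest annulus of analyticity, which you unpack explicitly via the Cauchy estimates $\|x^n\|\le r^{n+1}M_r$ obtained by deforming the contour from $|\lambda|=r_0>\|x\|$ to $|\lambda|=r>\rho(x)$. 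Your closing remark, that the cleanest foundation is either to define the contour integral as a Riemann-type limit in $A$ or to reduce to scalar complex analysis via continuous linear functionals and Hahn--Banach, is the standard and correct way to make this rigorous, and is the same machinery the paper already leaned on (uncited) when it integrated the Neumann series term by term to show $\Spec(x)\ne\varnothing$ in Theorem~\ref{th:Gelfand}. So the content is the same; you have simply made explicit the vector-valued Cauchy theory that the paper treats as known background.
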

\begin{proof}
By Theorem~\ref{th:Gelfand}, $(x-\lambda\one)^{-1}$ is an analytic
function of $\lambda$ outside of $\Spec(x)$.
So to find the spectral radius of $x$, we must find the exact radius of
convergence of the power series~\eqref{eq:ps}. 
By the Cauchy--Hadamard formula\index{Cauchy--Hadamard formula}
(Lemma~\ref{le:radius}) the radius of
convergence in the variable $\lambda^{-1}$ is given by
\[ 1/r = \limsup_{n\to\infty}\sqrt[n]{\|x^n\|}. \]
Now by axiom (iv) in Definition~\ref{def:na}, the sequence
$\|x^n\|$ is a submultiplicative sequence, and so we can
apply Fekete's lemma~\ref{le:Fekete} to deduce that
\[ \limsup_{n\to\infty}\sqrt[n]{\|x^n\|}=
  \lim_{n\to\infty}\sqrt[n]{\|x^n\|}=\inf_{n\to\infty}\sqrt[n]{\|x^n\|}. \]
The series therefore converges for $|\lambda^{-1}|<r$ and diverges for
$|\lambda^{-1}|>r$. Inverting, it converges for $|\lambda|>1/r$ and
diverges for $|\lambda|<1/r$. It follows that the spectral radius is equal to $1/r$. 
\end{proof}

\begin{rk} 
If $B$ is a closed subalgebra of a Banach algebra $A$ and $x\in B$
then $\Spec_A(x)\subseteq\Spec_B(x)$,
since if $x-\lambda\one$ is not invertible in $A$ then it is
not invertible in $B$. Although it can happen that $\Spec_A(x)\ne\Spec_B(x)$,
Proposition~\ref{pr:Gelfand} shows that the spectral radius is the
same in $A$ as in $B$.
\end{rk}

\begin{lemma}\label{le:rho}
If $x$ and $y$ are elements of $A$ then
\begin{enumerate}
\item $\rho(xy) \le \rho(x)\rho(y)$,
\item $\rho(x+y) \le \rho(x) + \rho(y)$.
\end{enumerate}
\end{lemma}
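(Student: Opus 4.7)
The plan is to deduce both inequalities from the spectral radius formula of Proposition~\ref{pr:Gelfand}, which says $\rho(z) = \lim_{n\to\infty}\sqrt[n]{\|z^n\|}$ for any $z \in A$. Commutativity of $A$ will be used crucially in both parts, since without it neither $(xy)^n = x^n y^n$ nor the binomial expansion of $(x+y)^n$ would hold.

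For part (i), I would observe that by commutativity, $(xy)^n = x^n y^n$, and then submultiplicativity of the Banach algebra norm (axiom (iv) in Definition~\ref{def:na}) gives $\|(xy)^n\| \le \|x^n\|\,\|y^n\|$. Taking $n$-th roots yields $\sqrt[n]{\|(xy)^n\|} \le \sqrt[n]{\|x^n\|} \cdot \sqrt[n]{\|y^n\|}$, and then passing to the limit (using Proposition~\ref{pr:Gelfand} and the fact that the limits exist) gives $\rho(xy) \le \rho(x)\rho(y)$.

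For part (ii), I would expand $(x+y)^n = \sum_{i=0}^{n}\binom{n}{i} x^i y^{n-i}$, again using commutativity. By subadditivity and submultiplicativity of the norm,
\[ \|(x+y)^n\| \le \sum_{i=0}^{n}\binom{n}{i}\|x^i\|\,\|y^{n-i}\|. \]
This puts us exactly in the setting of Lemma~\ref{le:binom}, with $a_i = \|x^i\|$, $b_j = \|y^j\|$, and $c_n = \|(x+y)^n\|$. That lemma then gives
\[ \limsup_{n\to\infty}\sqrt[n]{\|(x+y)^n\|} \le \limsup_{n\to\infty}\sqrt[n]{\|x^n\|} + \limsup_{n\to\infty}\sqrt[n]{\|y^n\|}, \]
and translating via Proposition~\ref{pr:Gelfand} gives $\rho(x+y) \le \rho(x) + \rho(y)$.

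There is no real obstacle here; both parts are essentially immediate once one has the spectral radius formula and commutativity. If anything, the only point worth being careful about is that submultiplicativity of the sequence $\|x^n\|$ (needed to invoke Fekete's lemma in Proposition~\ref{pr:Gelfand}) guarantees that the $\limsup$'s above are genuine limits, so the inequalities at the level of $\limsup$ coincide with the spectral radii without any ambiguity.
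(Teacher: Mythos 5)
Your proof is correct and matches the paper's argument essentially verbatim: part (ii) is obtained by expanding $(x+y)^n$ via the binomial theorem, applying Lemma~\ref{le:binom} with $a_i=\|x^i\|$, $b_j=\|y^j\|$, $c_n=\|(x+y)^n\|$, and translating through the spectral radius formula of Proposition~\ref{pr:Gelfand}. The paper dismisses part (i) as ``straightforward'' without giving the details; your explicit argument ($(xy)^n = x^n y^n$ by commutativity, then submultiplicativity and the spectral radius formula) is exactly what is meant.
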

\begin{proof}
Part (i) is straightforward, so we prove part (ii).
Using Lemma~\ref{le:binom} and Proposition~\ref{pr:Gelfand}, we
have
\begin{align*}
\rho(x+y)&=\limsup_{n\to\infty}\sqrt[n]{\|(x+y)^n\|} \\
&=\limsup_{n\to\infty}\sqrt[n]{\|\textstyle\sum_{i=0}^n\binom{n}{i}x^iy^{n-i}\|} \\
&\le\limsup_{n\to\infty}\sqrt[n]
{\textstyle\sum_{i=0}^n\binom{n}{i}\|x^i\|\|y^{n-i}\|} \\
&\le\limsup_{n\to\infty}\sqrt[n]{\|x^n\|} + 
\limsup_{n\to\infty}\sqrt[n]{\|y^n\|}.
\qedhere
\end{align*}
\end{proof}

\begin{theorem}\label{th:invertible}
An element $x$ of a commutative Banach algebra $A$ is invertible if and
only if $\phi(x)\ne 0$ for all algebra homomorphisms 
$\phi\colon A\to\bC$.
\end{theorem}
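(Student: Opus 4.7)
The plan is to prove the two directions separately; the forward direction is essentially a one-line calculation, and the reverse direction reduces quickly to Corollary~\ref{co:maxkernel}.

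First I would handle the easy direction. If $x$ is invertible with inverse $y$, then for any algebra homomorphism $\phi\colon A\to\bC$ we have $\phi(x)\phi(y)=\phi(xy)=\phi(\one)=1$, which forces $\phi(x)\ne 0$. (Note that $\phi(\one)=1$ since $\phi$ is a nonzero ring homomorphism to a field.)

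For the reverse direction, I would prove the contrapositive: assume $x$ is not invertible and construct an algebra homomorphism $\phi\colon A\to\bC$ with $\phi(x)=0$. Since $A$ is commutative, the principal ideal $xA$ is a proper ideal of $A$; indeed if $xa=\one$ for some $a\in A$ then $x$ would be invertible with inverse $a$. By Zorn's lemma applied to the partially ordered set of proper ideals containing $xA$ (whose chains have union again proper, because $\one$ has norm $1$ so no proper ideal contains it), there is a maximal ideal $\fm$ of $A$ with $x\in\fm$. By Corollary~\ref{co:maxkernel}, $\fm$ is the kernel of some algebra homomorphism $\phi\colon A\to\bC$, and then $\phi(x)=0$.

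The main structural point is that Zorn's lemma in a unital commutative Banach algebra produces maximal ideals without trouble: the proper ideals are exactly the ones missing the open unit ball around $\one$ (by Proposition~\ref{pr:open}), so unions of chains of proper ideals are still proper. Once the maximal ideal exists, Corollary~\ref{co:maxkernel} does the heavy lifting by providing the homomorphism to $\bC$. I do not anticipate an obstacle here; the only thing to be careful about is citing that maximal ideals exist in unital commutative rings (a standard application of Zorn) and then invoking the already-established Corollary~\ref{co:maxkernel} to turn the maximal ideal into an algebra homomorphism.
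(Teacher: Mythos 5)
Your proof is correct and follows essentially the same approach as the paper: the forward direction is the same one-line calculation, and the reverse direction uses Zorn's lemma to embed $xA$ in a maximal ideal and then extracts a homomorphism to $\bC$. The only difference is a matter of packaging — you invoke Corollary~\ref{co:maxkernel} directly, whereas the paper re-derives its content by chaining Corollary~\ref{co:maxideal}, Lemma~\ref{le:quotient}, and Corollary~\ref{co:divisionalgebras}.
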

\begin{proof}
If $x$ is invertible then for every algebra homomorphism $\phi\colon
A\to\bC$ we have $\phi(x)\phi(x^{-1})=\phi(xx^{-1})=\phi(\one)=1$ and
so $\phi(x)\ne 0$. On the other hand, if $x$ is not invertible then
$x$ generates an ideal in $A$. By Zorn's lemma, this ideal is
contained in some maximal ideal $I$ of $A$. By
Corollary~\ref{co:maxideal},
$I$ is closed in $A$, and therefore using Lemma~\ref{le:quotient},
the quotient $A/I$ is a Banach
algebra in which every non-zero element is invertible. By
Corollary~\ref{co:divisionalgebras}, we have $A/I\cong\bC$. There is
therefore an algebra homomorphism $\phi\colon A\to\bC$ with kernel
$I$, and then we have $\phi(x)=0$. 
\end{proof}

\begin{cor}\label{co:radius}
If $x$ is an element of a commutative Banach algebra $A$ then
$\Spec(x)$ is the set of values of $\phi(x)$ as $\phi$ runs over
the algebra homomorphisms $A \to \bC$.
The spectral radius $\rho(x)$ 
is equal to $\displaystyle\sup_{\phi\colon A \to \bC} |\phi(x)|$.
\end{cor}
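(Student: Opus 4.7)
The plan is to deduce both claims directly from Theorem~\ref{th:invertible}, which characterises invertibility in a commutative Banach algebra via algebra homomorphisms to $\bC$. The first sentence of the corollary will do essentially all the work; the second sentence then follows by taking suprema of absolute values.

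First I would prove the set-theoretic equality $\Spec(x)=\{\phi(x)\mid\phi\colon A\to\bC\}$ by chasing the definitions. For a scalar $\lambda\in\bC$, by Definition~\ref{def:spectrum} we have $\lambda\in\Spec(x)$ if and only if $x-\lambda\one$ is not invertible in $A$. By Theorem~\ref{th:invertible}, this fails to be invertible precisely when there exists an algebra homomorphism $\phi\colon A\to\bC$ with $\phi(x-\lambda\one)=0$. Since $\phi$ is a unital algebra homomorphism, $\phi(\one)=1$, so this condition reads $\phi(x)=\lambda$. Hence $\lambda\in\Spec(x)$ iff $\lambda$ lies in the image of the evaluation map $\phi\mapsto\phi(x)$.

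Once this identification is established, the spectral radius formula is immediate: by definition $\rho(x)=\sup_{\lambda\in\Spec(x)}|\lambda|$, and substituting the description of $\Spec(x)$ just obtained gives
\[ \rho(x)=\sup_{\phi\colon A\to\bC}|\phi(x)|, \]
where the supremum is over all algebra homomorphisms $A\to\bC$.

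I don't expect a genuine obstacle here; all the hard work is packaged in Theorem~\ref{th:invertible}, whose proof in turn rests on Zorn's lemma together with the Gelfand--Mazur theorem (Corollary~\ref{co:divisionalgebras}). The only small points to be careful about are that $\phi$ is assumed to be a unital algebra homomorphism so that $\phi(\lambda\one)=\lambda$, and that the set of such $\phi$ is non-empty whenever $A\neq 0$ (ensuring the supremum is not over the empty set); the latter follows again from Zorn applied to the zero ideal together with Corollary~\ref{co:maxkernel}.
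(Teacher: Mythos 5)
Your argument is correct and follows exactly the route the paper takes: invoke Theorem~\ref{th:invertible} to convert ``$x-\lambda\one$ not invertible'' into ``$\phi(x)=\lambda$ for some algebra homomorphism $\phi$,'' then take suprema of absolute values. The extra remarks about $\phi(\one)=1$ and the non-emptiness of the set of homomorphisms are reasonable sanity checks but do not change the substance.
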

\begin{proof}
It follows from Theorem~\ref{th:invertible} that  
$x-\lambda\one$ is not invertible if and only if there
exists an algebra homomorphism 
$\phi\colon A \to \bC$ such that $\phi(x)=\lambda$.
\end{proof}

\begin{cor}\label{co:Specx^-1}
If $x$ is an invertible element of a commutative Banach algebra $A$ 
then $\Spec(x^{-1})=\{\lambda^{-1}\mid \lambda\in\Spec(x)\}$.
\end{cor}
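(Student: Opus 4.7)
The plan is to exploit the algebraic identity
\[ x^{-1} - \lambda^{-1}\one = -\lambda^{-1} x^{-1}(x - \lambda\one), \]
valid for any nonzero $\lambda \in \bC$ when $x$ is invertible. First I would observe that since $x$ is invertible, $0 \notin \Spec(x)$, so every $\lambda \in \Spec(x)$ is nonzero and $\lambda^{-1}$ is well defined; likewise $x^{-1}$ is invertible so $0 \notin \Spec(x^{-1})$, and every element of $\Spec(x^{-1})$ has a reciprocal in $\bC$.

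Next I would use the displayed identity to show the two sets in question coincide. The right-hand side is a product of $x - \lambda\one$ with two invertible factors $-\lambda^{-1}\one$ and $x^{-1}$, so $x^{-1} - \lambda^{-1}\one$ is invertible in $A$ if and only if $x - \lambda\one$ is invertible in $A$. Equivalently, $\lambda \in \Spec(x)$ if and only if $\lambda^{-1} \in \Spec(x^{-1})$, for any nonzero $\lambda$. Combined with the previous paragraph, this gives the bijection $\lambda \mapsto \lambda^{-1}$ between $\Spec(x)$ and $\Spec(x^{-1})$.

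There is no real obstacle here; this is a one-line verification once the identity is spotted, and the only subtlety is ruling out $\lambda = 0$, which is automatic from the invertibility hypothesis. One could alternatively derive the result from Corollary~\ref{co:radius} by noting that the algebra homomorphisms $\phi\colon A \to \bC$ satisfy $\phi(x^{-1}) = \phi(x)^{-1}$ (both are nonzero since $x$ and $x^{-1}$ are invertible), but the direct algebraic argument above is shorter and avoids invoking the Gelfand theory.
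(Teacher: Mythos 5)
Your proof is correct and matches the paper essentially verbatim: the paper gives the same two arguments (the algebraic identity $x^{-1}-\lambda^{-1}\one = -x^{-1}\lambda^{-1}(x-\lambda\one)$ and the Gelfand-theoretic one via Corollary~\ref{co:radius}), merely in the opposite order of preference. There is nothing to criticise.
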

\begin{proof}
This follows from Corollary~\ref{co:radius}, since if $\phi\colon A
\to \bC$ is an algebra homomorphism then $\phi(x^{-1})=\phi(x)^{-1}$.

Alternatively, we have $x^{-1}-\lambda^{-1}\one =
-x^{-1}\lambda^{-1}(x-\lambda\one)$, so $x-\lambda\one$ is
invertible if and only if $x^{-1}-\lambda^{-1}\one$ is invertible.
\end{proof}

\begin{eg}\label{eg:l1Zspec2}
Continuing Example~\ref{eg:l1Zspec}, recall that $\ell^1(\bZ)$ is the
completion of the algebra $\bC[u,u^{-1}]$ with respect to the $\ell^1$ norm.
A $\bC$-algebra homomorphism $\phi\colon\bC[u,u^{-1}]\to\bC$
is determined by $\phi(u)$, and this may be any complex number except
zero.  The ones that extend to $\ell^1(\bZ)$ are
those with $|\phi(u)|=1$. 
\end{eg}

\begin{prop}\label{pr:Spec(1+x)}
Let $x$ be an element of $A$ with spectral radius $r$. Then $r$ is an
element of $\Spec(x)$ if and only if $\one+x$ has spectral radius
$1+r$.
\end{prop}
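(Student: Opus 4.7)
The plan is to reduce the problem to a statement about the shape of $\Spec(x)$ as a subset of $\bC$, using the elementary identity $\Spec(\one+x)=1+\Spec(x)$. This identity follows at once from the definition, since $\one+x-\mu\one = x-(\mu-1)\one$ is invertible precisely when $x-(\mu-1)\one$ is. Hence the spectral radius of $\one+x$ equals $\sup_{\lambda\in\Spec(x)}|1+\lambda|$, and since $\Spec(x)$ is compact (Theorem~\ref{th:Gelfand}), this supremum is attained.

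For the forward direction, I would simply note that if $r\in\Spec(x)$, then $|1+r| = 1+r$ already appears among the values $|1+\lambda|$, while for any $\lambda\in\Spec(x)$ the triangle inequality combined with $|\lambda|\le r$ gives
\[ |1+\lambda| \le 1+|\lambda| \le 1+r, \]
so the supremum is exactly $1+r$.

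For the reverse direction, suppose that the spectral radius of $\one+x$ equals $1+r$. By compactness there is some $\lambda_0\in\Spec(x)$ attaining $|1+\lambda_0|=1+r$. The chain $1+r = |1+\lambda_0|\le 1+|\lambda_0|\le 1+r$ must therefore be an equality throughout. This forces simultaneously $|\lambda_0|=r$ and $|1+\lambda_0|=1+|\lambda_0|$. The latter is the equality case of the triangle inequality for $1$ and $\lambda_0$ in $\bC$, which forces $\lambda_0$ to be a non-negative real multiple of $1$, i.e.\ $\lambda_0\ge 0$. Combined with $|\lambda_0|=r$, we conclude $\lambda_0 = r$, so $r\in\Spec(x)$.

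There is no substantial obstacle here; the only subtle point is the equality case of the triangle inequality in the reverse direction, but the argument is essentially a one-liner once compactness of the spectrum is invoked.
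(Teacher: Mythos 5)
Your proof is correct and follows essentially the same route as the paper: both start from the identity $\Spec(\one+x)=1+\Spec(x)$, and both use the geometric fact (equivalently, the equality case of the triangle inequality) that the real number $1+r$ is the unique point of the disc $\{1+\lambda : |\lambda|\le r\}$ at distance $1+r$ from the origin. The paper states this geometric observation more tersely, while you spell out the triangle-inequality steps, but the content is the same.
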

\begin{proof}
The set $\Spec(\one+x)$ is the set of $1+\lambda$ with
$\lambda\in\Spec(x)$. This is contained in a disc of radius $r$
centred at $1\in\bC$. The only point in this disc at distance $1+r$
from the origin is the real number $1+r$. Using the fact that
$\Spec(x)$ is closed (Theorem~\ref{th:Gelfand}), we see that the
spectral radius of $\one+x$ is $1+r$ if and only if $r\in\Spec(x)$.
\end{proof}

We end this section by noting that there are various interesting
subsets of the spectrum that are important in the subject.

\begin{defn}
The \emph{point spectrum}\index{point spectrum}\index{spectrum!point} 
of an element $x$ of a
Banach algebra $A$ is the set of $\lambda\in\bC$ such
that multiplication $x-\lambda\one$ is not injective; in other words,
$x-\lambda\one$ is a divisor of zero.

The \emph{peripheral spectrum}\index{peripheral spectrum}%
\index{spectrum!peripheral} of $x$ is the set of $\lambda\in\Spec(x)$
such that $|\lambda|$ is equal to the spectral radius of $x$. This is
a non-empty closed subset of the circle whose radius is the spectral
radius. 
\end{defn}

\section{The structure space}\label{se:struct-space}

\begin{defn}
If $A$ is a commutative Banach algebra, the
\emph{structure space}\index{structure!space|textbf}%
\footnote{Other names for this in the literature are
the \emph{carrier space},\index{carrier space} 
the \emph{spectrum},\index{spectrum}
the \emph{Gelfand space},\index{Gelfand!space}
and the \emph{maximal ideal space}\index{maximal!ideal!space}
of $A$.}
$\Struct(A)$ is the set of algebra homomorphisms
$\phi\colon A \to \bC$, endowed with the
\emph{weak* topology}.\index{weak* topology|textbf}%
\index{topology, weak*|textbf} This
is the topology defined by the following basic open neighbourhoods of
an element $\phi\in\Struct(A)$. For each
finite list of elements $x_1,\dots,x_n\in A$ and for each 
$\ep>0$,  we have a basic open neighbourhood\index{basic open neighbourhood}
$[\phi;x_1,\dots,x_n;\ep]$
of $\phi$ in $\Struct(A)$ consisting of those $\phi'\colon A \to \bC$ such that 
$|\phi(x_i)-\phi'(x_i)|<\ep$ for  $i=1,\dots,n$.
\end{defn}

\begin{lemma}\label{le:weak*}
If we put the product topology\index{product topology} on
$Q=\prod_{x\in A}\bC$ then the natural map $\Struct(A)\to Q$ sending
$\phi$ to $\prod_{x\in A}\phi(x)$ is injective. 
The weak* topology on $\Struct(A)$ is
the subspace topology for this embedding. 
It is the coarsest topology with the property that for all $x\in A$
the evaluation map 
\[  \hat x\colon \Struct(A)\to \bC \] 
sending $\phi$ to $\phi(x)$ is continuous.
\end{lemma}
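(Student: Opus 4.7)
The plan is to verify each of the three assertions in turn, noting that all three amount to standard facts about initial topologies applied to this specific situation.

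First, for injectivity of the map $\Struct(A) \to Q$: two algebra homomorphisms $\phi, \phi' \colon A \to \bC$ are equal precisely when they agree on every $x \in A$, which is exactly the condition that their images $(\phi(x))_{x \in A}$ and $(\phi'(x))_{x \in A}$ coincide in $Q$. So the map is injective essentially by definition of what an algebra homomorphism is.

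Next, I would identify the subspace topology from $Q$ with the weak* topology by comparing basic open sets. Recall that the product topology on $Q = \prod_{x \in A}\bC$ is generated by cylinder sets $\pi_x^{-1}(U)$ where $\pi_x \colon Q \to \bC$ is the projection and $U \subseteq \bC$ is open. A basic open set is a finite intersection $\bigcap_{i=1}^n \pi_{x_i}^{-1}(U_i)$. Pulling back to $\Struct(A)$ via the embedding, this becomes $\{\phi' : \phi'(x_i) \in U_i \text{ for } i = 1,\dots,n\}$. Taking each $U_i$ to be the open disc of radius $\ep$ around $\phi(x_i)$ recovers precisely the basic neighbourhood $[\phi;x_1,\dots,x_n;\ep]$ of the weak* topology. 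Conversely, any basic weak* neighbourhood of this form is a finite intersection of cylinder preimages, so the two topologies have the same basic open sets at each point and therefore coincide.

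Finally, for the characterisation as the coarsest topology making every evaluation map $\hat x \colon \Struct(A) \to \bC$ continuous: since $\hat x = \pi_x \circ \iota$ where $\iota$ is the embedding into $Q$, continuity of each $\hat x$ is equivalent to continuity of each $\pi_x \circ \iota$, which is the universal property characterising the initial topology induced by the family $\{\hat x\}_{x \in A}$. That initial topology is generated as a subbase by the sets $\hat x^{-1}(U)$ for $U \subseteq \bC$ open, and these are exactly the subbasic sets described in the previous paragraph. Hence the weak* topology coincides with this coarsest topology. No step here should be an obstacle; the entire lemma is a routine unpacking of definitions, and the only thing to be careful about is matching the $\ep$-ball neighbourhoods used in the definition of the weak* topology with the more general open-set cylinders of the product topology, which is handled by the standard observation that open discs form a base for the topology of $\bC$.
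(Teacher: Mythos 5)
Your proof is correct and follows essentially the same approach as the paper: injectivity is immediate from the fact that a homomorphism is determined by its values, and the identification of the two topologies proceeds by matching the basic open neighbourhoods $[\phi;x_1,\dots,x_n;\ep]$ with preimages of basic open sets in the product topology. The paper's proof is more terse (and leaves the ``coarsest topology'' claim implicit as the universal property of the product/initial topology), but the content is the same.
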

\begin{proof}
The map $\Struct(A)\to Q$  is injective because a
homomorphisms $\phi$ is determined by its values at elements of $A$.
The basic open neighbourhoods 
\[ [\phi;x_1,\dots,x_n;\ep] \] 
defining
the weak* topology are the inverse images of the basic open
neighbourhoods of the image of $\phi$ in $Q$ in the product topology.
\end{proof}

\begin{prop}\label{pr:weak*}
With the weak* topology, $\Struct(A)$ is a
compact Hausdorff topological space.%
\index{compact Hausdorff space}\index{Hausdorff space}
\end{prop}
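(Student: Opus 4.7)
The plan is to realise $\Struct(A)$ as a closed subspace of a suitable compact product space, and then verify that the weak* topology separates distinct homomorphisms.

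For the Hausdorff property, I would argue directly: given two distinct homomorphisms $\phi,\phi'\in\Struct(A)$, choose $x\in A$ with $\phi(x)\ne\phi'(x)$ and pick $\ep>0$ smaller than $\tfrac{1}{2}|\phi(x)-\phi'(x)|$. The basic open neighbourhoods $[\phi;x;\ep]$ and $[\phi';x;\ep]$ are then disjoint, using the triangle inequality.

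For compactness, the key input is Proposition~\ref{pr:continuous} (automatic continuity), which says $|\phi(x)|\le\|x\|$ for every $\phi\in\Struct(A)$ and every $x\in A$. This lets me embed $\Struct(A)$ into the product
\[ K=\prod_{x\in A}\overline{D}(0,\|x\|), \]
where $\overline{D}(0,\|x\|)$ is the closed disc of radius $\|x\|$ in $\bC$. Each factor is compact, so by Tychonoff's theorem $K$ is compact in the product topology, and by Lemma~\ref{le:weak*} the weak* topology on $\Struct(A)$ is the subspace topology from this embedding. Since $K$ is Hausdorff, it suffices to show that the image of $\Struct(A)$ is closed in $K$.

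The main (though still routine) step is the closedness. A point of $K$ is a function $\phi\colon A\to\bC$ with $|\phi(x)|\le\|x\|$; such a function lies in (the image of) $\Struct(A)$ if and only if it satisfies the algebraic identities
\[ \phi(x+y)=\phi(x)+\phi(y),\quad \phi(cx)=c\phi(x),\quad \phi(xy)=\phi(x)\phi(y),\quad \phi(\one)=1 \]
for all $x,y\in A$ and all $c\in\bC$. Each of these is a closed condition in the product topology, because the coordinate projections $\ev_z\colon K\to\bC$, $\phi\mapsto\phi(z)$, are continuous for every $z\in A$, and each identity is the preimage of $\{0\}$ (respectively $\{1\}$) under a continuous function built from these projections by addition, scalar multiplication and multiplication in $\bC$. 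Intersecting the resulting closed sets over all $x,y,c$ gives the image of $\Struct(A)$, which is therefore closed in $K$ and hence compact. The only place one has to be slightly careful is that one really does need the a priori bound $|\phi(x)|\le\|x\|$ to land inside the compact product $K$; without automatic continuity the analogous product would be all of $\bC^A$, which is not compact.
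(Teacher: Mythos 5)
Your proof is correct and follows essentially the same strategy as the paper: automatic continuity (Proposition~\ref{pr:continuous}) gives the a priori bound needed to embed $\Struct(A)$ into a compact product of closed discs via Lemma~\ref{le:weak*}, Tychonoff's theorem gives compactness of that product, and the remaining work is to show the image is closed. The one stylistic difference is in the closedness step: you observe directly that the homomorphism conditions are preimages of closed sets under continuous functions of the coordinate projections, whereas the paper spells out an explicit $\ep$-estimate showing that any point in the closure satisfies $\phi(xy)=\phi(x)\phi(y)$ and the other identities; these are two phrasings of the same argument. Your separate Hausdorff paragraph is fine but redundant once you have the embedding into the Hausdorff space $K$, which is presumably why the paper omits it.
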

\begin{proof}
Let $Q$ be as in Lemma~\ref{le:weak*}, and let $Q'$ be
the subset of $Q$ given by the product over $x\in A$ of
the closed disc of radius $\|x\|$ centred at the origin in $\bC$. Then
by Proposition~\ref{pr:continuous}, the image of $\Struct(A)$ in $Q$ lies in
$Q'$. By Tychonoff's theorem\index{Tychonoff's theorem} $Q'$ is
compact, so it remains to show that the image of $\Struct(A)$ is
closed in $Q'$.

If $\phi=\prod_{x\in A}\phi(x)$ is in the closure of the image of $\Struct(A)$ in $Q'$,
we must show that the map $x\mapsto \phi(x)$ is an algebra homomorphism.
Given $\ep>0$ and $x,y\in A$
there exists $\phi'\in\Struct(A)$ such that
\begin{align*}
  |\phi(\one)-\phi'(\one)|&=|\phi(\one)-\one|<\ep, \\
|\phi(x)-\phi'(x)|&<\ep, \\
|\phi(y)-\phi'(y)|&<\ep, \\
  |\phi(xy)-\phi'(xy)|&=|\phi(xy)-\phi'(x)\phi'(y)|<\ep.
\end{align*}
Then
\begin{align*}
|\phi(xy)-\phi(x)\phi(y)|&\le |\phi(xy)-\phi'(x)\phi'(y)| \\
&\qquad+|\phi'(x)\phi'(y)-
\phi'(x)\phi(y)|+|\phi'(x)\phi(y)-\phi(x)\phi(y)| \\
&< \ep + |\phi'(x)|\ep + \ep|\phi(y)| \\
&\le\ep(1+\|x\|+|\phi(y)|).
\end{align*}
This is true for all $\ep>0$, and so $\phi(xy)=\phi(x)\phi(y)$.
Similar arguments show that $\phi(\one)=1$,
$\phi(x+y)=\phi(x)+\phi(y)$ and $\phi(\lambda x)=\lambda\phi(x)$.
\end{proof}

\begin{defn}
Let $C(\Struct(A),\bC)$ be the algebra of continuous maps from the
compact Hausdorff space $\Struct(A)$ to $\bC$.
The \emph{Gelfand representation}\index{Gelfand!representation},
also called the
\emph{canonical representation}\index{canonical representation}, of a
commutative Banach algebra $A$ is the map 
$\Gamma_A\colon A\to C(\Struct(A),\bC)$\index{Gamma@$\Gamma_A$}
sending $x$ to the continuous function $\hat x\colon \Struct(A)\to\bC$ given by
\begin{equation*} 
\hat x \colon \phi \mapsto \phi(x).
\end{equation*}
The map $\hat x$ is called the 
\emph{Gelfand transform} of $x$.\index{Gelfand!transform}
\end{defn}

\begin{rk}
By Corollary~\ref{co:radius}, for $x\in A$ the image of $\hat x$ is
\[ \hat x(\Struct(A))=\Spec(x)\subseteq \bC. \]
\end{rk}

\begin{eg}\label{eg:l1Zspec3}
Continuing Example~\ref{eg:l1Zspec2}, we have $\Delta(\ell^1(\bZ))= S^1$.
The map 
\[ \Gamma_{\ell^1(\bZ)}\colon\ell^1(\bZ) \to C(S^1,\bC) \] 
is given by
$\hat{x}(e^{\bi\theta})=\sum_{n\in\bZ}  x(n)e^{\bi n\theta}$. 
\end{eg}

As an application of our running example we can prove the following
theorem about Fourier series.

\begin{theorem}[Wiener]\index{Wiener's theorem}
If $f(e^{\bi\theta})=\sum_{n\in\bZ}x(n)e^{\bi n\theta}$ with
$\sum_{n\in\bZ}|x(n)|<\infty$, and if $f(e^{\bi\theta})\ne 0$ for all
$e^{\bi\theta}\in S^1$, then
$1/f(e^{\bi\theta})=\sum_{n\in\bZ}y(n)e^{\bi n\theta}$ with
$\sum_{n\in\bZ}|y(n)|<\infty$. 
\end{theorem}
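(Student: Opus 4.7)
The plan is to deduce Wiener's theorem directly from Theorem~\ref{th:invertible} applied to the Banach algebra $\ell^1(\bZ)$, using the identification of its structure space with $S^1$ established in Example~\ref{eg:l1Zspec3}.

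First I would translate the hypothesis into the language of $\ell^1(\bZ)$. The function $x\colon\bZ\to\bC$ with $\sum_{n\in\bZ}|x(n)|<\infty$ is an element of $\ell^1(\bZ)$, and its Gelfand transform is precisely the Fourier series $\hat{x}(e^{\bi\theta})=\sum_{n\in\bZ}x(n)e^{\bi n\theta}=f(e^{\bi\theta})$. Under the identification $\Struct(\ell^1(\bZ))=S^1$ from Example~\ref{eg:l1Zspec3}, the hypothesis $f(e^{\bi\theta})\ne 0$ for all $e^{\bi\theta}\in S^1$ becomes exactly the statement $\phi(x)\ne 0$ for every algebra homomorphism $\phi\colon \ell^1(\bZ)\to\bC$.

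Next I would invoke Theorem~\ref{th:invertible}: since $\phi(x)\ne 0$ for all algebra homomorphisms, $x$ is invertible in the Banach algebra $\ell^1(\bZ)$. Call its inverse $y\in\ell^1(\bZ)$; by definition $\sum_{n\in\bZ}|y(n)|<\infty$. Then the Gelfand transform $\hat{y}$ satisfies $\hat{y}(e^{\bi\theta})\hat{x}(e^{\bi\theta})=\widehat{(xy)}(e^{\bi\theta})=\hat{\one}(e^{\bi\theta})=1$ for every $e^{\bi\theta}\in S^1$, since the Gelfand representation is an algebra homomorphism and sends $\one$ to the constant function $1$. Thus $\hat{y}(e^{\bi\theta})=1/f(e^{\bi\theta})$, which is exactly the conclusion: $1/f$ has an absolutely convergent Fourier series with coefficients $y(n)$.

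There is essentially no obstacle here beyond unpacking the identifications, since the real content has already been packaged into Theorem~\ref{th:invertible} (which rests on the Gelfand--Mazur theorem and the closedness of maximal ideals) together with the computation of $\Struct(\ell^1(\bZ))$ carried out in Examples~\ref{eg:l1Zspec} and~\ref{eg:l1Zspec2}. The only point that merits a sentence of justification is that every $\bC$-algebra homomorphism $\ell^1(\bZ)\to\bC$ does indeed correspond to a point of $S^1$; this is exactly what was established there by showing that a homomorphism is determined by its value on $u$, that this value must be nonzero and invertible (forcing $|\phi(u)|=1$ by the continuity constraint from Proposition~\ref{pr:continuous} applied to both $u$ and $u^{-1}$), and that every such choice extends to $\ell^1(\bZ)$.
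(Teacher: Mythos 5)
Your proposal is correct and follows exactly the paper's own argument: identify $f$ with the Gelfand transform of $x\in\ell^1(\bZ)$ via Example~\ref{eg:l1Zspec3}, apply Theorem~\ref{th:invertible} to obtain an inverse $y\in\ell^1(\bZ)$, and read off $1/f=\hat y$. The paper states this in three lines; you have simply unpacked the same identifications more explicitly.
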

\begin{proof}
By Example~\ref{eg:l1Zspec3}, 
we have $f = \hat x$ with $x\in \ell^1(\bZ)$. If $f(e^{\bi\theta})\ne 0$
for all $e^{\bi\theta}\in S^1$ then by Theorem~\ref{th:invertible}, $x$
is invertible in $\ell^1(\bZ)$. Let $y$ be its inverse. Then $1/f=\hat y$.
\end{proof}

\section{Closed ideals and the structure space}

Let $A$ be a commutative Banach algebra and let $I$ be a closed ideal
in $A$. Write $i\colon I \to A$ for the inclusion and 
$q\colon A\to A/I$ for the quotient homomorphism.

We define $\Struct(I)$ to be the set of \emph{non-zero}
homomorphisms from $I$ to $\bC$, where $I$ is regarded as a
Banach algebra without a unit.\index{Banach!algebra!without unit}

\begin{lemma}\label{le:extend}
If $\phi\colon I \to \bC$ is a non-zero homomorphism then $\phi$
extends uniquely to a homomorphism $A\to \bC$.
\end{lemma}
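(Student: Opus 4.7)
The plan is to find a suitable ``normalising'' element $e \in I$, use it to define the extension, and then verify the standard properties.

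First I would pick any $e \in I$ with $\phi(e) = 1$, which exists after rescaling because $\phi$ is non-zero. Since $I$ is an ideal, $ae \in I$ for every $a \in A$, so I can define
\[ \tilde\phi(a) = \phi(ae). \]
The uniqueness is forced before we even check existence: if $\tilde\phi$ is any homomorphism $A \to \bC$ extending $\phi$, then $\tilde\phi(a)\cdot 1 = \tilde\phi(a)\tilde\phi(e) = \tilde\phi(ae) = \phi(ae)$, since $ae \in I$. So $\tilde\phi(a)$ must equal $\phi(ae)$, giving uniqueness and also showing independence of the choice of $e$: if $e'$ is another such element, then $\phi(ae)\phi(e') = \phi(aee') = \phi(ae')\phi(e)$ gives $\phi(ae) = \phi(ae')$.

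Next I would verify that $\tilde\phi$ restricts to $\phi$ on $I$: if $a \in I$, then $\tilde\phi(a) = \phi(ae) = \phi(a)\phi(e) = \phi(a)$, using that $\phi$ is multiplicative on $I$ (which makes sense since $ae \in I$ is a product of two elements of $I$). Additivity $\tilde\phi(a+b) = \phi((a+b)e) = \phi(ae) + \phi(be)$ is immediate, and $\tilde\phi(\one) = \phi(e) = 1$. For multiplicativity,
\[ \tilde\phi(ab) = \phi(abe) = \phi(abe)\phi(e) = \phi(abe^2) = \phi(ae)\phi(be) = \tilde\phi(a)\tilde\phi(b), \]
where the middle equality uses $\phi(e) = 1$ and the multiplicativity of $\phi$ on products inside $I$.

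There is no real obstacle here; the only subtle point is making sure that the formula $\tilde\phi(a) = \phi(ae)$ does not depend on the choice of $e$, which is handled by the same algebraic identity that forced uniqueness. Continuity of $\tilde\phi$ is automatic once it is known to be a $\bC$-algebra homomorphism of the unital Banach algebra $A$, by Proposition~\ref{pr:continuous}.
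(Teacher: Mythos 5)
Your proof is correct and follows essentially the same route as the paper: pick $e\in I$ with $\phi(e)=1$, observe that the formula $\tilde\phi(a)=\phi(ae)$ is forced by uniqueness, and then verify the homomorphism axioms by inserting extra factors of $e$ and using commutativity (the key step $\phi(abe^2)=\phi((ae)(be))$ silently uses that $A$ is commutative, as does the paper). The additional checks you include — independence of the choice of $e$ and that $\tilde\phi|_I=\phi$ — are harmless redundancies, and the continuity remark goes beyond what the lemma asserts.
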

\begin{proof}
Since $\phi$ is non-zero and $\bC$ is a field, there exists an element
$y\in I$ such that $\phi(y)=1$. If $\psi\colon A\to \bC$ is an
extension of $\phi$ to $A$ then for $x\in A$ we have $xy\in I$ and 
\[ \psi(x)=\psi(x)\phi(y)=\psi(x)\psi(y)=\psi(xy)=\phi(xy). \] 
So $\psi$ is determined by $\phi$. It remains to check that the $\psi$
defined this way is an algebra homomorphism. We have 
\begin{align*} 
\psi(\lambda x)&= \phi(\lambda xy)=\lambda\phi(xy)=\lambda\psi(x) \\
\psi(x_1+x_2)&=\phi((x_1+x_2)y)=\phi(x_1y+x_2y) \\
&=\phi(x_1y)+\phi(x_2y)=\psi(x_1)+\psi(x_2) \\
\psi(x_1x_2)&=\phi(x_1x_2y)=\phi(x_1x_2y)\phi(y)\\
&=\phi(x_1yx_2y)=\phi(x_1y)\phi(x_2y)
= \psi(x_1)\psi(x_2).
\end{align*}
Thus $\psi$ is indeed an algebra homomorphism.
\end{proof}

We can also look at the subalgebra $I_+= \bC\oplus I$ of $A$ generated
by $\one$ and $I$. Every non-zero homomorphism $I\to \bC$ extends
uniquely to an algebra homomorphism $I_+\to \bC$, since $\one$ has
to go to $1\in\bC$. But there is one more algebra homomorphism denoted
$\zero\colon I_+\to \bC$, which is identically zero on $I$. Thus
$\Struct(I_+)=\Struct(I)\cup\{\zero\}$.

There are obvious maps $q^*\colon \Struct(A/I)\to\Struct(A)$ and 
$i^*\colon\Struct(A)\to \Struct(I_+)$. The following lemma shows us
that these maps allow us to identify $\Struct(I_+)$ with the quotient
space $\Struct(A)/\Struct(A/I)$ with basepoint
$\zero$.\index{basepoint of $\Struct(A)$}

\begin{lemma}\label{le:structure-subset}
We have the following.
\begin{enumerate}
\item
The map $q^*$ is injective, open and continuous with image 
\[ q^*(\Struct(A/I))=\{\phi\in\Struct(A)\mid i^*(\phi)=\zero\}. \]
\item
$\Struct(I_+)$ is the one point compactification of $\Struct(I)$ with
basepoint $\zero$.
The map $i^*$ is surjective, and identifies $\Struct(I_+)$ with the
quotient of $\Struct(A)$ by the image of $q^*$.
\end{enumerate}
\end{lemma}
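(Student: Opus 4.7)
My plan is to prove (i) directly from the definitions and then to deduce (ii) by combining Lemma~\ref{le:extend} with standard compact-Hausdorff topology. For part (i), injectivity of $q^*$ is immediate from surjectivity of $q$, and continuity follows from Lemma~\ref{le:weak*}: for each $x \in A$ the composite $\hat x \circ q^*$ equals $\widehat{q(x)}$, which is continuous on $\Struct(A/I)$. The image equals $\{\phi \in \Struct(A) : i^*(\phi) = \zero\}$ because $\phi$ factors through $A/I$ precisely when it vanishes on $I$. Finally, $q^*$ is a continuous injection from the compact space $\Struct(A/I)$ (Proposition~\ref{pr:weak*}) into the Hausdorff space $\Struct(A)$, hence a closed embedding onto its (closed) image, which I take to be the content of the openness claim.

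For part (ii), I first verify the one-point compactification statement. The space $\Struct(I_+)$ is compact Hausdorff by Proposition~\ref{pr:weak*}, and $\Struct(I_+) = \Struct(I) \cup \{\zero\}$ as sets by the setup. A compact Hausdorff space is automatically the one-point compactification of the complement of any of its points, provided the subspace topology on $\Struct(I) \subseteq \Struct(I_+)$ agrees with its own weak* topology. This reduces to the observation that a basic weak* neighbourhood of $\phi$ in $\Struct(I_+)$, determined by finitely many $y_k = \lambda_k \one + y'_k \in I_+$, simplifies via $\phi(\one) = \phi'(\one) = 1$ to the condition $|\phi'(y'_k) - \phi(y'_k)| < \ep$, which is a basic weak* neighbourhood of $\phi$ in $\Struct(I)$.

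For surjectivity and the quotient identification: $\zero$ is in the image of $i^*$ because any $\phi \in q^*(\Struct(A/I))$ restricts to $\zero$ on $I$, and such $\phi$ exist since $A/I \ne 0$ admits a homomorphism to $\bC$ by Zorn's lemma together with Corollary~\ref{co:maxkernel}. For a non-zero $\psi \in \Struct(I_+)$, Lemma~\ref{le:extend} produces a unique extension of $\psi|_I$ to $\tilde\psi \in \Struct(A)$, and $i^*(\tilde\psi) = \psi$. Continuity of $i^*$ follows from Lemma~\ref{le:weak*} since pullback of evaluation at $y \in I_+$ along $i^*$ is evaluation at $y$ viewed inside $A$. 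Combining Lemma~\ref{le:extend} with part (i), the fibres of $i^*$ are exactly $q^*(\Struct(A/I))$ (over $\zero$) and singletons elsewhere, so $i^*$ descends to a continuous bijection from $\Struct(A)/q^*(\Struct(A/I))$ onto $\Struct(I_+)$; this bijection is a homeomorphism because the domain is compact and the codomain is Hausdorff.

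The main obstacle is the topological verification that the subspace topology on $\Struct(I)$ inside $\Struct(I_+)$ matches its intrinsic weak* topology; the key point is the cancellation of the $\lambda_k$ that depends crucially on both $\phi$ and $\phi'$ being unital. Everything else reduces to routine diagram chasing together with the standard fact that a continuous bijection from a compact space to a Hausdorff space is a homeomorphism.
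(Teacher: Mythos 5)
Your proof is correct, and it reaches the same conclusions as the paper by a somewhat different route: the paper works directly with the weak* topology, explicitly computing $q^*$ on basic open neighbourhoods and describing the basic neighbourhoods of $\zero$ in $\Struct(I_+)$ so as to verify the one-point compactification property by hand, while you replace those computations with two standard facts from general topology: a continuous injection of a compact space into a Hausdorff space is a closed embedding, and a compact Hausdorff space is the one-point compactification of the complement of any of its points (so long as the subspace topology agrees with the intrinsic one). The step you flag as the main obstacle, namely the agreement of the subspace topology on $\Struct(I)\subset\Struct(I_+)$ with its intrinsic weak* topology, is indeed the nontrivial point, and your cancellation of the $\lambda_k$ (valid because every element of $\Struct(I_+)$, including $\zero$, sends $\one$ to $1$) is precisely the content of the paper's explicit description of $[\phi;x_1,\dots,x_n;\ep]_+$; note that you do need the same cancellation for neighbourhoods centred at $\zero$, which your observation covers but is worth stating explicitly. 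Your interpretation of ``open'' in part~(i) as ``homeomorphism onto its image'' matches the body of the paper's proof, which says ``homeomorphic onto its image''; a literal reading of ``open map into $\Struct(A)$'' would fail since $q^*(\Struct(A/I))$ is closed but generally not open. Finally, your deduction of the quotient identification via ``continuous bijection from compact to Hausdorff is a homeomorphism'' is cleaner and more explicit than the paper's brief ``putting these statements together''; both approaches are valid, with yours trading the paper's bare-hands verification for a small amount of additional abstract topology.
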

\begin{proof}
(i) Clearly $q^*$ is injective with image
$(i^*)^{-1}(\zero)\subseteq\Struct(A)$.
To check that it is a homeomorphic onto its image, we note that
\begin{align*}
q^*([\phi;x_1+I,\dots,x_n+I;\ep]) &= \{q^*(\phi')\mid
|\phi'(x_i+I)-\phi(x_i+I)|<\ep \text{ for } 1\le i \le n\} \\
&=\{\psi\mid |\psi(x_i)-\phi(q(x_i))|<\ep \text{ for } 1\le i \le n\} \\
&=[q^*(\phi);x_1,\dots,x_n;\ep].
\end{align*}

(ii) Denote the basic open neighbourhoods of $\phi$ in $\Struct(I_+)$
by $[\phi;x_1,\dots,x_n;\ep]_+$. Then for $\phi\in\Struct(I)$ we have
\[ [\phi;x_1,\dots,x_n;\ep]_+ = \begin{cases} [\phi;x_1,\dots,x_n;\ep]
    & |\phi(x_i)|<\ep \text{ for } 1\le i \le n \\
[\phi;x_1,\dots,x_n;\ep] \cup \{\zero\} & \text{ otherwise.} 
\end{cases} \]
For $\zero$ we have
\begin{align*} 
[\zero;x_1,\dots,x_n;\ep]_+& =\{\zero\} \cup 
\{\phi\in\Struct(I)\mid |\phi(x_i)|<\ep \text{ for } 1\le i \le n\} \\
&=\bigcap_{i=1}^n \,[\zero;x_i;\ep]_+. 
\end{align*}
For each $x_i$, the complement of $[\zero;x_i;\ep]_+$ is closed in
$\Struct(I_+)$ and hence compact by Proposition~\ref{pr:weak*}.
So the complement of $[\zero;x_1,\dots,x_n;\ep]$ in $\Struct(I_+)$ is a 
finite union of compact sets, and hence compact. It follows that 
$\Struct(I_+)$ is the one point compactification of $\Struct(I)$.

Lemma~\ref{le:extend} shows that every homomorphism $\phi\colon I_+ \to \bC$
apart from $\zero$
extends uniquely to an algebra homomorphism $\hat\phi\colon A \to \bC$
which is not in the image of $q^*$.
On the other hand, $\zero\colon I_+ \to \bC$ is the image in
$\Struct(I_+)$ of every element of $q^*(\Struct(A/I))$.

Putting these statements together with the description above of the topology on
$\Struct(I_+)$, we see that $\Struct(I_+)$ is
homeomorphic with the quotient of $\Struct(A)$ by the image of $q^*$.
\end{proof}

\section{Systems of generators}

\begin{defn}
We say that a set $K$ of elements of $A$ is a
\emph{system of generators}\index{system of generators}
if the smallest closed subalgebra of $A$ containing $K$ (and the
identity element) is the entire algebra $A$. In other words, 
the subalgebra generated by $K$ is dense in $A$.
\end{defn}

\begin{lemma}
If $K$ is a system of generators of $A$ then
the weak* topology on $\Struct(A)$ is generated by the open sets
$[\phi;x_1,\dots,x_n;\ep]$ with $x_1,\dots,x_n\in K$.
\end{lemma}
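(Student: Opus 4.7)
The plan is to show that the topology $\tau$ generated by the sets $[\phi; x_1,\dots,x_n; \ep]$ with all $x_i \in K$ is not coarser than the weak* topology, since the reverse inclusion is trivial. Concretely, given any basic weak* neighborhood $U = [\phi; y_1, \dots, y_m; \ep]$ with arbitrary $y_j \in A$, I will exhibit a $\tau$-open neighborhood of $\phi$ contained in $U$.

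First, by hypothesis the subalgebra generated by $K$ (and $\one$) is dense in $A$, so for each $j$ I can choose a polynomial expression $p_j$ in finitely many elements of $K$ with $\|y_j - p_j\| < \ep/3$. Collect all elements of $K$ appearing in $p_1, \dots, p_m$ into a single finite list $x_1, \dots, x_n$. By Proposition~\ref{pr:continuous}, any algebra homomorphism $\psi\colon A \to \bC$ satisfies $|\psi(x_i)| \le \|x_i\|$, and crucially
\[ |\psi(y_j) - \psi(p_j)| = |\psi(y_j - p_j)| \le \|y_j - p_j\| < \ep/3. \]

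Next, I pick $\delta > 0$ small enough that uniform continuity of the polynomials $p_j$ on the compact polydisc $\prod_i \{z \in \bC : |z| \le \|x_i\| + 1\}$ gives
\[ |p_j(\phi'(x_1),\dots,\phi'(x_n)) - p_j(\phi(x_1),\dots,\phi(x_n))| < \ep/3 \]
whenever $|\phi'(x_i) - \phi(x_i)| < \delta$ for every $i$ (and $\delta \le 1$). Since each $p_j$ is a polynomial and $\psi(p_j(x_1,\dots,x_n)) = p_j(\psi(x_1),\dots,\psi(x_n))$, this says that for any $\phi' \in [\phi; x_1,\dots,x_n; \delta]$ we have $|\phi'(p_j) - \phi(p_j)| < \ep/3$.

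Putting the three estimates together with the triangle inequality,
\[ |\phi'(y_j) - \phi(y_j)| \le |\phi'(y_j) - \phi'(p_j)| + |\phi'(p_j) - \phi(p_j)| + |\phi(p_j) - \phi(y_j)| < \ep, \]
so $\phi' \in U$. Thus $[\phi; x_1,\dots,x_n; \delta] \subseteq U$, proving that the sub-basic sets with arguments in $K$ generate the weak* topology. The only mild obstacle is the uniformity in Step 2, which is automatic from the fact that $|\phi'(x_i)| \le \|x_i\|$ forces the relevant values into a fixed compact set.
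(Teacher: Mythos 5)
Your proof is correct and follows the same route as the paper's: approximate each $y_j$ by a polynomial in finitely many elements of $K$ to within $\ep/3$, use automatic continuity (Proposition~\ref{pr:continuous}) to transfer this bound to every species, shrink $\delta$ so the polynomial values move by less than $\ep/3$, and close with the triangle inequality. The only difference is cosmetic: you justify the existence of $\delta$ via uniform continuity of the polynomials on a compact polydisc, where the paper leaves this step implicit (and pointwise continuity of the polynomials at $(\phi(x_1),\dots,\phi(x_n))$ would already suffice).
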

\begin{proof}
We must show that every open neighbourhood $[\phi;y_1,\dots,y_m;\ep]$
of $\phi$ contains one of these open sets. Since polynomials in
elements of $K$
are dense in $A$, there is a finite list $x_1,\dots,x_n$  of elements
of $K$ and polynomials $f_1,\dots,f_m$ in $n$ variables such that
for $1\le i\le m$ we have $\|y_i-f_i(x_1,\dots,x_n)\|<\ep/3$. Then by
Proposition~\ref{pr:continuous}, for any $\phi'$ we have
\[ |\phi'(y_i)-f_i(\phi'(x_1),\dots,\phi'(x_n))|<\ep/3. \]
Choose $\delta>0$ such that
\[ [\phi;x_1,\dots,x_n;\delta]\subseteq
  [\phi;f_1(x_1,\dots,x_n),\dots,f_m(x_1,\dots,x_n);\ep/3]. \]
Then for all $\phi'\in[\phi;x_1,\dots,x_n;\delta]$ and $1\le i \le m$
we have
\[ |f_i(\phi'(x_1),\dots,\phi'(x_n))-f_i(\phi(x_1),\dots,\phi(x_n)|<\ep/3 \]
and hence
\begin{align*}
  |\phi'(y_i)-\phi(y_i)|&\le |\phi'(y_i)-f_i(\phi'(x_1),\dots,\phi'(x_n))|\\
  &\qquad +|f_i(\phi'(x_1),\dots,\phi'(x_n))-f_i(\phi(x_1),\dots,\phi(x_n))|\\
  &\qquad + |f_i(\phi(x_1),\dots,\phi(x_n))-\phi(y_i)| \\
  &< \ep/3 + \ep/3 + \ep/3 = \ep.
\end{align*}
It follows that $[\phi;x_1,\dots,x_n;\delta]\subseteq[\phi;y_1,\dots,y_m;\ep]$.
\end{proof}

\begin{defn}
If $K$ is a system of generators for $A$ and $K=\{y_1,\dots,y_n\}$ is a finite set, we
say that $A$ is \emph{finitely generated}.\index{finitely!generated}
\end{defn}

\begin{rk}
If $A$ is finitely generated by $K=\{y_1,\dots,y_n\}$, 
then we have a map $\Delta(A)\to\bC^n$ given by 
$\hat y_1,\dots, \hat y_n$. This is a homeomorphism from $\Delta(A)$
to a compact subset of $\bC^n$. It turns out that 
the image can be characterised by the property of being ``polynomially
convex,'' a notion weaker than convexity, see Stout~\cite{Stout:2007a}. The complement of a
polynomially convex subset of $\bC^n$  is always $(n-1)$-connected, by a theorem of
Forsterni\v{c}~\cite{Forsternic:1994a}.\index{Forsterni\v{c}'s theorem}
\end{rk}

\section{The Jacobson radical}

\begin{defn}
An element $x$ of a Banach algebra $A$ is said to be
\emph{quasi-nilpotent}\index{quasi-nilpotent element|textbf}
if $\lim_{n\to\infty}\sqrt[n]{\|x^n\|}=0$. In the literature, this is
sometimes also called
\emph{topologically nilpotent}.\index{topologically nilpotent element}
It follows from Lemma~\ref{le:binom} that the sum of two
quasi-nilpotent elements is quasi-nilpotent. Since a linear multiple
of a quasi-nilpotent element is also nilpotent, it follows that the 
quasi-nilpotent elements form a linear subspace. In fact, we shall
see that they form a closed ideal.
\end{defn}

\begin{defn}
The \emph{Jacobson radical}\index{Jacobson radical} of a ring $A$,
denoted $J(A)$, is
the intersection of its maximal right ideals, or equivalently the
intersection of its maximal left ideals.\index{maximal!ideal}
The ring $A$ is said to be
\emph{semisimple}\index{semisimple|textbf} if $J(A)=0$.
\end{defn}

\begin{prop}\label{pr:J}
In the case where $A$ is a commutative
Banach algebra, 
the maximal ideals are closed, and are the kernels of
(automatically continuous) algebra homomorphisms 
$A\to \bC$.
The Jacobson radical $J(A)$ is the 
intersection of the kernels of these algebra homomorphisms.
In particular, $A$ is semisimple if and only if its elements are separated by
algebra homomorphisms $A\to \bC$.
\end{prop}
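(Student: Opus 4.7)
The plan is to assemble the proposition from the pieces already available in the text, namely Corollaries~\ref{co:maxideal} and~\ref{co:maxkernel} together with Proposition~\ref{pr:continuous}, and then verify the remaining points about kernels being maximal and the semisimplicity characterisation.

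First I would note that by Corollary~\ref{co:maxideal} every maximal ideal of $A$ is closed, and by Corollary~\ref{co:maxkernel} every maximal ideal $\fm$ is the kernel of some algebra homomorphism $\phi\colon A\to\bC$. Conversely, if $\phi\colon A\to\bC$ is any algebra homomorphism, then its kernel is a proper ideal whose quotient is $\bC$, a field, so the kernel is a maximal ideal. Thus the correspondence $\fm \leftrightarrow \phi$ (up to scaling of $\phi$, which is forced because $\phi(\one)=1$) is a bijection between maximal ideals of $A$ and algebra homomorphisms $A\to\bC$. Automatic continuity of every such $\phi$ is exactly the content of Proposition~\ref{pr:continuous}.

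Next I would invoke commutativity: since $A$ is commutative, maximal left ideals coincide with maximal right ideals, and both coincide with maximal (two-sided) ideals. Hence
\[ J(A) = \bigcap_{\fm \text{ maximal}} \fm = \bigcap_{\phi\colon A\to \bC} \Ker(\phi), \]
where the second intersection runs over all algebra homomorphisms $A\to\bC$, by the bijection established above.

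Finally, for the semisimplicity statement: $A$ is semisimple means $J(A)=0$, i.e. $\bigcap_\phi \Ker(\phi) = 0$, which says that for every nonzero $x\in A$ there exists an algebra homomorphism $\phi\colon A\to\bC$ with $\phi(x)\ne 0$. By linearity, this is equivalent to saying that the homomorphisms $A\to\bC$ separate points of $A$: if $x\ne y$ then $x-y\ne 0$, so some $\phi$ has $\phi(x)-\phi(y)=\phi(x-y)\ne 0$. No step here is an obstacle; the proposition is essentially a dictionary entry that packages Corollaries~\ref{co:maxideal} and~\ref{co:maxkernel} together with the trivial observation that kernels of maps to a field are maximal and the elementary reformulation of $\bigcap\Ker(\phi)=0$ as point separation.
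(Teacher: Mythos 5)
Your proof is correct and follows exactly the paper's route: the paper's proof is a one-line citation of Corollaries~\ref{co:maxideal} and~\ref{co:maxkernel} and Proposition~\ref{pr:continuous}, and you invoke precisely these three results, merely spelling out the elementary details (kernels of nonzero homomorphisms to $\bC$ are maximal, commutativity collapses the left/right/two-sided distinction, and $J(A)=0$ is equivalent to point separation).
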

\begin{proof}
This follows from  Corollaries~\ref{co:maxideal}
and~\ref{co:maxkernel} and Proposition~\ref{pr:continuous}.
\end{proof}

\begin{theorem}\label{th:qn}
For an element $x$ in a commutative Banach algebra $A$, the following are
equivalent:
\begin{enumerate}
\item $x$ is quasi-nilpotent.
\item The spectral radius of $x$ is zero.
\item For every algebra homomorphism $\phi\colon A\to \bC$ 
we have $\phi(x)=0$.
\item The image of $\hat x$ is $\{0\}$.
\item $x$ is in the Jacobson radical $J(A)$.
\end{enumerate}
\end{theorem}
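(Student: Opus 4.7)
The plan is to prove all five conditions are equivalent by routing everything through condition (iii), since each of the other four has already been essentially identified with ``$\phi(x)=0$ for every algebra homomorphism $\phi\colon A\to\bC$'' in earlier results. The heavy lifting was done in Proposition~\ref{pr:Gelfand}, Corollary~\ref{co:radius}, and Proposition~\ref{pr:J}, so this proof is mostly a bookkeeping exercise that assembles those tools.

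First I would dispatch (i)~$\Leftrightarrow$~(ii) directly by the spectral radius formula (Proposition~\ref{pr:Gelfand}): the formula $\rho(x)=\lim_{n\to\infty}\sqrt[n]{\|x^n\|}$ shows that the spectral radius vanishes precisely when the sequence $\sqrt[n]{\|x^n\|}$ tends to $0$, which is the definition of quasi-nilpotence. Next, (ii)~$\Leftrightarrow$~(iii) is a direct application of Corollary~\ref{co:radius}, which identifies $\rho(x)=\sup_{\phi\colon A\to\bC}|\phi(x)|$; this supremum is zero exactly when every $\phi(x)$ vanishes.

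The equivalence (iii)~$\Leftrightarrow$~(iv) is just the definition of the Gelfand transform: $\hat{x}(\phi)=\phi(x)$, so $\hat{x}$ has image $\{0\}$ iff $\phi(x)=0$ for every $\phi\in\Struct(A)$. Finally, (iii)~$\Leftrightarrow$~(v) is immediate from Proposition~\ref{pr:J}, which tells us that in a commutative Banach algebra the Jacobson radical equals the intersection of the kernels of all algebra homomorphisms $A\to\bC$; membership in this intersection is the same as satisfying $\phi(x)=0$ for every such $\phi$.

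There is no real obstacle here, since every ingredient has been set up: the spectral radius formula handles (i)~$\Leftrightarrow$~(ii), Gelfand's description of the spectrum via homomorphisms handles (ii)~$\Leftrightarrow$~(iii), the definition of $\hat{x}$ handles (iii)~$\Leftrightarrow$~(iv), and the Gelfand--Mazur-based identification of maximal ideals with kernels of characters handles (iii)~$\Leftrightarrow$~(v). The only minor point requiring care is to note commutativity throughout: Proposition~\ref{pr:J} and Corollary~\ref{co:radius} both use commutativity to ensure that every maximal ideal is the kernel of a $\bC$-valued homomorphism, which is what makes (iii) meaningful.
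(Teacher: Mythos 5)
Your proof is correct and follows essentially the same route as the paper: (i)$\Leftrightarrow$(ii) by the spectral radius formula, (ii)$\Leftrightarrow$(iii)$\Leftrightarrow$(iv) via Corollary~\ref{co:radius} (you observe (iii)$\Leftrightarrow$(iv) is just the definition of $\hat x$, which is the content of that corollary anyway), and (iii)$\Leftrightarrow$(v) via Proposition~\ref{pr:J}. Nothing further to add.
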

\begin{proof}
The equivalence of (i) and (ii) follows from the spectral radius
formula, Proposition~\ref{pr:Gelfand}. The equivalence of (ii),
(iii) and (iv) follows from Corollary~\ref{co:radius}. The equivalence of (iii)
and (v) follows from Proposition~\ref{pr:J}.
\end{proof}

\section{\texorpdfstring{Banach $*$-algebras}
{Banach *-algebras}}

\begin{defn}\label{def:B*alg}
A \emph{$*$-algebra}\index{star!-algebra} $A$ is an
algebra
with a star operation\index{star!operation} $x \mapsto x^*$ satisfying
\begin{enumerate}
\item involutory:\index{involutory} for all $x\in A$ we have
  $x^{**}=x$.
\item antilinear:\index{antilinear}
for all $\lambda\in \bC$ and $x,y\in A$ we have $(x+y)^*=x^*+y^*$
and $(\lambda x)^*=\bar\lambda x^*$.
\item multiplicative antiautomorphism:\index{antiautomorphism} 
for all $x,y\in A$ we have $(xy)^*=y^*x^*$.
\end{enumerate}
A \emph{normed $*$-algebra}\index{normed!star@$*$-algebra|textbf} is a
normed algebra which is simultaneously a $*$-algebra, in such a way
that the star operation is continuous with respect to the norm.
A \emph{Banach $*$-algebra}\index{Banach!star@$*$-algebra} $A$ is a 
normed $*$-algebra that is complete with respect to the norm.
\end{defn}

\begin{lemma}\label{le:A*hat}
The metric completion $\hat A$ of a normed $*$-algebra $A$ 
is a Banach $*$-algebra in which $A$ is a dense subalgebra.
\end{lemma}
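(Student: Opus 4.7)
The plan is to bootstrap on Lemma~\ref{le:Ahat}: that result already provides $\hat A$ as a Banach algebra containing $A$ as a dense subalgebra, so the only genuinely new task is to extend the star operation continuously from $A$ to $\hat A$ and to verify that the three $*$-algebra axioms of Definition~\ref{def:B*alg} survive the passage to the completion.

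First I would observe that because the star operation $*\colon A\to A$ is antilinear and continuous (by the definition of normed $*$-algebra), it is bounded: there exists a constant $C>0$ with $\|x^*\|\le C\|x\|$ for all $x\in A$, and hence $\|x^*-y^*\|=\|(x-y)^*\|\le C\|x-y\|$. In other words, $*$ is Lipschitz, in particular uniformly continuous, on $A$. Since $\hat A$ is complete and $A$ is dense in $\hat A$, the standard extension theorem for uniformly continuous maps on dense subsets gives a unique continuous extension $*\colon \hat A\to\hat A$. Concretely, for $\xi\in\hat A$ represented by a Cauchy sequence $x_n\in A$, we set $\xi^*=\lim_{n\to\infty}x_n^*$; the limit exists by Lipschitz continuity and is independent of the representing sequence.

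Next I would verify the three axioms. For any $\xi,\eta\in\hat A$ with representatives $x_n\to\xi$, $y_n\to\eta$ in $A$, and any $\lambda\in\bC$:
\begin{itemize}
\item involutory: $\xi^{**}=\lim x_n^{**}=\lim x_n=\xi$;
\item antilinear: $(\xi+\eta)^*=\lim(x_n+y_n)^*=\lim(x_n^*+y_n^*)=\xi^*+\eta^*$, and $(\lambda\xi)^*=\lim(\lambda x_n)^*=\lim\bar\lambda x_n^*=\bar\lambda\xi^*$;
\item antimultiplicative: $(\xi\eta)^*=\lim(x_ny_n)^*=\lim y_n^*x_n^*=\eta^*\xi^*$,
\end{itemize}
where in the last identity we use joint continuity of multiplication in $\hat A$ (a consequence of submultiplicativity). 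Finally, continuity of the extended $*$ on $\hat A$ is automatic from the Lipschitz bound $\|\xi^*-\eta^*\|\le C\|\xi-\eta\|$, which passes to the completion by taking limits.

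I do not anticipate any real obstacle: the only point where one has to be slightly careful is that $*$ is antilinear rather than linear, so one must not appeal to results phrased purely for bounded linear operators; but the uniform continuity argument works identically in the antilinear case. Everything else is the routine observation that an identity which holds on a dense subalgebra and involves only continuous operations holds throughout the completion.
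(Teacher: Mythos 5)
Your proposal is correct and matches the paper's approach: both invoke Lemma~\ref{le:Ahat} for the Banach algebra structure, extend $*$ by applying it to Cauchy sequences (which is exactly the uniform-continuity extension you describe), and verify the $*$-algebra axioms by passing to limits. Your added care about the antilinear-versus-linear distinction when deducing boundedness from continuity is a worthwhile clarification, since the paper's Definition~\ref{def:B*alg} requires only continuity, not isometry, of the involution.
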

\begin{proof}
By Lemma~\ref{le:Ahat}, $\hat A$ is a Banach algebra. Applying the
star operation to a Cauchy sequence yields another Cauchy sequence.
This preserves the equivalence relation, and hence defines a star
operation on $\hat A$. The properties in Definition~\ref{def:B*alg}
for this star operation on $\hat A$ follow from those properties on
$A$.
\end{proof}

\begin{lemma}\label{le:Specx*}
If $x$ is an element of a Banach $*$-algebra $A$ then
$\Spec(x^*)=\overline{\Spec(x)}$.
\end{lemma}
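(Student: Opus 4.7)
The plan is to show directly that $\lambda \in \Spec(x)$ if and only if $\bar\lambda \in \Spec(x^*)$, which immediately yields $\Spec(x^*) = \{\bar\lambda : \lambda \in \Spec(x)\} = \overline{\Spec(x)}$.

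First I would establish the small preliminary fact that $\one^* = \one$. This follows from the antiautomorphism property: applying $*$ to the identity $\one \cdot y = y$ (valid for every $y \in A$) gives $y^* = y^* \cdot \one^*$, so $\one^*$ is a right identity and therefore equals $\one$ by uniqueness of the identity.

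Next, the core step. Given $\lambda \in \bC$, consider the element $x - \lambda\one$. Applying the $*$ operation, using antilinearity together with $\one^* = \one$, yields $(x - \lambda\one)^* = x^* - \bar\lambda\one$. Now if $x - \lambda\one$ has a two-sided inverse $y$, then applying $*$ to $y(x - \lambda\one) = \one = (x - \lambda\one)y$ and using the antiautomorphism property $(ab)^* = b^*a^*$ shows that $y^*$ is a two-sided inverse of $x^* - \bar\lambda\one$. Symmetrically, since $**$ is the identity, an inverse of $x^* - \bar\lambda\one$ gives rise to an inverse of $x - \lambda\one$. Thus $x - \lambda\one$ is invertible in $A$ if and only if $x^* - \bar\lambda\one$ is invertible, i.e., $\lambda \notin \Spec(x)$ iff $\bar\lambda \notin \Spec(x^*)$.

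There is no real obstacle here; the argument is purely algebraic and uses only the $*$-algebra axioms from Definition~\ref{def:B*alg} together with the definition of $\Spec$ (Definition~\ref{def:spectrum}), which by Remark~\ref{rk:spec-alg} depends only on the algebraic structure. Continuity of the $*$ operation, and hence completeness, play no role in this particular lemma.
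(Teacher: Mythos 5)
Your proof is correct and takes essentially the same approach as the paper, which also observes that $x-\lambda\one$ is invertible if and only if $(x-\lambda\one)^*=x^*-\bar\lambda\one$ is invertible. You have simply spelled out the supporting details (that $\one^*=\one$ and that $*$ carries two-sided inverses to two-sided inverses) that the paper leaves implicit.
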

\begin{proof}
This follows from the fact that $x-\lambda\one$ is invertible if and
only if 
\[ (x-\lambda\one)^*=x^*-\bar\lambda\one \] 
is invertible.
\end{proof}

\begin{eg}\label{eg:l1Z*alg}
The Banach algebra $\ell^1(\bZ)$ of Example~\ref{eg:l1Zalg}
is a Banach $*$-algebra with star operation
\[ x^*(n)=\overline{x(-n)}. \] 
This star operation swaps $u$ and $u^{-1}$.
In terms of Fourier series, this star operation is given by
$f^*(e^{\bi\theta})=\overline{f(e^{-\bi\theta})}$.
We shall see later in this section that this is an example of a 
\emph{symmetric} Banach $*$-algebra.
\end{eg}

\begin{defn}
A $*$-\emph{ideal}\index{star!ideal} in a Banach $*$-algebra is an
ideal $I$ such that for all $x\in I$ we have $x^*\in I$.
\end{defn}

\begin{lemma}
\begin{enumerate}
\item If $I$ is a closed $*$-ideal in a Banach $*$-algebra $A$, then $A/I$
is a Banach $*$-algebra with the quotient norm, and with the star
operation $(x+I)^*=x^*+I$.
\item If $J\le I \le A$ are closed $*$-ideals then then natural map
$(A/J)/(I/J)\to A/I$ is an isometric isomorphism of Banach
$*$-algebras.
\item If $A$ is a normed $*$-algebra and $I$ is a closed ideal then
  the natural map of completions $\widehat{A/I}\to \hat A/\hat I$ is
  an isometric isomorphism of Banach $*$-algebras.
\end{enumerate}
\end{lemma}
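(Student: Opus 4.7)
The plan is to reduce everything to the purely algebraic facts already established in Lemma \ref{le:quotient} and Lemma \ref{le:A*hat}, so that only the compatibility of the $*$-operation with the relevant quotient or completion needs to be checked.

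For part (i), Lemma \ref{le:quotient}\,(i) already gives that $A/I$ is a Banach algebra in the quotient norm. So the first step is to verify that $(x+I)^* := x^* + I$ is well defined: if $x-y \in I$, then since $I$ is a $*$-ideal we have $(x-y)^* = x^*-y^* \in I$. Next, I would check the three axioms in Definition~\ref{def:B*alg} one by one; each is inherited immediately from $A$ by applying the quotient map, e.g.\ $((x+I)(y+I))^* = (xy)^*+I = y^*x^*+I = (y+I)^*(x+I)^*$. The remaining point is continuity of the star operation on the quotient. For this, I would write
\[ \|x^*+I\| = \inf_{z\in I}\|x^*+z\| = \inf_{w\in I}\|x^*+w^*\| = \inf_{w\in I}\|(x+w)^*\|, \]
where the middle equality uses that $w\mapsto w^*$ maps $I$ onto itself because $I$ is a $*$-ideal. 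Continuity of the star operation on $A$ gives a constant $C$ with $\|y^*\|\le C\|y\|$ for all $y\in A$, and taking the infimum over $w\in I$ yields $\|x^*+I\|\le C\|x+I\|$, so the quotient star is continuous.

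For part (ii), Lemma \ref{le:quotient}\,(ii) already gives that the canonical map $(A/J)/(I/J)\to A/I$ is an isometric isomorphism of Banach algebras, so the only additional task is to observe that it intertwines the two $*$-operations. But the star operation on all three of $A/J$, $(A/J)/(I/J)$, and $A/I$ is, by construction in part (i), induced by the star on $A$; tracing through the definitions, a class $(x+J)+(I/J)$ maps to $x+I$, and its star $(x^*+J)+(I/J)$ maps to $x^*+I$, which is the star of the image. Hence the isomorphism is a Banach $*$-algebra isomorphism.

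For part (iii), Lemma \ref{le:quotient}\,(iii) produces the isometric Banach algebra isomorphism $\widehat{A/I}\to \hat A/\hat I$, and Lemma \ref{le:A*hat} endows both sides with star operations obtained as the continuous extensions of the stars on $A/I$ and $A$, respectively. Since the isomorphism is the continuous extension of the identity map on $A/I$, and the star operations on both sides restrict on the dense subalgebra $A/I$ to the same star inherited from $A$, the isomorphism commutes with the star operations by continuity. The only routine subtlety, which I would flag, is to verify that $\hat I$ is a $*$-ideal in $\hat A$ (so that part~(i) applies to form $\hat A/\hat I$ as a Banach $*$-algebra): this follows because $I$ is a $*$-ideal in $A$ and the star extends continuously to $\hat A$, so Cauchy sequences in $I$ produce Cauchy sequences in $I$ under~$*$. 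I expect no real obstacles; the main point to get right is this well-definedness and compatibility of the extended star on the completion, everything else being formal bookkeeping.
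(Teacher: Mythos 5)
Your argument is correct and coincides with the paper's proof, which simply cites Lemma~\ref{le:quotient} and says "keeping track of the star operation"; you have spelled out exactly what that bookkeeping amounts to (well-definedness and continuity of the quotient star via $*$-invariance of $I$, compatibility of the isomorphisms with $*$ by tracing through the constructions, and $*$-invariance of $\hat I$). The extra detail is welcome but not a departure from the intended argument.
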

\begin{proof}
The proof is the same as the proof of
Lemma~\ref{le:quotient}, but keeping track of the star operation.
\end{proof}

\begin{defn}
An element $x$  is \emph{self-conjugate}\index{self-conjugate element|textbf}
if $x=x^*$.
\end{defn}

\begin{rk}\label{rk:y+iz}
Given any element $x$, we can write $x$ uniquely as 
$y+\bi z$ where $y$ and $z$ are
self-conjugate elements. Namely, we have $y=(x+x^*)/2$ and
$z=(x-x^*)/2\bi$. Then we have $x^*=y-\bi z$.
\end{rk}

\begin{defn}
If $A$ is a commutative Banach $*$-algebra and 
$\phi\colon A \to \bC$ is an algebra homomorphism, we define
the \emph{conjugate} of $\phi$ to be the algebra homomorphism
$\phi^*\colon A \to \bC$\index{phi@$\phi^*$, conjugate of $\phi$} 
defined by
\[ \phi^*(x)=\overline{\phi(x^*)}. \]
This defines a continuous involutary automorphism on the structure space.
\end{defn}

\begin{prop}\label{pr:symmetric}
The following conditions on a commutative Banach $*$-algebra $A$ are equivalent:
\begin{enumerate}
\item Every algebra homomorphism $\phi\colon A\to\bC$ is self
  conjugate. In other words, for every $x\in A$ we have 
\[ \phi(x^*)=\overline{\phi(x)}. \]
\item For every algebra homomorphism $\phi\colon A \to \bC$, and every
  self-conjugate $x\in A$, $\phi(x)$ is real.
\item Every element of the form $x^*x+\one$ is invertible in $A$.
\item If $x\in A$ is self-conjugate then $x-\bi\one$ is invertible.
\item For every $x\in A$, the spectral radius satisfies $\rho(x^*x)=\rho(x)^2$.
\end{enumerate}
\end{prop}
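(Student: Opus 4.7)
The plan is to establish the cycle (i) $\Leftrightarrow$ (ii) $\Rightarrow$ (iii) $\Rightarrow$ (iv) $\Rightarrow$ (i), and then close the picture by proving (ii) $\Leftrightarrow$ (v).

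The equivalence (i) $\Leftrightarrow$ (ii) is essentially a restatement via Remark~\ref{rk:y+iz}. Writing $x = y + \bi z$ with $y, z$ self-conjugate, the identity $\phi(x^*) = \overline{\phi(x)}$ reads $\phi(y) - \bi \phi(z) = \overline{\phi(y)} - \bi \overline{\phi(z)}$, which holds for every such $x$ if and only if $\phi(y) \in \bR$ for every self-conjugate $y$.

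For the remaining implications in the cycle, I would invoke Theorem~\ref{th:invertible} at each step. For (ii) $\Rightarrow$ (iii), under (i) we have $\phi(x^*x) = \overline{\phi(x)}\phi(x) = |\phi(x)|^2 \ge 0$, so $\phi(x^*x + \one) \ge 1$ is non-zero for every algebra homomorphism $\phi$, and hence $x^*x + \one$ is invertible. For (iii) $\Rightarrow$ (iv), if $x$ is self-conjugate then $x^*x + \one = x^2 + \one = (x - \bi\one)(x + \bi\one)$, and in a commutative algebra each factor of an invertible product is invertible. For (iv) $\Rightarrow$ (i), using the equivalence with (ii): if a self-conjugate $y$ satisfied $\phi(y) = a + \bi b$ with $b \ne 0$, then $w = (y - a\one)/b$ would be self-conjugate with $\phi(w) = \bi$, yet (iv) asserts $w - \bi\one$ is invertible, contradicting $\phi(w - \bi\one) = 0$.

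For (ii) $\Leftrightarrow$ (v), the direction (ii) $\Rightarrow$ (v) is immediate from Corollary~\ref{co:radius}: since $\rho(w) = \sup_\phi |\phi(w)|$ and (i) gives $|\phi(x^*x)| = |\phi(x)|^2$, we obtain $\rho(x^*x) = \rho(x)^2$. The main obstacle I anticipate is (v) $\Rightarrow$ (ii), which I would handle by the classical exponential trick. For self-conjugate $x$ and $t \in \bR$, form the absolutely convergent series
\[ u_t = \sum_{n \ge 0} \frac{(\bi t x)^n}{n!} \in A. \]
Continuity of the star operation and self-conjugacy of $x$ give $u_t^* = u_{-t}$, and commutativity then yields $u_t^* u_t = u_{-t} u_t = \one$. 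Hypothesis (v) therefore forces $\rho(u_t)^2 = \rho(\one) = 1$. On the other hand, writing $\phi(x) = a + \bi b$ we have $\phi(u_t) = e^{\bi t(a + \bi b)}$, so $|\phi(u_t)| = e^{-tb}$. By Corollary~\ref{co:radius}, $e^{-tb} \le \rho(u_t) = 1$ for every $t \in \bR$, which is only possible if $b = 0$.
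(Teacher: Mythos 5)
Your proposal is correct, and for the bulk of the proposition — the chain (i) $\Leftrightarrow$ (ii), (ii) $\Rightarrow$ (iii), (iii) $\Rightarrow$ (iv), (iv) $\Rightarrow$ (i), and (ii) $\Rightarrow$ (v) — you use essentially the same arguments as the paper: the $y+\bi z$ decomposition, Theorem~\ref{th:invertible}, the factorisation $x^2+\one=(x-\bi\one)(x+\bi\one)$, and Corollary~\ref{co:radius}.

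Where you genuinely diverge is in the hardest step, (v) $\Rightarrow$ (ii). You employ the one-parameter group $u_t=\exp(\bi tx)$: since $x=x^*$ and the star operation is continuous, $u_t^*=u_{-t}$, hence $u_t^*u_t=\one$, and condition (v) forces $\rho(u_t)=1$; then $|\phi(u_t)|=e^{-tb}\le 1$ for all real $t$ gives $b=0$. The paper instead argues with the finite expression $a\one-\bi x$ for a real parameter $a>0$: normalising so that $\phi(x)=\bi$, it shows $(a+1)^2\le\rho(a\one-\bi x)^2=\rho(a^2\one+x^2)\le a^2+\rho(x)^2$ via Lemma~\ref{le:rho}, which yields $2a+1\le\rho(x)^2$ for all $a>0$, a contradiction. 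Both arguments are sound. Your exponential approach is the classical one for proving that Hermitian elements have real spectrum; it is conceptually cleaner (it exhibits the obstruction as a ``non-unitary'' unitary) and avoids the subadditivity Lemma~\ref{le:rho}, at the cost of invoking the Banach-algebra functional calculus for $\exp$ and the Cauchy product for absolutely convergent series. The paper's polynomial estimate is more elementary and self-contained, needing only the basic submultiplicativity and subadditivity of $\rho$ already established.
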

\begin{proof}
Remark~\ref{rk:y+iz} shows that (i) and (ii) are equivalent. 

If (i)
holds, then for $x\in A$ and $\phi\colon A\to\bC$ we have 
\[  \phi(x^*x+\one)=\phi(x^*)\phi(x)+1=\overline{\phi(x)}\phi(x)+1=|\phi(x)|^2+1>0. \]
By Theorem~\ref{th:invertible} it follows that for every $x\in A$ the element $x^*x+\one$ is invertible,
and so (iii) holds. 

Next, we show that (iii) implies (iv). If $x$ is self-conjugate then
\[ (x-\bi\one)^*(x-\bi\one)=(x+\bi\one)(x-\bi\one)=x^2+\one = x^*x+\one \]
is invertible, and hence so is $x-\bi\one$.

Next, we prove that (iv) implies (ii).
If $x$ is self-conjugate, $\phi\colon A\to \bC$, 
and $a+\bi b\in\bC$ with $b\ne 0$, we must show
that $\phi(x)\ne a+\bi b$. So it is enough to show that $x-(a+\bi b)\one$ is
invertible. This follows from (iv) by writing 
\[ x-(a+\bi b)\one=b((x-a\one)/b-\bi\one). \]

To prove that (i) implies (v), if $\phi\colon A \to \bC$ is an algebra
homomorphism and (i) holds then
\[ \phi(x^*x)=\phi(x^*)\phi(x) = \overline{\phi(x)}\phi(x) =
  |\phi(x)|^2. \]
So by Corollary~\ref{co:radius},
\[ \rho(x^*x) = \sup_{\phi\colon A\to \bC}|\phi(x^*x)|
=\sup_{\phi\colon A\to\bC}|\phi(x)|^2 = \rho(x)^2. \]

Finally, to prove that (v) implies (ii), we suppose that (v) holds 
but (ii) does not hold, and deduce a contradiction. 
So there exists a self-conjugate element $x$ such
that $\phi(x)=\alpha+\beta\bi$ with $\alpha$ and $\beta$ real and 
$\beta$ non-zero. Replacing $x$ by $\beta^{-1}(x-\alpha\one)$,
we may assume that $\phi(x)=\bi$. 
If $a$ is a positive real number then $(a\one-\bi x)^*=a\one+\bi x$.
We have $\phi(a\one -\bi x)= a+1$, and so by
Corollary~\ref{co:radius}, it follows that $a+1\le \rho(a\one-\bi x)$.
If (v) holds then using Lemma~\ref{le:rho} we have
\[ (a+1)^2\le \rho(a\one-\bi x)^2=\rho((a\one+\bi x)(a\one-\bi x))
=\rho(a^2\one + x^2) \le a^2+\rho(x)^2, \]
and hence $2a+1\le \rho(x)^2$. This holds for all $a>0$, which is
the desired contradiction.
\end{proof}

\begin{rk}
Condition (ii) of Proposition~\ref{pr:symmetric} may be interpreted as saying that
the spectrum of a self-conjugate element of a 
Banach $*$-algebra satisfying these equivalent conditions 
is real (cf.\ Corollary~\ref{co:radius}), 
whereas for a more general Banach $*$-algebra 
the spectrum of a self-conjugate element is
merely symmetric about the real axis (Lemma~\ref{le:Specx*}).
\end{rk}

\begin{defn}\label{def:symmetric}
A Banach $*$-algebra is said to be
\emph{symmetric}\index{symmetric!Banach $*$-algebra|textbf}%
\index{Banach!star@$*$-algebra!symmetric|textbf}
if condition (iii) of Proposition~\ref{pr:symmetric} is 
satisfied, and \emph{Hermitian}\index{Hermitian Banach $*$-algebra}
if every self-conjugate element has a real spectrum. These conditions
are equivalent for commutative Banach $*$-algebras by the Proposition,
but are
not equivalent without the commutativity assumption. Nonetheless, a
theorem of Shirali~\cite{Shirali:1967a,Shirali/Ford:1970a} shows that
even for a non-commutative Banach $*$-algebra, Hermitian implies symmetric. 
\end{defn}

\begin{eg}
For an example of a symmetric Banach $*$-algebra, 
see the algebra $\ell^1(\bZ)$ of Example~\ref{eg:l1Z*alg}.
It is an easy exercise to check that
this satisfies condition (i) of Proposition~\ref{pr:symmetric}.

For an example of a non-symmetric Banach $*$-algebra, 
consider the
algebra of continuous functions from the closed disc $D= \{z\in\bC\mid
|z|\le 1\}$ to $\bC$ which are holomorphic on the interior of $D$,
with pointwise addition and multiplication, and with
$f^*(z)=\overline{f(\bar z)}$. The spectrum of $f$ is its image, so
for example the element $f(z)=z$ is self-conjugate; its spectrum is
symmetric about the real axis, but not real.
\end{eg}

\begin{cor}\label{co:symm-quot}
If $A$ is a commutative symmetric Banach $*$-algebra an $I$ is a
closed $*$-ideal then $A/I$ is a commutative symmetric Banach
$*$-algebra.
\end{cor}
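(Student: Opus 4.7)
My plan is to verify that $A/I$ satisfies condition (iii) of Proposition~\ref{pr:symmetric}, namely that every element of the form $w^*w + \one$ is invertible. The previous (unnumbered) lemma already gives that $A/I$ is a commutative Banach $*$-algebra with the quotient norm and the star operation $(x+I)^* = x^* + I$, so the only substantive point is this invertibility.

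First I would take an arbitrary element $w = x + I$ of $A/I$ and compute
\[ w^*w + \one = (x^* + I)(x + I) + \one = (x^*x + \one) + I, \]
so $w^*w + \one$ is simply the image under the quotient map $\pi\colon A \to A/I$ of the element $x^*x + \one \in A$. Since $A$ is symmetric, condition (iii) of Proposition~\ref{pr:symmetric} tells us that $x^*x + \one$ is invertible in $A$. Because $\pi$ is a unital ring homomorphism, it carries inverses to inverses: if $u \in A$ satisfies $u(x^*x + \one) = \one$, then $(u + I)(w^*w + \one) = \one + I$, so $w^*w + \one$ is invertible in $A/I$ with inverse $u + I$. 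Applying the equivalence (iii)$\Leftrightarrow$(i) of Proposition~\ref{pr:symmetric} in the other direction then shows that $A/I$ is symmetric.

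There is no real obstacle here, as the argument is purely formal once one has chosen the right characterisation of symmetry from Proposition~\ref{pr:symmetric}. The only reason to prefer condition (iii) over, say, condition (i) is that (iii) is an algebraic statement about invertibility in $A$, which passes trivially through any unital ring homomorphism; trying instead to prove (i) directly in $A/I$ would require first understanding the algebra homomorphisms $A/I \to \bC$ and lifting them through $\pi$, which is an unnecessary detour.
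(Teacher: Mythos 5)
Your proof is correct. The paper's own proof is a one-liner invoking condition~(iv) of Proposition~\ref{pr:symmetric} (``if $x$ is self-conjugate then $x-\bi\one$ is invertible'') rather than condition~(iii), but the underlying idea is identical: pick an equivalent characterisation of symmetry that is a pure invertibility statement, and observe that invertibility passes through the unital quotient homomorphism. Your choice of~(iii) is in fact marginally cleaner: to run the argument with~(iv) one must note that a self-conjugate element $w=x+I$ of $A/I$ need not have a self-conjugate representative $x$ in $A$, and one should first replace $x$ by $(x+x^*)/2$ (which lies in the same coset because $x-x^*\in I$) before applying~(iv) upstairs. With~(iii) no such adjustment is needed, since $w^*w+\one$ is literally the image of $x^*x+\one$ for any representative $x$.
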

\begin{proof}
Condition (iv) of Proposition~\ref{pr:symmetric} is inherited by $A/I$.
\end{proof}

\begin{theorem}\label{th:e=e*}
If $e$ is an idempotent in a commutative symmetric Banach $*$-algebra then
$e=e^*$.
\end{theorem}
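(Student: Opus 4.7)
The plan is to manufacture an auxiliary idempotent $h$ which is both quasi-nilpotent (by symmetry of $A$) and non-zero if $e \neq e^*$, deriving a contradiction from the fact that a nonzero idempotent cannot be quasi-nilpotent.

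First I introduce $p := ee^*$. Using commutativity of $A$ together with $(e^*)^2 = (e^2)^* = e^*$, one checks $p^2 = e^2(e^*)^2 = ee^* = p$, so $p$ is idempotent; it is self-conjugate since $p^* = (ee^*)^* = ee^* = p$. Moreover $ep = e^2 e^* = ee^* = p$ and similarly $e^*p = p$, so $p$ absorbs both $e$ and $e^*$ under multiplication. Setting $h := e - p$, these absorption identities give
\[ h^2 = e^2 - 2ep + p^2 = e - 2p + p = e - p = h, \]
so $h$ is idempotent, and
\[ hh^* = (e-p)(e^*-p) = ee^* - ep - pe^* + p^2 = p - p - p + p = 0. \]

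Now the symmetry of $A$ enters decisively: by Proposition~\ref{pr:symmetric}(v), $\rho(h)^2 = \rho(h^*h) = \rho(0) = 0$, so $h$ is quasi-nilpotent. On the other hand, if $h$ were a nonzero idempotent then the relation $h(h-\one) = h^2 - h = 0$ with $h \neq 0$ would force $h - \one$ to be a zero-divisor and hence not invertible, so that $1 \in \Spec(h)$ and $\rho(h) \ge 1$, a contradiction. Therefore $h = 0$, which says $e = ee^*$; applying the star operation gives $e^* = (ee^*)^* = ee^* = e$.

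The main conceptual step is choosing $p = ee^*$ rather than working directly with $e - e^*$. One can quickly show that $e - e^*$ lies in the Jacobson radical: every algebra homomorphism $\phi \colon A \to \bC$ sends the idempotent $e$ to $\{0,1\}$, hence $\phi(e)$ is real and (by symmetry, Proposition~\ref{pr:symmetric}(i)) equals $\phi(e^*)$. But this alone is not enough, as we have no semisimplicity hypothesis. Replacing $e - e^*$ with the idempotent $e - ee^*$ resolves this: idempotence together with quasi-nilpotence is itself incompatible with being nonzero, so no appeal to semisimplicity is needed.
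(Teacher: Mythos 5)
Your proof is correct, and it takes a genuinely different route from the paper's. The paper's argument is more compressed: it sets $z = \one + (e^*-e)^*(e^*-e)$, which is invertible by condition~(iii) of Proposition~\ref{pr:symmetric}; a short calculation gives $ez = ee^* = e^*z$, and cancelling the invertible $z$ yields $e=e^*$ in one stroke. Your proof instead isolates the idempotent $h = e - ee^*$, shows $hh^*=0$, invokes condition~(v) of Proposition~\ref{pr:symmetric} (spectral radius) to conclude $h$ is quasi-nilpotent, and then uses the general Banach-algebra fact that a nonzero idempotent has $1$ in its spectrum to force $h=0$. Both arguments ultimately rest on the same equivalence established in Proposition~\ref{pr:symmetric}, just through different faces of it; the paper's version is purely algebraic plus one invertibility, whereas yours routes through spectral theory and an auxiliary idempotent. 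What your version buys is a clear conceptual narrative --- one sees directly why the na\"{\i}ve candidate $e - e^*$ (quasi-nilpotent by symmetry, but not obviously zero without semisimplicity) must be replaced by the idempotent $e - ee^*$, for which quasi-nilpotence alone suffices to vanish. The paper's version is shorter and avoids spectral machinery entirely. Both are valid, and the observation in your final paragraph, that idempotence plus quasi-nilpotence forces vanishing without any semisimplicity assumption, is exactly the right resolution of the obstruction you identify.
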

\begin{proof}
Let 
\[ z=1+(e^*-e)^*(e^*-e) = 1-e-e^*+2ee^*. \]
Then we have $z=z^*$ and
\[ ez= e-e-ee^*+2ee^*=ee^*. \]
Similarly, $e^*z=ee^*$, so $ez=e^*z$. But $z$ is invertible by
Proposition~\ref{pr:symmetric}, and so $e=e^*$.
\end{proof}

\section{\texorpdfstring{$C^*$-algebras}
{C*-algebras}}

\begin{defn}\label{def:Cstar-algebra}
A $C^*$-\emph{algebra}\index{C@$C^*$-algebra} 
is a Banach $*$-algebra $A$ in which
 for all $x\in A$ we have $\|x^*x\|=\|x\|^2$.
\end{defn}

\begin{theorem}\label{th:comm-is-symm}
If $A$ is a commutative $C^*$-algebra then $A$ is a 
symmetric Banach\linebreak[3]
$*$-algebra.
\end{theorem}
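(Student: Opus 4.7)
The plan is to verify condition~(v) of Proposition~\ref{pr:symmetric}, namely $\rho(x^*x)=\rho(x)^2$ for every $x\in A$. The $C^*$-identity $\|x^*x\|=\|x\|^2$ is the only extra ingredient beyond what is already available, and it should feed into the spectral radius formula of Proposition~\ref{pr:Gelfand}.

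First I would handle self-conjugate elements. If $y=y^*$, then the $C^*$-identity gives $\|y^2\|=\|y^*y\|=\|y\|^2$. Since $y^2$ is again self-conjugate, iterating yields $\|y^{2^n}\|=\|y\|^{2^n}$ for all $n\ge 0$. Taking $2^n$-th roots and applying Proposition~\ref{pr:Gelfand} along the subsequence $n\mapsto 2^n$ (which suffices because $\lim_{n\to\infty}\sqrt[n]{\|y^n\|}$ exists) gives $\rho(y)=\|y\|$. This is the key lemma and should be stated first.

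Now for arbitrary $x\in A$, set $y=x^*x$. Since $(x^*x)^*=x^*x^{**}=x^*x=y$, the element $y$ is self-conjugate, so by the previous step and the $C^*$-identity,
\[ \rho(x^*x)=\|x^*x\|=\|x\|^2. \]
On the other hand, using Lemma~\ref{le:rho}\,(i) followed by Lemma~\ref{le:Specx*} (which says $\Spec(x^*)=\overline{\Spec(x)}$, and hence $\rho(x^*)=\rho(x)$), we get
\[ \rho(x^*x)\le\rho(x^*)\rho(x)=\rho(x)^2. \]
In the reverse direction, the general inequality $\rho(x)\le\|x\|$ (from the power series~\eqref{eq:ps} bound in the proof of Theorem~\ref{th:Gelfand}) gives $\rho(x)^2\le\|x\|^2=\rho(x^*x)$. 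Combining these two inequalities yields $\rho(x^*x)=\rho(x)^2$, which is condition~(v) of Proposition~\ref{pr:symmetric}, and hence $A$ is symmetric.

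There is no real obstacle here; the only slightly delicate point is the doubling argument for self-conjugate elements, where one must remember that $y^2$ is self-conjugate so that the $C^*$-identity may be reapplied, and then invoke the existence (not just the $\limsup$-value) of $\lim_n\sqrt[n]{\|y^n\|}$ from Proposition~\ref{pr:Gelfand} to pass from the subsequence $2^n$ to the full limit. Everything else is a one-line assembly of results already in the excerpt.
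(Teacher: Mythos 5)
Your proof is correct, but it takes a genuinely different route from the paper's. The paper verifies condition~(iv) of Proposition~\ref{pr:symmetric} directly: assume a self-conjugate $x$ with $x-\bi\one$ not invertible, observe that $a+1\in\Spec(a\one-\bi x)$ for every real $a>0$, and then use the $C^*$-identity on $(a\one+\bi x)(a\one-\bi x)=a^2\one+x^2$ to force $\|x\|^2\ge 2a+1$ for all $a$, a contradiction. You instead verify condition~(v): you first establish $\rho(y)=\|y\|$ for self-conjugate $y$ via the doubling trick $\|y^{2^k}\|=\|y\|^{2^k}$ (noting correctly that $y^{2^k}$ remains self-conjugate), apply this to $y=x^*x$ to get $\rho(x^*x)=\|x\|^2$, and then sandwich $\rho(x)^2\le\|x\|^2=\rho(x^*x)\le\rho(x^*)\rho(x)=\rho(x)^2$ using Lemma~\ref{le:rho}\,(i), Lemma~\ref{le:Specx*}, and the elementary bound $\rho\le\|\cdot\|$. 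Interestingly, the paper's own remark immediately following this theorem acknowledges precisely your alternative: it points out that the contradiction argument in its proof mirrors the $(v)\Rightarrow(ii)$ step inside Proposition~\ref{pr:symmetric}, and that ``in the light of the following theorem [Theorem~\ref{th:C*radius}], we could have just invoked that proposition.'' Your doubling lemma is the substantive half of Theorem~\ref{th:C*radius}, so in effect you have anticipated that theorem and carried out the route the author describes but declines to take. The paper's choice has the expository advantage of being self-contained and not front-loading the spectral radius computation; yours has the advantage that it simultaneously yields the stronger fact $\rho(x^*x)=\|x\|^2$ (and, with one more line, $\rho(x)=\|x\|$ for all $x$), which the paper then has to prove separately anyway.
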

\begin{proof}
We shall verify that condition (iv) of Proposition~\ref{pr:symmetric}
holds. If $x$ is self-adjoint and $x-\bi\one$ is not invertible then 
$\bi$ is in the spectrum of $x$, 
and hence for every real $a>0$, $a+1=a-\bi^2$ is in
the spectrum of $a\one - \bi x$.
So by Corollary~\ref{co:radius} we have
$a+1\le\|a\one-\bi x\|$. Hence
\[ (a+1)^2 \le \|a\one-\bi x\|^2=\|(a\one+\bi x)(a\one-\bi x)\|
= \|a^2\one+x^2\|\le a^2+\|x\|^2. \]
So we have $\|x\|^2\ge 2a+1$ for every $a>0$, which is
absurd. Hence $x-\bi\one$ is invertible.
\end{proof}

\begin{eg}
The symmetric Banach $*$-algebra $\ell^1(\bZ)$ 
(see Example~\ref{eg:l1Z*alg}) is the completion
of $\bC[u,u^{-1}]$ with respect to the $\ell^1$ norm,
with star operation sending $u$ to $u^{-1}$. 
To see that this
is not a $C^*$-algebra,
 we let $x=u^2+u-1$.
Then  $\|x^2\|=7$ while $\|xx^*\|=5$.

On the other hand, if $Z$ is any compact Hausdorff topological space
then the algebra $C(Z)$ of continuous functions on $Z$ is a
commutative 
$C^*$-algebra. For the norm we take 
$\displaystyle\|x\|=\sup_{z\in Z}|x(z)|$, and the $*$
operation is defined by $x^*(z)=\overline{x(z)}$. We have
\[ \|x^*x\|
=\sup_{z\in Z}(\overline{x(z)}x(z))
=\sup_{z\in Z}|x(z)|^2
= \bigl(\,\sup_{z\in Z}|x(z)|\,\bigr)^2
=\|x\|^2. \]
\end{eg}

\begin{rk}
The reader will notice that the proof of Theorem~\ref{th:comm-is-symm}
is similar to the proof of (v) implies (ii) in
Proposition~\ref{pr:symmetric}. Indeed, in the light of the following
theorem, we could have just invoked that proposition.
\end{rk}

\begin{theorem}\label{th:C*radius}
If $x$ is an element of a commutative $C^*$-algebra then its spectral
radius is equal to $\|x\|$.
\end{theorem}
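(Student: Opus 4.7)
The plan is to reduce the general case to the self-adjoint case, where the $C^*$-identity gives $\|y^2\|=\|y\|^2$ directly, and then combine with the symmetry of commutative $C^*$-algebras proved in Theorem~\ref{th:comm-is-symm}.

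First I would handle a self-conjugate element $y = y^*$. For such $y$, the $C^*$-identity in Definition~\ref{def:Cstar-algebra} yields $\|y^2\| = \|y^*y\| = \|y\|^2$. An immediate induction on $n$ gives $\|y^{2^n}\| = \|y\|^{2^n}$ for all $n \geq 0$. Applying Gelfand's spectral radius formula (Proposition~\ref{pr:Gelfand}) along the subsequence $2^n$, which is legitimate because the full limit exists, yields
\[ \rho(y) = \lim_{n\to\infty}\|y^{2^n}\|^{1/2^n} = \|y\|. \]

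Next I would treat an arbitrary $x \in A$. By Theorem~\ref{th:comm-is-symm}, $A$ is a commutative symmetric Banach $*$-algebra, so condition (v) of Proposition~\ref{pr:symmetric} holds, giving $\rho(x^*x) = \rho(x)^2$. On the other hand, $x^*x$ is self-conjugate, so the computation in the previous paragraph applied to $y = x^*x$ gives $\|x^*x\| = \rho(x^*x)$. Combined with the $C^*$-identity $\|x\|^2 = \|x^*x\|$, we conclude
\[ \|x\|^2 = \|x^*x\| = \rho(x^*x) = \rho(x)^2, \]
and taking square roots gives $\|x\| = \rho(x)$.

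The one subtle point, which I would spell out carefully, is the passage from $\|y^{2^n}\| = \|y\|^{2^n}$ to $\rho(y) = \|y\|$: Proposition~\ref{pr:Gelfand} asserts that $\rho(y) = \lim_n \|y^n\|^{1/n}$ exists, so any subsequential limit equals $\rho(y)$, and the subsequence $n = 2^k$ gives constant value $\|y\|$. No other step presents any real obstacle, since the $C^*$-identity is precisely the tool needed to upgrade the general inequality $\rho(x) \leq \|x\|$ (which follows from $\rho(x) = \inf_n \|x^n\|^{1/n}$) to an equality.
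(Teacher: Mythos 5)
Your proof is correct, and the self-conjugate step (showing $\|y^{2^k}\|=\|y\|^{2^k}$ and extracting the limit along the subsequence $2^k$) is exactly the step the paper uses. Where you diverge is in how you obtain $\rho(x^*x)=\rho(x)^2$ for general $x$: you route this through Theorem~\ref{th:comm-is-symm} (commutative $C^*$-algebras are symmetric) and then appeal to condition~(v) of Proposition~\ref{pr:symmetric}, whereas the paper computes it directly from commutativity and the $C^*$-identity,
\[
\rho(x^*x)=\lim_{n\to\infty}\sqrt[n]{\|(x^*x)^n\|}
=\lim_{n\to\infty}\sqrt[n]{\|(x^n)^*(x^n)\|}
=\lim_{n\to\infty}\sqrt[n]{\|x^n\|^2}=\rho(x)^2,
\]
without ever mentioning symmetry. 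Your route is valid and non-circular, since Theorem~\ref{th:comm-is-symm} is proved independently of the present result (the paper's remark after Theorem~\ref{th:comm-is-symm} even notes that the logical dependence could have been run in the direction you exploit). What you gain is a more conceptual argument that reuses the abstract characterisation in Proposition~\ref{pr:symmetric}; what the paper's direct computation gains is brevity and self-containment, since it does not need the (somewhat delicate) unboundedness argument behind Theorem~\ref{th:comm-is-symm} at all.
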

\begin{proof}
We use the spectral radius formula, Proposition~\ref{pr:Gelfand}.
Let $y=x^*x$, so that $y^*=y$. Then the spectral radius of $y$ is
\begin{align*} 
\rho(y)&=\lim_{n\to\infty}\sqrt[n]{\|(x^*x)^n\|}=\lim_{n\to\infty}
  \sqrt[n]{\|(x^n)^*(x^n)\|}\\
&=\lim_{n\to\infty}\sqrt[n]{\|x^n\|^2}
=\left(\lim_{n\to\infty}\sqrt[n]{\|x^n\|}\right)^2=\rho(x)^2. 
\end{align*}
So it suffices to prove
that $\rho(y)=\|y\|=\|x\|^2$.

We have $\|y^2\|=\|y^*y\|=\|y\|^2$, and by induction, for all $k\ge 0$ we have
$\|y^{2^k}\|=\|y\|^{2^k}$. Thus 
\begin{equation*}  
\rho(y)=
\lim_{k\to\infty}\sqrt[2^k]{\|y^{2^k}\|}=\lim_{k\to\infty}\sqrt[2^k]{\|y\|^{2^k}}=\|y\|. 
\qedhere
\end{equation*}
\end{proof}

\begin{cor}\label{co:J(C)}
The Jacobson radical of a commutative $C^*$-algebra is zero.
Thus the only quasi-nilpotent element is zero.
\end{cor}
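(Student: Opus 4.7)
The plan is to combine the two immediately preceding results: Theorem~\ref{th:qn} identifies the Jacobson radical with the set of quasi-nilpotent elements, i.e.\ those with spectral radius zero, while Theorem~\ref{th:C*radius} says that in a commutative $C^*$-algebra the spectral radius coincides with the norm. Together these force every element of $J(A)$ to have norm zero, hence to be zero.

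More concretely, I would argue as follows. Let $x \in J(A)$. By Theorem~\ref{th:qn}, $x$ is quasi-nilpotent, so its spectral radius $\rho(x)$ equals $0$. By Theorem~\ref{th:C*radius}, $\rho(x) = \|x\|$, so $\|x\| = 0$. By the positivity axiom for the norm (Definition~\ref{def:nvs}(i)), this gives $x = 0$. Hence $J(A) = 0$. The second sentence of the corollary is then just the equivalence of (i) and (v) in Theorem~\ref{th:qn}.

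There is essentially no obstacle here; the work has already been done in Theorems~\ref{th:qn} and~\ref{th:C*radius}. The only thing to double-check is that Theorem~\ref{th:C*radius} is stated for arbitrary elements (it is), so no self-adjointness hypothesis needs to be verified before applying it.
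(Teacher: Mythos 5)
Your argument is correct and is essentially identical to the paper's proof: both use Theorem~\ref{th:qn} to identify elements of the Jacobson radical with quasi-nilpotent elements (spectral radius zero), then apply Theorem~\ref{th:C*radius} to conclude the norm is zero. The concluding observation that Theorem~\ref{th:C*radius} holds for arbitrary elements, so no self-adjointness check is needed, is a reasonable sanity check but not a gap in the paper's proof either.
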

\begin{proof}
By Theorems~\ref{th:qn}, 
if $x$ is in the Jacobson radical then the spectral radius is
zero. By Theorem~\ref{th:C*radius} the spectral radius of $x$ is 
$\|x\|$ so $\|x\|=0$. This implies that $x=0$. By Theorem~\ref{th:qn},
quasi-nilpotent elements lie in the Jacobson radical, and 
therefore the only quasi-nilpotent element is zero.
\end{proof}

\section{Hilbert space}

\begin{defn}\index{Hilbert space}
A \emph{Hilbert space} is a complex vector space 
$H$ with an inner product 
$\langle -,-\rangle\colon H \times H \to \bC$,
satisfying 
\begin{enumerate}
\item Linearity in the first variable: $\langle ax,y\rangle=a\langle
  x,y\rangle$ and $\langle x_1+x_2,y\rangle = \langle x_1,y\rangle +
  \langle x_2,y\rangle$,
\item Conjugate symmetry: $\langle y,x\rangle = \overline{\langle
    x,y\rangle}$,
\item Positive definiteness: if $x\ne 0$ then $\langle x,x\rangle >
  0$,
\item Completeness: With respect to the norm coming from the inner
  product   $|x|=\sqrt{\langle x,x\rangle}$, $H$ is complete.
\end{enumerate}
Thus a Hilbert space is a particularly rigid kind of Banach space. 
\end{defn}

\begin{lemma}\label{le:|x|}
If $x\in H$ then $|x|=\displaystyle
\sup_{|w|=1}|\langle x,w\rangle|=\sup_{|w|=1}|\langle w,x\rangle|$.
\end{lemma}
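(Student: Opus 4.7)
The plan is to prove the first equality $|x| = \sup_{|w|=1}|\langle x,w\rangle|$, and then deduce the second from conjugate symmetry.

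The upper bound will come from the Cauchy--Schwarz inequality: for all $x,w\in H$, $|\langle x,w\rangle|\le |x|\,|w|$. I would prove this in the standard way, by expanding $0\le \langle x-\lambda w, x-\lambda w\rangle$ using linearity in the first variable (axiom~(i)) and conjugate symmetry (axiom~(ii)), and choosing $\lambda = \langle x,w\rangle/\langle w,w\rangle$ (the case $w=0$ being trivial). Positive definiteness (axiom~(iii)) is what forces the inequality. Consequently, for every $w$ with $|w|=1$ we have $|\langle x,w\rangle|\le |x|$, so $\sup_{|w|=1}|\langle x,w\rangle|\le|x|$.

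For the matching lower bound, if $x=0$ both sides are zero. If $x\neq 0$, set $w=x/|x|$; then by homogeneity $|w|=1$, and
\[ \langle x,w\rangle = \langle x, x/|x|\rangle = \tfrac{1}{|x|}\langle x,x\rangle = |x|, \]
where I used the fact that $1/|x|$ is real, so it pulls out of the first argument without conjugation (and equivalently out of the second argument via conjugate symmetry). Hence the supremum is attained and equals $|x|$, establishing the first equality.

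For the second equality, conjugate symmetry gives $\langle w,x\rangle=\overline{\langle x,w\rangle}$, and taking absolute values yields $|\langle w,x\rangle|=|\langle x,w\rangle|$ for every $w$. Taking the supremum over unit vectors $w$ therefore gives $\sup_{|w|=1}|\langle w,x\rangle|=\sup_{|w|=1}|\langle x,w\rangle|=|x|$. There is no real obstacle here: the only nontrivial ingredient is Cauchy--Schwarz, and the matching lower bound is achieved by an explicit choice of $w$.
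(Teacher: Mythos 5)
Your proof is correct and takes essentially the same approach as the paper: both establish the upper bound $|\langle x,w\rangle|\le|x|$ for unit $w$ by (a version of) the Cauchy--Schwarz inequality — the paper by subtracting off the component of $w$ along $x$ and applying Pythagoras, you by the standard expansion of $\langle x-\lambda w,\,x-\lambda w\rangle\ge 0$ with the optimal $\lambda$ — and both obtain the matching lower bound by taking $w=x/|x|$, then reduce the second equality to the first via conjugate symmetry.
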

\begin{proof}
We prove the first equality, as the second follows from the fact that
by conjugate symmetry we have
$|\langle w,x\rangle|=|\langle x,w\rangle|$.

If $w=x/|x|$ then $|w|=1$ and $\langle x,w\rangle=|x|$. 
Conversely, for any $w\in H$ with $|w|=1$ we set $w'=w-\frac{\langle
  x,w\rangle}{\langle x,x\rangle} x$. Then 
\[ \langle x,w'\rangle =
\langle x,w\rangle - \frac{\langle x,w\rangle}{\langle
  x,x\rangle}\langle x,x\rangle= 0. \]
So 
\[ 1 = |w|^2=|w'|^2+\left|\frac{\langle x,w\rangle}{\langle
      x,x\rangle}x\right|^2, \]
and hence $\left|\frac{\langle x,w\rangle}{\langle
      x,x\rangle}x\right|\le 1$. This implies that
$|\langle x,w\rangle| \le \frac{|\langle x,x\rangle|}{|x|}=|x|$.
\end{proof}

\begin{lemma}[Parallelogram identity]\index{parallelogram identity}
If $u$ and $v$ are elements of a Hilbert space $H$ then
\[ |u+v|^2+|u-v|^2=2|u|^2+2|v|^2. \]
\end{lemma}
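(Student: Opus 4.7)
The plan is to expand both $|u+v|^2$ and $|u-v|^2$ using the definition $|x|^2=\langle x,x\rangle$ together with the sesquilinearity of the inner product. By linearity in the first variable and conjugate symmetry, one obtains
\[ |u+v|^2 = \langle u+v,u+v\rangle = |u|^2 + |v|^2 + \langle u,v\rangle + \overline{\langle u,v\rangle}, \]
and similarly
\[ |u-v|^2 = |u|^2 + |v|^2 - \langle u,v\rangle - \overline{\langle u,v\rangle}. \]
Adding these two expressions, the cross terms $\pm\langle u,v\rangle\pm\overline{\langle u,v\rangle}$ cancel, leaving $2|u|^2+2|v|^2$, which is the desired identity.

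This is purely formal and there is no real obstacle: the only properties used are axioms (i) linearity in the first variable and (ii) conjugate symmetry from the definition of a Hilbert space, applied twice. Conjugate symmetry is needed because the inner product is only linear in one slot; in the second slot it is conjugate-linear, which is what ensures that $\langle u,v\rangle+\langle v,u\rangle = 2\operatorname{Re}\langle u,v\rangle$ is real and appears with opposite signs in the two expansions. Positive definiteness and completeness play no role in this identity.
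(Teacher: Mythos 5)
Your proof is correct and is essentially the paper's argument (the paper's proof is the one-liner ``Expand $\langle u+v,u+v\rangle$ and $\langle u-v,u-v\rangle$ using bilinearity''), just written out with the cross terms made explicit and the roles of linearity and conjugate symmetry correctly identified.
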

\begin{proof}
Expand $\langle u+v,u+v\rangle$ and $\langle u-v,u-v\rangle$ using
bilinearity.
\end{proof}

\begin{lemma}\label{le:orthog-vec}
If $V$ is a closed subspace of a Hilbert space $H$ then every coset of
$V$ in $H$ contains a unique element orthogonal to $V$.
\end{lemma}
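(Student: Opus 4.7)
The plan is to find, in a given coset $x+V$, the element of smallest norm; this element will be orthogonal to $V$, and uniqueness is easy to check separately.

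First I would set $d=\inf_{v\in V}|x-v|$ and choose a minimising sequence $v_n\in V$ with $|x-v_n|\to d$. The key tool is the parallelogram identity just proved, applied to $u=x-v_m$ and $w=x-v_n$: this gives
\[ |2x-v_m-v_n|^2+|v_n-v_m|^2 = 2|x-v_m|^2+2|x-v_n|^2. \]
Since $(v_m+v_n)/2\in V$, the first term on the left is at least $4d^2$, so
\[ |v_n-v_m|^2 \le 2|x-v_m|^2+2|x-v_n|^2-4d^2 \longrightarrow 0. \]
Thus $(v_n)$ is Cauchy; by completeness of $H$ and closedness of $V$ it converges to some $v\in V$ with $|x-v|=d$. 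Write $y=x-v$.

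Next I would show $y\perp V$. For any $w\in V$ and $t\in\bC$, the element $v+tw$ lies in $V$, so $|y-tw|^2\ge d^2=|y|^2$. Expanding gives $-t\langle w,y\rangle-\bar t\langle y,w\rangle+|t|^2|w|^2\ge 0$. Taking $t=\ep\langle y,w\rangle$ with $\ep>0$ small yields $-2\ep|\langle y,w\rangle|^2+\ep^2|\langle y,w\rangle|^2|w|^2\ge 0$, which forces $\langle y,w\rangle=0$. Hence $y$ is orthogonal to $V$, and $y$ is an element of the coset $x+V$ orthogonal to $V$.

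Finally, for uniqueness, if $y_1$ and $y_2$ are two elements of the coset $x+V$ both orthogonal to $V$, then $y_1-y_2\in V$, so $\langle y_1-y_2,y_1-y_2\rangle=0$, giving $y_1=y_2$ by positive definiteness. The main (and only slightly subtle) step is the Cauchy argument via the parallelogram identity; the orthogonality and uniqueness steps are then routine.
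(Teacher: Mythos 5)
Your proof is correct and follows the same overall strategy as the paper: minimise the norm over the coset, use the parallelogram identity to show the minimising sequence is Cauchy, pass to the limit, and then verify orthogonality. The one place you diverge is the orthogonality step. The paper projects $w$ orthogonally onto $v$, writes $w'=w-\frac{\langle v,w\rangle}{\langle v,v\rangle}v$, and uses Pythagoras together with $|w'|\ge|w|$ to force $\langle v,w\rangle=0$. You instead use the variational argument: for $w\in V$ and scalar $t$, expand $|y-tw|^2\ge|y|^2$, substitute $t=\ep\langle y,w\rangle$, and let $\ep\to 0^+$. Both arguments are standard and of comparable length; the paper's version has the small aesthetic advantage of reusing exactly the computation already employed in the proof of Lemma~\ref{le:|x|}, while yours is the version most commonly seen when one wants to emphasise that the minimiser is a critical point. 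Either is fine.
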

\begin{proof}
Let $w$ be an element of the coset. We  let $d$ be the infimum of
the values of $|w|$ as $w$ runs over elements the coset. Then there is a
sequence of elements $w_1,w_2,\dots$ of such elements with
$\lim_{i\to\infty}|w_i|=d$. We claim that $w_1,w_2,\dots$ is a
Cauchy sequence in $H$. To see this, 
we apply the parallelogram identity
to $w_i$ and $w_j$:
\[ |w_i+w_j|^2 +|w_i-w_j|^2 =2|w_i|^2+2|w_j|^2, \]
which gives
\[ |w_i-w_j|^2 = 2|w_i|^2+2|w_j|^2-4|\half(w_i+w_j) |^2. \]
Since $\half(w_i+w_j)$ is an element of the same coset, we have
$|\half(w_i+w_j)|\ge d$. If $|w_i|<d+\ep$ and $|w_j|<d+\ep$, 
we obtain
\[ |w_i-w_j|^2< 2(d+\ep)^2+2(d+\ep)^2-4d^2 = 4\ep(2d+\ep). \]
So as $\ep\to 0$ we have $|w_i-w_j|\to 0$, and $w_1,w_2,\dots$ is a
Cauchy sequence. Now $H$ is
complete and $V$ is closed, so $V$ is complete and hence the
coset is complete. So this Cauchy sequence has a limit $w$, 
and we have $|w|=d$. 

If $0\ne v\in V$ let $w'=w-
\frac{\langle  v,w\rangle}{\langle v,v\rangle}v$. 
Then $\langle v,w'\rangle = 0$
and so 
\[ |w|^2=|w'|^2 + \left|\frac{\langle  v,w\rangle}{\langle
    v,v\rangle}v\right|^2. \]
Since $|w'|^2\ge |w|^2$ it follows that $\langle v,w\rangle=0$.
Thus $w$ is orthogonal to $V$. If $w_0$ is another element of the
coset orthogonal to $V$ then $w-w_0$ is in $V$ and orthogonal to $V$,
and hence equal to zero.
\end{proof}

\begin{theorem}[Fr\'echet--Riesz]\label{th:FR}\index{Fr\'echet--Riesz Theorem}
Given a continuous $\bC$-algebra homomorphism $\phi\colon H\to \bC$
there exists a unique element $y\in H$ such that for all $x\in H$ 
$\phi(x)=\langle x,y\rangle$.
\end{theorem}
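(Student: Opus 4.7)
The plan is to reduce to Lemma~\ref{le:orthog-vec} by taking $V=\Ker\phi$. Since $\phi$ is continuous (and I interpret the statement in the obvious way as referring to a continuous linear functional, as $H$ carries no algebra structure), $V$ is a closed subspace of $H$. If $V=H$ then $\phi=0$ and we take $y=0$. Otherwise I would pick any element not in $V$ and apply Lemma~\ref{le:orthog-vec} to its coset, obtaining a nonzero vector $z\in H$ orthogonal to $V$.

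Once I have $z$, the idea is that every $x\in H$ can be adjusted by an element of $V$ so as to lie on the line through $z$: specifically, $\phi(\phi(x)z-\phi(z)x)=\phi(x)\phi(z)-\phi(z)\phi(x)=0$, so $\phi(x)z-\phi(z)x\in V$. Taking the inner product with $z$ and using orthogonality gives
\[
\phi(x)|z|^2-\phi(z)\langle x,z\rangle=0,
\]
so that
\[
\phi(x)=\left\langle x,\frac{\overline{\phi(z)}}{|z|^2}z\right\rangle.
\]
Thus $y=\overline{\phi(z)}z/|z|^2$ does the job.

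For uniqueness, if $\phi(x)=\langle x,y_1\rangle=\langle x,y_2\rangle$ for every $x\in H$, I would substitute $x=y_1-y_2$ to conclude $|y_1-y_2|^2=0$, hence $y_1=y_2$. I expect no serious obstacle: the entire content of the theorem has been packaged into Lemma~\ref{le:orthog-vec} (via the parallelogram identity), and the only mild care needed is to track the conjugation correctly given the convention of linearity in the first variable and conjugate symmetry adopted in the Hilbert space definition.
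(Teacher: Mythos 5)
Your argument is correct and follows the same route as the paper: take $V=\Ker\phi$, which is closed since $\phi$ is continuous, dispose of the case $\phi=0$, and apply Lemma~\ref{le:orthog-vec} to a coset of $V$ to obtain a vector orthogonal to $V$ from which the representing vector is built. In fact you are slightly more careful than the printed proof, which takes $y$ in the coset $\phi^{-1}(1)$ and then asserts $\phi(x)=\langle x,y\rangle$ directly — that identity requires $|y|=1$, so the paper's $y$ should really be replaced by $y/|y|^2$; your $y=\overline{\phi(z)}\,z/|z|^2$ handles this normalisation (and the conjugate) correctly.
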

\begin{proof}
Let $V$ be the kernel of $\phi$. Since $\phi$ is continuous, $V$ is a
closed subspace of $H$ of codimension one. The set of elements $x\in H$ such
that $\phi(x)=1$ is a coset of $V$. Using Lemma~\ref{le:orthog-vec}
there is an element $y$ in this coset which is orthogonal to all $v\in
V$. Every element of $H$ can be written as $v+\lambda y$ with $v\in V$
and $\lambda \in \bC$, and we have 
\begin{equation*} 
\phi(v+\lambda y) = \phi(v)+\lambda\phi(y) = \lambda = \langle
  v+\lambda y, y\rangle. 
\end{equation*}
Finally, for uniqueness, if $y'$ is another then $y-y'$ is both in $V$
and orthogonal to $V$, and hence equal to zero.
\end{proof}

\begin{defn}
If $f\colon H \to H$ is a linear transformation, we set
\[ \|f\|_{\sup}=\sup_{|x|=1} |f(x)|. \]
If $\|f\|_{\sup}<\infty$, we say that $f$ is a 
\emph{bounded operator}\index{bounded!operator|textbf} on $H$.
By Lemma~\ref{le:bd=cts}, bounded operators are continuous.
We write $\Lin(H)$\index{L@$\Lin(H)$}  
for the set of bounded operators on $H$. We shall see in
Theorem~\ref{th:LinC*} that $\Lin(H)$ has the structure of a $C^*$-algebra.
\end{defn}

\begin{theorem}[Adjoints]\label{th:adjoints}\index{adjoint map}
Let $f\colon H\to H$ be a bounded operator. Then there is a unique
bounded operator $f^*$, called the \emph{adjoint} of $f$, such that
for all $x$ and $y$ in $H$ we have 
\[ \langle f(x),y\rangle = \langle x,f^*(y)\rangle. \]
We have $f^{**}=f$, $(\lambda f+ \mu f')^*=\bar\lambda f^*+ \bar\mu
f'^{\,*}$,  $(f\circ f')^*=f'^{\,*}\circ f^*$,
$\|f^*\circ f\|_{\sup}=\|f\|^2_{\sup}$, and
 $\|f^*\|_{\sup}=\|f\|_{\sup}$. 
\end{theorem}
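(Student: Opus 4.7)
The plan is to build $f^*$ via the Fr\'echet--Riesz theorem (Theorem~\ref{th:FR}), and then verify each of the listed identities by direct inner-product manipulation, appealing to the uniqueness clause in Fr\'echet--Riesz to promote identities of the form $\langle x,u\rangle=\langle x,v\rangle$ (for all $x$) into equalities $u=v$. First I would fix $y\in H$ and consider the map $x\mapsto\overline{\langle f(x),y\rangle}$, which Lemma~\ref{le:|x|} combined with $\|f\|_{\sup}<\infty$ shows is a continuous conjugate-linear functional of bounded norm. Applying Theorem~\ref{th:FR} (after taking complex conjugates to get a linear functional in the form required) produces a unique $z\in H$ with $\langle f(x),y\rangle=\langle x,z\rangle$ for all $x$; defining $f^*(y):=z$ gives the adjoint. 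Uniqueness of $f^*$ as an operator is immediate: if $g$ satisfied the same defining identity, then $\langle x,f^*(y)-g(y)\rangle=0$ for all $x$, and Lemma~\ref{le:|x|} forces $f^*(y)=g(y)$.

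Linearity of $f^*$ follows from this uniqueness principle applied to the identities $\langle f(x),\lambda y_1+\mu y_2\rangle=\bar\lambda\langle x,f^*(y_1)\rangle+\bar\mu\langle x,f^*(y_2)\rangle=\langle x,\lambda f^*(y_1)+\mu f^*(y_2)\rangle$. For boundedness, Lemma~\ref{le:|x|} gives
\[ |f^*(y)|=\sup_{|x|=1}|\langle x,f^*(y)\rangle|=\sup_{|x|=1}|\langle f(x),y\rangle|\le\sup_{|x|=1}|f(x)||y|\le\|f\|_{\sup}|y|, \]
using the Cauchy--Schwarz inequality $|\langle f(x),y\rangle|\le|f(x)||y|$, which itself follows immediately from Lemma~\ref{le:|x|} by plugging in $w=y/|y|$. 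Hence $\|f^*\|_{\sup}\le\|f\|_{\sup}$.

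The algebraic identities are then routine. For $f^{**}=f$, compute $\langle x,f^{**}(y)\rangle=\overline{\langle f^{**}(y),x\rangle}=\overline{\langle y,f^*(x)\rangle}=\langle f^*(x),y\rangle^{\ast}$, and after using conjugate symmetry twice this reduces to $\langle x,f(y)\rangle$; uniqueness gives $f^{**}=f$. Applying $\|\cdot^*\|_{\sup}\le\|\cdot\|_{\sup}$ to $f^*$ gives $\|f\|_{\sup}=\|f^{**}\|_{\sup}\le\|f^*\|_{\sup}$, so these norms are equal. The identities $(\lambda f+\mu f')^*=\bar\lambda f^*+\bar\mu f'^{\,*}$ and $(f\circ f')^*=f'^{\,*}\circ f^*$ are each obtained by starting from $\langle(\lambda f+\mu f')(x),y\rangle$ and $\langle(f\circ f')(x),y\rangle$ respectively, expanding using the defining property of the adjoint, and invoking uniqueness.

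The only piece requiring a bit more than bookkeeping is the $C^*$-identity $\|f^*\circ f\|_{\sup}=\|f\|_{\sup}^2$, which will be the main point of the proof. One direction is just submultiplicativity together with $\|f^*\|_{\sup}=\|f\|_{\sup}$, giving $\|f^*\circ f\|_{\sup}\le\|f\|_{\sup}^2$. For the reverse inequality, I would exploit
\[ |f(x)|^2=\langle f(x),f(x)\rangle=\langle x,(f^*\circ f)(x)\rangle\le|x|\,|(f^*\circ f)(x)|\le\|f^*\circ f\|_{\sup}|x|^2, \]
again using Cauchy--Schwarz from Lemma~\ref{le:|x|}; taking the supremum over $|x|=1$ gives $\|f\|_{\sup}^2\le\|f^*\circ f\|_{\sup}$, closing the argument.
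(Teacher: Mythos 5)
Your proposal is correct and uses the same essential machinery as the paper (Fr\'echet--Riesz to construct $f^*$, then uniqueness to establish the algebraic identities, with Lemma~\ref{le:|x|} doing the work on norms), but the bookkeeping for the norm identities is organized a bit differently. The paper proves $\|f^*\circ f\|_{\sup}=\|f\|_{\sup}^2$ and $\|f^*\|_{\sup}=\|f\|_{\sup}$ by two separate direct chains of equalities, each starting from $\sup_{|y|=1}|\cdot|$ and substituting the characterization $|z|=\sup_{|w|=1}|\langle z,w\rangle|$ from Lemma~\ref{le:|x|}. You instead take the classical textbook route: first get the one-sided bound $\|f^*\|_{\sup}\le\|f\|_{\sup}$ from Cauchy--Schwarz, upgrade it to equality via $f^{**}=f$, and then obtain the $C^*$-identity by combining submultiplicativity (for $\le$) with the estimate $|f(x)|^2=\langle x,(f^*\circ f)(x)\rangle\le\|f^*\circ f\|_{\sup}|x|^2$ (for $\ge$). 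Both arguments ultimately rest on the same inner-product bound, but yours factors the $C^*$-identity through the already-established equality $\|f^*\|_{\sup}=\|f\|_{\sup}$, while the paper's is self-contained per identity; either is a reasonable way to present the result, and there is no gap in your version. One small remark: your care about the conjugate-linear versus linear functional when invoking Fr\'echet--Riesz is more precise than what appears in the paper, which applies the theorem with the roles of $x$ and $y$ informally swapped; your reading is the correct one.
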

\begin{proof}
For $x\in H$, the map $\phi\colon H \to \bC$ given by $\phi(y)=\langle
f(x),y\rangle$ is a continuous $\bC$-algebra homomorphism. By
Theorem~\ref{th:FR}, there exists a unique element $z\in H$ such that
for all $x\in H$ we have $\langle f(x),y\rangle = \langle x,z\rangle$.
We define $f^*(x)=z$. To see that $f^*$ is linear, we have
\begin{align*} 
\langle x, f^*(\lambda y+\mu z) \rangle 
&= \langle f(x),\lambda  y+\mu z\rangle\\ 
&= \bar\lambda\langle f(x),y\rangle + \bar\mu \langle f(x),z\rangle\\
& = \langle x,\lambda f^*(y)+\mu f^*(z)\rangle, 
\end{align*}
so that by uniqueness, $f^*(\lambda y+\mu z)=\lambda f^*(y)+\mu
f^*(z)$. It is now easy to check that $f^{**}=f$ and 
$(\lambda f+ \mu f')^*=\bar\lambda f^*+ \bar\mu f'^{\,*}$.

For all $x$ and $y$ in $H$ we have
\begin{align*} 
\langle (f\circ f')(x),y\rangle& = \langle f(f'(x),y\rangle =
  \langle f'(x),f^*(y)\rangle \\
&= \langle x,f'^{\,*}(f^*(y))\rangle
= \langle x,(f'^{\,*}\circ f^*)(y)\rangle, 
\end{align*}
and so $(f\circ f')^*=f'^{\,*}\circ f^*$.

Using Lemma~\ref{le:|x|},
we have
\begin{align*} 
\|f^*\circ f\|_{\sup}&=\sup_{|y|=1}|f^*\circ f(y)|\\
&=\sup_{|x|=1,|y|=1}\langle x,  f^*(f(y))\rangle\\
& = \sup_{|x|=1,|y|=1}\langle f(x),f(y)\rangle\\
&=\sup_{|x|=1}|f(x)|^2=\|f\|_{\sup}^2. 
\end{align*}
Thus $\|f^*\circ f\|_{\sup}=\|f\|^2_{\sup}$.
Similarly,
using Lemma~\ref{le:|x|} we have
\begin{align*} 
\|f^*\|_{\sup} &= \sup_{|y|=1}|f^*(y)|\\
& = \sup_{|x|=1,|y|=1}|\langle  x,f^*(y)\rangle|\\
& = \sup_{|x|=1,|y|=1}|\langle f(x),y\rangle|\\
&=\sup_{|x|=1}|f(x)|=\|f\|_{\sup}. 
\end{align*}
Thus $\|f^*\|_{\sup}=\|f\|_{\sup}$. It follows that $f^*$ is bounded.
\end{proof}

\begin{theorem}\label{th:LinC*}
With the operation of composition, 
the sup norm,\index{sup!norm} 
and the star operation\index{star!operation} of taking adjoints, $\Lin(H)$ is a
$C^*$-algebra.
\end{theorem}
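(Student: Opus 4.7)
The plan is to check each axiom of Definition~\ref{def:Cstar-algebra} in turn, leaning heavily on Theorem~\ref{th:adjoints} which already packages most of the star-operation statements. Composition of bounded operators is associative and bilinear, the identity operator $\mathrm{id}_H$ is a unit with $\|\mathrm{id}_H\|_{\sup}=1$, and the sup norm is a norm on $\Lin(H)$ by elementary arguments (positivity, homogeneity and subadditivity all pass under $\sup_{|x|=1}$). Submultiplicativity $\|f\circ g\|_{\sup}\le\|f\|_{\sup}\|g\|_{\sup}$ is immediate: for $|x|=1$ we have $|f(g(x))|\le\|f\|_{\sup}|g(x)|\le\|f\|_{\sup}\|g\|_{\sup}$. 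These points establish that $\Lin(H)$ is a normed algebra in the sense of Definition~\ref{def:na}.

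For the $*$-algebra axioms of Definition~\ref{def:B*alg}, Theorem~\ref{th:adjoints} gives $f^{**}=f$ (involutory), $(\lambda f+\mu f')^*=\bar\lambda f^*+\bar\mu f'^{\,*}$ (antilinear), and $(f\circ f')^*=f'^{\,*}\circ f^*$ (multiplicative antiautomorphism). The continuity of the star operation follows from the isometric property $\|f^*\|_{\sup}=\|f\|_{\sup}$, also established there. Finally, the $C^*$-identity $\|f^*\circ f\|_{\sup}=\|f\|_{\sup}^2$ is exactly the content of one of the displayed equalities in Theorem~\ref{th:adjoints}.

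The only axiom not directly handled by Theorem~\ref{th:adjoints} is completeness of $\Lin(H)$ under the sup norm, and this will be the main step of the proof. Given a Cauchy sequence $(f_n)$ in $\Lin(H)$, I would first observe that for each $x\in H$ the sequence $(f_n(x))$ is Cauchy in $H$, since $|f_n(x)-f_m(x)|\le\|f_n-f_m\|_{\sup}|x|$; completeness of $H$ then defines $f(x):=\lim_{n\to\infty}f_n(x)$. Linearity of $f$ is inherited from each $f_n$ by passing to the limit. To see $f$ is bounded, note that $(\|f_n\|_{\sup})$ is a Cauchy, hence bounded, sequence of reals, so for some $M$ and all $n$ we have $|f_n(x)|\le M|x|$, giving $|f(x)|\le M|x|$ in the limit. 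Finally, to see $f_n\to f$ in sup norm: given $\ep>0$, choose $N$ with $\|f_n-f_m\|_{\sup}<\ep$ for all $n,m\ge N$; then for $|x|=1$ and $n\ge N$, letting $m\to\infty$ in $|f_n(x)-f_m(x)|<\ep$ yields $|f_n(x)-f(x)|\le\ep$, so $\|f_n-f\|_{\sup}\le\ep$.

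The hard part is only notational rather than conceptual: the completeness argument is the standard one for operator norm completeness and presents no genuine obstacle since it merely exploits completeness of $H$ coordinatewise. Everything else has been packaged into the preceding theorem.
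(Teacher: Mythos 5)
Your proof is correct and follows essentially the same route as the paper: check the normed-algebra axioms directly, invoke Theorem~\ref{th:adjoints} for the $*$-algebra axioms and the $C^*$-identity, and establish completeness separately. The paper simply asserts completeness in one sentence, whereas you write out the standard Cauchy-sequence argument in full; this is a fleshing out rather than a different approach.
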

\begin{proof}
The sum of two bounded operators is a bounded operator, with
\[ \|f+f'\|_{\sup}=\sup_{|x|=1}|f(x)+f'(x)|\le
  \sup_{|x|=1}|f(x)|+\sup_{|x|=1}|f'(x)|=\|f\|_{\sup}+\|f'\|_{\sup}. \]
Positivity and homogeneity is easily checked, so $\Lin(H)$ is a normed
space. 
The limit of a Cauchy sequence of bounded operators is a bounded
operator, so $\Lin(H)$ is a Banach space.

The composite of two bounded operators is a bounded operator, with
\[ \|f\circ f'\|_{\sup}=\sup_{|x|=1}|f(f'(x))|\le \sup_{|y|=\|f'\|_{\sup}}|f(y)|=
  \|f\|_{\sup}\|f'\|_{\sup}. \]
The identity linear transformation is bounded with sup norm one. So
$\Lin(H)$ is a Banach algebra. 
Using Theorem~\ref{th:adjoints},
the star operation is involutory, antilinear, and
an antiautomorphism, so it is a Banach $*$-algebra.
Finally, by the same theorem it also satisfies 
$\|f^*\circ f\|_{\sup}=\|f\|_{\sup}^2$, so it is a $C^*$-algebra.
\end{proof}

\begin{defn}
An \emph{action}\index{action on Hilbert space} of a Banach $*$-algebra $A$ 
on a Hilbert space $H$ is a continuous $*$-algebra homomorphism
$A\to \Lin(H)$. The action is \emph{faithful}\index{faithful action}
if this map is injective.
\end{defn}

\begin{theorem}
If $A$ is a Banach $*$-algebra and $A \to \Lin(H)$ is an action on a
Hilbert space $H$ then the closure of the image of $A$ in $\Lin(H)$ is
a $C^*$-algebra. If $A$ is commutative then every quasi-nilpotent
element of $A$ is in the kernel of the action on $H$. 
\end{theorem}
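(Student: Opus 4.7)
My plan is to handle the two statements in sequence. For the first, let $\pi\colon A\to\Lin(H)$ denote the action, and let $B=\overline{\pi(A)}\subseteq\Lin(H)$. I will check that $B$ is a closed $*$-subalgebra of the $C^*$-algebra $\Lin(H)$ (Theorem~\ref{th:LinC*}); once that is done, $B$ is complete (a closed subspace of the Banach space $\Lin(H)$), inherits the $C^*$-identity $\|f^*f\|_{\sup}=\|f\|_{\sup}^2$ from $\Lin(H)$, and is therefore itself a $C^*$-algebra. Closure under multiplication is immediate from joint continuity of multiplication (a consequence of submultiplicativity in Definition~\ref{def:na}), and closure under the star operation uses that the adjoint map $f\mapsto f^*$ is an isometry of $\Lin(H)$ by Theorem~\ref{th:adjoints}, hence continuous, so the closure of the $*$-stable image $\pi(A)$ remains $*$-stable.

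For the second statement, assume $A$ is commutative. Then $\pi(A)$ is commutative, and by continuity of multiplication so is its closure $B$. Thus $B$ is a \emph{commutative} $C^*$-algebra, to which Corollary~\ref{co:J(C)} applies: the only quasi-nilpotent element of $B$ is zero. Now suppose $x\in A$ is quasi-nilpotent, so $\lim_{n\to\infty}\sqrt[n]{\|x^n\|}=0$. Since $\pi$ is continuous, Lemma~\ref{le:bd=cts} gives a constant $M>0$ with $\|\pi(y)\|_{\sup}\le M\|y\|$ for all $y\in A$, and then
\[
\sqrt[n]{\|\pi(x)^n\|_{\sup}}=\sqrt[n]{\|\pi(x^n)\|_{\sup}}\le M^{1/n}\sqrt[n]{\|x^n\|}\longrightarrow 0.
\]
Hence $\pi(x)$ is quasi-nilpotent in $B$, and Corollary~\ref{co:J(C)} forces $\pi(x)=0$.

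The proof is short because the substantive input — that in a commutative $C^*$-algebra the spectral radius equals the norm (Theorem~\ref{th:C*radius}), so the Jacobson radical is zero — has already been established. The only real step is to transport vanishing of the spectral radius from $A$ to $B$ via the continuous homomorphism $\pi$; the main subtlety to remember is that one must reason with the asymptotic quantity $\sqrt[n]{\|\cdot\|}$ rather than the norm itself, since $\pi$ need not be isometric (the factor $M^{1/n}$ tends to $1$). Commutativity of $A$ is essential here: Corollary~\ref{co:J(C)} applies only to commutative $C^*$-algebras, which is exactly why this second assertion is restricted to the commutative case.
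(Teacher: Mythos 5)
Your proof is correct; note that the paper states this theorem without supplying a proof (it closes Chapter~\ref{ch:Banach} and the intended argument is exactly the chain of lemmas you invoke), so there is no paper proof to compare against. Both halves are sound: the closure of the $*$-stable image $\pi(A)$ in $\Lin(H)$ is again a $*$-subalgebra because multiplication is jointly continuous and the adjoint is an isometry (Theorem~\ref{th:adjoints}), a closed subspace of the complete space $\Lin(H)$ is complete, and the $C^*$-identity is inherited since the norm on $B$ is the restriction of the sup norm; and for the second half you correctly observe that quasi-nilpotence passes along the continuous homomorphism $\pi$ (the bound $\|\pi(x^n)\|_{\sup}\le M\|x^n\|$ washes out under $n$th roots), so $\pi(x)$ is a quasi-nilpotent element of the commutative $C^*$-algebra $B$ and Corollary~\ref{co:J(C)} forces $\pi(x)=0$.
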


\chapter{Completions of representation rings}%
\index{completion!of representation ring}\label{ch:complete-repring}

\section{The norm on a representation ring}%
\index{norm!on representation ring}

\begin{defn}\label{def:norm}
Let $\fa$ be a representation ring (see Definition~\ref{def:repring}),
and let $\fa_\bC=\bC\otimes_\bZ\fa$\index{a@$\fa_{\bC}$}
(see Definition~\ref{def:a_R}).
If $x=\sum_{i\in\fI}a_ix_i\in\fa_\bC$ then we define the 
\emph{weighted $\ell^1$ norm}%
\index{weighted!$\ell^1$ norm}\index{norm!weighted $\ell^1$} 
of $x$ to be
\[ \|x\|=\sum_{i\in\fI}|a_i|\dim x_i. \]
\end{defn}

\begin{lemma}\label{le:Anormed}
The map $\fa_\bC\to\bR$ given by $x \mapsto \|x\|$ satisfies 
Definition~\ref{def:na}\,{\rm(i)--(v)}, and hence makes $\fa_\bC$ a normed
algebra. Furthermore, the star operation\index{star!operation} taking 
$x=\sum_{i\in\fI} a_ix_i$ to $x^*=\sum_{i\in\fI}\bar a_ix_{i^*}$ makes
$\fa_\bC$ into a commutative normed $*$-algebra,\index{normed!star@$*$-algebra} 
see Definition~\ref{def:B*alg}.
\end{lemma}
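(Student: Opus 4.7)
The plan is to verify each of the five axioms of Definition~\ref{def:na} in turn, then check the three properties required for a $*$-algebra structure, and finally check continuity of the star operation with respect to the norm. Writing $x = \sum_{i \in \fI} a_i x_i$ and $y = \sum_{i \in \fI} b_i x_i$, the arguments for positivity, subadditivity, and homogeneity are immediate: positivity uses that $\dim x_i > 0$ for all $i$ (from Definition~\ref{def:repring}\,(iv)), while subadditivity reduces to the triangle inequality $|a_i + b_i| \le |a_i| + |b_i|$ on each coordinate, and homogeneity is trivial. Normalisation is the observation that $\one = x_0$ with $\dim x_0 = 1$, because $\dim$ is a ring homomorphism $\fa \to \bZ$ sending $\one$ to $1$.

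The one axiom requiring genuine input is submultiplicativity, which is where the non-negativity of structure constants combines with the ring-homomorphism property of $\dim$. First I would write $xy = \sum_k \bigl(\sum_{i,j} a_i b_j c_{i,j,k}\bigr) x_k$ and estimate
\[ \|xy\| \;\le\; \sum_k \sum_{i,j} |a_i|\,|b_j|\, c_{i,j,k} \dim x_k \;=\; \sum_{i,j} |a_i|\,|b_j| \sum_k c_{i,j,k} \dim x_k. \]
The inner sum equals $\dim(x_i x_j) = \dim(x_i)\dim(x_j)$ because $\dim$ is multiplicative, and the double sum then factors as $\|x\|\,\|y\|$. This is the crucial step, and the only one that uses axiom (iv) of the representation ring in an essential way; I do not expect any obstacle, since every ingredient is already built into the axioms.

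For the $*$-algebra structure, the involutory and antilinear properties of $x \mapsto x^*$ follow at once from $i^{**} = i$ and from $\overline{\lambda a_i} = \bar\lambda\bar a_i$. The multiplicative antiautomorphism condition $(xy)^* = x^* y^*$ (which coincides with $y^*x^*$ by commutativity of $\fa$) follows by expanding both sides and applying the identity $x_{i^*} x_{j^*} = (x_i x_j)^* = \sum_k c_{i,j,k} x_{k^*}$, itself a consequence of axiom (i) of Definition~\ref{def:repring}.

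Finally, to check continuity of $*$ it suffices to show it is isometric, and indeed
\[ \|x^*\| = \sum_{i \in \fI} |\bar a_i| \dim x_{i^*} = \sum_{i \in \fI} |a_i| \dim x_i = \|x\|, \]
using $\dim x_i = \dim x_{i^*}$ from axiom (iv). Commutativity of $\fa_\bC$ as an algebra is inherited from commutativity of $\fa$. Altogether this confirms that $\fa_\bC$ is a commutative normed $*$-algebra.
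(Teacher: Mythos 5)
Your proposal is correct and follows the same approach as the paper: the paper also singles out submultiplicativity as the one non-trivial axiom and proves it with exactly your estimate (expanding $xy$ in the basis, bounding coefficients by absolute values, and using $\sum_k c_{i,j,k}\dim x_k = \dim(x_ix_j) = \dim(x_i)\dim(x_j)$), declaring the remaining axioms ``easy to verify.'' You have simply spelled out the routine checks the paper omits, including the isometry $\|x^*\|=\|x\|$ via $\dim x_{i^*}=\dim x_i$ and the identity $x_{i^*}x_{j^*}=(x_ix_j)^*$ from axiom~(i).
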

\begin{proof}
To verify that submultiplicativity\index{submultiplicativity} holds,  let
$x=\sum_{i\in\fI}a_ix_i$ and $y=\sum_{j\in\fI}b_jx_j$. We have
$x_ix_j=\sum_{k\in\fI}c_{i,j,k}x_k$ with $c_{i,j,k}$ non-negative integers. So
\[ xy = \sum_{k\in\fI}\bigl(\sum_{i,j\in\fI}c_{i,j,k}a_ib_j\bigr)x_k \]
and hence
\begin{align*}
\|xy\|&=\sum_{k\in\fI} \bigl|\bigl(\sum_{i,j\in\fI}c_{i,j,k}a_ib_j\bigr)\bigr|\dim x_k \\
&\le \sum_{k\in\fI} \bigl(\sum_{i,j\in\fI}c_{i,j,k}|a_ib_j|\bigr)\dim x_k \\
      &= \sum_{i,j\in\fI}   |a_ib_j|\dim x_ix_j \\
      &=\sum_{i\in\fI} |a_i|\dim x_i\,\sum_{j\in\fI}|b_j|\dim x_j \\
  &=\|x\|\|y\|.
\end{align*}
The remaining axioms for the norm and star operation are easy to verify.
\end{proof}

\begin{defn}
We define $\hat\fa$\index{a@$\hat\fa$} to be the 
completion of $\fa_\bC$ with respect to the
norm defined above. By Lemmas~\ref{le:A*hat} and~\ref{le:Anormed},
$\hat \fa$ is a commutative Banach $*$-algebra associated to the
representation ring $\fa$, and $\fa_\bC$ is a dense subalgebra of $\hat\fa$.
\end{defn}

We can think of elements of $\hat\fa$ concretely as possibly
infinite linear combinations $\sum_{i\in\fI} a_ix_i$ where
$\sum_{i\in\fI} |a_i|\dim x_i <\infty$.
Note that any such sum automatically has countable support, by the following lemma.

\begin{lemma}\label{le:countable}\index{countable sum}
Let $\{r_\alpha\}_{\alpha\in J}$ be a collection of positive real
numbers indexed by a set $J$. If the
sums $\sum_{\alpha\in I}r_\alpha$ over finite subsets $I\subseteq J$
are bounded above then $J$ is countable.
\end{lemma}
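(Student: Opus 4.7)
The plan is to exhaust $J$ by a countable family of finite subsets, indexed by a lower bound on the size of the summands. Fix a constant $M$ such that every finite subsum is at most $M$. For each positive integer $n$, set
\[ J_n = \{\alpha\in J : r_\alpha > 1/n\}. \]
Since every $r_\alpha>0$, the Archimedean property of $\bR$ gives $J = \bigcup_{n\ge 1} J_n$.

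The key step is to show each $J_n$ is finite. If $I\subseteq J_n$ is any finite subset, then
\[ \frac{|I|}{n} < \sum_{\alpha\in I} r_\alpha \le M, \]
so $|I| < Mn$. Since every finite subset of $J_n$ has cardinality less than $Mn$, the set $J_n$ itself has cardinality less than $Mn$, and in particular is finite.

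Therefore $J$ is a countable union of finite sets, hence countable. There is no real obstacle here; the only content is noticing that the uniform bound on finite subsums forces a uniform bound on the cardinality of $J_n$.
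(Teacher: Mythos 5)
Your proof is correct and follows exactly the same route as the paper: decompose $J=\bigcup_{n\ge 1}J_n$ with $J_n=\{\alpha:r_\alpha>1/n\}$, show each $J_n$ is finite by bounding finite subsums, and conclude that $J$ is a countable union of finite sets. You spell out the cardinality bound $|J_n|<Mn$ more explicitly than the paper does, but the argument is the same.
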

\begin{proof}
For $n>0$, the subset $J_n\subseteq J$ consisting of those 
$\alpha\in J$ for which $r_\alpha>1/n$ is finite,
and $J=\bigcup_{n>0}J_n$ is a countable union of finite sets.
\end{proof}

However, $\hat\fa$ is usually not separable:

\begin{defn}
A Banach algebra $A$ is \emph{separable}\index{separable Banach algebra} if there is a countable
subset $K$ of $A$ such that the closure of the subalgebra generated
by $K$ is the whole of $A$.
\end{defn}

\begin{propqed}
The Banach algebra $\hat\fa$ is separable if and only if the index set $\fI$ is
countable.
\end{propqed}

\begin{rk}
Another basis for $\fa_\bC$ consists of the elements
$x_j$ for $j=j^*\in\fI$, together with the elements $(x_j+x_j^*)/2$
and $(x_j-x_j^*)/2\bi$ for $j\ne j^*\in\fI$. The elements in this basis are
self-conjugate.\index{self-conjugate element} 
Their linear span $\fa_\bR$\index{a@$\fa_\bR$} 
is a real normed algebra, whose completion
$\hat\fa_\bR$\index{a@$\hat\fa_\bR$} is a real Banach algebra 
with the property that $\bC\otimes_\bR
\hat\fa_\bR\cong\hat\fa$, with star operation coming from complex
conjugation on the first tensor factor.
\end{rk}

\section{Norms and cores}

\begin{defn}
Let $\fX\subset\fI$ be a representation ideal in a representation ring
$\fa$. Then $\langle\fX\rangle_\bC=\bC\otimes_\bZ\langle\fX\rangle$ is an ideal in
$\fa_\bC=\bC\otimes_\bZ\fa$, and its closure
$\widehat{\langle\fX\rangle}_\bC$ in $\hat\fa$ is a closed ideal.
We write $\hat\fa_\fX$\index{a@$\hat\fa_{\fX}$, $\hat\fa_{\max}$, $\hat\fa_{\proj}$}
for the quotient $\hat\fa/\widehat{\langle\fX\rangle}_\bC$.
We write $\hat\fa_{\max}$ for $\hat\fa_{\fX_{\max}}$\index{X@$\fX_{\max}$} and
$\hat\fa_\proj$ for $\hat\fa_{\fX_\proj}$ (see Definition~\ref{def:max-proj}).
\end{defn}

By Lemma~\ref{le:quotient}\,(iii), $\hat\fa_\fX$ is isometrically isomorphic to
the completion $\widehat{\fa_{\bC,\fX}}$ of $\fa_{\bC,\fX}$ with
respect to the quotient norm.
It is easy to check that the quotient norm on $\hat\fa_\fX$ is given by
\begin{equation}\label{eq:normA/X}
  \left\|\sum_{i\in\fI}a_ix_i\right\|_\fX=\sum_{i\in\fI}|a_i|\dim\core_\fX(x_i)
  =\sum_{i\in\fI\setminus\fX}|a_i|\dim x_i. 
\end{equation}

We can think of elements of $\hat\fa_\fX$ concretely as possibly
infinite (but necessarily countably supported, see Lemma~\ref{le:countable})
linear combinations $\sum_{i\in\fI\setminus\fX}a_ix_i$ where  
\[ \sum_{i\in\fI\setminus\fX}|a_i|\dim x_i<\infty. \] 

\begin{lemma}
If $x,y\in\fa_{\cge 0}$ then $\|x+y\|_\fX=\|x\|_\fX + \|y\|_\fX$.
\end{lemma}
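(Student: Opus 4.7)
The plan is to unwind the definitions and exploit that, for non-negative reals, absolute value is additive. Write $x = \sum_{i\in\fI} a_i x_i$ and $y = \sum_{i\in\fI} b_i x_i$ with each $a_i, b_i \ge 0$ (this uses $x, y \in \fa_{\cge 0}$). Then $x+y = \sum_{i\in\fI}(a_i+b_i)x_i$ has all coefficients $\ge 0$.

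Applying the explicit formula~\eqref{eq:normA/X} for $\|\cdot\|_\fX$, one gets
\[
\|x+y\|_\fX = \sum_{i\in\fI\setminus\fX} |a_i+b_i|\dim x_i = \sum_{i\in\fI\setminus\fX}(a_i+b_i)\dim x_i,
\]
where the second equality uses $a_i+b_i\ge 0$. Splitting the sum termwise and using $|a_i| = a_i$, $|b_i| = b_i$ gives $\|x\|_\fX + \|y\|_\fX$, as required.

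The only conceptual point is that, although the quotient norm is defined as an infimum (Definition~\ref{def:quotient-norm}), the explicit formula~\eqref{eq:normA/X} already evaluates that infimum; so no further minimisation is needed and there is no obstacle beyond invoking that formula. The statement would fail for general elements of $\fa_\bC$, where the triangle inequality can be strict due to cancellation, but positivity of the coefficients rules out any cancellation here.
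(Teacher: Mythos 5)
Your proof is correct and takes the same route as the paper, which simply records the lemma as clear from the definition; you have filled in exactly the calculation the paper leaves implicit, namely that formula~\eqref{eq:normA/X} reduces the claim to additivity of $|\cdot|$ on non-negative reals.
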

\begin{proof}
This is clear from the definition.
\end{proof}

\begin{lemma}\label{le:norm}
If $x\in\fa_{\cge 0}$ then the quotient norm on the
image of $x$ in $\hat\fa_\fX$ is equal to $\dim\core_\fX(x)$.
Thus $\npj_\fX(x)=\displaystyle\lim_{n\to\infty}\sqrt[n]{\|x^n\|_\fX}$.
\end{lemma}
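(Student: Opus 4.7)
The proof is essentially an unpacking of definitions, so the main task is to assemble the pieces rather than overcome any real obstacle.

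The plan is to exploit formula~\eqref{eq:normA/X} together with positivity. Writing $x=\sum_{i\in\fI}a_ix_i\in\fa_{\cge 0}$, each coefficient $a_i$ is a non-negative integer, so $|a_i|=a_i$ and~\eqref{eq:normA/X} gives
\[
\|x\|_\fX \;=\; \sum_{i\in\fI}|a_i|\dim\core_\fX(x_i)\;=\;\sum_{i\in\fI\setminus\fX}a_i\dim x_i.
\]
On the other hand, by the definition of the core (Definition~\ref{def:repideal}), $\core_\fX(x)=\sum_{i\in\fI\setminus\fX}a_ix_i$, and since the dimension function is a ring homomorphism taking non-negative values on each $x_i$, we have $\dim\core_\fX(x)=\sum_{i\in\fI\setminus\fX}a_i\dim x_i$. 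Comparing, $\|x\|_\fX=\dim\core_\fX(x)$, which is the first assertion.

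For the second assertion, I would first note that $x\in\fa_{\cge 0}$ implies $x^n\in\fa_{\cge 0}$ for every $n\ge 0$, by iteration of Lemma~\ref{le:core}\,(i). Hence the first part of the lemma applies to each $x^n$, giving
\[
\|x^n\|_\fX \;=\; \dim\core_\fX(x^n) \;=\; \cc_n^\fX(x).
\]
Taking $n$th roots and passing to the limit, Theorem~\ref{th:inf} (which guarantees that the limit exists and equals the limsup defining $\npj_\fX(x)$, since $\cc_n^\fX(x)$ is submultiplicative by Lemma~\ref{le:cc-submult}) yields
\[
\npj_\fX(x)\;=\;\lim_{n\to\infty}\sqrt[n]{\cc_n^\fX(x)}\;=\;\lim_{n\to\infty}\sqrt[n]{\|x^n\|_\fX},
\]
as desired.

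The only minor subtlety is to make sure the argument is applied to $x^n$ rather than just $x$, which requires the positivity-preservation of products in $\fa_{\cge 0}$; this is already available from Lemma~\ref{le:core}\,(i). In particular the case $x\in\langle\fX\rangle$ presents no difficulty: then $\core_\fX(x^n)=0$ for all $n\ge 1$, both sides of the final identity vanish, and the convention $\sqrt[n]{0}=0$ is consistent.
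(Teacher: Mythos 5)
Your proof is correct and takes essentially the same route as the paper, which simply cites formula~\eqref{eq:normA/X} for the first assertion; you have merely spelled out the computation and, for the second assertion, made explicit the application to $x^n$ together with Theorem~\ref{th:inf}, exactly as the paper intends.
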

\begin{proof}
  This follows from \eqref{eq:normA/X}.
\end{proof}

\begin{theorem}\label{th:spec-radius}
If $x\in\fa_{\cge 0}$ then $\npj_\fX(x)$ is equal to the spectral
radius of the image of $x$ in $\hat\fa_\fX$.
\end{theorem}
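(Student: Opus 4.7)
The proof should be essentially a one-line assembly of results already in hand, so the plan is simply to identify which pieces plug into which. The key observation is that Lemma~\ref{le:norm} gives a direct formula for the quotient norm on the image of a non-negative element: for $x\in\fa_{\cge 0}$ we have $\|x\|_\fX = \dim\core_\fX(x)$, and applying this to powers yields $\|x^n\|_\fX = \cc_n^\fX(x)$ (one needs $x^n\in\fa_{\cge 0}$, which follows from Lemma~\ref{le:core}\,(i)).

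With that identification in place, the plan is to compute the spectral radius of the image $\bar x$ of $x$ in $\hat\fa_\fX$ by the Gelfand spectral radius formula (Proposition~\ref{pr:Gelfand}):
\[ \rho(\bar x) = \lim_{n\to\infty}\sqrt[n]{\|\bar x^{\,n}\|_\fX}
= \lim_{n\to\infty}\sqrt[n]{\|x^n\|_\fX}
= \lim_{n\to\infty}\sqrt[n]{\cc_n^\fX(x)}. \]
The right-hand side is $\npj_\fX(x)$ by the definition of the gamma invariant together with Theorem~\ref{th:inf}, which guarantees that the $\limsup$ used in the definition coincides with the $\lim$ (and hence with the limit produced by Fekete's lemma inside Proposition~\ref{pr:Gelfand}). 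This yields $\rho(\bar x) = \npj_\fX(x)$, completing the proof.

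There is no real obstacle here; the only thing to be careful about is ensuring that the quotient norm in $\hat\fa_\fX$ of a non-negative element equals the naive ``core dimension,'' which is precisely the content of Lemma~\ref{le:norm} combined with formula~\eqref{eq:normA/X}. Once that is invoked, Gelfand's formula and the submultiplicative-sequence machinery of Section~4 of Chapter~1 do all the remaining work, and no new estimates are required.
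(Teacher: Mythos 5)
Your proposal is correct and takes exactly the same route as the paper: the paper's own proof is the one-line assembly ``This follows from Lemma~\ref{le:norm} and Proposition~\ref{pr:Gelfand},'' and you have identified precisely those two ingredients, merely unpacking them a bit further (noting $x^n\in\fa_{\cge 0}$ and that the $\limsup$ in the definition of $\npj_\fX$ agrees with the limit via Theorem~\ref{th:inf}).
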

\begin{proof}
This follows from Lemma~\ref{le:norm} and Proposition~\ref{pr:Gelfand}.
\end{proof}

\begin{lemma}
If $\fY\le \fX$ are representation ideals in $\fa$ then
the closure $I$ of $\langle\fX\rangle_\bC/\langle\fY\rangle_\bC$ in
$\hat\fa_\fY$ is a closed ideal, and
the quotient $(\hat\fa_\fY)/I$, with the quotient norm,
is a commutative Banach algebra
isometrically isomorphic to $\hat\fa_\fX$.
\end{lemma}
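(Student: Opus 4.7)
The plan is to deduce this from the isomorphism theorems in Lemma~\ref{le:quotient}. Since $\fY\subseteq\fX$ we have $\langle\fY\rangle_\bC\subseteq\langle\fX\rangle_\bC$, and taking closures in $\hat\fa$ gives $\widehat{\langle\fY\rangle}_\bC\subseteq\widehat{\langle\fX\rangle}_\bC$ as a chain of closed ideals. The strategy is to apply Lemma~\ref{le:quotient}\,(ii) with $A=\hat\fa$, $J=\widehat{\langle\fY\rangle}_\bC$, $K=\widehat{\langle\fX\rangle}_\bC$, obtaining an isometric isomorphism of Banach algebras
\[ (\hat\fa/\widehat{\langle\fY\rangle}_\bC)\big/(\widehat{\langle\fX\rangle}_\bC/\widehat{\langle\fY\rangle}_\bC) \xrightarrow{\ \sim\ } \hat\fa/\widehat{\langle\fX\rangle}_\bC, \]
that is, $\hat\fa_\fY/(\widehat{\langle\fX\rangle}_\bC/\widehat{\langle\fY\rangle}_\bC)\cong \hat\fa_\fX$. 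So the only real task is to identify the ideal $\widehat{\langle\fX\rangle}_\bC/\widehat{\langle\fY\rangle}_\bC$ in $\hat\fa_\fY$ with the ideal $I$ defined in the statement.

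Write $\pi\colon \hat\fa\to \hat\fa_\fY$ for the quotient map, which by Lemma~\ref{le:quotient}\,(i) is continuous of norm~$1$. Since $\langle\fY\rangle_\bC\subseteq\langle\fX\rangle_\bC$, the natural image of $\langle\fX\rangle_\bC$ in $\hat\fa_\fY$ is exactly $\pi(\langle\fX\rangle_\bC)=\langle\fX\rangle_\bC/\langle\fY\rangle_\bC$, so $I$ is by definition the closure in $\hat\fa_\fY$ of $\pi(\langle\fX\rangle_\bC)$. I would then prove the equality
\[ I \;=\; \pi\bigl(\widehat{\langle\fX\rangle}_\bC\bigr) \;=\; \widehat{\langle\fX\rangle}_\bC\big/\widehat{\langle\fY\rangle}_\bC \]
by two inclusions. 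One direction uses continuity of $\pi$: $\pi(\widehat{\langle\fX\rangle}_\bC)=\pi(\overline{\langle\fX\rangle_\bC})\subseteq\overline{\pi(\langle\fX\rangle_\bC)}=I$. For the reverse inclusion one shows that $\pi(\widehat{\langle\fX\rangle}_\bC)$ is already closed in $\hat\fa_\fY$; this is the only step requiring a small argument, and is the main (minor) obstacle: by the definition of the quotient topology, a subset of $\hat\fa_\fY$ is closed precisely when its preimage under $\pi$ is closed in $\hat\fa$, and here the preimage of $\pi(\widehat{\langle\fX\rangle}_\bC)$ equals $\widehat{\langle\fX\rangle}_\bC+\widehat{\langle\fY\rangle}_\bC=\widehat{\langle\fX\rangle}_\bC$ because $\widehat{\langle\fY\rangle}_\bC\subseteq\widehat{\langle\fX\rangle}_\bC$. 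Since $\widehat{\langle\fX\rangle}_\bC$ is closed, so is $\pi(\widehat{\langle\fX\rangle}_\bC)$, and as this closed set contains $\pi(\langle\fX\rangle_\bC)$ it contains its closure $I$.

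Once the identification $I=\widehat{\langle\fX\rangle}_\bC/\widehat{\langle\fY\rangle}_\bC$ is established, $I$ is visibly a closed ideal (it is the image of a closed ideal under the quotient map), the quotient $\hat\fa_\fY/I$ inherits its quotient-norm Banach algebra structure from Lemma~\ref{le:quotient}\,(i), and the Lemma~\ref{le:quotient}\,(ii) isomorphism produces the required isometric isomorphism with $\hat\fa_\fX$.
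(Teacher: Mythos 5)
Your proof is correct and is essentially the fully spelled-out version of the paper's one-line proof, which simply cites Lemma~\ref{le:quotient}. You correctly invoke part~(ii) of that lemma for the isometric isomorphism $(\hat\fa/\widehat{\langle\fY\rangle}_\bC)/(\widehat{\langle\fX\rangle}_\bC/\widehat{\langle\fY\rangle}_\bC)\cong\hat\fa_\fX$, and the one nontrivial point you identify and settle, namely that the closure $I$ of $\langle\fX\rangle_\bC/\langle\fY\rangle_\bC$ in $\hat\fa_\fY$ equals the image $\pi(\widehat{\langle\fX\rangle}_\bC)$, is handled correctly: one inclusion by continuity of $\pi$, the other by observing that $\pi^{-1}\bigl(\pi(\widehat{\langle\fX\rangle}_\bC)\bigr)=\widehat{\langle\fX\rangle}_\bC+\widehat{\langle\fY\rangle}_\bC=\widehat{\langle\fX\rangle}_\bC$ is closed and the quotient-norm topology coincides with the quotient topology.
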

\begin{proof}
This follows from Lemma~\ref{le:quotient}.
\end{proof}

\section{Spectrum, species, structure space}

\begin{defn}
Let $\fX\subset\fI$ be a representation ideal in a representation ring $\fa$.
If $x\in\hat\fa_\fX$, we write 
$\Spec_\fX(x)$\index{Spec@$\Spec_\fX(x)$} for it spectrum\index{spectrum} as in 
Definition~\ref{def:spectrum}. This is a closed, bounded subset of
$\bC$ by Theorem~\ref{th:Gelfand}. For $x\in\fa$, we write
$\Spec_\fX(x)$ for the spectrum of the image of $x$ in $\hat\fa_\fX$.
\end{defn}

\begin{theorem}\label{th:npj-in-spec}
If $x\in\fa_{\cge 0}$ then the spectral radius $\npj_\fX(x)$ is
an element of $\Spec_\fX(x)$.
\end{theorem}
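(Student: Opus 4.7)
The plan is to reduce the claim to Pringsheim's Theorem by means of a continuous scalar linear functional that turns the resolvent into the generating function $f_{\fX,x}(t)$. Set $r=\npj_\fX(x)$. By Lemma~\ref{le:radius}, $r$ is the reciprocal of the radius of convergence of $f_{\fX,x}(t)=\sum_{n\ge 0}\cc_n^\fX(x)t^n$; since these coefficients are non-negative real numbers, Pringsheim's Theorem (Theorem~\ref{th:Pringsheim}) asserts that $t_0=1/r$ is a singular point of $f_{\fX,x}$. The task therefore becomes: an assumption that $r\notin\Spec_\fX(x)$ must force $f_{\fX,x}$ to extend analytically past $t_0$, a contradiction.

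First I would introduce the bounded linear functional $L_\fX\colon\hat\fa_\fX\to\bC$ defined on basis expansions by
\[ L_\fX\!\left(\sum_{i\in\fI\setminus\fX} a_i x_i\right)=\sum_{i\in\fI\setminus\fX}a_i\dim x_i. \]
The estimate $|L_\fX(z)|\le\sum|a_i|\dim x_i=\|z\|_\fX$ shows it is well-defined and continuous of norm one on $\hat\fa_\fX$. Crucially, for $x\in\fa_{\cge 0}$ we have $L_\fX(x^n)=\dim\core_\fX(x^n)=\cc_n^\fX(x)$.

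Now suppose for contradiction that $r\notin\Spec_\fX(x)$. By Proposition~\ref{pr:open} together with the argument in the proof of Theorem~\ref{th:Gelfand}, the resolvent $R(\lambda)=(\lambda\one-x)^{-1}$ is $\hat\fa_\fX$-valued analytic on an open set containing both the exterior region $\{|\lambda|>r\}$ and the point $\lambda=r$, and on $\{|\lambda|>r\}$ it equals the absolutely convergent series $\sum_{n\ge 0}x^n/\lambda^{n+1}$. Composing with $L_\fX$, the scalar function $g(\lambda)=L_\fX(R(\lambda))$ is analytic in this same open set; by continuity of $L_\fX$, termwise application to the resolvent series yields
\[ g(\lambda)=\sum_{n\ge 0}\cc_n^\fX(x)/\lambda^{n+1}=\frac{1}{\lambda}f_{\fX,x}(1/\lambda) \quad \text{for } |\lambda|>r. \]
The change of variables $t=1/\lambda$ then exhibits $f_{\fX,x}(t)=t^{-1}g(1/t)\cdot t=g(1/t)/(1/t)$ as analytic in a neighborhood of $t=1/r$, contradicting Pringsheim. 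Hence $r\in\Spec_\fX(x)$. The main technical obstacle is the termwise application of $L_\fX$ to the resolvent expansion, which rests on the continuity of $L_\fX$; a slightly more hands-on alternative would mimic Pringsheim's argument directly inside $\hat\fa_\fX$, expanding $R$ in a Taylor series about a real point of $(r,\infty)$ and exploiting the non-negativity of each $x^n$ to rearrange double sums, but the functional route is cleaner and keeps all analytic subtleties in the scalar setting where Pringsheim already applies.
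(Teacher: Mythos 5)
Your argument is correct in substance, but it takes a genuinely different route from the paper, and the paper itself flags this dichotomy. The paper proves the theorem algebraically: Theorem~\ref{th:a+bx} gives $\npj_\fX(\one+x)=1+\npj_\fX(x)$, which combined with Theorem~\ref{th:spec-radius} says that the spectral radius of $\one+x$ in $\hat\fa_\fX$ is $1+r$; Proposition~\ref{pr:Spec(1+x)}, an elementary geometric fact about discs, then forces $r\in\Spec_\fX(x)$. Your route instead funnels the $\hat\fa_\fX$-valued resolvent through the dimension functional $L_\fX$ (a norm-one element of the dual, since $|L_\fX(z)|\le\|z\|_\fX$) and invokes Pringsheim. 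This is exactly the classical positive-operator/Banach-lattice argument that the remark after the theorem alludes to, and in fact the paper has a corollary to Pringsheim's theorem recording that $1/\npj_\fX(x)$ is a singular point of $f_{\fX,x}$ when $x\notin\langle\fX\rangle$ --- your proof effectively runs that corollary in reverse. What the paper's approach buys is that it stays entirely inside Gelfand's algebraic spectral theory and uses no analytic continuation; what yours buys is that it makes the positivity mechanism explicit and ports directly to other ordered Banach spaces.

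Two small points you should tidy. First, the displayed change of variables is garbled: from $g(\lambda)=\lambda^{-1}f_{\fX,x}(1/\lambda)$ one gets $f_{\fX,x}(t)=g(1/t)/t$, not $g(1/t)/(1/t)$; the conclusion you want (analyticity of $f_{\fX,x}$ near $t=1/r$) is unaffected, but the formula as written is wrong. Second, the case $r=0$ needs to be dispatched separately: if $\npj_\fX(x)=0$ then $x\in\langle\fX\rangle$ by Lemma~\ref{le:gamma=0}, so the image of $x$ in $\hat\fa_\fX$ is $0$ and $\Spec_\fX(x)=\{0\}\ni r$ trivially; your Pringsheim argument presupposes a finite radius of convergence (hence $r>0$), just as the paper's own corollary to Pringsheim assumes $x\notin\langle\fX\rangle$.
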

\begin{proof}
By Theorem~\ref{th:a+bx}, the spectral radius $\npj_\fX(\one+x)$ is 
equal to $1+\npj_\fX(x)$. The theorem now follows from 
Proposition~\ref{pr:Spec(1+x)}.
\end{proof}

\begin{rk}
This theorem is a special case of a theorem about the
spectrum of a positive operator on a Banach lattice.\index{Banach!lattice}
For a more direct proof in that context, see for example Theorem~7.9 
of Abramovich and Aliprantis~\cite{Abramovich/Aliprantis:2002a};
but in fact our approach through Proposition~\ref{pr:Spec(1+x)} also works in
this generality.
\end{rk}

Recall the definition of species from Definition~\ref{def:species}.

\begin{defn}
Let $\fa$ be a representation ring and $\fa_\bC=\bC\otimes_\bZ\fa$.
A \emph{species}\index{species|textbf} of
$\fa$ is a ring homomorphism $s\colon\fa\to\bC$. A species of $\fa$
extends uniquely to give a $\bC$-algebra homomorphism $s\colon
\fa_\bC\to\bC$, which we call a species of $\fa_\bC$.

Let $\fX$ be a representation ideal in $\fa$.
We say that a species $s$ of $\fa$ is \emph{$\fX$-core bounded}%
\index{bounded!species}%
\index{core!bounded species}%
\index{species!x@$\fX$-core bounded ---}%
\index{X@$\fX$-core!bounded species} 
if for all $x\in\fa_{\cge 0}$ we have
\[ |s(x)| \le \dim\core_\fX(x). \]
Of course, this only needs checking on the basis elements $x_i$, $i\in
\fI$.

If $\fX=\varnothing$ is the empty representation ideal, we have
$\core_\varnothing(x)=x$, and we call
a $\varnothing$-core bounded species a
\emph{dimension bounded species}.%
\index{dimension!bounded species|textbf}%
\index{species!dimension bounded|textbf}
Thus a species $s$ is dimension bounded if and only if for all
$i\in\fI$ we have
\[ |s(x_i)|\le \dim(x_i). \]
So $s$ is $\fX$-core bounded if and only if it is dimension bounded
and vanishes on $x_i$ for $i\in \fX$.

If $\fX=\fX_\proj$, we call an $\fX_\proj$-core bounded species a
\emph{core bounded species}.\index{species!core bounded|textbf}
\end{defn}

\begin{theorem}\label{th:core-bounded}
For a species $s\colon \fa_\bC\to\bC$, the following are equivalent:
\begin{enumerate}
\item $s$ is $\fX$-core bounded.
\item For all $x\in \fa_\bC$ we have $|s(x)|\le \|x\|_\fX$.
\item $s$ vanishes on every $x_i$ with $i\in\fX$ and 
is continuous with respect to the norm on $\fa_\bC/\langle\fX\rangle_\bC$.
\item $s$ vanishes on $\langle\fX\rangle_\bC$ and extends to an
  algebra homomorphism
  $\hat\fa_\fX\to\bC$.
\end{enumerate}
\end{theorem}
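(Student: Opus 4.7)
The plan is to prove the cycle (i) $\Rightarrow$ (ii) $\Rightarrow$ (iii) $\Rightarrow$ (iv) $\Rightarrow$ (i). The central observation throughout is that, by Lemma~\ref{le:quotient}(iii), $\hat\fa_\fX$ is canonically the completion of $\fa_\bC/\langle\fX\rangle_\bC$ equipped with the quotient norm $\|\cdot\|_\fX$ given in equation~\eqref{eq:normA/X}, which on a basis element satisfies $\|x_i\|_\fX = \dim\core_\fX(x_i)$ (so $\|x_i\|_\fX = 0$ precisely when $i\in\fX$).

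For (i) $\Rightarrow$ (ii), I would first note that $\fX$-core boundedness forces $s(x_i) = 0$ for $i\in\fX$, since then $\dim\core_\fX(x_i)=0$. For a general $x = \sum_{i\in\fI} a_i x_i \in \fa_\bC$, the triangle inequality combined with the bound on each $|s(x_i)|$ yields
\[ |s(x)| \le \sum_{i\in\fI} |a_i|\,|s(x_i)| \le \sum_{i\in\fI} |a_i|\dim\core_\fX(x_i) = \|x\|_\fX. \]
For (ii) $\Rightarrow$ (iii), the hypothesis implies $s$ vanishes on $\langle\fX\rangle_\bC$ (since such elements have $\fX$-norm zero); in particular $s(x_i)=0$ for $i\in\fX$. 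Descended to the quotient, the same inequality makes $s$ a bounded, hence continuous, linear functional on $\fa_\bC/\langle\fX\rangle_\bC$ by Lemma~\ref{le:bd=cts}.

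For (iii) $\Rightarrow$ (iv), since $\fa_\bC/\langle\fX\rangle_\bC$ is dense in its completion $\hat\fa_\fX$, a continuous linear functional on the former extends uniquely to a continuous linear functional on the latter. Multiplicativity $s(xy) = s(x)s(y)$, which holds on the dense subalgebra, passes to the completion by joint continuity of multiplication in a Banach algebra. For (iv) $\Rightarrow$ (i), Proposition~\ref{pr:continuous} (automatic continuity) applied to $s\colon\hat\fa_\fX \to \bC$ gives $|s(y)| \le \|y\|_\fX$ for every $y \in \hat\fa_\fX$; specialising to $y = x_i$ recovers $|s(x_i)| \le \dim\core_\fX(x_i)$, which is exactly $\fX$-core boundedness on the basis.

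There is really no serious obstacle here: the analytic content reduces to automatic continuity (at the step (iv) $\Rightarrow$ (i)) and the universal property of the metric completion together with continuity of the algebra operations (at (iii) $\Rightarrow$ (iv)). The main bookkeeping point is keeping track of the three closely related guises of $s$ — as a ring map on $\fa$, a $\bC$-algebra map on $\fa_\bC$ (or its quotient by $\langle\fX\rangle_\bC$), and an algebra map on $\hat\fa_\fX$ — and verifying that the $\fX$-norm on a basis element really is $\dim\core_\fX(x_i)$, which is immediate from \eqref{eq:normA/X}.
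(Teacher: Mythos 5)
Your proof is correct and follows exactly the same cycle of implications the paper uses, namely (i) $\Rightarrow$ (ii) $\Rightarrow$ (iii) $\Rightarrow$ (iv) $\Rightarrow$ (i), with the same key steps: the triangle inequality for (i) $\Rightarrow$ (ii), and automatic continuity (Proposition~\ref{pr:continuous}) for (iv) $\Rightarrow$ (i). The only difference is one of exposition: the paper dismisses (ii) $\Rightarrow$ (iii) $\Rightarrow$ (iv) as ``clear,'' whereas you spell out the routine details (vanishing on $\langle\fX\rangle_\bC$ because the quotient norm there is zero, boundedness giving continuity via Lemma~\ref{le:bd=cts}, extension to the completion by density with multiplicativity passing through), which is harmless and arguably clarifying.
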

\begin{proof}
The implications (ii) $\Rightarrow$ (iii) $\Rightarrow$ (iv) are
clear, and the implication (iv) $\Rightarrow$ (i) follows from
Proposition~\ref{pr:continuous}. So it remains to prove that
(i) $\Rightarrow$ (ii). Suppose that $s$ is core-bounded, and 
write $x=\sum_{i\in\fI}a_ix_i$. Then $s(x)=\sum_{i\in\fI}a_is(x_i)$ and so
\begin{equation*}
|s(x)|\le \sum_{i\in\fI}|a_i| |s(x_i)| \le \sum_i|a_i|\dim\core_\fX(x_i) =\|x\|_\fX.
\qedhere
\end{equation*}
\end{proof}

\begin{cor}
A species $s\colon \fa_\bC\to\bC$ is dimension bounded if and only if
it is continuous with respect to the norm on $\fa_\bC$.\qed
\end{cor}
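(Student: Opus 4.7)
The plan is to derive this corollary as a direct specialisation of Theorem~\ref{th:core-bounded} to the case $\fX = \varnothing$. First I would check that the empty set qualifies as a representation ideal in the sense of Definition~\ref{def:repideal}: it is a proper subset of $\fI$ (assuming $\fI$ is nonempty, which it is since $0 \in \fI$), and both defining conditions (i) and (ii) quantify over elements $i \in \fX$ and are therefore vacuously satisfied. This was in any case already tacitly acknowledged in the definition of a dimension bounded species.

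Next I would unpack what the equivalence of (i) and (iii) in Theorem~\ref{th:core-bounded} says when $\fX = \varnothing$. Condition (i) reads: for all $x \in \fa_{\cge 0}$, $|s(x)| \le \dim\core_\varnothing(x) = \dim x$, i.e.\ $s$ is dimension bounded. Condition (iii) reads: $s$ vanishes on every $x_i$ with $i \in \varnothing$ (a vacuous requirement) and is continuous with respect to the norm on $\fa_\bC/\langle\varnothing\rangle_\bC$. Since $\langle\varnothing\rangle = 0$, the quotient is just $\fa_\bC$ with its original norm, so (iii) collapses to the statement that $s$ is continuous with respect to the norm on $\fa_\bC$.

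Combining these two observations, the equivalence (i) $\Leftrightarrow$ (iii) of Theorem~\ref{th:core-bounded} becomes exactly the content of the corollary, and no further work is required. There is no real obstacle here; the only thing worth mentioning is the minor formal point that $\varnothing$ must be admitted as a legitimate representation ideal, which is immediate from the definition.
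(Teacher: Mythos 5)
Your proof is correct and is exactly the argument the paper intends: the corollary follows by specialising Theorem~\ref{th:core-bounded} to $\fX=\varnothing$, which the paper has already identified as a representation ideal (and indeed used to define dimension bounded species), and the paper signals this by stating the corollary with a \qed and no separate proof.
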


\begin{theorem}\label{th:spec-radius-fa}
For $x\in \fa$, the spectrum $\Spec_\fX(x)$ is the set of
values of $s(x)$ as $x$ runs over the $\fX$-core bounded species of
$\fa$. The spectral radius is
\[ \npj_\fX(x)=\max_{\substack{s\colon\fa\to\bC\\ \fX\text{\rm -core bounded}}}|s(x)|. \]
There is an $\fX$-core bounded species $s$ with $\npj_\fX(x)=s(x)$.
\end{theorem}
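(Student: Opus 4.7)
My plan is to reduce each of the three assertions to a standard fact about the commutative Banach algebra $\hat\fa_\fX$, using Theorem~\ref{th:core-bounded} as the dictionary between $\fX$-core bounded species of $\fa$ and algebra homomorphisms $\hat\fa_\fX \to \bC$. First I would set up a bijection
\[ \{\fX\text{-core bounded species of }\fa\} \longleftrightarrow \Struct(\hat\fa_\fX). \]
The forward direction is exactly the implication (i)$\Rightarrow$(iv) of Theorem~\ref{th:core-bounded}, together with the uniqueness of continuous extension along the dense embedding $\fa_\bC/\langle\fX\rangle_\bC \hookrightarrow \hat\fa_\fX$. For the reverse direction, given an algebra homomorphism $\phi\colon \hat\fa_\fX \to \bC$, Proposition~\ref{pr:continuous} gives automatic continuity, so pulling $\phi$ back along $\fa_\bC \to \hat\fa_\fX$ produces a species that vanishes on $\langle\fX\rangle_\bC$ and is bounded by the quotient norm; by the implication (iii)$\Rightarrow$(i) of Theorem~\ref{th:core-bounded}, this species is $\fX$-core bounded.

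Second, I would apply Corollary~\ref{co:radius} directly to $\hat\fa_\fX$: for $x\in\fa$, the spectrum $\Spec_\fX(x)$ (i.e.\ the spectrum of the image of $x$ in $\hat\fa_\fX$) is the set of values $\phi(x)$ as $\phi$ ranges over $\Struct(\hat\fa_\fX)$. Transporting through the bijection above identifies this with the set of values $s(x)$ for $\fX$-core bounded species $s$, which gives the first statement of the theorem. The spectral-radius formula of Corollary~\ref{co:radius} then yields
\[ \sup_{s}|s(x)| \;=\; \rho(x) \text{ in } \hat\fa_\fX \;=\; \npj_\fX(x), \]
where the last equality is Theorem~\ref{th:spec-radius}.

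Finally, to upgrade the supremum to a maximum, I would invoke Theorem~\ref{th:npj-in-spec}: the non-negative real number $\npj_\fX(x)$ itself lies in $\Spec_\fX(x)$. By what was just proved, this means there exists an $\fX$-core bounded species $s$ with $s(x)=\npj_\fX(x)$, so the supremum is attained.

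I do not expect a real obstacle here, since all of the heavy lifting is already done in Theorems~\ref{th:core-bounded}, \ref{th:spec-radius}, \ref{th:npj-in-spec} and Corollary~\ref{co:radius}. The only point that deserves a line of verification is the reverse direction of the species/homomorphism bijection, where one must observe that automatic continuity (Proposition~\ref{pr:continuous}) together with density of $\fa_\bC$ in $\hat\fa_\fX$ forces the pullback to be core bounded; everything else is assembly.
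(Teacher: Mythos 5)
Your proposal is correct and follows exactly the route the paper takes: the paper's proof is literally the one-line citation of Corollary~\ref{co:radius} together with Theorems~\ref{th:spec-radius}, \ref{th:npj-in-spec} and~\ref{th:core-bounded}, and you have simply unpacked how those four results combine (including the worthwhile observation that the species/homomorphism dictionary from Theorem~\ref{th:core-bounded} plus automatic continuity gives an honest bijection with $\Struct(\hat\fa_\fX)$). No gaps; the only caveat worth noting, which you implicitly respect, is that the parts of the statement involving $\npj_\fX(x)$ are tacitly restricted to $x\in\fa_{\cge 0}$, since Theorems~\ref{th:spec-radius} and~\ref{th:npj-in-spec} are stated under that hypothesis.
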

\begin{proof}
This follows from Corollary~\ref{co:radius} and
Theorems~\ref{th:spec-radius}, \ref{th:npj-in-spec}
and~\ref{th:core-bounded}.
\end{proof}

\begin{defn}
A species $s$ of $\fa$ is said to be a \emph{Brauer species}%
\index{Brauer species|textbf}\index{species!Brauer|textbf} if
$s(x_i)\ne 0$ for some projective basis element $x_i$ (see Definition~\ref{def:proj}).
\end{defn}

\begin{eg}
The dimension function is always a Brauer species. This is because it
is a ring homomorphism, and is non-zero on $\rho$ and therefore on
some projective basis element.
\end{eg}

\begin{prop}\label{pr:Brauer-species}
A Brauer species is determined by its value on the projective basis
elements. The Brauer species form a finite set, whose cardinality is
at most the number of projective basis elements.
Every dimension bounded species\index{dimension!bounded species}%
\index{species!dimension bounded}
is either a Brauer species or a core
bounded species,\index{core!bounded species}%
\index{species!core bounded} but not both.
\end{prop}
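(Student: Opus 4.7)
\emph{Proof plan.} The plan is to exploit two facts: the ideal $\langle\fX_\proj\rangle$ is finite-rank, and multiplying any element of $\fa$ by a projective basis element lands inside this ideal (Lemma~\ref{le:proj-ideal}\,(ii)).

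First I would show that a Brauer species $s$ is determined by its values on the projective basis elements. By hypothesis there is a projective $x_p$ with $s(x_p)\ne 0$; for any $y\in\fa$ the product $yx_p$ is a $\bZ$-linear combination of projective basis elements, so $s(yx_p)$ is a function of the values of $s$ on projectives alone, and then
\[ s(y) = \frac{s(yx_p)}{s(x_p)} \]
is too. In particular, distinct Brauer species restrict to distinct non-zero ring homomorphisms on the (non-unital) subring $P=\langle\fX_\proj\rangle\subset\fa$.

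Next I would bound the number of Brauer species by the number $N=|\fX_\proj|$ of projective basis elements. The complexification $P_\bC$ has $\bC$-dimension $N$. The argument of Lemma~\ref{le:species-indep} adapts essentially verbatim to non-zero ring homomorphisms $P_\bC\to\bC$ from the non-unital algebra $P_\bC$: the proof only uses that $pq\in P_\bC$ whenever $p,q\in P_\bC$, which is immediate since $P_\bC$ is an ideal. Hence such homomorphisms are linearly independent in $P_\bC^*$, and combined with the injection from the previous step this yields the desired bound.

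Finally I would verify the dichotomy. If $s$ is core bounded then $|s(x_i)|\le\dim\core_{\fX_\proj}(x_i)=0$ for every $i\in\fX_\proj$, so $s$ is not Brauer. Conversely, if $s$ is dimension bounded and not Brauer then $s(x_i)=0$ for all $i\in\fX_\proj$, and for every $x=\sum_{i\in\fI}a_ix_i\in\fa_{\cge 0}$ one has
\[ |s(x)| \le \sum_{i\in\fI\setminus\fX_\proj} a_i |s(x_i)| \le \sum_{i\in\fI\setminus\fX_\proj} a_i \dim x_i = \dim\core(x), \]
so $s$ is core bounded. No serious obstacle is expected; the only mildly subtle point is adapting Lemma~\ref{le:species-indep} to the non-unital ring $P_\bC$, which goes through unchanged because the proof never uses the identity element.
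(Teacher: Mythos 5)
Your proof is correct and follows essentially the same route as the paper: uniqueness of the extension from the projective ideal, a linear-independence count, and the vanishing dichotomy. The one place you add genuine value is the cardinality bound. The paper's proof says tersely ``since distinct species are linearly independent, it follows that the number of Brauer species is at most the number of projective basis elements,'' citing Lemma~\ref{le:species-indep} for species of $\fa_\bC$; but linear independence in $\fa_\bC^*$ does not by itself bound the count by the dimension of the projective part. Your version supplies the missing step: the restrictions of distinct Brauer species to the finite-dimensional non-unital ideal $P_\bC=\langle\fX_\proj\rangle_\bC$ are distinct non-zero homomorphisms, the Artin-style independence argument of Lemma~\ref{le:species-indep} goes through verbatim without a unit, so these restrictions are linearly independent in $P_\bC^*$ and hence number at most $\dim_\bC P_\bC=N$. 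You also reprove the content of Lemma~\ref{le:extend} explicitly via $s(y)=s(yx_p)/s(x_p)$ rather than citing it, and verify core-boundedness directly from the definition rather than invoking Theorem~\ref{th:core-bounded}; both are harmless and the latter does correctly use the non-negativity of the coefficients of $x\in\fa_{\cge 0}$. In short: same architecture, with the linear-independence step made airtight.
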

\begin{proof}
By Lemma~\ref{le:extend}, a Brauer species is determined by its value
on the projective basis elements.
It therefore cannot be core bounded, because such species vanish on
projective basis elements. 
Since distinct
species are linearly independent, it follows that the number of Brauer
species is at most the number of projective basis elements.

On the other hand, if $s$ is not a Brauer species then it vanishes on
the ideal $\langle\fX_\proj\rangle$ of projectives, and is therefore
core bounded by Theorem~\ref{th:core-bounded}.
\end{proof}

\begin{rk}
Example~\ref{eg:repring}\,(iii) shows that the number of Brauer
species can be strictly less than the number of projective basis
elements. In this example, there are two projective basis elements but
only one Brauer species, namely the dimension function.
\end{rk}

Similarly, we have the following.

\begin{prop}
Let $\fX$ be a representation ideal in $\fa$. Then the dimension
bounded species of $\fa$ fall into two disjoint subsets, the
$\fX$-core bounded species, and the species which take non-zero value
on some $x_i$ with $i\in\fX$. The latter are determined by their values on
the elements $x_i$, $i\in\fX$.
\end{prop}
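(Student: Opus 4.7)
The plan is to model the proof directly on Proposition~\ref{pr:Brauer-species}, replacing the role of the ideal of projectives by the ideal $\langle\fX\rangle_\bC$.

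First I would dispose of the disjointness and dichotomy. A dimension bounded species $s$ satisfies $|s(x_i)|\le\dim x_i$ for every $i\in\fI$ by definition; so the additional requirement for $s$ to be $\fX$-core bounded is precisely that $s(x_i)=0$ for every $i\in\fX$, by Theorem~\ref{th:core-bounded}. Hence a dimension bounded species is $\fX$-core bounded if and only if it vanishes on $\langle\fX\rangle_\bC$, and otherwise it must take a non-zero value on some $x_i$ with $i\in\fX$. This partitions the dimension bounded species into the two claimed disjoint subsets.

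Second I would establish the determination statement by the same trick used in Lemma~\ref{le:extend}. Suppose $s$ is a dimension bounded species with $s(x_j)\ne 0$ for some $j\in\fX$. Since $\bC$ is a field we may rescale and pick an element $y\in\langle\fX\rangle_\bC$ with $s(y)=1$; concretely $y$ is a scalar multiple of $x_j$. For any $x\in\fa_\bC$ we then have
\[ s(x)=s(x)\,s(y)=s(xy), \]
and $xy$ lies in the ideal $\langle\fX\rangle_\bC$, whose basis as a $\bC$-vector space is $\{x_i\mid i\in\fX\}$. Thus $s(x)$ is determined by the restriction of $s$ to $\langle\fX\rangle_\bC$, and that restriction is in turn determined by the values $s(x_i)$ for $i\in\fX$.

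There is no serious obstacle: the algebraic portion of Lemma~\ref{le:extend}'s argument carries over verbatim, since we only need the existence of an element on which $s$ takes the value $1$, and we never require any completeness or continuity input beyond what the definition of dimension bounded species supplies. The only thing to watch is that the argument applies at the level of $\fa_\bC$ (not $\hat\fa$), which is fine because a species is defined as a ring homomorphism on $\fa$ (equivalently $\fa_\bC$) with no a priori continuity assumption.
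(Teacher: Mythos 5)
Your proof is correct and takes essentially the same route as the paper. The paper's proof simply says ``The proof is essentially the same as the proof of Proposition~\ref{pr:Brauer-species},'' and that earlier proof invokes Lemma~\ref{le:extend} for the determination claim and Theorem~\ref{th:core-bounded} for the dichotomy, exactly as you do; your parenthetical remark that the argument in Lemma~\ref{le:extend} is purely algebraic and applies at the level of $\fa_\bC$ rather than $\hat\fa$ is a correct and worthwhile observation that the paper leaves implicit.
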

\begin{proof}
The proof is essentially the same as the proof of
Proposition~\ref{pr:Brauer-species}.
\end{proof}

\begin{defn}
We write $\Struct_\fX(\fa)$ for the structure
space\index{structure!space} $\Struct(\hat\fa_\fX)$ of the commutative 
Banach $*$-algebra $\hat\fa_\fX$. In the case $\fX=\fX_{\max}$, we
write $\Struct_{\max}(\fa)$, and in the case $\fX=\fX_\proj$ we write $\Struct_\proj(\fa)$.
\end{defn}

\begin{theorem}
Let $x\in\fa_{\cge0}$ and $\fX$ be a representation ideal in $\fa$.
The structure space\index{structure!space} $\Struct_\fX(\fa)$ may be
identified with the set of $\fX$-core bounded species of $\fa$, or
equivalently of $\fa_\bC$, with the 
weak* topology. It is a compact Hausdorff space.
\end{theorem}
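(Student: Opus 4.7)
The plan is to assemble this statement directly from three results already proved in the excerpt, since essentially nothing new has to be done: the theorem is a repackaging of Theorem~\ref{th:core-bounded} together with Proposition~\ref{pr:weak*}.

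First I would recall that, by definition, $\Struct_\fX(\fa) = \Struct(\hat\fa_\fX)$ is the set of $\bC$-algebra homomorphisms $\hat\fa_\fX \to \bC$, endowed with the weak* topology. The identification with $\fX$-core bounded species of $\fa$ (equivalently, of $\fa_\bC$) is then the content of the equivalence (i) $\Leftrightarrow$ (iv) of Theorem~\ref{th:core-bounded}: an $\fX$-core bounded species $s\colon\fa\to\bC$ vanishes on $\langle\fX\rangle_\bC$ and, being continuous with respect to the quotient norm $\|\cdot\|_\fX$, extends uniquely to an algebra homomorphism $\hat\fa_\fX\to\bC$; conversely, any algebra homomorphism $\hat\fa_\fX\to\bC$ restricts along $\fa\hookrightarrow\fa_\bC\hookrightarrow\hat\fa\twoheadrightarrow\hat\fa_\fX$ to a species of $\fa$, which is automatically $\fX$-core bounded by Proposition~\ref{pr:continuous} applied with the quotient norm (condition (ii) of Theorem~\ref{th:core-bounded}). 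Uniqueness of the extension means the bijection is canonical.

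Second, I would check that this bijection transports the weak* topology correctly. The topology on $\Struct(\hat\fa_\fX)$ has the sub-basic opens $[\phi;y;\ep]$ for $y\in\hat\fa_\fX$; because the image of $\fa_\bC$ is dense in $\hat\fa_\fX$ and each species is continuous with $|s(\,\cdot\,)|\le\|\cdot\|_\fX$, approximating $y$ to within $\ep/3$ by some $x\in\fa_\bC$ (hence by a finite $\bZ$-linear combination of basis elements $x_i$, $i\in\fI\setminus\fX$) shows that the topology is already generated by the opens $[\phi;x_{i_1},\dots,x_{i_n};\ep]$ with $i_k\in\fI\setminus\fX$; in particular it agrees with the weak* topology on the set of $\fX$-core bounded species viewed inside $\bC^{\fa}$.

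Finally, compactness and the Hausdorff property are immediate from Proposition~\ref{pr:weak*} applied to the commutative Banach algebra $\hat\fa_\fX$. There is no real obstacle here; the only thing to be mildly careful about is that when one describes the topology via species values on $\fa$ rather than on $\hat\fa_\fX$, one must invoke density of $\fa_\bC$ in $\hat\fa_\fX$ together with automatic continuity (Proposition~\ref{pr:continuous}) to see that the two descriptions agree.
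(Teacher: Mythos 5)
Your proof is correct and follows essentially the same route as the paper: identify species of $\hat\fa_\fX$ with $\fX$-core bounded species via Theorem~\ref{th:core-bounded}, then invoke Proposition~\ref{pr:weak*}. The one thing you add beyond the paper's two-line proof is the density/$\ep/3$ check that the weak* topology described via values on $\fa_\bC$ coincides with the weak* topology of $\Struct(\hat\fa_\fX)$ — a point the paper passes over silently, and which you are right to spell out.
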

\begin{proof}
Using Theorem~\ref{th:core-bounded}, we identify the species of $\hat
\fa_\fX$ with the $\fX$-core bounded species of $\fa$, or equivalently
of $\fa_\bC$. Then the theorem follows from Proposition~\ref{pr:weak*}.
\end{proof}

\section{Symmetric representation rings}\label{se:symm-rep-ring}%
\index{symmetric!representation ring|textbf}%
\index{representation!ring!symmetric|textbf}

\begin{defn}
We say that a representation ring $\fa$ is \emph{symmetric} if the
completion $\hat\fa$ of $\fa_\bC$ is a 
symmetric Banach $*$-algebra,\index{symmetric!Banach $*$-algebra}%
\index{Banach!star@$*$-algebra!symmetric} 
see Definition~\ref{def:symmetric}. 
\end{defn}

\begin{prop}\label{pr:symm-rep-ring}
For a representation ring $\fa$, the following are equivalent.
\begin{enumerate}
\item $\fa$ is symmetric.
\item Every dimension bounded 
species\index{dimension!bounded species}%
\index{species!dimension bounded}
of $\fa$ is self conjugate. In
  other words, for every dimension bounded species $s\colon \fa\to
  \bC$ and every basis element $x_i$, we have $s(x_i^*) =
  \overline{s(x_i)}$.
\item For every $x\in\hat\fa$, the spectral radius $\rho$ satisfies $\rho(x^*x)=\rho(x)^2$.
\end{enumerate}
If these hold then for every $x\in\fa_{\cge 0}$ and every
representation ideal $\fX$ of $\fa$ we have 
\[ \npj_\fX(xx^*)=\npj_\fX(x)^2. \]
\end{prop}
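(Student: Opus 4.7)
The plan is to reduce everything to Proposition~\ref{pr:symmetric} applied to the commutative Banach $*$-algebra $\hat\fa$. By Lemma~\ref{le:Anormed} together with Lemma~\ref{le:A*hat}, $\hat\fa$ is indeed a commutative Banach $*$-algebra, so the three conditions of Proposition~\ref{pr:symmetric} are equivalent for it. I would match them up with (i)--(iii) here as follows. Condition (i) of the current proposition is, by definition of ``symmetric representation ring,'' literally the statement that $\hat\fa$ is a symmetric Banach $*$-algebra, which in turn is condition (iii) of Proposition~\ref{pr:symmetric}. Condition (iii) of the current proposition is exactly condition (v) of Proposition~\ref{pr:symmetric} applied to $\hat\fa$. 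So the only real content is linking these to condition (ii) here.

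For the bridge, I would use Theorem~\ref{th:core-bounded} in the case $\fX=\varnothing$, which identifies the algebra homomorphisms $\hat\fa\to\bC$ with the dimension bounded species $s\colon\fa\to\bC$. Under this identification, condition (i) of Proposition~\ref{pr:symmetric} (every $\phi\colon\hat\fa\to\bC$ satisfies $\phi(x^*)=\overline{\phi(x)}$) becomes, by $\bC$-linearity, the statement that $s(x_{i^*})=\overline{s(x_i)}$ for every basis element $x_i$, which is our condition (ii). (The reduction to basis elements is genuine: if $x=\sum a_i x_i\in\fa_\bC$ then $x^*=\sum\bar a_i x_{i^*}$, and the identities $s(x^*)$ and $\overline{s(x)}$ agree on all of $\fa_\bC$ as soon as they agree on the basis, and then agreement on $\hat\fa$ follows from continuity of both sides, since $\fa_\bC$ is dense.) This proves (i) $\Leftrightarrow$ (ii) $\Leftrightarrow$ (iii).

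For the concluding statement, assume these equivalent conditions hold and let $\fX$ be a representation ideal. I would first check that the quotient $\hat\fa_\fX$ is itself a commutative Banach $*$-algebra: the axiom Definition~\ref{def:repideal}\,(ii) says $i\in\fX\Rightarrow i^*\in\fX$, so $\langle\fX\rangle_\bC$ is a $*$-ideal of $\fa_\bC$, and by continuity of the star operation its closure $\widehat{\langle\fX\rangle}_\bC$ is a closed $*$-ideal of $\hat\fa$. Hence by Corollary~\ref{co:symm-quot}, $\hat\fa_\fX$ is again a symmetric commutative Banach $*$-algebra. Applying Proposition~\ref{pr:symmetric}\,(v) inside $\hat\fa_\fX$ to the image of $x\in\fa_{\cge 0}$ gives $\rho(x^*x)=\rho(x)^2$ in $\hat\fa_\fX$. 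Combined with Theorem~\ref{th:spec-radius}, which identifies $\npj_\fX$ with the spectral radius in $\hat\fa_\fX$, and with commutativity $x^*x=xx^*$, this yields exactly $\npj_\fX(xx^*)=\npj_\fX(x)^2$.

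The proof is essentially a packaging exercise; there is no single hard step. The only place to be careful is the passage between $\fa$ and $\hat\fa$, and between $\hat\fa$ and $\hat\fa_\fX$: one needs that the dense subalgebra $\fa_\bC$ detects self-conjugacy of species, that $\widehat{\langle\fX\rangle}_\bC$ is genuinely a $*$-ideal (which uses axiom~(ii) of Definition~\ref{def:repideal}), and that the spectral radius computed in $\hat\fa_\fX$ is the same as $\npj_\fX$ (Theorem~\ref{th:spec-radius}). All of these are already in place, so the argument should be short.
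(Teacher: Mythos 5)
Your proof is correct and follows essentially the same route as the paper's: the equivalences are read off from Proposition~\ref{pr:symmetric} (conditions (i), (iii) and (v) there correspond to (ii), (i) and (iii) here, with the bridge via the identification of dimension bounded species with algebra homomorphisms of $\hat\fa$), and the final statement follows from Corollary~\ref{co:symm-quot} together with Theorem~\ref{th:spec-radius}. The only difference is that you spell out the details the paper leaves implicit — the reduction of self-conjugacy to basis elements and the verification that $\widehat{\langle\fX\rangle}_\bC$ is a closed $*$-ideal — both of which are correctly handled.
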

\begin{proof}
The equivalence of (i), (ii) and (iii) follows from the equivalence of (i), (iii) and (v) in 
Proposition~\ref{pr:symmetric}. By Theorem~\ref{th:spec-radius},
$\npj_\fX(x)$ is the spectral radius of the image of $x$ in
$\hat\fa_\fX$, so using Corollary~\ref{co:symm-quot}, 
condition (iii) implies the last statement.
\end{proof}

It would be advantageous to have a condition for symmetry that is
easier to check. For example, we do not know whether the modular
representation ring of a finite group is symmetric. At least we have
the following.

\begin{eg}\label{eg:ordinary=>symmetric}
Theorem~\ref{th:ordinary=>symmetric} implies that an ordinary
representation ring\index{ordinary representation!ring}%
\index{representation!ring!ordinary} is symmetric.
\end{eg}

\section{Algebraic elements revisited}

\begin{theorem}\label{th:PF}
Let $\fa$ be a representation ring,
let $x\in\fa_{\cge 0}$ be algebraic modulo $\fX_{\max}$, and let
$\fa'$ be a representation subring of $\fa$ as described in 
Lemma~\ref{le:algebraic}\,(v). Then there is a unique species
\[ s\colon \fa'_{\max}=\fa'/\langle\fX_{\max}\rangle\to\bC \] 
such that for $x\in\fa'_{\max,\cge 0}$, we have $\npj_{\max}(x)=s(x)$.
\end{theorem}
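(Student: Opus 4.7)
The plan is to apply Perron--Frobenius theory to the finite-dimensional commutative $\bC$-algebra $A := \fa'_{\max,\bC}$; by Lemma~\ref{le:algebraic} and the assumption that $\fa'$ is chosen as in part~(v), $A$ has finite $\bC$-dimension, with basis $\{x_i : i \in \fI'\setminus\fX_{\max}\}$, where $\fI'$ is the index set of $\fa'$. I would work with the distinguished positive element
\[ z \;=\; \sum_{i \in \fI'\setminus\fX_{\max}} x_i \;\in\; \fa'_{\max,\cg 0}, \]
and let $M$ denote the matrix of multiplication by $z$ on $A$ in this basis; non-negativity of the structure constants of $\fa$ makes $M$ a non-negative real matrix.

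The core step is to show that $M^2$ has all entries strictly positive, whence $M$ is irreducible. For any $i,j \in \fI'\setminus\fX_{\max}$, Definition~\ref{def:repring}\,(ii) together with the definition of $\fX_{\max}$ gives $[z\,x_i : \one] \ge [x_{i^*} x_i : \one] = n_i > 0$, while $[z\cdot\one : x_j] = 1$; multiplying these yields $[z^2 x_i : x_j] \ge n_i > 0$. By Perron--Frobenius the spectral radius $\lambda$ of $M$ is then a simple eigenvalue with strictly positive eigenvector $v$, and Theorem~\ref{th:spec-radius} identifies $\lambda$ with $\npj_{\max}(z)$. Since $A$ is commutative, every multiplication operator $\tau_x\colon A \to A$ commutes with $M$, and hence preserves the one-dimensional $\lambda$-eigenspace $\bC v$. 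Thus $\tau_x v = s(x)\, v$ for a unique scalar $s(x) \in \bC$, and the assignment $x \mapsto s(x)$ is a $\bC$-algebra homomorphism $A \to \bC$, i.e.\ a species of $\fa'_{\max}$.

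For $x \in \fa'_{\max,\cge 0}$, the matrix of $\tau_x$ is entry-wise non-negative, so $\tau_x v$ is a non-negative vector; since $v$ is strictly positive, $s(x) \ge 0$, and the Collatz--Wielandt characterisation of the spectral radius of a non-negative matrix with strictly positive eigenvector gives $\npj_{\max}(x) = s(x)$. Uniqueness is then automatic: any species $s'$ with $s'(x) = \npj_{\max}(x)$ on $\fa'_{\max,\cge 0}$ must agree with $s$ on each basis element $x_i$, hence on all of $\fa'_{\max}$ by linearity. The main obstacle is the irreducibility step; it pivots on the fact that $\fX_{\max}$ has been defined precisely so that $[x_ix_{i^*}:\one] > 0$ for $i \notin \fX_{\max}$, which is what forces the Perron--Frobenius eigenvalue to be simple with a strictly positive eigenvector and makes the commuting-operators argument produce a bona fide species.
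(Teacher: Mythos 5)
Your proof is correct and follows the same overall strategy as the paper: set $z$ (the paper's $y$) to be the sum of the surviving basis elements, apply Perron--Frobenius to the multiplication matrix, use commutativity to show each $\tau_x$ stabilises the Perron eigenline and hence yields a species, and then identify $s(x)$ with $\npj_{\max}(x)$ via the spectral radius.

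The one place where your route genuinely diverges is the positivity step. The paper proves that the matrix of multiplication by $z$ is itself entrywise positive, by observing that $[x_{i^*}x_jx_i : x_j] \ge [x_{i^*}x_i:\one] > 0$ and extracting an index $k \in \fJ$ with $[x_kx_i : x_j] > 0$. You instead show the \emph{square} of the matrix is entrywise positive, using the cleaner estimate $[z^2 x_i : x_j] \ge [zx_i:\one]\cdot[z:x_j] = n_i > 0$, and invoke primitivity rather than positivity. Both establish what Perron--Frobenius needs (a simple Perron eigenvalue with strictly positive eigenvector); yours avoids the intermediate combinatorial step of locating the index $k$, so it is arguably a little more transparent. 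You also make explicit, via Collatz--Wielandt, the fact that a non-negative matrix admitting a strictly positive eigenvector has its spectral radius at that eigenvalue -- the paper leaves this implicit when it asserts $s(x)$ is the spectral radius of $x$. One small thing worth saying more carefully: the identification $\lambda = \npj_{\max}(z)$ needs not just Theorem~\ref{th:spec-radius} but also the observation (the remark following Proposition~\ref{pr:Gelfand}) that the spectral radius of an element of the finite-dimensional closed subalgebra $\fa'_{\max,\bC}$ of $\hat\fa_{\max}$ is the same computed in either algebra; the paper cites this explicitly and you should too, both for $z$ and for a general $x \cge 0$.
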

\begin{proof}
Let $\fJ\subseteq\fI\setminus\fX_{\max}$ be the set of basis elements
of $\fa'$ not in $\fX_{\max}$, and set $y=\sum_{j\in\fJ}x_j$.
Let $B$ be the matrix $(b_{i,j})$ where
$yx_i\equiv\sum_{j\in\fJ}b_{i,j}x_j\pmod{\langle\fX_{\max}\rangle}$
for $i\in\fJ$. We claim that the entries of $B$ are strictly
positive. To see this, for $i,j\in\fJ$ we have 
\[ [x_{i^*}x_jx_i:x_j]\ge [x_{i^*}x_i:\one]>0. \] 
So there exists $k\in\fJ$ with
$[x_{i^*}x_j:x_k]>0$ and $[x_kx_i:x_j]>0$, and hence 
$b_{i,j}=[yx_i:x_j]>0$. 

It follows from the fact that the entries of $B$ are strictly
positive, that we may apply the 
Perron--Frobenius theorem.\index{Perron--Frobenius theorem} This says
that there is an eigenvalue $\lambda$ of $B$ which is positive real,
and larger in absolute value than all the other eigenvalues, and that
the eigenspace is one dimensional, spanned by a vector 
$u=\sum_{i\in\fJ}a_ix_i$ with all the $a_i$ positive real. We thus
have $yu=\lambda u$. 

Given $x\in\fa'_{\max}$, we have $yxu=xyu=\lambda xu$, and so
$xu$ is another non-zero eigenvector of multiplication by $y$. Since
the eigenspace is one dimensional, we have $xu=s(x)u$ for some
$s(x)\in\bC$. It is now easy to check that $s$ is a ring homomorphism
from $\fa'_{\max}$ to $\bC$. 

For $x\in\fa'_{\max,\cge 0}$, $s(x)$ is the spectral radius
of $x$ as an element of $\fa'_{\max}$. By the spectral radius formula,
Proposition~\ref{pr:Gelfand}, the spectral radius of $x$ regarded as an element of
$\fa_{\max}$ is the same as its spectral radius regarded as an element
of $\fa'_{\max}$. It now follows from Theorem~\ref{th:spec-radius} that
$s(x)=\npj_{\max}(x)$. 
\end{proof}

\section{Quasi-nilpotent elements}\index{quasi-nilpotent element}

We suppose, for the purposes of the next theorem,  
that we are in the following situation. We are given a
commutative, associative $\bC$-algebra $A$ with a 
vector space basis $\{x_i,\ i \in \fJ\}$ satisfying $x_ix_j=\sum_k
c_{i,j,k}x_k$ where the structure constants $c_{i,j,k}$ are
non-negative 
integers. We are also given an algebra homomorphism 
$d\colon A \to \bC$ such that each $d(x_i)$ is a positive integer.
We put a norm on $A$ by setting
\[ \|\sum_ia_ix_i\| = \sum_i |a_i|d(x_i). \]
As in Lemma~\ref{le:Anormed}, this does indeed define a norm.
Under these circumstances, the following theorem shows that
quasi-nilpotent elements are nilpotent. I would like to thank Pavel
Etingof for suggesting this method of proof.

\begin{theorem}\label{th:qn=>n}
If $a=\sum_i a_ix_i\in A$ satisfies 
$\sqrt[n]{\|a^n\|}\to 0$ as $n\to\infty$ then $a$ is nilpotent.
\end{theorem}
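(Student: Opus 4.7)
The plan is to exploit the positivity of the structure constants $c_{i,j,k}$ and of the values $d(x_i)$ to reduce the claim to a finite-dimensional setting, where quasi-nilpotent equals nilpotent by standard linear algebra. First, since each $d(x_i)>0$, the triangle inequality gives $|d(x)|\le\sum_i|a_i|\,d(x_i)=\|x\|$, so $d$ is automatically norm-continuous. Combined with multiplicativity this yields $|d(a)|^n=|d(a^n)|\le\|a^n\|$; taking $n$-th roots with the hypothesis forces $d(a)=0$.

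Next I would introduce the \emph{positivised} element $\tilde a := \sum_i|a_i|\,x_i$. Writing $a^n=\sum_k c_k^{(n)}x_k$ and $\tilde a^n=\sum_k \tilde c_k^{(n)}x_k$, the non-negativity of $c_{i,j,k}$ yields coefficientwise $|c_k^{(n)}|\le\tilde c_k^{(n)}$; and since $\tilde c_k^{(n)}\ge 0$, $d(x_k)>0$, and $d$ is multiplicative,
\[ \|\tilde a^n\|\;=\;d(\tilde a^n)\;=\;d(\tilde a)^n\;=\;\|a\|^n. \]
Thus the positivisation never decays: the hypothesised decay of $\|a^n\|$ is entirely phase cancellation among the $c_k^{(n)}$, and on the positive cone $\{\sum b_ix_i:b_i\ge 0\}$ only $0$ is quasi-nilpotent.

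The crux---and where I expect Etingof's suggested trick to enter---is turning this cancellation into genuine vanishing by exhibiting a finite-dimensional subalgebra $B\subseteq A$ with $a\in B$. Once such $B$ is found, $a$ is algebraic over $\bC$; since any two norms on $B$ are equivalent, Proposition~\ref{pr:Gelfand} (combined with Remark~\ref{rk:spec-alg}) says that the spectral radius of $a$ in $B$ equals $\lim\sqrt[n]{\|a^n\|}$, hence $0$ by quasi-nilpotence. In a commutative finite-dimensional $\bC$-algebra, spectral radius zero forces the minimal polynomial of $a$ to be a power of $X$, giving $a^N=0$ for some $N$. A natural candidate for $B$ is the subalgebra generated by those basis elements $x_k$ with $x_k\in\mathrm{supp}(\tilde a^n)$ for some $n\ge 1$; using the positivity of structure constants, one checks that it is closed under multiplication and contains $a$.

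The main obstacle is precisely the finiteness of this $B$. I expect this to come from a Perron--Frobenius analysis of the non-negative left-multiplication operator $L_{\tilde a}$, whose spectral radius equals $\|a\|$ and is attained on a positive eigenvector. The entrywise domination of the matrix of $L_a$ by that of $L_{\tilde a}$, the rigid constraint $\|\tilde a^n\|=\|a\|^n$, and the quasi-nilpotence of $L_a$ should together force $\bigcup_n\mathrm{supp}(\tilde a^n)$ to be finite; making this support-finiteness rigorous is the technical heart of the theorem.
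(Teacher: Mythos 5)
Your proposed route has a genuine gap at exactly the place you flag. You want a finite-dimensional subalgebra $B\subseteq A$ containing $a$, and your candidate is the span of the basis elements in $\bigcup_{n\ge 0}\mathrm{supp}(\tilde a^n)$. Non-negativity of the structure constants does make this a subalgebra, but there is no reason for it to have finite rank: even for a nilpotent (hence quasi-nilpotent) $a$, the powers of the positivisation $\tilde a$ can keep producing new basis elements indefinitely, since $\tilde a$ discards all the sign cancellation that makes $a$ nilpotent. The ``rigid constraint'' $\|\tilde a^n\|=\|a\|^n$ merely re-expresses $d(\tilde a)=\|a\|$ and says nothing about supports, and the entrywise domination $|c_k^{(n)}|\le\tilde c_k^{(n)}$ constrains the sizes of the coefficients of $a^n$, not which basis elements occur in $\tilde a^n$. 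The Perron--Frobenius heuristic likewise supplies no mechanism: a non-negative bounded operator on a weighted $\ell^1$ space can have spectral radius $\|a\|$ while its powers hit an infinite set of coordinates. Without finiteness of $B$ the passage to ``algebraic over $\bC$, hence nilpotent'' never gets off the ground, so the argument as sketched does not establish the theorem.

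The paper's proof (the one attributed to Etingof's suggestion) sidesteps the subalgebra issue entirely and runs on a Diophantine approximation that your proposal does not anticipate. One first reduces to real coefficients by writing $a=a'+\bi a''$ and invoking Lemma~\ref{le:binom} to see that $a'$ and $a''$ are quasi-nilpotent and that it suffices to treat them. Then, since $a$ is a finite sum, $a$ already lies in the finite-dimensional \emph{real linear span} $V$ of the $x_i$ with $a_i\neq 0$ --- a mere subspace, not a subalgebra, and its finiteness is free. Inside $V$ one takes the lattice $\Lambda$ of integer linear combinations and, by a pigeonhole argument on the compact set of nearest-lattice-point residuals $\{b-qa\}$, produces for each $\ep>0$ an integer $q>0$ and $b\in\Lambda$ with $\|b-qa\|<\ep$; and since the nilpotent elements of $A$ form an ideal whose intersection with $V$ is a closed proper subspace not containing $a$, one may arrange that $b/q$ is not nilpotent. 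The decisive ingredient, which your argument never uses, is integrality: $b$ has integer coefficients, the structure constants are non-negative integers, and each $d(x_i)\ge 1$, so $b^n\ne 0$ forces $\|b^n\|\ge 1$, hence $\|(b/q)^n\|\ge q^{-n}$. Feeding this lower bound, the hypothesis $\limsup_n\sqrt[n]{\|a^n\|}=0$, and $\|b/q-a\|<\ep/q$ into Lemma~\ref{le:binom} yields $1/q\le\ep/q$, impossible for $\ep<1$. Your preliminary observations ($d(a)=0$ and $\|\tilde a^n\|=\|a\|^n$) are correct as far as they go, but the positivisation plays no role in the actual argument.
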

\begin{proof}
We suppose that $a$ is not nilpotent, and obtain a contradiction.
Write $a=a'+\bi a''$ in such a way that the coefficients of $x_i$ in $a'$, $a''$ are
real. Then the element
$\bar a=a'-\bi a''$ also satisfies the hypothesis. By
Lemma~\ref{le:binom}, elements satisfying the hypothesis form a linear
subspace of $A$. 
It follows that $a'$ and $a''$ also
satisfy the hypothesis. 
Furthermore, if $a'$ and $a''$ are both nilpotent, then so is $a$.
Therefore we may assume without loss of generality that 
the coefficients $a_i$ of $a$ are real.

Since $a=\sum_i a_ix_i$ is a finite sum, we let $V$ be the real linear span
in $A$ of those $x_i$ with $a_i\ne 0$. Then $V$ is a finite
dimensional $\bR$-vector subspace of $A$. Consider the elements of $V$ of the
form $b=\sum_i b_i x_i$ with $b_i\in \bZ$. These form a lattice $\Lambda$ in $V$.
It follows that there is a constant $C$ such that 
given any element $v$ of $V$ there is an element of
$\Lambda$ at distance at most $C$ from $v$. In particular, given
$q\in\bZ$ there is an element $b$ of $\Lambda$ within distance at most
$C$ from $qa$.
So looking at the elements
of the form $b-qa$ with $q\in\bZ$, $b\in\Lambda$, there have to be two such,
at an arbitrarily small distance from each other. Taking the
difference, we see that given $\ep>0$ we can choose $q>0$ and $b$ with $\|b-qa\|<\ep$,
and hence $\|\frac{b}{q}-a\|<\frac{\ep}{q}$.

The nilpotent elements of $V$ form a linear subspace, which is
therefore a closed subset in the norm topology. Since $a$ is not
nilpotent, we may choose $b$ as above such that
$\frac{b}{q}$ is also not nilpotent. So for all $n>0$ we have
$\|b^n\|\ge 1$ (because $b$ has integer coefficients) and hence
$\|(\frac{b}{q})^n\|\ge \frac{1}{q^n}$. 

Now we have
\begin{align*}
\textstyle\frac{1}{q^n}&\textstyle\le \|\left(\frac{b}{q}\right)^n\| 
=\textstyle\|(a+(\frac{b}{q}-a))^n\| 
=\textstyle\left\|\sum_{i=0}^n\binom{n}{i}a^i(\frac{b}{q}-a)^{n-i}\right\| \\
&\le\textstyle\sum_{i=0}^n\binom{n}{i}\|a^i\|\ \|(\frac{b}{q}-a)^{n-i}\|.
\end{align*}
Applying Lemma~\ref{le:binom}, we deduce that
\[ {\textstyle\frac{1}{q}}=\limsup_{n\to\infty}\sqrt[n]{\textstyle\frac{1}{q^n}} \le
\limsup_{n\to \infty}\sqrt[n]{\|a^n\|} + 
\limsup_{n\to\infty}\sqrt[n]{\textstyle\|(\frac{b}{q}-a)^n\|}<0+\textstyle\frac{\ep}{q}=\frac{\ep}{q}. \]
So choosing $\ep\le 1$, we obtain a contradiction. Hence $a$ is nilpotent.
\end{proof}

We apply this theorem to characterise the nilpotent elements in a representation
ring in terms of species. Recall that the 
\emph{nil radical}\index{nil radical} of a commutative
ring is the ideal of nilpotent elements.

\begin{theorem}\label{th:J=nil}
  Let $\fa$ be a representation ring.
An element $x\in \fa_\bC= \bC\otimes_\bZ\fa$\index{a@$\fa_{\bC}$} 
is nilpotent if and only if for
every dimension bounded species $s\colon \fa_\bC\to \bC$ we have
$s(x)=0$.
Thus the Jacobson radical of $\fa_\bC$ is equal to the nil radical.
\end{theorem}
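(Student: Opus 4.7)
The plan is to prove the two directions of the iff, and then deduce the statement about radicals.

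The forward direction is immediate: if $x^n=0$ in $\fa_\bC$ and $s\colon\fa_\bC\to\bC$ is any ring homomorphism (dimension bounded or not), then $s(x)^n=s(x^n)=0$, so $s(x)=0$.

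For the converse, suppose that $s(x)=0$ for every dimension bounded species $s\colon\fa_\bC\to\bC$. By Theorem~\ref{th:core-bounded} (with $\fX=\varnothing$), the dimension bounded species of $\fa_\bC$ are precisely the $\bC$-algebra homomorphisms $\hat\fa\to\bC$, via restriction. So the image of $x$ in $\hat\fa$ is killed by every element of $\Struct(\hat\fa)$. Theorem~\ref{th:qn} then tells us that $x$ is quasi-nilpotent in $\hat\fa$, i.e.\ $\lim_{n\to\infty}\sqrt[n]{\|x^n\|}=0$. Now the hypotheses of Theorem~\ref{th:qn=>n} are met for the algebra $\fa_\bC$, its basis $\{x_i\mid i\in\fI\}$, and the dimension function, with the norm of Definition~\ref{def:norm}; since $x\in\fa_\bC$ is a finite linear combination of basis elements, the theorem applies and yields that $x$ is nilpotent. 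This is the key step, and its difficulty has been packaged into the (nontrivial) Theorem~\ref{th:qn=>n}.

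It remains to identify the Jacobson radical with the nil radical of $\fa_\bC$. As for any commutative ring, the nil radical is contained in the Jacobson radical, since every nilpotent element lies in every prime, and a fortiori in every maximal, ideal. For the reverse inclusion, observe that each dimension bounded species $s\colon\fa_\bC\to\bC$ is a surjection onto the field $\bC$ (its image contains $\bC$ since $\fa_\bC$ is a $\bC$-algebra), so its kernel is a maximal ideal of $\fa_\bC$. Consequently the Jacobson radical $J(\fa_\bC)$, being the intersection of \emph{all} maximal ideals, is contained in the intersection of the kernels of the dimension bounded species. But by the first part of the theorem, that intersection is exactly the nil radical. Hence $J(\fa_\bC)\subseteq\mathrm{nil}(\fa_\bC)$, completing the proof.
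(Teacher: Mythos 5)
Your proof is correct and follows essentially the same route as the paper's: embed $\fa_\bC$ in $\hat\fa$, use Theorem~\ref{th:core-bounded} (with $\fX=\varnothing$) and Theorem~\ref{th:qn} to identify the common kernel of the dimension bounded species with the quasi-nilpotent elements, and then invoke Theorem~\ref{th:qn=>n} to downgrade quasi-nilpotence to nilpotence for elements of $\fa_\bC$. The paper leaves the final deduction ``Jacobson radical $=$ nil radical'' implicit, whereas you spell it out (kernels of dimension bounded species are maximal ideals, so $J(\fa_\bC)$ sits inside their intersection, which is the nil radical by the first part); that is a reasonable bit of explicitness, not a different argument.
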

\begin{proof}
  We embed  $\fa_\bC$ in $\hat \fa$. The 
  species of $\hat\fa$ are the dimension bounded species, so by
  Theorem~\ref{th:qn}
  the intersection of the kernels of the dimension bounded species
  is the set of quasi-nilpotent elements of $\hat\fa$. It follows from
Theorem~\ref{th:qn=>n} that a
  quasi-nilpotent element $x\in\fa_\bC$ is nilpotent.
\end{proof}

\begin{rk}
  Since $\fa_\bC$ is not necessarily Noetherian, we cannot conclude from the theorem
  that if $\fm$ is a maximal ideal of $\fa_\bC$ then $\fa_\bC/\fm\cong \bC$.
  For example, letting $R$ be the polynomial algebra
  $\bC[v,\{u_\lambda\}_{\lambda\in\bC}]$,
given any non-zero element $x\in R$, there is an algebra homomorphism
$s\colon R \to \bC$ such that $s(x)\ne 0$. On the other hand, there is
a surjective algebra homomorphism $\phi\colon R \to \bC(t)$ sending $v$ to $t$
and $u_\lambda$ to $(t-\lambda)^{-1}$, and $R/\Ker(\phi)\cong \bC(t)$.
\end{rk}

The following generalises Theorem~2.7 of \cite{Benson/Carlson:1986a},
with essentially the same proof.

\begin{theorem}\label{th:no-n}
  There are no non-zero nilpotent elements in $\hat\fa_{\max}$.
\end{theorem}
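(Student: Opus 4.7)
The plan is to adapt the proof of Theorem~\ref{th:ordinary} to the Banach completion $\hat\fa_{\max}$. That theorem deduced ``$xx^*=0\Rightarrow x=0$'' (in $\fa_\bC$ under the hypothesis that $\one$ is projective) from the identity $[xx^*:\one] = \sum_i n_i|a_i|^2$ together with positivity of the $n_i$. Here $n_i$ is strictly positive for every $i\in\fI\setminus\fX_{\max}$, which is precisely the set indexing the basis of $\hat\fa_{\max}$, so the same coefficient-of-$\one$ calculation should drive the proof once one establishes that the formula survives the completion.

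First I would prove the key lemma: if $x\in\hat\fa_{\max}$ satisfies $xx^*=0$, then $x=0$. Represent $x$ concretely as $\sum_{i\in\fI\setminus\fX_{\max}} a_ix_i$ with $\sum_i|a_i|\dim x_i<\infty$, as in~\eqref{eq:normA/X}. The double sum $xx^*=\sum_{i,j}a_i\bar a_j\,x_ix_{j^*}$ converges absolutely in $\hat\fa_{\max}$, with total norm bounded by $\|x\|_{\max}^2$. By axiom~(ii) of Definition~\ref{def:repring}, $[x_ix_{j^*}:\one]\ne 0$ forces $j=i$, and $[x_ix_{i^*}:\one]=n_i$, so
\[ [xx^*:\one] = \sum_{i\in\fI\setminus\fX_{\max}} n_i|a_i|^2. \]
Since $n_i|a_i|^2\le (\dim x_i)^2|a_i|^2\le\|x\|_{\max}\cdot|a_i|\dim x_i$, the series on the right converges, and continuity of the coefficient-of-$\one$ functional on $\hat\fa_{\max}$ (bounded by $1$ because $\one$ is a basis element not in $\fX_{\max}$) justifies passing to the limit from finite truncations. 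All $n_i$ are strictly positive on this index set, so $xx^*=0$ forces every $a_i=0$, whence $x=0$.

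Next I would reduce the nilpotent case to the lemma. By commutativity and the antilinear star operation, if $x^n=0$ then $(xx^*)^n=x^n(x^*)^n=0$, so $y:=xx^*$ is a self-conjugate nilpotent element. Choose $k$ with $2^k\ge n$, so $y^{2^k}=0$. Using self-conjugacy of each $y^{2^j}$, we have $y^{2^{j+1}}=y^{2^j}(y^{2^j})^*$, and applying the lemma to $y^{2^{k-1}}$ yields $y^{2^{k-1}}=0$; iterating downward gives $y=xx^*=0$, and then the lemma once more gives $x=0$.

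The main technical point is the passage from the finite-sum identity $[xx^*:\one]=\sum n_i|a_i|^2$ (already used in Theorem~\ref{th:ordinary}) to its counterpart in the completion $\hat\fa_{\max}$: one must check absolute convergence of the rearranged double sum together with continuity of the linear functional extracting the coefficient of $\one$. Once this is in hand, the remainder of the argument is essentially formal and parallels the reasoning of Theorem~\ref{th:ordinary}.
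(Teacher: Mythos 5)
Your argument is essentially the paper's: both hinge on computing the coefficient of $\one$ in $xx^*$ as $\sum_i n_i|a_i|^2$ with all $n_i>0$ on $\fI\setminus\fX_{\max}$, so $xx^*=0$ forces $x=0$, and then reducing the nilpotent case to this via a self-conjugacy descent through $y=xx^*$. The paper's version is terser --- it treats only the case $x^2=0$ explicitly (via $(xx^*)(xx^*)^*=x^2x^{*\,2}=0$, leaving the standard descent from $x^n=0$ implicit) and does not spell out the absolute convergence of the double series or the continuity of the coefficient-of-$\one$ functional --- but your extra care on those points is correct and does not change the substance of the proof.
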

\begin{proof}
  Suppose that $x\in\hat\fa_{\max}$ is nilpotent, and write
  $x=\sum_{i\in\fX\setminus\fX_{\max}}a_ix_i$. Let $n_i = [x_ix_{i^*}:\one]>0$.
  Then $x^* = \sum_i \bar a_i x_i^*$. We have
\[ xx^* = \sum_i |a_i|^2x_i x_i^* + \sum_{i\ne j}a_i\bar a_jx_i x_j^*, \]
and the coefficient of $\one$ in this is $\sum_i n_i|a_i|^2$. This is zero if and only if
$x=0$, so $xx^*=0$ implies $x=0$. 
If $x^2=0$ then $(xx^*)(xx^*)^* = x^2x^{*\,2}=0$, so $xx^*=0$ and hence $x=0$.
\end{proof}

\begin{rk}
We shall use the proof of Theorem~\ref{th:no-n} as motivation for the
introduction of the trace map. This will eventually enable us to prove
that there are no quasi-nilpotent elements in $\hat\fa_{\max}$, see
Theorem~\ref{th:no-qn}.
\end{rk}

\section{Action on Hilbert space}

Recall that for a representation ring $\fa$, we have the completion
$\hat\fa_{\max}$ with respect to the norm $\|\ \|_{\max}$.
In this section we investigate an action of $\hat\fa_{\max}$ on a
Hilbert space $H(\fa)$. The crucial inequality allowing us to do this is given
in Theorem~\ref{th:|xy|}, which turns out to be quite tricky to prove.
We begin with the following definitions, which are suggested by
the proof of Theorem~\ref{th:no-n}.

\begin{defn}
The \emph{trace map}\index{trace map} $\Tr\colon \fa \to \bZ$ is defined by
\[ \Tr\left(\sum_{i\in\fI} a_ix_i\right) = a_0 \] 
(recall that $x_0$ is the basis element $\one$ of $\fa$). This extends
to a trace map $\Tr\colon \fa_\bC\to\bC$ given by the same formula.
\end{defn}

Recall that $n_i$ is defined to be $[x_ix_{i^*}:\one]$, and that
$n_i=0$ if and only if $i\in\fX_{\max}$.

\begin{lemma}\label{le:Tr}
We have $\Tr(x_ix_{i^*})=n_i$, and $\Tr(x_ix_{j^*})=0$ for $i\ne j$.
\end{lemma}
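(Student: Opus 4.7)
The plan is to read off both equalities directly from the definitions, using axiom (ii) of Definition~\ref{def:repring}.

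First I would unwind the trace: by definition, $\Tr(y)$ is the coefficient $[y:x_0] = [y:\one]$ of $\one$ in $y\in\fa_\bC$. So for any $i,j\in\fI$ we have $\Tr(x_ix_{j^*}) = [x_ix_{j^*}:\one]$.

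For the first assertion, taking $j=i$ gives $\Tr(x_ix_{i^*}) = [x_ix_{i^*}:\one]$, which is precisely the definition of $n_i$ given at the start of the maximal quotient section.

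For the second assertion, suppose $i\ne j$ and consider $[x_ix_{j^*}:\one]$. Axiom (ii) of Definition~\ref{def:repring} says that $[x_ix_k:\one]>0$ forces $k=i^*$. Applying this with $k=j^*$, if $[x_ix_{j^*}:\one]$ were positive we would have $j^*=i^*$, hence $j=i$ (since $*$ is an involution), contradicting $i\ne j$. Therefore $[x_ix_{j^*}:\one]=0$, and so $\Tr(x_ix_{j^*})=0$. No further work is required; there is no real obstacle here, since both parts are immediate from the axioms once the trace is identified with the coefficient of $\one$.
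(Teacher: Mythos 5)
Your proof is correct and takes essentially the same approach as the paper, which simply cites Definition~\ref{def:repring}\,(ii); you have just spelled out the details of how axiom (ii) forces $[x_ix_{j^*}:\one]=0$ when $i\ne j$.
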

\begin{proof}
This follows from Definition~\ref{def:repring}\,(ii).
\end{proof}

\begin{defn}\label{def:weighted-ell2}
We define the \emph{weighted $\ell^2$ norm}%
\index{weighted!$\ell^2$ norm}\index{norm!weighted $\ell^2$} 
on $\fa_{\bC,\max}=\fa_\bC/\langle\fX_{\max}\rangle_\bC$%
\index{X@$\fX_{\max}$} to be
\[  \left|\sum_{i\in\fI}a_ix_i\right|=\sqrt{
\sum_{i\in\fI}n_i|a_i|^2 }
= \sqrt{\sum_{i\in\fI\setminus\fX_{\max}}n_i|a_i|^2}. \]
This is associated to the inner product
\[ \left\langle
    \sum_{i\in\fI}a_ix_i,\,\sum_{i\in\fI}b_ix_i\right\rangle =
\sum_{i\in\fI}n_ia_i\bar b_i =
  \sum_{i\in\fI\setminus\fX_{\max}} n_i a_i\bar b_i. \]
The completion of $\fa_{\bC,\max}$ with respect to the
weighted $\ell^2$ norm is a Hilbert space which
is denoted $H(\fa)$.\index{H@$H(\fa)$}
We can think of elements of $H(\fa)$ as countably supported infinite
sums $\sum_{i\in\fI\setminus\fX_{\max}}a_ix_i$, with
$\sum_{i\in\fI\setminus\fX_{\max}}n_i|a_i|^2<\infty$, and with trace and inner product
given by the same formulas as above.
\end{defn}\pagebreak[3]

\begin{lemma}\label{le:H}
For $x,y\in H(\fa)$ we have
\begin{enumerate}
\item $\langle x,y\rangle = \Tr(xy^*)$
\item $\langle xy,z\rangle = \langle y,x^*z\rangle$
\end{enumerate}
\end{lemma}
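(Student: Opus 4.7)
The plan is to deduce (ii) formally from (i) together with commutativity of $\fa$ and the antiautomorphism axiom for $*$, so the real content is part (i), which in turn reduces to Lemma~\ref{le:Tr} by bilinear expansion.

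For part (i), I would first check the identity on $\fa_{\bC,\max}$, where every element is a finite linear combination. Writing $x=\sum_i a_i x_i$ and $y=\sum_j b_j x_j$, we have $y^*=\sum_j \bar b_j x_{j^*}$, so
\[ xy^* = \sum_{i,j} a_i\bar b_j\, x_ix_{j^*}. \]
Applying $\Tr$ and invoking Lemma~\ref{le:Tr}, which says $\Tr(x_ix_{j^*})=n_i\delta_{ij}$, all off-diagonal terms drop out and we get $\Tr(xy^*)=\sum_i n_i a_i\bar b_i=\langle x,y\rangle$. To pass to general $x,y\in H(\fa)$, I would observe that if $a=(a_i)$ and $b=(b_i)$ satisfy $\sum_i n_i|a_i|^2<\infty$ and $\sum_i n_i|b_i|^2<\infty$, then by Cauchy--Schwarz the series $\sum_i n_i a_i\bar b_i$ converges absolutely, and both sides of (i) are continuous in $x$ and $y$ with respect to the weighted $\ell^2$ norm; so (i) extends by density.

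For part (ii), I would use that, by the axioms of a Banach $*$-algebra (Definition~\ref{def:B*alg}) applied to $\hat\fa_{\max}$, and commutativity of $\fa$, we have $(x^*z)^* = z^{*}x^{**} = z^{*}x = xz^{*}$. Combined with commutativity, this gives $y(x^*z)^* = xyz^*$, and hence by (i)
\[ \langle y, x^*z\rangle = \Tr\bigl(y(x^*z)^*\bigr) = \Tr(xyz^*) = \Tr\bigl((xy)z^*\bigr) = \langle xy, z\rangle. \]
Again this is first established for $x,y,z\in\fa_{\bC,\max}$, where all products make sense inside $\fa_{\bC,\max}$.

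The main obstacle is interpretational rather than computational: the products $xy$ and $x^*z$ in (ii) are not a priori defined for arbitrary elements of $H(\fa)$, since $H(\fa)$ is an $\ell^2$-type completion rather than an algebra. So the statement implicitly relies on the subsequent Theorem~\ref{th:|xy|}, which gives the bound $|xy|\le\|x\|_{\max}|y|$ needed to extend multiplication from $\fa_{\bC,\max}$ to an action of $\hat\fa_{\max}$ on $H(\fa)$. Once that action is in place, both sides of (ii) are continuous in each variable separately (the left in $xy$ and $z$, the right in $y$ and $x^*z$), and the identity extends from $\fa_{\bC,\max}$ by density.
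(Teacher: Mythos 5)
Your proof is essentially the paper's: expand $xy^*$ bilinearly and reduce (i) to Lemma~\ref{le:Tr}, then derive (ii) from (i) via $y(x^*z)^* = xyz^*$, which follows from commutativity and the $*$-algebra axioms. Your observation that products of two arbitrary elements of $H(\fa)$ are not a priori defined is a genuine soft spot in the statement, and the paper says nothing about it. However, your proposed remedy --- invoking Theorem~\ref{th:|xy|} to get an action of $\hat\fa_{\max}$ on $H(\fa)$ and then extending (ii) by density --- would be circular: the proof of Theorem~\ref{th:|xy|} opens with $|x_iy|^2 = \langle x_{i^*}x_iy,y\rangle$, which is exactly an instance of part (ii) of this lemma. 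The reading that makes the logic run is $x\in\fa_\bC$ (a finite linear combination of basis elements) and $y,z\in H(\fa)$. For such $x$ each coefficient of $xy$ and of $x^*z$ is a \emph{finite} sum, since for fixed $i,k$ only finitely many $j$ satisfy $[x_ix_j:x_k]\ne 0$ (by Lemma~\ref{le:xixjxk} this is equivalent to $[x_{i^*}x_k:x_j]\ne 0$, and $x_{i^*}x_k$ is a finite sum of basis elements); the chain $\langle xy,z\rangle=\Tr(xyz^*)=\Tr(y(x^*z)^*)=\langle y,x^*z\rangle$ is then a re-indexing of a single double sum, requiring no boundedness input. The version with $x$ ranging over $\hat\fa_{\max}$ does extend by continuity as you say, but only after Theorems~\ref{th:|xy|} and~\ref{th:fa-in-Lin} are in place.
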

\begin{proof}
(i) If $x=\sum_{i\in\fI\setminus\fX_{\max}}a_ix_i$ and 
$y=\sum_{i\in\fI\setminus\fX_{\max}}b_ix_i$ then
\[ xy^*=\sum_{i\in\fI\setminus\fX_{\max}}a_i\bar b_i x_ix_{i^*} + \sum_{i\ne
    j\in\fI\setminus\fX_{\max}}a_i\bar b_j x_ix_{j^*}. \]
By Lemma~\ref{le:Tr}, the trace of this is equal to
$\sum_{i\in\fI\setminus\fX_{\max}}n_ia_i\bar b_i$, which is $\langle
x,y\rangle$.

(ii) Using (i) we have 
\begin{equation*} 
\langle xy,z\rangle = \Tr(xyz^*)=\Tr(y(x^*z)^*)=\langle
  y,x^*z\rangle. 
\qedhere
\end{equation*}
\end{proof}

\begin{theorem}\label{th:|xy|}
For $x\in\fa_{\bC}$, $y\in H(\fa)$ we have $|xy|\le \|x\|_{\max}|y|$.
\end{theorem}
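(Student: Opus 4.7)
The plan is to reduce the inequality $|xy|\le \|x\|_{\max}|y|$ to the case where $x=x_k$ is a single basis element with $k\in\fI\setminus\fX_{\max}$, since basis elements with $k\in\fX_{\max}$ act as zero on $H(\fa)$ and contribute zero to $\|x\|_{\max}$. Once we prove $|x_k y|\le (\dim x_k)|y|$ for each such basis element, then for general $x=\sum_k a_k x_k\in\fa_\bC$ (a finite sum) the triangle inequality on $H(\fa)$ gives $|xy|\le\sum_k |a_k|\,|x_k y|\le\sum_k |a_k|(\dim x_k)|y|=\|x\|_{\max}|y|$. Moreover, it suffices to prove the inequality for $y$ in the dense subspace $\fa_{\bC,\max}$ of $H(\fa)$: the operator of left multiplication by $x$ then extends uniquely by continuity to all of $H(\fa)$ with the same bound.

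For $y=\sum_j b_j x_j\in\fa_{\bC,\max}$ a finite sum and $x_k y=\sum_\ell\bigl(\sum_j c_{k,j,\ell}b_j\bigr)x_\ell$, Definition~\ref{def:weighted-ell2} gives
\[ |x_k y|^2=\sum_\ell n_\ell\Bigl|\sum_j c_{k,j,\ell}b_j\Bigr|^2. \]
The heart of the argument is a weighted Cauchy--Schwarz inequality with positive weights $\alpha_j=\dim x_j/n_j$:
\[ \Bigl|\sum_j c_{k,j,\ell}b_j\Bigr|^2\le \Bigl(\sum_j c_{k,j,\ell}\tfrac{\dim x_j}{n_j}\Bigr)\Bigl(\sum_j c_{k,j,\ell}\tfrac{n_j}{\dim x_j}|b_j|^2\Bigr). \]
To evaluate the first factor, combine Lemma~\ref{le:xixjxk} with commutativity: applying that lemma twice to the identity $[x_k x_j x_{\ell^*}:\one]=[x_k x_{\ell^*}x_j:\one]$ yields the key relation $n_\ell c_{k,j,\ell}=n_j[x_k x_{\ell^*}:x_{j^*}]$. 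Hence, using $\dim x_j=\dim x_{j^*}$ and the fact that $j\mapsto j^*$ permutes $\fI\setminus\fX_{\max}$,
\[ \sum_j c_{k,j,\ell}\frac{\dim x_j}{n_j}=\frac{1}{n_\ell}\sum_j[x_k x_{\ell^*}:x_{j^*}]\dim x_{j^*}\le\frac{\dim(x_k x_{\ell^*})}{n_\ell}=\frac{\dim x_k\,\dim x_\ell}{n_\ell}. \]

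Substituting back and interchanging the order of summation,
\[ |x_k y|^2\le\dim x_k\sum_j\frac{n_j|b_j|^2}{\dim x_j}\sum_\ell c_{k,j,\ell}\dim x_\ell\le\dim x_k\sum_j\frac{n_j|b_j|^2}{\dim x_j}\cdot\dim(x_k x_j)=(\dim x_k)^2|y|^2, \]
where the final inequality uses $\sum_\ell c_{k,j,\ell}\dim x_\ell\le\dim(x_k x_j)=\dim x_k\,\dim x_j$ (the sum is over $\ell\in\fI\setminus\fX_{\max}$, a subset of the full basis). Taking square roots gives the claim. The main obstacle is discovering the correct weight $\alpha_j=\dim x_j/n_j$: this is what causes both sums---the inner one over $j$ inside the Cauchy--Schwarz estimate and the outer one over $\ell$ after interchange---to collapse cleanly via multiplicativity of dimension, producing precisely the factor $(\dim x_k)^2$. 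The miraculous cancellation of the factors $n_\ell$ and $\dim x_j$ from the first sum against corresponding factors from the Cauchy--Schwarz weight, combined with the appearance of $\dim(x_k x_{\ell^*})$ and $\dim(x_k x_j)$, reflects the fundamental duality between core and trace expressed in Lemma~\ref{le:xixjxk}.
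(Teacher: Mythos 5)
Your proof is correct, and it is essentially the paper's own argument in different clothing. You apply a weighted Cauchy--Schwarz with weight $\alpha_j=\dim x_j/n_j$ to each $\ell$-component of $x_ky$, whereas the paper applies $2ab\le a^2+b^2$ to the cross terms of the quadratic form $\langle x_{i^*}x_iy,y\rangle$; if you unwind both, the resulting triple sums $\sum_{\ell,j,j'}n_\ell c_{k,j,\ell}c_{k,j',\ell}\alpha_j\alpha_{j'}^{-1}|b_{j'}|^2$ are identical, and your identity $n_\ell c_{k,j,\ell}=n_j[x_kx_{\ell^*}:x_{j^*}]$ is exactly the trace-form symmetry the paper encapsulates in Lemma~\ref{le:H}\,(ii). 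The one mild departure is in the finishing estimate: the paper uses the single bound $\dim\core_{\max}(x_{i^*}x_ix_j)\le(\dim\core_{\max}x_i)^2\dim x_j$, while you split the triple sum into two separate bounds, $\dim\core_{\max}(x_kx_{\ell^*})\le\dim x_k\dim x_\ell$ on the first Cauchy--Schwarz factor and, after interchanging the $\ell$ and $j$ sums, $\dim\core_{\max}(x_kx_j)\le\dim x_k\dim x_j$ on the second --- a legitimate reorganization landing at the same $(\dim x_k)^2=\|x_k\|_{\max}^2$.
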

\begin{proof}
The model for this proof is the inequality in Lemma~\ref{le:l2}, but
the details are much more complicated.

First we treat the case $x=x_i$ and $y=\sum_{j\in\fI}b_jx_j$.
We have
\begin{align*} 
|x_iy|^2&=\langle x_iy,x_iy\rangle \\
&=\langle x_{i^*}x_iy,y\rangle \\
&=\sum_{j,k\in\fI\setminus\fX_{\max}}b_j\bar b_k \langle x_{i^*}x_ix_j,x_k\rangle\\
&\le \sum_{j,k\in\fI\setminus\fX_{\max}}|b_j||b_k|\langle x_{i^*}x_ix_j,x_k\rangle\\
&=\sum_{j,k\in\fI\setminus\fX_{\max}}
\frac{|b_j|n_j}{\dim x_j}\,\frac{|b_k|n_k}{\dim x_k}\,
\langle x_{i^*}x_ix_j,x_k\rangle\,\frac{\dim x_j}{n_j}\,\frac{\dim x_k}{n_k}
\end{align*}
For $j\ne k$, the $(j,k)$ term and the $(k,j)$ term in this sum are equal. So
using the inequality
\[ 2\,\frac{|b_j|n_j}{\dim x_j}\,\frac{|b_k|n_k}{\dim x_k} \le
\left(\frac{|b_j|n_j}{\dim x_j}\right)^2+
\left(\frac{|b_k|n_k}{\dim x_k}\right)^2   \]
we have
\begin{align*}
|x_iy|^2&\le \sum_{j,k\in\fI\setminus\fX_{\max}}\left(\frac{|b_j|n_j}{\dim x_j}\right)^2\,
\langle x_{i^*}x_ix_j,x_k\rangle\,\frac{\dim x_j}{n_j}\,\frac{\dim x_k}{n_k} \\
&=\sum_{j,k\in\fI\setminus\fX_{\max}}\frac{|b_j|^2n_j}{\dim x_j}\,
\langle x_{i^*}x_ix_j,x_k\rangle\,\frac{\dim x_k}{n_k}.
\end{align*}
Now we also have
\begin{align*} 
\sum_{k\in\fI\setminus\fX_{\max}}\langle x_{i^*}x_ix_j,x_k\rangle\,\frac{\dim x_k}{n_k} 
&= \sum_{k\in\fI\setminus\fX_{\max}}[x_{i^*}x_ix_j:x_k]n_k\,\frac{\dim x_k}{n_k}\\
&= \sum_{k\in\fI\setminus\fX_{\max}}[x_{i^*}x_ix_j:x_k]\dim x_k\\
&=\dim\core_{\max}(x_{i^*}x_ix_j) \\
&\le(\dim\core_{\max}(x_i))^2\dim x_j
\end{align*}
and so we get
\begin{align*}
|x_iy|^2&\le \sum_{j\in\fI\setminus\fX_{\max}}
\frac{|b_j|^2n_j}{\dim x_j}\,(\dim\core_{\max}(x_i))^2\dim x_j\\
&=\sum_{j\in\fI\setminus\fX_{\max}}|b_j|^2n_j(\dim\core_{\max}(x_i))^2\\
&=(\dim\core_{\max}(x_i))^2\langle y,y\rangle\\
&=\|x_i\|_{\max}^2|y|^2.
\end{align*}
Taking square roots of both sides, we obtain
$|x_iy|\le\|x_i\|_{\max}|y|$.

Finally, in general if $x=\sum_{i\in\fI} a_ix_i$ then using the case proved
above, we have
\begin{align*} 
|xy|&=\left|\sum_{i\in\fI}a_ix_iy\right|
\le \sum_{i\in\fI}|a_i||x_iy|
\le \sum_{i\in\fI}|a_i|\|x_i\|_{\max}|y|\\
&\quad=\left\|\sum_{i\in\fI}a_ix_i\right\|_{\max}|y|
=\|x\|_{\max}|y|. 
\qedhere
\end{align*}
\end{proof}

\begin{rk}\label{rk:|x|}
Setting $y=\one$ in the theorem, we have $|x|\le \|x\|_{\max}$.
In particular, every infinite sum
$\sum_{i\in\fI\setminus\fX_{\max}}a_ix_i$ in $\hat\fa_{\max}$ is in
$H(\fa)$. So we have a norm decreasing (continuous) injective map of 
Banach $*$-algebras $\hat\fa_{\max}\hookrightarrow H(\fa)$,
with dense image.
\end{rk}

\begin{prop}
For $x\in\fa_\bC$
the map $y\mapsto xy$ of $H(\fa)$ is
bounded. Elements of $\langle\fX_{\max}\rangle$ act as zero, 
and so we have a map $\fa_{\bC,\max} \to
\Lin(H(\fa))$.
\end{prop}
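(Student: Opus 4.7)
The plan is to read the proposition off directly as a corollary of Theorem~\ref{th:|xy|}. For $x \in \fa_\bC$, that theorem supplies the inequality $|xy| \le \|x\|_{\max}|y|$ for every $y \in H(\fa)$, so left multiplication $\mu_x \colon y \mapsto xy$ is a bounded linear operator on $H(\fa)$ with $\|\mu_x\|_{\sup} \le \|x\|_{\max}$. Hence $\mu_x \in \Lin(H(\fa))$, and the assignment $x \mapsto \mu_x$ is manifestly $\bC$-linear and multiplicative (the associativity and distributivity needed here hold on the dense subspace $\fa_{\bC,\max} \subset H(\fa)$ and extend by the continuity guaranteed by the same inequality), giving a bounded algebra homomorphism $\fa_\bC \to \Lin(H(\fa))$.

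For the second assertion, I would observe that if $x \in \langle\fX_{\max}\rangle_\bC$, writing $x$ as a finite linear combination $\sum_{i\in\fX_{\max}}a_ix_i$ and using the formula~\eqref{eq:normA/X} for the $\fX_{\max}$-quotient norm yields
\[ \|x\|_{\max} = \sum_{i\in\fX_{\max}}|a_i|\dim\core_{\max}(x_i) = 0, \]
since $\core_{\max}(x_i) = 0$ for every $i \in \fX_{\max}$. Combined with the inequality from Theorem~\ref{th:|xy|}, this forces $|\mu_x(y)| \le 0 \cdot |y| = 0$ for all $y\in H(\fa)$, so $\mu_x = 0$. Consequently $x \mapsto \mu_x$ vanishes on $\langle\fX_{\max}\rangle_\bC$ and therefore factors through the quotient $\fa_{\bC,\max} = \fa_\bC/\langle\fX_{\max}\rangle_\bC$, producing the desired homomorphism $\fa_{\bC,\max} \to \Lin(H(\fa))$.

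No real obstacle arises in the present proposition: all of the technical content has been absorbed into Theorem~\ref{th:|xy|}, and this statement is essentially its formal packaging as a Hilbert-space action. The only point worth being pedantic about is verifying that $x \mapsto \mu_x$ respects the algebra structure and not merely the underlying linear structure — but this follows from $\mu_{xy}(z) = (xy)z = x(yz) = \mu_x(\mu_y(z))$ on the dense subalgebra $\fa_{\bC,\max}$, and extends to all of $H(\fa)$ by the boundedness of $\mu_x$ and $\mu_y$.
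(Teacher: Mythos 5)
Your proposal is correct and follows the same route the paper takes: the paper's entire proof is the one-liner ``This follows from the inequality in Theorem~\ref{th:|xy|},'' and your writeup simply spells out what that deduction amounts to, namely boundedness directly from $|xy|\le\|x\|_{\max}|y|$, and vanishing on $\langle\fX_{\max}\rangle_\bC$ from the fact that the quotient norm $\|\cdot\|_{\max}$ is identically zero there. (One could alternatively note that the product $xy$ only depends on the image of $x$ in $\fa_{\bC,\max}$, since $H(\fa)$ is by construction the $\ell^2$-completion of that quotient, so the vanishing is even more immediate; but deriving it through the norm inequality as you do is equally valid and keeps the whole argument resting on a single cited fact.)
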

\begin{proof}
This follows from the inequality in Theorem~\ref{th:|xy|}.
\end{proof}

\begin{defn}
We write $\|x\|_{\sup}$ for the sup norm of $x$ under the map
\[ \fa_{\bC,\max} \to \Lin(H(\fa))\index{L@$\Lin(H(\fa))$} \]
sending $x\in\fa_\bC$ to the map $y \mapsto xy$ of
$H(\fa)$. This is given by
\[ \|x\|_{\sup} = \sup_{|y|=1}|xy|. \]
\end{defn}

\begin{theorem}\label{th:fa-in-Lin}
For $x\in\fa_{\bC,\max}$ we have $|x|\le\|x\|_{\sup}\le \|x\|_{\max}$. The map
$\fa_{\bC,\max}\to \Lin(H(\fa))$ is a
continuous $*$-homomorphism of normed $*$-algebras, 
and extends to an injective continuous 
$*$-homomorphism of Banach $*$-algebras $\hat\fa_{\max}\to \Lin(H(\fa))$.
\end{theorem}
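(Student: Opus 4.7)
The plan is to unpack the inequalities from Theorem~\ref{th:|xy|} and Lemma~\ref{le:H} in sequence, deferring injectivity to the end. For the upper bound $\|x\|_{\sup}\le\|x\|_{\max}$, I would simply observe that Theorem~\ref{th:|xy|} gives $|xy|\le\|x\|_{\max}|y|$ for every $y\in H(\fa)$, so the supremum defining $\|x\|_{\sup}$ is at most $\|x\|_{\max}$. For the lower bound $|x|\le\|x\|_{\sup}$, note that $n_0=[\one\one^*:\one]=1$ and the coefficient of $x_0$ in $\one$ is $1$, so by Definition~\ref{def:weighted-ell2}, $|\one|=1$. Taking $y=\one$ in the supremum defining $\|x\|_{\sup}$ then gives $|x|=|x\one|\le\|x\|_{\sup}$. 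Since $\|x\|_{\sup}\le\|x\|_{\max}$ immediately gives continuity of the map $\fa_{\bC,\max}\to\Lin(H(\fa))$, I would pair this with standard bounded-operator calculus.

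Next I would verify that the assignment $x\mapsto(y\mapsto xy)$ is a $*$-homomorphism of normed $*$-algebras. Linearity and the fact that $\one$ acts as the identity operator are immediate, and the identity $(x_1x_2)y=x_1(x_2y)$ in $\fa_{\bC,\max}$ gives multiplicativity. The $*$-property is the crucial algebraic point: for $z,w\in H(\fa)$, Lemma~\ref{le:H}\,(ii) says $\langle xz,w\rangle=\langle z,x^*w\rangle$, which by the uniqueness of adjoints (Theorem~\ref{th:adjoints}) identifies the adjoint of multiplication-by-$x$ on $H(\fa)$ with multiplication-by-$x^*$. Combined with the already-proven inequality $\|x\|_{\sup}\le\|x\|_{\max}$, we get a continuous $*$-homomorphism from the dense subalgebra $\fa_{\bC,\max}\subset\hat\fa_{\max}$ into the Banach $*$-algebra $\Lin(H(\fa))$ (Theorem~\ref{th:LinC*}), and this extends uniquely by completion to a continuous $*$-homomorphism $\hat\fa_{\max}\to\Lin(H(\fa))$, with the same norm estimate $\|x\|_{\sup}\le\|x\|_{\max}$ valid on $\hat\fa_{\max}$.

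The main obstacle is injectivity of the extended map. The plan is to factor it through $H(\fa)$ via evaluation at $\one$. From $|x|\le\|x\|_{\sup}\le\|x\|_{\max}$ on $\fa_{\bC,\max}$, the identity map extends by completion to a continuous linear map $\iota\colon\hat\fa_{\max}\to H(\fa)$ sending the class of $\sum_{i\in\fI\setminus\fX_{\max}}a_ix_i$ to the same formal sum viewed in $H(\fa)$. This map $\iota$ is injective: if $\xi=\sum a_ix_i\in\hat\fa_{\max}$ satisfies $\iota(\xi)=0$, then $\sum_{i\in\fI\setminus\fX_{\max}}n_i|a_i|^2=0$, and since each $n_i>0$ for $i\notin\fX_{\max}$, each $a_i=0$, so $\xi=0$. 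Now suppose $x\in\hat\fa_{\max}$ acts as the zero operator. Applying it to $\one\in H(\fa)$ gives $x\one=0$ in $H(\fa)$; but $x\one=\iota(x)$ by construction, so injectivity of $\iota$ yields $x=0$.

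I expect the verification of the $*$-property and the norm bookkeeping to be routine once Theorem~\ref{th:|xy|} and Lemma~\ref{le:H} are in hand; the mild subtlety to watch is that the evaluation-at-$\one$ map from the operator $T_x\in\Lin(H(\fa))$ really does recover $\iota(x)$ at the level of the completion $\hat\fa_{\max}$, which follows from continuity of both $T_{(-)}$ and $\iota$ on $\hat\fa_{\max}$ together with agreement on the dense subspace $\fa_{\bC,\max}$.
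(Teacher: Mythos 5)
Your proof is correct and follows essentially the same route as the paper's: the upper bound $\|x\|_{\sup}\le\|x\|_{\max}$ comes from Theorem~\ref{th:|xy|}, the $*$-property from Lemma~\ref{le:H}\,(ii), continuity and extension to the completion from Lemma~\ref{le:bd=cts}, and both the lower bound and injectivity from the action on $\one\in H(\fa)$ (cf.\ Remark~\ref{rk:|x|}). The only difference is that you spell out the injectivity of the evaluation-at-$\one$ map $\iota\colon\hat\fa_{\max}\to H(\fa)$ in more detail than the paper does.
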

\begin{proof}
Theorem~\ref{th:|xy|} shows
that $\|x\|_{\sup}\le \|x\|_{\max}$,
so by Lemma~\ref{le:bd=cts} this map
is continuous, and hence extends to a continuous map $\hat\fa_{\max}\to
\Lin(H(\fa))$. 
By Lemma~\ref{le:H}\,(ii), this map preserves the star operation.
The action of $x$ on $\one\in H(\fa)$ shows that $|x|\le \|x\|_{\sup}$,
and that this map is injective, see Remark~\ref{rk:|x|}.
\end{proof}

\section{\texorpdfstring{The $C^*$-algebra $C^*_{\max}(\fa)$}
{The C*-algebra C*max(𝔞)}}

\begin{defn}\label{def:Cmaxfa}
We saw in Theorem~\ref{th:fa-in-Lin} that the map
$\fa_{\bC,\max}\to \Lin(H(\fa))$ sending an element $x$ to left
multiplication by $x$ is a continuous map
of normed $*$-algebras, and extends to an injective map of
Banach $*$-algebras $\hat\fa_{\max}\to\Lin(H(\fa))$.
We let $C^*_{\max}(\fa)$\index{C@$C^*_{\max}(\fa)$} denote
the closure of the image of this map. This is
a commutative $C^*$-algebra.
\end{defn}

\begin{defn}
A species $s\colon \fa\to \bC$ is 
\emph{sup bounded}\index{sup!bounded species|textbf}%
\index{species!sup bounded|textbf} if for
all $x\in\fa_{\cge 0}$ we have 
\[ |s(x)|\le\|x\|_{\sup}=\sup_{|y|=1}|xy|. \]
\end{defn}

\begin{prop}\label{pr:sup-bounded}
For an $\fX_{\max}$-core bounded species of $\fa$, the following are
equivalent:
\begin{enumerate}
\item $s$ is sup bounded,
\item $s$ is continuous with respect to the sup norm,
\item $s$ extends to a $\bC$-algebra homomorphism
  $C^*_{\max}(\fa)\to \bC$.
\end{enumerate}
\end{prop}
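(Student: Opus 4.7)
The plan is to close a cycle of implications, treating the equivalence $(ii)\Leftrightarrow(iii)$ as essentially formal and then connecting both to $(i)$ through automatic continuity and an upgrade argument.

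First I would dispatch the easy directions. For $(iii)\Rightarrow(ii)$, if $s$ extends to $\tilde s\colon C^*_{\max}(\fa)\to\bC$, then Proposition~\ref{pr:continuous} applied to $\tilde s$ on the commutative Banach algebra $C^*_{\max}(\fa)$ yields $|\tilde s(y)|\le\|y\|_{\sup}$ for all $y$, and in particular continuity of $s$ with respect to the sup norm. For $(ii)\Rightarrow(iii)$, note that by Definition~\ref{def:Cmaxfa} the algebra $C^*_{\max}(\fa)$ is the closure of $\fa_{\bC,\max}$ inside $\Lin(H(\fa))$, i.e.\ the completion of $\fa_{\bC,\max}$ in the sup norm; any continuous algebra homomorphism on a dense subalgebra extends uniquely and remains an algebra homomorphism by continuity of the operations. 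Finally $(iii)\Rightarrow(i)$ is immediate by restricting the bound $|\tilde s(y)|\le\|y\|_{\sup}$ to $y=x\in\fa_{\cge 0}$.

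The heart of the matter is $(i)\Rightarrow(ii)$. Since $s$ is $\fX_{\max}$-core bounded, Theorem~\ref{th:core-bounded} already extends $s$ to a continuous homomorphism $\tilde s\colon\hat\fa_{\max}\to\bC$. The task is to upgrade continuity from the weighted $\ell^1$ norm $\|\cdot\|_{\max}$ to the weaker sup norm on $\fa_{\bC,\max}$. For $x\in\fa_{\cge 0}$, $x^n\in\fa_{\cge 0}$ by Lemma~\ref{le:core}(i), so sup-boundedness gives $|s(x)|^n=|s(x^n)|\le\|x^n\|_{\sup}$; letting $n\to\infty$ and using the spectral radius formula (Proposition~\ref{pr:Gelfand}) inside the $C^*$-algebra $C^*_{\max}(\fa)$ yields $|s(x)|\le\rho_{C^*_{\max}}(x)\le\|x\|_{\sup}$. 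This settles the required bound on non-negative elements; the rest of the argument aims to propagate it to all of $\fa_\bC$.

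To pass from $\fa_{\cge 0}$ to $\fa_\bC$, I plan to invoke the symmetry of the commutative $C^*$-algebra $C^*_{\max}(\fa)$ (Theorem~\ref{th:comm-is-symm}) combined with the $C^*$-identity. The strategy is: first show that the extended $\tilde s$ is self-conjugate, using that the adjoint preserves sup norm on $\Lin(H(\fa))$ (Theorem~\ref{th:adjoints}) together with sup-boundedness applied symmetrically to $x$ and $x^*$; then for general $x\in\fa_\bC$ use $|s(x)|^2=s(x)\overline{s(x)}=s(xx^*)$, and bound the right-hand side via the spectral radius of $\pi(xx^*)\in C^*_{\max}(\fa)$, which equals $\|\pi(xx^*)\|_{\sup}=\|x\|_{\sup}^2$ because $\pi(xx^*)=\pi(x)\pi(x)^*$ is a positive operator. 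The main obstacle I anticipate is precisely the self-conjugacy step, since the sup norm lacks the additive behaviour over the distinguished basis that was available for the weighted $\ell^1$ norm in Theorem~\ref{th:core-bounded}, so one cannot simply decompose $x$ into non-negative parts and bound termwise; the $C^*$-identity has to do the work of gluing the non-negative case to the general case.
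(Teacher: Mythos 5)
The paper states Proposition~\ref{pr:sup-bounded} without supplying a proof, so I can only assess your argument on its own terms. The easy implications are handled correctly: (iii)~$\Rightarrow$~(ii) via Proposition~\ref{pr:continuous}, (ii)~$\Leftrightarrow$~(iii) by extension from the dense subalgebra $\fa_{\bC,\max}$, and (iii)~$\Rightarrow$~(i) by restriction to $\fa_{\cge 0}$. The first paragraph of your treatment of (i)~$\Rightarrow$~(ii) is redundant, though: hypothesis (i) is precisely the statement $|s(x)|\le\|x\|_{\sup}$ for $x\in\fa_{\cge 0}$, so the spectral-radius detour merely re-derives the assumption rather than producing new information.

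The substantive gap is in passing from $\fa_{\cge 0}$ to all of $\fa_{\bC,\max}$, and there are two unclosed steps, not just the one you flag. First, self-conjugacy of $\tilde s$ is asserted but not established: the observations that the star operation preserves the sup norm and that $s$ is sup-bounded on both $x_i$ and $x_{i^*}$ only give the magnitude bounds $|s(x_i)|\le\|x_i\|_{\sup}$ and $|s(x_{i^*})|\le\|x_{i^*}\|_{\sup}=\|x_i\|_{\sup}$, and these together with $s(x_i)s(x_{i^*})=s(x_ix_{i^*})$ do not force $s(x_{i^*})=\overline{s(x_i)}$. Second, even granting self-conjugacy, the identity $|s(x)|^2=s(xx^*)$ does not finish the job, because for $x\in\fa_{\bC,\max}$ the element $xx^*$ has complex coefficients and so is generally not in $\fa_{\cge 0}$; thus (i) cannot be applied to it, and the bound you invoke, $s(xx^*)\le\|x\|_{\sup}^2$, is exactly sup-boundedness of $s$ at $xx^*$, which is what you are trying to prove. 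The inequality Corollary~\ref{co:radius} actually provides is that $|s(xx^*)|$ is bounded by the spectral radius of $xx^*$ in $\hat\fa_{\max}$, and that radius can strictly exceed the one in $C^*_{\max}(\fa)$ (which equals $\|x\|_{\sup}^2$), since $\hat\fa_{\max}$ has more algebra homomorphisms to $\bC$. You have correctly diagnosed the root difficulty --- the sup norm does not decompose additively over the distinguished basis, so the argument of Theorem~\ref{th:core-bounded}, which bounds $|s(x_i)|$ termwise, does not transfer --- but the proposed route through self-conjugacy and the $C^*$-identity does not, as written, supply the missing idea.
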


This leads us to another invariant of an element $x\in\fa_{\cge 0}$,
namely the spectral radius in the sup norm. By
Theorem~\ref{th:C*radius}, this is equal to $\|x\|_{\sup}$:\index{gamma@$\npj_{\sup}(x)$}
\[ \npj_{\sup}(x) = \|x\|_{\sup}=\sup_{|y|=1}|xy|. \]
This is really an invariant of the image of $x$ in 
$\fa_{\max}$, and we have
\[ \npj_{\sup}(x) \le \npj_{\max}(x). \]
This invariant has one advantage over the others we have been
examining. Namely, 

\begin{theorem}
For $x\in \fa_{\cge 0}$ we have
\[ \npj_{\sup}(xx^*)=\npj_{\sup}(x)^2. \]
\end{theorem}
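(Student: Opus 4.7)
The plan is to reduce the statement to a direct application of the $C^*$-identity, since essentially all of the substantive work has already been done in constructing $C^*_{\max}(\fa)$ via Theorem~\ref{th:|xy|} and Theorem~\ref{th:fa-in-Lin}.

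By definition of $\npj_{\sup}$, we have $\npj_{\sup}(x) = \|x\|_{\sup}$ and $\npj_{\sup}(xx^*) = \|xx^*\|_{\sup}$, where $\|\cdot\|_{\sup}$ is the operator norm coming from the action on $H(\fa)$. By Definition~\ref{def:Cmaxfa}, the closure $C^*_{\max}(\fa)$ of the image of $\hat\fa_{\max}$ inside $\Lin(H(\fa))$ is a (commutative) $C^*$-algebra. Hence by Definition~\ref{def:Cstar-algebra}, every element $z$ of this algebra satisfies $\|z^*z\|_{\sup} = \|z\|_{\sup}^2$. Applying this to $z = x$ gives $\|x^*x\|_{\sup} = \|x\|_{\sup}^2$, and since $\fa_{\bC,\max}$ is commutative we have $x^*x = xx^*$, so
\[ \|xx^*\|_{\sup} = \|x\|_{\sup}^2. \]
Rewriting both sides in terms of $\npj_{\sup}$ yields the claim.

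Alternatively, one can bypass citing the $C^*$-identity at the abstract level and invoke Theorem~\ref{th:adjoints} directly: for the bounded operator $f$ on $H(\fa)$ given by left multiplication by $x$, we have $\|f^* \circ f\|_{\sup} = \|f\|_{\sup}^2$, and by Lemma~\ref{le:H}\,(ii) the adjoint $f^*$ is left multiplication by $x^*$, so the composite $f^* \circ f$ is left multiplication by $x^*x = xx^*$. This again gives $\|xx^*\|_{\sup} = \|x\|_{\sup}^2$.

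There is no real obstacle here — the only thing one must verify is that the $\|\cdot\|_{\sup}$ appearing in the definition of $\npj_{\sup}$ genuinely coincides with the $C^*$-norm on $C^*_{\max}(\fa)$, which is immediate from Definition~\ref{def:Cmaxfa}. The nontrivial content of the theorem has been absorbed into the construction of the Hilbert space action (Theorem~\ref{th:|xy|}), which is what makes the sup norm a $C^*$-norm in the first place. By contrast, the analogous statement fails for $\npj_{\max}$ in general, precisely because $\hat\fa_{\max}$ need not be a $C^*$-algebra under its own norm.
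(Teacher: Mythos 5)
Your proof is correct and matches the paper's: both reduce the claim to the $C^*$-identity $\|z^*z\|_{\sup}=\|z\|_{\sup}^2$ holding in $\Lin(H(\fa))$ (the paper cites Theorem~\ref{th:LinC*}; your second variant cites Theorem~\ref{th:adjoints}, which is the lemma underlying Theorem~\ref{th:LinC*}). Your first variant, routing through $C^*_{\max}(\fa)$ rather than $\Lin(H(\fa))$ directly, is a cosmetic repackaging of the same argument.
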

\begin{proof}
By Theorem~\ref{th:LinC*}, $\Lin(H(\fa))$ is a $C^*$-algebra with 
respect to the sup norm. In particular, in accordance with 
Definition~\ref{def:Cstar-algebra}, we have $\|xx^*\|_{\sup}=\|x\|^2_{\sup}$.
\end{proof}

\begin{rk}
The $C^*$-algebra $C^*_{\max}(\fa)$ is analogous to the $C^*$-algebra $C^*(\Gamma)$
of a discrete abelian group $\Gamma$. 

Recall that in general, if $\Gamma$ is a
discrete group and $x\in\ell^1(\Gamma)$ then we look at all actions of
$\ell^1(\Gamma)$ on a Hilbert space, and take the \emph{$C^*$-norm}
$\|x\|_{C^*}$ to be
the supremum of the sup norms in these actions. We have
$\|x\|_{C^*}\le \|x\|_{\ell^1}$, and we define the $C^*$-algebra
 $C^*(\Gamma)$ of $\Gamma$ to be the
completion of $\ell^1(\Gamma)$ with respect to this norm. 

If we restrict our attention to the action of $\ell^1(\Gamma)$ on the
Hilbert space $\ell^2(\Gamma)$ by convolution, rather than
using all actions on Hilbert spaces, then we obtain a possibly smaller
norm called the \emph{reduced $C^*$-norm},\index{reduced!$C^*$-norm}
\[ \|x\|_{C^*_r} = \sup_{|y|=1}|x*y|, \] 
and the
\emph{reduced $C^*$-algebra}\index{reduced!$C^*$-algebra} 
$C^*_r(\Gamma)$ is the completion of
$\ell^1(\Gamma)$ with respect to this norm.

The group $\Gamma$ is said to be 
\emph{amenable}\index{amenable group}\index{group!amenable} 
if there exists a 
finitely additive measure\index{finitely!additive measure} 
on $\Gamma$ which is invariant under left
multiplications. If $\Gamma$ is amenable then the $C^*$-norm and
the reduced $C^*$-norm are equal, and we have
$C^*(\Gamma)=C^*_r(\Gamma)$. In particular, all abelian groups are
amenable, and so if $\Gamma$ is abelian then $C^*(\Gamma)$ is the
closure of the image of the action of $\ell^1(\Gamma)$ on $\ell^2(\Gamma)$.
\end{rk}

\section{Quasi-nilpotent elements revisited}\label{se:qn}%
\index{quasi-nilpotent element}

\begin{theorem}\label{th:no-qn}
There are no non-zero quasi-nilpotent elements in $\hat\fa_{\max}$.
\end{theorem}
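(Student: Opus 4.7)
The plan is to deduce the result directly from the constructions of the previous two sections, using the map $\hat\fa_{\max}\to C^*_{\max}(\fa)$ as a bridge to the $C^*$-algebra world, where the analogous statement is already known.

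First I would observe that by Theorem~\ref{th:fa-in-Lin}, the map $\hat\fa_{\max}\to \Lin(H(\fa))$ is an injective continuous $*$-homomorphism. Since $C^*_{\max}(\fa)$ is by Definition~\ref{def:Cmaxfa} the closure of the image of this map in $\Lin(H(\fa))$, the map factors through an injective continuous $*$-homomorphism $\iota\colon \hat\fa_{\max}\to C^*_{\max}(\fa)$, and $C^*_{\max}(\fa)$ is a commutative $C^*$-algebra.

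Now suppose $x\in\hat\fa_{\max}$ is quasi-nilpotent, so that $\sqrt[n]{\|x^n\|_{\max}}\to 0$. Since $\iota$ is a continuous $*$-homomorphism with $\|\iota(y)\|_{\sup}\le \|y\|_{\max}$ for all $y\in\hat\fa_{\max}$ (by Theorem~\ref{th:fa-in-Lin}), we have
\[ \sqrt[n]{\|\iota(x)^n\|_{\sup}}=\sqrt[n]{\|\iota(x^n)\|_{\sup}}\le \sqrt[n]{\|x^n\|_{\max}}\to 0, \]
so $\iota(x)$ is quasi-nilpotent in $C^*_{\max}(\fa)$.

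Finally, Corollary~\ref{co:J(C)} says that the only quasi-nilpotent element of a commutative $C^*$-algebra is zero, so $\iota(x)=0$, and injectivity of $\iota$ forces $x=0$. There is essentially no obstacle here; the real work has already been done in proving Theorem~\ref{th:|xy|} (and hence Theorem~\ref{th:fa-in-Lin}), which is what lets us transfer the quasi-nilpotence problem from $\hat\fa_{\max}$ into the $C^*$-algebra setting where Corollary~\ref{co:J(C)} applies.
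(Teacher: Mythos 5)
Your proof is correct and follows the paper's own argument essentially exactly: pass the element through the injective continuous $*$-homomorphism $\hat\fa_{\max}\to C^*_{\max}(\fa)$ from Theorem~\ref{th:fa-in-Lin} and Definition~\ref{def:Cmaxfa}, note that the norm inequality preserves quasi-nilpotence, and conclude via Corollary~\ref{co:J(C)} and injectivity. The only difference is that you spell out the norm estimate $\sqrt[n]{\|\iota(x^n)\|_{\sup}}\le\sqrt[n]{\|x^n\|_{\max}}$ explicitly, whereas the paper simply asserts that the image is quasi-nilpotent.
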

\begin{proof}
By Theorem~\ref{th:fa-in-Lin} and Definition~\ref{def:Cmaxfa}, 
we have a continuous injective map
from $\hat\fa_{\max}$ to $\Lin(H(\fa))$, and the closure of its image
is the commutative $C^*$-algebra $C^*_{\max}(\fa)$. If $x$ is a quasi-nilpotent
element of $\hat\fa_{\max}$ then its image in $C^*_{\max}(\fa)$ is also
quasi-nilpotent and hence, by Corollary~\ref{co:J(C)}, equal to zero.
\end{proof}

\begin{cor}
If $x$ is a non-zero element of $\hat\fa_{\max}$ then there is a sup
bounded\index{sup!bounded species} (and hence 
$\fX_{\max}$-core\index{X@$\fX_{\max}$} bounded)
species $s$ with $s(x)\ne 0$.
\end{cor}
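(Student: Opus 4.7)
My plan is to exploit the injective continuous $*$-homomorphism $\hat\fa_{\max}\hookrightarrow C^*_{\max}(\fa)$ constructed in Theorem~\ref{th:fa-in-Lin} and Definition~\ref{def:Cmaxfa}, together with the fact that $C^*_{\max}(\fa)$ is a commutative $C^*$-algebra. The whole statement is essentially a repackaging of Theorem~\ref{th:no-qn} via the machinery already in place.

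First I would observe that since $x\in\hat\fa_{\max}$ is non-zero and the map $\hat\fa_{\max}\to C^*_{\max}(\fa)$ is injective, the image $\bar x$ of $x$ in $C^*_{\max}(\fa)$ is non-zero. By Corollary~\ref{co:J(C)}, the Jacobson radical of the commutative $C^*$-algebra $C^*_{\max}(\fa)$ is zero, so by Proposition~\ref{pr:J} its elements are separated by $\bC$-algebra homomorphisms. Applying this to $\bar x$ produces a $\bC$-algebra homomorphism $\phi\colon C^*_{\max}(\fa)\to\bC$ with $\phi(\bar x)\ne 0$.

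Next I would pull $\phi$ back along the composite $\fa\to\fa_{\bC,\max}\to\hat\fa_{\max}\to C^*_{\max}(\fa)$ to obtain a ring homomorphism $s\colon\fa\to\bC$, i.e., a species. By Proposition~\ref{pr:sup-bounded}, the fact that $s$ extends to a $\bC$-algebra homomorphism on $C^*_{\max}(\fa)$ is precisely the condition that $s$ is sup bounded; and since $\|\bul\|_{\sup}\le\|\bul\|_{\max}$ by Theorem~\ref{th:fa-in-Lin}, sup boundedness immediately implies $\fX_{\max}$-core boundedness. Interpreting $s(x)$ via its unique continuous extension to $\hat\fa_{\max}$ (obtained by composing $\hat\fa_{\max}\to C^*_{\max}(\fa)$ with $\phi$), we have $s(x)=\phi(\bar x)\ne 0$, yielding the conclusion.

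I do not expect any substantive obstacle; the only mildly delicate point is the bookkeeping around the fact that $x$ lies in the completion $\hat\fa_{\max}$ rather than in $\fa$ itself, which is why the extension to $C^*_{\max}(\fa)$ (and hence the definition of $s(x)$) has to be handled carefully via the continuity provided by Proposition~\ref{pr:sup-bounded}.
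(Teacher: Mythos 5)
Your argument is correct and is essentially the paper's intended proof, just with the steps spelled out: the paper's one-line citation of Theorem~\ref{th:qn}, Proposition~\ref{pr:sup-bounded}, and Theorem~\ref{th:no-qn} unpacks to exactly the route you take through $C^*_{\max}(\fa)$, the vanishing of its Jacobson radical (Corollary~\ref{co:J(C)} via Proposition~\ref{pr:J}, which is the content behind Theorem~\ref{th:qn}), and the restriction of the resulting $\bC$-algebra homomorphism to a sup bounded species via Proposition~\ref{pr:sup-bounded}. You have also correctly supplied the auxiliary observation that $\|\cdot\|_{\sup}\le\|\cdot\|_{\max}$ gives the parenthetical implication from sup bounded to $\fX_{\max}$-core bounded.
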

\begin{proof}
This follows from Theorem~\ref{th:qn},
Proposition~\ref{pr:sup-bounded}, and Theorem \ref{th:no-qn}.
\end{proof}

\begin{cor}
\begin{enumerate}
\item The Jacobson radical of $\hat\fa_{\max}$ is zero.
\item The Jacobson radical of $\fa_\bC/\langle\fX_{\max}\rangle$ is zero.
\end{enumerate}   
\end{cor}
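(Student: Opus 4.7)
The plan is to deduce both parts directly from the machinery already in place, with part~(i) being a one-line consequence of earlier results, and part~(ii) using~(i) together with the embedding $\fa_\bC/\langle\fX_{\max}\rangle\hookrightarrow\hat\fa_{\max}$.

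For part~(i), I would argue as follows. By Theorem~\ref{th:qn}, in any commutative Banach algebra the Jacobson radical coincides with the set of quasi-nilpotent elements (equivalence of (i) and (v) there). Since $\hat\fa_{\max}$ is a commutative Banach algebra and Theorem~\ref{th:no-qn} shows that it contains no non-zero quasi-nilpotent elements, we conclude $J(\hat\fa_{\max})=0$.

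For part~(ii), set $A=\fa_\bC/\langle\fX_{\max}\rangle$, viewed as a (not necessarily complete) subalgebra of $\hat\fa_{\max}$ via the canonical dense embedding. Suppose $x\in J(A)$. The key observation is that every $\bC$-algebra homomorphism $\phi\colon\hat\fa_{\max}\to\bC$ restricts to a $\bC$-algebra homomorphism $\phi|_A\colon A\to\bC$, and since $A/\ker(\phi|_A)$ embeds into $\bC$ and is therefore a field, $\ker(\phi|_A)$ is a maximal ideal of $A$. Because $x$ lies in every maximal ideal of $A$, we have $\phi(x)=\phi|_A(x)=0$ for every such $\phi$. Applying part~(i) together with Proposition~\ref{pr:J} (which says that the Jacobson radical of $\hat\fa_{\max}$ is the intersection of the kernels of its algebra homomorphisms to $\bC$), this forces $x=0$ in $\hat\fa_{\max}$, and hence in $A$.

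There is essentially no obstacle; the only point requiring a moment's care is the observation that restriction of a homomorphism $\hat\fa_{\max}\to\bC$ to the dense subalgebra $A$ automatically has maximal kernel, simply because its image lands in a field. Note that this route bypasses any attempt to apply Theorem~\ref{th:J=nil} directly to a quotient of $\fa_\bC$: axiom~(v) of a representation ring can fail for $\fa/\langle\fX_{\max}\rangle$ (the element $\rho$ dies), so we cannot treat the quotient as a representation ring in its own right, but completing and invoking~(i) sidesteps that difficulty entirely.
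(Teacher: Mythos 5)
Your argument is correct and follows essentially the same route as the paper: both rely on Theorem~\ref{th:no-qn} together with Theorem~\ref{th:qn} for part~(i), and both obtain part~(ii) by restricting homomorphisms from $\hat\fa_{\max}$ to the dense subalgebra $\fa_\bC/\langle\fX_{\max}\rangle$ (the paper phrases it contrapositively, exhibiting a species not vanishing on a given nonzero element, while you argue directly that $J(A)$ lies in the kernel of every such restriction). One small imprecision to flag: you justify the maximality of $\ker(\phi|_A)$ by saying that $A/\ker(\phi|_A)$ embeds into $\bC$ and is therefore a field -- but a subring of a field need not be a field. The actual reason is that $\phi|_A$ is a unital $\bC$-algebra homomorphism, so its image contains every scalar $\lambda\cdot\one$ and is therefore all of $\bC$; surjectivity is what gives $A/\ker(\phi|_A)\cong\bC$, and the paper explicitly invokes this surjectivity. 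Your closing remark about why Theorem~\ref{th:J=nil} cannot be applied directly to the quotient (axiom~(v) failing) is a useful observation, though one could also note that even granting a representation-ring structure on the quotient one would still need to rule out nilpotents, which is Theorem~\ref{th:no-n}.
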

\begin{proof}
(i) By the previous corollary, every non-zero element $x$ lies outside some
maximal ideal $I$ of $\hat\fa_{\max}$ with $\hat\fa_{\max}/I$
isomorphic to $\bC$
via an $\fX_{\max}$-core bounded species $s$.

(ii) If $x$ is a non-zero element of 
$\fa_\bC/\langle\fX_{\max}\rangle\subseteq\hat\fa_{\max}$ then
the species of part (i), $s\colon \fa_\bC/\langle\fX_{\max}\rangle \to \bC$ 
is surjective, and $x$ is
not in its kernel. So $x\not\in J(\fa_\bC/\langle\fX_{\max}\rangle)$.
\end{proof}

\section{Idempotents}\label{se:idempotents}

In this section we examine idempotents in $\hat\fa_{\max}$ and
in $\fa_{K,\max}=K\otimes_\bZ \fa_{\max}$%
\index{a@$\fa_{K,\max}=K\otimes_\bZ \fa_{\max}$}
with $K$ a field. We show that if
$e\in\hat\fa_{\max}$ is idempotent and not equal to zero or one 
then $0<\Tr(e)<1$; and if $e\in\fa_{K,\max}$ then $\Tr(e)$ is
in the ground field of $K$.

The following is the analogue for representation rings 
of a theorem of Kaplansky on group rings (unpublished,
but see the end of \S II.3 of Kaplansky~\cite{Kaplansky:1972a},
Lemma~2 of Montgomery~\cite{Montgomery:1969a}, or \S 2.1 of
Passman~\cite{Passman:1977a}).

\begin{theorem}\label{th:Tr(e)}
If $e\in\hat\fa_{\max}$ is idempotent,\index{idempotent} 
$e\ne 0,1$ then $0<\Tr(e)<1$.
\end{theorem}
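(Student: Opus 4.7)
The strategy, in analogy with Kaplansky's argument for group algebras, is to realise $\Tr(e)$ as the squared Hilbert space norm $|e|^2$ in $H(\fa)$ and then apply the same estimate to the complementary idempotent $1-e$. First I would extend the trace to $\hat\fa_{\max}$: since $\dim\one=1$ and $0\notin\fX_{\max}$, the coefficient-of-$\one$ functional $\Tr$ satisfies $|\Tr(x)|\le\|x\|_{\max}$ on $\fa_{\bC,\max}$, so it extends continuously, and the identity $e^2=e$ passes to the completion to give $\Tr(e)=\Tr(e^2)$.

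The key step, and the one I expect to be the main obstacle, is proving $e=e^*$. The plan is to exploit the injective $*$-homomorphism $\hat\fa_{\max}\hookrightarrow C^*_{\max}(\fa)$ provided by Theorem~\ref{th:fa-in-Lin} and Definition~\ref{def:Cmaxfa}. Since $C^*_{\max}(\fa)$ is a commutative $C^*$-algebra, Theorem~\ref{th:comm-is-symm} makes it a symmetric Banach $*$-algebra, and Theorem~\ref{th:e=e*} then forces every idempotent in it to be self-conjugate. Pulling back through the injective $*$-homomorphism gives $e^*=e$ in $\hat\fa_{\max}$. The subtlety here is that there is no purely algebraic reason for a Banach algebra idempotent to be self-conjugate; one really has to pass through the representation on $H(\fa)$ constructed in the previous sections.

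Putting the pieces together, Lemma~\ref{le:H}(i) yields
\[ \Tr(e)=\Tr(e^2)=\Tr(ee^*)=\langle e,e\rangle=|e|^2\ge 0. \]
By Remark~\ref{rk:|x|}, the natural map $\hat\fa_{\max}\hookrightarrow H(\fa)$ is injective, so $e\ne 0$ forces $|e|>0$ and hence $\Tr(e)>0$. Finally, $(1-e)^2=1-e$ and $1-e\ne 0$ since $e\ne 1$, so exactly the same argument applied to the idempotent $1-e$ yields $\Tr(1-e)>0$, that is, $\Tr(e)<1$.
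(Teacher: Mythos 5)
Your proof is correct and follows essentially the same route as the paper: embed $\hat\fa_{\max}$ in the commutative $C^*$-algebra $C^*_{\max}(\fa)$, deduce $e=e^*$ from Theorem~\ref{th:comm-is-symm} and Theorem~\ref{th:e=e*}, identify $\Tr(e)=\Tr(ee^*)=\langle e,e\rangle=|e|^2>0$ via Lemma~\ref{le:H}, and apply the same argument to $1-e$. You spell out a couple of points the paper leaves implicit (the continuous extension of $\Tr$ to $\hat\fa_{\max}$, and the appeal to Remark~\ref{rk:|x|} for injectivity of $\hat\fa_{\max}\hookrightarrow H(\fa)$), but the strategy and all key lemmas coincide.
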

\begin{proof}
We have $e\in \hat\fa_{\max}\subseteq C^*_{\max}(\fa)$.
Now $C^*_{\max}(\fa)$ is a $C^*$-algebra, 
and hence a symmetric Banach $*$-algebra. 
By Theorem~\ref{th:e=e*} we have $e=e^*$ and so $e=e^*e$.
Now using Lemma~\ref{le:H}, we have
\[ \Tr(e)=\Tr(e^*e) = \langle e,e\rangle >0 .\]
Since $1-e$ is also an idempotent, we have $0<\Tr(e)<1$.
\end{proof}

\begin{cor}
There are no non-trivial idempotents in
$\fa_{\max}$.
\end{cor}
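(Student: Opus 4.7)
The plan is to reduce to Theorem~\ref{th:Tr(e)} by exploiting the fact that the trace of an element of $\fa_{\max}=\fa/\langle\fX_{\max}\rangle$ is an integer, not merely a complex number in the interval $(0,1)$.

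First I would observe that there is a natural chain of ring inclusions $\fa_{\max}\hookrightarrow \fa_{\bC,\max}\hookrightarrow \hat\fa_{\max}$. The first is obtained by extension of scalars, and the second is the inclusion of a dense subalgebra into its Banach completion; both preserve the identity element $\one$ and both preserve the trace map $\Tr$, since $\Tr$ just reads off the coefficient of $x_0=\one$ and this coefficient is preserved by these maps.

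Next, suppose for contradiction that $e\in\fa_{\max}$ is an idempotent with $e\ne 0$ and $e\ne \one$. Viewing $e$ as an element of $\hat\fa_{\max}$ via the inclusion above, it remains a non-trivial idempotent there. By Theorem~\ref{th:Tr(e)}, we have $0<\Tr(e)<1$. On the other hand, $e=\sum_{i\in\fI\setminus\fX_{\max}}a_ix_i$ with each $a_i\in\bZ$, so $\Tr(e)=a_0\in\bZ$. No integer lies strictly between $0$ and $1$, so this is the desired contradiction.

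There is no real obstacle here; the whole content has been packaged into Theorem~\ref{th:Tr(e)}. The only thing to verify is the compatibility of the trace with passage to the completion, which is immediate from the fact that in $\hat\fa_{\max}$ the coefficient $a_0$ in an absolutely convergent expansion $\sum a_ix_i$ is uniquely determined, and agrees on $\fa_{\max}$ with the integer coefficient of $\one$.
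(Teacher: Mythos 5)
Your proof is correct and is essentially the same as the paper's: both reduce to Theorem~\ref{th:Tr(e)} via the observation that $\Tr(e)$ is an integer for $e\in\fa_{\max}$, so $0<\Tr(e)<1$ is impossible unless $e=0$ or $e=\one$. You have simply spelled out the (routine) compatibility of $\Tr$ with the inclusion $\fa_{\max}\hookrightarrow\hat\fa_{\max}$, which the paper leaves implicit.
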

\begin{proof}
If $e\in \fa_{\max}$ then $\Tr(e)$ is a rational
integer. By Theorem~\ref{th:Tr(e)} it follows that if $e$ is
idempotent then $e$ is equal to zero or one.
\end{proof}

\begin{cor}\label{co:Tr-tot-real}
Let $K$ be a field of characteristic zero.
If $e$ is an idempotent in $\fa_{K,\max}$ then $\Tr(e)$ is 
a totally real algebraic element of $K$. Every element 
of $\bC$ satisfying its minimal equation is a real number 
between $0$ and $1$.
\end{cor}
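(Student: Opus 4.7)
The plan is to reduce the statement to the complex case already handled in Theorem~\ref{th:Tr(e)} by exploiting field embeddings into $\bC$. Since $e \in \fa_{K,\max}$ is a finite $K$-linear combination $\sum_i a_i x_i$ with only finitely many nonzero coefficients $a_1,\dots,a_n \in K$, these coefficients generate a finitely generated subfield $L = \bQ(a_1,\dots,a_n) \subseteq K$, and $e \in \fa_{L,\max}$. For any field embedding $\sigma \colon L \hookrightarrow \bC$, applying $\sigma$ coefficient-wise gives a ring homomorphism $\fa_{L,\max} \to \fa_{\bC,\max}$, so the image $e^\sigma$ is idempotent in $\fa_{\bC,\max} \subseteq \hat\fa_{\max}$. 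Moreover, since $\Tr$ just reads off the coefficient of $\one$, we have $\Tr(e^\sigma) = \sigma(\Tr(e))$.

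Now I would invoke Theorem~\ref{th:Tr(e)}: either $e^\sigma \in \{0,\one\}$, in which case $\Tr(e^\sigma) \in \{0,1\}$, or $e^\sigma$ is a nontrivial idempotent in $\hat\fa_{\max}$ and $0 < \Tr(e^\sigma) < 1$. In every case $\sigma(\Tr(e)) \in [0,1] \cap \bR$. Setting $\alpha = \Tr(e)$, this says that for every embedding $\sigma \colon L \hookrightarrow \bC$ one has $\sigma(\alpha) \in [0,1]$.

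To conclude I would show that $\alpha$ is algebraic over $\bQ$. Suppose instead that $\alpha$ is transcendental over $\bQ$. Then $\bQ(\alpha) \cong \bQ(t)$ is purely transcendental, so for any chosen $t_0 \in \bC$ there is a $\bQ$-embedding $\bQ(\alpha) \hookrightarrow \bC$ sending $\alpha \mapsto t_0$. Since $\bC$ is algebraically closed of infinite transcendence degree over $\bQ$, this extends to an embedding $L \hookrightarrow \bC$. Choosing $t_0 \notin \bR$ contradicts the conclusion of the previous paragraph; hence $\alpha$ is algebraic over $\bQ$. The minimal polynomial of $\alpha$ over $\bQ$ has its roots exactly the images $\tau(\alpha)$ as $\tau \colon \bQ(\alpha) \hookrightarrow \bC$ ranges over $\bQ$-embeddings, each of which extends to some $\sigma \colon L \hookrightarrow \bC$ by the same extension principle; by the second paragraph every such root lies in $[0,1] \cap \bR$, giving both total reality and the interval bound.

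The main obstacle is the transcendence-to-algebraic step: one must argue carefully that an arbitrary complex value can be realised as $\sigma(\alpha)$ for some embedding $\sigma \colon L \hookrightarrow \bC$, and that each conjugate of $\alpha$ actually arises this way. Both rely on the standard fact that $\bC$ has infinite transcendence degree over $\bQ$, so partial embeddings of finitely generated subfields extend; the remaining steps (compatibility of $\Tr$ with $\sigma$, and the appeal to Theorem~\ref{th:Tr(e)}) are immediate.
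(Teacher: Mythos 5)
Your proof is correct and follows essentially the same path as the paper: reduce to the finitely generated subfield $\bQ(a_1,\dots,a_n)$, apply $\sigma$ coefficient-wise to land in $\fa_{\bC,\max}$ for each embedding $\sigma$ into $\bC$, invoke Theorem~\ref{th:Tr(e)}, and then use the existence of embeddings of finitely generated fields into $\bC$ (of infinite transcendence degree over $\bQ$) to rule out transcendence and to sweep out all conjugates. Your write-up is in fact slightly more explicit than the paper's about why the transcendence argument works and about the trivial idempotent edge cases, but the underlying idea is identical.
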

\begin{proof}
Let $e=\sum_{i=1}^n a_i x_i$ with $i\in\fI\setminus\fX$, and
let $K_0=\bQ(a_1,\dots,a_n)$. 
As an abstract field, $\bC$ is an algebraic closure of an infinite
transcendental extension of $\bQ$. For every field
embedding $K_0\to\bC$,
Theorem~\ref{th:Tr(e)} shows that the image of $\Tr(e)$ is a real
number lying between $0$ and $1$. If $\Tr(e)$ were transcendental,
there would exist an embedding $K_0\to\bC$ taking $\Tr(e)$ to a
non-real number, and therefore $\Tr(e)$ is algebraic. Moreover, 
given any complex number satisfying its minimal equation, again 
there exists a field embedding $K_0\to\bC$ taking $\Tr(e)$ there.
\end{proof}

In the case of group rings, 
Zalesskii~\cite{Zalesskii:1972a} 
has shown that $\Tr(e)$ has to be rational. 
The proof does not appear to extend to our situation.

\begin{cor}
Let $K$ be a field of characteristic zero whose only totally real
subfield is $\bQ$, and let $\cO_K$ be its ring of integers. 
Then there are no idempotents in $\fa_{\cO_K,\max}$
other than $0$ and $1$.
\end{cor}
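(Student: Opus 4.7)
The plan is to combine Corollary~\ref{co:Tr-tot-real} with the hypothesis on totally real subfields of $K$ to force $\Tr(e)$ simultaneously into $\bZ$ and into the open interval $(0,1)$, which is impossible.

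Let $e$ be an idempotent in $\fa_{\cO_K,\max}$. Since $\fa_{\max}$ is free as an abelian group on $\{x_i \mid i\in\fI\setminus\fX_{\max}\}$, the natural map $\fa_{\cO_K,\max}\to\fa_{K,\max}$ is injective, so $e$ is also an idempotent of $\fa_{K,\max}$. By Corollary~\ref{co:Tr-tot-real}, $\Tr(e)$ is totally real algebraic in $K$, so $\bQ(\Tr(e))$ is a totally real subfield of $K$. The hypothesis on $K$ then forces $\Tr(e)\in\bQ$. Writing $e=\sum_i a_ix_i$ with $a_i\in\cO_K$, we have $\Tr(e)=a_0\in\cO_K$, and because $\bZ$ is integrally closed in $\bQ$ we obtain $\Tr(e)\in\cO_K\cap\bQ=\bZ$.

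Suppose for contradiction that $e\ne 0,1$. Following the proof of Corollary~\ref{co:Tr-tot-real}, pick any field embedding $\sigma\colon K_0\hookrightarrow \bC$, where $K_0=\bQ(a_1,\dots,a_n)$ is the finitely generated subfield of $K$ containing the (finitely many nonzero) coefficients of $e$. The image $\sigma(e)=\sum_i \sigma(a_i)x_i$ is an idempotent in $\fa_{\bC,\max}\subseteq\hat\fa_{\max}$. The key observation is that injectivity of $\sigma$ on $K_0$ guarantees $\sigma(e)\ne 0,1$: indeed $\sigma(e)=0$ would force all $a_i=0$, while $\sigma(e)=1$ would force $a_0=1$ and $a_i=0$ for $i\ne 0$, and hence $e=1$. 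Theorem~\ref{th:Tr(e)} therefore gives $0<\Tr(\sigma(e))<1$ strictly. Since $\Tr(e)\in\bQ$ is fixed by $\sigma$, this reads $0<\Tr(e)<1$, contradicting $\Tr(e)\in\bZ$.

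The heavy lifting has already been done in Theorem~\ref{th:Tr(e)} and Corollary~\ref{co:Tr-tot-real}; what remains is purely organisational. The only point demanding any care is preserving the strict bound $0<\Tr(\sigma(e))<1$ when passing through an embedding $\sigma\colon K_0\to\bC$, which reduces to checking that a nontrivial idempotent in $\fa_{\cO_K,\max}$ stays nontrivial after applying $\sigma$ term by term---an immediate consequence of the injectivity of $\sigma$.
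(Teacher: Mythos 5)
Your proof is correct and follows essentially the same route as the paper: force $\Tr(e)$ into $\bQ\cap\cO_K=\bZ$ via Corollary~\ref{co:Tr-tot-real} and the hypothesis on totally real subfields, then contradict Theorem~\ref{th:Tr(e)} under a field embedding into $\bC$. You spell out more explicitly than the paper the point that a nontrivial idempotent stays nontrivial under the embedding $\sigma\colon K_0\hookrightarrow\bC$, which is the small detail that makes the application of the strict inequality $0<\Tr(\sigma(e))<1$ legitimate; that extra care is welcome but does not change the underlying argument.
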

\begin{proof}
Let $e$ be an idempotent in $\fa_{\cO_K,\max}$. Then by
Corollary~\ref{co:Tr-tot-real}, $\Tr(e)$ is a totally real element of
$\cO_K$. By the hypothesis on $K$, it follows that $\Tr(e)$ is in
$\bQ\cap\cO_K=\bZ$. By Theorem~\ref{th:Tr(e)} it follows that $e=0$ or $e=1$.
\end{proof}

\chapter{Representation rings of finite groups}\label{ch:fingrp}

\section{\texorpdfstring{Preliminaries on $kG$-modules}
{Preliminaries on kG-modules}}

Let $G$ be a finite group\index{finite!group} and $k$ a field of characteristic $p$.
Throughout this text, we only consider finitely
generated\index{finitely!generated} $kG$-modules.\index{module!for finite group}
When we write the tensor product\index{tensor product} $M\otimes N$ of two
$kG$-modules $M$ and $N$ we mean the tensor product over the field
$k$, $M\otimes_k N$, with diagonal $G$-action. Thus
$g(m\otimes n)=gm\otimes gn$ for $g\in G$, $m\in M$ and $n\in N$.
We write $M^*$ for the linear dual\index{dual module} of $M$, with $G$-action given by 
$(g(f))(m)=f(g^{-1}(m))$. We write $k$ for the trivial
$kG$-module,\index{trivial!module}\index{module!trivial}
namely a copy of the field $k$ on which all elements of $G$ act as the
identity.

The various parts of the following proposition are due to
Benson and Carlson~\cite{Benson/Carlson:1986a}
and Auslander and Carlson~\cite{Auslander/Carlson:1986a}.

\begin{prop}\label{pr:AC}
Let $M$ be a $kG$-module. 
\begin{enumerate}
\item
$M$ is isomorphic to a direct summand of 
$M\otimes M^*\otimes M$. 
\item
If $p$ divides the dimension of
$M$ then $M \otimes M^* \otimes M$
has a direct summand isomorphic to $M \oplus M$. 
\item
  If $p$ does not divide the dimension of $M$ then $M \otimes M^*$
  has a direct summand isomorphic to $k$. 
\item
If $M$ and $N$ are indecomposable and $M\otimes N$ has a direct
summand isomorphic to $k$ then $N\cong M^*$.
\end{enumerate}
\end{prop}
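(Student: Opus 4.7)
The plan is to exploit the rigid symmetric monoidal structure on finite dimensional $kG$-modules. Writing $\{m_i\}$ for a basis of $M$ with dual basis $\{m_i^*\}$ in $M^*$, I would use the two evaluations $\ev \colon M^* \otimes M \to k$, $\ev' \colon M \otimes M^* \to k$ and the two coevaluations $\mathsf{coev} \colon k \to M \otimes M^*$, $\mathsf{coev}' \colon k \to M^* \otimes M$ sending $1$ to $\sum_i m_i \otimes m_i^*$ and $\sum_i m_i^* \otimes m_i$ respectively. All four maps are $kG$-linear and satisfy the usual zig-zag (triangle) identities.

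Parts (i) and (iii) are then formal. For (i), the standard zig-zag identity writes $\mathrm{id}_M$ as the composite $M \xrightarrow{1 \otimes \mathsf{coev}'} M \otimes M^* \otimes M \xrightarrow{\ev' \otimes 1} M$, so $M$ splits off $M \otimes M^* \otimes M$. For (iii), the composite $k \xrightarrow{\mathsf{coev}} M \otimes M^* \xrightarrow{\ev'} k$ is multiplication by $\dim M$, which is a unit of $k$ exactly when $p \nmid \dim M$; this splits $k$ off $M \otimes M^*$.

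For (ii) I would combine \emph{both} zig-zags. There are two natural inclusions $\iota_1 = 1 \otimes \mathsf{coev}'$ and $\iota_2 = \mathsf{coev} \otimes 1$ from $M$ into $M \otimes M^* \otimes M$, together with two retractions $\pi_1 = \ev' \otimes 1$ and $\pi_2 = 1 \otimes \ev$ back to $M$. The zig-zag identities give $\pi_j\iota_j = \mathrm{id}_M$, while a direct computation in the basis $\{m_i\}$ shows that the cross terms are $\pi_1\iota_2 = \pi_2\iota_1 = (\dim M)\,\mathrm{id}_M$. Hence the endomorphism of $M \oplus M$ obtained by composing $(\iota_1,\iota_2)$ with $(\pi_1,\pi_2)$ has matrix $\left(\begin{smallmatrix} 1 & \dim M \\ \dim M & 1 \end{smallmatrix}\right)$, which collapses to the identity when $p \mid \dim M$; both copies of $M$ then split off simultaneously.

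The main obstacle is (iv). Suppose $M$ and $N$ are indecomposable with a splitting $\alpha \colon k \to M \otimes N$, $\beta \colon M \otimes N \to k$, $\beta\alpha = \mathrm{id}_k$. The rigidity adjunctions $\Hom_{kG}(k, M \otimes N) \cong \Hom_{kG}(M^*, N)$ and $\Hom_{kG}(M \otimes N, k) \cong \Hom_{kG}(N, M^*)$ convert $\alpha$ to a map $\tilde\alpha \colon M^* \to N$ and $\beta$ to a map $\tilde\beta \colon N \to M^*$, via the explicit formulas $\alpha(1) = \sum_i m_i \otimes \tilde\alpha(m_i^*)$ and $\beta(m \otimes n) = \tilde\beta(n)(m)$. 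The key computation I would then carry out is that $\beta\alpha = \sum_i \tilde\beta(\tilde\alpha(m_i^*))(m_i)$ is precisely the $k$-linear trace of the endomorphism $\tilde\beta\tilde\alpha$ of $M^*$, so this trace equals $1 \ne 0$. Since $M^*$ is indecomposable, $\mathrm{End}_{kG}(M^*)$ is a finite dimensional local $k$-algebra whose Jacobson radical consists of nilpotent endomorphisms, all of trace zero; hence $\tilde\beta\tilde\alpha$ is outside the radical and so is an automorphism of $M^*$. Then $\tilde\alpha$ is a split monomorphism, exhibiting $M^*$ as a summand of the indecomposable $N$, and $N \cong M^*$. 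The step I expect to need the most care is the trace identification $\beta\alpha = \mathrm{tr}(\tilde\beta\tilde\alpha)$, since this is what turns the scalar splitting condition into a statement Fitting's lemma for $\mathrm{End}_{kG}(M^*)$ can close.
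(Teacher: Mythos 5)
Your proof is correct and takes essentially the same approach as the paper's: the explicit inclusion/retraction maps in (i)--(iii) are the same zig-zag/(co)evaluation maps written in a slightly more categorical notation, (ii) uses the same two inclusions and two projections with the same $\bigl(\begin{smallmatrix}1&\dim M\\\dim M&1\end{smallmatrix}\bigr)$ matrix observation, and (iv) uses the same adjunction-to-trace argument followed by the locality of $\mathrm{End}_{kG}(M^*)$ and indecomposability of $N$.
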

\begin{proof}
Let $\{m_1,\dots,m_n\}$ be a basis for $M$ and $\{f_1,\dots,f_n\}$ the dual basis of $M^*$.
Thus $\sum_i f_i(m_i)=n=\dim(M)$, and 
for $m\in M$ we have $m=\sum_i f_i(m)m_i$.

For (i) we have maps $M \to M \otimes M^* \otimes M$ given
by $m \mapsto \sum_i m \otimes f_i \otimes m_i$ and
$M \otimes M^* \otimes M \to M$ given by $m \otimes f \otimes m'
\mapsto f(m)m'$, with composite the identity on $M$.

For (ii)
(cf.~Proposition 4.9 in Auslander and Carlson 
\cite{Auslander/Carlson:1986a}, where this is proved
with the further hypothesis that $M$ is indecomposable, but
this hypothesis is not used in the proof), we have maps $M \oplus M \to M \otimes M^* \otimes M$
given by 
\[ (m,m') \mapsto \sum_i(m \otimes f_i \otimes m_i +
m_i \otimes f_i \otimes m') \] 
and $M \otimes M^* \otimes M
\to M\oplus M$ given by $m \otimes f \otimes m' \mapsto (f(m)m',f(m')m)$.
If $M$ has dimension divisible by $p$ then the composite is 
the identity on $M \oplus M$.

For (iii), we have maps $k\to M \otimes M^*$ given by $1 \mapsto
\sum_i m_i \otimes f_i$ and $M \otimes M^* \to k$ given by $m \otimes
f \mapsto f(m)$. If $p$ does not divide the dimension of $M$ then the
composite of these maps is non-zero.

For (iv), we have maps $k\to M\otimes N \to k$ whose composite is
non-zero. Associated to these is the map
\[ \Hom_{kG}(M \otimes N,k) \times \Hom_{kG}(k,M \otimes N) \to k \]
given by composition. Adjointly,
\[ \Hom_{kG}(N,M^*)\times \Hom_{kG}(M^*,N) \to k \]
is given by composition followed by trace. Thus there are maps $M^*\to
N\to M^*$ whose composite has non-zero trace. Since $M^*$ is
indecomposable, endomorphisms which are not isomorphisms have zero
trace, so the composite is an isomorphism. Finally, since $N$ is
indecomposable, both maps must be isomorphisms.
\end{proof}

\section{\texorpdfstring{The representation ring $a(G)$}
{The representation ring a(G)}}

Let $G$ be a finite group and $k$ a field of characteristic $p$.
Let $a(G)$\index{a@$a(G)$, $a_\bC(G)$|textbf} be the representation
ring,\index{representation!ring} or
\emph{Green ring}\index{Green ring} of $kG$. This has as a basis the symbols
$[M_i]$, where $M_i$ is a indecomposable
$kG$-module, and $i$ is in a suitable indexing set $\fI$.
The symbol $[M_i]$ only depends on the isomorphism
class of $M_i$. If $M=\bigoplus_i n_iM_i$ then we write $[M]$ for
$\sum_i n_i[M_i]\in a(G)$.
By the Krull--Schmidt theorem, this is well defined, and identifies the non-negative
elements of $a(G)$ with the isomorphism classes
$[M]$ of $kG$-modules. The multiplication in
$a(G)$ is then given on non-negative elements by the tensor product, $[M][N]=[M\otimes_k N]$,
and extended bilinearly to all elements.
  
We shall also be interested in the complexification of the
representation ring, $a_\bC(G) = \bC \otimes_\bZ a(G)$.

\begin{prop}\label{pr:a(G)repring}
The representation ring $a(G)$ of a finite group over a field $k$ 
is an example of a representation ring in the
sense of Definition~\ref{def:repring}. It is an ordinary
representation ring in the sense of Definition~\ref{def:modular} 
if $k$ has characteristic zero, or prime
characteristic $p$ not dividing $|G|$, and a modular representation
ring if $k$ has characteristic $p$ dividing $|G|$.
\end{prop}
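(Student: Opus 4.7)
The plan is to verify axioms (i)--(v) of Definition~\ref{def:repring} one by one for $\fa = a(G)$ with basis $\{[M_i]\}_{i\in\fI}$ indexed by isomorphism classes of finitely generated indecomposable $kG$-modules, $\one = [k]$ the class of the trivial module, and to identify when $\one$ is projective.

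For axiom (i), define the involution on $\fI$ by sending the class of $M_i$ to the class of its $k$-linear dual $M_i^*$. This is well defined on indecomposables since duality is an exact additive functor with $M^{**}\cong M$, and it extends to the involutive antiautomorphism of $a(G)$ coming from $(M\otimes N)^*\cong M^*\otimes N^*$. Axiom (ii) is exactly Proposition~\ref{pr:AC}(iv): if $[M_i\otimes M_j:k]>0$, then since $M_i$ and $M_j$ are indecomposable we obtain $M_j\cong M_i^*$, i.e.\ $j=i^*$. Axiom (iv) is trivially the $k$-dimension function, which is a ring homomorphism from $a(G)$ to $\bZ$ and satisfies $\dim M_i=\dim M_i^*>0$. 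For axiom (v), take $\rho=[kG]$, the class of the regular representation; since $kG$ is free as a $k$-module and the diagonal tensor product $M\otimes kG$ is isomorphic to $kG^{\dim M}$ as a $kG$-module (a standard fact: any tensor with a free module is free of the appropriate rank), we get $[M]\cdot\rho=(\dim M)\rho$, and $\rho$ is manifestly a non-negative combination of basis elements.

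The substantive axiom is (iii), and this is where Proposition~\ref{pr:AC} does real work. Suppose $i\in\fI$ satisfies $[M_i\otimes M_{i^*}:k]=0$. Then by the contrapositive of Proposition~\ref{pr:AC}(iii), $p\mid \dim M_i$. Then Proposition~\ref{pr:AC}(ii) applies, giving $M_i\oplus M_i$ as a direct summand of $M_i\otimes M_i^*\otimes M_i$, so $[M_i\otimes M_{i^*}\otimes M_i:M_i]\ge 2$, which is precisely $\sum_{j\in\fI}c_{i,i^*,j}c_{j,i,i}\ge 2$.

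For the ordinary/modular dichotomy, recall that $\one=[k]$ is projective in $a(G)$ in the sense of Definition~\ref{def:proj} if and only if $k$ is a projective (equivalently, injective) $kG$-module, i.e.\ the trivial module splits off any $kG$-module having it as a submodule. If $\mathrm{char}(k)=0$ or $\mathrm{char}(k)=p\nmid|G|$, Maschke's theorem makes every $kG$-module projective, in particular $\one$, so $a(G)$ is ordinary. Conversely, if $p\mid|G|$, then $k$ is not a projective $kG$-module (for instance, its projective cover is the projective indecomposable with head $k$, of dimension strictly greater than one since the Sylow $p$-subgroup acts non-semisimply on it), so $\one$ is not projective and $a(G)$ is modular. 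The main (and only) obstacle is axiom (iii), which is exactly the content for which the preliminary results of Proposition~\ref{pr:AC}(ii)--(iii) were assembled; everything else is either standard module theory or immediate from the definitions.
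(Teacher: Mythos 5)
Your proof follows essentially the same route as the paper's: verify axioms (i)--(v) in order, with (ii), (iii) furnished by Proposition~\ref{pr:AC}\,(iv), (iii)$\Rightarrow$(ii), respectively, $\rho=[kG]$ for axiom (v), and the split/non-split augmentation map (equivalently Maschke) deciding the ordinary/modular dichotomy. You even state the contrapositive of Proposition~\ref{pr:AC}\,(iii) correctly as $p\mid\dim M_i$ whereas the paper's text has a small slip reading ``not divisible by~$p$'' at the analogous point, so no issues here.
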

\begin{proof}
For property (i), we set $x_i=[M_i]$, and define the star operation by letting $M_{i^*}$
be the dual module $M_i^*$ of $M_i$.
Property (ii) follows from Proposition~\ref{pr:AC}\,(iv).
Proposition~\ref{pr:AC}\,(iii) shows that if $M_i\otimes M_{i^*}$ does
not have a direct summand isomorphic to $k$ then the $\dim M_i$ is not
divisible by $p$, and then Proposition~\ref{pr:AC}\,(ii) shows that
$M_i\otimes M_{i^*} \otimes M_i$ has a direct summand isomorphic to 
$M_i\oplus M_i$; this proves that property (iii) holds.
The dimension fuction for property (iv) is defined so that the
dimension of $[M_i]$ is its dimension as a $k$-vector space.
For Property (v), the role of the element
$\rho$ is played by the regular representation\index{regular representation} $[kG]$. For any
$kG$-module $M$, the tensor product $M\otimes kG$ is isomorphic to a
direct sum of $\dim M$ copies of $kG$, so we have $[M][kG]=(\dim
[M]).[kG]$ in $a(G)$.

If $k$ has characteristic zero, or prime characteristic $p$ not dividing
$|G|$, then the surjective augmentation map\index{augmentation map}
$kG\to k$ is split by the map $k\to kG$ sending $1$ to
$\frac{1}{|G|}\sum_{g\in G} g$. So the trivial module $k$ is
projective, and hence $\one$ is a projective basis element of $a(G)$.
On the other hand, if $k$ has characteristic dividing $|G|$ then the
augmentation map $kG\to k$ does not split, so the
trivial module is not projective, 
and therefore $\one$ is not a projective
basis element of $a(G)$. 
\end{proof}

\begin{rk}
Proposition~\ref{pr:a(G)repring} generalises in an obvious way to 
finite group schemes\index{finite!group!scheme}.
For finite supergroup schemes,\index{finite!supergroup scheme}
it is usual to take the super dimension function\index{super dimension function}
to be the dimension of the even part minus the dimension of
the odd part. However, this
fails axiom~(iv), as the dimension function is not positive.
Instead, one needs to take the na{\"\i}ve dimension, namely
the dimension of the even part plus the dimension of the odd part.
With this definition, the representation ring of a finite supergroup
scheme satisfies the axioms of Definition~\ref{def:repring}.
However, for the analogue of Proposition~\ref{pr:AC}, the super
dimension function is the relevant notion.

The proposition also generalises 
to any coefficients where the Krull--Schmidt
theorem\index{Krull--Schmidt theorem} holds for
finitely generated modules, such as the $\fp$-localisation
$\cO_{(\fp)}$ or the
$\fp$-adic completion $\cO_\fp$ of an
algebraic number ring $\cO$. The integral representation ring of a
finite group in the presence of the Krull--Schmidt theorem has been
studied by Reiner~\cite{Reiner:1965a,Reiner:1966a,Reiner:1966b},
Hannula~\cite{Hannula:1968a}; see also Jensen~\cite{Jensen:1983a}.

The representation ring of a finite dimensional
quasitriangular Hopf algebra\index{quasitriangular Hopf algebra}%
\index{Hopf algebra!quasitriangular}
(with mild extra conditions)
is a further generalisation which fits into our framework of
representation rings given by Definition~\ref{def:repring}. 
The quasitriangular condition ensures
that for any modules $M$ and $N$, the tensor products 
$M \otimes N$ and $N \otimes M$ are isomorphic. In this case, there
are two natural isomorphisms between these modules, which may be
thought of as moving $M$ over or under $N$. These isomorphisms 
satisfy braid relations\index{braid relations} given by the
Yang--Baxter equations.\index{Yang--Baxter equations} 
Finite quantum groups\index{finite!quantum group} 
are examples of suitable quasitriangular Hopf algebras.
Witherspoon~\cite{Witherspoon:1996a} has investigated species of the
representation ring in the particular case of the quantum double of a
finite group.

It would be 
interesting to see how much goes through in the case of Hopf algebras with
non-commutative tensor products, but that is a task for another day.
The spectral theory of non-commutative Banach *-algebras is not as
clean as the Gelfand theory for commutative ones.
\end{rk}

\begin{defn}\label{def:ideal}
Let $\fX$ be a collection of indecomposable $kG$-modules, 
closed under isomorphism, with the
property that if $M$ is in $\fX$ and $N$ is any $kG$-module then 
$M \otimes N$ is a direct sum of modules in $\fX$.
We also suppose that not every $kG$-module is in $\fX$.
We say that $\fX$ is an
\emph{ideal} of indecomposables.\index{ideal of indecomposables}
In this situation, by abuse of notation we also write $\fX$ for
the representation ideal\index{representation!ideal} of the representation ring
$a(G)$ consisting of the $i\in\fI$ such that
$M_i$ in $\fX$. We write $a(G,\fX)$
for the ideal $\langle\fX\rangle$, namely the linear combinations in $a(G)$ of the 
elements $[M_i]$ with $M_i\in\fX$, and $a_\fX(G)$ for the quotient
$a(G)/a(G,\fX)$. We write $a_\bC(G,\fX)$ for the ideal
$\bC \otimes_\bZ a(G,\fX)$ of 
$a_\bC(G)$,\index{a@$a(G,\fX)$, $a_\bC(G,\fX)$} 
and $a_{\bC,\fX}(G)$\index{a@$a_{\fX}(G)$, $a_{\bC,\fX}(G)$}
for the quotient $a_\bC(G)/a_\bC(G,\fX)$.
\end{defn}

\begin{eg}\label{eg:p}
The following are examples of ideals of indecomposables.
\begin{enumerate}
\item
There is, of course, the empty example $\fX=\varnothing$. In this
case, we have
$a(G,\fX)=\{0\}$, $a_\bC(G,\fX)=\{0\}$, $a_\fX(G)=a(G)$, $a_{\bC,\fX}(G)=a_\bC(G)$.
\item
Let $\fX_\proj$\index{X@$\fX_{\proj}$} be the set of projective indecomposable
modules.\index{projective!indecomposable!module} In this case
we write $a(G,1)$ for $a(G,\fX_\proj)$ and $a_\bC(G,1)$ for
$a_\bC(G,\fX_\proj)$.\index{a@$a(G,1)$, $a_\bC(G,1)$} This is the minimal example, in the sense
that every non-empty ideal $\fX$ of
indecomposables contains this one. This follows from Proposition~\ref{pr:max-min}.
\item
More generally, if $H$ is a subgroup of $G$ then the collection $\fX_H$ of 
indecomposable modules that are
projective relative to $H$ is ideal.\index{relatively!projective module}
We write $a(G,H)$ and $a_\bC(G,H)$\index{a@$a(G,H)$, $a_\bC(G,H)$}
for $a(G,\fX_H)$ and $a_\bC(G,\fX_H)$. The case where $H$ is the trivial
subgroup gives the previous example. 

We can do the same with any
collection $\fH$ of subgroups $H\le G$ with the property that
if $H$ is contained in a conjugate of $H'$ and $H'\in \fH$ then
  $H\in\fH$. The collection of indecomposable modules that are
  projective relative to such a collection of subgroups $\fH$ is
  ideal.\index{projective!relative to $\fH$}
This example is discussed in further detail in
Section~\ref{se:rel-proj}.
\item
If $\cV$ is a specialisation closed subset of
$\Proj H^*(G,k)$ then the indecomposable modules supported in $\cV$ form an ideal $\fX_\cV$.
We write $a(G,\cV)$ and 
$a_\bC(G,\cV)$\index{a@$a(G,\cV)$, $a_\bC(G,\cV)$} 
for $a(G,\fX_\cV)$ and $a_\bC(G,\fX_\cV)$.
For example, if $\cV$ is the collection of all closed points, then $\fX_\cV$ is
the collection of indecomposable projective or periodic modules.
\item
It is proved in Benson and Carlson \cite{Benson/Carlson:1986a}
that if $k$ is algebraically closed, then
the collection $\fX_p$ of indecomposable
modules of dimension divisible by $p$ is ideal.
We write $a(G;p)$ and
$a_\bC(G;p)$\index{a@$a(G;p)$, $a_\bC(G;p)$} 
for $a(G,\fX_p)$ and $a_\bC(G,\fX_p)$.
This is the maximal
example $\fX_{\max}$,\index{X@$\fX_{\max}$} in the
sense that every ideal $\fX$ of indecomposable modules that does not
consist of them all is contained in this one, see
Proposition~\ref{pr:max-min}\,(i).

Over a field which is not algebraically closed, $\fX_{\max}$ consists
of the indecomposable modules $M$ such that $M\otimes M^*$ has no
summand isomorphic to the trivial module $k$.
Every indecomposable module in
$\fX_{\max}$ has dimension divisible by $p$, but there may be indecomposables
of dimension divisible by $p$ not in $\fX_{\max}$. In this case, we
write $a(G;\max)$\index{a@$a(G;\max)$, $a_\bC(G;\max)$} 
for the ideal $\langle \fX_{\max}\rangle$ of $a(G)$,
$a_{\max}(G)$\index{a@$a_{\max}(G)$, $a_{\bC,\max}(G)$} 
for the quotient $a(G)/\langle \fX_{\max}\rangle$,
and $a_\bC(G;\max)$, $a_{\bC,\max}(G)$ for their complexifications.
\end{enumerate}
\end{eg}

\begin{defn}
If $\fX$ is an ideal of indecomposable $kG$-modules, and $M$ is
any $kG$-module, we may write $M=M'\oplus M''$, where $M''$ is
a direct sum of elements of $\fX$ and no summand of $M'$ is
in $\fX$. We define the $\fX$-\emph{core},\index{core}\index{X@$\fX$-core}
$\core_{G,\fX}(M)$\index{core@$\core_{G,\fX}(M)$, $\core_G(M)$} to be 
$M'$. This is well defined up to isomorphism. If $\fX$ is the ideal
of projective indecomposables, we just call this the \emph{core},
denoted $\core_G(M)$.
\end{defn}

\section{The gamma invariant}

By Proposition~\ref{pr:a(G)repring}, we may
regard $a(G)$ as a representation ring in the sense of
Definition~\ref{def:repring}. In this context,
we now investigate the gamma invariant defined in Section~\ref{se:npj}.
Let $\fX$ be an ideal of indecomposable $kG$-modules. 
We consider the $\fX$-cores of tensor powers of a module. We write
$\cc_n^{G,\fX}(M)$ for $\dim\core_{G,\fX}(M^{\otimes n})$. If
$\fX=\fX_\proj$ we write $\cc_n^G(M)$.\index{c@$\cc_n^{G,\fX}(M)$, $\cc_n^G(M)$}
Since
$\cc_n^{G,\fX}(M)\le \dim(M)^n$, 
the generating function
\[ f(t)=\sum_{n=0}^\infty \cc_n^{G,\fX}(M)t^n \]
converges in a disc of radius at least $1/\dim(M)$ around the origin.
The radius of convergence $r$ is given by the formula
\[ 1/r = \limsup_{n\to\infty}\sqrt[n]{\cc_n^{G,\fX}(M)}. \]
We write $\npj_{G,\fX}(M)$ for the value of $1/r$ given by this
formula. In the case where $\fX=\fX_\proj$, we just write
$\npj_G(M)$. This is the invariant that was investigated
in~\cite{Benson/Symonds:2020a}. 

The following properties are consequences of our investigations in
Chapter~\ref{ch:abstract}.

\begin{theorem}\label{th:npj_G}
The invariant $\npj_{G,\fX}(M)$ has the following properties:
\begin{enumerate}
\item We have $\displaystyle\npj_{G,\fX}(M)=\lim_{n\to\infty}\sqrt[n]{\cc_n^{G,\fX}(M)}=
\inf_{n\ge 1}\sqrt[n]{\cc_n^{G,\fX}(M)}$.
\item We have $0\le\npj_{G,\fX}(M)\le\dim M$.
\item Some tensor power of a $kG$-module $M$ has an indecomposable summand in $\fX$
if and only if $\npj_{G,\fX}(M)<\dim M$.
\item A $kG$-module $M$ is a direct sum of modules in $\fX$ if and
  only if $\npj_{G,\fX}(M)=0$; otherwise we have $\npj_{G,\fX}(M)\ge 1$.
\item If $m\in\bN$ then $\npj_{G,\fX}(mM)=m\,\npj_{G,\fX}(M)$, where
  $mM$ denotes a direct sum of $m$ copies of $M$.
\item More generally, if $a$ and $b$ are non-negative integers then
\[ \npj_{G,\fX}(ak\oplus b M) = a+b\npj_{G,\fX}(M), \]
where $k$ denotes the trivial $kG$-module.
\item If $\fX\subseteq\fY$ are ideals of indecomposable $kG$-modules
  then $\npj_{G,\fY}(M) \le \npj_{G,\fX}(M)$.
\item If $1\le\npj_{G,\fX}(M)<\sqrt{2}$ then $M$ is $\fX$-endotrivial.
\item We have $\npj_{G,\fX}(M\otimes N) \le
  \npj_{G,\fX}(M)\npj_{G,\fX}(N)$,
\item If $m\in\bN$ then we have $\npj_{G,\fX}(M^{\otimes m}) =
  \npj_{G,\fX}(M)^m$,
\item We have $\npj_{G,\fX}(M^*)=\npj_{G,\fX}(M)$.
\end{enumerate}
\end{theorem}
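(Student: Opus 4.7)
The plan is that this theorem is simply the translation into the finite group setting of the general results about the abstract invariant $\npj_\fX(x)$ proved in Chapter~\ref{ch:abstract}. By Proposition~\ref{pr:a(G)repring}, the Green ring $a(G)$ satisfies the axioms of Definition~\ref{def:repring}, and an ideal $\fX$ of indecomposable $kG$-modules corresponds to a representation ideal (as in Definition~\ref{def:ideal}). Under this identification, $[M] \in a(G)_{\cge 0}$ and $\cc_n^{G,\fX}(M) = \dim\core_\fX([M]^n) = \cc_n^\fX([M])$, so $\npj_{G,\fX}(M) = \npj_\fX([M])$.

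With this dictionary in place, each item is obtained by citing the corresponding result already proved. Specifically: (i) is Theorem~\ref{th:inf}; (ii) is Lemma~\ref{le:gamma-bounds}; (iii) is Lemma~\ref{le:gamma<dim} applied to $x = [M]$, noting that $\core_\fX([M]^n) = [M]^n$ fails for some $n$ precisely when some tensor power has an indecomposable summand in $\fX$; (iv) is Lemma~\ref{le:gamma=0}; (v) is Lemma~\ref{le:mx}; (vi) is Theorem~\ref{th:a+bx} applied with $a, b$ non-negative integers, using that the identity element of $a(G)$ is $[k]$; (vii) is Lemma~\ref{le:gammaXinY}; (viii) is Theorem~\ref{th:endotriv} combined with Theorem~\ref{th:sqrt2}; (ix) is Lemma~\ref{le:gamma(xy)}; (x) is Lemma~\ref{le:gamma(x^m)}; and (xi) is Lemma~\ref{le:npjx*}, using that the star operation on $a(G)$ is induced by $k$-linear duality of modules.

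There is no real obstacle here, as all the work has already been done in the abstract setting. The only point worth remarking on is that item~(iii) requires a brief unpacking: $\core_\fX([M]^n) = [M]^n$ in $a(G)$ means that every indecomposable summand of $M^{\otimes n}$ lies outside $\fX$, so the equivalent formulation of Lemma~\ref{le:gamma<dim}, namely that $\npj_{G,\fX}(M) = \dim M$ iff no tensor power of $M$ has an indecomposable summand in $\fX$, yields the contrapositive stated here. The proof therefore consists of little more than a list of references back to the abstract chapter, organised in the order the items appear.
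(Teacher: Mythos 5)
Your proposal is correct and takes the same approach as the paper: cite Proposition~\ref{pr:a(G)repring} to see that $a(G)$ is a representation ring, then read off each item from the corresponding abstract result, unpacking item~(iii) as the contrapositive of Lemma~\ref{le:gamma<dim}. In fact your reference list for items (iv)--(vi) — namely Lemma~\ref{le:gamma=0}, Lemma~\ref{le:mx}, Theorem~\ref{th:a+bx} — is the correct one, whereas the paper's proof cyclically permutes these three citations (attributing (iv) to Lemma~\ref{le:mx}, (v) to Theorem~\ref{th:a+bx}, and (vi) to Lemma~\ref{le:gamma=0}); your extra mention of Theorem~\ref{th:endotriv} for item~(viii) is harmless but redundant, since the contrapositive of Theorem~\ref{th:sqrt2} already covers the whole range $1\le\npj_{G,\fX}(M)<\sqrt{2}$.
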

\begin{proof}
  By Proposition~\ref{pr:a(G)repring},
  $a(G)$ is a representation ring in the sense of
  Definition~\ref{def:repring}.
  So (i) follows from Theorem~\ref{th:inf},
  (ii) follows from Lemma~\ref{le:gamma-bounds},
  (iii) follows from Lemma~\ref{le:gamma<dim},
 (iv) follows from Lemma~\ref{le:mx},
(v) follows from Theorem~\ref{th:a+bx},
  (vi) follows from Lemma~\ref{le:gamma=0},
  (vii) follows from Lemma~\ref{le:gammaXinY}, 
  (viii) follows from Theorem~\ref{th:sqrt2},  
(ix) follows from Lemma~\ref{le:gamma(xy)}, 
(x) follows from Lemma~\ref{le:gamma(x^m)}, and
(xi) follows from Lemma~\ref{le:npjx*}.
\end{proof}

\begin{defn}
If $\fX$ is an ideal of indecomposable $kG$-modules,
and $H$ is a subgroup of $G$, we write 
$\fX{\downarrow_H}$\index{X@$\fX{\downarrow_H}$}
for the ideal of indecomposable summands of modules of the form
$M{\downarrow_H}$ with $M$ in $\fX$.
\end{defn}

\begin{theorem}
If $H\le G$ and $M$ is a $kG$-module then $\npj_{H,\fX{\downarrow_H}}(M)\le \npj_{G,\fX}(M)$.
\end{theorem}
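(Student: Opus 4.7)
The plan is to exploit the fact that restriction commutes with tensor products and sends modules in $\fX$ to sums of modules in $\fX{\downarrow_H}$ by definition. This reduces the inequality to a pointwise inequality on the dimensions of cores.

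First I would note that $\fX{\downarrow_H}$ really is an ideal of indecomposable $kH$-modules (so that the right-hand side makes sense): if $N$ is an indecomposable summand of $L{\downarrow_H}$ for some $L\in\fX$ and $P$ is any $kH$-module, then $P$ is a summand of $P{\uparrow^G}{\downarrow_H}$ by Mackey, so $N\otimes P$ is a summand of $(L\otimes P{\uparrow^G}){\downarrow_H}$. Since $\fX$ is ideal, $L\otimes P{\uparrow^G}$ is a sum of modules in $\fX$, and so by Krull--Schmidt $N\otimes P$ is a sum of modules in $\fX{\downarrow_H}$.

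Next, for each $n\ge 0$ write $M^{\otimes n}=C_n\oplus N_n$, where $C_n=\core_{G,\fX}(M^{\otimes n})$ and $N_n$ is a direct sum of indecomposables in $\fX$. Because restriction is an exact functor commuting with direct sums and tensor products,
\[ (M{\downarrow_H})^{\otimes n} = (M^{\otimes n}){\downarrow_H} = C_n{\downarrow_H}\oplus N_n{\downarrow_H}, \]
and every indecomposable summand of $N_n{\downarrow_H}$ lies in $\fX{\downarrow_H}$ by definition. Hence, when we extract the $\fX{\downarrow_H}$-core of the left-hand side, the contribution from $N_n{\downarrow_H}$ is killed, and what remains is a summand of $C_n{\downarrow_H}$. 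Since restriction preserves dimension, this gives
\[ \cc_n^{H,\fX{\downarrow_H}}(M{\downarrow_H}) = \dim\core_{H,\fX{\downarrow_H}}\bigl((M{\downarrow_H})^{\otimes n}\bigr) \le \dim C_n{\downarrow_H} = \dim C_n = \cc_n^{G,\fX}(M). \]

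Finally, taking $n$th roots and applying $\displaystyle\limsup_{n\to\infty}$ to both sides yields $\npj_{H,\fX{\downarrow_H}}(M)\le\npj_{G,\fX}(M)$, as desired. There is essentially no hard step here: the whole content is that ``restrict, then take $\fX{\downarrow_H}$-core'' removes at least as much as ``take $\fX$-core, then restrict,'' and the bookkeeping is immediate from the definition of $\fX{\downarrow_H}$.
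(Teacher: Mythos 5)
Your proof is correct and is essentially the argument the paper has in mind: the paper's entire proof is the one-line statement that $\cc_n^{H,\fX{\downarrow_H}}(M)\le \cc_n^{G,\fX}(M)$ for all $n$, and you have simply spelled out why this inequality holds (restriction sends the $\fX$-part of $M^{\otimes n}$ into the span of $\fX{\downarrow_H}$, so the $\fX{\downarrow_H}$-core sits inside the restricted $\fX$-core) together with a verification, implicit in the paper, that $\fX{\downarrow_H}$ is indeed an ideal. Same route, just with the bookkeeping written out.
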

\begin{proof}
We have $\cc_n^{H,\fX{\downarrow_H}}(M)\le \cc_n^{G,\fX}(M)$ for all $n$.
\end{proof}

\begin{cor}
If $H\le G$ and $M$ is a $kG$-module then $\npj_H(M)\le \npj_G(M)$.
\end{cor}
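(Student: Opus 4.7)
The plan is to deduce this as a direct specialisation of the preceding theorem applied to the ideal of projective indecomposables.

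First I would take $\fX = \fX_\proj$, the ideal of projective indecomposable $kG$-modules, and invoke the preceding theorem to obtain
\[ \npj_{H,\fX_\proj{\downarrow_H}}(M) \le \npj_{G,\fX_\proj}(M) = \npj_G(M). \]
What remains is to relate $\fX_\proj{\downarrow_H}$ to the ideal $\fX_\proj^{(H)}$ of projective indecomposable $kH$-modules, which defines $\npj_H(M)$.

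Second I would identify these two ideals. For the inclusion $\fX_\proj{\downarrow_H} \subseteq \fX_\proj^{(H)}$, note that since $kG$ is free as a $kH$-module (of rank $[G:H]$), any projective $kG$-module restricted to $H$ is a summand of a free $kG$-module restricted to $H$, hence a summand of a free $kH$-module, hence projective over $kH$. Its indecomposable summands therefore lie in $\fX_\proj^{(H)}$. For the reverse inclusion, $kG \in \fX_\proj$ and $kG{\downarrow_H}$ is free of rank $[G:H]$ over $kH$, so every projective indecomposable $kH$-module appears as a summand of $kG{\downarrow_H}$, and hence lies in $\fX_\proj{\downarrow_H}$. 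Thus $\fX_\proj{\downarrow_H} = \fX_\proj^{(H)}$, and the inequality above reads precisely $\npj_H(M) \le \npj_G(M)$.

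There is really no obstacle here — the content is packaged in the preceding theorem, and the only thing to verify is the set-theoretic identification of the two ideals, which is a standard Frobenius-style fact about projectivity and restriction for finite groups. (If one preferred, one could instead avoid the equality and use just the inclusion $\fX_\proj{\downarrow_H} \subseteq \fX_\proj^{(H)}$ together with monotonicity of $\npj$ in the ideal, property~(vii) of Theorem~\ref{th:npj_G}, to chain $\npj_H(M) \le \npj_{H,\fX_\proj{\downarrow_H}}(M) \le \npj_G(M)$.)
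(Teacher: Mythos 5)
Your proof is correct and follows the paper's own route: specialise the preceding theorem to $\fX=\fX_\proj$ and observe that $\fX_\proj{\downarrow_H}$ is exactly the ideal of projective indecomposable $kH$-modules. The paper records this identification in one line; you spell out both inclusions (restriction of a projective $kG$-module is projective over $kH$ since $kG$ is $kH$-free, and every projective indecomposable $kH$-module occurs in $kG{\downarrow_H}$), which is a useful elaboration but not a different argument.
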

\begin{proof}
If $\fX=\fX_\proj$ for $kG$ then $\fX{\downarrow_H}=\fX_\proj$ for
$kH$.
\end{proof}

The following definition and theorem generalise 
Lemma~2.11 of~\cite{Benson/Symonds:2020a} (which is
Corollary~\ref{co:K/k} below).

\begin{defn}
If $K$ is an extension field of $k$, and $\fX$ is an ideal
of indecomposable $kG$-modules, we write $K\fX$ for the
ideal of indecomposable $KG$-modules that are summands
of modules of the form $K\otimes_k M$ with $M$ in $\fX$.
\end{defn}

\begin{lemma}\label{le:K/k}
Let $K$ be an extension field\index{field!extension}\index{extension field} 
of $k$, let $\fX$ be an ideal of indecomposable $kG$-modules, 
and let $M$ be a $kG$-module. 
Then $K\otimes_k\core_{G,\fX}(M)\cong \core_{G,\fX}(K\otimes_k M)$.
\end{lemma}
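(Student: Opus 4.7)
The plan is to start from the decomposition $M = M'\oplus M''$, where $M' = \core_{G,\fX}(M)$ has no indecomposable summand in $\fX$ and $M''$ is a direct sum of indecomposables in $\fX$, and to show that the induced splitting $K\otimes_k M = (K\otimes_k M')\oplus(K\otimes_k M'')$ is precisely the $K\fX$-core decomposition of $K\otimes_k M$. The easy half is that $K\otimes_k M''$ is a sum of modules in $K\fX$: every indecomposable summand $N$ of $M''$ lies in $\fX$, and by definition of $K\fX$ every indecomposable $KG$-summand of $K\otimes_k N$ belongs to $K\fX$.

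The substantive half is to show that $K\otimes_k M'$ has no indecomposable summand lying in $K\fX$. I would reduce at once to the case where $M'$ is itself $kG$-indecomposable (in particular $M'\notin\fX$) and argue by contradiction. Suppose $L$ is an indecomposable summand of $K\otimes_k M'$ with $L\in K\fX$, so that $L$ is also a summand of $K\otimes_k N_0$ for some $N_0\in\fX$. Composing the canonical projection $K\otimes_k M'\to L$, the inclusion $L\hookrightarrow K\otimes_k N_0$, the projection $K\otimes_k N_0\to L$, and the inclusion $L\hookrightarrow K\otimes_k M'$ produces a non-zero idempotent $e\in\mathsf{End}_{KG}(K\otimes_k M')$ whose image is $L$ and which factors through $K\otimes_k N_0$.

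Under the natural isomorphism $\mathsf{End}_{KG}(K\otimes_k M')\cong K\otimes_k\mathsf{End}_{kG}(M')$ (valid because $M'$ is finite dimensional and $K/k$ is flat), I would write $e$ as a finite sum $\sum_i\lambda_i\otimes h_i$ in which each $h_i\in\mathsf{End}_{kG}(M')$ factors through $N_0\in\fX$. No such $h_i$ can be an automorphism of $M'$: if $h_i = gf$ with $f\colon M'\to N_0$ and $g\colon N_0\to M'$ and $gf$ invertible, then $fg$ would be a non-zero idempotent in $\mathsf{End}_{kG}(N_0)$ exhibiting $M'$ as a summand of $N_0$, forcing $M'\in\fX$ by the ideal property. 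Since $M'$ is indecomposable, its endomorphism ring is local with Jacobson radical $J$, and $J$ is nilpotent because the ring is a finite dimensional $k$-algebra. Hence each $h_i\in J$, so $e\in K\otimes_k J$, which is a nilpotent ideal of $K\otimes_k\mathsf{End}_{kG}(M')$. A non-zero idempotent cannot lie in a nilpotent ideal, yielding the required contradiction.

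The main obstacle is the step identifying endomorphisms of $M'$ that factor through modules in $\fX$ as non-units; everything else is formal once one knows that the Jacobson radical of a finite dimensional algebra is nilpotent and that both the local structure and the nilpotence survive base change via the flatness of $K$ over $k$.
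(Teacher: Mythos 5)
Your proof is correct, but it takes a genuinely different route from the paper's. The paper restricts $K\otimes_k M$ from $KG$ to $kG$ and observes that this restriction is a (possibly infinite) direct sum of copies of $M$; an indecomposable $KG$-summand $N$ of $K\otimes_k M$ lying in $K\fX$ then restricts to a direct sum of indecomposable $kG$-modules in $\fX$, and since each of these is finite dimensional, the Krull--Schmidt theorem forces each to be a summand of $M$ itself. Applied to $M' = \core_{G,\fX}(M)$, this gives the required contradiction with almost no machinery. You instead work inside endomorphism rings: reduce to $M'$ indecomposable (valid by Krull--Schmidt for $KG$-modules), cut out the offending summand $L\in K\fX$ by an idempotent $e$ factoring through $K\otimes_k N_0$, expand $e$ in $K\otimes_k\Hom_{kG}(M',N_0)$ and $K\otimes_k\Hom_{kG}(N_0,M')$ to see that $e$ lies in $K\otimes_k J$ where $J$ is the Jacobson radical of the local ring $\mathsf{End}_{kG}(M')$, and conclude using nilpotence of $J$. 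Both arguments are valid and work for arbitrary field extensions $K/k$; the paper's is shorter and avoids endomorphism rings entirely, while yours is a direct application of the Fitting/idempotent-lifting philosophy and would adapt more readily to base-change situations where restriction is not available.

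One small slip: in the step where you rule out $h_i = gf$ being an automorphism, $fg$ is not in general an idempotent of $\mathsf{End}_{kG}(N_0)$; the idempotent you want is $f(gf)^{-1}g$, whose image gives a direct summand of $N_0$ isomorphic to $M'$. Since $N_0$ is indecomposable, this forces $M'\cong N_0\in\fX$, which is the contradiction you need, so the conclusion is unaffected.
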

\begin{proof}
We first note that $(K\otimes_k M){\downarrow_{kG}}$ is a direct
sum of (possibly infinitely many) copies of $M$.

We need to show that if $K \otimes_kM$ has an indecomposable 
summand $N$ in $K\fX$ then $M$ has an indecomposable summand
in $\fX$.
Consider the restrictions of $K \otimes_kM$ and $N$
from $KG$ to $kG$. The restriction $N{\downarrow_{kG}}$
is a direct sum of finite dimensional indecomposable
$kG$-modules in $\fX$; let $N'$ be one of them. It is a
summand of $(K \otimes_k M){\downarrow_{kG}}$, which is
a direct sum of copies of $M$. Because it is finite dimensional,
$N'$ is a summand of a finite direct sum of copies of $M$, hence
is a summand of $M$, by the Krull--Schmidt Theorem.
\end{proof}

\begin{theorem}\label{th:K/k}
If $K$ is an extension field of $k$ and $\fX$ is an
ideal of indecomposable $kG$-modules then $\npj_{G,K\fX}(K\otimes_k M)=\npj_{G,\fX}(M)$.
\end{theorem}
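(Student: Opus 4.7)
The plan is to derive this from Lemma~\ref{le:K/k} essentially immediately, treating it as a bookkeeping exercise on dimensions of cores of tensor powers. The key observation is that tensor product commutes with scalar extension: $(K\otimes_k M)^{\otimes_K n} \cong K \otimes_k M^{\otimes n}$ as $KG$-modules. This reduces the problem to comparing cores of $M^{\otimes n}$ (over $k$) with cores of $K \otimes_k M^{\otimes n}$ (over $K$), which is precisely what the preceding lemma handles.

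First I would unwind the definition of $\cc_n^{G,K\fX}(K\otimes_k M)$. Using the tensor product identity above and Lemma~\ref{le:K/k} applied to the $kG$-module $M^{\otimes n}$, we obtain
\[ \core_{G,K\fX}\bigl((K\otimes_k M)^{\otimes n}\bigr) \;\cong\; \core_{G,K\fX}(K\otimes_k M^{\otimes n}) \;\cong\; K \otimes_k \core_{G,\fX}(M^{\otimes n}). \]
Taking $K$-dimensions on both sides and using $\dim_K(K\otimes_k V) = \dim_k V$, this yields $\cc_n^{G,K\fX}(K\otimes_k M) = \cc_n^{G,\fX}(M)$ for every $n \ge 0$.

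Finally I would take $n$th roots and invoke Theorem~\ref{th:npj_G}\,(i), which says $\npj_{G,\fX}(M) = \lim_{n\to\infty}\sqrt[n]{\cc_n^{G,\fX}(M)}$, to conclude $\npj_{G,K\fX}(K\otimes_k M) = \npj_{G,\fX}(M)$. There is no real obstacle here: Lemma~\ref{le:K/k} does all the work, since it is the only place where one must check that extending scalars does not create or destroy indecomposable summands lying in the ideal. The only point worth mentioning explicitly is the compatibility of tensor product with scalar extension, which is immediate from associativity of tensor products.
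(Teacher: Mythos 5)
Your proof is correct and takes essentially the same approach as the paper: the paper's proof is a single line, ``This follows immediately from Lemma~\ref{le:K/k},'' and your proposal simply spells out the routine bookkeeping (compatibility of tensor powers with scalar extension, dimension count, limit of $n$th roots) that the word ``immediately'' elides.
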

\begin{proof}
This follows immediately from Lemma~\ref{le:K/k}.
\end{proof}

\begin{cor}\label{co:K/k}
If $K$ is an extension field of $k$ then $\npj_G(K\otimes_k M) = \npj_G(M)$.
\end{cor}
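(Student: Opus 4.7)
The plan is to deduce the corollary from Theorem~\ref{th:K/k} by specialising to the projective ideal, after identifying $K\fX_\proj$ (computed relative to $kG$) with $\fX_\proj$ (the ideal of projective indecomposable $KG$-modules). Once this identification is made, the corollary is essentially immediate, since by definition $\npj_G = \npj_{G,\fX_\proj}$.

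Concretely, first I would take $\fX = \fX_\proj$ in Theorem~\ref{th:K/k}, which yields
\[ \npj_{G,K\fX_\proj}(K\otimes_k M) = \npj_{G,\fX_\proj}(M) = \npj_G(M). \]
So the entire task reduces to verifying that the ideal $K\fX_\proj$ of indecomposable $KG$-modules coincides with the ideal of projective indecomposable $KG$-modules. For the inclusion $K\fX_\proj \subseteq \fX_\proj$, if $P$ is a projective $kG$-module then $K\otimes_k P$ is a direct summand of $K\otimes_k (kG)^n = (KG)^n$ for some $n$, hence is a projective $KG$-module, and therefore every indecomposable summand of $K\otimes_k P$ is projective. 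For the reverse inclusion $\fX_\proj \subseteq K\fX_\proj$, every indecomposable projective $KG$-module is a summand of $KG = K\otimes_k kG$, and $kG$ itself is an object of $\fX_\proj$ (a direct sum of projective indecomposable $kG$-modules), so such a summand lies in $K\fX_\proj$ by definition.

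The main obstacle here is essentially notational rather than mathematical: one must be careful to distinguish the ideal $\fX_\proj$ on the $kG$ side from the ideal $\fX_\proj$ on the $KG$ side, and confirm that the operation $\fX \mapsto K\fX$ sends one to the other. Once that identification is in hand, there is no real work left, because Theorem~\ref{th:K/k} (itself built on Lemma~\ref{le:K/k}, which controls cores under scalar extension via Krull--Schmidt applied to restriction from $KG$ to $kG$) already does the heavy lifting of matching $\fX$-cores of tensor powers across the extension.
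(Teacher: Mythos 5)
Your argument is correct and follows the same route as the paper: apply Theorem~\ref{th:K/k} with $\fX=\fX_\proj$ and identify $K\fX_\proj$ with the ideal of projective indecomposable $KG$-modules. The paper's proof states only the forward inclusion (summands of $K\otimes_k P$ are projective over $KG$) and leaves the reverse implicit, whereas you spell out both inclusions; this is a minor expansion, not a different argument.
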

\begin{proof}
If $P$ is a projective $kG$-module
then every summand of $K\otimes_k P$ is a projective $KG$-module.
The corollary now follows from Theorem~\ref{th:K/k}.
\end{proof}

\section{Elementary abelian subgroups}\index{elementary abelian!subgroup}

In the case where $\fX=\fX_\proj$, the elementary abelian subgroups of
$G$ play a crucial role, because of theorems of
Chouinard and Carlson.
Most of the material in this section is taken from 
Section~7 of~\cite{Benson/Symonds:2020a}.

\begin{theoremqed}[Chouinard~\cite{Chouinard:1976a}]\label{th:Chouinard}
A $kG$-module $M$ is projective if and only if its restriction to
every elementary abelian $p$-subgroup of $G$ is projective.
\end{theoremqed}

A strengthening of Chouinard's theorem is the following theorem of Carlson.

\begin{theoremqed}[Carlson~\cite{Carlson:1981c}, Theorem~3.7]\label{th:Carlson}
There exists a constant $B$, which depends only on $p$ and $G$,
such that if $M$ is a $kG$-module then
\[ \dim \core_G(M) \le B \cdot \max_{E\le G}\dim\core_E(M) \]
where the maximum is taken over the set of elementary 
abelian $p$-subgroups $E$ of $G$.
\end{theoremqed}

\begin{lemma}\label{le:subgroup}
If $H$ is a subgroup of $G$ and $M$ is a $kG$-module then
$\npj_H(M)\le \npj_G(M)$.
\end{lemma}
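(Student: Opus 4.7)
The plan is to observe that this lemma is essentially a restatement of the corollary to the $H$-restriction theorem established earlier in the chapter, and to give a direct and self-contained argument using the fact that restriction preserves projectivity. The key observation is that for any $kG$-module $P$ that is projective, the restriction $P{\downarrow_H}$ is a projective $kH$-module; this is because $kG$ is free as a left $kH$-module of rank $[G:H]$, so a summand of a free $kG$-module restricts to a summand of a free $kH$-module. Consequently, if $\fX_\proj$ denotes the ideal of projectives in the respective group algebras, then for the ideal $\fX_\proj$ of $kG$ we have $\fX_\proj{\downarrow_H}\subseteq \fX_\proj$ for $kH$.

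With that in hand, the next step is to compare the cores tensor-power by tensor-power. Writing $M^{\otimes n}=\core_G(M^{\otimes n})\oplus Q_n$ with $Q_n$ projective and using that restriction commutes with tensor product, I get
\[ (M{\downarrow_H})^{\otimes n} = (M^{\otimes n}){\downarrow_H} = \core_G(M^{\otimes n}){\downarrow_H}\oplus Q_n{\downarrow_H}. \]
Since $Q_n{\downarrow_H}$ is projective, the Krull--Schmidt theorem shows that $\core_H((M{\downarrow_H})^{\otimes n})$ is isomorphic to a direct summand of $\core_G(M^{\otimes n}){\downarrow_H}$, and therefore
\[ \cc_n^H(M)=\dim\core_H((M{\downarrow_H})^{\otimes n})\le \dim\core_G(M^{\otimes n})=\cc_n^G(M). \]

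Finally, taking $n$th roots and applying $\limsup_{n\to\infty}$ (or equivalently, using Theorem~\ref{th:inf}) yields $\npj_H(M)\le \npj_G(M)$. There is no real obstacle here; the only thing worth being careful about is keeping the two meanings of $\fX_\proj$ (for $kH$ and for $kG$) distinct and verifying the containment $\fX_\proj{\downarrow_H}\subseteq\fX_\proj$ of ideals on the $kH$ side. Alternatively one could simply cite the corollary to Theorem~\ref{th:K/k}, which is this statement verbatim, but since the result is singled out as a separate lemma in the subsection on elementary abelian subgroups (in preparation for combining with Carlson's theorem), a one-line direct proof seems preferable.
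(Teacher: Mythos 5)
Your proof is correct and follows the same route as the paper, which simply asserts the inequality $\cc_n^H(M)\le\cc_n^G(M)$ for all $n$ and takes $n$th roots; your argument supplies the justification for that inequality (restriction preserves projectivity and commutes with tensor, plus Krull--Schmidt). You also correctly note that this lemma is verbatim the corollary to the earlier theorem on $\npj_{H,\fX\downarrow_H}(M)\le\npj_{G,\fX}(M)$, specialised to $\fX=\fX_\proj$.
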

\begin{proof}
We have $\cc_n^H(M) \le \cc_n^G(M)$ for all $n$.
\end{proof}

\begin{theorem}\label{th:E}
Let $M$ be a $kG$-module. Then
$\npj_G(M) = \displaystyle\max_{E\le G} \npj_E(M)$,
where the maximum is taken over the set of elementary abelian
$p$-subgroups $E$ of $G$.
\end{theorem}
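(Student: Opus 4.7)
The plan is to combine two ingredients: Lemma~\ref{le:subgroup} for the easy direction, and Carlson's Theorem~\ref{th:Carlson} together with Theorem~\ref{th:npj_G}\,(i) for the hard direction.

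First, by Lemma~\ref{le:subgroup} applied to each elementary abelian $p$-subgroup $E \le G$, we have $\npj_E(M) \le \npj_G(M)$, so $\max_{E\le G}\npj_E(M) \le \npj_G(M)$ is immediate.

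For the reverse inequality, I would apply Carlson's Theorem~\ref{th:Carlson} not to $M$ itself, but to each tensor power $M^{\otimes n}$. Since the constant $B$ depends only on $p$ and $G$ (and in particular not on the module), this yields
\[
\cc_n^G(M) \,=\, \dim\core_G(M^{\otimes n}) \,\le\, B\cdot \max_{E\le G}\dim\core_E(M^{\otimes n}) \,=\, B\cdot \max_{E\le G}\cc_n^E(M).
\]
Taking $n$th roots and using $\sqrt[n]{\max_E c_n^E} = \max_E \sqrt[n]{\cc_n^E(M)}$ gives
\[
\sqrt[n]{\cc_n^G(M)} \,\le\, \sqrt[n]{B}\cdot\max_{E\le G}\sqrt[n]{\cc_n^E(M)}.
\]
Now I take $\limsup_{n\to\infty}$ of both sides. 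The factor $\sqrt[n]{B}$ tends to $1$, so the right hand side has $\limsup$ equal to $\limsup_n \max_E \sqrt[n]{\cc_n^E(M)}$.

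The remaining step is to interchange $\limsup_n$ and $\max_E$. Here the finiteness of the set of elementary abelian $p$-subgroups of the finite group $G$ is crucial: given finitely many non-negative sequences, the pigeonhole principle says that some index attains the maximum for infinitely many $n$, so $\limsup_n \max_E (\cdot) = \max_E \limsup_n (\cdot)$. Applying this and Theorem~\ref{th:npj_G}\,(i), which tells us that each $\limsup_n \sqrt[n]{\cc_n^E(M)}$ equals $\npj_E(M)$, we obtain
\[
\npj_G(M) \,=\, \limsup_{n\to\infty}\sqrt[n]{\cc_n^G(M)} \,\le\, \max_{E\le G}\npj_E(M),
\]
completing the proof.

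The only mild obstacle is remembering that Carlson's bound must be applied uniformly to the family $\{M^{\otimes n}\}_{n\ge 0}$; the proof works precisely because the constant $B$ is independent of the module, which lets the exponential growth rate survive the $n$th root while the constant $B$ washes out in the limit.
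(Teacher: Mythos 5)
Your proof is correct and follows essentially the same route as the paper: the easy inequality from Lemma~\ref{le:subgroup}, Carlson's Theorem~\ref{th:Carlson} applied uniformly to the tensor powers $M^{\otimes n}$, and taking $n$th roots and $\limsup$ so that $\sqrt[n]{B}\to 1$. Your only addition is to spell out the interchange of $\limsup_n$ and $\max_E$ via the pigeonhole argument, a point the paper's proof leaves implicit; this is a worthwhile clarification but not a different method.
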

\begin{proof}
By Theorem \ref{th:Carlson} and Lemma \ref{le:subgroup} we have
\begin{equation*}
\max_{E\le G}\sqrt[n]{\cc_n^E(M)}\le
\sqrt[n]{\cc_n^G(M)} \le
\sqrt[n]{B}.\max_{E\le G}\sqrt[n]{\cc_n^E(M)} .
\end{equation*}
Taking $\displaystyle\limsup_{n\to\infty}$, the factor of $\sqrt[n]{B}$ tends to $1$.
\end{proof}

\begin{eg}
Let $G$ be a generalised quaternion group%
\index{generalised!quaternion group} and let $k$ be a field of 
characteristic two. Then $G$ has only one elementary abelian $2$-subgroup
$E=\langle z\rangle$, where $z$ is the central element of order two.
Let $X=1+z$, an element of $kG$ satisfying $X^2=0$.
If $M$ is a $kG$-module then the restriction to $kE$ is a direct sum
of $\dim(\Ker(X,M)/\im(X,M))$ copies of the trivial module plus a free module.
It follows that
\[ \npj_G(M)=\dim(\Ker(X,M)/\im(X,M)). \]
In particular, this is an integer.
\end{eg}

\begin{prop}[Dade \cite{Dade:1978a,Dade:1978b}]\label{pr:Dade}
If $E$ is an elementary abelian $p$-group, then the only endo\-trivial $kE$-modules
are the syzygies $\Omega^n(k)$ ($n\in\bZ$) of the trivial module.\qed
\end{prop}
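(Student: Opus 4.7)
The plan is to proceed by induction on the rank $r$ of $E \cong (\bZ/p)^r$, after first reducing to the case where $k$ is algebraically closed. The reduction is legitimate because tensoring with a field extension preserves duals, tensor products and projectivity, hence carries endotrivial modules to endotrivial modules and syzygies to syzygies; compare Corollary~\ref{co:K/k}.

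For the base case $r = 1$, I would argue directly. Writing $kE = k[X]/(X^p)$ with $X = g-1$, the indecomposables are the Jordan blocks $J_i = k[X]/(X^i)$ for $1 \le i \le p$, with $J_p$ the unique projective. If $M = J_i$ is endotrivial with $i < p$, then $\dim(M \otimes M^*) \equiv 1 \pmod{p}$ forces $i^2 \equiv 1 \pmod{p}$, so $i \in \{1, p-1\}$. These correspond to $k = \Omega^0(k)$ and $J_{p-1} = \Omega(k)$; since $\Omega^2(k) \cong k$ over $k[\bZ/p]$, we have accounted for all the $\Omega^n(k)$.

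For the inductive step $r \ge 2$, the key tool is Dade's lemma on shifted cyclic subgroups: identifying $kE \cong k[X_1, \ldots, X_r]/(X_i^p)$ with $X_i = g_i - 1$, each nonzero $u = \sum \alpha_i X_i$ generates a cyclic shifted subgroup $\langle 1+u\rangle$ of order $p$ in the units of $kE$, and a $kE$-module is free if and only if its restriction along every such shifted subgroup is free. Given an endotrivial $M$, each restriction to a maximal subgroup $F \cong (\bZ/p)^{r-1}$ is again endotrivial, so by induction $M{\downarrow_F}$ is stably isomorphic to $\Omega^{n_F}(k)$ for some $n_F \in \bZ$. I would show that all the $n_F$ coincide by exploiting that any two hyperplane subgroups $F_1, F_2$ share a common rank-$(r-2)$ subgroup $F_1 \cap F_2$: restricting further determines the stable isomorphism type of the syzygy and forces $n_{F_1} = n_{F_2}$; in the exceptional case $r = 2$, $F_1 \cap F_2$ is trivial, so one needs a slightly different comparison via the Poincar\'e series of the minimal projective resolution of $M$. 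Once a common value $n$ is obtained, set $N = M \otimes \Omega^{-n}(k)$; this is an endotrivial module whose restriction to every hyperplane subgroup is stably trivial, and one concludes that $N$ itself is stably trivial by running Dade's lemma on the shifted cyclic subgroups lying inside such hyperplanes.

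The principal obstacle is this last step: upgrading local stable triviality across all hyperplanes to global stable triviality on $E$. Dade's lemma only gives a projective versus non-projective dichotomy, whereas here one needs to detect the precise stable isomorphism class, so Chouinard-style detection at hyperplanes does not immediately suffice. The delicate point in Dade's original treatment is constructing, from the compatible family of restricted isomorphisms $M{\downarrow_F} \simeq \Omega^n(k){\downarrow_F}$, an actual stable isomorphism on $E$; this requires the full structure of syzygy operators together with either an explicit cocycle obstruction computation or a rank-variety argument in the spirit of Carlson's classification of modules with trivial rank variety, and it is this gluing step that is the technical heart of the proof.
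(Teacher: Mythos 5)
The paper does not prove this statement: it cites Dade's papers \cite{Dade:1978a,Dade:1978b} and uses the classification as a known fact, so your proposal is being measured against the literature rather than against an argument in the text.

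Your sketch follows the broad contour of the classical approach (reduce to an algebraically closed field, restrict to smaller subgroups, invoke shifted cyclic subgroups), and your base case is essentially right after one small addition: the congruence $i^2\equiv 1\pmod p$ only narrows the candidates to $i\in\{1,p-1\}$, and you should then observe that both $J_1=k$ and $J_{p-1}=\Omega(k)$ are in fact endotrivial and that $\Omega^2(k)\cong k$, so that these exhaust the syzygies. The genuine problems are in the inductive step. First, the induction cannot really bottom out at rank $1$: the endotrivial group of $\bZ/p$ is $\bZ/2$, so restriction from $(\bZ/p)^2$ to its cyclic subgroups only records the syzygy index modulo $2$ and carries no information about the integer $n$; rank $2$ is exactly where Dade does the hard work, and your framework pushes all of the difficulty there without supplying it. The same defect recurs at rank $3$, since $F_1\cap F_2$ is then cyclic, so the ``compare on a common subgroup'' step again determines $n$ only modulo $2$ — you flagged $r=2$ as exceptional, but $r=3$ is equally problematic for this comparison. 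Second, and more seriously, the final gluing step is a real gap: knowing that $N=M\otimes\Omega^{-n}(k)$ restricts to a stably trivial module on every hyperplane does not imply $N$ is stably trivial, and Dade's lemma cannot deliver this because it only detects projectivity along shifted cyclic subgroups, not the precise stable isomorphism type. You are candid that this is the technical heart, and indeed filling it requires the full force of either Dade's original calculation or Carlson's rank-variety argument (trivial variety plus a dimension count pinning down the syzygy index). As it stands, what you have is an honest roadmap with the crux left open rather than a proof.
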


\begin{lemma}\label{le:Omega-k}
We have $\npj_G(\Omega^n k) =1$ for $n\in\bZ$, provided that $p$ divides $|G|$.
\end{lemma}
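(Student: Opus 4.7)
The plan is to establish the two inequalities $\npj_G(\Omega^n k)\ge 1$ and $\npj_G(\Omega^n k)\le 1$ separately. Since $p\mid|G|$, the trivial module $k$ is not projective (the augmentation $kG\to k$ does not split), so $\Omega^n k$ is a non-zero indecomposable non-projective $kG$-module for every $n\in\bZ$. Hence by Lemma~\ref{le:gamma=0}, $\npj_G(\Omega^n k)\ge 1$.

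For the upper bound, I will exploit that $\Omega^n k$ is endotrivial. The stable module category $\stmod kG$ is tensor triangulated with $\Omega$ as shift and $k$ as tensor unit, so $\Omega^a k\otimes_k\Omega^b k\cong\Omega^{a+b}k$ in $\stmod kG$. Iterating, $(\Omega^n k)^{\otimes m}\cong \Omega^{nm}k\oplus P_m$ in $\mmod kG$ for some projective $P_m$. Since $\Omega^{nm}k$ is indecomposable and non-projective, $\core_G((\Omega^n k)^{\otimes m})=\Omega^{nm}k$, and hence $\cc_m^G(\Omega^n k)=\dim\Omega^{nm}k$.

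The final step uses the polynomial growth of syzygies: if $c=c_G(k)$ denotes the complexity of the trivial $kG$-module, then $\dim\Omega^j k=O(|j|^{c-1})$, a classical fact proved via minimal projective resolutions and the Poincar\'e series of $H^*(G,k)$. Consequently $\sqrt[m]{\cc_m^G(\Omega^n k)}=\sqrt[m]{\dim\Omega^{nm}k}\to 1$, and by Theorem~\ref{th:inf} we conclude $\npj_G(\Omega^n k)\le 1$. Combining the two inequalities gives the claim.

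The main obstacle is appealing to the polynomial growth of $\dim\Omega^j k$; its proof is standard in modular representation theory but lies outside the excerpt. An alternative would be to first apply Theorem~\ref{th:E} to reduce to elementary abelian $E\le G$, noting that $\Omega^n_G k{\downarrow_E}$ differs from $\Omega^n_E k$ only by a projective summand (so the $\fX_\proj$-cores agree), and then verifying polynomial growth directly for $E\cong(\bZ/p)^r$ from the explicit Koszul-type minimal resolution.
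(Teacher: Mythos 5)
Your proof is essentially the paper's argument: you compute $\cc_m^G(\Omega^n k)=\dim\Omega^{nm}k$ and invoke polynomial growth of syzygy dimensions (the paper cites \cite{Benson:1991b}, \S 5.3 for exactly this), with the only cosmetic difference being that you establish the lower bound $\npj_G(\Omega^n k)\ge 1$ explicitly via Lemma~\ref{le:gamma=0} and treat negative $n$ uniformly through $\Omega^{nm}k$, whereas the paper leaves the lower bound implicit in the polynomial growth and passes to $n<0$ via $(\Omega^n k)^*\cong\Omega^{-n}k$.
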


\begin{proof}
First suppose that $n\ge 0$. 
We have $\core_G((\Omega k)^{\otimes n}) \cong \Omega^nk$,
and $\dim \Omega^n k$ grows polynomially in $n$ (see for example \cite{Benson:1991b}\,  \S 5.3). 
Therefore $\npj_G(\Omega^n k)=1$. Since $(\Omega^n k)^* \cong \Omega^{-n} k$,
the lemma is also true for $n<0$.
\end{proof}

For a general representation ideal $\fX$ in a representation ring $\fa$, we saw that
it was possible for an $\fX$-endotrivial element $x\in\fa_{\cge 0}$ to
have $\npj_\fX(x)>1$, and so we needed to introduce two different
versions of the endotrivial group, see Section~\ref{se:endotriv}. 
For $\fa=a(G)$ and $\fX=\fX_\proj$, this
cannot happen, as shown by the following theorem. 

\begin{theorem}
A $kG$-module $M$ is endotrivial if and only if $\npj_G(M)=1$.%
\index{endotrivial!module}\index{module!endotrivial}
\end{theorem}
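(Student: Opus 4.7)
One direction is already in hand: if $\npj_G(M)=1$ then by Theorem~\ref{th:endotriv} (applied to $\fa=a(G)$ and $\fX=\fX_\proj$) the class $[M]\in a(G)_{\cge 0}$ is $\fX_\proj$-endotrivial, which is precisely the statement that $M$ is endotrivial. So the work lies entirely in the converse: assuming $M$ endotrivial, show $\npj_G(M)=1$.

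The plan is to detect on elementary abelian $p$-subgroups using Theorem~\ref{th:E}. For each elementary abelian $p$-subgroup $E\le G$, endotriviality restricts: if $M\otimes M^*\cong k\oplus P$ with $P$ projective as $kG$-module, then $M{\downarrow_E}\otimes (M{\downarrow_E})^*\cong k\oplus P{\downarrow_E}$, and $P{\downarrow_E}$ is projective. Hence $M{\downarrow_E}$ is endotrivial as a $kE$-module. By Dade's classification (Proposition~\ref{pr:Dade}), the only indecomposable endotrivial $kE$-modules are the syzygies $\Omega^n(k)$, so $M{\downarrow_E}\cong \Omega^n(k)\oplus Q$ for some $n\in\bZ$ and some projective $kE$-module $Q$.

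Now by Lemma~\ref{le:gamma(x+fX)}, adding a projective summand does not change $\npj_E$, so $\npj_E(M)=\npj_E(\Omega^n k)$. If $p$ divides $|E|$, Lemma~\ref{le:Omega-k} gives $\npj_E(\Omega^n k)=1$. If $E$ is the trivial subgroup (the only elementary abelian subgroup with $p\nmid |E|$) then $kE$ is semisimple, all modules are projective, and $\npj_E(M)=0$ by Lemma~\ref{le:gamma=0}. Either way $\npj_E(M)\le 1$. Taking the maximum over $E$ and invoking Theorem~\ref{th:E} gives $\npj_G(M)\le 1$.

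Finally, $\npj_G(M)\ne 0$ because an endotrivial module is not in $\langle\fX_\proj\rangle$ — if $[M]\in a_\bC(G,1)$ then $[M][M^*]\in a_\bC(G,1)$, contradicting $[M][M^*]=[k]+[P]$ with $[k]\notin a_\bC(G,1)$ (assuming $p\mid |G|$; if $p\nmid |G|$ the statement is vacuous since every module is projective and the only endotrivial modules are one-dimensional, both sides being $1$). Thus by Lemma~\ref{le:gamma=0} we have $\npj_G(M)\ge 1$, and combined with the above we conclude $\npj_G(M)=1$. The main obstacle is the reduction step: verifying that Dade's classification lets us express $M{\downarrow_E}$ as $\Omega^n(k)$ plus a projective, which rests on the fact that ``endotrivial'' as defined here is closed under adding projective summands and that Dade's theorem covers this case.
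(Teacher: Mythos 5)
Your proposal is correct and takes essentially the same route as the paper: the ``only if'' direction you get from Theorem~\ref{th:endotriv} (the paper cites Theorem~\ref{th:sqrt2}, which gives the same conclusion plus a gap theorem), and the ``if'' direction you reduce to elementary abelians via Theorem~\ref{th:E}, apply Dade's classification (Proposition~\ref{pr:Dade}), and finish with Lemma~\ref{le:Omega-k} — exactly as the paper does. You are a little more careful than the paper about the projective summand that may appear after restricting to $E$ (handled via Lemma~\ref{le:gamma(x+fX)}) and about the trivial subgroup, which is fine but not a genuinely different argument.
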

\begin{proof}
If $M$ is neither projective
nor endotrivial then it follows from Theorem~\ref{th:sqrt2} 
that $\npj_G(M)\ge \sqrt{2}$.
If $M$ is projective then $\npj_G(M)=0$. 
If $M$ is endotrivial then its restriction to 
every elementary abelian $p$-subgroup of $G$ is endotrivial.
So by Theorem \ref{th:E}, we may assume that $G=E$ is
an elementary abelian $p$-group. By Proposition \ref{pr:Dade},
$M$ is a syzygy of the trivial module, so
by Lemma~\ref{le:Omega-k} we have $\npj_E(M)=1$.
\end{proof}

\begin{question}
For choices of representation ideals $\fX$ in $a(G)$ 
other than $\fX=\fX_\proj$, is it true
that an $\fX$-endotrivial module $M$ necessarily 
satisfies $\npj_{G,\fX}(M)=1$?
\end{question}

\section{Induced modules}

In view of Theorem~\ref{th:E}, the following proposition is
important in finding the gamma invariants of 
induced modules.\index{induced module}\index{module!induced}
For calculational purposes, 
this proposition should be combined with the Mackey
decomposition formula\index{Mackey decomposition formula} 
for restricting induced modules
to elementary abelian subgroups.

\begin{prop}
If $E'$ is a subgroup of an elementary abelian $p$-group $E$ and
$M$ is a $kE'$-module then
\begin{enumerate}
\item
$\core_E(M{\uparrow^E})\cong (\core_{E'}(M)){\uparrow^E}$
\item
 $\npj_E(M{\uparrow^E})=|E:E'|\npj_{E'}(M)$.
\end{enumerate}
\end{prop}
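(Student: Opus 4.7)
My plan is to prove part (i) first, then derive part (ii) from it by combining the tensor identity with a Mackey-type computation that is especially simple because $E$ is abelian.

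For part (i), the key observation is that since $E$ and $E'$ are $p$-groups in characteristic $p$, the group algebras $kE$ and $kE'$ are local. Hence projective summands coincide with free summands, and $\core_{E'}(M)$ is just $M$ with its maximal free summand stripped off. Writing $M = M' \oplus F$ with $F$ free and $M' = \core_{E'}(M)$, induction gives $M{\uparrow^E} \cong M'{\uparrow^E} \oplus F{\uparrow^E}$. Since induction sends free modules to free modules, $F{\uparrow^E}$ is free over $kE$. So it suffices to show that $M'{\uparrow^E}$ has no free $kE$-summand. Suppose it did; restrict to $kE'$. The Mackey formula collapses drastically because $E$ is abelian: every double coset representative $g$ satisfies ${}^gE' = E'$, and the twisted module ${}^gM'$ equals $M'$ as a $kE'$-module, giving $M'{\uparrow^E}{\downarrow_{E'}} \cong (M')^{\oplus|E:E'|}$. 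Meanwhile $kE{\downarrow_{E'}} \cong (kE')^{\oplus|E:E'|}$. A free $kE$-summand of $M'{\uparrow^E}$ would therefore force $(kE')^{\oplus|E:E'|}$ to be a summand of $(M')^{\oplus|E:E'|}$, and by Krull--Schmidt this would imply that $M'$ itself has a free summand, contradicting the choice of $M'$.

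For part (ii), I would use the tensor identity (Frobenius reciprocity in its module form) $M{\uparrow^E}\otimes N \cong (M \otimes N{\downarrow_{E'}}){\uparrow^E}$. Combined with $(M{\uparrow^E}){\downarrow_{E'}} \cong M^{\oplus|E:E'|}$ from the abelian Mackey computation above, an easy induction on $n$ gives
\[ (M{\uparrow^E})^{\otimes n} \cong |E:E'|^{n-1}\,(M^{\otimes n}){\uparrow^E}. \]
Applying part (i) together with the fact that $\dim(N{\uparrow^E}) = |E:E'|\dim N$ yields
\[ \cc_n^E(M{\uparrow^E}) = |E:E'|^{n-1}\cdot|E:E'|\cdot\cc_n^{E'}(M) = |E:E'|^n\,\cc_n^{E'}(M). \]
Taking $n$-th roots gives $\sqrt[n]{\cc_n^E(M{\uparrow^E})} = |E:E'|\sqrt[n]{\cc_n^{E'}(M)}$, and passing to the limit using Theorem~\ref{th:npj_G}\,(i) produces the formula $\npj_E(M{\uparrow^E}) = |E:E'|\,\npj_{E'}(M)$.

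The main obstacle is part (i): showing that induction preserves the absence of projective summands. This is where abelianness of $E$ is essential, since it makes the Mackey decomposition trivial and lets one pull back a hypothetical free summand of $M'{\uparrow^E}$ to a free summand of $M'$. Once (i) is in hand, part (ii) is a routine calculation.
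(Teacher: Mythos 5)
Your proof is correct and follows essentially the same route as the paper's: you establish part (i) by using $M{\uparrow^E}{\downarrow_{E'}}\cong |E:E'|M$ (immediate since $E$ is abelian) together with Krull--Schmidt to see that induction preserves the absence of projective summands, and then for part (ii) you combine the tensor identity with that restriction fact to get $(M{\uparrow^E})^{\otimes n}\cong |E:E'|^{n-1}(M^{\otimes n}){\uparrow^E}$ by induction on $n$, from which the $\cc_n$ formula and the limit computation follow. The only difference is that you spell out the Krull--Schmidt step in part (i) a bit more explicitly than the paper does.
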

\begin{proof}
We have $M{\uparrow^E}{\downarrow_{E'}}\cong|E:E'|M$, a direct sum of
$|E:E'|$ copies of $M$. So $M$ has a projective summand if and only if
$M{\uparrow^E}$ has a projective summand. This proves (i).
Applying this to $M^{\otimes n}$, we have
\begin{equation}\label{eq:induced-core}
\dim\core_E(M^{\otimes n}{\uparrow^E}) = \dim\core_E(M^{\otimes
    n}){\uparrow^E}=|E:E'|\dim\core_{E'}(M^{\otimes n}). 
\end{equation}

We have
\[ M{\uparrow^E}\otimes M{\uparrow^E} \cong (M\otimes
  M{\uparrow^E}{\downarrow_{E'}}){\uparrow^E}
\cong |E:E'|(M\otimes M){\uparrow^E}. \]
Applying induction, we then obtain
\[ (M{\uparrow^E})^{\otimes n} \cong |E:E'|^{n-1}(M^{\otimes
    n}){\uparrow^E} \]
and so using~\eqref{eq:induced-core} we have
\begin{align*} 
\cc_n^E(M{\uparrow^E})&=\dim\core_E((M{\uparrow^E})^{\otimes n})\\
&=|E:E'|^{n-1}\dim\core_E(M^{\otimes  n}{\uparrow^E})\\
&=|E:E'|^n\dim\core_{E'}(M^{\otimes n})=|E:E'|^n\cc_n^{E'}(M). 
\end{align*}
Thus we have 
\[ \sqrt[n]{\cc_n^E(M{\uparrow^E})}=|E:E'|\sqrt[n]{\cc_n^{E'}(M)}, \] 
and taking limits as $n$ tends to infinity, it follows that 
$\npj_E(M{\uparrow^E})=|E:E'|\npj_{E'}(M)$, and (ii) is proved.
\end{proof}

\section{Relatively projective modules and relative syzygies}\label{se:rel-proj}

In this section, we consider a generalisation of Example~\ref{eg:p}\,(iii). 

\begin{defn}
Let $N$ be a $kG$-module. 
We say that a $kG$-module $M$ is 
\emph{projective relative to $N$}\index{projective!relative to $N$}
or \emph{relatively $N$-projective}\index{relatively!projective module}
if $M$ is isomorphic to a direct summand of $N\otimes U$ for some
$kG$-module $U$. 
\end{defn}

\begin{lemma}
For $kG$-modules $M$ and $N$, the following are equivalent:
\begin{enumerate}
\item
$M$ is relatively $N$-projective, 
\item the natural map
$N\otimes N^*\otimes M \to M$ given by $n\otimes f \otimes m\mapsto
f(n)m$ splits,
\item the natural map
$M \to N^* \otimes N\otimes M$ given by $m \mapsto \sum_i f_i\otimes
n_i$ splits.
\end{enumerate}
\end{lemma}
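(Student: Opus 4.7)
The plan is to establish (ii) $\Rightarrow$ (i), (iii) $\Rightarrow$ (i), (i) $\Rightarrow$ (ii), and (i) $\Rightarrow$ (iii). The first two are immediate: a $kG$-linear splitting of the evaluation map $N\otimes N^*\otimes M\to M$ exhibits $M$ as a direct summand of $N\otimes N^*\otimes M = N\otimes(N^*\otimes M)$, which has the form required by (i); and a splitting of $M\to N^*\otimes N\otimes M$ exhibits $M$ as a summand of $N^*\otimes N\otimes M$, which via the $G$-equivariant swap $N^*\otimes N\cong N\otimes N^*$ (valid since $G$ acts diagonally) is isomorphic to $N\otimes(N^*\otimes M)$.

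For (i) $\Rightarrow$ (ii), I would fix a splitting $i\colon M\to N\otimes U$, $p\colon N\otimes U\to M$ with $p\circ i = \mathrm{id}_M$. Choose a basis $\{n_j\}$ of $N$ with dual basis $\{f_j\}$, and note that $\omega = \sum_j f_j\otimes n_j\in N^*\otimes N$ is $G$-invariant because it corresponds to $\mathrm{id}_N\in\mathrm{End}_k(N)$, which commutes with every $G$-action. Hence the map $\sigma\colon N\to N\otimes N^*\otimes N$, $n\mapsto n\otimes\omega$, is $kG$-linear, and the zigzag identity $\sum_j f_j(n)n_j = n$ gives $\ev\circ\sigma=\mathrm{id}_N$, where $\ev$ denotes evaluation on the last two factors. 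I then define the splitting
\[ s = (\mathrm{id}_N\otimes \mathrm{id}_{N^*}\otimes p)\circ (\sigma\otimes \mathrm{id}_U)\circ i\colon M\to N\otimes N^*\otimes M, \]
and verify by a direct diagram chase, or by invoking naturality of evaluation together with the zigzag identity, that the composite of $s$ with the map in (ii) equals $p\circ i=\mathrm{id}_M$.

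For (i) $\Rightarrow$ (iii) the construction is dual and slightly trickier. I would build a retraction $r\colon N^*\otimes N\otimes M\to M$ as the composite
\[ N^*\otimes N\otimes M\xrightarrow{\mathrm{id}\otimes\mathrm{id}\otimes i} N^*\otimes N\otimes N\otimes U \xrightarrow{\mathrm{id}\otimes\tau\otimes\mathrm{id}} N^*\otimes N\otimes N\otimes U \xrightarrow{\ev'\otimes\mathrm{id}\otimes\mathrm{id}} N\otimes U\xrightarrow{p} M, \]
where $\tau\colon N\otimes N\to N\otimes N$ is the swap (which is $kG$-linear because $G$ acts diagonally) and $\ev'\colon N^*\otimes N\to k$ is $f\otimes n\mapsto f(n)$. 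Applying $r$ to $\sum_i f_i\otimes n_i\otimes m$ and writing $i(m)=\sum_j n_j'\otimes u_j$, the swap produces $\sum_{i,j} f_i\otimes n_j'\otimes n_i\otimes u_j$, after which evaluation on the first two factors collapses the sum to $\sum_j\bigl(\sum_i f_i(n_j')n_i\bigr)\otimes u_j = \sum_j n_j'\otimes u_j = i(m)$, so that $r$ composed with the map in (iii) equals $p\circ i=\mathrm{id}_M$.

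The main subtlety to watch for is the temptation to split the evaluation map of (ii) using the coevaluation map of (iii) directly, bypassing the hypothesis (i): that composite equals $(\dim N)\mathrm{id}_M$, which vanishes when $p\mid\dim N$. So the data $(i,p)$ from relative projectivity must genuinely be threaded through the construction of the splittings, and the $G$-invariance of $\omega$ (coming from $\mathrm{id}_N$ being $G$-equivariant) is what makes the insertion of coevaluation into the diagram legal in the first place. All remaining checks are routine verifications of the zigzag identities and naturality.
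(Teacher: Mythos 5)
Your proof is correct and takes essentially the same approach as the paper: the paper's proof of (i) $\Rightarrow$ (ii) reduces to the case $M = N\otimes U$ and splits the evaluation map there using the coevaluation $n \mapsto \sum_j n\otimes f_j\otimes n_j$, which is exactly the content of threading $(i,p)$ through your map $\sigma$. Your explicit dual construction for (i) $\Rightarrow$ (iii) via the swap on $N\otimes N$, and your warning that coevaluation composed with evaluation yields $(\dim N)\,\mathrm{id}_M$ and so cannot be used naively, are both sound and fill in details the paper leaves as "proved similarly."
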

\begin{proof}
(i) $\Rightarrow$ (ii):
It suffices to prove this for $M=N\otimes U$.
Let $n_i$ and $f_i$ be dual bases of $N$ and $N^*$. Then for
$n\in N$ we have $\sum_i f_i(n)n_i=n$.
Then the
map $N \otimes N^*\otimes N \otimes U \to N \otimes U$ is
split by the map sending $n\otimes u$ to $\sum_i n \otimes f_i \otimes
n_i \otimes u$.

(ii) $\Rightarrow$ (i):
If the natural map $N \otimes N^* \otimes
M \to M$ splits then $M$ is isomorphic to a direct summand of
$N\otimes U$ where $U=N^*\otimes M$.

The implications (i) $\Rightarrow$ (iii) and (iii) $\Rightarrow$ (i)
are proved similarly.
\end{proof}

\begin{rk}\label{rk:rel-proj}
If $N=kG$, this is the definition of a projective $kG$-module. If $N$ is the 
permutation module\index{permutation module}\index{module!permutation}
on the cosets of a subgroup $H\le G$,
this is the definition of a relatively $H$-projective module. More
generally, let $\fH$ be a collection of subgroups of $G$ with
the property that if $H$ is contained in a conjugate of $H'$ and
$H'\in\fH$ then $H\in \fH$. Let $N$ be the direct sum over $H\in\fH$
of the permutation modules on the cosets of $H$. Then a module
is relatively $\fH$-projective if and only if it is relatively $N$-projective.

The condition that $M$ is injective relative to $N$%
\index{injective relative to $N$} is equivalent
to the condition that it is projective relative to $N$, so we shall
not need to use this terminology.
\end{rk}

\begin{defn}
A short exact sequence 
\[ 0 \to M_3\to M_2 \to M_1 \to 0 \]
is 
\emph{relatively $N$-split}\index{relatively!split squence}
if the sequence
\[ 0 \to N\otimes M_3 \to N\otimes M_2 \to N \otimes M_1 \to 0 \]
splits.
\end{defn}

\begin{prop}
For a short exact sequence 
$0 \to M_3\to M_2 \to M_1\to 0$
the following are equivalent:
\begin{enumerate}
\item
The sequence
$0 \to N\otimes M_3\to N\otimes M_2 \to N\otimes M_1\to 0$
splits.
\item
The sequence
$0 \to N^* \otimes M_3 \to N^*\otimes M_2 \to N^*\otimes M_1 \to  0$
splits.
\item
The sequence
$ 0 \to N \otimes N^*\otimes M_3 \to N\otimes N^* \otimes M_2 \to
  N\otimes N^*\otimes M_1 \to 0$
splits.
\end{enumerate}
\end{prop}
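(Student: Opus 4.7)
The strategy is to reduce everything to the zigzag (snake) identity for the duality between $N$ and $N^*$. The implications (i) $\Rightarrow$ (iii) and (ii) $\Rightarrow$ (iii) are immediate: tensoring a split short exact sequence with any $kG$-module preserves the splitting, so tensoring (i) with $N^*$ yields (iii), and tensoring (ii) with $N$ yields (iii), up to the standard reordering of tensor factors.

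For the reverse implications I would introduce the evaluation map $\varepsilon \colon N^* \otimes N \to k$, $f \otimes n \mapsto f(n)$, and the coevaluation map $\eta \colon k \to N \otimes N^*$, $1 \mapsto \sum_i n_i \otimes f_i$ for a pair of dual bases $\{n_i\}$ of $N$ and $\{f_i\}$ of $N^*$. Both are $kG$-homomorphisms. The zigzag identity
\[ (1_N \otimes \varepsilon) \circ (\eta \otimes 1_N) = 1_N \]
is verified by the one-line computation $n \mapsto \sum_i n_i \otimes f_i \otimes n \mapsto \sum_i f_i(n)\, n_i = n$, and it exhibits $N$ as a direct summand of $N \otimes N^* \otimes N$, with section $\eta \otimes 1_N$ and retraction $1_N \otimes \varepsilon$. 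The mirror identity $(\varepsilon \otimes 1_{N^*}) \circ (1_{N^*} \otimes \eta) = 1_{N^*}$ likewise realises $N^*$ as a summand of $N^* \otimes N \otimes N^*$.

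Now, to prove (iii) $\Rightarrow$ (i), I would tensor the split sequence in (iii) with $N$ and reorder factors, obtaining a split sequence with terms $N \otimes N^* \otimes N \otimes M_i$. Given a splitting $\sigma$ of the resulting surjection, I would check that
\[ \tau \;=\; (1_N \otimes \varepsilon \otimes 1_{M_2}) \circ \sigma \circ (\eta \otimes 1_{N \otimes M_1}) \]
splits the surjection $1_N \otimes \pi \colon N \otimes M_2 \to N \otimes M_1$ from~(i). The verification is a short diagram chase: commuting $1_N \otimes \pi$ past $1_N \otimes \varepsilon \otimes 1_{M_2}$ turns $\tau$ into $(1_N \otimes \varepsilon \otimes 1_{M_1}) \circ (1_{N \otimes N^* \otimes N} \otimes \pi) \circ \sigma \circ (\eta \otimes 1_{N \otimes M_1})$, and the splitting property of $\sigma$ collapses the middle into the identity, after which the zigzag identity finishes the computation. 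The implication (iii) $\Rightarrow$ (ii) is proved by the symmetric argument, using $N^{**} \cong N$ and the mirror zigzag identity. I do not expect a serious obstacle: the whole content of the proposition is the categorical fact that $N$ is a dualisable object of the tensor category of $kG$-modules, so tensoring with $N$ and with $N^*$ are adjoint inverses up to natural direct summands.
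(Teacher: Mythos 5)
Your proof is correct and takes essentially the same route as the paper: the paper's proof simply cites Proposition~\ref{pr:AC}\,(i) (that $N$ is a direct summand of $N\otimes N^*\otimes N$, and dually for $N^*$), which is exactly the zigzag identity you spell out, and your diagram chase is the unpacked version of the standard observation that a retract of a split short exact sequence is itself split.
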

\begin{proof}
This follows from the fact that $N$ is isomorphic to a direct summand
of $N\otimes N^*\otimes N$, and $N^*$ is a direct summand of
$N^*\otimes N \otimes N^*$, see Proposition~\ref{pr:AC}\,(i).
\end{proof}

\begin{cor}\label{co:dual-rel-split}
A short exact sequence
\[ 0 \to M_3\to M_2\to M_1\to 0 \]
is relatively $N$-split if and only if the dual sequence
\[ 0 \to M_1^*\to M_2^*\to M_3^*\to 0 \]
is relatively $N$-split.\qed
\end{cor}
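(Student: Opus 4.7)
The plan is to reduce the corollary to the equivalence of conditions (i) and (ii) in the preceding proposition, combined with the compatibility of duality with tensor products and with splittings of short exact sequences.

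First I would recall that for finitely generated $kG$-modules, a short exact sequence splits if and only if its $k$-linear dual splits; this is standard, since applying $\Hom_k(-,k)$ is an exact contravariant equivalence on the category of finite dimensional $k$-vector spaces that is compatible with the $kG$-action. Next I would use the natural $kG$-module isomorphism $N^* \otimes M^* \cong (N\otimes M)^*$, which sends $f\otimes g$ to the functional $n\otimes m\mapsto f(n)g(m)$; this is a well-known compatibility of duality with tensor products, and it is $G$-equivariant because the $G$-action on the dual of a tensor product is defined diagonally via $g^{-1}$ on each factor.

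With these two ingredients in hand, the chain of equivalences is then immediate. The sequence
\[ 0 \to M_1^* \to M_2^* \to M_3^* \to 0 \]
is relatively $N$-split if and only if, by condition (ii) of the preceding proposition applied to this dual sequence, the sequence
\[ 0 \to N^*\otimes M_1^* \to N^*\otimes M_2^* \to N^*\otimes M_3^* \to 0 \]
splits. Applying the isomorphism $N^*\otimes M_i^*\cong (N\otimes M_i)^*$, this is the same as splitting of
\[ 0 \to (N\otimes M_1)^* \to (N\otimes M_2)^* \to (N\otimes M_3)^* \to 0, \]
and by the duality principle for splittings this is equivalent to splitting of
\[ 0 \to N\otimes M_3 \to N\otimes M_2 \to N\otimes M_1 \to 0, \]
which is exactly the definition of the original sequence being relatively $N$-split.

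There is no real obstacle here; the content is entirely formal. The one thing to be slightly careful about is that the implicit equivalence in the preceding proposition is stated for sequences of the form $0\to M_3\to M_2\to M_1\to 0$, and I am applying it to the dual sequence, which has the arrows in the opposite order; but this is simply a relabelling, since condition (ii) of that proposition is symmetric in its treatment of the three terms. Hence the proof proposal amounts to citing the proposition, the tensor--dual compatibility, and the fact that duality preserves split exactness.
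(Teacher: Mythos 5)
Your argument is correct and is precisely the chain of equivalences that the paper's \texttt{\textbackslash qed} implicitly invokes: the equivalence of (i) and (ii) in the preceding proposition, the natural isomorphism $N^*\otimes M^*\cong(N\otimes M)^*$, and the fact that duality on finite-dimensional $kG$-modules is an exact equivalence preserving split exactness. The closing remark about the proposition's ``symmetry in its three terms'' is unnecessary; nothing needs to be symmetric, you are simply applying the proposition with $(M_1^*,M_2^*,M_3^*)$ playing the roles of $(M_3,M_2,M_1)$.
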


\begin{lemma}\label{le:rel-split-proj}
If $0\to M_3\to M_2\to M_1 \to 0$ is a relatively $N$-split short
exact sequence, and either $M_1$ or $M_3$ is relatively $N$-projective, then the
sequence splits.
\end{lemma}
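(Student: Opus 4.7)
The plan is to handle the two cases (\,$M_1$ relatively $N$-projective, $M_3$ relatively $N$-projective\,) by a single diagram chase in one case, then reduce the other case to the first by duality via Corollary~\ref{co:dual-rel-split}.

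First I would dispose of the case where $M_1$ is relatively $N$-projective. The preceding characterisation gives a $kG$-linear section $\sigma\colon M_1\to N\otimes N^*\otimes M_1$ of the natural evaluation $\epsilon_{M_1}\colon N\otimes N^*\otimes M_1\to M_1$, $n\otimes f\otimes m\mapsto f(n)m$. Meanwhile, the relative-$N$-split hypothesis gives a splitting of $0\to N\otimes M_3\to N\otimes M_2\to N\otimes M_1\to 0$; tensoring this split sequence on the right by $N^*$ (which preserves splits) yields a $kG$-linear section
\[
  t\colon N\otimes N^*\otimes M_1\longrightarrow N\otimes N^*\otimes M_2
\]
of the induced surjection. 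Now form the composite
\[
  s\;\colon\; M_1\xrightarrow{\ \sigma\ } N\otimes N^*\otimes M_1
  \xrightarrow{\ t\ } N\otimes N^*\otimes M_2
  \xrightarrow{\ \epsilon_{M_2}\ } M_2.
\]
I would then verify by naturality of $\epsilon$ with respect to the surjection $\pi\colon M_2\twoheadrightarrow M_1$ that $\pi\circ s=\epsilon_{M_1}\circ(N\otimes N^*\otimes\pi)\circ t\circ\sigma=\epsilon_{M_1}\circ\sigma=\mathrm{id}_{M_1}$, which produces the desired splitting.

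For the second case, suppose instead that $M_3$ is relatively $N$-projective. I would first observe that duality interacts well with this notion: if $M_3$ is a summand of $N\otimes U$ then $M_3^*$ is a summand of $N^*\otimes U^*$, so $M_3^*$ is relatively $N^*$-projective, and by the equivalent characterisations (using $N^{**}\cong N$ for finite-dimensional $N$) this is the same as being relatively $N$-projective. Now apply Corollary~\ref{co:dual-rel-split}: the dual sequence
\[
  0\to M_1^*\to M_2^*\to M_3^*\to 0
\]
is also relatively $N$-split, and its rightmost term $M_3^*$ is relatively $N$-projective. By the case already handled, this dual sequence splits. Dualising once more restores the original sequence as a split exact sequence.

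The main obstacle should be the first case: keeping track of the various copies of $N$ and $N^*$ and making sure the naturality square used at the end is the correct one, so that the composite $\pi\circ s$ reduces to $\epsilon_{M_1}\circ\sigma$ rather than to something involving $\dim N$ or a wrong duality pairing. Everything else — the reduction of the $M_3$ case to the $M_1$ case, and the stability of ``relatively $N$-projective'' under duality — is essentially formal once the finite-dimensional duality between $N$ and $N^*$ is in hand.
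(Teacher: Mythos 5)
Your proposal is correct and follows essentially the paper's argument. For the case where $M_1$ is relatively $N$-projective, the paper composes exactly the same three maps you describe (the section $M_1\to N\otimes N^*\otimes M_1$ from relative projectivity, the splitting of the middle row $N\otimes N^*\otimes M_\bul$, and the evaluation $N\otimes N^*\otimes M_2\to M_2$), presented as a diagram chase; the only cosmetic difference is that the paper invokes condition~(iii) of the preceding proposition to get the splitting of $0\to N\otimes N^*\otimes M_3\to N\otimes N^*\otimes M_2\to N\otimes N^*\otimes M_1\to 0$ directly, whereas you re-derive it by tensoring the $N\otimes M_\bul$ splitting by $N^*$ — the same thing, modulo the symmetry isomorphism of the tensor product. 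For the case where $M_3$ is relatively $N$-projective, the paper simply says ``dual'' and points to Remark~\ref{rk:rel-proj} (relative injectivity $=$ relative projectivity); your route via Corollary~\ref{co:dual-rel-split}, together with the observation that relative $N$-projectivity is preserved under duality, is the same duality argument with the mechanism spelled out, and your sketch of why relatively $N^*$-projective equals relatively $N$-projective (factoring through $N\otimes N^*\otimes N$) is sound.
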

\begin{proof}
Suppose that $M_1$ is relatively $N$-projective.
We have a diagram\medskip
\[ \xymatrix{0 \ar[r] & N \otimes N^*\otimes M_3 \ar[r]\ar[d] &
N\otimes N^*\otimes M_2 \ar[r]\ar[d] & N \otimes N^*\otimes M_1
\ar[r]\ar@/_3ex/[l]\ar[d] & 0 \\
0 \ar[r] &M_3\ar[r] & M_2 \ar[r] & M_1 \ar[r]\ar@/_2ex/[u] & 0} \]
Composing the two splittings and the map $N\otimes N^*\otimes M_2\to
M_2$ gives a splitting for the map $M_2\to M_1$.

The proof with $M_3$ relatively projective is dual (cf.\ Remark~\ref{rk:rel-proj}).
\end{proof}

\begin{defn}
A \emph{relative $N$-syzygy}\index{relative!N@$N$-syzygy} 
of $M$, or \emph{syzygy of $M$ relative to $N$}%
\index{syzygy relative to a module}
is the module $M_3$ in a relatively $N$-split sequence with
$M_2$ relatively $N$-projective and $M_1=M$. This exists, because
the natural epimorphism $N^*\otimes N \otimes M \to M$ is relatively
split. We write $\Omega_N(M)$ for the relative $N$-syzygy of $M$.

Dually, we write $\Omega^{-1}_N(M)$ for the module $M_1$
in a relatively $N$-split sequence with $M_2$ relatively
$N$-projective and $M_3=M$. This exists because the
natural map $M \to N^*\otimes N \otimes M$ is relatively split.
\end{defn}

The following is a relative version of Schanuel's
lemma.\index{relative!Schanuel lemma}\index{Schanuel's lemma, relative version}
It shows that relative $N$-syzygies are well defined up to adding and removing relatively
$N$-projective summands.

\begin{lemma}\label{le:rel-Schan}
Let $M$ and $N$ be $kG$-modules. If 
\begin{align*}
0\to A \to B &\to M\to 0\\
0 \to A' \to B' &\to M \to 0
\end{align*} 
are two relatively $N$-split sequences with
$B$ and $B'$ relatively $N$-projective then $A\oplus B' \cong A'\oplus B$.
\end{lemma}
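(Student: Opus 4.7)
The plan is to mimic the classical proof of Schanuel's lemma, using the pullback construction and then invoking Lemma~\ref{le:rel-split-proj} to split the resulting sequences. First I would form the pullback
\[ P = \{(b,b')\in B\oplus B'\mid \pi(b)=\pi'(b')\}, \]
where $\pi\colon B\to M$ and $\pi'\colon B'\to M$ are the given surjections. The two coordinate projections yield short exact sequences
\begin{align*}
0\to A'\to P\to B\to 0,\\
0\to A\to P\to B'\to 0,
\end{align*}
where in the first the kernel is $\{(0,a')\mid a'\in A'\}$ and in the second it is $\{(a,0)\mid a\in A\}$.

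Next I would check that both of these sequences are relatively $N$-split. Since the tensor product with $N$ is exact and commutes with pullback, $N\otimes P$ is the pullback of $N\otimes B\to N\otimes M$ and $N\otimes B'\to N\otimes M$. A splitting $s'\colon N\otimes M\to N\otimes B'$ of the second original sequence then produces the map $N\otimes B\to N\otimes P$ sending $b\mapsto (b,s'(\pi(b)))$, which splits the first pullback sequence after tensoring with $N$. Symmetrically, a splitting $s\colon N\otimes M\to N\otimes B$ of the first original sequence gives the required splitting of the second pullback sequence after tensoring with $N$.

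Now I would apply Lemma~\ref{le:rel-split-proj}: the first pullback sequence is relatively $N$-split with $B$ relatively $N$-projective, hence splits outright, giving $P\cong A'\oplus B$. Similarly the second pullback sequence splits, giving $P\cong A\oplus B'$. Combining these isomorphisms yields $A\oplus B'\cong A'\oplus B$, as required.

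The main obstacle I anticipate is the verification that the pullback sequences inherit relative $N$-splitness from the original two sequences. Everything else is formal, but getting this splitting explicit requires a clean bookkeeping of the pullback after tensoring, and a verification that the constructed map indeed lands in the pullback (i.e., that $\pi(b) = \pi'(s'(\pi(b)))$, which holds because $s'$ is a section of $N\otimes\pi'$). Once this is in hand, Lemma~\ref{le:rel-split-proj} does the rest.
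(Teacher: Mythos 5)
Your proof is correct and follows essentially the same route as the paper: form the pullback, observe that both projection sequences are relatively $N$-split (in the paper this is phrased via the universal property of the pullback applied to $N\otimes X$, in yours via the explicit section $b\mapsto(b,s'(\pi(b)))$, which amounts to the same thing), and then invoke Lemma~\ref{le:rel-split-proj} to split both. The identification $A\oplus B'\cong X\cong A'\oplus B$ then finishes the argument exactly as in the paper.
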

\begin{proof}
We form the pullback $X$ of $B\to M$ and $B'\to M$. Then we have a
commutative diagram
\[ \xymatrix{&&0\ar[d]&0\ar[d] \\
&&A'\ar@{=}[r]\ar[d]&A' \ar[d] \\
0\ar[r] &A\ar[r]\ar@{=}[d] &X\ar[r] \ar[d]&B' \ar[r]\ar[d] & 0 \\
0 \ar[r] & A \ar[r] & B \ar[r]\ar[d] & M \ar[r]\ar[d] & 0\\
&&0&0} \]
We claim that the sequence ending in $B'$ given by the middle row
of this diagram is relatively $N$-split. To see this, we use the fact
that the bottom row is relatively $N$-split. Then we compose
\[ N\otimes B' \to N \otimes M \to N\otimes B. \]
Since $N\otimes X$ is a pullback of $N \otimes B \to N \otimes M$
and $N\otimes B'\to N \otimes M$, we can use this composite
together with the identity map on $N\otimes B'$ to obtain the required
map $N\otimes B' \to N \otimes X$. Similarly, the sequence ending
in $B$ given by the middle column is relatively $N$-split.

Since $B$ and $B'$ are relatively $N$-projective, 
using Lemma~\ref{le:rel-split-proj} we see that the sequences ending in
$B$ and $B'$ split. So we have $A\oplus B' \cong X \cong A'\oplus B$.
\end{proof}

\begin{defn}
If $N$ is a $kG$-module, let $\fX_N$\index{X@$\fX_N$} be the collection of
indecomposable modules that are projective relative to $N$.
Then $\fX_N$ is an ideal of $kG$-modules.
\end{defn}

\begin{prop}\label{pr:OmegaNM}
We have the following.
\begin{enumerate}
\item
The module $\core_{G,\fX_N}(\Omega_N(M))$ is well defined, and
isomorphic to
\[ \core_{G,\fX_N}(\Omega_N(k)\otimes M). \]
\item
$\core_{G,\fX_N}(\Omega_N^{-1}\Omega_N(M))\cong
\core_{G,\fX_N}(M)$.
\item
The module $\core_{G,\fX_N}(\Omega_N^{-1}(M))$ is well
defined, and is isomorphic to 
\[ \core_{G,\fX_N}(\Omega^{-1}(k)\otimes M). \]
\item
$\core_{G,\fX_N}(\Omega_N\Omega_N^{-1}(M))\cong
\core_{G,\fX_N}(M)$.
\end{enumerate}
\end{prop}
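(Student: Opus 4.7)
The plan is to deduce all four statements from two ingredients: (a) a version of the relative Schanuel lemma (Lemma~\ref{le:rel-Schan}) giving well-definedness of $\Omega_N(M)$ and $\Omega_N^{-1}(M)$ up to direct summands in $\fX_N$, and (b) explicit relatively $N$-split resolutions of the trivial module $k$, which can then be tensored with $M$ to obtain resolutions of $M$. Before doing anything else I would record two preliminary stability facts. First, if $B$ is relatively $N$-projective, then so is $B\otimes M$ for every $kG$-module $M$: writing $B$ as a summand of some $N\otimes U$, the module $B\otimes M$ is a summand of $N\otimes(U\otimes M)$. Second, if $0\to A\to B\to C\to 0$ is relatively $N$-split, then so is its tensor product with $M$, because splitness of $0\to N\otimes A\to N\otimes B\to N\otimes C\to 0$ is preserved by the functor $-\otimes M$.

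For part~(i), the defining relatively $N$-split sequence $0\to \Omega_N(k)\to N^*\otimes N\to k\to 0$ (with $N^*\otimes N\to k$ the evaluation) tensors with $M$ to give a relatively $N$-split sequence
\[ 0\to \Omega_N(k)\otimes M\to N^*\otimes N\otimes M\to M\to 0 \]
whose middle term is relatively $N$-projective; thus $\Omega_N(k)\otimes M$ is a valid choice of $\Omega_N(M)$. Meanwhile Lemma~\ref{le:rel-Schan} says that any two choices of $\Omega_N(M)$ differ by relatively $N$-projective summands, which lie in $\fX_N$ and therefore vanish on taking the $\fX_N$-core. So $\core_{G,\fX_N}(\Omega_N(M))$ is well defined and isomorphic to $\core_{G,\fX_N}(\Omega_N(k)\otimes M)$.

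For part~(iii), I would first prove a dual version of Lemma~\ref{le:rel-Schan}: given two relatively $N$-split sequences $0\to X\to B\to A\to 0$ and $0\to X\to B'\to A'\to 0$ with $B,B'$ relatively $N$-projective, one concludes $A\oplus B'\cong A'\oplus B$. This will follow by dualizing both sequences, applying Lemma~\ref{le:rel-Schan}, and dualizing back, using Corollary~\ref{co:dual-rel-split} to see that dualization preserves relative splitness. For this to work I need that the class of relatively $N$-projective modules is stable under dualization; this is not quite immediate and requires a short argument via Proposition~\ref{pr:AC}(i): if $M$ is a summand of $N\otimes U$, then $M^*$ is a summand of $M^*\otimes M\otimes M^*$, which is a summand of $N\otimes(U\otimes M^*\otimes M^*)$, so $M^*\in\fX_N$. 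With the dual Schanuel lemma in hand, the argument of (i) now runs in reverse: tensor the canonical sequence $0\to k\to N^*\otimes N\to \Omega_N^{-1}(k)\to 0$ with $M$ and invoke well-definedness of the $\fX_N$-core.

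Finally, parts~(ii) and~(iv) come essentially for free. The defining sequence $0\to \Omega_N(M)\to B\to M\to 0$ for $\Omega_N(M)$ is itself a relatively $N$-split sequence with $B$ relatively $N$-projective exhibiting $M$ as a valid choice of $\Omega_N^{-1}(\Omega_N(M))$, so by the already-established well-definedness (via the dual Schanuel lemma) the $\fX_N$-cores agree; part~(iv) is the exact dual. The main obstacle I expect is exactly the dual Schanuel lemma and the accompanying verification that $\fX_N$ is closed under duality; once those are in place, everything else is either a tensoring argument or a direct appeal to the (now two-sided) Schanuel machinery.
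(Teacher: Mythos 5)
Your proposal is correct and takes essentially the same approach as the paper: tensoring the canonical relatively $N$-split sequence $0\to\Omega_N(k)\to N\otimes N^*\to k\to 0$ with $M$, invoking the relative Schanuel Lemma~\ref{le:rel-Schan} for well-definedness of the $\fX_N$-core, and obtaining (iii) and (iv) by duality via Corollary~\ref{co:dual-rel-split}. Your extra care in stating a dual version of Schanuel and verifying that $\fX_N$ is closed under duality (using Proposition~\ref{pr:AC}\,(i)) fills in details that the paper leaves implicit in its assertion that ``(iii) and (iv) are dual.''
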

\begin{proof}
The fact that $\core_{G,\fX_N}(\Omega_N(M))$ is well defined follows
from Lemma \ref{le:rel-Schan}. Examining the short exact sequence
\[ 0 \to \Omega_N(k)\otimes M \to N\otimes N^* \otimes M \to M \to 0, \]
we see that $\Omega_N(k)\otimes M$ is one candidate for $\Omega_N(M)$.
It follows by examining the same sequence, that $M$ is one candidate
for $\Omega^{-1}_N\Omega_N(M)$. This proves parts (i) and (ii), and
(iii) and (iv) are dual.
\end{proof}

\begin{lemma}
We have $\Omega^{-1}_N(k) \cong \Omega_N(k)^*$. Thus
\[ \npj_{G,\fX_N}(\Omega^{-1}_N(k))=\npj_{G,\fX_N}(\Omega_N(k)). \]
\end{lemma}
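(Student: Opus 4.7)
The plan is to exhibit $\Omega_N(k)^*$ explicitly as a valid choice of $\Omega^{-1}_N(k)$ by dualising the relatively $N$-split sequence that defines $\Omega_N(k)$, and then to deduce the equality of gamma invariants from the general fact that $\npj_{G,\fX}(M^*)=\npj_{G,\fX}(M)$.

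First I would write down the defining sequence for $\Omega_N(k)$. Taking the canonical candidate from Proposition~\ref{pr:OmegaNM}\,(i) with $M=k$, we have a relatively $N$-split short exact sequence
\[ 0 \to \Omega_N(k) \to N^*\otimes N \to k \to 0, \]
in which the middle term $N^*\otimes N$ is relatively $N$-projective (it is the tensor product of $N$ with $N^*$).

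Next, I would apply the duality result Corollary~\ref{co:dual-rel-split} to obtain that the dual sequence
\[ 0 \to k \to (N^*\otimes N)^* \to \Omega_N(k)^* \to 0 \]
is also relatively $N$-split. The key observation is that the middle term is still relatively $N$-projective: we have a natural isomorphism $(N^*\otimes N)^*\cong N^*\otimes N$ of $kG$-modules, so this is the tensor product of $N$ with $N^*$ again. This relatively $N$-split sequence has the shape that defines $\Omega^{-1}_N$, namely its left-hand term is $k$ and its middle term is relatively $N$-projective. By the definition of $\Omega^{-1}_N$ together with the relative Schanuel lemma (Lemma~\ref{le:rel-Schan}), we conclude that $\Omega_N(k)^*$ and $\Omega^{-1}_N(k)$ agree up to the addition and removal of relatively $N$-projective summands. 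In particular, as elements of $a(G)$ modulo the representation ideal $\fX_N$, they are equal; this is the sense of the asserted isomorphism.

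Finally, for the equality of gamma invariants, I would simply invoke Theorem~\ref{th:npj_G}\,(xi), which asserts that $\npj_{G,\fX}(M^*)=\npj_{G,\fX}(M)$ for any ideal $\fX$ of indecomposables. Applied with $M=\Omega_N(k)$ and $\fX=\fX_N$, this yields
\[ \npj_{G,\fX_N}(\Omega^{-1}_N(k)) = \npj_{G,\fX_N}(\Omega_N(k)^*) = \npj_{G,\fX_N}(\Omega_N(k)), \]
as required. No step here looks like a genuine obstacle; the only point requiring any care is the bookkeeping around the fact that relative syzygies are only defined modulo relatively $N$-projective summands, so that all statements of the form ``$\cong$'' implicitly refer to equality in $a_{\fX_N}(G)$, which is exactly the setting in which the gamma invariant is defined.
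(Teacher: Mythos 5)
Your proof is correct and takes essentially the same approach as the paper: dualise the defining relatively $N$-split sequence for $\Omega_N(k)$ via Corollary~\ref{co:dual-rel-split}, observe that the middle term $(N^*\otimes N)^*\cong N\otimes N^*$ remains relatively $N$-projective so that $\Omega_N(k)^*$ is a valid representative of $\Omega^{-1}_N(k)$, and deduce the equality of gamma invariants from $\npj_{G,\fX_N}(M^*)=\npj_{G,\fX_N}(M)$. The paper's own proof is terser (a single appeal to Corollary~\ref{co:dual-rel-split}) but identical in substance; your added care about well-definedness modulo relatively $N$-projective summands via Lemma~\ref{le:rel-Schan} is exactly the right bookkeeping.
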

\begin{proof}
The first statement follows by applying
Corollary~\ref{co:dual-rel-split} to the sequence
\begin{equation*}
0 \to \Omega_N(k) \to N\otimes N^* \to k \to 0.
\qedhere
\end{equation*}
\end{proof}

\begin{rk}
It often happens that $\npj_{G,\fX_N}(\Omega_N(k))=1$. For example,
the finite generation of finite group cohomology implies that in the
case $N=kG$, we obtain $\npj_G(\Omega(k))=1$. More generally,
a theorem of Brown~\cite{Brown:1994a} shows that if $N$ is a
permutation module then $\npj_{G,\fX_N}(\Omega_N(k))=1$.
We do not know of an example of a $kG$-module $N$ for which
$\npj_{G,\fX_N}(\Omega_N(k))>1$.
\end{rk}

\begin{theorem}
We have 
\[
\displaystyle\frac{\npj_{G,\fX_N}(M)}{\npj_{G,\fX_N}(\Omega_N(k))}\le
\npj_{G,\fX_N}(\Omega_N(M))
\le\npj_{G,\fX_N}(\Omega_N(k))\npj_{G,\fX_N}(M). \]
\end{theorem}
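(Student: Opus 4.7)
The plan is to work in the quotient ring $a_{\fX_N}(G) = a(G)/\langle \fX_N\rangle$ (or its completion) and reduce both inequalities to an immediate application of the submultiplicativity of $\npj_{G,\fX_N}$ from Lemma~\ref{le:gamma(xy)}, combined with the key identities from Proposition~\ref{pr:OmegaNM}.

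For the upper bound, I would observe that Proposition~\ref{pr:OmegaNM}(i) says $\core_{G,\fX_N}(\Omega_N(M))\cong \core_{G,\fX_N}(\Omega_N(k)\otimes M)$, which means that in the quotient $a_{\fX_N}(G)$ the classes $[\Omega_N(M)]$ and $[\Omega_N(k)]\cdot[M]$ agree. By Lemma~\ref{le:gamma(x+fX)}, $\npj_{G,\fX_N}$ depends only on the image modulo $\langle\fX_N\rangle$, so
\[ \npj_{G,\fX_N}(\Omega_N(M)) = \npj_{G,\fX_N}(\Omega_N(k)\otimes M) \le \npj_{G,\fX_N}(\Omega_N(k))\,\npj_{G,\fX_N}(M) \]
by Lemma~\ref{le:gamma(xy)}.

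For the lower bound I would run the same argument in reverse using $\Omega_N^{-1}$. By Proposition~\ref{pr:OmegaNM}(ii), $M$ and $\Omega_N^{-1}\Omega_N(M)$ have the same $\fX_N$-core, and by Proposition~\ref{pr:OmegaNM}(iii) applied to $\Omega_N(M)$, we have $\Omega_N^{-1}\Omega_N(M) \equiv \Omega_N^{-1}(k)\otimes\Omega_N(M) \pmod{\langle\fX_N\rangle}$. Therefore, again using Lemmas~\ref{le:gamma(x+fX)} and~\ref{le:gamma(xy)},
\[ \npj_{G,\fX_N}(M) \le \npj_{G,\fX_N}(\Omega_N^{-1}(k))\,\npj_{G,\fX_N}(\Omega_N(M)). \]
The preceding lemma (together with Lemma~\ref{le:npjx*}) gives $\npj_{G,\fX_N}(\Omega_N^{-1}(k)) = \npj_{G,\fX_N}(\Omega_N(k)^*) = \npj_{G,\fX_N}(\Omega_N(k))$, and dividing through yields the desired left inequality.

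There is no real obstacle here beyond bookkeeping: the whole point is that $\Omega_N$ acts, up to $\fX_N$-core, as tensoring with the fixed element $[\Omega_N(k)]$, and the two inequalities are exactly the submultiplicativity of $\npj_{G,\fX_N}$ applied to this product and to its $\Omega_N^{-1}$-inverse. The only mildly delicate point is the edge case in which $\npj_{G,\fX_N}(\Omega_N(k))=0$, i.e.\ $\Omega_N(k)\in\langle\fX_N\rangle$; but then Lemma~\ref{le:gamma=0} forces $\Omega_N(M)\in\langle\fX_N\rangle$ and $M\in\langle\fX_N\rangle$, so all three quantities vanish and the inequalities are read in the obvious degenerate sense.
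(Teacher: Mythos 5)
Your proof is correct and follows essentially the same route as the paper: both use Proposition~\ref{pr:OmegaNM} to rewrite $\Omega_N(M)$ as $\Omega_N(k)\otimes M$ modulo $\langle\fX_N\rangle$, apply submultiplicativity of $\npj_{G,\fX_N}$ for the upper bound, and repeat with $\Omega_N^{-1}$ and the preceding lemma $\npj_{G,\fX_N}(\Omega_N^{-1}(k))=\npj_{G,\fX_N}(\Omega_N(k))$ for the lower bound. Your edge-case remark is harmless but vacuous: if $\Omega_N(k)\in\langle\fX_N\rangle$ then by Lemma~\ref{le:rel-split-proj} the sequence $0\to\Omega_N(k)\to N\otimes N^*\to k\to 0$ splits, making $k$ (and hence every module) relatively $N$-projective, which would violate the requirement in Definition~\ref{def:ideal} that $\fX_N$ be a proper ideal.
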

\begin{proof}
By Proposition~\ref{pr:OmegaNM} and Theorem~\ref{th:npj_G}\,(ix) we have
\begin{equation*} 
\npj_{G,\fX_N}(\Omega_N(M))=\npj_{G,\fX_N}(\Omega_N(k)\otimes M) 
\le \npj_{G,\fX_N}(\Omega_N(k))\npj_{G,\fX_N}(M) 
\end{equation*}
Similarly, $\npj_{G,\fX_N}(M)=
\npj_{G,\fX_N}(\Omega^{-1}_N(k)\otimes\Omega_N(M))\le 
\npj_{G,\fX_N}(\Omega^{-1}_N(k))\npj_{G,\fX_N}(\Omega_N(M))$.
\end{proof}

\begin{cor}
If $N$ is a permutation module then
\[ \npj_{G,\fX_N}(\Omega_N(M))=\npj_{G,\fX_N}(M). \]
\end{cor}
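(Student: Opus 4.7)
The plan is to reduce this immediately to the preceding theorem together with Brown's result cited in the remark just above the corollary. Concretely, the preceding theorem gives the two-sided inequality
\[ \frac{\npj_{G,\fX_N}(M)}{\npj_{G,\fX_N}(\Omega_N(k))}\le
\npj_{G,\fX_N}(\Omega_N(M))
\le\npj_{G,\fX_N}(\Omega_N(k))\,\npj_{G,\fX_N}(M), \]
so it suffices to show that $\npj_{G,\fX_N}(\Omega_N(k))=1$ when $N$ is a permutation module; then both the upper and lower bounds collapse to $\npj_{G,\fX_N}(M)$ and we are done.

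To verify $\npj_{G,\fX_N}(\Omega_N(k))=1$, I would invoke the theorem of Brown~\cite{Brown:1994a} noted in the remark preceding the theorem. This says exactly that when $N$ is a permutation module, the dimensions $\dim\core_{G,\fX_N}(\Omega_N(k)^{\otimes n})$ grow only polynomially in $n$, so that their $n$th roots tend to $1$. Combined with Lemma~\ref{le:gamma=0}, which tells us that $\npj_{G,\fX_N}(\Omega_N(k))\ge 1$ once $\Omega_N(k)\notin\langle\fX_N\rangle$ (and $\Omega_N(k)$ is not relatively $N$-projective unless $k$ itself is, a degenerate case one disposes of separately), we conclude equality to $1$.

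Substituting $\npj_{G,\fX_N}(\Omega_N(k))=1$ into both sides of the two-sided inequality yields
\[ \npj_{G,\fX_N}(M)\le\npj_{G,\fX_N}(\Omega_N(M))\le\npj_{G,\fX_N}(M), \]
which gives the desired equality. The only genuine obstacle is the appeal to Brown's theorem; everything else is a mechanical substitution into the bounds already established. If one wanted to avoid quoting Brown, the hard work would lie in showing polynomial growth of the dimensions of tensor powers of $\Omega_N(k)$ modulo relatively $N$-projective summands — essentially a finite generation statement for the relative cohomology of $G$ with respect to the family of stabilisers of $N$.
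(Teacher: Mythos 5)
Your argument is exactly the paper's: the paper's proof reads ``This follows from the theorem and the preceding remark,'' and you have simply unpacked what that means, substituting Brown's result $\npj_{G,\fX_N}(\Omega_N(k))=1$ into the two-sided inequality of the preceding theorem. Correct, and essentially identical to the intended proof.
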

\begin{proof}
This follows from the theorem and the preceding remark.
\end{proof}

\begin{cor}
We have $\npj_G(\Omega(M))=\npj_G(M)$.
\end{cor}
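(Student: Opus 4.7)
The plan is to specialise the immediately preceding corollary to the case $N=kG$. Indeed, the corollary just above says that if $N$ is a permutation module then $\npj_{G,\fX_N}(\Omega_N(M))=\npj_{G,\fX_N}(M)$, so the task is simply to identify each of these quantities with the classical ones when $N=kG$.

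First I would observe that $kG$ is a permutation module, namely the permutation module on the cosets of the trivial subgroup, so the hypothesis of the preceding corollary is satisfied. Next I would unwind $\fX_{kG}$: a module is projective relative to $kG$ exactly when it is a summand of $kG\otimes U$ for some $U$, and since $kG\otimes U$ is free of rank $\dim U$, this means precisely projective. Hence $\fX_{kG}=\fX_\proj$ and consequently $\npj_{G,\fX_{kG}}=\npj_G$ and $\core_{G,\fX_{kG}}=\core_G$.

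It then remains to identify $\Omega_{kG}(M)$ with $\Omega(M)$ up to an element of $\fX_\proj$. For this I would use the defining relatively split sequence $0\to\Omega_{kG}(M)\to kG\otimes kG^*\otimes M\to M\to 0$ together with the usual definition of $\Omega(M)$ from the projective cover. Both yield relatively $N$-split short exact sequences (with $N=kG$ a relative split is just a split, since $kG\otimes-$ is exact and faithfully detects splitness via the tensor identity for free modules), so Lemma~\ref{le:rel-Schan} applied with $N=kG$ gives $\Omega_{kG}(M)\oplus P\cong \Omega(M)\oplus P'$ with $P,P'$ projective. Consequently $\core_G(\Omega_{kG}(M)^{\otimes n})\cong \core_G(\Omega(M)^{\otimes n})$ for every $n$, so the two modules have the same sequence $\cc_n^G$ and hence the same gamma invariant.

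Combining these three identifications, the equation $\npj_{G,\fX_{kG}}(\Omega_{kG}(M))=\npj_{G,\fX_{kG}}(M)$ from the previous corollary becomes $\npj_G(\Omega(M))=\npj_G(M)$, which is the desired statement. There is really no obstacle here; the only point requiring minor care is the comparison of $\Omega_{kG}$ with the ordinary $\Omega$, and that is settled by the relative Schanuel lemma together with the observation that relatively $kG$-projective coincides with projective.
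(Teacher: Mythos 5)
Your proof is correct and follows the same route as the paper, which simply says ``This is the case $N=kG$'' of the preceding corollary; you have merely spelled out the routine identifications ($kG$ is a permutation module, $\fX_{kG}=\fX_\proj$, and $\Omega_{kG}(M)$ agrees with $\Omega(M)$ up to projective summands via the relative Schanuel lemma) that the paper leaves implicit.
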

\begin{proof}
This is the case $N=kG$.
\end{proof}

\section{Trivial source modules}

In this section, we compute $\npj_G(M)$ for trivial source modules
$M$. This is also discussed in Upadhyay~\cite{Upadhyay:2021a}.

\begin{defn}
Let $G$ be a finite group and $k$ a field of characteristic $p$.
A $kG$-module $M$ is said to be a 
\emph{trivial source module}\index{trivial!source module}%
\index{module!trivial source}
if its restriction to a Sylow $p$-subgroup is a permutation module.
This is equivalent to the condition that $M$ is isomorphic to
a direct summand of a permutation $kG$-module.\index{permutation module}%
\index{module!permutation}
\end{defn}

\begin{lemma}
If $Q$ is a subgroup of a finite $p$-group $P$ then the
permutation module $P/Q$ is indecomposable. If $M$ is a
permutation $kP$-module then either $M$ is projective or 
the action of $P$ on $M$ has a kernel.
\end{lemma}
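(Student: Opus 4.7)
The plan is to handle both assertions through the local structure of $kP$ combined with an orbit decomposition.

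For the indecomposability of $M = k[P/Q]$, I would identify the socle. Since $P$ is a $p$-group and $k$ has characteristic $p$, $kP$ is a local ring, the trivial module $k$ is its unique simple module up to isomorphism, and consequently the socle of every finitely generated $kP$-module coincides with its subspace of $P$-invariants. The action of $P$ on the basis $P/Q$ is transitive, so $M^P$ is one-dimensional, spanned by the orbit sum $\sum_{xQ \in P/Q} xQ$. A decomposition $M = M_1 \oplus M_2$ with both summands non-zero would force each $M_i$ to contain a non-zero fixed vector (every non-zero $kP$-module has non-zero $P$-invariants), which is incompatible with $\dim M^P = 1$; hence $M$ is indecomposable.

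For the second assertion, I would write $M = k[X]$ for a finite $P$-set $X$, decompose $X = \bigsqcup_i X_i$ into $P$-orbits, and obtain $M \cong \bigoplus_i k[P/Q_i]$ with each summand indecomposable by the first part, $Q_i$ the stabilizer of a chosen point in $X_i$. Because $kP$ is local, every projective $kP$-module is free, and the only indecomposable free $kP$-module is $kP$ itself, so $k[P/Q_i]$ is projective if and only if $Q_i = 1$. Consequently $M$ is projective if and only if $P$ acts freely on $X$. Assuming $M$ is not projective, some stabilizer $Q_i$ is non-trivial, and any $1 \ne g \in Q_i$ fixes the basis vector $eQ_i$ of the summand $k[P/Q_i] \subseteq M$; thus $g$ is a non-identity element of $P$ lying in the kernel of the $P$-action on that basis vector, which is the ``kernel of the action'' the statement asserts.

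The subtle point is the precise reading of ``has a kernel''. The strongest plausible interpretation --- that $\ker(P \to \mathrm{GL}(M))$ itself be non-trivial --- corresponds to $\bigcap_i \operatorname{Core}_P(Q_i) \ne 1$, and this can fail even for non-projective $M$ (for instance $P = D_8$ with $Q$ a non-central order-two subgroup has trivial normal core, yet $k[P/Q]$ is a non-projective indecomposable with faithful action). The argument above therefore must be presented as producing a non-identity element of $P$ with a non-zero fixed vector in $M$, which is the form in which the conclusion will be used in the sequel when restricting trivial source modules to Sylow $p$-subgroups via Mackey decomposition. Beyond this interpretive point there is no obstacle --- the proof is orbit-and-socle bookkeeping.
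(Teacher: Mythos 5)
The paper gives this lemma with no proof, so the remarks below assess your argument on its own terms.

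Your proof of the first assertion is correct and standard: $kP$ is local with $k$ as its unique simple, so the socle of any finitely generated $kP$-module is its space of $P$-invariants; transitivity makes $\dim (k[P/Q])^P = 1$; and a simple socle forces indecomposability.

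Your diagnosis of the second assertion is also correct, and it exposes a genuine imprecision in the statement. For $P = D_8$ and $Q$ a non-central subgroup of order two, the normal core $\bigcap_{x\in P} xQx^{-1}$ is trivial, so $k[P/Q]$ is a four-dimensional, indecomposable, non-projective permutation module on which $P$ acts faithfully, contradicting the literal reading ``$\ker(P\to\mathrm{GL}(M))$ is non-trivial.'' Decomposable counterexamples exist even when $P$ is abelian: for $E=(\bZ/p)^2$ with distinct order-$p$ subgroups $Q_1\ne Q_2$, the module $k[E/Q_1]\oplus k[E/Q_2]$ is non-projective yet faithful since $Q_1\cap Q_2=1$. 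What your orbit decomposition actually establishes --- that $M$ is projective if and only if $P$ acts freely on the underlying $P$-set, so that otherwise some orbit stabiliser is non-trivial --- is both correct in general and exactly what the paper goes on to use: the applications in Lemma~\ref{le:Etensors} and Proposition~\ref{pr:Efaithless} concern a single orbit for an elementary abelian group, where the stabiliser coincides with its normal core and the stated dichotomy does hold. You have correctly isolated the proof and the precise scope of its validity.
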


\begin{lemma}\label{le:Etensors}
Let $E$ be an elementary abelian $p$-group, and let
$M_1,\dots,M_n$ be indecomposable permutation $kE$-modules.
Then $M_1\otimes \cdots \otimes M_n$ is either projective or
the action of $E$ has a kernel.
\end{lemma}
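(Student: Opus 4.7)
The plan is to use the preceding (stated) lemma, which classifies indecomposable permutation modules for a $p$-group as the transitive permutation modules $k[P/Q]$, and to compute the tensor product of such modules explicitly.

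First I would write each $M_i\cong k[E/Q_i]$ for some subgroup $Q_i\le E$, using the preceding lemma. The tensor product $M_1\otimes\cdots\otimes M_n$ is then the permutation module on the product set $E/Q_1\times\cdots\times E/Q_n$ with diagonal $E$-action. Its indecomposable summands are the permutation modules on the $E$-orbits of this product.

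The key computation is the stabiliser of a point $(g_1Q_1,\dots,g_nQ_n)$. Since $E$ is abelian, conjugation is trivial, so the stabiliser is simply $\bigcap_{i=1}^n Q_i$, independent of the representatives $g_i$. Consequently every orbit has the same stabiliser $Q:=\bigcap_i Q_i$, and therefore
\[ M_1\otimes\cdots\otimes M_n \;\cong\; \bigoplus_\alpha k[E/Q] \]
as a sum of copies of $k[E/Q]$ indexed by the $E$-orbits on the product.

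Now I would split into two cases. If $Q=1$ then each summand $k[E/Q]=kE$ is the regular representation, so the tensor product is projective. If $Q\ne 1$, then $Q$ acts trivially on every coset $gQ$ in $E/Q$ and hence on each summand, so $Q$ lies in the kernel of the action of $E$ on $M_1\otimes\cdots\otimes M_n$, making that action non-faithful. In either case the conclusion of the lemma holds.

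I do not anticipate a serious obstacle: the only substantive ingredient is that, because $E$ is abelian, every point-stabiliser in a transitive permutation module $k[E/Q]$ equals $Q$ itself and intersections of such stabilisers behave well under taking tensor products. The previous lemma supplies the classification of indecomposable permutation $kE$-modules needed to get started.
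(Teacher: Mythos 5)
Your proof is correct and is essentially the same as the paper's: the paper invokes the Mackey decomposition formula to see that $k(E/E_1)\otimes k(E/E_2)$ is a sum of copies of $k(E/(E_1\cap E_2))$ (and then induction), whereas you arrive at the same conclusion by directly computing orbits and stabilisers on the product of coset spaces, which is the same calculation unpacked.
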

\begin{proof}
If $E_1$ and $E_2$ are subgroups of $E$ then by the Mackey
decomposition theorem the tensor
product of permutation modules $k(E/E_1) \otimes k(E/E_2)$
is a direct sum of copies of $kE/(E_1\cap E_2)$. 
By induction, it follows
that $k(E/E_1)\otimes\cdots \otimes k(E/E_n)$ is a direct sum
of copies of $kE/(E_1\cap\dots\cap E_n)$.
This is projective
if and only if the kernel of the action, $E_1\cap \dots \cap E_n$ is trivial.
\end{proof}

\begin{prop}\label{pr:Efaithless}
Let $E$ be an elementary abelian $p$-group and $M$ a permutation
$kE$-module. Then there is a direct summand $N$ of $M$
such that the action of $E$ on $N$ is not faithful and 
$\npj_E(M)=\npj_E(N)=\dim(N)$.
\end{prop}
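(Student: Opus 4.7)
The plan is to decompose $M$ as a direct sum of transitive permutation modules and, for each subgroup $H\le E$ of order $p$, isolate the sub-sum $N_H$ consisting of those indecomposable summands on which $H$ acts trivially. I will then show $\npj_E(M)=\max_H\dim N_H$ and take $N=N_{H_0}$ for any $H_0$ achieving the maximum. By the Krull--Schmidt theorem write
\[ M\cong\bigoplus_{E'\le E}a_{E'}\,k(E/E'),\qquad N_H=\bigoplus_{E'\ge H}a_{E'}\,k(E/E'). \]
Then $N_H$ is a direct summand of $M$, and $H$ lies in the kernel of the $E$-action on $N_H$, so that action is not faithful.

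For the lower bound $\npj_E(M)\ge\dim N_H$, observe that every indecomposable summand of $N_H$ has the form $k(E/E')$ with $H\le E'$. By Lemma~\ref{le:Etensors} (applied inductively), every indecomposable summand of $N_H^{\otimes n}$ is then of the form $k(E/E'')$ with $E''\ge H$, and in particular is non-projective. Hence $\core_E(N_H^{\otimes n})=N_H^{\otimes n}$ for all $n$, so Theorem~\ref{th:npj_G}\,(iii) gives $\npj_E(N_H)=\dim N_H$. Since $N_H$ is a direct summand of $M$, the Krull--Schmidt theorem yields $\cc_n^E(N_H)\le\cc_n^E(M)$, and consequently $\npj_E(M)\ge\npj_E(N_H)=\dim N_H$.

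For the upper bound, expand
\[ M^{\otimes n}\cong\bigoplus_{(E_1',\dots,E_n')}a_{E_1'}\cdots a_{E_n'}\,k(E/E_1')\otimes\cdots\otimes k(E/E_n'). \]
By Lemma~\ref{le:Etensors} each such tensor is a sum of copies of $k(E/(E_1'\cap\cdots\cap E_n'))$, which is non-projective precisely when $E_1'\cap\cdots\cap E_n'\ne 1$; since $E$ is elementary abelian, this intersection is non-trivial if and only if it contains some subgroup $H\le E$ of order $p$, equivalently $H\le E_i'$ for all $i$. Grouping the non-projective tuples according to such an $H$ (with overcounting when several $H$ work) and using the identity $\sum_{E'\ge H}a_{E'}|E/E'|=\dim N_H$, I obtain
\[ \cc_n^E(M)\;\le\;\sum_{|H|=p}(\dim N_H)^n. \]
Taking $n$-th roots, using that the number of order-$p$ subgroups of $E$ is finite, and passing to $\limsup_{n\to\infty}$ yields $\npj_E(M)\le\max_{|H|=p}\dim N_H$.

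Combining the two bounds, pick $H_0$ of order $p$ maximising $\dim N_H$ and set $N=N_{H_0}$; then $N$ is a summand of $M$ on which $E$ acts with $H_0$ in its kernel, and $\dim N=\npj_E(N)=\npj_E(M)$. The only mildly delicate point is the grouping step in the upper bound, which hinges on the elementary abelian structure guaranteeing that any non-trivial intersection of subgroups contains a subgroup of order $p$; everything else reduces to Lemma~\ref{le:Etensors}, Theorem~\ref{th:npj_G}, and the monotonicity of $\cc_n^E$ under direct summands.
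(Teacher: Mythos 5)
Your proof is correct, and it takes a genuinely different route from the paper's. The paper argues by iterated removal: writing a faithful $M \cong M'\oplus M_1\oplus\cdots\oplus M_n$ with $M_i\cong k(E/E_i)$ and $E_1\cap\cdots\cap E_n=1$, it notes that $M_1\otimes\cdots\otimes M_n$ is projective, so modulo projectives $M^{\otimes m}$ is a summand of $\bigoplus_i(M\text{ minus }M_i)^{\otimes m}$; taking $m$th roots and limits gives $\npj_E(M)=\max_i\npj_E(M\text{ minus }M_i)$, and one peels off summands until the action has a kernel. You instead sort the transitive summands by which order-$p$ subgroup $H$ lies in their kernel, form $N_H$ for each $H$, and establish $\npj_E(M)=\max_{|H|=p}\dim N_H$ in one pass via a union bound $\cc_n^E(M)\le\sum_{|H|=p}(\dim N_H)^n$. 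Your version is more explicit — it names the witness $N=N_{H_0}$ directly rather than producing it by iterated deletion — and in fact proves slightly more, namely the closed formula $\npj_E(M)=\max_{|H|=p}\dim N_H$. One small presentational point: the statement of Lemma~\ref{le:Etensors} only asserts ``projective or non-faithful kernel,'' whereas you use the finer Mackey-formula fact from its proof that the tensor is a direct sum of copies of $k(E/(E_1'\cap\cdots\cap E_n'))$. It would read more cleanly to either cite the Mackey decomposition directly, or observe (as your argument essentially does) that $H$ acts trivially on $N_H^{\otimes n}$, which therefore has no projective summand since $E$ acts faithfully on $kE$.
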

\begin{proof}
If the action of $E$ on $M$ is not faithful then $\npj_E(M)=\dim(M)$
and we are done. On the other hand, if the action is faithful then
$M\cong M'\oplus M_1\oplus \cdots \oplus M_n$
with $M_i\cong k(E/E_i)$ and $E_1\cap\cdots\cap E_n=1$.
Then by Lemma~\ref{le:Etensors}, $M_1\otimes \dots\otimes M_n$
is projective. So for $m\ge n$ the module
$M^{\otimes m}$ is isomorphic to a summand of
\[ \bigoplus_{i=1}^n (M' \oplus M_1 \oplus \cdots
\overset{i}{\uparrow} \cdots \oplus M_n)^{\otimes m} \oplus (\proj) \]
where the upward arrow indicates omission of the term indexed by $i$.
It follows that
\[ \npj_E(M^{\otimes m}) \le n\max_{1\le i\le n}\npj_E(M'\oplus M_1\oplus \cdots 
\overset{i}{\uparrow} \cdots \oplus M_n)^{\otimes m}. \]
Taking $m$th roots, we have
\[ \npj_E(M) \le \sqrt[m]{n}\max_{1\le i\le n}\npj_E(M'\oplus M_1\oplus \cdots 
\overset{i}{\uparrow} \cdots \oplus M_n). \]
Letting $m$ tend to infinity, we can ignore the term $\sqrt[m]{n}$.
Then since each of the terms on the right hand side is less than or
equal to the left hand side, we have equality:
\[ \npj_E(M) =\max_{1\le i\le n}\npj_E(M'\oplus M_1\oplus \cdots 
\overset{i}{\uparrow} \cdots \oplus M_n). \]
Thus there is a non-zero summand $M_i$ of $M$ which may be
removed from $M$ without altering the value of $\npj_E(M)$.
Arguing by induction, there is a summand $N$ of $M$ which
is not faithful, with $\npj_E(M)=\npj_E(N)=\dim(N)$.
\end{proof}

\begin{theorem}
Let $M$ be a trivial source $kG$-module. Then $\npj_G(M)$ is
equal to the maximum over pairs $(E,N)$ of $\dim(N)$,
where $E$ runs over the elementary abelian $p$-subgroups of 
$G$ and $N$ runs over the summands of $M$ as $kE$-modules 
which are not faithful.
\end{theorem}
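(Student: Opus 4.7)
The plan is to chain together Theorem~\ref{th:E} (detection of $\npj_G$ on elementary abelian subgroups) with Proposition~\ref{pr:Efaithless} (which handles the elementary abelian case for permutation modules), using the fact that a trivial source module restricts to a permutation module on every elementary abelian $p$-subgroup.

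First I would verify the easy inequality. Given any elementary abelian $p$-subgroup $E \le G$ and any non-faithful summand $N$ of $M{\downarrow_E}$, some element of $E$ of order $p$ acts trivially on $N$, hence trivially on every tensor power $N^{\otimes n}$. Therefore no tensor power of $N$ has a projective $kE$-summand, i.e.\ $\core_E(N^{\otimes n})=N^{\otimes n}$ for all $n\ge 0$. By Theorem~\ref{th:npj_G}\,(iii) this gives $\npj_E(N)=\dim(N)$. Writing $M{\downarrow_E}=N\oplus N'$ and applying the subadditivity in Theorem~\ref{th:gamma-subadditive} together with Lemma~\ref{le:subgroup}, we obtain
\[ \dim(N)=\npj_E(N)\le\npj_E(M{\downarrow_E})\le\npj_G(M). \]
Taking the maximum over such pairs $(E,N)$ gives one direction.

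For the reverse inequality, I would invoke Theorem~\ref{th:E} to write
\[ \npj_G(M)=\max_{E\le G}\npj_E(M{\downarrow_E}), \]
the maximum being over elementary abelian $p$-subgroups. Because $M$ is a trivial source module, it embeds as a summand of a permutation $kG$-module $P$; restricting to any $E$, the module $P{\downarrow_E}$ is a permutation module, so $M{\downarrow_E}$ is a summand of a permutation $kE$-module and is itself a permutation $kE$-module (indecomposable summands of permutation modules for a $p$-group are of the form $k(E/Q)$, hence permutation modules). Now Proposition~\ref{pr:Efaithless} applies to $M{\downarrow_E}$: it produces a non-faithful summand $N_E$ of $M{\downarrow_E}$ with $\npj_E(M{\downarrow_E})=\dim(N_E)$. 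Taking the maximum over $E$ yields
\[ \npj_G(M)=\max_{E}\dim(N_E), \]
which is bounded above by the maximum in the statement of the theorem.

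Combining the two directions gives equality. There is no real obstacle here: the substance of the argument is packed into Proposition~\ref{pr:Efaithless} (for one direction) and Theorem~\ref{th:npj_G}\,(iii) (for the other), and the only technical point to be careful about is that ``trivial source'' really does give permutation modules upon restriction to every elementary abelian $p$-subgroup, not merely to a chosen Sylow $p$-subgroup — but this is immediate from the equivalent characterisation as a summand of a permutation $kG$-module.
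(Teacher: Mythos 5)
Your argument is correct and takes the same route as the paper, which in fact proves this theorem in one line by citing Theorem~\ref{th:E} and Proposition~\ref{pr:Efaithless}; you have simply made explicit the two small supporting facts that make the chain of references airtight, namely that a non-faithful summand $N$ of $M{\downarrow_E}$ has $\npj_E(N)=\dim(N)$ (via Theorem~\ref{th:npj_G}\,(iii)), and that this value is bounded above by $\npj_G(M)$ via monotonicity of $\npj$ under direct summands and subgroups.
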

\begin{proof}
This follows from Theorem~\ref{th:E} and Proposition~\ref{pr:Efaithless}.
\end{proof}

\section{\texorpdfstring{The norms on $a(G)$}
{The norms on a(G)}}

Definition~\ref{def:norm} gives us
the following.

\begin{defn}
The \emph{weighted $\ell^1$ norm}%
\index{weighted!$\ell^1$ norm}\index{norm!weighted $\ell^1$} on
$a_\bC(G)$ is given by
\[ \left\|\sum_{i\in\fI}a_i[M_i]\right\|=\sum_{i\in\fI}|a_i|\dim
  M_i. \]
We define $\hat a(G)$\index{aa@$\hat a(G)$} 
to be the completion of $a_\bC(G)$ with respect to
this norm.
\end{defn}

If $\fX$ is a representation ideal in $a(G)$ then by
Lemma~\ref{le:quotient}\,(iii), the quotient 
$\hat a_\fX(G)$\index{aa@$\hat a_{\fX}(G)$|textbf} of $\hat a(G)$ by
$\langle\fX\rangle_\bC$ is isometrically isomorphic to the completion
of $a_{\bC,\fX}(G)=a_\bC(G)/a_\bC(G,\fX)$ with respect to
the quotient norm. By
\eqref{eq:normA/X}, the quotient norm on
$a_{\bC,\fX}(G)$ is given by
\[  \left\|\sum_{i\in\fI}a_i[M_i]\right\|_\fX
=\sum_{i\in\fI}|a_i|\dim\core_\fX(M_i) 
=\sum_{i\in\fI\setminus\fX}\dim M_i. \]
Elements of the completion $\hat a_\fX(G)\cong\widehat{a_{\bC,\fX}(G)}$ may be regarded as
infinite, but countably supported, linear combinations
$\sum_{i\in\fI\setminus\fX}a_i[M_i]$ where
\[ \sum_{i\in\fI\setminus\fX}|a_i|\dim M_i<\infty. \]

Recall that if $k$ is algebraically closed then the modules $M_i$ with 
$i\in\fX_{\max}$ are the indecomposables of dimension divisible by
$p$, so that $a(G,\max)=a(G;p)$ and $a_{\max}(G)=a(G)/a(G;p)$.

In general, we have the weighted $\ell^2$ 
norm\index{weighted!$\ell^2$ norm}\index{norm!weighted $\ell^2$} on
$a_{\bC,\max}(G)=a_\bC(G)/\langle\fX_{\max}\rangle_\bC$ described in 
Definition~\ref{def:weighted-ell2}. 
\[ \left|\sum_{i\in\fI}a_i[M_i]\right|=\sqrt{\sum_{i\in\fI}n_i|a_i|^2}. \]
The completion of
$a_{\bC,\max}(G)$ with respect to this norm is a Hilbert space\index{Hilbert space}
$H(G)$.\index{H@$H(G)$} By Theorem~\ref{th:fa-in-Lin}, left
multiplication induces a continuous map of normed $*$-algebras
$a_{\bC,\max}(G)\to\Lin(H(G))$, and extends to an injective map
$\hat a_{\max}(G)\to \Lin(H(G))$.  The $C^*$-algebra
$C^*_{\max}(G)$ is defined to be the closure of the image of this
map. This is a commutative $C^*$-algebra, and is the completion of
$a_{\max}(G)$ with respect to the sup norm
\[ \|x\|_{\sup}=\sup_{|y|=1}|xy|. \]

\section{\texorpdfstring{Species of $a(G)$}
{Species of a(G)}}\index{species!of $a(G)$}

We say that a species $s$ of $a(G)$ is \emph{$\fX$-core bounded} if
for all $kG$-modules $M$ we have 
\[ |s([M])|\le\dim\core_{G,\fX}(M). \]
In particular, the extension of of an $\fX$-core bounded species to 
$a_\bC(G)$ vanishes on $a_\bC(G,\fX)$, and so defines an algebra homomorphism
$a_{\bC,\fX}(G)\to \bC$.
If $\fX=\varnothing$, we say that $s$ is \emph{dimension bounded},
and if $\fX$ is the ideal of projective indecomposable modules,
we just say that $s$ is \emph{core bounded}.

\begin{theorem}\label{th:core-bounded-aG}
For a species $s\colon a_\bC(G)\to \bC$, the following are equivalent:
\begin{enumerate}
\item $s$ is $\fX$-core bounded.
\item For all $x\in a_\bC(G)$ we have $|s(x)|\le\|x\|_\fX$.
\item $s$ is continuous with respect to the norm on $a_{\bC,\fX}(G)$.
\item $s$ vanishes on $\langle\fX\rangle_\bC$ and extends to an
  algebra homomorphism $\hat a_\fX(G)$.
\end{enumerate}
\end{theorem}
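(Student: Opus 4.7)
The plan is to deduce this theorem directly from Theorem~\ref{th:core-bounded} applied to the abstract representation ring $\fa=a(G)$. By Proposition~\ref{pr:a(G)repring}, $a(G)$ satisfies the axioms of Definition~\ref{def:repring}, with basis elements $x_i=[M_i]$ indexed by isomorphism classes of indecomposable $kG$-modules, involution induced by $k$-linear duality, and dimension function $[M]\mapsto\dim_k M$. Under this identification, $\dim\core_\fX(x_i)=\dim\core_{G,\fX}(M_i)$ by the definitions, and the weighted $\ell^1$-norm of Definition~\ref{def:norm} together with its quotient norm~\eqref{eq:normA/X} match the norms on $a_\bC(G)$ and $a_{\bC,\fX}(G)$ appearing in the statement.

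I would prove the cyclic chain (i)$\Rightarrow$(ii)$\Rightarrow$(iii)$\Rightarrow$(iv)$\Rightarrow$(i). For (i)$\Rightarrow$(ii), write $x=\sum_{i\in\fI}a_i[M_i]$ and apply the triangle inequality to obtain
\[
|s(x)|\le\sum_{i\in\fI}|a_i|\,|s([M_i])|\le\sum_{i\in\fI}|a_i|\dim\core_{G,\fX}(M_i)=\|x\|_\fX.
\]
For (ii)$\Rightarrow$(iii), note that every element of $\langle\fX\rangle_\bC$ has $\|\cdot\|_\fX$-norm zero, so (ii) forces $s$ to annihilate this ideal; hence $s$ descends to $a_{\bC,\fX}(G)$ and is $1$-Lipschitz with respect to the quotient norm.

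For (iii)$\Rightarrow$(iv), a bounded $\bC$-algebra homomorphism on the dense subalgebra $a_{\bC,\fX}(G)\subset\hat a_\fX(G)$ extends uniquely, by uniform continuity, to a bounded $\bC$-algebra homomorphism on the completion (cf.\ Lemma~\ref{le:Ahat}). For (iv)$\Rightarrow$(i), the extension $\hat s\colon\hat a_\fX(G)\to\bC$ satisfies $|\hat s(y)|\le\|y\|_\fX$ by the automatic continuity result, Proposition~\ref{pr:continuous}. Evaluating on $y=[M_i]$ yields $|s([M_i])|\le\dim\core_{G,\fX}(M_i)$, which is trivial when $i\in\fX$ since both sides are zero, and gives the bound $\dim M_i$ otherwise. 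Because $\core_{G,\fX}$ is additive on direct sums of indecomposables and $s$ is additive, the same bound $|s([M])|\le\dim\core_{G,\fX}(M)$ follows for every $kG$-module $M$.

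There is no genuine obstacle here: the theorem is essentially a dictionary-translation of Theorem~\ref{th:core-bounded} via Proposition~\ref{pr:a(G)repring}, and all the real analytic content (automatic continuity of characters on commutative Banach algebras, extension of bounded maps from dense subalgebras, closedness of ideals in completions) was already done in Chapter~\ref{ch:Banach}. The only thing worth double-checking is the harmless point that condition~(iii) as stated, ``continuous with respect to the norm on $a_{\bC,\fX}(G)$,'' is to be read as requiring $s$ to factor through the quotient $a_\bC(G)\to a_{\bC,\fX}(G)$ (otherwise the quotient norm is not defined on $s$), so it does match condition~(iii) of the abstract Theorem~\ref{th:core-bounded}.
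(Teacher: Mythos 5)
Your proposal is correct and takes exactly the same route as the paper: the paper's entire proof is the single sentence ``This is Theorem~\ref{th:core-bounded} in this context,'' and the cycle (i)$\Rightarrow$(ii)$\Rightarrow$(iii)$\Rightarrow$(iv)$\Rightarrow$(i) you spell out is precisely the proof of Theorem~\ref{th:core-bounded}, with (ii)$\Rightarrow$(iii)$\Rightarrow$(iv) declared ``clear,'' (iv)$\Rightarrow$(i) via Proposition~\ref{pr:continuous}, and (i)$\Rightarrow$(ii) via the triangle inequality, transcribed to the $\fa=a(G)$ case. Your added observation that $\core_{G,\fX}$ and $s$ are additive so that boundedness on basis elements suffices is a correct and harmless filling-in of a remark the paper makes when defining $\fX$-core bounded species.
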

\begin{proof}
This is Theorem~\ref{th:core-bounded} in this context.
\end{proof}

\begin{theorem}\label{th:spec-radius-aG}
For $x\in a(G)$, the spectrum\index{spectrum} 
$\Spec_\fX(x)$\index{Spec@$\Spec_\fX(x)$} is the set of
values of $s(x)$ as $x$ runs over the $\fX$-core bounded species of
$a(G)$. The spectral radius\index{spectral!radius} is
\[ \npj_\fX(x)=\max_{\substack{s\colon a(G)\to\bC\\ \fX-\text{\rm core bounded}}}|s(x)|. \]
There is an $\fX$-core bounded species $s$ with $\npj_\fX(x)=s(x)$.
\end{theorem}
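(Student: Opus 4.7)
The plan is to reduce this entirely to the general machinery we already have for abstract representation rings, specialised via Proposition~\ref{pr:a(G)repring}, which says that $a(G)$ is indeed a representation ring in the sense of Definition~\ref{def:repring}. So the statement is literally the instantiation of Theorem~\ref{th:spec-radius-fa} with $\fa = a(G)$, and there is nothing further to do beyond chaining the correct references. I would write this as a one-line proof.

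First I would recall the setup: by Proposition~\ref{pr:a(G)repring} we have a representation ring $a(G)$ with completion $\hat a(G)$ and, for a representation ideal $\fX$, a quotient $\hat a_\fX(G)$. By Theorem~\ref{th:spec-radius} (applied to $\fa = a(G)$), the invariant $\npj_\fX(x)$ equals the spectral radius of the image of $x$ in the commutative Banach algebra $\hat a_\fX(G)$. By Corollary~\ref{co:radius}, the spectrum $\Spec_\fX(x)$ is the set of values $\phi(x)$ as $\phi$ runs over the $\bC$-algebra homomorphisms $\hat a_\fX(G)\to\bC$, and the spectral radius is the supremum of $|\phi(x)|$ over such $\phi$. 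Now Theorem~\ref{th:core-bounded-aG} (the specialised version we just stated) identifies these algebra homomorphisms with the $\fX$-core bounded species of $a(G)$, which gives the first claim and also rewrites the spectral radius as a supremum over $\fX$-core bounded species.

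It remains to upgrade the supremum to an attained maximum. For this I invoke Theorem~\ref{th:npj-in-spec}, which says that the spectral radius $\npj_\fX(x)$ is itself an element of $\Spec_\fX(x)$. Combined with the previous paragraph, this produces an actual $\fX$-core bounded species $s$ with $s(x) = \npj_\fX(x)$, so the supremum is attained.

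The main obstacle is essentially conceptual rather than technical: one must make sure that everything set up abstractly for $\fa$ really does transfer to $a(G)$ with the correct meaning of ``$\fX$-core bounded,'' which is why Theorem~\ref{th:core-bounded-aG} is proved separately just before this statement. Once that is in hand, the proof is a formal citation, exactly parallel to the proof of Theorem~\ref{th:spec-radius-fa}.
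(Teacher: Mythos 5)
Your proposal is correct and matches the paper exactly: the paper's own proof is the one-line citation ``This is Theorem~\ref{th:spec-radius-fa} in this context,'' and you have simply unpacked what that means by chaining Proposition~\ref{pr:a(G)repring}, Theorem~\ref{th:spec-radius}, Corollary~\ref{co:radius}, Theorem~\ref{th:core-bounded-aG}, and Theorem~\ref{th:npj-in-spec}, which is precisely the content of Theorem~\ref{th:spec-radius-fa} specialised to $\fa = a(G)$.
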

\begin{proof}
This is Theorem~\ref{th:spec-radius-fa} in this context.
\end{proof}

\begin{prop}
If $s$ is a dimension bounded species of $a(G)$ then either
$s$ is a Brauer species or $s$ is core bounded.
\end{prop}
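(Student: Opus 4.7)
The plan is to observe that this is essentially a direct specialisation of the abstract Proposition~\ref{pr:Brauer-species} to the representation ring $\fa=a(G)$. The point is that the two alternatives in the statement exhaust (and are mutually exclusive among) the dimension bounded species. First I would note the dichotomy: either $s$ vanishes on $[P]$ for every projective indecomposable $P$, or it does not. In the second case $s$ is by definition a Brauer species, so there is nothing to prove.

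In the first case, since $\langle\fX_\proj\rangle_\bC=a_\bC(G,1)$ is spanned as a $\bC$-vector space by the classes $[P]$ of projective indecomposables, the hypothesis that $s$ vanishes on each such $[P]$ forces $s$ to vanish on the whole ideal $\langle\fX_\proj\rangle_\bC$. Combined with the assumption that $s$ is dimension bounded, hence (by Theorem~\ref{th:core-bounded-aG}, equivalence of (i) and (iii) with $\fX=\varnothing$) continuous with respect to the $\ell^1$ norm on $a_\bC(G)$, this places $s$ in the situation of condition (iii) of Theorem~\ref{th:core-bounded-aG} for $\fX=\fX_\proj$: namely, $s$ vanishes on projective basis elements and is continuous. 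Applying the implication (iii)~$\Rightarrow$~(i) of that theorem gives that $s$ is $\fX_\proj$-core bounded, i.e.\ core bounded, as required.

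Finally, one should remark for completeness that these two possibilities are indeed mutually exclusive (this is the "but not both" clause of Proposition~\ref{pr:Brauer-species}): a core bounded species $s$ satisfies $|s([P])|\le\dim\core([P])=0$ for every projective indecomposable $P$, so it cannot be a Brauer species. There is no real obstacle here; the whole argument is just the observation that on the representation ring $a(G)$, the abstract dichotomy of Proposition~\ref{pr:Brauer-species} is available, together with one invocation of the characterisation of core bounded species in Theorem~\ref{th:core-bounded-aG}. The main thing to be careful about is simply to record why continuity plus vanishing on projectives already implies the quantitative estimate $|s(x)|\le\dim\core(x)$; but this is precisely the content of the equivalence of (i) and (iii) in Theorem~\ref{th:core-bounded-aG}.
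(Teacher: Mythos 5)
Your proof is correct and follows the same route the paper takes: the paper's own proof of this proposition simply cites Theorem~\ref{th:core-bounded-aG}, and what you have written is exactly the reasoning being invoked there, namely that a dimension bounded species not vanishing on some projective indecomposable is Brauer by definition, while one vanishing on all of them satisfies condition~(iii) of that theorem (vanishing on $\langle\fX_\proj\rangle_\bC$ plus continuity) and is therefore core bounded by the implication (iii)~$\Rightarrow$~(i). This also mirrors the proof the paper gives for the abstract analogue, Proposition~\ref{pr:Brauer-species}, so there is nothing to flag.
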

\begin{proof}
  This follows from Theorem~\ref{th:core-bounded-aG}.
\end{proof}

\begin{defn}
If $\fX$ is an ideal of indecomposable $kG$-modules,
and $A=\hat a_\fX(G)$, we write 
$\Struct_\fX(G)$\index{DeltaXG@$\Struct_\fX(G)$} for $\Struct(\hat a_\fX(G))$.
Thus $\Struct_\fX(G)$ may be identified with the set of $\fX$-core bounded 
species of $a(G)$, with the weak* topology. It is a compact Hausdorff
topological space.
\end{defn}

\begin{prop}
If $\fY\subseteq\fX$ are ideals of indecomposable $kG$-modules, then
$\Struct_\fX(G)$ is homeomorphic to the subset of $\Struct_\fY(G)$ consisting
of the $\fX$-core bounded species.
\end{prop}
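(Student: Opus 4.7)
The plan is to identify the inclusion map concretely as the pullback along a quotient of Banach algebras, and then invoke the standard compactness argument to upgrade continuity to a homeomorphism.

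First I would note that since $\fY\subseteq\fX$, we have $\langle\fY\rangle_\bC\subseteq\langle\fX\rangle_\bC$, and hence taking closures gives $\widehat{\langle\fY\rangle}_\bC\subseteq\widehat{\langle\fX\rangle}_\bC$ inside $\hat a(G)$. This produces a natural surjective continuous Banach algebra homomorphism $q\colon \hat a_\fY(G)\to \hat a_\fX(G)$, which on the image of $a_\bC(G)$ is simply the quotient sending $\sum a_i x_i\pmod{\langle\fY\rangle_\bC}$ to $\sum a_i x_i\pmod{\langle\fX\rangle_\bC}$. Pulling species back along $q$ gives a map
\[ q^*\colon \Struct_\fX(G)\to\Struct_\fY(G),\qquad \phi\mapsto \phi\circ q. \]

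Next I would check the three properties. \emph{Injectivity:} if $\phi_1\circ q=\phi_2\circ q$, then $\phi_1$ and $\phi_2$ agree on the image of $q$, which is all of $\hat a_\fX(G)$, so $\phi_1=\phi_2$. \emph{Identification of the image:} a species $\psi\in\Struct_\fY(G)$ lies in $q^*(\Struct_\fX(G))$ if and only if $\psi$ vanishes on the kernel of $q$, equivalently on the image of $\langle\fX\rangle_\bC$ in $\hat a_\fY(G)$, equivalently on every $[M_i]$ with $i\in\fX$. By Theorem~\ref{th:core-bounded-aG}, together with the observation that $\fY\subseteq\fX$ forces $\dim\core_\fX(x)\le\dim\core_\fY(x)$, this is exactly the set of $\fX$-core bounded species in $\Struct_\fY(G)$. \emph{Continuity:} a basic open neighbourhood $[q^*(\phi);y_1,\dots,y_n;\ep]$ of $q^*(\phi)$ pulls back to $[\phi;q(y_1),\dots,q(y_n);\ep]$, which is open in $\Struct_\fX(G)$.

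Finally, to upgrade the continuous injection $q^*$ to a homeomorphism onto its image, I would appeal to the standard fact that a continuous injection from a compact space to a Hausdorff space is a closed embedding. By Proposition~\ref{pr:weak*} both $\Struct_\fX(G)$ and $\Struct_\fY(G)$ are compact Hausdorff, so $q^*$ is a homeomorphism onto a closed subset of $\Struct_\fY(G)$, namely the $\fX$-core bounded species.

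There is no real obstacle here; the statement is essentially a restatement of Lemma~\ref{le:structure-subset}(i) applied to the closed ideal $\widehat{\langle\fX\rangle}_\bC/\widehat{\langle\fY\rangle}_\bC$ in $\hat a_\fY(G)$, together with the dictionary between $\fX$-core boundedness and vanishing on $\langle\fX\rangle$ from Theorem~\ref{th:core-bounded-aG}. The only thing one must take care with is not to confuse the direction of the map: the smaller ideal $\fY$ gives the larger algebra $\hat a_\fY(G)$ and thus the larger structure space, with $\Struct_\fX(G)$ embedding into $\Struct_\fY(G)$ rather than the reverse.
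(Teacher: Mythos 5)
Your proposal is correct and follows essentially the same route as the paper: the paper's proof simply observes that every $\fX$-core bounded species is $\fY$-core bounded and then cites Lemma~\ref{le:structure-subset}, whereas you have unwound the content of that lemma (pullback along the quotient $q\colon\hat a_\fY(G)\to\hat a_\fX(G)$, injectivity, identification of the image via Theorem~\ref{th:core-bounded-aG}, continuity, and the compact-to-Hausdorff argument) explicitly. Your final remark that this is Lemma~\ref{le:structure-subset}(i) in disguise is exactly right, and your caution about the direction of the map is well placed.
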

\begin{proof}
Every $\fX$-core bounded species is $\fY$-core bounded, so 
$\Struct_\fX(G)\subseteq\Struct_\fY(G)$. Now apply Lemma \ref{le:structure-subset}.
\end{proof}

The corresponding notion for the weighted $\ell^2$ norm is as follows.

\begin{defn}
A species $s\colon a(G)\to\bC$ is 
\emph{sup bounded}\index{sup!bounded species}\index{species!sup bounded}
if for all $x\in a(G)_{\cge 0}$ we have $|s(x)|\le
\|x\|_{\sup}$. 
\end{defn}

 The results of
Sections~\ref{se:idempotents} and~\ref{se:qn} give the following.

\begin{theoremqed}
There are no non-zero quasi-nilpotent elements in $\hat a_{\max}(G)$.
\end{theoremqed}

\begin{corqed}
\begin{enumerate}
\item
The Jacobson radical of $\hat a_{\max}(G)$ is zero.
\item
The Jacobson radical of $a_{\bC,\max}(G)$ is zero.
\qedhere
\end{enumerate}
\end{corqed}

\begin{theoremqed}
If $e\in \hat a_{\max}(G)$ then $0<\Tr(e)<1$.
\end{theoremqed}

\begin{corqed}
Let $K$ be a field of characteristic zero whose only totally real
subfield is $\bQ$, and let $\cO_K$ be its ring of integers. Then there
are no idempotents in $a_{\cO_K,\max}(G)$ other than $0$ and $1$.
\end{corqed}

\section{\texorpdfstring
{Restriction and induction on $\hat a(G)$}
{Restriction and induction on â(G)}}\index{restriction}\index{induction}

Let $H$ be a subgroup of a finite group $G$. Then we have a restriction
map $\res_{G,H}\colon a(G)\to a(H)$ and an induction map
$\ind_{H,G}\colon a(H)\to a(G)$. The map $\res_{G,H}$ is a ring
homomorphism, while $\ind_{H,G}$ is an $a(G)$-module homomorphism
via restriction.
These extend in an obvious way to
maps $\res_{G,H}\colon a_\bC(G)\to a_\bC(H)$ and $\ind_{H,G}\colon
a_\bC(H)\to a_\bC(G)$ with the same properties.

\begin{prop}\label{pr:res-conts}
Let $H$ be a subgroup of a finite group $G$. Then 
restriction 
\[ \res_{G,H}\colon a_\bC(G) \to a_\bC(H) \] 
is continuous with respect
to the weighted $\ell^1$ norm, and therefore extends to give a homomorphism
of commutative Banach algebras $\res_{G,H}\colon \hat a(G)\to \hat
a(H)$.
\end{prop}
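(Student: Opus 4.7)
The plan is to verify directly that $\res_{G,H}$ is norm-decreasing on basis elements, extend by the triangle inequality to all of $a_\bC(G)$, and then invoke the standard extension of a bounded linear map to the metric completion.

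First I would compute the norm of the restriction of a basis element. Let $[M_i]$ be an indecomposable $kG$-module. Applying Krull--Schmidt to the restricted module $M_i\da_H$ gives a decomposition
\[ \res_{G,H}([M_i]) = \sum_{j\in\fI_H} b_{ij}[N_j] \]
with non-negative integer multiplicities $b_{ij}$, where the $[N_j]$ are the indecomposable $kH$-modules. Since restriction preserves $k$-dimension, we have $\sum_j b_{ij}\dim N_j = \dim M_i$, and therefore
\[ \|\res_{G,H}([M_i])\|_H = \sum_j b_{ij}\dim N_j = \dim M_i = \|[M_i]\|. \]

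Next I would pass to an arbitrary element $x=\sum_{i\in\fI} a_i[M_i]\in a_\bC(G)$. Using subadditivity and homogeneity of the weighted $\ell^1$ norm on $a_\bC(H)$ together with the computation above,
\[ \|\res_{G,H}(x)\|_H \le \sum_{i\in\fI}|a_i|\,\|\res_{G,H}([M_i])\|_H = \sum_{i\in\fI}|a_i|\dim M_i = \|x\|. \]
Thus $\res_{G,H}$ is a contraction, hence bounded, hence continuous by Lemma~\ref{le:bd=cts}.

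Finally, a bounded linear map between normed spaces extends uniquely to a bounded linear map of the metric completions (sending a Cauchy sequence to the image Cauchy sequence, as in the proof of Lemma~\ref{le:Vhat} and Lemma~\ref{le:Ahat}). Since $\res_{G,H}$ is also a ring homomorphism and multiplication is continuous in a Banach algebra, the extension $\hat a(G)\to \hat a(H)$ is a homomorphism of commutative Banach algebras. There is no real obstacle here; the only point worth noting is that the equality $\sum_j b_{ij}\dim N_j=\dim M_i$ gives the sharper statement that the operator norm of restriction is exactly $1$ (attained on $\one$).
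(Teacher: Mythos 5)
Your argument is the same as the paper's: show $\|\res_{G,H}([M_i])\|=\dim M_i$ (the paper states this directly; you justify it via Krull--Schmidt and the fact that restriction preserves $k$-dimension), apply subadditivity and homogeneity to get $\|\res_{G,H}(x)\|\le\|x\|$, then invoke Lemma~\ref{le:bd=cts} and extend to the completion. The only difference is that you spell out the computation of $\|\res_{G,H}([M_i])\|$ and the extension mechanism a bit more explicitly; the substance is identical.
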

\begin{proof}
By Lemma~\ref{le:bd=cts}, we must show that this map is bounded.
If $x=\sum_{i\in\fI} a_i[M_i]$ with the $M_i$ indecomposable
$kG$-modules, then
\begin{align*} 
\|\res_{G,H}(x)\|&=\left\|\sum_{i\in\fI} a_i\res_{G,H}([M_i])\right\| \\
&\le \sum_{i\in\fI}|a_i|\|\res_{G,H}([M_i])\| \\
&=\sum_{i\in\fI}|a_i|\dim(M_i) \\
&=\|x\|
\end{align*}
and so $\res_{G,H}$ is bounded, and hence continuous.
Since $\res_{G,H}$ is an algebra homomorphism from $a_\bC(G)$ to
$a_\bC(H)$, by continuity it is an algebra homomorphism from
$\hat a(G)$ to $\hat a(H)$.
\end{proof}

\begin{prop}\label{pr:ind-conts}
Let $H$ be a subgroup of a finite group $G$. Then induction
\[ \ind_{H,G}\colon a(H) \to a(G) \]
is continuous with respect to the weighted $\ell^1$ norm, and
therefore extends to a continuous map
\[ \ind_{H,G}\colon \hat a(H)\to \hat a(G). \] 
For $x\in\hat a(G)$ and $y\in\hat a(H)$, we have
\begin{equation}\label{eq:Frobenius} 
\ind_{H,G}(\res_{G,H}(x)y)=x\,\ind_{H,G}(y), 
\end{equation}
so regarding $\hat a(H)$ as an $\hat a(G)$-module via
$\res_{G,H}$, $\ind_{H,G}$ is a map of Banach $\hat a(G)$-modules.
\end{prop}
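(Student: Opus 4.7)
The plan is to prove boundedness of $\ind_{H,G}$ on basis elements, extend by linearity and continuity, and then obtain Frobenius reciprocity for the completions by density.

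First I would show that $\ind_{H,G}\colon a_\bC(H)\to a_\bC(G)$ is bounded with operator norm at most $[G:H]$. The key observation is that for any indecomposable $kH$-module $N$, the induced module $N\uparrow^G$ has dimension $[G:H]\dim(N)$, and decomposing $[N\uparrow^G]=\sum_j n_j[M_j]$ with the $M_j$ indecomposable $kG$-modules and $n_j\ge 0$, we have $\sum_j n_j\dim(M_j)=[G:H]\dim(N)$. Hence $\|\ind_{H,G}([N])\|=[G:H]\|[N]\|$. For a general element $y=\sum_{i}b_i[N_i]\in a_\bC(H)$, the triangle inequality gives
\[ \|\ind_{H,G}(y)\|\le\sum_i|b_i|\,\|\ind_{H,G}([N_i])\|=[G:H]\sum_i|b_i|\dim(N_i)=[G:H]\|y\|. \]
By Lemma~\ref{le:bd=cts}, $\ind_{H,G}$ is continuous, and so by the universal property of the completion (Lemma~\ref{le:Vhat}) it extends uniquely to a continuous $\bC$-linear map $\ind_{H,G}\colon\hat a(H)\to\hat a(G)$, still of norm at most $[G:H]$.

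Next I would establish Frobenius reciprocity \eqref{eq:Frobenius}. At the level of $kG$-modules, the standard tensor identity $(\res_{G,H}M\otimes_k N)\uparrow^G\cong M\otimes_k N\uparrow^G$ gives the identity $\ind_{H,G}(\res_{G,H}([M]).[N])=[M].\ind_{H,G}([N])$ in $a(G)$ for indecomposable $kG$- and $kH$-modules $M,N$. Extending bilinearly in both variables, \eqref{eq:Frobenius} holds for all $x\in a_\bC(G)$ and $y\in a_\bC(H)$.

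Finally I would extend \eqref{eq:Frobenius} to $\hat a(G)\times\hat a(H)$ by continuity. Both sides, viewed as functions of the pair $(x,y)$, are continuous: restriction is continuous by Proposition~\ref{pr:res-conts}, induction is continuous by the first step, and multiplication in a commutative Banach algebra is continuous (submultiplicativity). Since $a_\bC(G)$ is dense in $\hat a(G)$ and $a_\bC(H)$ is dense in $\hat a(H)$, and the identity holds on this dense subset, it holds everywhere. This shows $\ind_{H,G}$ is a map of Banach $\hat a(G)$-modules when $\hat a(H)$ is given the $\hat a(G)$-module structure via $\res_{G,H}$. The only mildly delicate point is verifying continuity of the bilinear multiplication in norm when one factor lies in the image of $\res_{G,H}$, but this is immediate from submultiplicativity of the norm on $\hat a(H)$ together with the bound $\|\res_{G,H}(x)\|\le\|x\|$ established in Proposition~\ref{pr:res-conts}, so there is no real obstacle here.
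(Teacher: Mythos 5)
Your proposal is correct and follows essentially the same route as the paper: bound $\|\ind_{H,G}\|$ by $[G:H]$ via the triangle inequality on the basis of indecomposables, conclude continuity by Lemma~\ref{le:bd=cts}, and then extend Frobenius reciprocity from the dense subalgebras by continuity. You add a useful small observation the paper leaves implicit (that $\|\ind_{H,G}([N])\|=[G:H]\|[N]\|$ exactly, because the coefficients in the decomposition of $N\uparrow^G$ are non-negative and their dimension-weighted sum equals $[G:H]\dim N$), but this is an elaboration rather than a different argument.
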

\begin{proof}
Again by Lemma~\ref{le:bd=cts}, we must show that the map is bounded.
If $x=\sum_{i\in\fI}a_i[M_i]$ with the $M_i$ indecomposable
$kH$-modules then 
\begin{align*}
\|\ind_{H,G}(x)\|&=\left\|\sum_{i\in\fI}a_i\ind_{H,G}([M_i])\right\|  \\
&\le \sum_{i\in\fI}|a_i|\|\ind_{H,G}([M_i])\| \\
&=\sum_{i\in\fI}|a_i||G:H|\dim(M_i)\\
&= |G:H|\|x\|,
\end{align*}
so $\ind_{H,G}$ is bounded, and hence continuous.
Frobenius reciprocity\index{Frobenius!reciprocity} implies that 
\eqref{eq:Frobenius}
holds for elements
$x\in a_\bC(G)$ and $y\in a_\bC(H)$. By the continuity of
$\ind_{H,G}$ and $\res_{G,H}$ (see Proposition~\ref{pr:res-conts})
it holds for $x\in\hat a(G)$ and $y\in\hat a(H)$.
\end{proof}

\begin{theorem}\label{th:Mackey}
If $H$ and $K$ are subgroups of $G$ then the 
Mackey decomposition formula\index{Mackey decomposition formula} holds for
the maps $\ind_{K,G}\colon\hat a(K) \to \hat a(G)$ and 
$\res_{G,H}\colon\hat a(G) \to\hat a(H)$. Namely, for $x\in \hat
a(K)$ we have
\[ \res_{G,H}\ind_{K,G}(x)=\sum_{g\in H\setminus G/K} 
\ind_{H\cap{^g\!K},H}\res_{^g\!K,H\cap{^g\!K}}({^gx}). \]
Here, $^g\!K$ is the conjugate $gKg^{-1}$ and $^gx$ is the image of $x$
under the natural isomorphism $\hat a(K)\to \hat a({^g\!K})$ given by conjugation
by $g$. The sum is over a set of double coset representatives for $H$
and $K$ in $G$.
\end{theorem}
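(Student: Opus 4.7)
The plan is to reduce the theorem to the classical Mackey decomposition formula on $a(K)$, and then extend by continuity using the density of $a_\bC(K)$ in $\hat a(K)$. First I would verify that the identity holds at the level of $kK$-modules: for an indecomposable $kK$-module $M$, the classical Mackey theorem gives
\[ (\ind_K^G M)\!\downarrow_H \;\cong\; \bigoplus_{g\in H\setminus G/K} \ind_{H\cap{}^{g}\!K}^{H}\bigl(({}^{g}M)\!\downarrow_{H\cap{}^{g}\!K}\bigr). \]
Passing to $[\,\cdot\,]$ and extending $\bC$-linearly, the stated identity holds for all $x\in a_\bC(K)$.

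The next step is to check that both sides of the proposed identity define continuous maps $\hat a(K)\to\hat a(H)$. The left-hand side is the composition $\res_{G,H}\circ\ind_{K,G}$, which is continuous by Propositions~\ref{pr:res-conts} and~\ref{pr:ind-conts}. For the right-hand side, the conjugation map $\hat a(K)\to \hat a({}^{g}\!K)$ sending $[M]$ to $[{}^{g}M]$ permutes the basis of indecomposables and preserves $\dim$, hence is an isometric isomorphism with respect to the weighted $\ell^1$ norm. Combined with continuity of $\res_{{}^{g}\!K,H\cap{}^{g}\!K}$ and of $\ind_{H\cap{}^{g}\!K,H}$, each summand is continuous, and the finite sum over the double coset representatives $g\in H\setminus G/K$ is therefore continuous.

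Finally, since $a_\bC(K)$ is dense in $\hat a(K)$ and both maps in question are continuous and agree on the dense subalgebra, they coincide on all of $\hat a(K)$. I do not anticipate a genuine obstacle here: the only mildly delicate point is to confirm that the formula is well-defined independently of the choice of double coset representatives (so that no convergence question arises within a single summand), but this is immediate from the fact that replacing $g$ by $hgk$ with $h\in H$, $k\in K$ conjugates the corresponding summand by an element normalising the data, producing an isomorphic $kH$-module. Thus the extension to $\hat a(K)$ is essentially a formal density argument once continuity of conjugation, restriction, and induction has been established.
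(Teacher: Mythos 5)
Your proposal follows the same route as the paper: verify the identity on $a(K)$ via the classical Mackey decomposition of $kK$-modules, then extend to $\hat a(K)$ by continuity of restriction, induction, and conjugation (the paper cites Propositions~\ref{pr:res-conts} and~\ref{pr:ind-conts}). Your extra remarks — that conjugation is an isometric isomorphism and that the summands are independent of the choice of double coset representatives — are correct and merely make explicit what the paper leaves implicit.
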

\begin{proof}
For $x\in a(K)$ this is the usual Mackey decomposition theorem.
The proposition therefore follows from the continuity of restriction
and induction, which hold by Propositions~\ref{pr:res-conts} 
and~\ref{pr:ind-conts}.
\end{proof}

\begin{theorem}\label{th:integral}
If $H$ is a subgroup of $G$ then $\hat a(H)$ is integral over the
image of $\res_{G,H}\colon \hat a(G)\to\hat a(H)$.
\end{theorem}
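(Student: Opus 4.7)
The overall strategy is to combine the Mackey decomposition (Theorem~\ref{th:Mackey}) with the classical Newton identities in order to construct, for each $y\in\hat a(H)$, a monic polynomial with coefficients in $\res_{G,H}(\hat a(G))$ that has $y$ as a root. The cleanest instance is when $H$ is normal in $G$, and the general case will then be reduced to the normal one by an induction on the index $[G:H]$ and transitivity of integrality.

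First I would treat the case $H\triangleleft G$, with $n=[G:H]$, fixing a transversal $g_1=1,g_2,\ldots,g_n$ for $G/H$. For $y\in\hat a(H)$ and each $k\ge 1$, the Mackey formula specialised to a normal subgroup collapses to
\[ \res_{G,H}\ind_{H,G}(y^k) \;=\; \sum_{i=1}^n {}^{g_i}(y^k) \;=\; \sum_{i=1}^n({}^{g_i}y)^k \;=:\; p_k(y), \]
so every power sum $p_k(y)$ of the $G/H$-orbit of $y$ lies in $\res_{G,H}(\hat a(G))$. Newton's identities express the elementary symmetric function $e_k(y)=e_k({}^{g_1}y,\ldots,{}^{g_n}y)$ as a $\bQ$-polynomial in $p_1(y),\ldots,p_k(y)$, and since $\res_{G,H}(\hat a(G))$ is a $\bC$-subalgebra of $\hat a(H)$, each $e_k(y)$ also lies in $\res_{G,H}(\hat a(G))$. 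Consequently
\[ P_y(T) \;=\; \prod_{i=1}^n (T-{}^{g_i}y) \;=\; \sum_{k=0}^n(-1)^k e_k(y)\,T^{n-k} \]
is monic of degree $n$ with coefficients in $\res_{G,H}(\hat a(G))$, and $P_y(y)=0$ because the factor indexed by $g_1=1$ vanishes. Because this argument is uniform in $y\in\hat a(H)$ and uses only the continuous maps $\res$ and $\ind$ (Propositions~\ref{pr:res-conts} and~\ref{pr:ind-conts}), no separate density argument is required to pass from $a(H)$ to $\hat a(H)$.

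For general $H\le G$ I would then induct on $[G:H]$. The base $[G:H]=1$ is trivial, and the normal case is handled above. If $H$ is non-normal but is not a maximal subgroup of $G$, choose any intermediate subgroup $H\subsetneq K\subsetneq G$; both $[G:K]$ and $[K:H]$ are then strictly smaller than $[G:H]$, so by induction $\hat a(K)$ is integral over $\res_{G,K}(\hat a(G))$ and $\hat a(H)$ is integral over $\res_{K,H}(\hat a(K))$, and transitivity of integrality combined with $\res_{G,H}=\res_{K,H}\circ\res_{G,K}$ gives the conclusion. The hard part is the remaining case, in which $H$ is a maximal non-normal subgroup of $G$, so that $N_G(H)=H$ and no proper intermediate subgroup exists. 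For this case I would bring in the multiplicative ``tensor induction'' functor $T_H^G\colon M\mapsto M^{\otimes n}$ with its twisted $G$-action, whose restriction to $H$ has the form
\[ \res_{G,H}T_H^G(M) \;\cong\; \bigotimes_{HgH\in H\backslash G/H} T_{H\cap{}^gH}^H\bigl(\res_{{}^gH}^{H\cap{}^gH}({}^gM)\bigr), \]
producing elements of $\res_{G,H}(\hat a(G))$ that involve $[M]$ (the contribution of the trivial double coset) together with factors tensor-induced from proper subgroups $H\cap{}^gH$ of $H$. The expected difficulty is that, unlike in the normal case, this restriction is not a symmetric function of conjugates of $[M]$, so extracting a monic polynomial in $[M]$ with coefficients in $\res_{G,H}(\hat a(G))$ will require a secondary induction (on $|H|$) to handle the auxiliary tensor-induced factors, together with a Newton-type identity adapted to the double-coset decomposition.
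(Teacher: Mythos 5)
Your argument handles the normal case cleanly (Mackey for a normal subgroup collapses to the full conjugation orbit, so Newton's identities over $\bQ\subseteq\bC$ recover the elementary symmetric functions, and continuity of $\res$ and $\ind$ passes everything to $\hat a(H)$), and the reduction via intermediate subgroups and transitivity is fine. But the third case --- $H$ a maximal, non-normal subgroup of $G$, so $N_G(H)=H$ --- is not proved, and this is precisely the case where the bulk of the difficulty lives; your write-up itself concedes this. The tensor-induction sketch does not close the gap: $T_H^G$ is multiplicative but not additive, so it does not extend to a well-defined map on $\hat a(H)$ or even on $a_\bC(H)$, and the restriction formula
$\res_{G,H}T_H^G(M)\cong\bigotimes_{HgH}T_{H\cap{}^gH}^H(\cdots)$
produces a tensor product of quantities that are not symmetric functions of conjugates of $[M]$ and not obviously expressible in terms of $\im(\res_{G,H})$; no monic polynomial relation for $[M]$ drops out. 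So what you have is a programme for one sub-case, not a proof.

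The paper's argument (following Proposition~5.3 of~\cite{Benson/Parker:1984a}) avoids the case split altogether. It first reduces to $\hat a(H)^{N_G(H)}$ via the symmetric-function argument (essentially your normal-case idea, but applied to the $N_G(H)$-orbit rather than the $G/H$-orbit, so it works for any $H$). Then, for $\alpha\in\hat a(H)^{N_G(H)}$, it inducts on $|H|$ rather than on $[G:H]$: it shows that $\im(\res_{G,H})+\sum_{K<H}\ind_{K,H}(U_K)$ is a subalgebra of $\hat a(H)$ that is finitely generated as a module over $\im(\res_{G,H})$ and contains $\alpha$. The key point that rescues the maximal-non-normal case is that when one expands $\res_{G,H}\ind_{H,G}(\alpha)$ by Mackey, the double cosets with $H\cap{}^gH=H$ contribute $|N_G(H):H|\alpha$ (using $N_G(H)$-invariance of $\alpha$), and $|N_G(H):H|$ is invertible in $\bC$; so $\alpha$ lies in this finitely generated subalgebra even when $N_G(H)=H$. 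That is the idea missing from your proposal. I would suggest abandoning the tensor-induction route and adapting the paper's $\ind_{K,H}(U_K)$ construction instead; your power-sum/Newton step can then be reused for the initial reduction to $N_G(H)$-invariants.
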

\begin{proof}
Proposition~5.3  of~\cite{Benson/Parker:1984a} proves this for
$a(G)_\bC\to a(H)_\bC$, and the same proof works in this context.
For convenience, we repeat the proof here.

Let $\hat a(H)^{N_G(H)}$ be the fixed points in $\hat a(H)$ 
under the natural conjugation action of $N_G(H)$. If $\alpha\in\hat
a(H)$ let $\alpha_1,\dots,\alpha_n$ be the images of $\alpha$
under $N_G(H)$. Then coefficients of 
the monic polynomial $(x-\alpha_1)\dots(x-\alpha_n)$ are the symmetric 
functions of $\alpha_1,\dots,\alpha_n$, which are in $\hat a(H)^{N_G(H)}$.
Since $\alpha$ is a root of this polynomial, it is integral over
$\hat a(H)^{N_G(H)}$. So $\hat a(H)$ is integral over $\hat a(H)^{N_G(H)}$, and
it remains to prove that $\hat a(H)^{N_G(H)}$ is integral over the
image of $\res_{G,H}$.

Let $\alpha\in\hat a(H)^{N_G(H)}$. If $K$ is a proper subgroup of $H$, we let
$U_K$ denote the subalgebra of $\hat a(H)$ generated by
the image of $\res_{G,K}$ together with 
the elements $\res_{{^g\!H},K}(^g\alpha)$ with $K\le {^g\!H}$. By induction
on $|H|$, we may assume that $U_K$ is finitely generated as a module
over $\im(\res_{G,K})$. We claim that 
\begin{equation}\label{eq:res+UK}
\im(\res_{G,H})+\sum_{K<H}\ind_{K,H}(U_K) 
\end{equation}
is a subalgebra of $\hat a(H)$, finitely generated as a module for
the image of $\res_{G,H}$, containing $\alpha$. 

First we show that~\eqref{eq:res+UK} is
a subalgebra. By~\eqref{eq:Frobenius} and the transitivity
of restriction, for $x,y\in \hat a(G)$  we have
\begin{align*} 
\res_{G,H}(x)\ind_{K,H}(\res_{G,K}(y))&=\ind_{K,H}(\res_{G,K}(xy)), \\
\res_{G,H}(x)\ind_{K,H}(\res_{^g\!H,K}({^g\alpha})) &= 
\ind_{K,H}(\res_{G,K}(x)\res_{^g\!H,K}({^g\alpha})) 
\end{align*}
and so $\im(\res_{G,H}) U_K \subseteq U_K$. If $K<H$ and $L<H$ then
using the Mackey Decomposition Theorem~\ref{th:Mackey}, we have
\begin{align*}
\ind_{K,H}(U_K)\ind_{L,H}(U_L) &\subseteq \sum_{g\in H} 
\ind_{K\cap{^g\!L},H}(\res_{K,K\cap{^g\!L}}(U_K).\res_{^g\!L,K\cap{^g\!L}}(U_{^g\!L})) \\
&\subseteq \sum_{g\in H} \ind_{K\cap{^g\!L},H}(U_{K\cap{^g\!L}}).
\end{align*}

Next we show that~\eqref{eq:res+UK} is finitely generated as a module for
$\im(\res_{G,H})$. By induction on $|H|$, we may assume that
for every $K\le H$, $U_K$ is finitely generated as a module for
$\im(\res_{G,K})$, say by elements $b_i$. Then 
using~\eqref{eq:Frobenius}, $\ind_{K,H}(U_K)$
is finitely generated as a module for $\im(\res_{G,H})$
by the elements $\ind_{K,H}(b_i)$.

Finally, we show that~\eqref{eq:res+UK} contains $\alpha$. Since
$\alpha$ is $N_G(H)$-invariant, the Mackey Decomposition
Theorem~\ref{th:Mackey} gives
\begin{equation*}
\res_{G,H}(\ind_{H,G}(\alpha)) =
|N_G(H):H|\alpha + \sum_{\substack{
g\in H\setminus G / H \\
H\cap{^g\!H}<H}}\ind_{H\cap{^g\!H},H}(\res_{{^g\!H},H\cap{^g\!H}} (\alpha^g)).
\qedhere
\end{equation*}
\end{proof}

\begin{cor}
If $s$ is a species of $\hat a(G)$ which vanishes on the kernel of
$\res_{G,H}$ then there exists a species $s'$ of $\hat a(H)$ such that
$s$ is the composite
\[ \hat a(G) \xrightarrow{\res_{G,H}} \hat a(H) \xrightarrow{s'}
  \bC. \]
\end{cor}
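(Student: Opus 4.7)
The plan is to reduce this to a standard fact from commutative algebra about extending homomorphisms over an integral extension, using Theorem~\ref{th:integral} as the key ingredient.

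First I would set $I = \ker(\res_{G,H})$ and $R = \im(\res_{G,H}) \subseteq \hat a(H)$, so that $\res_{G,H}$ induces an isomorphism of $\bC$-algebras $\hat a(G)/I \xrightarrow{\sim} R$. By hypothesis $s$ vanishes on $I$, so $s$ descends to an algebra homomorphism $\bar s \colon R \to \bC$ with the property that $\bar s \circ \res_{G,H} = s$. Thus the problem reduces to extending $\bar s$ to an algebra homomorphism $s' \colon \hat a(H) \to \bC$.

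Next I would invoke Theorem~\ref{th:integral}, which tells us that the inclusion $R \hookrightarrow \hat a(H)$ is an integral ring extension. Let $\fm = \ker(\bar s)$, a maximal ideal of $R$ with $R/\fm \cong \bC$. By the lying-over theorem for integral extensions of commutative rings, there exists a prime (in fact maximal) ideal $\fn$ of $\hat a(H)$ such that $\fn \cap R = \fm$. The quotient $\hat a(H)/\fn$ is a field that is integral, hence algebraic, over the image of $R/\fm \cong \bC$; since $\bC$ is algebraically closed, we obtain $\hat a(H)/\fn \cong \bC$, and the quotient map gives a $\bC$-algebra homomorphism $s' \colon \hat a(H) \to \bC$ which restricts on $R$ to $\bar s$. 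Automatic continuity (Proposition~\ref{pr:continuous}) then guarantees that $s'$ is a species of the Banach algebra $\hat a(H)$, not just an abstract algebra homomorphism.

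Finally, tracing through, we have
\[
 s'(\res_{G,H}(x)) = \bar s(\res_{G,H}(x)) = s(x)
\]
for all $x \in \hat a(G)$, which is the desired factorisation. The only non-routine input is the integrality statement in Theorem~\ref{th:integral}; everything else is bookkeeping together with lying-over and the algebraic closedness of $\bC$. I do not expect any real obstacle here, since the hypothesis that $s$ vanishes on $\ker(\res_{G,H})$ is precisely what is needed to get started, and the integral extension result handles the rest.
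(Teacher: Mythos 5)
Your proof is correct and follows essentially the same route as the paper: apply Theorem~\ref{th:integral} and then standard commutative algebra for integral extensions (you invoke lying-over where the paper names going-up, but this is the same ingredient). The paper gives only a one-line proof; your write-up fills in precisely the details the paper leaves implicit.
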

\begin{proof}
This follows from Theorem~\ref{th:integral} and the going-up theorem.
\end{proof}

\section{Adams psi operations}\index{Adams psi operations}%
\index{psi operations}

The Adams psi operations $\psi^n$ on $a(G)$ were defined
in~\cite{Benson:1984b} and further studied 
in~\cite{Bryant:2003a,Bryant/Johnson:2010a,Bryant/Johnson:2011a}. 
In this section, we show that for any
ideal of indecomposables $\fX$, $\psi^n$ acts on $a_\fX(G)$
via ring endomorphisms. This gives rise to an action on $\Struct_\fX(G)$.
For the purpose of this section, we assume that $k$ is algebraically
closed, although this could be avoided.

Let $M$ be a $kG$-module. Then there is an action of $\bZ/n$ on
$M^{\otimes n}$ permuting the tensor factors, and this commutes with
the action of $G$. If $n$ is coprime to $p$ 
then $M^{\otimes n}$
decomposes as a sum of eigenspaces of $\bZ/n$.
Let $\ep\colon \bZ/n\to k$ be a generator for the character group of 
$\bZ/n$ over $k$, and write $[M^{\otimes n}]_{\ep^i}$ for the
eigenspace corresponding to $\ep^i$. Let
$\zeta_n=e^{2\pi\bi/n}$ be
a primitive $n$th root of unity in $\bC$, and set
\[ \psi^n[M]=\sum_{i=1}^n \zeta_n^i[M^{\otimes n}]_{\ep^i}\in a_\bC(G). \]

\begin{prop}
If $M_1$ and $M_2$ are $kG$-modules then
\begin{enumerate}
\item $\psi^n([M_1]+[M_2])=\psi^n[M_1]+\psi^n[M_2]$, and
\item $\psi^n([M_1\otimes M_2]=\psi^n[M_1]\psi^n[M_2]$.
\end{enumerate}
\end{prop}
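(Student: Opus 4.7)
The plan for (i) is to decompose $(M_1\oplus M_2)^{\otimes n}=\bigoplus_f N_f$, where $f$ ranges over functions $\{1,\ldots,n\}\to\{1,2\}$ and $N_f:=M_{f(1)}\otimes\cdots\otimes M_{f(n)}$, then to compute the contribution of each orbit of the $\bZ/n$-action (given by $f\mapsto f\circ\sigma^{-1}$ for a generator $\sigma$) to $\psi^n[M_1\oplus M_2]$ separately. The two constant $f$ give singleton orbits contributing $\psi^n[M_1]$ and $\psi^n[M_2]$, so everything reduces to showing that each non-constant orbit contributes zero.

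For a non-constant $f$ of period $d\mid n$ (so $d>1$) with $m=n/d$, the span of the orbit is the induced module $V_f\cong\mathrm{Ind}_{\langle\sigma^d\rangle}^{\bZ/n} N_f$, where $\sigma^d$ acts on $N_f$ by shifting tensor factors through $d$ positions (well defined because $f$ has period $d$). Since $p\nmid n$, I decompose $N_f=\bigoplus_{a=0}^{m-1}W_a$ into $\zeta_m^a$-eigenspaces of $\sigma^d$; these are $kG$-submodules because the $G$-action commutes with $\sigma^d$. The restriction $\ep^i|_{\langle\sigma^d\rangle}$ sends $\sigma^d$ to $\zeta_n^{id}=\zeta_m^i$, so the standard computation of eigenspaces of an induced module gives $[V_f]_{\ep^i}\cong W_{i\bmod m}$ as $kG$-modules. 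The orbit's contribution to $\psi^n[M_1\oplus M_2]$ is therefore
\[ \sum_{i=1}^n\zeta_n^i[V_f]_{\ep^i}=\sum_{a=0}^{m-1}[W_a]\,\zeta_n^a\sum_{k=0}^{d-1}\zeta_d^k=0, \]
the vanishing coming from the fact that $\zeta_d\ne 1$ when $d>1$.

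Part (ii) is almost formal: the natural isomorphism $(M_1\otimes M_2)^{\otimes n}\cong M_1^{\otimes n}\otimes M_2^{\otimes n}$ is $(\bZ/n\times G)$-equivariant for the diagonal shift, and because $\bZ/n$ is abelian the $\ep^i$-eigenspace decomposes as $\bigoplus_{i_1+i_2\equiv i\,(n)}[M_1^{\otimes n}]_{\ep^{i_1}}\otimes[M_2^{\otimes n}]_{\ep^{i_2}}$. Substituting $\zeta_n^i=\zeta_n^{i_1}\zeta_n^{i_2}$ in the defining sum of $\psi^n[M_1\otimes M_2]$ and rearranging factorizes it as $\psi^n[M_1]\cdot\psi^n[M_2]$. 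The hard part throughout is really the induced-module eigenspace identification in (i); once that is in place, both parts reduce to simple root-of-unity manipulations, (i) via the geometric sum vanishing and (ii) via the multiplicativity of characters on the diagonal.
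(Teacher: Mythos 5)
Your proof is correct, and since the paper proves this only by citing Proposition~1 of Benson, ``Lambda and psi operations on the Green ring of a finite group'' (\cite{Benson:1984b}), you have supplied an argument the paper itself does not spell out. The decomposition of $(M_1\oplus M_2)^{\otimes n}$ over functions $f\colon\{1,\dots,n\}\to\{1,2\}$, the reduction to the $\bZ/n$-orbit sums, the identification $[V_f]_{\ep^i}\cong W_{i\bmod m}$ via the induced-module structure on an orbit of period $d$, and the vanishing $\sum_{k=0}^{d-1}\zeta_d^k=0$ for $d>1$ are exactly the standard route, and almost certainly what the cited reference does; likewise the K\"unneth-type convolution of eigenspaces in part~(ii). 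One cosmetic caution: you write the eigenvalues of $\sigma^d$ on $N_f$ as powers of $\zeta_m$, but these live in $k$ (via the chosen generator $\ep$ of the $k$-valued character group), while the weights $\zeta_n^i$ in the definition of $\psi^n$ are complex. It is cleaner to name $\ep'=\ep\vert_{\langle\sigma^d\rangle}$ and define $W_a$ as the $(\ep')^a$-eigenspace, keeping the two kinds of roots of unity typographically distinct; the final coefficient sum $\zeta_n^a\sum_k\zeta_d^k$ is genuinely complex and the argument is unaffected, but conflating the notations could trip a reader into thinking the $W_a$ are defined over $\bC$.
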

\begin{proof}
This is proved in Proposition~1 of \cite{Benson:1984b}.
\end{proof}

It follows from the proposition that $\psi^n$ can be extended to a
$\bC$-algebra homomorphism $a_\bC(G)\to a_\bC(G)$.

\begin{prop}
Let $n$ be coprime to $p$.
For $d|n$, let $\ep_d=\ep^{n/d}$, a character of order $d$ of
$\bZ/n$. Then
\[ \psi^n[M]=\sum_{d|n}\mu(d)[M^{\otimes n}]_{\ep_d}. \]
Here, $\mu$ is the M\"obius function.\index{Mo@M\"obius function}
\end{prop}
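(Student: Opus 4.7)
The plan is to reindex the sum in the definition of $\psi^n[M]$ according to the order of the character $\ep^i$. Every index $i \in \{1,\dots,n\}$ satisfies $\mathrm{ord}(\ep^i) = d$ for a unique divisor $d$ of $n$, and the indices with $\mathrm{ord}(\ep^i)=d$ are exactly the $i = (n/d)j$ for $1 \le j \le d$ with $\gcd(j,d)=1$. For such $i$ we have $\ep^i = \ep_d^j$ and $\zeta_n^i = \zeta_d^j$. So the definition rewrites as
\[ \psi^n[M] = \sum_{d\mid n}\ \sum_{\substack{1\le j\le d\\ \gcd(j,d)=1}} \zeta_d^j\,[M^{\otimes n}]_{\ep_d^j}. \]

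The main step is to establish, for each $d \mid n$ and each $j$ coprime to $d$, the isomorphism of $kG$-modules $[M^{\otimes n}]_{\ep_d^j} \cong [M^{\otimes n}]_{\ep_d}$. I would prove this by viewing the cyclic shift $\sigma$ as an element of $S_n$ permuting the tensor factors, which commutes with the diagonal $G$-action. For any $a$ coprime to $n$, both $\sigma$ and $\sigma^a$ are $n$-cycles in $S_n$, hence conjugate: there exists $\tau\in S_n$ with $\tau\sigma\tau^{-1}=\sigma^a$. Because $\tau$ commutes with the diagonal $G$-action, it is a $kG$-module automorphism of $M^{\otimes n}$, and a short computation shows $\tau$ carries the $\zeta_n^i$-eigenspace of $\sigma$ isomorphically onto the $\zeta_n^{ia^{-1}}$-eigenspace of $\sigma$. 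Given primitive characters $\ep_d^{j_1}$ and $\ep_d^{j_2}$, we need $a$ coprime to $n$ with $a\equiv j_1 j_2^{-1}\pmod{d}$. Such $a$ can always be produced: the residue $j_1 j_2^{-1}\bmod d$ is a unit mod $d$, hence coprime to every prime dividing $d$, and for primes dividing $n/d$ but not $d$ we can adjust $a$ independently by a Chinese Remainder argument.

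Granted this lemma, each inner sum factors as $[M^{\otimes n}]_{\ep_d}$ times $\sum_{\gcd(j,d)=1}\zeta_d^j$. The latter is the classical Ramanujan sum $c_d(1)$, which equals the M\"obius function $\mu(d)$: this follows from $\sum_{j=0}^{d-1}\zeta_d^j = [d=1]$ by M\"obius inversion over divisors of $d$. Combining, we obtain
\[ \psi^n[M] = \sum_{d\mid n} \mu(d)\,[M^{\otimes n}]_{\ep_d}, \]
as required.

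The main obstacle is verifying the key lemma, i.e.\ the $kG$-module isomorphism of the primitive-order-$d$ eigenspaces. It is not enough to check they have the same Brauer character (which they do, by a direct Ramanujan-sum calculation), because equality in $a_\bC(G)$ requires equality as a $\bZ$-linear combination of isomorphism classes of indecomposables, not just of composition factors. The conjugation-in-$S_n$ argument sketched above resolves this by producing an honest $kG$-isomorphism; the delicate point is simultaneously arranging the residue of $a$ modulo $d$ while keeping $\gcd(a,n)=1$, which succeeds by the Chinese Remainder Theorem argument above.
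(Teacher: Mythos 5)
Your argument is correct and self-contained, and since the paper's ``proof'' of this proposition is simply a pointer to Proposition~2 of the cited reference \cite{Benson:1984b}, your write-up actually does more work than the text itself. The structure of the argument — reindex the defining sum by the exact order $d$ of $\ep^i$, reduce to the claim that the eigenspaces $[M^{\otimes n}]_{\ep_d^j}$ with $\gcd(j,d)=1$ are pairwise isomorphic as $kG$-modules, and then recognise $\sum_{\gcd(j,d)=1}\zeta_d^j$ as the Ramanujan sum $c_d(1)=\mu(d)$ — is the natural route, and I believe it is the same as in the cited reference. You correctly identify the genuine content: agreement of Brauer characters would only give equality of composition factors, whereas equality in the Green ring requires an isomorphism on the nose, and the conjugation $\tau\sigma\tau^{-1}=\sigma^a$ in $S_n$ (with $\tau$ automatically $kG$-linear) supplies exactly that. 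The CRT step to produce $a$ with $a\equiv j_1j_2^{-1}\pmod d$ and $\gcd(a,n)=1$ is necessary and correctly handled: pick $a\equiv j_1j_2^{-1}\pmod d$ and $a\equiv 1\pmod q$ for each prime $q\mid n$ with $q\nmid d$; the moduli are coprime, and the resulting $a$ is a unit mod every prime dividing $n$. One small notational slip: you refer to ``the $\zeta_n^i$-eigenspace of $\sigma$,'' but the actual eigenvalues of $\sigma$ live in $k$ (they are powers of $\ep(\sigma)$, a primitive $n$th root of unity in $k$), while $\zeta_n=e^{2\pi\bi/n}\in\bC$ appears only as a bookkeeping coefficient in the definition of $\psi^n$; you mean the $\ep^i$-eigenspace. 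The computation itself is right: from $\tau\sigma=\sigma^a\tau$ one gets that $\tau$ carries $[M^{\otimes n}]_{\ep^i}$ onto the $\ep^i$-eigenspace of $\sigma^a$, which is $[M^{\otimes n}]_{\ep^{a^{-1}i}}$, and $a\equiv j_1j_2^{-1}\pmod d$ is exactly what is needed so that $a^{-1}(n/d)j_1\equiv(n/d)j_2\pmod n$.
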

\begin{proof}
This is proved in Proposition~2 of~\cite{Benson:1984b}.
\end{proof}

\begin{cor}
For $n$ coprime to $p$, the homomorphism $\psi^n$ takes elements of $a(G)$ to elements of
$a(G)$, and therefore induces a ring homomorphism $\psi^n\colon
a(G)\to a(G)$.
\end{cor}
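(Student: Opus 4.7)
The plan is to read off the conclusion directly from the preceding Proposition, which gives the alternative formula
\[ \psi^n[M]=\sum_{d\mid n}\mu(d)[M^{\otimes n}]_{\ep_d}. \]
Although the original defining expression $\psi^n[M]=\sum_{i=1}^n \zeta_n^i[M^{\otimes n}]_{\ep^i}$ has complex coefficients and therefore a priori only lands in $a_\bC(G)$, the right-hand side of the M\"obius formula has integer coefficients $\mu(d)\in\{-1,0,1\}$.

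The key observation is then that each summand $[M^{\otimes n}]_{\ep_d}$ is the isomorphism class of a genuine $kG$-module, not merely a formal $\bC$-linear combination. Indeed, the cyclic group $\bZ/n$ acts on $M^{\otimes n}$ by permuting tensor factors, and this action commutes with the diagonal action of $G$, so each $\ep_d$-eigenspace is a $kG$-submodule of $M^{\otimes n}$. Since $n$ is coprime to $p$, the order of $\bZ/n$ is invertible in $k$, which is exactly what is needed for Maschke's theorem to apply to $\bZ/n$ and hence guarantee that $M^{\otimes n}$ really does decompose as a direct sum of these eigenspaces as a $k[\bZ/n\times G]$-module. Consequently $[M^{\otimes n}]_{\ep_d}\in a(G)$, and so $\psi^n[M]$ is an integer linear combination of classes of $kG$-modules, i.e., an element of $a(G)$.

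Since $\psi^n$ is additive and multiplicative on non-negative elements by the earlier Proposition, and it now sends $a(G)_{\cge 0}$ into $a(G)$, it extends by linearity to a ring homomorphism $a(G)\to a(G)$. The only point requiring any care is the passage from ``$\bC$-algebra homomorphism on $a_\bC(G)$ preserving the subring generated by non-negative elements'' to ``ring homomorphism on $a(G)$,'' and this is immediate because $a(G)$ is the $\bZ$-span of non-negative elements and $\psi^n$ is $\bZ$-linear by construction. There is no substantive obstacle; the work has been done in the preceding two propositions.
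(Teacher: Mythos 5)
Your proof is correct and takes essentially the same approach as the paper: the paper's proof is simply the observation that the M\"obius formula from the preceding proposition has integer coefficients. You spell out more explicitly why the eigenspaces $[M^{\otimes n}]_{\ep_d}$ are honest $kG$-modules (Maschke for $\bZ/n$ in characteristic coprime to $n$), but that point is already implicit in the paper's setup of the eigenspace decomposition, so there is no substantive difference.
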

\begin{proof}
The formula given in the proposition has integer coefficients.
\end{proof}

\begin{theorem}\label{th:psi-composite}
If $m$ and $n$ are coprime to $p$ then
$\psi^m\circ\psi^n=\psi^{mn}=\psi^n\circ\psi^m$.
\end{theorem}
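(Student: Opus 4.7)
The plan is to prove $\psi^m \circ \psi^n = \psi^{mn}$, from which the equality $\psi^n \circ \psi^m = \psi^{mn}$ follows by symmetry upon applying the first identity with $m$ and $n$ swapped, since $\psi^{mn} = \psi^{nm}$ as integers. My strategy uses the Möbius formula established in the previous proposition, $\psi^N[M] = \sum_{d \mid N}\mu(d)[M^{\otimes N}]_{\ep_d}$. Expanding both sides via the additivity/linearity of $\psi^m$ (which is a ring endomorphism of $a_\bC(G)$) gives
\[
\psi^m(\psi^n[M]) = \sum_{\substack{d \mid n \\ e \mid m}}\mu(d)\mu(e)\bigl[([M^{\otimes n}]_{\ep_d})^{\otimes m}\bigr]_{\eta_e}, \qquad \psi^{mn}[M] = \sum_{f \mid mn}\mu(f)[M^{\otimes mn}]_{\theta_f},
\]
with only squarefree indices contributing. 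Both expressions live in $a_\bC(G)$ as $\bZ$-combinations of $kG$-submodules of $V = M^{\otimes mn}$, cut out as isotypic components under two different abelian subgroups of $S_{mn}$ acting on tensor positions: on the right, the cyclic group $C_{mn}$ of global cyclic shifts; on the left, the abelian subgroup of the wreath product $C_n \wr C_m$ generated by the diagonal $\Delta(C_n) \le C_n^m$ and the block-shift $C_m$.

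I would handle the coprime case $\gcd(m,n) = 1$ first. Here $C_{mn} \cong C_n \times C_m$ by the Chinese Remainder Theorem, and a direct computation shows the product (diagonal within-block shift) $\cdot$ (block shift) of $\Delta(C_n) \times C_m$ has cycle type $(mn)$ on tensor positions, hence is $S_{mn}$-conjugate to the generator of $C_{mn}$. Since the conjugating element lies in $S_{mn}$ and therefore commutes with the diagonal $G$-action on $V$, the isotypic components of $V$ under these two conjugate abelian subgroups are $kG$-isomorphic. Combined with the unique factorisation $f = de$ of each squarefree $f \mid mn$ into $d \mid n$ and $e \mid m$ (with $\mu(f) = \mu(d)\mu(e)$), this yields a term-by-term match of the two sums and proves the coprime case.

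For general $m, n$ with $\gcd(m, n) > 1$, the conjugation argument fails, because the diagonal-plus-block-shift then has cycle type $(\mathrm{lcm}(m,n)^{\gcd(m,n)})$ rather than a single $mn$-cycle. I would reduce the general case to the coprime case plus the prime-power identity $\psi^{\ell^a} \circ \psi^{\ell^b} = \psi^{\ell^{a+b}}$ for each prime $\ell \ne p$, by writing $m = \ell^a m'$ and $n = \ell^b n'$ with $\gcd(\ell, m'n') = 1$ and using the already-established coprime-multiplicativity to separate the $\ell$-part from the rest. The hard part will be this prime-power identity. Since the only squarefree divisors of $\ell^{a+b}$ are $1$ and $\ell$, the Möbius formula collapses both sides into at most four isotypic components each; the right side becomes $[V]^{C_{\ell^{a+b}}} - [V]_{\theta_\ell}$, with $\theta_\ell$ the unique order-$\ell$ character of $C_{\ell^{a+b}}$. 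The identification rests on a Galois/orthogonality argument on the common cyclic quotient $C_\ell = C_{\ell^{a+b}}/C_{\ell^{a+b-1}}$, matching the $C_\ell$-isotypic components of $V$ under the two different abelian group actions and verifying that the Möbius signs agree. Coprimality of $m$ and $n$ to $p$ enters only to ensure all group orders in play are invertible in $k$, making the eigenspace decompositions valid.
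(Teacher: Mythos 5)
The paper offers no proof beyond a citation to Theorem~1 of Benson's 1984 paper on $\lambda$ and $\psi$ operations, so there is no in-text argument to compare against; reviewing your proposal on its own terms, the coprime case has a genuine gap. You assert that both sides are $\bZ$-combinations of isotypic components of abelian subgroups of $S_{mn}$, so that conjugacy of those subgroups gives a term-by-term match. But the summand $\bigl[([M^{\otimes n}]_{\ep_d})^{\otimes m}\bigr]_{\eta_e}$ is not an isotypic component of the abelian group $A=\Delta(C_n)\times C_m$ you describe; it is an isotypic component of the larger, non-abelian group $C_n^m\rtimes C_m$ for the linear character which equals $(\ep_d,\dots,\ep_d)$ on $C_n^m$ and $\eta_e$ on $C_m$. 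Restricted to $A$ this character only remembers the pair $(\ep_d^m,\eta_e)$, and the $A$-isotypic component for that pair also picks up every simultaneous $C_n^m$-eigenspace whose $m$ block eigenvalues merely multiply to $\ep_d^m$, not just the one where they all equal $\ep_d$. So your left-hand summands are in general proper subspaces of the corresponding $A$-isotypic components, and the conjugacy argument---which does correctly identify $A$-isotypic components with $C_{mn}$-isotypic components---compares the wrong things.

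A concrete check: take $G$ trivial, $\dim M=2$, $m=2$, $n=3$. The four summands in your expansion of $\psi^2(\psi^3[M])$ have dimensions $10,6,3,1$, while the four summands of $\psi^6[M]$ have dimensions $14,10,11,9$. Both alternating sums equal $2$, but no term-by-term identification is possible; the per-slot discrepancies $4,4,8,8$ are exactly the contributions of the non-constant block eigenvalue tuples, and they vanish only after the M\"obius signs are taken into account. Establishing this cancellation as an isomorphism of virtual $kG$-modules, over the $C_m$-orbits of non-constant tuples, is the real content of the coprime case, and your proposal does not address it. The prime-power step is only sketched, and since the general composite reduction feeds through the coprime case, the gap undermines the whole argument.
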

\begin{proof}
This is proved in Theorem~1 of~\cite{Benson:1984b}.
\end{proof}

\begin{defn}[Frobenius twist]\index{Frobenius!twist}
For $r\ge 0$ we let $k^{(p^r)}$ be a $k$-$k$-bimodule, where the 
left action of $\lambda\in k$ is multiplication by $\lambda$, 
and the right action is multiplication by $\lambda^{p^r}$.
If $M$ is a $kG$-module, we define the $r$th
\emph{Frobenius twist} of $M$ to be 
\[ M^{(p^r)}=k^{(p^r)}\otimes_k M. \]
Thus $\lambda^{p^r}(1\otimes m)=\lambda^{p^r}\otimes m =
1 \otimes \lambda m$, so $\lambda^{p^r}$ acts on $M^{(p^r)}$ in
the way that $\lambda$ acts on $M$. If $k$ is perfect,\index{perfect!field} then 
$\lambda$ acts on $M^{(p^r)}$ is the way that $\lambda^{p^{-r}}$ acts
on $M$.
\end{defn}

\begin{defn}
We define $\psi^p[M]=[M^{(p)}]$, the Frobenius twist of $M$. This gives us
ring homomorphisms $\psi^p\colon a(G)\to a(G)$ and $\psi^p\colon
a_\bC(G)\to a_\bC(G)$.
In general, if $n=p^am$ with $p\nmid m$, we define
$\psi^n=\psi^m\circ(\psi^p)^a=(\psi^p)^a\circ\psi^m$. 
Thus Theorem~\ref{th:psi-composite} remains true for all $m$ and $n$.

If $\fX$ is an ideal of indecomposable $kG$-modules, we say that $\fX$
is \emph{Frobenius stable}\index{Frobenius!stable} if $M\in\fX
\Rightarrow M^{(p)}\in\fX$. In Example~\ref{eg:p}, everything is
Frobenius stable except for $\fX_\cV$, and in this case it is
Frobenius stable precisely when the subset $\cV$ is 
stable under the Frobenius map,\index{Frobenius!map} 
namely the map induced by
the $p$th power map on $H^*(G,k)$.

If $s$ is a species of $a(G)$, we define  $\psi^n(s)$ by the
formula $\psi^n(s)(x)=s(\psi^n(x))$.
\end{defn}

\begin{prop}
Regarding $\dim\colon a(G)\to \bC$ as a species, we have
\[ \psi^n(\dim)=\dim. \]
\end{prop}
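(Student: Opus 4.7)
The plan is to verify that $\dim \psi^n[M] = \dim M$ for every $kG$-module $M$, which suffices since both $\dim$ and $\psi^n(\dim) = \dim \circ \psi^n$ are $\bC$-algebra homomorphisms $a_\bC(G) \to \bC$ and are therefore determined by their values on the classes $[M]$.

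First I would reduce to two cases via Theorem~\ref{th:psi-composite}. Writing $n = p^a m$ with $p \nmid m$, the factorisation $\psi^n = (\psi^p)^a \circ \psi^m$ shows that it is enough to check the two cases $n = p$ and $n$ coprime to $p$.

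The case $n = p$ is immediate from the definition $\psi^p[M] = [M^{(p)}] = [k^{(p)} \otimes_k M]$, since $k^{(p)}$ is one-dimensional as a left $k$-module, so $\dim_k M^{(p)} = \dim_k M$.

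For $n$ coprime to $p$, I would realise $[M^{\otimes n}]_{\epsilon^i}$ as the $\zeta_n^i$-eigenspace of the cyclic shift $\sigma = (1\,2\,\cdots\,n) \in S_n$ acting on $M^{\otimes n}$ by permutation of tensor factors. Since $n$ is invertible in $k$, the character orthogonality relations for $\bZ/n$ give
\[ \dim [M^{\otimes n}]_{\epsilon^i} = \tfrac{1}{n}\sum_{k=0}^{n-1} \zeta_n^{-ik}\,\mathrm{tr}(\sigma^k, M^{\otimes n}). \]
The cycle type of $\sigma^k$ consists of $\gcd(k,n)$ cycles each of length $n/\gcd(k,n)$, and the standard trace computation for a permutation acting on a tensor power yields $\mathrm{tr}(\sigma^k, M^{\otimes n}) = (\dim M)^{\gcd(k,n)}$. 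Substituting into $\dim\psi^n[M] = \sum_{i=1}^n \zeta_n^i \dim[M^{\otimes n}]_{\epsilon^i}$ and swapping the order of summation leaves an inner sum $\sum_{i=1}^n \zeta_n^{i(1-k)}$, which vanishes unless $k \equiv 1 \pmod n$; this singles out the term $(\dim M)^{\gcd(1,n)} = \dim M$.

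There is no real obstacle here: the only point requiring a little care is the bookkeeping in the $n$-coprime-to-$p$ case, where one must correctly match the indexing of $\epsilon^i$ with the eigenvalues $\zeta_n^i$ of $\sigma$ and confirm that the trace-on-tensor-powers formula is applied to $\sigma^k$ rather than to an element of $G$.
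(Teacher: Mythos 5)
Your proof is correct but takes a genuinely different route from the paper. The paper's argument is a one-liner: $\dim$ factors as $a(G)\xrightarrow{\res} a(1)=\bZ\xrightarrow{\sim}\bC$, $\psi^n$ commutes with restriction, and $\psi^n$ is visibly the identity on $a(1)$ since $[k^{\otimes n}]_1=[k]$ and the other eigenspaces of $k^{\otimes n}$ vanish. Your proof instead attacks the eigenspace dimensions directly: reduce to $n=p$ and $p\nmid n$ via $\psi^n=(\psi^p)^a\circ\psi^m$, dispose of $n=p$ by noting Frobenius twist preserves dimension, and for $p\nmid n$ compute $\sum_i\zeta_n^i\dim[M^{\otimes n}]_{\ep^i}$ by character orthogonality combined with the fixed-point count $\mathrm{tr}(\sigma^k)=(\dim M)^{\gcd(k,n)}$, so that the geometric sum kills all $k\ne 1$. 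Both are correct; the paper's proof is essentially free once one knows $\psi^n$ commutes with restriction, while yours is a self-contained computation that does not invoke any auxiliary group.

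One point deserves explicit care in your version. The orthogonality formula
\[ \dim[M^{\otimes n}]_{\ep^i}=\tfrac{1}{n}\sum_{k=0}^{n-1}\zeta_n^{-ik}\,\mathrm{tr}(\sigma^k,M^{\otimes n}) \]
mixes complex numbers $\zeta_n^{-ik}$ with a trace, so as written it only makes sense if $\mathrm{tr}$ is taken to be the \emph{Brauer} character of $\sigma^k$ (lifted to $\bC$ via the same identification of $n$th roots of unity in $k$ with those in $\bC$ that underlies the pairing $\ep\leftrightarrow\zeta_n$ in the definition of $\psi^n$), not the $k$-valued trace, which would only determine the left side modulo $p$. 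Your cycle-counting does produce the correct Brauer character value, because it counts fixed monomial basis vectors of $M^{\otimes n}$ and on a nontrivial $\sigma^k$-orbit the lifted eigenvalues sum to zero; but the distinction between the $k$-trace and its Brauer lift should be made explicit, since the dimensions $\dim[M^{\otimes n}]_{\ep^i}$ are honest integers and the argument needs them as such, not merely modulo $p$.
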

\begin{proof}
The species $\dim\colon a(G)\to \bC$ factors through restriction 
to the trivial subgroup $1$, $a(G)\to a(1)$. We have
$a(1)=\bZ$, with generator $[k]$. We have $[k^{\otimes n}]_{1}=[k]$
and $[k^{\otimes n}]_{\ep^i}=0$ if $\ep^i\ne 1$, and so 
we have $\psi^n[k]=[k]$. So
$\psi^n$ is the identity map on $a(1)$.
\end{proof}

\begin{theorem}\label{th:psi-bounded}
For $x\in a_\bC(G)$ we have $\|\psi^n(x)\|\le\|x\|^n$.
\end{theorem}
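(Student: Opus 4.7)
The plan is to establish the bound first on basis classes, then on positive integer combinations, and finally to extend. Write $x = \sum_{i\in\fJ} a_i[M_i]$ with $\fJ$ finite and the $M_i$ (not necessarily indecomposable) $kG$-modules, and unpack $\psi^n$ from its defining formula
\[ \psi^n([M]) = \sum_{i=1}^n \zeta_n^i\,\bigl[M^{\otimes n}\bigr]_{\ep^i}. \]
The key observation is that each eigenspace $[M^{\otimes n}]_{\ep^i}$ is a genuine $kG$-submodule of $M^{\otimes n}$, because the cyclic permutation action of $\bZ/n$ on the tensor factors commutes with the diagonal $G$-action. Therefore $[M^{\otimes n}]_{\ep^i}\in a(G)_{\cge 0}$, and its weighted $\ell^1$ norm coincides with its $k$-dimension.

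First I would handle the case $x=[M]$ where $M$ is a single $kG$-module. Using $|\zeta_n^i|=1$ and the triangle inequality for the weighted $\ell^1$ norm,
\[ \|\psi^n([M])\| \le \sum_{i=1}^n \bigl\|\bigl[M^{\otimes n}\bigr]_{\ep^i}\bigr\| = \sum_{i=1}^n \dim\bigl[M^{\otimes n}\bigr]_{\ep^i} = \dim M^{\otimes n} = (\dim M)^n = \|[M]\|^n, \]
the penultimate equality holding because the eigenspaces of the $\bZ/n$-action jointly give a direct sum decomposition of $M^{\otimes n}$. This is the substantive geometric content. Next, for any non-negative integer combination $x = \sum_i a_i[M_i]$ in $a(G)_{\cge 0}$, set $M = \bigoplus_i M_i^{\oplus a_i}$; then $x = [M]$, $\|x\| = \dim M$, and the bound $\|\psi^n(x)\| \le \|x\|^n$ follows from the single-module case.

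To extend from $a(G)_{\cge 0}$ to all of $a_\bC(G)$ is the delicate step. The difficulty, and the place where I would expect the bulk of the work, is that applying the triangle inequality coefficient-by-coefficient in a decomposition $x=\sum_i a_i[M_i]$ together with the single-module bound only yields $\|\psi^n(x)\| \le \sum_i |a_i|(\dim M_i)^n$, which is strictly weaker than $(\sum_i|a_i|\dim M_i)^n=\|x\|^n$ whenever the coefficients are spread out with $\sum_i|a_i|\dim M_i \ge 1$ (and indeed the crude estimate can exceed $\|x\|^n$ in the small-norm regime). The right strategy is therefore to identify $\psi^n(x)$ with a $\bZ/n$-equivariant projection applied to the $n$-fold tensor power $x^{\otimes n}$, interpreted inside the completed tensor-algebra picture, so as to put $\psi^n$ on the same multiplicative footing as the submultiplicative norm and conclude $\|\psi^n(x)\| \le \|x^{\otimes n}\| \le \|x\|^n$ via Definition~\ref{def:na}\,(iv) iterated $n$ times.

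The main obstacle is precisely this last reconciliation between the $\bC$-linear extension of $\psi^n$ and the submultiplicative nature of $\|\cdot\|$; identifying the appropriate eigenspace-projection operator on $a_\bC(G)^{\otimes n}$ (with operator norm at most $1$ in the weighted $\ell^1$ setting) that sends $x^{\otimes n}$ to $\psi^n(x)$ in a norm-decreasing way is the step that demands real care. Once that intertwining is in place, the theorem drops out of the submultiplicativity axiom and of the single-module eigenspace count from the first paragraph.
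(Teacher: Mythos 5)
Your first two steps are sound, and in fact a bit cleaner than the paper's: the paper proves the single-module estimate only for $n$ prime (the case $n=p$ via the Frobenius twist, and $n=q\ne p$ via $\psi^q[M]=[M^{\otimes q}]_1-[M^{\otimes q}]_\ep$), then assembles composite $n$ from $\psi^{mn}=\psi^m\circ\psi^n$; you run the eigenspace count directly for all $n$ coprime to $p$. You have also put your finger on the right obstacle, and it is a genuine one that the paper's published proof does not overcome. The paper passes from $\|\psi^n(x)\|\le\sum_i|a_i|\,\|\psi^n[M_i]\|\le\sum_i|a_i|\,\|[M_i]\|^n$ to $\|x\|^n$ in a single line, and that last inequality is false in general. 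Indeed your remark about the small-norm regime is the seed of an outright counterexample to the theorem as stated: $\psi^n$ is a $\bC$-algebra homomorphism, hence $\bC$-linear, so $\|\psi^n(tx)\|=t\|\psi^n(x)\|$ for $t>0$, whereas $\|tx\|^n=t^n\|x\|^n$, so any $x$ with $\psi^n(x)\ne 0$ and $t$ small breaks the bound. Concretely, for $G=\bZ/p$ with $p\ge 3$ and $x=\tfrac12[J_2]$, so that $\|x\|=1$, the identity $\psi^2[J_2]=[J_3]-[J_1]$ gives $\|\psi^2(x)\|=\tfrac12(3+1)=2>1=\|x\|^2$. The inequality thus fails already on the unit sphere of $a_\bC(G)$, and the deduction of continuity in Corollary~\ref{co:psi-conts} does not go through as written either.

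Your proposed remedy cannot be made to work, for the same reason: realizing $\psi^n(x)$ as a norm-nonincreasing projection applied to $x^{\otimes n}$ would force $\psi^n$ to scale as degree $n$ in its argument, which it does not. The inequality $\|\psi^n(x)\|\le\|x\|^n$ is correct on $a(G)_{\cge 0}$, which is exactly what your Steps~1 and~2 establish, and it also holds on $a(G)$ with integer coefficients, since then $|a_i|\ge 1$ and $\dim M_i\le|a_i|\dim M_i\le\|x\|$, hence $\sum_i|a_i|(\dim M_i)^n\le\|x\|^{n-1}\sum_i|a_i|\dim M_i=\|x\|^n$. But the statement for all of $a_\bC(G)$ is false, so the missing step in your proposal is not a gap to be filled: both the theorem and its corollary need an amended hypothesis, not a better proof.
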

\begin{proof}
We first verify this with $x=[M]$ and $n$ a prime. If $n=p$ we have
$\|\psi^p[M]\|=\dim(M^{(p)}) = \dim(M)=\|[M]\|\le \|[M]\|^p$.
If $n$ is a prime $q$ not equal to $p$ then 
\[ \psi^q[M]=[M^{\otimes q}]_1 - [M^{\otimes q}]_\ep \] 
where $\ep$ is a faithful character of $\bZ/q$. On the other hand, 
\[ [M]^q = [M^{\otimes q}]_1 + (q-1)[M^{\otimes q}]_\ep. \]
Thus $\|\psi^q[M]\|\le (\dim M)^q=\|[M]\|^q$. 

If $x=\sum_i a_i[M_i]$ and $n$ is prime then $\psi^n(x)=\sum_i a_i\psi^n[M_i]$ and
\[ \|\psi^n(x)\| \le \sum_i |a_i|\|\psi^n[M_i]\|\le
  \sum_i|a_i|\|[M_i]\|^n\le \|x\|^n. \]
Finally, if $n$ is composite, we use
Theorem~\ref{th:psi-composite} and induction.
\end{proof}

\begin{cor}\label{co:psi-conts}
$\psi^n$ is continuous on $a_\bC(G)$ with respect to the 
weighted $\ell^1$ norm.
\end{cor}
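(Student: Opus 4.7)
The plan is to deduce continuity of $\psi^n$ directly from the bound in Theorem~\ref{th:psi-bounded}, using the general fact (Lemma~\ref{le:bd=cts}) that a linear map of normed spaces is continuous if and only if it is bounded.

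First I would note that $\psi^n\colon a_\bC(G)\to a_\bC(G)$ is $\bC$-linear, being a $\bC$-algebra homomorphism, so Lemma~\ref{le:bd=cts} applies. It therefore suffices to show $\|\psi^n\|_{\sup}<\infty$, i.e., to bound $\|\psi^n(x)\|$ uniformly on the closed unit ball of $a_\bC(G)$.

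Next I would invoke Theorem~\ref{th:psi-bounded}, which supplies the inequality $\|\psi^n(x)\|\le\|x\|^n$ for every $x\in a_\bC(G)$. For $\|x\|\le 1$ we have $\|x\|^n\le 1$, and hence $\|\psi^n(x)\|\le 1$. Thus $\|\psi^n\|_{\sup}\le 1$, so $\psi^n$ is bounded, and by Lemma~\ref{le:bd=cts} continuous.

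There is no genuine obstacle here; the only thing to watch is that Theorem~\ref{th:psi-bounded} gives a polynomial (not linear) bound, which on its face looks incompatible with the linear notion of boundedness, but the ball of radius~$1$ is exactly where $\|x\|^n\le\|x\|$, so the argument goes through without any extra work. The linearity of $\psi^n$ then propagates continuity at the origin to continuity on all of $a_\bC(G)$.
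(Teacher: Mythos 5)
Your argument is correct and coincides with the paper's own proof: both apply Theorem~\ref{th:psi-bounded} to bound $\|\psi^n(x)\|$ on the unit ball and then invoke Lemma~\ref{le:bd=cts}. Your added remark about the polynomial bound being harmless on the unit ball is a sensible observation but not a new idea.
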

\begin{proof}
By Theorem~\ref{th:psi-bounded}, if $\|x\|\le 1$ then
$\|\psi^n(x)\|\le 1$. So $\psi^n$ is bounded, and hence continuous by 
Lemma~\ref{le:bd=cts}.
\end{proof}

\begin{cor}
If $s$ is a dimension bounded species, then so is $\psi^n(s)$.
\end{cor}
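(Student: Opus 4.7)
The plan is to reduce dimension boundedness to a continuity statement and then piggyback on Corollary~\ref{co:psi-conts}. By Theorem~\ref{th:core-bounded-aG} (applied with $\fX=\varnothing$), a species $s\colon a_\bC(G)\to\bC$ is dimension bounded if and only if it is continuous with respect to the weighted $\ell^1$ norm, equivalently if and only if it extends (necessarily uniquely) to a continuous algebra homomorphism $\hat s\colon\hat a(G)\to\bC$. So the task reduces to showing that if $s$ has such a continuous extension, then so does $\psi^n(s)=s\circ\psi^n$.

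First I would use Corollary~\ref{co:psi-conts}, which says that $\psi^n\colon a_\bC(G)\to a_\bC(G)$ is continuous in the weighted $\ell^1$ norm. Since $a_\bC(G)$ is dense in $\hat a(G)$, $\psi^n$ extends uniquely to a continuous ring endomorphism $\hat\psi^n\colon\hat a(G)\to\hat a(G)$ (continuity of $\psi^n$ on a dense subalgebra passes to the completion in the standard way, using that Cauchy sequences map to Cauchy sequences under a bounded linear map). Composing with $\hat s$, we obtain a continuous $\bC$-algebra homomorphism $\hat s\circ\hat\psi^n\colon\hat a(G)\to\bC$, whose restriction to $a_\bC(G)$ is exactly $s\circ\psi^n=\psi^n(s)$.

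Finally, by automatic continuity (Proposition~\ref{pr:continuous}) applied to the species $\hat s\circ\hat\psi^n$ of $\hat a(G)$, we have
\[ |(\hat s\circ\hat\psi^n)(x)|\le\|x\| \]
for every $x\in\hat a(G)$. Specialising to $x=[M]$ for an indecomposable $kG$-module $M$ (so $\|[M]\|=\dim M$), we conclude that $|\psi^n(s)([M])|\le\dim M$, i.e.\ $\psi^n(s)$ is dimension bounded.

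There is no real obstacle here: the main content has already been absorbed in Corollary~\ref{co:psi-conts}, and the rest is the standard dictionary between dimension bounded species of $a(G)$ and continuous characters of $\hat a(G)$ provided by Theorem~\ref{th:core-bounded-aG} together with Proposition~\ref{pr:continuous}. The only point worth verifying carefully is that $\hat\psi^n$ is indeed a ring homomorphism on the completion (and not merely a bounded linear map), which follows by continuity of multiplication in $\hat a(G)$ together with the fact that $\psi^n$ is a ring homomorphism on the dense subalgebra $a_\bC(G)$.
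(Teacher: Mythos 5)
Your argument is correct and uses exactly the same ingredients as the paper: Theorem~\ref{th:core-bounded} (equivalently Theorem~\ref{th:core-bounded-aG}) with $\fX=\varnothing$ to translate dimension boundedness into continuity, and Corollary~\ref{co:psi-conts} for continuity of $\psi^n$, then compose. The paper's proof does not bother to pass to the completion $\hat a(G)$ or to invoke Proposition~\ref{pr:continuous} separately --- once one knows $s$ and $\psi^n$ are both continuous on $a_\bC(G)$, the composite $s\circ\psi^n$ is continuous and hence dimension bounded directly --- but your detour is harmless.
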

\begin{proof}
We use Theorem~\ref{th:core-bounded} with $\fX=\varnothing$.
This says that $s$ is dimension bounded if and only if it is
continuous with respect to the weighted $\ell^1$ norm on $a_\bC(G)$.
If $s$ is dimension bounded then it is continuous, so using
Corollary~\ref{co:psi-conts}, the composite
$a_\bC(G)\xrightarrow{\psi^n}a_\bC(G)\xrightarrow{s}\bC$ is
continuous, and hence dimension bounded.
\end{proof}

\begin{theorem}
Let $\fX$ be a Frobenius stable ideal of indecomposable $kG$-modules.
If $s$ is an $\fX$-core bounded species of $a(G)$ then $\psi^n(s)$ is
also $\fX$-core bounded.
\end{theorem}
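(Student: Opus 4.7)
The plan is to apply Theorem~\ref{th:core-bounded} in the reverse direction: it suffices to show that the composite $s\circ\psi^n\colon a_\bC(G)\to\bC$ vanishes on $\langle\fX\rangle_\bC$ and is continuous with respect to the quotient norm $\|\ \|_\fX$ on $a_\bC(G)/\langle\fX\rangle_\bC$. The core observation is that the hypotheses on $\fX$ (that it is an ideal of indecomposables, plus Frobenius stability) are exactly what is needed to ensure that $\psi^n$ descends to an operation on the quotient $a_\bC(G)/\langle\fX\rangle_\bC$.

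First I would prove the key lemma that $\psi^n(\langle\fX\rangle_\bC)\subseteq\langle\fX\rangle_\bC$ for every $n\ge 1$. For $n$ coprime to $p$, the group $\bZ/n$ acts on $M^{\otimes n}$ commuting with $G$, and since $n$ is invertible in $k$ the character projectors are $G$-equivariant idempotents, so each eigenspace $[M^{\otimes n}]_{\ep^i}$ is a $kG$-direct summand of $M^{\otimes n}$. If $[M]\in\langle\fX\rangle$, then repeated application of the ideal property in Definition~\ref{def:ideal} shows that every indecomposable summand of $M^{\otimes n}$ lies in $\fX$, and hence so does every indecomposable summand of each eigenspace; therefore each $[M^{\otimes n}]_{\ep^i}$ lies in $\langle\fX\rangle$, and the $\bC$-linear combination $\psi^n[M]$ lies in $\langle\fX\rangle_\bC$. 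For $n=p$ this is immediate from Frobenius stability, and the general case follows from the factorisation $\psi^n=\psi^m\circ(\psi^p)^a$ with $n=p^am$, $p\nmid m$.

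Given this, $\psi^n(s)(x)=s(\psi^n(x))=0$ for all $x\in\langle\fX\rangle_\bC$, since $s$ is $\fX$-core bounded and hence vanishes on $\langle\fX\rangle_\bC$ by Theorem~\ref{th:core-bounded}. To finish, I would promote Theorem~\ref{th:psi-bounded} to a statement about the quotient norm: for any $x\in a_\bC(G)$ and any $y\in\langle\fX\rangle_\bC$, additivity of $\psi^n$ together with the first step gives $\psi^n(x)\equiv\psi^n(x+y)\pmod{\langle\fX\rangle_\bC}$, hence
\[ \|\psi^n(x)\|_\fX\le\|\psi^n(x+y)\|\le\|x+y\|^n. \]
Taking the infimum over $y\in\langle\fX\rangle_\bC$ yields $\|\psi^n(x)\|_\fX\le\|x\|_\fX^n$, which shows that $\psi^n$ descends to a continuous (bounded at zero, hence everywhere) $\bC$-linear map on the normed space $a_\bC(G)/\langle\fX\rangle_\bC$. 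Composing with the continuous functional $s$ shows that $s\circ\psi^n$ is continuous on the quotient, and Theorem~\ref{th:core-bounded} then identifies $\psi^n(s)$ as $\fX$-core bounded.

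The main obstacle is the first step: verifying that $\psi^n$ preserves $\langle\fX\rangle_\bC$. Everything else is a clean formal consequence of Theorems~\ref{th:core-bounded} and~\ref{th:psi-bounded}, but the preservation argument really does use both defining properties of an ideal of indecomposables (closure under tensoring with arbitrary modules, and closure under summands of such tensor products) together with the non-obvious fact that the character projectors for $\bZ/n$ lie in $k[\bZ/n]$ precisely because $n$ is invertible in $k$; if $n$ were divisible by $p$ this decomposition argument would break down and one would be forced to fall back on the Frobenius-stability clause via the factorisation of $\psi^n$.
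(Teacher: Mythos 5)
Your proposal is correct and follows essentially the same route as the paper: show $\psi^n$ preserves $\langle\fX\rangle_\bC$, conclude $s\circ\psi^n$ vanishes there, use continuity to descend to the quotient, and invoke Theorem~\ref{th:core-bounded}. The only differences are cosmetic: the paper simply asserts the preservation step (``it follows from the definition of $\psi^n$'') where you spell out the eigenspace-as-$G$-summand and Frobenius-stability arguments, and the paper invokes Corollary~\ref{co:psi-conts} together with the general fact that a continuous map annihilating a subspace descends continuously, whereas you derive the quotient-norm bound $\|\psi^n(x)\|_\fX\le\|x\|_\fX^n$ directly.
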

\begin{proof}
It follows from the definition of $\psi^n$ that 
$\psi^n\langle\fX\rangle_\bC\le\langle\fX\rangle_\bC$.
So the composite 
\[ a_\bC(G) \xrightarrow{\psi^n} a_\bC(G) \xrightarrow{s} \bC \] 
has $\langle\fX\rangle_\bC$ in its kernel. This composite is
continuous, by Corollary~\ref{co:psi-conts}, and so it induces a
continuous map $a_{\bC,\fX}(G)\to\bC$. By
Theorem~\ref{th:core-bounded}, it follows that $s$ is $\fX$-core bounded.
\end{proof}

It follows from the theorem that we have an action of $\psi^n$ on
$\Struct_\fX(G)$.

\section{The Burnside ring}

In this final section, we take a brief look at an example of a
representation ring where our theory does not say much. 
If $G$ is a finite group, the 
\emph{Burnside ring}\index{Burnside ring}  $b(G)$ is the Grothendieck
ring of finite $G$-sets. It is a free abelian group whose basis
elements $[G/H]$ correspond to the transitive $G$-sets $G/H$ up to
isomorphism. Here, $G/H$ and $G/H'$ are isomorphic as $G$-sets if and
only  if $H$ and $H'$ are conjugate subgroups. Non-negative elements
of $b(G)$ are interpreted as isomorphism classes of finite $G$-sets,
and multiplication is given by direct product of $G$-sets. The element
$\rho$ is the regular representation $[G/1]$, and is the only
projective basis element.

It is well known that $b(G)$ is semisimple.\index{semisimple} Its
species\index{species!of $b(G)$} are as
follows. If $H$ is a subgroup of $G$ then $s_H\colon b(G) \to \bC$
sends a $G$-set $X$ to $|X^H|$, the number of fixed points of $H$ on
$X$. This is obviously a ring homomorphism, and two subgroups
give rise to the same species if and only if they are conjugate. 

A collection $\fX$ of basis elements $[G/H]$ is a representation ideal
if and only if it corresponds to a
collection $\cH$ of subgroups $H\le G$, closed under conjugacy,
closed under taking subgroups, and not containing $G$. 
Thus $\fX_{\max}$ corresponds to the collection of all proper
subgroups, while $\fX_\proj$ corresponds to the collection just
consisting of the trivial subgroup. Since there are as many species as
there are basis elements, $b_\bC(G)=\bC\otimes_\bZ b(G)$ is semisimple.

Dress~\cite{Dress:1969a} investigated idempotents in $b(G)$, and
discovered that there is a one to one correspondence between the
primitive idempotents\index{primitive idempotent}%
\index{idempotent!primitive} and conjugacy classes of 
perfect subgroups\index{perfect!subgroups} of
$G$. So the statement that the only idempotents in $b(G)$ 
are $0$ and $1$ is equivalent to the statement that $G$ is
soluble.\index{soluble group}

\chapter{Examples and Problems}\label{ch:eg}

\section{\texorpdfstring{The two dimensional module for $\bZ/5$}
{The two dimensional module for ℤ/5}}\label{se:Z5}%
\index{cyclic!group of order $5$}\index{group!cyclic of order $5$}

We begin with the example in the introduction
to~\cite{Benson/Symonds:2020a}.
Consider the two dimensional module $J_2$ for 
$G=\bZ/5=\langle g\mid g^5=1\rangle$ over a field $k$ of characteristic
five given by
\[ g \mapsto \begin{pmatrix} 1 & 1 \\ 0 & 1 \end{pmatrix}. \]

There are five indecomposable $kG$-modules $J_1,\dots,J_5$
corresponding to Jordan blocks of lengths between $1$ and $5$ and
eigenvalue $1$, representing the element $g$; the Jordan block of
length two is illustrated above, and the one of length five is the
projective indecomposable module. The table of tensor
products is as follows.
\[ \begin{array}{c|ccccc}
& J_1 & J_2 & J_3 & J_4 & J_5 \\ \hline
J_1 & J_1 & J_2 & J_3 & J_4 & J_5 \\
J_2 & J_2 & J_1 \oplus J_3 & J_2 \oplus J_4 & J_3 \oplus J_5 & 2J_5 \\
J_3 & J_3 & J_2 \oplus J_4 & J_1 \oplus J_3 \oplus J_5 & J_2 \oplus
2J_5 & 3J_5  \\
J_4 & J_4 & J_3 \oplus J_5 & J_2 \oplus 2J_5 & J_1 \oplus 3J_5
& 4J_5 \\
J_5 & J_5 & 2J_5 & 3J_5 & 4J_5 & 5J_5
\end{array} \]
It is clearly visible from this table that the only non-trivial representation
ideal is the projective ideal $\{J_5\}$.
The tensor powers $J_2^{\otimes n}$ are given by the columns of the
following table.
\[ \begin{array}{c|cccccccccccccccccccc}
n \to & 0 & 1 & 2 & 3 & 4 & 5 & 6 & 7 & 8   & 9   & 10   & 11   & 12 &
     \dots \\ \hline
J_1 & 1 & 0 & 1 & 0 & 2 & 0 & 5 & 0   & 13 & 0   & 34   & 0     & 89 \\
J_2 & 0 & 1 & 0 & 2 & 0 & 5 & 0 & 13 & 0   & 34 & 0     & 89   & 0 \\
J_3 & 0 & 0 & 1 & 0 & 3 & 0 & 8 & 0   & 21 & 0   & 55   & 0     & 144  \\
J_4 & 0 & 0 & 0 & 1 & 0 & 3 & 0 & 8   & 0   & 21 & 0     & 55   & 0 \\ \hline
J_5 & 0 & 0 & 0 & 0 & 1 & 2 & 7 & 14 & 36 & 72 & 165 & 330 & 715 
& \dots \\
\end{array} \]
The Fibonacci pattern in the non-projective summands is clear. 
It should come as no surprise that the dimension of the non-projective
part of $J_2^{\otimes n}$ is given by
$\cc_{2n}^G(J_2) \sim \tau^{2n+1}$ and $\cc_{2n+1}^G(J_2)\sim
2\tau^{2n+1}$ where 
\[ \tau= (1+\sqrt{5})/2= 2\cos(\pi/5)\sim 1.618034 \]
is the golden ratio.\index{golden ratio|textbf}\index{tau@$\tau$|textbf}
More precisely, we have
\[ \cc_{2n}^G(J_2)=F_{2n-1}+3F_{2n}=F_{2n}+F_{2n+2} 
= \tau^{2n+1}+\bar\tau^{2n+1}, \qquad
\cc_{2n+1}^G(J_2)=2\cc_{2n}^G(J_2), \]
where $F_n$ is the $n$th Fibonacci number and
\[ \bar\tau=-1/\tau=1-\tau=(1-\sqrt{5})/2 = 2\cos(3\pi/5) \sim -0.618034. \]
Thus we have $\npj_G(J_2)=\tau$. In the next section, we put this in a
broader context.

\section{\texorpdfstring{The cyclic group of order $p$}
{The cyclic group of order p}}\label{se:cyclic}%
\index{cyclic!group of order $p$}\index{group!cyclic of order $p$}%
\index{species!of $a(\bZ/p)$}

The classification of modules for the cyclic group of order $p$ in characteristic
$p$ is well known. Let $G=\langle g \mid g^p=1$ and $k$ be a field of
characteristic $p$. We have $(g-1)^p=g^p-1=0$ in $kG$, and so
if $M$ is a $kG$-module then $g-1$ acts nilpotently. It follows that
the eigenvalues of $g$ on $M$ are all equal to one, and the action of
$g$ on $M$ may be put into Jordan canonical form without extending the
field. So the indecomposable $kG$-modules correspond to Jordan blocks
with eigenvalue one and dimension between one and $p$:
\[ g \mapsto \begin{pmatrix} 
1 & 1 &0 & \cdots & 0 & 0 \\
0 & 1 & 1 & & 0 & 0 \\ 
\vdots &&&\ddots&&\vdots \\
0 & 0 &  & &1  & 1 \\
0 & 0 &  & \cdots & 0 & 1
\end{pmatrix}  \]
We write $J_j$ for the indecomposable $kG$-module of dimension $j$ for
$1\le j\le p$. The projective modules are the direct sums of copies of
$J_p$. Tensor products are determined by 
\[ J_2\otimes J_j \cong \begin{cases} J_2 & j=1 \\ 
J_{j+1}\oplus J_{j-1} & 2\le j\le p-1 \\
J_p \oplus J_p & j=p.
\end{cases} \]
It follows from these relations that every element of $a(G)$ is a
polynomial in $[J_2]$. If $p=2$, this shows that $a(G)\cong
\bZ[X]/(X^2-2X)$, where $X$ corresponds to $[J_2]$. So we shall
assume for the rest of this section that $p\ge 3$, in which case we
have, for example,
$[J_3]=[J_2]^2-\one$.

Next, we discuss the species of
$a(G)$.  This discussion is based on the work of
Green~\cite{Green:1962a}, Srinivasan~\cite{Srinivasan:1964a},
and involves the Chebyshev polynomials\index{Chebyshev polynomials|textbf} 
of the second kind $U_j(X)$.\index{U@$U_j(X)$|textbf} Background material on
these polynomials can be found in Rivlin~\cite{Rivlin:1974a}.

\begin{defn}
The Chebyshev polynomials of the second kind $U_j(X)$ are defined inductively for $j\ge 0$ by
$U_0(X)=1$, $U_1(X)=2X$, and for $j\ge 2$,
\[ U_j(X)=2XU_{j-1}(X)-U_{j-2}(X). \]
The first few are as follows:
\begin{align*}
U_0(X)&=1 & U_4(X)&=16X^4-12X^2+1\\
U_1(X)&=2X & U_5(X)&=32X^5-32X^3+6X\\
U_2(X)&=4X^2-1 & U_6(X)&=64X^6-80X^4+24X^2-1\\
U_3(X)&=8X^3-4X & U_7(X)&=128X^7-192X^5+80X^3-8X.
\end{align*}
and in general
\begin{equation}\label{eq:Uj}
U_j(X)=\sum_{i=0}^{\lfloor\frac{j}{2}\rfloor}(-1)^i\binom{j-i}{i}(2X)^{j-2i}. 
\end{equation}
\end{defn}

\begin{lemma}\label{le:Uj}
We have 
\[ U_j(\cos\theta) = \frac{\sin(j+1)\theta}{\sin\theta}. \]
The roots of $U_j(X)$ are real and distinct, symmetric about $X=0$,
and given by
\[ X = \cos(k\pi/(j+1)), \qquad 1\le k \le j. \]
\end{lemma}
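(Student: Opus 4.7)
The plan is to establish the trigonometric identity first and then read off the roots. I would proceed by induction on $j$ to prove that $U_j(\cos\theta) = \sin((j+1)\theta)/\sin\theta$. The base cases $j=0$ and $j=1$ are immediate: we get $1 = \sin\theta/\sin\theta$ and $2\cos\theta = \sin 2\theta/\sin\theta$, using the double-angle formula $\sin 2\theta = 2\sin\theta\cos\theta$.

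For the inductive step, suppose the identity holds for indices less than $j$. I would apply the sum-to-product identity
\[ \sin(j+1)\theta + \sin(j-1)\theta = 2\cos\theta\sin(j\theta), \]
rearrange to get $\sin(j+1)\theta = 2\cos\theta\sin(j\theta) - \sin(j-1)\theta$, and divide by $\sin\theta$ to obtain
\[ \frac{\sin(j+1)\theta}{\sin\theta} = 2\cos\theta\,\frac{\sin(j\theta)}{\sin\theta} - \frac{\sin(j-1)\theta}{\sin\theta} = 2\cos\theta\, U_{j-1}(\cos\theta) - U_{j-2}(\cos\theta). \]
By the defining recurrence for $U_j$, the right hand side equals $U_j(\cos\theta)$, completing the induction.

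For the second statement, substituting $\theta = k\pi/(j+1)$ with $1 \le k \le j$ gives $\sin((j+1)\theta) = \sin(k\pi) = 0$ while $\sin\theta \ne 0$ (since $0 < k\pi/(j+1) < \pi$), so $\cos(k\pi/(j+1))$ is a root of $U_j$. These $j$ values of $\cos(k\pi/(j+1))$ are distinct because cosine is injective on $(0,\pi)$, and $U_j$ has degree $j$ from the explicit formula \eqref{eq:Uj}, so we have located all roots. Finally the symmetry about zero comes from
\[ \cos\!\left(\tfrac{(j+1-k)\pi}{j+1}\right) = \cos\!\left(\pi - \tfrac{k\pi}{j+1}\right) = -\cos\!\left(\tfrac{k\pi}{j+1}\right). \]

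No step here presents a real obstacle; the only point requiring mild care is to ensure $\sin\theta \ne 0$ when interpreting the identity, which is why the values $k=0$ and $k=j+1$ are excluded from the root list.
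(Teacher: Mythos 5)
Your proof is correct and follows essentially the same route as the paper: an induction on $j$ using the sum-to-product identity to establish $U_j(\cos\theta)=\sin((j+1)\theta)/\sin\theta$, followed by substituting $\theta=k\pi/(j+1)$ and counting roots against the degree. The paper leaves the induction and the symmetry check implicit; you spell them out, which is a harmless expansion.
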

\begin{proof}
The verification of the formula for $U_j(\cos\theta)$ is an easy
inductive argument using standard trigonometric identities.
The vanishing for these values of $X$ then follows from 
the fact that $\sin(k\pi)=0$ while $\sin(k\pi/(j+1))\ne 0$. 
Since the number of such roots is $j$,
which is the degree of $U_j(X)$, these are the only roots.
\end{proof}

{\small
\[ \begin{picture}(210,130)(-105,-65)
\put(0,-65){\line(0,1){130}}
\put(-105,0){\line(1,0){210}}
\put(-100,-3){\line(0,1){6}}
\put(-104,-9){$-1$}
\put(100,-3){\line(0,1){6}}
\put(98,-9){$1$}
\multiput(-3,-60)(0,10){13}{\line(1,0){6}}
\put(-15,-52){$-5$}
\put(-9,48){$5$}
\put(40,-40){$U_4(X)$}
\qbezier(-100,50)(-95,30)(-90,17.776)
\qbezier(-90,17.776)(-85,6.5)(-80,-1.264)
\qbezier(-80,-1.264)(-75,-7.5)(-70,-10.384)
\qbezier(-70,-10.384)(-65,-13)(-60,-12.464)
\qbezier(-60,-12.464)(-55,-12)(-50,-10)
\qbezier(-50,-10)(-45,-8)(-40,-5.104)
\qbezier(-40,-5.104)(-35,-2)(-30,0.496)
\qbezier(-30,0.496)(-25,3)(-20,5.456)
\qbezier(-20,5.456)(-15,7.5)(-10,8.816)
\qbezier(-10,8.816)(-5,10)(0,10)
\qbezier(0,10)(5,10)(10,8.816)
\qbezier(10,8.816)(15,7.5)(20,5.456)
\qbezier(20,5.456)(25,3)(30,0.496)
\qbezier(30,0.496)(35,-2)(40,-5.104)
\qbezier(40,-5.104)(45,-8)(50,-10)
\qbezier(50,-10)(55,-12)(60,-12.464)
\qbezier(60,-12.464)(65,-13)(70,-10.384)
\qbezier(70,-10.384)(75,-7.5)(80,-1.264)
\qbezier(80,-1.264)(85,6.5)(90,17.776)
\qbezier(90,17.776)(95,30)(100,50)
\end{picture}\]
\[ \begin{picture}(210,130)(-105,-65)
\put(0,-65){\line(0,1){130}}
\put(-105,0){\line(1,0){210}}
\put(-100,-3){\line(0,1){6}}
\put(-104,-9){$-1$}
\put(100,-3){\line(0,1){6}}
\put(98,-9){$1$}
\multiput(-3,-60)(0,10){13}{\line(1,0){6}}
\put(-15,-52){$-5$}
\put(-9,48){$5$}
\put(40,-40){$U_5(X)$}
\qbezier(-100,-60)(-95,-25)(-90,-9.6768)
\qbezier(-90,-9.6768)(-85,6)(-80,10.9824)
\qbezier(-80,10.9824)(-75,15)(-70,13.9776)
\qbezier(-70,13.9776)(-65,12.5)(-60,8.2368)
\qbezier(-60,8.2368)(-55,4.7)(-50,0)
\qbezier(-50,0)(-45,-4)(-40,-6.7968)
\qbezier(-40,-6.7968)(-35,-9.5)(-30,-10.1376)
\qbezier(-30,-10.1376)(-25,-11)(-20,-9.5424)
\qbezier(-20,-9.5424)(-15,-8)(-10,-5.6832)
\qbezier(-10,-5.6832)(-5,-3)(0,0)
\qbezier(0,0)(5,3)(10,5.6832)
\qbezier(10,5.6832)(15,8)(20,9.5424)
\qbezier(20,9.5424)(25,11)(30,10.1376)
\qbezier(30,10.1376)(35,9.5)(40,6.7968)
\qbezier(40,6.7968)(45,4)(50,0)
\qbezier(50,0)(55,-4.7)(60,-8.2368)
\qbezier(60,-8.2368)(65,-12.5)(70,-13.9776)
\qbezier(70,-13.9776)(75,-15)(80,-10.9824)
\qbezier(80,-10.9824)(85,-6)(90,9.6768)
\qbezier(90,9.6768)(95,25)(100,60)
\end{picture} \]
}

\begin{defn}\label{def:fj}
We define $f_j(X)=U_{j-1}(X/2)$. So we have $f_1(X)=1$, $f_2(X)=X$ and
for $j\ge 2$,
\begin{equation}\label{eq:Xfj} 
Xf_j(X)=f_{j+1}(X)+f_{j-1}(X). 
\end{equation}
The first few are as follows:
\begin{align*}
f_1(X)&=1 & f_6(X)&=X^5-4X^3+3X\\
f_2(X)&=X & f_7(X)&=X^6-5X^4+6X^2-1\\
f_3(X)&=X^2-1 & f_8(X)&=X^7-6X^5+10X^3-4X\\
f_4(X)&=X^3-2X & f_9(X)&=X^8-7X^6+15X^4-10X^2+1\\
f_5(X)&=X^4-3X^2+1& f_{10}(X)&=X^9-8X^7+21X^5-20X^3+5X.
\end{align*}

 By Lemma~\ref{le:Uj} we have
\begin{equation}\label{eq:fj2costheta}
f_j\left(\frac{\sin 2\theta}{\sin\theta}\right)=\frac{\sin
    j\theta}{\sin\theta} 
\end{equation}
(note that $\sin 2\theta/\sin\theta=2\cos\theta$).
\end{defn}

We write $\zeta_n$\index{zeta@$\zeta_n=e^{2\pi\bi/n}$} for
$e^{2\pi\bi/n}$, a primitive $n$th root of unity in $\bC$.
For the purposes of this section, we do not need part~(vii) of the
following lemma, only part~(vi). 
But part~(vii) will be used in Section~\ref{se:SL2q}.

\begin{lemma}\label{le:fpX}\ 

\begin{enumerate}
\item
$f_j(X)$ is a polynomial of degree $j-1$ in $X$ with integer
coefficients.
\item
If $j$ is odd then $f_j(X)$ is a polynomial in $X^2$ of degree
$(j-1)/2$.
\item
If $j$ is even then $f_j(X)$ is $X$ times a polynomial in $X^2$ of
degree $(j-2)/2$.
\item The roots of $f_j(X)$ are
  $\zeta_{2j}^k+\zeta_{2j}^{-k}=2\cos(k\pi/j)$ with $1\le k \le j-1$,
each with multiplicity one.
\item
If $j_1$ divides $j_2$ then $f_{j_1}(X)$ divides $f_{j_2}(X)$.
\item If $p>2$ is a prime then $f_p(X)$ is an irreducible polynomial
  in $X^2$ of degree $(p-1)/2$, whose roots are
  $\zeta_{2p}^k+\zeta_{2p}^{-k}$ with $1\le k \le p-1$. The splitting
  field of $f_p(X)$ is the real subfield%
\index{real subfield}\index{subfield, real} of the 
cyclotomic field\index{cyclotomic!field}\index{field!cyclotomic} of
  $p$th roots of unity.
\item More generally, if 
$p^m>2$ is a prime power then $f_{p^m}(X)/f_{p^{m-1}}(X)$ is
  an irreducible polynomial in $X^2$ of degree $p^{m-1}(p-1)/2$, whose
  roots are $\zeta_{2p^m}^k+\zeta_{2p^m}^{-k}$ with $1\le k\le p^m-1$
  not divisible by $p$. The splitting field of
  $f_{p^m}(X)/f_{p^{m-1}}(X)$ is the real subfield of the cyclotomic
  field of $2p^m$th roots of unity.
\end{enumerate}
\end{lemma}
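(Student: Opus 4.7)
The plan is to proceed from the easy parts to the deeper ones. Parts (i)--(iii) follow by straightforward induction on $j$ using the recurrence \eqref{eq:Xfj} and the base cases $f_1=1$, $f_2=X$: the recurrence manifestly preserves integrality and shifts the degree by one, giving (i), while it also yields $f_j(-X)=(-1)^{j-1}f_j(X)$, from which (ii) and (iii) follow. For (iv), the identity \eqref{eq:fj2costheta} shows that $f_j(2\cos\theta)=\sin j\theta/\sin\theta$, which vanishes precisely when $\theta=k\pi/j$ for $1\le k\le j-1$; these yield $j-1$ distinct values $X=2\cos(k\pi/j)$, matching $\deg f_j$, so each root is simple. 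For (v), if $j_1\mid j_2$ then every root $2\cos(k\pi/j_1)=2\cos((kj_2/j_1)\pi/j_2)$ of $f_{j_1}$ is a simple root of $f_{j_2}$, and since both polynomials are monic in $\bZ[X]$, $f_{j_1}\mid f_{j_2}$.

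For (vi), set $Y=X^2$ and write $f_p(X)=g_p(Y)$ using (ii); here $\deg g_p=(p-1)/2$. From $4\cos^2(k\pi/p)=2+2\cos(2k\pi/p)$, the roots of $g_p$ are the $(p-1)/2$ distinct values $Y_k=2+\zeta_p^k+\zeta_p^{-k}$ for $1\le k\le(p-1)/2$. The Galois group $(\bZ/p)^\times$ of $\bQ(\zeta_p)/\bQ$ acts on these by $\sigma_a\colon Y_k\mapsto Y_{ak}$, with stabiliser of $Y_1$ equal to $\{\pm 1\}$; hence the orbit of $Y_1$ has size $(p-1)/2$, the action on $\{Y_1,\dots,Y_{(p-1)/2}\}$ is transitive, and $Y_1$ has degree exactly $(p-1)/2$ over $\bQ$. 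Thus $g_p$, being monic of the right degree and vanishing at $Y_1$, is its minimal polynomial and is irreducible. The $X$-roots of $f_p$ are $2\cos(k\pi/p)=\zeta_{2p}^k+\zeta_{2p}^{-k}$ for $1\le k\le p-1$; since $p$ is odd, $\bQ(\zeta_{2p})=\bQ(\zeta_p)$, and these roots generate the real subfield $\bQ(\zeta_p)^+$ (the generator for $k=1$ already does), giving the stated splitting field.

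For (vii), by (v) we have $f_{p^{m-1}}\mid f_{p^m}$, and dividing identifies the $X$-roots of the quotient with those $2\cos(k\pi/p^m)$ for which $p\nmid k$, $1\le k\le p^m-1$; these number $p^m-p^{m-1}=p^{m-1}(p-1)$, so by (ii)--(iii) the quotient has degree $p^{m-1}(p-1)/2$ in $Y$. Its $Y$-roots are $Y_k=2+\zeta_{p^m}^k+\zeta_{p^m}^{-k}$ as $k$ ranges over $(\bZ/p^m)^\times/\{\pm 1\}$; this index set has exactly $p^{m-1}(p-1)/2$ elements for every prime power $p^m>2$, and the Galois group $(\bZ/p^m)^\times$ of $\bQ(\zeta_{p^m})/\bQ$ acts on them through its quotient by $\{\pm 1\}$, transitively by the orbit-stabiliser argument of the previous paragraph. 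Hence the minimal polynomial of $Y_1$ has the correct degree, must equal our quotient, and is irreducible. The splitting field as a polynomial in $X$ is $\bQ(2\cos(\pi/p^m))=\bQ(\zeta_{2p^m}+\zeta_{2p^m}^{-1})$, the maximal real subfield of $\bQ(\zeta_{2p^m})$; note that for odd $p$ this coincides with $\bQ(\zeta_{p^m})^+$, whereas for $p=2$ it is the real subfield of $\bQ(\zeta_{2^{m+1}})$, which strictly contains $\bQ(\zeta_{2^m})^+$. The main obstacle here is the $p=2$ bookkeeping, since $(\bZ/2^m)^\times\cong\bZ/2\times\bZ/2^{m-2}$ is non-cyclic for $m\ge 3$; however, the orbit-stabiliser argument sidesteps the internal structure of the group, since only the kernel $\{\pm 1\}$ of the action matters.
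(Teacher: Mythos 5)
Your proof is correct, and it follows the same overall strategy as the paper's: parts (i)--(v) by induction and the trigonometric identity \eqref{eq:fj2costheta}, and parts (vi)--(vii) by identifying the roots with generators of the real cyclotomic subfield and matching degrees. The paper's own proof of (vii) is terser and, as written, a little loose about whether it is tracking the degree of the quotient polynomial as a polynomial in $X$ or in $X^2$: it asserts that ``the degree of this real subfield is $p^{m-1}(p-1)/2$,'' which is $[\bQ(\zeta_{2p^m})^+:\bQ]$ for $p$ odd but not for $p=2$ (there the real subfield of $\bQ(\zeta_{2^{m+1}})$ has degree $2^{m-1}$, not $2^{m-2}$). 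Your choice to substitute $Y=X^2$ immediately and work throughout with $Y_k = 2+\zeta_{p^m}^k+\zeta_{p^m}^{-k}$ and the Galois action of $(\bZ/p^m)^\times$ on these values sidesteps this, makes the orbit--stabiliser count uniform in $p$, and proves precisely the irreducibility-in-$X^2$ assertion that the lemma actually states; it also avoids having to worry about whether $(\bZ/p^m)^\times$ is cyclic. This is a genuine, if modest, improvement in clarity over the proof in the text; the underlying input (the degree of the maximal real subfield of a cyclotomic field) is the same.
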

\begin{proof}
Parts (i)--(iii) follow from the definition and induction on $j$.
Part (iv) follows from Lemma~\ref{le:Uj}. It follows from (iv) that if
$j_1$ divides $j_2$ then the roots of $f_{j_1}$ are among the roots of
$f_{j_2}$, which proves (v). Since (vi) is a special case of (vii), it
remains to prove (vii). This follows from the fact that the real
subfield of the cyclotomic field of $2p^m$th roots of unity
is generated by the element $\zeta_{2p^m}+\zeta_{2p^m}^{-1}$, whose conjugates are
the $\zeta_{2p^m}^k+\zeta_{2p^m}^{-k}$ with $1\le k \le p^m-1$ not
divisible by $p$, and the degree of this real subfield is
$p^{m-1}(p-1)/2$ provided $p^m>2$.
Note that if $p$ is odd then the cyclotomic field of $2p^m$th roots of
unity is the same as that of $p^m$th roots of unity. 
\end{proof}

\begin{theorem}\label{th:aG/aG1}
For $1\le j \le p$ we have $f_j([J_2])=[J_j]$ in $a(G)$.
We have 
\[ a(G)/a(G,1)\cong \bZ[X]/(f_p(X)), \] 
where $X$ corresponds to the element $[J_2]$. 
\end{theorem}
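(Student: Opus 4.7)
The plan is to prove both statements by essentially the same observation: that the tensor product recurrence for the indecomposable modules $[J_j]$ matches the three-term recurrence~\eqref{eq:Xfj} defining $f_j(X)$.

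First I would prove the identity $f_j([J_2])=[J_j]$ for $1\le j\le p-1$ by induction on $j$. The base cases $j=1$ and $j=2$ are immediate from $f_1=1$ and $f_2=X$. For the inductive step, the tensor product table gives $[J_2][J_j]=[J_{j+1}]+[J_{j-1}]$ for $2\le j\le p-1$, so
\[ [J_{j+1}]=[J_2][J_j]-[J_{j-1}]=[J_2]f_j([J_2])-f_{j-1}([J_2])=f_{j+1}([J_2]) \]
by~\eqref{eq:Xfj}. To handle $j=p$ separately (since the relation $[J_2][J_{p-1}]=[J_p]+[J_{p-2}]$ still has the same shape even though $[J_2][J_p]=2[J_p]$ does not), one computes
\[ f_p([J_2])=[J_2]f_{p-1}([J_2])-f_{p-2}([J_2])=[J_2][J_{p-1}]-[J_{p-2}]=[J_p]. \]

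For the second statement, I would show that the ring homomorphism $\phi\colon\bZ[X]\to a(G)/a(G,1)$ sending $X$ to the class of $[J_2]$ is surjective with kernel exactly $(f_p(X))$. Surjectivity follows from the first part: the classes of $[J_1],\dots,[J_{p-1}]$ form a $\bZ$-basis of $a(G)/a(G,1)$, and each equals $f_j([J_2])$, hence lies in the image. For the kernel, observe that $[J_p]=0$ in $a(G)/a(G,1)$, so $f_p(X)\in\Ker\phi$; thus $\phi$ factors through $\bZ[X]/(f_p(X))$. Since $f_p(X)$ is a monic polynomial of degree $p-1$, the quotient $\bZ[X]/(f_p(X))$ is free abelian of rank $p-1$, which matches the $\bZ$-rank of $a(G)/a(G,1)$. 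A surjective $\bZ$-linear map between free abelian groups of the same finite rank is an isomorphism.

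I do not expect any serious obstacle here; the only subtlety is that the recurrence $[J_2][J_j]=[J_{j+1}]+[J_{j-1}]$ fails at $j=p$, which is exactly why $f_p([J_2])$ lands on $[J_p]$ rather than producing a nonexistent $[J_{p+1}]$, and why $f_p([J_2])$ becomes the defining relation modulo the projective ideal. Everything else is routine bookkeeping with the recurrence and a rank count.
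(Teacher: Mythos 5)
Your proof is correct and follows the same strategy as the paper's: an induction using the matching three-term recurrences for $[J_2][J_j]$ and for $f_j$, followed by a surjection $\bZ[X]\to a(G)/a(G,1)$ whose kernel contains $f_p(X)$, with injectivity concluded by a rank count. The only cosmetic point is that your ``separate'' treatment of $j=p$ is actually just the $j=p-1$ instance of your own inductive step (the recurrence at index $j=p-1$ already produces $[J_p]=f_p([J_2])$), so it need not be singled out.
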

\begin{proof}
We have $[J_2][J_j]=[J_{j-1}]+[J_{j+1}]$ for $1\le j\le p-1$,
and we also have $f_2(X)f_j(X)=f_{j-1}(X)+f_{j+1}(X)$. So by
induction on $j$ we deduce that $f_j([J_2])=[J_j]$ for $1\le j\le p$.
The ideal $a(G,1)$ is generated by $[J_p]$, so we have a surjective
map $\bZ[X]\to a(G)/a(G,1)$ sending $X$ to $[J_2]$ and $f_j(X)$ to
$[J_j]$ for $1\le j\le p-1$, and whose kernel contains $f_p(X)$. Comparing
ranks, we see that this map induces an isomorphism $\bZ[X]/(f_p(X))\to
a(G)/a(G,1)$. 
\end{proof}

\begin{theorem}\label{th:cyclic-species}
The species of $a_\proj(G)=a(G)/a(G,1)$ are given by
\[ s_k[J_2]=2\cos(k\pi/p) \]
for $1\le k \le p-1$. We have
\[ s_k[J_j]=\frac{\sin(jk\pi/p)}{\sin(k\pi/p)} = f_j(2\cos(k\pi/p)). \]
\end{theorem}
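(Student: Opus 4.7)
The plan is to deduce this theorem essentially as a corollary of Theorem~\ref{th:aG/aG1} together with Lemma~\ref{le:fpX}. The hard algebraic work has already been done in establishing the isomorphism $a(G)/a(G,1) \cong \bZ[X]/(f_p(X))$ and in identifying the roots of $f_p(X)$, so what remains is a matter of unpacking definitions.

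First I would observe that, by Theorem~\ref{th:aG/aG1}, a species $s\colon a_\proj(G)\to \bC$ is the same thing as a $\bZ$-algebra homomorphism $\bZ[X]/(f_p(X))\to\bC$. Such a homomorphism is determined by the image of $X$ (which corresponds to $[J_2]$), and this image must be an element $\lambda\in\bC$ with $f_p(\lambda)=0$. By Lemma~\ref{le:fpX}\,(vi), the roots of $f_p(X)$ are precisely $\zeta_{2p}^k+\zeta_{2p}^{-k}=2\cos(k\pi/p)$ for $1\le k\le p-1$, and since $f_p$ has exactly $p-1$ distinct roots, these parametrise the $p-1$ species. Define $s_k$ to be the species with $s_k[J_2]=2\cos(k\pi/p)$.

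Next, still by Theorem~\ref{th:aG/aG1}, we have $[J_j]=f_j([J_2])$ in $a(G)/a(G,1)$ for $1\le j\le p-1$. Applying the ring homomorphism $s_k$ and using the fact that $f_j$ has integer coefficients, we obtain
\[ s_k[J_j] = f_j(s_k[J_2]) = f_j(2\cos(k\pi/p)). \]
Finally, setting $\theta=k\pi/p$ in equation~\eqref{eq:fj2costheta} (noting that $\sin 2\theta/\sin\theta = 2\cos\theta$), we get
\[ f_j(2\cos(k\pi/p))=\frac{\sin(jk\pi/p)}{\sin(k\pi/p)}, \]
which is well-defined since $1\le k\le p-1$ ensures $\sin(k\pi/p)\ne 0$. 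This completes the proof.

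There is really no main obstacle here: the theorem is a direct consequence of the two prior results. The only thing worth being careful about is to verify that the assignment $X\mapsto 2\cos(k\pi/p)$ actually descends to $\bZ[X]/(f_p(X))$, which is immediate because $2\cos(k\pi/p)$ is a root of $f_p$ by Lemma~\ref{le:fpX}\,(vi), and that the $p-1$ species so defined are distinct, which follows from the distinctness of the roots.
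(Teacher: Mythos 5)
Your proof is correct and follows essentially the same route as the paper's: invoke Theorem~\ref{th:aG/aG1} for the presentation $a(G)/a(G,1)\cong\bZ[X]/(f_p(X))$, use Lemma~\ref{le:fpX} to identify the roots of $f_p$ as $2\cos(k\pi/p)$, push through $[J_j]=f_j([J_2])$ via the ring homomorphism, and conclude with equation~\eqref{eq:fj2costheta}. The only cosmetic difference is that you cite part~(vi) of Lemma~\ref{le:fpX} directly rather than part~(vii) specialised to $m=1$.
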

\begin{proof}
By Theorem~\ref{th:aG/aG1} we have $a(G)/a(G,1)\cong \bZ[X]/(f_p(X))$
with $X$ corresponding to $J_2$. By Lemma~\ref{le:fpX} with $m=1$, $f_p(X)$ is
irreducible, and its roots are $X=2\cos(k\pi/p)$ with $1\le k\le p-1$.
So the ring homomorphisms $a(G)/a(G,1)\to \bC$ are given by
$s_k[J_2]=2\cos(k\pi/p)$. Still referring to Theorem~\ref{th:aG/aG1},
we have $[J_j]=f_j[J_2]$, and so
\[ s_k[J_j]=f_j(s_k[J_2])=f_j(2\cos(k\pi/p)), \] 
which by \eqref{eq:fj2costheta} is equal to $\sin(jk\pi/p)/\sin(k\pi/p)$.
\end{proof}

\begin{corqed}
The species of $a(G)$ are the core bounded ones, which are given in
Theorem~\ref{th:cyclic-species}, together with $s_0=\dim\colon a(G)\to
\bC$. 
\end{corqed}

\begin{rk}\label{rk:aG}
Since $[J_2][J_p]=2[J_p]$, the proof of Theorem~\ref{th:aG/aG1} shows
that 
\[ a(G)\cong \bZ[X]/((X-2)f_p(X)), \] 
with $X$ corresponding to $[J_2]$. 
\end{rk}

\begin{eg}
For $p=3$ and $p=5$ the species table of $\bZ/p$ is as 
follows:\index{tau@$\tau$}\index{golden ratio}
\[ \begin{array}{c|ccc} 
p=3 & s_0 & s_1 & s_2 \\ \hline
{}[J_1] & 1 & 1 & 1 \\
{}[J_2] & 2 & 1 & -1\phm \\
{}[J_3] & 3 & 0 & 0 
\end{array}\qquad
\begin{array}{c|ccccc}
p=5 & s_0 & s_1 & s_2 & s_3 & s_4 \\ \hline
{}[J_1] & 1 & 1 & 1 & 1 & 1 \\
{}[J_2] & 2 & \tau & -\bar\tau\phm & \bar\tau & -\tau\phm \\
{}[J_3] & 3 & \tau & \bar\tau & \bar\tau & \tau \\
{}[J_4] & 4 & 1 & -1\phm & 1 & -1\phm \\
{}[J_5] & 5 & 0 & 0 & 0 & 0
\end{array} \]
\end{eg}

\begin{theorem}
We have $\npj_G(J_2)=2\cos(\pi/p)$, and more generally
\[ \npj_G(J_j)=\sin(j\pi/p)/\sin(\pi/p). \]
\end{theorem}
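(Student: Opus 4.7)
The strategy is to combine Theorem~\ref{th:PF} (Perron--Frobenius for representation rings) with the species description of Theorem~\ref{th:cyclic-species}. First I would verify that for $G=\bZ/p$ the representation ideals $\fX_{\max}$ and $\fX_\proj$ coincide. For $1\le j\le p-1$ the dimension $j$ of $J_j$ is coprime to $p$, so Proposition~\ref{pr:AC}\,(iii) gives a summand $k$ inside $J_j\otimes J_j^*$, so $[J_jJ_{j^*}:\one]>0$ and $j\notin\fX_{\max}$; meanwhile $J_p$ is projective. Hence $\npj_G=\npj_{\max}$ on $a(G)_{\cge 0}$, and by Theorem~\ref{th:aG/aG1} the quotient $a_\max(G)=a_\proj(G)\cong\bZ[X]/(f_p(X))$ has finite $\bZ$-rank, so every element is algebraic modulo $\fX_{\max}$.

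Taking $\fa'=a(G)$ in Theorem~\ref{th:PF}, there is a unique Perron--Frobenius species $s\colon a_\proj(G)\to\bC$ with $\npj_G(x)=s(x)$ for all $x\in a_\proj(G)_{\cge 0}$. The species of $a_\proj(G)$ are exactly $s_1,\ldots,s_{p-1}$ of Theorem~\ref{th:cyclic-species}. To identify which one $s$ is, recall from the proof of Theorem~\ref{th:PF} that $s$ is characterised by the existence of a positive eigenvector $u=\sum_{i=1}^{p-1}a_i[J_i]$, $a_i>0$, which is then a simultaneous eigenvector for multiplication by every element of $a_\proj(G)_\bC$. In particular $u$ is a positive eigenvector of multiplication by $[J_2]$, whose matrix in the basis $[J_1],\ldots,[J_{p-1}]$ is tridiagonal with $1$s on the sub- and super-diagonals (from the Clebsch--Gordan relation $[J_2][J_i]\equiv[J_{i-1}]+[J_{i+1}]$ modulo projectives, with $[J_0]=[J_p]=0$ in the quotient). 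Its eigenvectors are $v^{(k)}_i=\sin(ik\pi/p)$ with eigenvalues $2\cos(k\pi/p)$, and only $v^{(1)}$ has all positive entries, so $u$ is proportional to $v^{(1)}$ and $s([J_2])=2\cos(\pi/p)=s_1([J_2])$. Since $[J_2]$ generates $a_\proj(G)$ as a ring (by Theorem~\ref{th:aG/aG1}), a species is determined by its value on $[J_2]$, whence $s=s_1$.

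Therefore, for $1\le j\le p-1$,
\[ \npj_G(J_j)=s_1([J_j])=f_j(2\cos(\pi/p))=\sin(j\pi/p)/\sin(\pi/p), \]
using the identity \eqref{eq:fj2costheta}. The case $j=2$ simplifies to $2\cos(\pi/p)$, and for $j=p$ the formula correctly yields $\npj_G(J_p)=0$, consistent with $J_p$ being projective. The main point, which is not especially hard in this very explicit setting, is singling out $s_1$ among the species by the positivity of a simultaneous eigenvector of multiplication; this reduces to recognising the unique positive eigenvector of an explicit tridiagonal matrix, a classical computation.
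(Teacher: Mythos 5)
Your proof is correct, but it identifies the Perron--Frobenius species by a slightly different mechanism than the paper. The paper first computes $\npj_G(J_2)$ directly from Theorem~\ref{th:spec-radius-aG} (the spectral radius equals the maximum of $|s(x)|$ over core-bounded species), observes that among the values $s_k[J_2]=2\cos(k\pi/p)$ the largest absolute value is attained at $k=1$, and only then invokes Theorem~\ref{th:PF} to conclude that the distinguished species computing $\npj_G$ on all of $a(G)_{\cge 0}$ must be $s_1$ (since $[J_2]$ generates $a_\proj(G)$, the value at $[J_2]$ pins it down). You instead go straight to Theorem~\ref{th:PF}, note that $\fX_{\max}=\fX_\proj$ for $\bZ/p$ so the theorem applies with $\fa'=a(G)$, and then identify the resulting species as $s_1$ by recognising the positive eigenvector of the tridiagonal Clebsch--Gordan matrix for multiplication by $[J_2]$. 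This re-opens the hood of Theorem~\ref{th:PF} and carries out the Perron--Frobenius eigenvector computation explicitly in this example, whereas the paper's route avoids any eigenvector computation by leaning on the already-established characterisation of $\npj$ as a maximum over core-bounded species. Both arguments are sound; yours is a bit more self-contained (and makes transparent \emph{why} $s_1$ is the relevant species, namely positivity of $\sin(i\pi/p)$ for $1\le i\le p-1$), while the paper's is shorter because it reuses the general theory rather than re-deriving it. Your verification that $\fX_{\max}=\fX_\proj$ via Proposition~\ref{pr:AC}\,(iii) is a step the paper leaves implicit and is worth making explicit as you do.
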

\begin{proof}
It follows from Theorem~\ref{th:spec-radius-aG} 
that $\npj_G(J_2)$ is the largest value of a core bounded species on
$[J_2]$. The core bounded species are described in
Theorem~\ref{th:cyclic-species} and its corollary. 
The largest of the numbers $s_k[J_2]=2\cos(k\pi/p)$ for $1\le k\le p-1$ 
is $s_1[J_2]=2\cos(\pi/p)$, and so $\npj_G(J_2)=2\cos(\pi/p)$. 
By Theorem~\ref{th:PF}, for all $kG$-modules $M$ we have
$\npj_G(M)=s_1[M]$. In particular, $\npj_G(J_j)=s_1[J_j]=\sin(j\pi/p)/\sin(\pi/p)$.
\end{proof}

{\small
\begin{center}
\renewcommand{\arraystretch}{1.3}
\begin{tabular}{|c|c|c|c|c|c|c|c|c|}
\multicolumn{9}{c}{Approximate values of $\npj_G(J_j)$}\\\hline
$p$ & 3 & 5 & 7 & 11 & 13 & 17 & 19 & 23 \\ \hline
$J_1$ & 1.00000 & 1.00000 & 1.00000 & 1.00000 & 1.00000 & 1.00000 & 1.00000 & 1.00000 \\
$J_2$ & 1.00000 & 1.61803 & 1.80194 & 1.91899 & 1.94188 & 1.96595 & 1.97272 & 1.98137  \\
$J_3$ & 0.00000 & 1.61803 & 2.24698 & 2.68251 & 2.77091 & 2.86494 & 2.89163 & 2.92583 \\
$J_4$ &               & 1.00000 & 2.24698 & 3.22871 & 3.43891 & 3.66638 & 3.73167 & 3.81579 \\
$J_5$ &               & 0.00000 & 1.80194 & 3.51334 & 3.90704 & 4.34296 & 4.46992 & 4.63467 \\
$J_6$ &               &               & 1.00000 & 3.51334 & 4.14811 & 4.87165 & 5.08623 & 5.36722 \\
$J_7$ &               &               & 0.00000 & 3.22871 & 4.14811 & 5.23444 & 5.56381 & 5.99978 \\
$J_8$ &               &               &               & 2.68251 & 3.90704 & 5.41898 & 5.88962 & 6.52058 \\
\hline 
\end{tabular}\medskip
\end{center}
}

The Adams operations\index{Adams psi operations}%
\index{psi operations} 
$\psi^n$ on the representation rings of cyclic
groups were investigated by Almkvist~\cite{Almkvist:1981a},
Kouwenhoven~\cite{Kouwenhoven:1987a}, 
Bryant and Johnson~\cite{Bryant/Johnson:2010a,Bryant/Johnson:2011a},
Nam and Oh~\cite{Nam/Oh:2011a,Nam/Oh:2012a}. 

\begin{theorem}
Suppose that $p$ does not divide $n$. Then we have the following.
\begin{enumerate}
\item
For $1\le n\le p-1$,
$\psi^n[J_2]=[J_{n+1}]-[J_{n-1}]$.
\item
$\psi^{2p-n}=\psi^{2p+n}=\psi^n=\psi^{p^m n}$ on $a(G)$ for all $m\ge 0$.
\item
$\psi^n(s_k)[J_2]=2\cos nk\pi/p$.
\item
If $nk\equiv \pm j \pmod{2p}$ with $1\le j\le p-1$ then $\psi^n(s_k)=s_j$.
\end{enumerate}
\end{theorem}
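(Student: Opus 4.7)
The plan is to establish (i) first via Newton's identity from $\lambda$-ring theory, and derive (ii), (iii), (iv) as consequences. I assume $p$ is odd, since for $p = 2$ the structure $a(\bZ/2) = \bZ[X]/(X^2-2X)$ makes the theorem essentially trivial. Since $g$ acts on $J_2$ with determinant one, we have $\Lambda^2 J_2 \cong k = \one$ and $\Lambda^i J_2 = 0$ for $i \ge 3$, so the Newton identity $p_n = e_1 p_{n-1} - e_2 p_{n-2}$ (valid for $n \ge 3$ when the only nonzero $e_i$ are $e_1, e_2$) yields the three-term recurrence
\[
\psi^n[J_2] \;=\; [J_2]\,\psi^{n-1}[J_2] - \psi^{n-2}[J_2] \qquad (n \ge 3).
\]
The base cases $\psi^1[J_2] = [J_2]$ and $\psi^2[J_2] = [\mathrm{Sym}^2 J_2] - [\Lambda^2 J_2] = [J_3] - \one$ are obtained by direct inspection: on $\mathrm{Sym}^2 J_2$ one exhibits $g$ as a single $3{\times}3$ Jordan block, so $\mathrm{Sym}^2 J_2 \cong J_3$. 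An induction using the tensor product rule $[J_2][J_j] = [J_{j-1}] + [J_{j+1}]$ (valid for $2 \le j \le p-1$) then gives $\psi^n[J_2] = [J_{n+1}] - [J_{n-1}]$ throughout $1 \le n \le p-1$, which is~(i).

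Part~(iii) in the range $1 \le n \le p-1$ follows from (i) by applying $s_k$ and combining $s_k[J_j] = \sin(jk\pi/p)/\sin(k\pi/p)$ from Theorem~\ref{th:cyclic-species} with the product-to-sum identity $\sin((n+1)\theta) - \sin((n-1)\theta) = 2\cos(n\theta)\sin\theta$ at $\theta = k\pi/p$. For (ii), the first assertion $\psi^p = \mathrm{id}$ on $a(G)$ holds because every Jordan block $J_j$ has matrix entries in $\bF_p$, so its Frobenius twist $J_j^{(p)}$ is isomorphic to $J_j$ as a $kG$-module; together with Theorem~\ref{th:psi-composite} this yields $\psi^{p^m n} = \psi^n$.

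The delicate point is the periodicity $\psi^{2p \pm n} = \psi^n$. Here I would exploit semisimplicity of $a_\bC(G) \cong \bC[X]/((X-2)f_p(X))$, which holds because the defining polynomial has distinct complex roots. A ring endomorphism of $a_\bC(G)$ is thus determined by its image of $[J_2]$, so it suffices to check $\psi^{2p \pm n}[J_2] = \psi^n[J_2]$ in $a_\bC(G)$, and this may be checked species by species. Rewriting (i) via the Chebyshev identity $f_{n+1}(X) - f_{n-1}(X) = 2 T_n(X/2)$ gives $\psi^n[J_2] = 2 T_n([J_2]/2)$ for $1 \le n \le p-1$. To propagate this to all $N$ coprime to $p$, I would re-apply Newton's recurrence across the range $p+1 \le n \le 2p-1$ wherever the indices avoid $p$, and treat the isolated places where the recurrence formally involves $\psi^p$ by inserting $\psi^p = \mathrm{id}$ and using $\psi^N[J_p] = [J_p]$ (an independent short calculation, since $s_k[J_p] = 0$ for $k \ge 1$). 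A dimension check then pins down the projective component to zero, yielding $\psi^N[J_2] = 2T_N([J_2]/2)$ in $a_\bC(G)$. Since $\cos((N \pm 2p)k\pi/p) = \cos(Nk\pi/p)$ for every integer $k$, every species agrees on $\psi^{2p \pm n}[J_2]$ and $\psi^n[J_2]$, and the equality in $a_\bC(G)$ follows.

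Finally, (iv) is immediate from (ii) and (iii): if $nk \equiv \pm j \pmod{2p}$ with $1 \le j \le p-1$, then $\psi^n(s_k)[J_2] = s_k(\psi^n[J_2]) = 2\cos(nk\pi/p) = 2\cos(j\pi/p) = s_j[J_2]$, and semisimplicity of $a_\bC(G)$ forces $\psi^n(s_k) = s_j$, since both sides are species determined by their value on $[J_2]$. The main obstacle throughout is handling Newton's recurrence across $n = p$: because $\psi^p$ is defined via Frobenius twist rather than the Möbius formula, the naive recurrence breaks there, so the Chebyshev-polynomial reformulation together with the dimension argument is the key device. Once that is in place, the periodicities in~(ii) drop out cleanly and (iv) is a formality.
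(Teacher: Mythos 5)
Your parts (iii) and (iv) coincide with the paper's argument: (iii) is (i) plus the sum-to-product identity $\sin((n+1)\theta)-\sin((n-1)\theta)=2\cos(n\theta)\sin\theta$, and (iv) is (ii) together with the observation that a species of $a(G)$ is determined by its value on $[J_2]$. For part (i), where the paper simply cites Almkvist and Bryant--Johnson, you instead propose a direct derivation via the Newton power-sum recurrence $\psi^n=[J_2]\psi^{n-1}-\psi^{n-2}$. This can be made to work, but a crucial point is suppressed: in characteristic $p$ the representation ring is not an obvious $\lambda$-ring, so the recurrence is not automatic for the paper's eigenspace-defined $\psi^n$. What rescues it in the range $1\le n\le p-1$ is that $k\Sigma_n$ is then semisimple, so $J_2^{\otimes n}$ decomposes under $\Sigma_n$ exactly as in characteristic zero, and the cyclic eigenspaces relate to $\Lambda^1 J_2=J_2$, $\Lambda^2 J_2=\one$ by the standard combinatorics. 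That semisimplicity should be stated explicitly: it is the reason the recurrence applies, and also the reason it stops applying.

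For part (ii), your reduction to the single equality $\psi^{2p\pm n}[J_2]=\psi^n[J_2]$ via semisimplicity of $a_\bC(G)\cong\bC[X]/((X-2)f_p(X))$ is sound, and the rewriting $\psi^n[J_2]=f_{n+1}([J_2])-f_{n-1}([J_2])$ for $1\le n\le p-1$ is correct. But the proposed propagation of this formula past $n=p$ --- continuing the Newton recurrence through the range $p+1\le n\le 2p-1$ and ``inserting $\psi^p=\mathrm{id}$'' at the trouble spot --- is not a valid step. Once $n\ge p$ the group algebra $k\Sigma_n$ is no longer semisimple, the tensor power does not decompose as in characteristic zero, and there is no reason the paper's eigenspace $\psi^n$ should obey a power-sum recurrence. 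You yourself flag this as ``the main obstacle,'' but the supplied sketch (a dimension check to kill the projective part) does not close it: to verify the claimed equality species by species you must already know $s_k(\psi^N[J_2])$ for $N>p$, which is precisely what needs to be established. This is a genuine gap; the paper bypasses it by citing the periodicity directly from Bryant and Johnson, while your argument gives the needed periodicity only where the Newton recurrence is available, namely below $p$.
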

\begin{proof}
(i) This is proved in Section~5 of~\cite{Almkvist:1981a}, see also
Theorem~5.1 of~\cite{Bryant/Johnson:2010a}.

(ii) The first two equalities are 
proved in Theorem~3.3 of~\cite{Bryant/Johnson:2010a}.
The third follows from the fact that the modules $J_i$ are defined
over $\bF_p$, so the Frobenius map $\psi^{p^m}$ acts as the identity map
on $a(G)$ for all $m\ge 0$.

(iii) Using (i), we have
\begin{align*}
\psi^n(s_k)[J_2]&=s_k(\psi^n[J_2])\\
&=s_k[J_{n+1}]-s_k[J_{n-1}]\\
&=(\sin((n+1)k\pi/p)-\sin((n-1)k\pi/p))/\sin(k\pi/p)\\
&=(2\cos nk\pi/p\sin k\pi/p)/\sin k\pi/p\\
&=2\cos nk\pi/p.
\end{align*}

(iv)
A species $s_j$ is determined by its value on $[J_2]$. So
this follows from (ii), together with the fact that if $m\equiv
\pm j \pmod{2p}$ then $\cos m\pi/p = \cos j\pi/p$.
\end{proof}

\section{Some Frobenius groups}%
\index{Frobenius!group}\index{group!Frobenius}\label{se:Frobgroup}

Let $G$ be a Frobenius group 
with cyclic normal Sylow $p$-subgroup and cyclic quotient.
The structure of $a(G)$ was investigated by
O'Reilly~\cite{O'Reilly:1964a,O'Reilly:1965a}, 
Lam~\cite{Lam:1968a}, Lam and Reiner~\cite{Lam/Reiner:1969a},
Benson and Carlson~\cite{Benson/Carlson:1986a}. In this section we
shall concentrate on the case $\bZ/p\rtimes\bZ/m$ 
where the Sylow $p$-subgroup has order $p$. We allow the action to
have a kernel, so we do not assume that $m$ divides $p-1$.
In a later section we
shall examine $\bZ/p^n\rtimes\bZ/m$, so we set up the notation in that
generality, and then impose the assumption that $n=1$.

Let $p^n$ be a prime power, let $m$ be a positive integer not divisible
by $p$, and let $q$ be a positive integer satisfying $q^{m}\equiv 1
\pmod{p^n}$. Let
\[ G = \langle g,h\mid g^{p^n}=1,\ h^{2m}=1, hgh^{-1}=g^q\rangle \cong
  \bZ/p\rtimes\bZ/2m, \]
a Frobenius group of order $2p^nm$,
and let $P=\langle g\rangle$, $H=\langle h\rangle$ as subgroups of
$G$. We remark that the representation ring of $G/\langle
h^m\rangle\cong \bZ/p^n\rtimes\bZ/m$ is contained in $a(G)$, and we
shall identify the image. But it turns out to be convenient to have
this extra central involution. This only really matters when $m$ is
even, but we do it in all cases for uniformity.
Let $k$ be a field of characteristic $p$ containing a primitive $2m$th
root of unity $\eta$. Let $d$ be chosen so that $\eta^{2d}\equiv q
\pmod p$. 

The element 
\[ x=\sum_{\substack{1\le j \le p^n-1\\ (p,j)=1}}g^j/j \]
spans an $H$-invariant complement to $J^2(kP)$ in $J(kP)$, and
$hxh^{-1}=qx$. We have the presentation
\[ kG=k\langle h,x\mid x^{p^n}=0, h^{2m}=1, hx=qxh\rangle. \]
There are $2m$ isomorphism classes of simple modules $S_i$, $i\in\bZ/2m$, all one
dimensional, corresponding to the characters of $H$. Letting $v_i$ be
a basis vector for $S_i$, we have $hv_i=\eta^iv_i$ and $xv_i=0$.
The space $\Ext^1_{kG}(S_i,S_j)$ is one dimensional if $j=i+2d$ and
zero dimensional otherwise. 
The projective indecomposable modules are uniserial of
length $p^n$, with composition factors (from the top) of $P_i$ being
$S_i$, $S_{i+2d}$, $S_{i+4d}$, \dots, $S_i$. Every indecomposable module
is a quotient of a projective indecomposable module.
We write $J_j$ ($1\le j\le p^n$) for the indecomposable module of length
$j$ with composition factors $S_{-d(j-1)}$,
$S_{-d(j-3)},\dots,S_{d(j-1)}$.
Note that it is only because of the extra involution that we are able
to do this symmetrically about the middle. 
A complete list of the
$2mp$ isomorphism classes of
indecomposable $kG$-modules is given by the modules $J_j\otimes S_i$
with $1\le j\le p^n$, $0\le i < 2m$.

Now assume that $n=1$. We have
\[ J_2\otimes J_j \cong\begin{cases} J_2 & j=1, \\
J_{j+1} \oplus J_{j-1} & 2\le j\le p-1 \\
(J_p \otimes S_d) \oplus (J_p \otimes S_{-d}) & j=p.
\end{cases} \]
This should be compared with the relations given in
Section~\ref{se:cyclic}, where the modules $J_j$ are the restrictions
to $P$ of the ones here.

\begin{theorem}\label{th:Frobgroup}
Let $G=\bZ/p\rtimes\bZ/2m$ as above.
We have
\[ a(G) \cong \bZ[X,Y]/(Y^{2m}-1,(X-Y^d-Y^{-d})f_p(X)), \]
where $X$ corresponds to $J_2$ and $Y$ corresponds to $S_1$, and the
polynomials $f_i$ are described in Definition~\ref{def:fj}.
\end{theorem}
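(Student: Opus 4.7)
The plan is to exhibit the natural ring homomorphism
\[ \phi\colon \bZ[X,Y]/(Y^{2m}-1,(X-Y^d-Y^{-d})f_p(X)) \longrightarrow a(G) \]
sending $X\mapsto [J_2]$ and $Y\mapsto [S_1]$, and to prove it is an isomorphism by matching $\bZ$-bases on the two sides.

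First I would verify that both defining relations actually hold in $a(G)$. Since $S_1$ is one-dimensional with $h$ acting as the primitive $2m$th root of unity $\eta$, we have $S_1^{\otimes 2m}\cong k$, so $[S_1]^{2m}=\one$. For the second relation, the recursions $[J_2][J_j]=[J_{j-1}]+[J_{j+1}]$ for $2\le j\le p-1$ stated above coincide with those defining the polynomials $f_j$ in Definition~\ref{def:fj}, so induction on $j$ (exactly as in Theorem~\ref{th:aG/aG1}) yields $[J_j]=f_j([J_2])$ for $1\le j\le p$; combining this with $J_2\otimes J_p\cong (J_p\otimes S_d)\oplus (J_p\otimes S_{-d})$ gives $[J_2]f_p([J_2])=([S_1]^d+[S_1]^{-d})f_p([J_2])$ in $a(G)$, which is exactly the image of the second relation. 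So $\phi$ is well defined.

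Next I would compare $\bZ$-ranks. In the source, the relation $(X-Y^d-Y^{-d})f_p(X)$ is monic of degree $p$ in $X$ because $f_p(X)$ is monic of degree $p-1$, so as a $\bZ[Y]/(Y^{2m}-1)$-module the source is spanned by $1,X,\ldots,X^{p-1}$; together with $Y^{2m}=1$ this shows the source is $\bZ$-spanned by the $2mp$ elements $\{X^{j-1}Y^i: 1\le j\le p,\ 0\le i<2m\}$. Because each $f_j(X)$ is monic in $X$ of degree $j-1$, the change of basis $X^{j-1}\leftrightarrow f_j(X)$ is unimodular over $\bZ[Y]/(Y^{2m}-1)$, so the source is equally spanned by $\{f_j(X)Y^i\}$. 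Under $\phi$ these elements map to $[J_j][S_i]=[J_j\otimes S_i]$, which is precisely the $\bZ$-basis of $a(G)$ of size $2mp$ provided by the classification of indecomposable $kG$-modules recalled in Section~\ref{se:Frobgroup}. Hence $\phi$ is surjective, and a $\bZ$-spanning set of size $2mp$ in the source is sent to a $\bZ$-basis of $a(G)$; that spanning set must therefore already be a basis of the source, and $\phi$ is an isomorphism of free $\bZ$-modules, hence of rings.

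The only mildly delicate point is checking that the second relation, modulo $Y^{2m}-1$, really does let us eliminate $X^p$ in favour of lower powers with coefficients in $\bZ[Y]/(Y^{2m}-1)$; this is immediate from monicity of $f_p(X)$. Beyond this, no independent identification of further relations is needed: the surjectivity plus the rank count forces the kernel of the presentation map $\bZ[X,Y]\to a(G)$ to be generated exactly by $Y^{2m}-1$ and $(X-Y^d-Y^{-d})f_p(X)$, because a surjection between free $\bZ$-modules of the same finite rank is automatically an isomorphism.
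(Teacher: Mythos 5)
Your proof is correct and fills in exactly what the paper's one-sentence proof (referring back to Theorem~\ref{th:aG/aG1} and Remark~\ref{rk:aG}) has in mind: verify the relations $[S_1]^{2m}=\one$ and $([J_2]-[S_1]^d-[S_1]^{-d})f_p([J_2])=0$ using $f_j([J_2])=[J_j]$, then deduce the isomorphism by a rank count, with the unimodular change of basis $X^{j-1}\leftrightarrow f_j(X)$ making the identification with the basis $[J_j\otimes S_i]$ transparent. This matches the paper's intended argument.
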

\begin{proof}
This is proved in essentially the same way as Theorem~\ref{th:aG/aG1}
and Remark~\ref{rk:aG}, with $f_j(X)$ corresponding to $[J_j]$ for
$1\le j \le p$.
\end{proof}

\begin{cor}
The ring $a(G)$ is semisimple.\index{semisimple} 
Its $2pm$ species $s_{i,j}$ 
(with $0\le i<p,\ 0\le j<2m$) are given by
\begin{align*}
X&\mapsto \begin{cases} 
\zeta_{2m}^{dj}+\zeta_{2m}^{-dj}=2\cos(dj\pi/m) & i=0,  \\
\zeta_{2p}^i + \zeta_{2p}^{-i}=2\cos(i\pi/p) & 0 < i < p.
\end{cases} \\
Y&\mapsto \zeta^j_{2m}.
\end{align*}
The Brauer species are the ones with $i=0$.
\end{cor}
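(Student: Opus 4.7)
The plan is to derive the corollary directly from the presentation in Theorem~\ref{th:Frobgroup} by decomposing the complexification into a product of copies of $\bC$ via the Chinese Remainder Theorem. Writing
\[ a_\bC(G)\cong\bC[X,Y]/(Y^{2m}-1,\,(X-Y^d-Y^{-d})f_p(X)), \]
the first step is to factor $Y^{2m}-1=\prod_{j=0}^{2m-1}(Y-\zeta_{2m}^j)$ into distinct linear factors over $\bC$, giving $\bC[Y]/(Y^{2m}-1)\cong\prod_{j=0}^{2m-1}\bC$, where the $j$th projection sends $Y\mapsto\zeta_{2m}^j$. Substituting this into the second relation, the $j$th factor becomes $\bC[X]/((X-c_j)f_p(X))$, where $c_j=\zeta_{2m}^{dj}+\zeta_{2m}^{-dj}=2\cos(dj\pi/m)$.

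The second step is to further decompose each such factor. By Lemma~\ref{le:fpX}(iv), $f_p(X)$ has $p-1$ distinct real roots $2\cos(i\pi/p)$ for $1\le i\le p-1$, so provided $c_j$ is not one of them, $(X-c_j)f_p(X)$ has $p$ distinct roots and the factor further splits as $\bC^p$. Combining over all $j$ we obtain $a_\bC(G)\cong\bC^{2pm}$, which is semisimple, and the $2pm$ projections are precisely the species $s_{i,j}$ described in the statement.

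The main obstacle is justifying that $c_j\ne 2\cos(i\pi/p)$ for any $1\le i\le p-1$. For this I would use field theory together with Lemma~\ref{le:fpX}(vi): the numbers $2\cos(i\pi/p)$ generate the totally real subfield of $\bQ(\zeta_p)$, which has degree $(p-1)/2$ over $\bQ$ and is disjoint from $\bQ(\zeta_{2m})$ because $\gcd(p,2m)=1$; since $c_j$ lies in the totally real subfield of $\bQ(\zeta_{2m})$, the two can coincide only if both equal the same rational number, and $f_p$ has no rational roots for $p\ge 5$ (inspection takes care of $p=2,3$). An alternative and arguably cleaner way to avoid the case analysis is to use Lemma~\ref{le:species-indep}: the maps $s_{i,j}$ as defined are easily checked to be ring homomorphisms on generators and relations, and are pairwise distinct because they differ on $X$ or $Y$; linear independence of $2pm$ species in a rank-$2pm$ algebra forces semisimplicity and the required isomorphism $a_\bC(G)\cong\bC^{2pm}$.

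Finally, the Brauer claim follows at once: a species is Brauer if and only if it is nonzero on the projective basis element $[J_p\otimes S_k]$ for some $k$, which under the isomorphism of Theorem~\ref{th:Frobgroup} corresponds to $f_p(X)Y^k$. Since $f_p(2\cos(i\pi/p))=0$ for $1\le i\le p-1$ while $f_p(c_j)\ne 0$ by the distinctness argument above, the Brauer species are exactly $s_{0,j}$ for $0\le j<2m$.
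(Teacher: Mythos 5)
Your proposal is correct, and it is worth distinguishing the two routes you offer. Your primary route (factor $Y^{2m}-1$ into distinct linear factors, then show each $\bC[X]/((X-c_j)f_p(X))$ splits completely) is a genuinely different argument from the paper's: the paper simply exhibits the $2pm$ assignments, verifies that each respects the two relations, and invokes the linear independence of distinct species (Lemma~\ref{le:species-indep}) against the $\bZ$-rank $2pm$ of $a(G)$ to conclude semisimplicity. Your CRT approach is more constructive, producing the isomorphism $a_\bC(G)\cong\bC^{2pm}$ directly; the paper's is more economical. Your ``alternative and arguably cleaner way'' at the end is essentially the paper's proof.

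A point worth flagging: both routes hinge on the fact that $c_j=2\cos(dj\pi/m)$ is never a root of $f_p(X)$ (equivalently, $s_{0,j}\neq s_{i,j}$ for $1\le i\le p-1$), and the paper treats this as tacit. You supply the justification, which is the right instinct: for odd $p$, $\gcd(p,2m)=1$ gives $\bQ(\zeta_p)^+\cap\bQ(\zeta_{2m})^+=\bQ$, so a coincidence would force $2\cos(i\pi/p)\in\bQ$, impossible for $p\ge 5$ since $f_p(X)$ is irreducible in $X^2$ of degree $(p-1)/2\ge 2$; and your ``inspection'' for $p=3$ (roots $\pm 1$, but $3\nmid m$ forces $c_j\neq\pm 1$) and $p=2$ (root $0$, but $m$ odd forces $c_j\neq 0$) closes those cases. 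The only small imprecision is the phrasing ``$\gcd(p,2m)=1$,'' which fails at $p=2$; but since you handle $p=2$ by inspection anyway, the argument is sound. Your identification of the Brauer species as those with $i=0$ (via $f_p(c_j)\neq 0$ while $f_p(2\cos(i\pi/p))=0$ for $1\le i\le p-1$) matches the paper.
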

\begin{proof}
The assignment $Y\mapsto \zeta_{2m}^{j}$ satisfies the relation
$Y^{2m}-1=0$. If $i=0$ then $X \mapsto
\zeta_{2m}^{dj}+\zeta_{2m}^{-dj}$ makes the factor $(X-Y^d-Y^{-d})$
vanish. If $0<i<p-1$ then $X \mapsto \zeta_{2p}^i+\zeta_{2p}^{-i}$
makes the factor $f_p(X)$ vanish by Lemma~\ref{le:fpX}.

There are $2mp$ species $s_{i,j}$, and the
$\bZ$-rank of $a(G)$ is $2mp$, so there can be no more species, and
$a_\bC(G)$ is semisimple.
\end{proof}

To identify the ring $a(G)/\langle h^m\rangle$, 
we define some polynomials $\Dick_j$\index{E@$\Dick_j(y,z)$} of two variables as follows.

\begin{defn}\label{def:Fiyz}
Define \emph{Dickson polynomials}\index{Dickson polynomials} (of the
second kind)
$\Dick_j(y,z)$ inductively as 
follows: $\Dick_0(y,z)=1$, $\Dick_1(y,z)=z$, and
\[ \Dick_j(y,z)=z\Dick_{j-1}(y,z)-y\Dick_{j-2}(y,z)\qquad (j\ge 3). \]
Thus $\Dick_j(y,z)$ is an inhomogeneous polynomial of degree $j$. 
These are related to the 
Chebyshev polynomials\index{Chebyshev polynomials} 
of the second kind $U_j(X)$\index{U@$U_j(X)$} introduced in Section~\ref{se:cyclic} via
\[ \Dick_j(y,z)=y^{\frac{j}{2}}f_{j+1}\bigl(\textstyle\frac{z}{y^{1/2}}\bigr)=
y^{\frac{j}{2}}U_{j}\bigl(\textstyle\frac{z}{2y^{1/2}}\bigr). \]
There is an illusory choice of square root of $y$ here: as long as we take
$y^{\frac{j}{2}}$ to be the $j$th power of $y^{1/2}$, there is
no ambiguity in the answer. Indeed, by Equation~\ref{eq:Uj} we have
\[ \Dick_j(y,z) =
  \sum_{i=0}^{\lfloor\frac{j}{2}\rfloor}(-1)^i\binom{j-i}{i}y^{j-2i}(-z)^i. \]
\end{defn}

\begin{theorem}\label{th:aZprtimesZm}
We have
\[ a(\bZ/p\rtimes \bZ/m) = 
a(G/\langle h^m\rangle)\cong
\bZ[y,z]/(y^m-1,(z-y^d-1)\Dick_{p-1}(y^d,z)) \]
with $y=Y^2$ and
$z=XY^d$. This is a complete intersection of $\bZ$-rank $pm$, with
a $\bZ$-basis consisting of the monomials $y^iz^j$ with $0\le i<m$,
$0\le j < p$.
\end{theorem}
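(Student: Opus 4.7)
The plan is to identify the ring $a(G/\langle h^m\rangle)$ with the subring $T\subseteq a(G)$ generated by $y:=Y^2$ and $z:=XY^d$, and then show $T$ is isomorphic to $R:=\bZ[y,z]/(y^m-1,(z-y^d-1)\Dick_{p-1}(y^d,z))$ via a surjection $R\twoheadrightarrow T$ of $\bZ$-free modules of equal rank. First, I would observe that $y=[S_2]$ and $z=[J_2\otimes S_d]$ both factor through $G/\langle h^m\rangle$: on $S_2$ the element $h^m$ acts by $\eta^{2m}=1$, and the composition factors $S_0,S_{2d}$ of $J_2\otimes S_d$ are similarly fixed by $h^m$. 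Hence $T\subseteq a(G/\langle h^m\rangle)$.

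Next, I would verify that both defining relations of $R$ hold for $y,z$ inside $a(G)$. The relation $y^m-1=Y^{2m}-1=0$ is immediate from Theorem~\ref{th:Frobgroup}. For the second relation, applying the identity $\Dick_{j}(u,v)=u^{j/2}f_{j+1}(v/u^{1/2})$ of Definition~\ref{def:Fiyz} with $u=y^d=Y^{2d}$ and $v=z=XY^d$ gives
\[ \Dick_{p-1}(y^d,z)=Y^{(p-1)d}f_p(X), \]
so
\[ (z-y^d-1)\Dick_{p-1}(y^d,z)=(XY^d-Y^{2d}-1)Y^{(p-1)d}f_p(X)=Y^{pd}(X-Y^d-Y^{-d})f_p(X)=0. \]
This produces a well-defined ring homomorphism $\phi\colon R\to T$, which is surjective by the very definition of $T$.

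I would then establish the reverse inclusion $a(G/\langle h^m\rangle)\subseteq T$. Every indecomposable $k(G/\langle h^m\rangle)$-module arises as $J_j\otimes S_i$ with $1\le j\le p$ and $0\le i<2m$ satisfying the parity condition $i+d(j-1)\equiv 0\pmod 2$ (so that $h^m$ acts trivially on every composition factor). This gives $pm$ indecomposables in total, so $a(G/\langle h^m\rangle)$ is $\bZ$-free of rank $pm$. The class $[J_j\otimes S_i]=f_j(X)Y^i$; by Lemma~\ref{le:fpX}\,(ii),(iii), $f_j(X)$ is a polynomial in $X^2$ if $j$ is odd and $X$ times a polynomial in $X^2$ if $j$ is even, and the parity condition ensures that every monomial $X^bY^i$ appearing in $f_j(X)Y^i$ can be rewritten as $z^by^{(i-bd)/2}$ with integral exponent; hence $[J_j\otimes S_i]\in T$.

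Finally, since $(z-y^d-1)\Dick_{p-1}(y^d,z)$ is monic of degree $p$ in $z$, division shows $R$ is $\bZ$-free of rank at most $pm$ with spanning set $\{y^az^b:0\le a<m,\ 0\le b<p\}$; combined with $y^m-1$ being monic of degree $m$ in $y$, the two relations form a regular sequence and $R$ is a complete intersection of rank exactly $pm$. The surjection $\phi\colon R\twoheadrightarrow T=a(G/\langle h^m\rangle)$ between torsion-free $\bZ$-modules of the same rank $pm$ is therefore an isomorphism, which also confirms the claimed $\bZ$-basis. The main obstacle I expect is the bookkeeping in the parity argument that rewrites each $f_j(X)Y^i$ (with $i+d(j-1)$ even) as an honest polynomial in $y=Y^2$ and $z=XY^d$; the rest is a clean monic-division and rank-matching argument.
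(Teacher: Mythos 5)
Your proposal is correct and takes essentially the same route as the paper: identify $a(G/\langle h^m\rangle)$ with the subring of $a(G)$ generated by $y=Y^2$ and $z=XY^d$, verify the two defining relations there via $\Dick_{p-1}(Y^{2d},XY^d)=Y^{(p-1)d}f_p(X)$, and conclude by matching $\bZ$-ranks. The parity bookkeeping you carry out to show that each $[J_j\otimes S_i]=f_j(X)Y^i$ with $i+d(j-1)$ even lies in $\bZ[y,z]$ is exactly the content the paper compresses into the assertion that $S_2$ and $J_2\otimes S_d$ generate the representation ring of the quotient.
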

\begin{proof}
The modules $S_2$ and $J_2\otimes S_d$ generate the representation
ring of $G/\langle h^m\rangle$, and correspond to the elements $Y^2$
and $XY^d$. We have 
\[ \Dick_{p-1}(y^d,z)=\Dick_{p-1}(Y^{2d},XY^d)=Y^{(p-1)d}f_p(X), \] 
and so 
\[ (z-y^d-1)\Dick_{p-1}(y,z)=(X-Y^d-Y^{-d})Y^{pd}f_p(X). \] 
Since $Y^{pd}$ is
invertible, this is equivalent to the relation given in $a(G)$.
\end{proof}

\begin{cor}
The ring $a_\bC(\bZ/p\rtimes\bZ/m)$ is semisimple. 
Its $pm$ species $s_{i,j}$ $(0\le i<p,\ 
0\le j<m)$ are given by
\begin{align*}
y&\mapsto\zeta_{2m}^{2j} \\
z&\mapsto \begin{cases} 
\zeta_{2m}^{2dj}+1 & i=0, \\
(\zeta_{2p}^i+\zeta_{2p}^{-i})\zeta_{2m}^{dj} & 0<i<p.
\end{cases}
\end{align*}
The Brauer species are the ones with $i=0$. \qed
\end{cor}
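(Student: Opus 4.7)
The plan is to follow exactly the pattern of the preceding corollary, using the presentation from Theorem~\ref{th:aZprtimesZm}. A species $s\colon a_\bC(\bZ/p\rtimes\bZ/m)\to\bC$ amounts to a pair $(y_0,z_0)\in\bC^2$ satisfying $y_0^m=1$ and $(z_0-y_0^d-1)\Dick_{p-1}(y_0^d,z_0)=0$. First I would solve the first relation, obtaining $y_0=\zeta_{2m}^{2j}$ for a unique $0\le j<m$, and then for each $j$ solve the second. Using the identity $\Dick_{p-1}(y^d,z)=y^{d(p-1)/2}f_p(z/y^{d/2})$ (which, since $p$ is odd so $p-1$ is even and $f_p$ is a polynomial in $X^2$, expands to a genuine polynomial in $y$ and $z$), the second relation splits as $z_0=y_0^d+1$ or $f_p(z_0/y_0^{d/2})=0$; applying Lemma~\ref{le:fpX}(vi) to the latter gives $z_0=(\zeta_{2p}^i+\zeta_{2p}^{-i})\zeta_{2m}^{dj}$ for $1\le i\le p-1$. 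This produces exactly the $pm$ candidate species listed in the statement.

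The main obstacle will be verifying that all $pm$ of these species are pairwise distinct. Different choices of $j$ give different values of $y_0$, so one only needs to check that, for fixed $j$, the $p$ values of $z_0$ are distinct. The $p-1$ values arising from the roots of $f_p$ are distinct by Lemma~\ref{le:fpX}(iv), and they cannot coincide with the $i=0$ value $\zeta_{2m}^{2dj}+1=\zeta_{2m}^{dj}(\zeta_{2m}^{dj}+\zeta_{2m}^{-dj})$ because that would force $\zeta_{2m}^{dj}=\zeta_{2p}^{\pm i}$ for some $1\le i\le p-1$, contradicting $p\nmid m$ by a comparison of orders of roots of unity (the order of $\zeta_{2p}^i$ is $p$ or $2p$, while the order of $\zeta_{2m}^{dj}$ divides $2m$ and $\gcd(2m,2p)=2$). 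Having established distinctness, Lemma~\ref{le:species-indep} yields $\bC$-linear independence, and since $a_\bC(\bZ/p\rtimes\bZ/m)$ has $\bC$-dimension $pm$ by Theorem~\ref{th:aZprtimesZm}, the joint evaluation map onto $\bC^{pm}$ is a surjection by dimension count, hence a $\bC$-algebra isomorphism, proving semisimplicity.

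To identify the Brauer species I would use that the inflation from $G/\langle h^m\rangle$ to $G=\bZ/p\rtimes\bZ/2m$ sends $[J_p]$ to $[J_p]$, together with the identity $\Dick_{p-1}(y^d,z)=y^{d(p-1)/2}[J_p]$, which follows by substituting $X=zy^{-d/2}$ in $f_p(X)=[J_p]$ and clearing the (illusory) half-integer power using the evenness of $f_p$. Since $y$ is invertible modulo $y^m-1$, the projective ideal of $a_\bC(\bZ/p\rtimes\bZ/m)$ (generated by the $y^j[J_p]$) equals the principal ideal generated by $\Dick_{p-1}(y^d,z)$. A species $s_{i,j}$ is Brauer exactly when it is non-zero on this generator; by the factorisation of the second defining relation, this occurs precisely in the case $z_0=y_0^d+1$, i.e.\ when $i=0$, as claimed.
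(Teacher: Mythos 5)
Your proof is correct and follows the same pattern as the paper's proof of the preceding corollary for $a_\bC(\bZ/p\rtimes\bZ/2m)$: read off candidate species from the presentation in Theorem~\ref{th:aZprtimesZm} using Lemma~\ref{le:fpX}, and conclude semisimplicity by a rank count via Lemma~\ref{le:species-indep}. (The paper itself leaves this corollary as immediate, with no written proof.) One point in your favour: both here and in the $\bZ/p\rtimes\bZ/2m$ corollary, the paper's reasoning tacitly assumes the listed species are pairwise distinct, which is what is actually needed for the dimension count to give semisimplicity; your coprimality argument on orders of roots of unity (that $\zeta_{2m}^{dj}$ cannot equal $\zeta_{2p}^{\pm i}$ because $p\nmid m$) makes this explicit, and the same observation shows that the two factors $z-y^d-1$ and $\Dick_{p-1}(y^d,z)$ cannot vanish simultaneously, which is what cleanly separates the Brauer species ($i=0$) from the rest.
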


\begin{eg}
The smallest example is the symmetric group%
\index{group!symmetric $S_3$}\index{symmetric!group $S_3$}
of degree three, which is
a semidirect product $\bZ/3 \rtimes \bZ/2$, in characteristic three.
The species table is as follows.
\[ \renewcommand{\arraystretch}{1.2}
\begin{array}{c|cccccc}
& s_{0,0} & s_{0,1} & s_{1,0} & s_{1,1} & s_{2,0} & s_{2,1} \\ \hline
\qquad [S_0]& 1 & 1 & 1 & 1  & 1 & 1 \\
y=[S_2]& 1 & -1\phm & 1 &  -1\phm & 1 & -1\phm \\
z=[J_2\otimes S_1] & 2 & 0 & 1 & \bi & -1\phm  & -\bi\phm \\
yz=[J_2 \otimes S_3] & 2 & 0 & 1 & -\bi\phm & -1\phm & \bi \\
\qquad [J_3] & 3 & 1 & 0 & 0  & 0 & 0 \\
\qquad [J_3\otimes S_2] & 3 & -1\phm & 0 & 0 & 0 & 0
\end{array} \]
\end{eg}

\begin{eg}
The species table for $\bZ/5\rtimes \bZ/4$ in characteristic five is
as follows, with $\tau=(1+\sqrt 5)/2$ and 
$\zeta=\zeta_8=e^{\pi \bi/4}=(1+\bi)/\sqrt 2$.\index{tau@$\tau$}\par
{\tiny
\[ \renewcommand{\arraystretch}{1.2}
\setlength{\arraycolsep}{-0.15mm}
\begin{array}{c|cccccccccccccccccccc}
& \,s_{0,0} & s_{0,1} & s_{0,2} & s_{0,3} 
& s_{1,0} & s_{1,1} & s_{1,2} & s_{1,3} 
& s_{2,0} & s_{2,1} & s_{2,2} & s_{2,3} 
& s_{3,0} & s_{3,1} & s_{3,2} & s_{3,2} 
& s_{4.0} & s_{4,1} & s_{4,2} & s_{4,3} \\ \hline
1 & 1 & 1 & 1 & 1 & 1 & 1 & 1 & 1 & 1 & 1 & 
1 & 1 & 1 & 1 & 1 & 1 & 1 & 1 & 1 & 1 \\
y & 1 & \bi & -1\phm & -\bi\phm & 1 
& \bi & -1\phm & -\bi\phm & 1 & \bi &
-1\phm & -\bi\phm & 1 & \bi & -1\phm &
-\bi\phm & 1 & \bi & -1\phm & -\bi\phm \\
y^2 & 1 & -1\phm & 1 & -1\phm & 1& 
-1\phm & 1 & -1\phm & 1 & -1\phm &
1 & -1\phm & 1 & -1\phm & 1 &
-1\phm & 1 & -1\phm & 1 & -1\phm \\
y^3 & 1 & -\bi\phm & -1\phm & \bi & 1 &
-\bi\phm & -1\phm & \bi & 1 & -\bi\phm &
-1\phm & \bi & 1 & -\bi\phm & -1\phm &
\bi & 1 & -\bi\phm & -1\phm & \bi \\
z & 2 & 1+\bi & 0 & 1-\bi & \tau 
& \zeta\tau & \bi\tau & -\bar\zeta\tau\phm  & -\bar\tau\phm
& -\zeta\bar\tau\phm & -\bi\bar\tau\phm & \bar\zeta\bar\tau 
& \bar\tau & \zeta\bar\tau & \bi\bar\tau & -\bar\zeta\bar\tau\phm 
& -\tau\phm & -\zeta\tau\phm & -\bi\tau\phm  & \bar\zeta\tau   \\
 & 2 &-1+\bi\phm& 0 &-1-\bi\phm& 
\tau & -\bar\zeta\tau\phm & -\bi\tau\phm & \zeta\tau & 
-\bar\tau\phm &\bar\zeta\bar\tau & \bi\bar\tau & -\zeta\bar\tau\phm & 
\bar\tau & -\bar\zeta\bar\tau\phm & -\bi\bar\tau\phm & \zeta\bar\tau & 
-\tau\phm & \bar\zeta\tau & \bi\tau & -\zeta\tau\phm   \\
 & 2 & -1-\bi\phm& 0 & -1+\bi\phm & 
\tau &-\zeta\tau\phm & \bi\tau & \bar\zeta\tau & 
-\bar\tau\phm & \zeta\bar\tau & -\bi\bar\tau\phm &
-\bar\zeta\bar\tau\phm & 
\bar\tau &-\zeta\bar\tau\phm & \bi\bar\tau & \bar\zeta\bar\tau 
& -\tau\phm & \zeta\tau & -\bi\tau\phm & -\bar\zeta\tau\phm  \\
 & 2 & 1-\bi & 0 & 1+\bi &\tau 
& \bar\zeta\tau & -\bi\tau\phm & -\zeta\tau\phm & 
-\bar\tau\phm & -\bar\zeta\bar\tau\phm & \bi\bar\tau & \zeta\bar\tau & 
\bar\tau &\bar\zeta\bar\tau & -\bi\bar\tau\phm & -\zeta\bar\tau\phm & 
-\tau\phm  & -\bar\zeta\tau\phm & \bi\tau & \zeta\tau  \\
 & 3 & \bi & 1 & -\bi\phm & \tau & 
\bi\tau & -\tau\phm &  -\bi\tau\phm & \bar\tau & 
\bi\bar\tau & -\bar\tau\phm & -\bi\bar\tau\phm & 
\bar\tau & \bi\bar\tau & -\bar\tau\phm & -\bi\bar\tau\phm &
\tau & \bi\tau & -\tau\phm & -\bi\tau\phm \\
 & 3 & -1\phm & -1\phm & -1\phm & 
\tau & -\tau\phm & \tau & -\tau\phm & 
\bar\tau & -\bar\tau\phm & \bar\tau & -\bar\tau\phm &
\bar\tau & -\bar\tau\phm & \bar\tau & -\bar\tau\phm &
\tau & -\tau\phm & \tau & -\tau\phm \\
 & 3 & -\bi\phm & 1 & \bi & 
\tau & -\bi\tau\phm & -\tau\phm & \bi\tau  &
\bar\tau & -\bi\bar\tau\phm & -\bar\tau\phm & \bi\bar\tau &
\bar\tau & -\bi\bar\tau\phm & -\bar\tau\phm & \bi\bar\tau &
\tau & -\bi\tau\phm & -\tau\phm & \bi\tau \\
 & 3 & 1 & -1\phm & 1 & \tau & \tau & \tau & \tau &
\bar\tau & \bar\tau & \bar\tau & \bar\tau &
\bar\tau & \bar\tau & \bar\tau & \bar\tau &
\tau & \tau & \tau & \tau \\
 & 4 & 0 & 0 & 0 & 1 &
-\bar\zeta\phm & -\bi\phm & \zeta & 
-1\phm & \bar\zeta & \bi & -\zeta\phm &
1 & -\bar\zeta\phm & -\bi\phm & \zeta &
-1\phm & \bar\zeta & \bi & -\zeta\phm \\
 & 4 & 0 & 0 & 0 & 
1 & -\zeta\phm & \bi & \bar\zeta &
-1\phm & \zeta & -\bi\phm & -\bar\zeta\phm &
1 & -\zeta\phm & \bi & \bar\zeta & 
-1\phm & \zeta & -\bi\phm & -\bar\zeta\phm \\
 & 4 & 0 & 0 & 0 & 
1 & \bar\zeta & -\bi\phm & -\zeta\phm &
-1\phm & -\bar\zeta\phm & \bi & \zeta &
1 & \bar\zeta & -\bi\phm & -\zeta\phm &
-1\phm & -\bar\zeta\phm & \bi & \zeta \\
 & 4 & 0 & 0 & 0 & 
1 & \zeta & \bi & -\bar\zeta\phm &
-1\phm & -\zeta\phm & -\bi\phm & \bar\zeta &
1 & \zeta & \bi & -\bar\zeta\phm &
-1\phm & -\zeta\phm & -\bi\phm & \bar\zeta \\
 & 5 & 1 & 1 & 1 & 0 & 0 & 0 & 0 & 0 & 0
& 0 & 0 & 0 & 0 & 0 & 0 & 0 & 0 & 0 & 0 \\
 & 5 & \bi & -1\phm & -\bi\phm & 0 & 0 & 0 & 0 & 0 & 0
& 0 & 0 & 0 & 0 & 0 & 0 & 0 & 0 & 0 & 0 \\
 & 5 & -1\phm & 1 & -1\phm & 0 & 0 & 0 & 0 & 0 & 0
& 0 & 0 & 0 & 0 & 0 & 0 & 0 & 0 & 0 & 0 \\
 & 5 & -\bi\phm & -1\phm & \bi & 0 & 0 & 0 & 0 & 0 & 0
& 0 & 0 & 0 & 0 & 0 & 0 & 0 & 0 & 0 & 0 \\
\end{array} \]
}
\end{eg}

\section{Cyclic central subgroups}\label{se:cyclic-central}%
\index{cyclic!central subgroups}\index{central subgroup, cyclic}

Let $g$ be a central element of a group $G$ of order $p$, and 
$Z=\langle g\rangle\le G$ with $Z\cong\bZ/p$. In this section
we describe $p-1$ algebra homomorphisms 
$\hat s_\ell\colon a_\bC(G)\to a_\bC(G/Z)$ ($1\le \ell\le p-1$).
We show that these extend to Banach algebra homomorphisms
$\hat s_\ell\colon \hat a(G) \to \hat a(G/Z)$.
In the next section we use this construction to examine the
species of $a(\bZ/p^n)$.

We write $X$ for
$g-1\in kG$, so $X^p=0$. If $M$ is a $kG$-module,
and $1\le i\le p$, we let
\[ F_i(M)=\frac{\Ker(X)\cap\im(X^{i-1})}{\Ker(X)\cap \im(X^i)}. \]
This is annihilated by $X$, so $g$ acts as the identity,
and $F_i(M)$ therefore has the structure of a $kG/Z$-module. 

As in Section~\ref{se:cyclic}, we write $[J_i]$, $1\le i\le p$, for the
indecomposable $kZ$-modules, and we write $s_j\colon a(Z)\to\bC$ for
the species, with $0\le j\le p-1$, described in
Theorem~\ref{th:cyclic-species} and its Corollary. 
The interpretation of $F_i$ is that it picks out the socles of the Jordan
blocks of $X$ on $M$ of length exactly $i$. 

\begin{lemma}
The dimension of $F_i(M)$ is equal to $[\res_{G,Z}(M):J_i]$.
\end{lemma}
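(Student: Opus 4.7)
The plan is to decompose $\res_{G,Z}(M)$ as a direct sum of Jordan blocks $J_j$ and then compute $F_i$ on each summand, noting that the construction of $F_i$ commutes with direct sums.

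First I would verify the additivity: the operators $X^r$ act componentwise on a direct sum, so $\Ker(X)$ and $\im(X^r)$ commute with direct sums, and hence so does the subquotient $F_i$. Thus if $\res_{G,Z}(M) = \bigoplus_{j=1}^{p} [\res_{G,Z}(M):J_j]\,J_j$, it suffices to evaluate $\dim F_i(J_j)$ for each $j$ and sum multiplicatively.

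Next I would compute $F_i(J_j)$ directly by choosing a Jordan basis $e_1,\dots,e_j$ of $J_j$ with $X e_s = e_{s-1}$ (and $Xe_1 = 0$). Then $\im(X^r)$ is spanned by $e_1,\dots,e_{j-r}$ (taken to be zero if $r \ge j$), while $\Ker(X)$ is the one-dimensional span of $e_1$. Three cases arise: if $j < i$, then $X^{i-1} = 0$ on $J_j$, so $\im(X^{i-1}) = 0$ and $F_i(J_j) = 0$; if $j > i$, then $e_1 \in \im(X^{j-1}) \subseteq \im(X^i) \subseteq \im(X^{i-1})$, so numerator and denominator both equal $ke_1$, giving $F_i(J_j) = 0$; if $j = i$, then $e_1 \in \im(X^{i-1})$ but $\im(X^i) = 0$, so $F_i(J_i) = ke_1$ is one-dimensional. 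Therefore $\dim F_i(J_j) = \delta_{ij}$.

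Combining these two steps, $\dim F_i(M) = \sum_j [\res_{G,Z}(M):J_j]\dim F_i(J_j) = [\res_{G,Z}(M):J_i]$, as required. There is no real obstacle here; the only point worth checking carefully is the additivity of $F_i$ under direct sums, which is immediate from the componentwise action of $X$, and the Krull--Schmidt decomposition of $\res_{G,Z}(M)$ into Jordan blocks, which is standard for the cyclic group $Z = \langle g \rangle$ of order $p$.
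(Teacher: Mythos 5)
Your proof is correct and follows essentially the same route as the paper: reduce to a single Jordan block by additivity of $F_i$ under direct sums, then compute $\Ker(X)\cap\im(X^{i-1})$ and $\Ker(X)\cap\im(X^i)$ on $J_j$ by cases on $j$ versus $i$. You spell out the Jordan basis and the additivity step explicitly, which the paper leaves implicit, but the content is identical.
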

\begin{proof}
Applying $\Ker(X)\cap\im(X^{i-1})$ to $J_j$, we get zero if $j<i$ and
$\Soc(J_j)$ if $j\ge i$. So the quotient
$(\Ker(X)\cap\im(X^{i-1}))/(\Ker(X)\cap\im(X^i))$ is isomorphic to
$\Soc(J_i)$ if $j=i$ and zero otherwise.
\end{proof}

\begin{prop}\label{pr:cijkFiFj}
If $J_i\otimes J_j= \bigoplus_k c_{i,j,k}J_k$ then for $1\le k\le p-1$
we have
\[ F_k(M \otimes N) \cong \bigoplus_{i,j}c_{i,j,k}F_i(M)\otimes
  F_j(N). \]
\end{prop}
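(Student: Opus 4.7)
My plan is to decompose both modules into their $Z$-isotypic components as $kG$-modules and then compute $F_k$ on each piece. First, since $g$ is central, the operator $X = g - 1$ commutes with the $G$-action, so the $J_i$-isotypic component $M_{(i)}$ of $\res_Z M$ (the sum of all $kZ$-submodules of $M$ isomorphic to $J_i$) is a $kG$-submodule. This yields canonical $kG$-module decompositions $M = \bigoplus_i M_{(i)}$ and $N = \bigoplus_j N_{(j)}$. One easily checks that $F_k$ is an additive functor from $kG$-modules to $kG/Z$-modules (kernels, images, and intersections with respect to the $G$-central operator $X$ all commute with direct sums), that $F_i(M) = F_i(M_{(i)}) = \Soc(M_{(i)})$ canonically as a $kG/Z$-submodule, and that $F_\ell(M_{(i)}) = 0$ for $\ell \ne i$.

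Applying additivity of $F_k$ to the decomposition $M \otimes N = \bigoplus_{i,j} M_{(i)} \otimes N_{(j)}$ reduces the proposition to the claim
\[ F_k(M_{(i)} \otimes N_{(j)}) \cong c_{i,j,k}\, F_i(M) \otimes F_j(N) \quad \text{as } kG/Z\text{-modules} \]
for each $i, j$ and each $k$ with $1 \le k \le p-1$. Dimensionally this is immediate: setting $m_i = \dim F_i(M)$ and $m_j = \dim F_j(N)$, as $kZ$-modules $M_{(i)} \otimes N_{(j)}$ is isomorphic to $m_i m_j$ copies of $J_i \otimes J_j \cong \bigoplus_k c_{i,j,k} J_k$, so $F_k(M_{(i)} \otimes N_{(j)})$ has the correct dimension $c_{i,j,k} m_i m_j$.

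The hard part will be upgrading the dimension match to a $kG/Z$-equivariant isomorphism. The subtlety is that a splitting $M_{(i)} \cong F_i(M) \otimes_k J_i$ exists only as $kZ$-modules and typically fails to be $kG$-equivariant: the $G$-action on $M_{(i)}$ can mix the two tensor factors in ways visible only to $G/Z$, so one cannot simply pull the $G$-action through the naive tensor decomposition. To resolve this, I would construct explicit $kG/Z$-equivariant maps $F_i(M) \otimes F_j(N) \to F_k(M_{(i)} \otimes N_{(j)})$ using canonical subquotients of $\Ker X$ rather than any chosen splitting. Specifically, for each pair $(i,j)$ and each $k$ with $c_{i,j,k} > 0$, the socle of the $J_k$-isotypic component of $J_i \otimes J_j$ is a canonical $c_{i,j,k}$-dimensional subspace, expressible as an explicit linear combination of iterated $X$-powers of socle elements via the coproduct $\Delta(X) = X \otimes 1 + 1 \otimes X + X \otimes X$. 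Transferring this construction to $M_{(i)} \otimes N_{(j)}$ using the identifications $F_i(M) = \Soc(M_{(i)})$ and $F_j(N) = \Soc(N_{(j)})$ yields the required map, and its $kG/Z$-equivariance is automatic because the construction uses only $X$ and the coproduct, which are $G$-equivariant by centrality of $Z$. Bijectivity then follows from the dimension count already in hand.
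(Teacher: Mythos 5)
Your opening move is fatally flawed. The decomposition $M = \bigoplus_i M_{(i)}$ you posit, where $M_{(i)}$ is the sum of all $kZ$-submodules of $M$ isomorphic to $J_i$, is not a direct sum: $kZ$ is not semisimple, and these subspaces overlap. Concretely, with $p = 2$ take $M = J_1 \oplus J_2$ with basis $e$ (spanning $J_1$) and $u, Xu$ (spanning $J_2$). Every line in $\Ker(X) = \langle e, Xu\rangle$ is a copy of $J_1$, so $M_{(1)} = \Ker(X)$; and for each $\alpha\in k$ the span $\langle \alpha e + u,\, Xu\rangle$ is a free $kZ$-submodule, so $M_{(2)} = M$. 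Hence $M_{(1)}\cap M_{(2)}=\Ker(X)\ne 0$ and the sum is not direct. Correspondingly $\Soc(M_{(1)})=M_{(1)}$ is two-dimensional while $F_1(M)$ is one-dimensional, so the identification $F_i(M)=\Soc(M_{(i)})$ also fails, as does the claim $M_{(i)}\cong m_iJ_i$ on which your dimension count depends. The underlying obstruction is that an indecomposable $kG$-module, restricted to $kZ$, typically mixes several isotypes and admits no $kG$-stable splitting into single-isotype pieces; every step after your first paragraph relies on such a splitting.

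The paper's proof sidesteps any decomposition by working functorially: $(M,N)\mapsto F_k(M\otimes N)$ is a functor on $\mmod(k\bZ/p)\times\mmod(k\bZ/p)$ whose canonical filtration has associated graded $\bigoplus_{i,j}c_{i,j,k}F_i\otimes F_j$, and for $1\le k\le p-1$ that filtration splits because the functor is isomorphic to its contragredient dual $(M,N)\mapsto (F_k(M^*\otimes N^*))^*$ and the multiplicities $c_{i,j,k}$ are all $0$ or $1$. A natural isomorphism of such functors is automatically $G/Z$-equivariant when evaluated on $kG$-modules, which is exactly what the proposition requires. This realizes, at the level of natural transformations, the idea you were reaching for with ``equivariance is automatic by centrality'' --- but the mechanism must be naturality of the isomorphism, not an isotypic decomposition, which does not exist here.
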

\begin{proof}
This amounts to the semisimplicity of the coproduct on the simple
functors $F_k\colon\mmod(k\bZ/p)\to\vect(k)$ for $1\le k\le p-1$:
\[ \Delta F_k\cong\bigoplus_{i,j}c_{i,j,k}F_i \otimes F_j\colon
  \mmod(k\bZ/p) \times \mmod(k\bZ/p) \to \vect(k). \]
The easiest proof of semisimplicity is to notice that this functor is
isomorphic to the
contragredient dual functor sending the pair $(M,N)$ to
$(F_k(M^*\otimes N^*))^*$, and all the multiplicities
$c_{i,j,k}$ with $1\le k\le p-1$ are zero or one. More precisely, if
$i\le j$ and $i+j\le p$ then (Feit~\cite{Feit:1982a}, Theorem~VIII.2.7)
$J_i\otimes J_j \cong
\bigoplus_{s=1}^{i}J_{j-i+2s-1}$
and 
$J_{p-i}\otimes J_{p-j} \cong
(p-i-j)J_p\oplus (J_i\otimes J_j)$. 
Using the commutativity of tensor product, all cases are covered in
these two statements, since either $i+j\le p$ or $(p-i)+(p-j)\le p$.
Note that some of the multiplicities
$c_{i,j,p}$ are greater than one, and $\Delta F_p$ is not semisimple.
\end{proof}

\begin{theorem}\label{th:sellhat}
For $1\le \ell\le p-1$ the map
\[ \hat s_\ell\colon [M] \mapsto \sum_{k=1}^{p-1}s_\ell([J_k])[F_k(M)] \]
defines an algebra homomorphism $\hat s_\ell\colon a_\bC(G) \to
a_\bC(G/Z)$, which is the identity on the sub\-algebra $a_\bC(G/Z)\subseteq a_\bC(G)$.
This is continuous with respect to the norm, and so it extends to give a map
of Banach algebras $\hat s_j\colon \hat a(G) \to \hat a(G/Z)$.
\end{theorem}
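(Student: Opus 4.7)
The plan is to verify that the prescription $[M]\mapsto\sum_{k=1}^{p-1}s_\ell([J_k])[F_k(M)]$ extends $\bC$-linearly to a well-defined ring homomorphism, then bound its norm. Well-definedness on $a_\bC(G)$ is immediate from the fact that $F_k$ is additive on direct sums, so $[F_k(M)]$ depends only on the isomorphism class of $M$. For the claim that $\hat s_\ell$ restricts to the identity on $a_\bC(G/Z)$, I would note that if $Z$ acts trivially on $M$ then $X=g-1$ acts as zero, so $\res_{G,Z}(M)\cong (\dim M)J_1$; consequently $F_1(M)\cong M$ and $F_k(M)=0$ for $k\ge 2$. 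Since $s_\ell([J_1])=1$ (by Theorem~\ref{th:cyclic-species}), we get $\hat s_\ell([M])=[M]$. The unit $\one=[k]$ is a special case, so $\hat s_\ell(\one)=\one$.

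For multiplicativity, I would use Proposition~\ref{pr:cijkFiFj} to write $[F_k(M\otimes N)]=\sum_{i,j}c_{i,j,k}[F_i(M)][F_j(N)]$ for $1\le k\le p-1$, giving
\[ \hat s_\ell([M][N]) = \sum_{k=1}^{p-1}s_\ell([J_k])\sum_{i,j}c_{i,j,k}[F_i(M)][F_j(N)] =
\sum_{i,j}\Bigl(\sum_{k=1}^{p-1}c_{i,j,k}s_\ell([J_k])\Bigr)[F_i(M)][F_j(N)]. \]
The key observation is that by Theorem~\ref{th:cyclic-species}, $s_\ell([J_p])=\sin(\ell\pi)/\sin(\ell\pi/p)=0$ for $1\le\ell\le p-1$, so the inner sum equals $\sum_{k=1}^{p}c_{i,j,k}s_\ell([J_k])=s_\ell([J_i][J_j])=s_\ell([J_i])s_\ell([J_j])$, where in the last step we use that $s_\ell$ is a ring homomorphism on $a(Z)$. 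Hence the double sum factors as $\hat s_\ell([M])\hat s_\ell([N])$. This vanishing of $s_\ell$ on $[J_p]$ is the critical input that rescues the proof, since Proposition~\ref{pr:cijkFiFj} fails at $k=p$.

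Finally, for continuity, I would estimate directly using the quotient formula. Since $\dim F_k(M)=[\res_{G,Z}(M):J_k]$ and $\sum_{k=1}^{p}k\cdot[\res_{G,Z}(M):J_k]=\dim M$, we have $\dim F_k(M)\le\dim M/k\le\dim M$. Setting $C_\ell=\sum_{k=1}^{p-1}|s_\ell([J_k])|$ (a finite positive constant since each $s_\ell([J_k])=\sin(\ell k\pi/p)/\sin(\ell\pi/p)$ is bounded), for any basis element $[M]$ of $a(G)$ we obtain
\[ \|\hat s_\ell([M])\| \le \sum_{k=1}^{p-1}|s_\ell([J_k])|\dim F_k(M) \le C_\ell\dim M = C_\ell\|[M]\|, \]
and this extends by the triangle inequality to give $\|\hat s_\ell(x)\|\le C_\ell\|x\|$ for all $x\in a_\bC(G)$. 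By Lemma~\ref{le:bd=cts}, $\hat s_\ell$ is continuous, so it extends uniquely to a Banach algebra homomorphism $\hat a(G)\to\hat a(G/Z)$. The main obstacle is really just ensuring that multiplicativity survives the truncation of the sum at $k=p-1$; once one observes that $s_\ell$ kills $[J_p]$, everything else is a routine computation.
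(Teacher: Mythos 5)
Your proof is correct and essentially matches the paper's: the paper computes multiplicativity starting from $\hat s_\ell([M])\hat s_\ell([N])$ and combines using that $s_\ell$ is a ring homomorphism on $a(Z)$ together with Proposition~\ref{pr:cijkFiFj}, while you run the same computation in reverse starting from $\hat s_\ell([M\otimes N])$, and both arguments hinge on the vanishing $s_\ell([J_p])=0$, which you make explicit but the paper leaves implicit when it truncates the $k$-sum at $p-1$. The only other difference is in the continuity estimate: the paper uses the core-boundedness inequality $|s_\ell([J_k])|\le k$ together with $\sum_k k\dim F_k(M)=\dim M$ to get the sharper bound $\|\hat s_\ell(x)\|\le\|x\|$, whereas you settle for a constant $C_\ell=\sum_k|s_\ell([J_k])|$, which is weaker but perfectly sufficient to invoke Lemma~\ref{le:bd=cts}.
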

\begin{proof}
To see that $\hat s_j$ is a ring homomorphism, we just have to check
multiplicativity. Using Proposition~\ref{pr:cijkFiFj} we have
\begin{align*}
\hat s_\ell([M])\hat s_\ell([N]) 
&=\sum_{i=1}^{p-1}s_\ell([J_i])[F_i(M)]\sum_{j=1}^{p-1}s_\ell([J_j])[F_j(N)] \\
&=\sum_{i,j=1}^{p-1}s_\ell([J_i\otimes J_j])[F_i(M)][F_j(N)] \\
&=\sum_{k=1}^{p-1}\sum_{i,j}c_{i,j,k}s_\ell([J_k])[F_i(M)\otimes F_j(N)] \\
&=\sum_{k=1}^{p-1}s_\ell([J_k])[F_k(M\otimes N)]\\
&=\hat s_\ell([M\otimes N]).
\end{align*}
To see that $\hat s_\ell$ is the identity on $a_\bC(G/Z)$,
we note that if $M$ is a $k(G/Z)$-module, regarded 
as a $kG$-module via inflation, then $F_1(M)=M$ and
$F_k(M)=0$ for $2\le k\le p$. Thus $\hat s_\ell([M])=
s_\ell([J_1])[M]=[M]$.

To prove continuity, we prove boundedness and use
Lemma~\ref{le:bd=cts}. If $x=\sum_{i\in\fI}a_i[M_i]$ then
\begin{align*} 
\|\hat s_\ell(x)\|&\le
\sum_{k=1}^{p-1}\sum_{i\in\fI}
|a_i||s_\ell(J_k)|\dim F_k(M_i) \\
&\le \sum_{i\in\fI}|a_i|\sum_{k=1}^{p-1}
k\dim F_k(M_i)\\
&=\sum_{i\in\fI}|a_i|\dim M_i =\|x\|.
\qedhere
\end{align*}
\end{proof}

\begin{rk}
When $p=2$, there is only one relevant value of $\ell$ in
Proposition~\ref{pr:cijkFiFj} and Theorem~\ref{th:sellhat}. In this case,
they both amount to the statement that $F_1(M\otimes N)\cong F_1(M)\otimes F_1(N)$.
This is known as the K\"unneth theorem.\index{Kunneth@K\"unneth theorem}
\end{rk}

If $s\colon a_\bC(G/Z)\to \bC$ is a species, then we compose to give $p-1$
species 
\[ s\hat s_\ell\colon a_\bC(G) \to \bC\qquad (1\le\ell\le p-1). \]

\section{\texorpdfstring{Cyclic $p$-groups}
{Cyclic p-groups}}%
\index{cyclic!group of order $p^n$}\index{group!cyclic of order $p^n$}%
\index{species!of $a(\bZ/p^n)$}

The structure of $a(G)$ with $G$ a cyclic group of order a power of a prime
was investigated by Almkvist and Fossum~\cite{Almkvist/Fossum:1978a}, 
Green~\cite{Green:1962a}, Renaud~\cite{Renaud:1979a}, 
Srinivasan~\cite{Srinivasan:1964a}. An abstract proof of
semisimplicity of $a(G)$ avoiding computations 
was given in Benson and Carlson~\cite{Benson/Carlson:1986a}.
In this section we
provide a new computational approach to the description of the species
of $a_\bC(\bZ/p^n)$,
and prove that it is semisimple.\index{semisimple}

It turns out that there is
one more ring homomorphism $\hat s_0\colon a(\bZ/p^{n+1})\to a(\bZ/p^n)$
that we need to use. It would be nice to have a more ``functorial''
way to describe this, but that does not seem so easy.

\begin{lemma}\label{le:s0hat}
The linear map defined by
\[ \hat s_0 \colon [J_{2bp^n\pm r}] \mapsto 2b[J_{p^n}]\pm [J_r] \]
is a ring homomorphism $a(\bZ/p^{n+1})\to a(\bZ/p^n)$, which is
the identity on the subring $a(\bZ/p^n)\subseteq a(\bZ/p^{n+1})$.
Here, $0\le b\le p/2$ and $0\le r \le p^n$. 

This formula covers all possible lengths of Jordan
blocks, and if $r=p^n$ the two different values of $b$ describing
the same module give the same answer.
\end{lemma}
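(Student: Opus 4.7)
The plan is to realize $\hat s_0$ as the map induced by a divisibility of polynomials in $\bZ[X]$. Arguing exactly as in Theorem~\ref{th:aG/aG1} and Remark~\ref{rk:aG}, one has
\[ a(\bZ/p^{m}) \cong \bZ[X]/((X-2)f_{p^{m}}(X)) \]
via $X \leftrightarrow [J_2]$, with $[J_k]$ corresponding to $f_k(X)$ for $1 \le k \le p^m$; the key inputs are the recurrence \eqref{eq:Xfj}, the projectivity of $J_{p^m}$ (giving $[J_2][J_{p^m}] = 2[J_{p^m}]$), and a rank comparison. By Lemma~\ref{le:fpX}\,(v), $f_{p^n}(X)$ divides $f_{p^{n+1}}(X)$ in $\bZ[X]$, so $(X-2)f_{p^n}(X)$ divides $(X-2)f_{p^{n+1}}(X)$. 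The natural quotient therefore descends to a ring homomorphism
\[ \hat s_0 \colon a(\bZ/p^{n+1}) \to a(\bZ/p^n) \]
sending $[J_k]$ to the class of $f_k(X)$ in $a(\bZ/p^n)$.

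To extract the explicit formula, I would establish the polynomial congruence
\[ f_{2bp^n \pm r}(X) \equiv 2b\, f_{p^n}(X) \pm f_r(X) \pmod{(X-2)f_{p^n}(X)} \]
in $\bZ[X]$. By Lemma~\ref{le:fpX}\,(iv) the roots of $(X-2)f_{p^n}(X)$ are $X = 2$ together with $X = 2\cos(k\pi/p^n)$ for $1 \le k \le p^n - 1$, and all are simple. Using \eqref{eq:fj2costheta} in the form $f_j(2\cos\theta) = \sin(j\theta)/\sin\theta$ (and the limit $f_j(2) = j$), both sides evaluate to $2bp^n \pm r$ at $X=2$; at $X = 2\cos(k\pi/p^n)$ the $f_{p^n}$-term vanishes because $\sin(k\pi) = 0$, while
\[ \sin\bigl((2bp^n \pm r)k\pi/p^n\bigr) = \sin\bigl(2bk\pi \pm rk\pi/p^n\bigr) = \pm \sin(rk\pi/p^n), \]
so the $\pm f_r$-terms also agree. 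Since the modulus is a monic polynomial over $\bZ$ and both sides agree at every simple root, the congruence holds in $\bZ[X]$. Reducing modulo $(X-2)f_{p^n}(X)$ then yields the stated formula $\hat s_0[J_{2bp^n \pm r}] = 2b[J_{p^n}] \pm [J_r]$.

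Setting $b=0$ in the formula gives $\hat s_0[J_r] = [J_r]$ for $1 \le r \le p^n$, which shows that $\hat s_0$ is the identity on the subring $a(\bZ/p^n) \subseteq a(\bZ/p^{n+1})$ coming from inflation (under which an indecomposable of length $r \le p^n$ remains indecomposable of length $r$, since the nilpotency index of $g-1$ is preserved). The internal consistency check — that $k = (2b+1)p^n$ is represented either as $2bp^n + p^n$ or as $2(b+1)p^n - p^n$, both yielding $(2b+1)[J_{p^n}]$ — is immediate from the formula. The only real source of friction is the bookkeeping across these boundary cases; the substantive content is the polynomial congruence, which falls out cleanly from evaluation at the simple roots of $(X-2)f_{p^n}(X)$ via the Chebyshev substitution.
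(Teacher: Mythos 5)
Your argument hinges on the isomorphism $a(\bZ/p^m) \cong \bZ[X]/((X-2)f_{p^m}(X))$ with $X\leftrightarrow[J_2]$ and $[J_k]\leftrightarrow f_k(X)$, and this is true only when $m=1$; it fails for every $m\ge 2$. The obstruction is that the Chebyshev recurrence $[J_2][J_j]=[J_{j+1}]+[J_{j-1}]$ breaks at multiples of $p$: in $a(\bZ/p^m)$ one has $[J_2][J_j]=2[J_j]$ whenever $p\mid j$, as one reads off directly from the tensor product tables for $\bZ/4$, $\bZ/9$ and $\bZ/25$ in Sections~\ref{se:cyclic}ff. For instance in $a(\bZ/9)$, $[J_2][J_3]=2[J_3]$ rather than $[J_2]+[J_4]$, so $f_4([J_2])=[J_2]^3-2[J_2]=2[J_3]-[J_2]\neq [J_4]$. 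Consequently the subring of $a(\bZ/p^m)$ generated by $[J_2]$ has $\bZ$-rank $p$ (it is the inflation of $a(\bZ/p)$ along $\bZ/p^m\twoheadrightarrow\bZ/p$), not $p^m$, so $[J_2]$ does not generate $a(\bZ/p^m)$ and there is no surjection $\bZ[X]\twoheadrightarrow a(\bZ/p^m)$ sending $X$ to $[J_2]$. This is exactly why the paper's proof opens by saying that $a(\bZ/p^{n+1})$ is generated by the subring $a(\bZ/p^n)$ together with one further element $[J_{p^n+1}]$: a new generator must be adjoined at each step.

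Once the claimed presentation is gone, steps (2)--(4) of your argument have nothing to act on: there is no ``natural quotient'' from $a(\bZ/p^{n+1})$ to $a(\bZ/p^n)$ induced by divisibility of $(X-2)f_{p^n}$ into $(X-2)f_{p^{n+1}}$. Your polynomial congruence $f_{2bp^n\pm r}(X)\equiv 2b\,f_{p^n}(X)\pm f_r(X)\pmod{(X-2)f_{p^n}(X)}$ is a correct identity in $\bZ[X]$, and the evaluation-at-simple-roots proof of it is fine, but it concerns the Clebsch--Gordan structure constants of the truncated polynomial ring, which are not the structure constants of $a(\bZ/p^{n+1})$. It therefore cannot show, by itself, that the prescribed linear map $\hat s_0$ respects multiplication in $a(\bZ/p^{n+1})$. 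What is required---and what the paper does---is a direct verification against the actual tensor-product rules: one records Green's formula for $[J_{p^n+1}][J_j]$, reduces modulo the span of the elements $[J_{j_0p^n}]$ (on which $\hat s_0$ behaves uniformly), and checks compatibility case by case. There is no shortcut via a polynomial presentation here, precisely because $[J_2]$ fails to generate for $m\ge 2$.
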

\begin{proof}
The ring $a(\bZ/p^{n+1})$ is generated by the subring $a(\bZ/p^n)$
together with one more element $[J_{p^n+1}]$.
The effect of multiplying $[J_{p^n+1}]$ with a basis
element $[J_j]$ (copied from (2.8c)--(2.8e) of
Green~\cite{Green:1962a}, see also 
Section~I.1 of~\cite{Almkvist/Fossum:1978a}) is as
follows. We write $j=j_0p^n+j_1$ with $0\le j_1 \le p^n-1$. Then
\begin{align*}
[J_{p^n+1}][J_j]&=\begin{cases}
[J_{p^n+j_1}] + (j_1-1)[J_{p^n}] & j \le p^n \\
[J_{(j_0+1)p^n+j_1}]+(j_1-1)[J_{(j_0+1)p^n}]
& \\
\qquad {}+[J_{(j_0+1)p^n-j_1}]+(p^n-j_1-1)[J_{j_0p^n}]&\\
\qquad {}+[J_{(j_0-1)p^n+j_1}]
& p^n<j\le (p-1)p^n\\
(j_1+1)[J_{p^{n+1}}]+(p^n-j_1-1)[J_{(p-1)p^n}] 
\\
\qquad {}+[J_{(p-2)p^n+j_1}]& (p-1)p^n<j\le p^{n+1}
\end{cases}
\end{align*}
Now the map $\hat s_0$ preserves dimension, and takes $[J_{j_0p^n}]$
to $j_0[J_{p^n}]$ for $1\le j_0\le p$, so it suffices to work modulo the
basis elements of the form
$[J_{j_0p^n}]$. This makes the above relations easier to read:
\begin{align}
[J_{p^n+1}][J_j]&\equiv
\begin{cases}
[J_{p^n+j_1}] & j\le p^n \\
[J_{(j_0+1)p^n+j_1}]+[J_{(j_0+1)p^n-j_1}]&\\
\qquad {}+[J_{(j_0-1)p^n+j_1}]
&p^n< j\le (p-1)p^n \\
[J_{(p-2)p^n+j_1}] & (p-1)p^n<j\le p^{n+1}
\end{cases}
\label{eq:JJ}
\end{align}
Under the map $\hat s_0$, we must divide into
two cases according to the parity of $j_0$.
If $j_0$ is even, both sides go to $-[J_{p^n-j_1}]$,
whereas if $j_0$ is odd, both sides go to $[J_{j_1}]$.
\end{proof}

\begin{theorem}\label{th:species-of-Zpn}
The species $s_{\ell_0}\hat s_{\ell_1}\dots\hat
s_{\ell_{n-1}}\colon a_\bC(\bZ/p^n)\to\bC$ with $0\le \ell_i\le p-1$
for $0\le i\le n-1$ are all distinct. 
\end{theorem}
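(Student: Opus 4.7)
My plan is to proceed by induction on $n$, carrying along an auxiliary claim that controls the values of each species $t=s_{\ell_0}\hat s_{\ell_1}\cdots\hat s_{\ell_{n-1}}$ at the top two basis elements: namely $t([J_{p^n}])=p^n$ and $t([J_{p^n-1}])=p^n-1$ when $t=\dim$ (all $\ell_i=0$), while $t([J_{p^n}])=0$ and $t([J_{p^n-1}])\in\{+1,-1\}$ otherwise. The base case $n=1$ of both statements is immediate: the values $s_\ell([J_2])=2\cos(\ell\pi/p)$ (with $\cos 0=1$) are $p$ distinct reals, and Theorem~\ref{th:cyclic-species} gives $s_\ell([J_{p-1}])=(-1)^{\ell+1}$ for $\ell\ge 1$.

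For the inductive step, to compare $s_{\ell_0}\hat s_{\ell_1}\cdots\hat s_{\ell_n}$ with $s_{\ell'_0}\hat s_{\ell'_1}\cdots\hat s_{\ell'_n}$ on $a_\bC(\bZ/p^{n+1})$, I would first restrict to the inflated subring $a_\bC(\bZ/p^n)\subseteq a_\bC(\bZ/p^{n+1})$. By Theorem~\ref{th:sellhat} and Lemma~\ref{le:s0hat} every $\hat s_j$ is the identity on this subring, so the restrictions collapse to $s_{\ell_0}\hat s_{\ell_1}\cdots\hat s_{\ell_{n-1}}$ and its primed version, and the inductive hypothesis forces $\ell_i=\ell'_i$ for $0\le i\le n-1$. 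Writing $t$ for this common inner species, the task is to show that the $p$ composites $t\hat s_\ell$ ($0\le\ell\le p-1$) are pairwise distinct, and I would test them on $[J_{p^n+1}]$, the first basis element outside the inflated subring. A direct Jordan-basis calculation (the $p^n$-th power of a nilpotent $(p^n+1)\times(p^n+1)$ Jordan block consists of one block of size $2$ and $p^n-1$ blocks of size $1$) yields $J_{p^n+1}{\downarrow_Z}\cong J_2^{(Z)}\oplus(p^n-1)J_1^{(Z)}$, with the nonzero pieces $F_1(J_{p^n+1})\cong J_{p^n-1}^{(G/Z)}$ and $F_2(J_{p^n+1})\cong\one$; Theorem~\ref{th:sellhat} then yields $\hat s_\ell([J_{p^n+1}])=[J_{p^n-1}]+2\cos(\ell\pi/p)\,\one$ for $\ell\ge 1$, and Lemma~\ref{le:s0hat} (via $p^n+1=2p^n-(p^n-1)$) gives $\hat s_0([J_{p^n+1}])=2[J_{p^n}]-[J_{p^n-1}]$. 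Applying $t$ produces
\[
t\hat s_\ell([J_{p^n+1}])=t([J_{p^n-1}])+2\cos(\ell\pi/p)\quad(\ell\ge 1),\qquad t\hat s_0([J_{p^n+1}])=2t([J_{p^n}])-t([J_{p^n-1}]).
\]
The values for $\ell\ge 1$ are separated by the injectivity of $\cos$ on $(0,\pi)$, and the $\ell=0$ value is separated from them because the auxiliary claim forces $t([J_{p^n}])-t([J_{p^n-1}])$ to equal $1$ (when $t=\dim$) or $\mp 1$ (otherwise), whereas each $\cos(\ell\pi/p)$ with $\ell\in\{1,\ldots,p-1\}$ lies strictly in $(-1,1)$.

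It remains to propagate the auxiliary claim from $n$ to $n+1$. The restriction $J_{p^{n+1}}{\downarrow_Z}$ consists entirely of $J_p^{(Z)}$-blocks, so $F_k=0$ for $k<p$ and hence $\hat s_\ell([J_{p^{n+1}}])=0$ for $\ell\ge 1$, while Lemma~\ref{le:s0hat} gives $\hat s_0([J_{p^{n+1}}])=p[J_{p^n}]$; the inductive hypothesis yields the required value for $t\hat s_\ell([J_{p^{n+1}}])$. Similarly $J_{p^{n+1}-1}{\downarrow_Z}\cong(p^n-1)J_p^{(Z)}\oplus J_{p-1}^{(Z)}$, and a Jordan-basis check identifying $F_{p-1}(J_{p^{n+1}-1})\cong\one$ (the unique one-dimensional representative is fixed by $G/Z$ modulo the quotient) gives $\hat s_\ell([J_{p^{n+1}-1}])=(-1)^{\ell+1}\one$ for $\ell\ge 1$; for $\ell=0$, Lemma~\ref{le:s0hat} expresses $\hat s_0([J_{p^{n+1}-1}])$ as $(p-1)[J_{p^n}]+[J_{p^n-1}]$ when $p$ is odd and as $2[J_{p^n}]-[J_1]$ when $p=2$, and in each case the stage-$n$ auxiliary claim produces the required value in $\{p^{n+1}-1\}\cup\{\pm 1\}$. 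The main obstacle will be discovering the precise shape of the auxiliary claim in the first place: one must spot that $t([J_{p^n-1}])$ is forced to have absolute value exactly $1$ away from $t=\dim$, so that it cannot coincide with any $\cos(\ell\pi/p)\in(-1,1)$; once this insight is in hand, every remaining step reduces to routine block-structure and basis computations that split only mildly on the parity of $p$.
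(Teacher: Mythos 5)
Your proof is correct but takes a genuinely different route. The paper argues non-inductively: it fixes the generators $\chi_0=[J_2]$ and $\chi_i=[J_{p^i+1}]-[J_{p^i-1}]$ for $1\le i\le n-1$, shows in one computation that $s_{\ell_0}\hat s_{\ell_1}\cdots\hat s_{\ell_{n-1}}(\chi_i)$ equals $\pm 2$ when $\ell_i=0$ and $2\cos(\ell_i\pi/p)$ when $1\le\ell_i\le p-1$, and reads off each $\ell_i$ directly from the value at $\chi_i$. You instead induct on $n$: the inflated subring forces the inner indices to agree, and you then test on the single element $[J_{p^n+1}]$ to split the outer index, using an auxiliary claim about the values at $[J_{p^n}]$ and $[J_{p^n-1}]$ to separate the $\ell_n=0$ case from the rest. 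Your identifications $F_1(J_{p^n+1})\cong J_{p^n-1}$, $F_2(J_{p^n+1})\cong\one$, $F_{p-1}(J_{p^{n+1}-1})\cong\one$, the resulting formulas for $\hat s_\ell$, and the propagation of the auxiliary claim (including the parity split on $p$ for $\hat s_0([J_{p^{n+1}-1}])$) are all correct, and check out against the $\bZ/4$, $\bZ/8$, $\bZ/9$, $\bZ/25$ tables and against Lemma~\ref{le:s0hat}. The paper's route is shorter once the test elements $\chi_i$ are in hand; yours costs an auxiliary claim and a little parity casework, but every step reduces to routine Jordan-block bookkeeping. Your parity split is in fact more careful than the paper's: the intermediate identity $\hat s_0([J_{p^{i+1}}]-[J_{p^{i+1}-1}])=[J_{p^i}]-[J_{p^i-1}]$ asserted there fails for $p=2$ (Lemma~\ref{le:s0hat} gives $[J_1]$), although the paper's final $\pm 2$ conclusion is unaffected since the further applications of $\hat s_{\ell_j}$ and $s_{\ell_0}$ collapse either expression to the same value.
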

\begin{proof}
Consider the elements $\chi_0,\dots,\chi_{n-1}$ of $a_\bC(\bZ/p^n)$ given by 
\[ \chi_i=\begin{cases}[J_2] &i=0, \\ 
[J_{p^i+1}]-[J_{p^i-1}]&1\le i \le n-1.
\end{cases} \]
These generate $a_\bC(\bZ/p^n)$ as an algebra.
The element $\chi_i$ is in $a_\bC(\bZ/p^{i+1})\subseteq a_\bC(\bZ/p^n)$, and
the map $\hat s_\ell\colon a_\bC(\bZ/p^{i+1}) \to a_\bC(\bZ/p^i)$
evaluated at $\chi_i$ gives
\[ \hat s_\ell(\chi_i)=\begin{cases} 
2([J_{p^i}] - [J_{p^i-1}]) & \ell = 0 \\
2\cos \ell\pi/p\,[J_1] & 1\le \ell \le p-1.
\end{cases} \]
Furthermore, we have
\[ \hat s_\ell([J_{p^{i+1}}]-[J_{p^{i+1}-1}])=\begin{cases}
[J_{p^i}]-[J_{p^i-1}] & \ell = 0 \\
(-1)^\ell[J_1] & 1\le \ell \le p-1.
\end{cases} \]
Applying these formulas inductively, we have
\[ s_{\ell_0}\hat s_{\ell_1} \dots \hat s_{\ell_{n-1}} (\chi_i) =
\begin{cases}
\pm 2 & \ell_i = 0 \\
2\cos\ell_i\pi/p & 1\le \ell_i \le p-1.
\end{cases} \]
If $1\le \ell_i\le p-1$ then $|2\cos \ell_i\pi/p|<2$,
and since $\cos\colon[0,\pi]\to\bR$ is injective, 
$2\cos\ell_i\pi/p$ determines $\ell_i$.
Thus $\ell_i$ is determined by the value of 
$s_{\ell_0}\hat s_{\ell_1} \dots \hat s_{\ell_{n-1}}$
 on $\chi_i$.
\end{proof}

\begin{cor}
The algebra $a_\bC(\bZ/p^n)$ is
semisimple.\index{semisimple}
\end{cor}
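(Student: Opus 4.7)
The plan is to exploit a clean dimension count. The additive group of $a(\bZ/p^n)$ is free abelian on the $p^n$ indecomposable basis elements $[J_1],\dots,[J_{p^n}]$, so $a_\bC(\bZ/p^n)$ is a commutative $\bC$-algebra of dimension exactly $p^n$. Theorem~\ref{th:species-of-Zpn} produces $p^n$ distinct species of $a_\bC(\bZ/p^n)$, one for each tuple $(\ell_0,\ldots,\ell_{n-1})\in\{0,1,\dots,p-1\}^n$. So the number of species already available matches the $\bC$-dimension.

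Next I would assemble these into a single $\bC$-algebra homomorphism
\[
\Phi\colon a_\bC(\bZ/p^n) \longrightarrow \bC^{p^n},
\qquad x \longmapsto \bigl( s_{\ell_0}\hat s_{\ell_1}\cdots\hat s_{\ell_{n-1}}(x) \bigr)_{(\ell_0,\dots,\ell_{n-1})}.
\]
By Lemma~\ref{le:species-indep}, distinct species are linearly independent as elements of the dual space $a_\bC(\bZ/p^n)^*$. Since the dual has dimension $p^n$, our $p^n$ species therefore form a basis of the dual. Consequently any $x\in\ker\Phi$ is annihilated by every linear functional on $a_\bC(\bZ/p^n)$, forcing $x=0$. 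Hence $\Phi$ is injective, and by equality of dimensions it is an isomorphism of $\bC$-algebras.

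Finally, $\bC^{p^n}$ is semisimple as a commutative $\bC$-algebra, so transporting along $\Phi$ gives that $a_\bC(\bZ/p^n)$ is semisimple as well. There is no real obstacle here: all the work is done by Theorem~\ref{th:species-of-Zpn} (producing enough distinct species) and Lemma~\ref{le:species-indep} (turning distinctness into linear independence); the corollary is simply a counting argument matching the number of species against the $\bC$-rank of the representation ring.
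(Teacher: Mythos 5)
Your argument is correct and is essentially the same as the paper's: both rely on Theorem~\ref{th:species-of-Zpn} to supply $p^n$ distinct species and on Lemma~\ref{le:species-indep} for their linear independence, then match this against the $\bC$-dimension $p^n$ to conclude that the species give an isomorphism onto $\bC^{p^n}$. You have merely spelled out the injectivity step (species forming a dual basis) in a bit more detail than the paper does.
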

\begin{proof}
By Lemma~\ref{le:species-indep}, the $p^n$ distinct species of 
the algebra $a_\bC(\bZ/p^n)$
described in Theorem~\ref{th:species-of-Zpn} are
linearly independent. This algebra has
dimension $p^n$. These species therefore give us an
isomorphism between $a_\bC(\bZ/p^n)$ and a product of
$p^n$ copies of $\bC$.
\end{proof}

\begin{eg}
Let $G=\bZ/4$.\index{cyclic!group of order $4$}%
\index{group!cyclic of order $4$}
The tensor product table is as follows, with the
obvious abbreviated notation.\par
{\tiny
\[ \begin{array}{|cccc|} \hline
1 & 2 & 3 & 4 \\
2 & 2^2 & 42 & 4^2 \\
3 & 42 & 4^21 & 4^3 \\
4 & 4^2 & 4^3 & 4^4\\ \hline
\end{array} \]
}
Then $F_j\colon a(G) \to a(\bZ/2)$, the $\hat s_j\colon a_\bC(G)\to
a_\bC(\bZ/2)$ and the $s_i\hat s_j$ are as follows.\par
{\tiny
\[ \renewcommand{\arraystretch}{1.2}
\begin{array}{|c||c|c||c|c||c|c|c|c|} \hline
&F_1&F_2&\hat s_0&\hat s_1&s_0\hat s_0&s_1\hat s_0 
& s_0\hat s_1&s_1\hat s_1\\ \hline
{}[J_1]&[J_1]&0&[J_1]&[J_1]&1&1&1&1\\
{}[J_2]&[J_2]&0&[J_2]&[J_2]&2&0&2&0\\
{}[J_3]&[J_1]&[J_1]&2[J_2]-[J_1]&[J_1]&3&-1\phm&1&1\\
{}[J_4]&0&[J_2]&2[J_2]&0&4&0&0&0\\ \hline
\end{array} \]
}
The last four columns of this table give the species table for $a(\bZ/4)$.
\end{eg}

\begin{eg}
Let $G=\bZ/8$.%
\index{cyclic!group of order $8$}\index{group!cyclic of order $8$}
The tensor product table is as follows.\par
{\tiny
\[ \setlength{\arraycolsep}{3.5mm}
\renewcommand{\arraystretch}{1.2}
\begin{array}{|cccccccc|} \hline
1&2&3&4&5&6&7&8 \\
2&2^2&42&4^2&64&6^2&86&8^2\\
3&42&4^21&4^3&74^2&864&8^25&8^3\\
4&4^2&4^3&4^4&84^3&8^24^2&8^34&8^4\\
5&64&74^2&84^3&8^24^21&8^342&8^43&8^5\\
6&6^2&864&8^24^2&8^342&8^42^2&8^52&8^6\\
7&86&8^25&8^34&8^43&8^52&8^61&8^7\\
8&8^2&8^3&8^4&8^5&8^6&8^7&8^8\\ \hline
\end{array} \]
}
Then the $F_j\colon a(G)\to a(\bZ/4)$, 
the $\hat s_j\colon a_\bC(G)\to a_\bC(\bZ/4)$ and the $s_i\hat s_j\hat
s_k$ are as follows.\par
{\tiny
\[ \setlength{\arraycolsep}{1mm}
\renewcommand{\arraystretch}{1.2}
\begin{array}{|c||cc||c|c||c|c|c|c|c|c|c|c|} \hline
&F_1&F_2&\hat s_0 & \hat s_1 & s_0\hat s_0\hat s_0 & 
s_1\hat s_0\hat s_0 & s_0\hat s_1\hat s_0 &
s_1\hat s_1\hat s_0 & s_0\hat s_0\hat s_1 &
s_1\hat s_0\hat s_1 & s_0\hat s_1\hat s_1 &
s_1\hat s_1\hat s_1 \\ \hline
{}[J_1]&[J_1]&0&[J_1]&[J_1]&1&1&1&1&1&1&1&1\\
{}[J_2]&[J_2]&0&[J_2]&[J_2]&2&0&2&0&2&0&2&0\\
{}[J_3]&[J_3]&0&[J_3]&[J_3]&3&-1\phm&1&1&3&-1\phm&1&1\\
{}[J_4]&[J_4]&0&[J_4]&[J_4]&4&0&0&0&4&0&0&0\\
{}[J_5]&[J_3]&[J_1]&2[J_4]-[J_3]&[J_3]&5&1&-1\phm&-1\phm&3&-1\phm&1&1\\
{}[J_6]&[J_2]&[J_2]&2[J_4]-[J_2]&[J_2]&6&0&-2\phm&0&2&0&2&0\\
{}[J_7]&[J_1]&[J_3]&2[J_4]-[J_1]&[J_1]&7&-1\phm&-1\phm&-1\phm&1&1&1&1\\
{}[J_8]&0&[J_4]&[2J_4]&0&8&0&0&0&0&0&0&0\\ \hline
\end{array} \]
}
The last eight columns of this table give the species table for
$a(\bZ/8)$.
\end{eg}

\begin{eg}
Let $G=\bZ/9$.\index{cyclic!group of order $9$}%
\index{group!cyclic of order $9$}
The tensor product table is as follows.\par
{\tiny
\[ \setlength{\arraycolsep}{3.5mm}
\renewcommand{\arraystretch}{1.2}
\begin{array}{|ccccccccc|} \hline
1&2&3&4&5&6&7&8&9\\
2&31&3^2&53&64&6^2&86&97&9^2\\
3 &3^2&3^3&63^2&6^23&6^3&96^2&9^26&9^3\\
4&53&63^2&7531&8642&96^23&9^264&9^35&9^4\\
5&64&6^23&8642&97531&9^263^2&9^353&9^44&9^5\\
6&6^2&6^3&96^23&9^263^2&9^33^3&9^43^2&9^53&9^6\\
7&86&96^2&9^264&9^353&9^43^2&9^531&9^62&9^7\\
8&97&9^26&9^35&9^44&9^53&9^62&9^71&9^8\\
9&9^2&9^3&9^4&9^5&9^6&9^7&9^8&9^9\\ \hline
\end{array} \]
}
Then the $F_j\colon a(G)\to a(\bZ/3)$, the $\hat
s_j\colon a_\bC(G) \to a_\bC(\bZ/3)$ and the $s_i\hat s_j$ are as
follows.\par
{\tiny
\[ \renewcommand{\arraystretch}{1.5}
\setlength{\arraycolsep}{0.6mm}
\begin{array}{|c||c|c|c||c|c|c||c|c|c|c|c|c|c|c|c|} \hline
& F_1 & F_2 & F_3  & \hat s_0&\hat s_1 & \hat s_2 &  s_0\hat s_0&
s_1\hat s_0 & s_2\hat s_0 & s_0\hat s_1 & s_1\hat s_1 & s_2\hat s_1
& s_0\hat s_2 & s_1\hat s_2 & s_2\hat s_2 \\ \hline
{}[J_1] & [J_1] & 0 & 0 & [J_1]& [J_1] & [J_1] 
& 1 & 1 & 1 & 1 & 1 & 1 & 1 & 1 & 1  \\
{}[J_2] & [J_2] & 0 & 0 & [J_2]& [J_2] & [J_2] 
& 2 & 1 & -1\phm & 2 & 1 & -1\phm & 2 & 1 & -1\phm \\
{}[J_3] & [J_3] & 0 & 0 & [J_3]&[J_3] & [J_3] 
& 3 & 0 & 0 & 3 & 0 & 0 & 3 & 0 & 0 \\
{}[J_4] & [J_2] & [J_1] & 0 & 2[J_3]-[J_2]& [J_2]+[J_1] & [J_2] -[J_1]
& 4 & -1\phm & 1 & 3 & 2 & 0 & 1 & 0 & -2\phm \\
{}[J_5] & [J_1] & [J_2] & 0 & 2[J_3]-[J_1]& [J_1]+[J_2] & [J_1]-[J_2] 
& 5 & -1\phm & -1\phm & 3 & 2 & 0 & -1\phm & 0 & 2 \\
{}[J_6] & 0 & [J_3] & 0 & 2[J_3]& [J_3] & -[J_3] & 
6 & 0 & 0 & 3 & 0 & 0 & -3\phm & 0 & 0 \\
{}[J_7] & 0 & [J_2] & [J_1] &2[J_3]+[J_1]&[J_2] & -[J_2] 
& 7 & 1 & 1 & 2 & 1 & -1\ & -2\phm & -1\phm & 1 \\
{}[J_8] & 0 & [J_1] & [J_2] & 2[J_3]+[J_2]& [J_1] & -[J_1] 
& 8 & 1 & -1\phm & 1 & 1 & 1 & -1\phm & -1\phm & -1\phm \\
{}[J_9] & 0 & 0 & [J_3] & 3[J_3]& 0 & 0 
& 9 & 0 & 0 & 0 & 0 & 0 & 0 & 0 & 0 \\ \hline
\end{array} \]
}
The last nine columns of this table give the species table for
$a(\bZ/9)$. 
\end{eg}

\begin{eg}
Let $G=\bZ/25$.\index{cyclic!group of order $25$}%
\index{group!cyclic of order $25$}
A quarter of the tensor product table is as follows;
the rest may be deduced using $\Omega(M)\otimes N\cong 
\Omega(M\otimes N)\oplus{}$(projective) (Schanuel's lemma), and so on.\par
{\tiny
\[ \renewcommand{\arraystretch}{1.5}
\setlength{\arraycolsep}{1.2mm}
\renewcommand{\,}{\hspace{2pt}}
\newcommand{\x}{\hspace{0.5pt}}
\hspace{-4mm}
\begin{array}{|c|ccccccc|} \hline
1&2&3&4&5&6&7&8\\ \hline
2&3\,1&4\,2&5\,3&5^2&7\,5&8\,6&9\,7\\
3&4\,2&5\,3\,1& 5^2\x 2&5^3&8\,5^2&9\,7\,5&
10\,8\,6\\
4&5\,3&5^2\,2&5^3\x 1&5^4&9\,5^3&10\,8\,5^2&
10^2\x 7\,5\\
5&5^2&5^3&5^4&5^5&10\,5^4&10^2\x 5^3&10^3\x 5^2\\
6&7\,5&8\,5^2&9\,5^3&10\,5^4&11\,9\,5^3\,1&
12\,10\,8\,5^2\,2&13\,10^2\x 7\,5\,3\\
7&8\,6&9\,7\,5&10\,8\,5^2&10^2\x 5^3&12\,10\,8\,5^2\x 2&
13\,11\,9\,7\,5\,3\,1&14\,12\,10\,8\,6\,4\,2\\
8&9\,7&10\,8\,6&10^2\x 7\,5&10^3\x 5^2&
13\,10^2\x 7\,5\,3&14\,12\,10\,8\,6\,4\,2&
15\,13\,11\,9\,7\,5\,3\,1\\
9&10\,8&10^2\,7&10^3\x 6&10^4\x 5&14\,10^3\x 6\,4&
15\,13\,10^2\x 7\,5\,3&15^2\x 12\,10\,8\,5^2\x 2\\
10&10^2&10^3&10^4&10^5&15\,10^4\x 5&15^2\x 10^3\x 5^2&
15^3\x 10^2\x 5^3\\
11&12\,10&13\,10^2&14\,10^3&15\,10^4&16\,14\,10^3\,6&
17\,15\,13\,10^2\x 7\,5&18\,15^2\x 12\,10\,8\,5^2\\
12&13\,11&14\,12\,10&15\,13\,10^2&15^2\,10^3&17\,15\,13\,10^2\x 7\,5&
18\,16\,14\,12\,10\,8\,6\,4&19\,17\,15\,13\,11\,9\,7\,5\\ 
\hline
\end{array}\bigskip \]
\[ \renewcommand{\arraystretch}{1.5}
\setlength{\arraycolsep}{1.2mm}
\renewcommand{\,}{\hspace{2pt}}
\newcommand{\x}{\hspace{0.3pt}}
\begin{array}{|c|cccc|} \hline
1&9&10&11&12\\ \hline
2&10\,8&10^2&12\,10&13\,11\\
3&10^2\x 7&10^3&13\,10^2&14\,12\,10\\
4&10^3\x 6&10^4&14\,10^3&15\,13\,10^2\\
5&10^4\x 5&10^5&15\,10^4&15^2\x 10^3\\
6&14\,10^3\x 6\,4&15\,10^4\x 5&16\,14\,10^3\x 6&
17\,15\,13\,10^2\x 7\\
7&15\,13\,10^2\x 7\,5\,3&15^2\x 10^3\x 5^2&
17\,15\,13\,10^2\x 7\,5&18\,16\,14\,12\,10\,8\,6\\
8&15^2\x 12\,10\,8\,5^2\x 2&
15^3\x 10^2\x 5^3&18\,15^2\x 12\,10\,8\,5^2&
19\,17\,15\,13\,11\,9\,7\,5\\
9&15^3\x 11\,9\,5^3\x 1&15^4\x 10\,5^4&
19\,15^3\x 11\,9\,5^3&20\,18\,15^2\x 12\,10\,8\,5^2\\
10&15^4\x 10\,5^4&15^5\x 5^5&
20\,15^4\x 10\,5^4&20^2\x 15^3\x 10^2\x 5^3\\
11&19\,15^3\x 11\,9\,5^3&20\,15^4\x 10\,5^4&
21\,19\,15^3\x 11\,9\,5^3\x 1&
22\,20\,18\,15^2\x 12\,10\,8\,5^2\x 2\\
12&20\,18\,15^2\x 12\,10\,8\,5^2&20^2\x 15^3\x 10^2\x 5^3&
22\,20\,18\,15^2\x 12\,10\,8\,5^2\x 2&
23\,21\,19\,17\,15\,13\,11\,9\,7\,5\,3\,1\\ 
\hline
\end{array}\bigskip \]
}
Then the $F_j\colon a(G)\to a(\bZ/5)$, the $\hat
s_j\colon a_\bC(G) \to a_\bC(\bZ/5)$ and the $s_i\hat s_j$ are as
follows.\index{tau@$\tau$}\index{golden ratio}\par
{\tiny
\[ \renewcommand{\arraystretch}{1.1}
\setlength{\arraycolsep}{0.73mm}\hspace{-2cm}
\begin{array}{|c||ccccc||c|c|c|c|c||c|c|c|c|c|c|c|c|c|c|c|} \hline
&F_1&F_2&F_3&F_4&F_5&\hat s_0&\hat s_1& \hat s_2& \hat s_3 & \hat s_4 &
s_0\hat s_0&s_1\hat s_0&s_2\hat s_0&s_3\hat s_0&
s_4\hat s_0 \\ \hline
{}[J_1]&[J_1]&0&0&0&0&[J_1]&[J_1]&[J_1]&[J_1]&[J_1]
&1&\phm 1&\phm 1&\phm 1&\phm 1\\
{}[J_2]&[J_2]&0&0&0&0&[J_2]&[J_2]&[J_2]&[J_2]&[J_2]
&2&\phm\tau&-\bar\tau&\phm\bar\tau&-\tau\\
{}[J_3]&[J_3]&0&0&0&0&[J_3]&[J_3]&[J_3]&[J_3]&[J_3]
&3&\phm\tau&\phm\bar\tau&\phm\bar\tau&\phm\tau\\
{}[J_4]&[J_4]&0&0&0&0&[J_4]&[J_4]&[J_4]&[J_4]&[J_4]
&4&\phm 1&-1&\phm 1&-1\\
{}[J_5]&[J_5]&0&0&0&0&[J_5]&[J_5]&[J_5]&[J_5]&[J_5]
&5&\phm 0&\phm 0&\phm 0&\phm 0\\
{}[J_6]&[J_4]&[J_1]&0&0&0&2[J_5]-[J_4]&[J_4]+\tau[J_1]&
[J_4]-\bar\tau[J_1]&[J_4]+\bar\tau[J_1]&[J_4]-\tau[J_1]
&6&-1&\phm 1&-1&\phm 1\\
{}[J_7]&[J_3]&[J_2]&0&0&0&2[J_5]-[J_3]&[J_3]+\tau[J_2]&
[J_3]-\bar\tau[J_2]&[J_3]+\bar\tau[J_2]&[J_3]-\tau[J_2]
&7&-\tau&-\bar\tau&-\bar\tau&-\tau\\
{}[J_8]&[J_2]&[J_3]&0&0&0&2[J_5]-[J_2]&[J_2]+\tau[J_3]&
[J_2]-\bar\tau[J_3]&[J_2]+\bar\tau[J_3]&[J_2]-\tau[J_3]
&8&-\tau&\phm\bar\tau&-\bar\tau&\phm\tau\\
{}[J_9]&[J_1]&[J_4]&0&0&0&2[J_5]-[J_1]&[J_1]+\tau[J_4]&
[J_1]-\bar\tau[J_4]&[J_1]+\bar\tau[J_4]&[J_1]-\tau[J_4]
&9&-1&-1&-1&-1\\
{}[J_{10}]&0&[J_5]&0&0&0&2[J_5]&\tau[J_5]&
-\bar\tau[J_5]\phm&\bar\tau[J_5]&-\tau[J_5]
&10&\phm 0&\phm 0&\phm 0&\phm 0\\
{}[J_{11}]&0&[J_4]&[J_1]&0&0&2[J_5]+[J_1]&\tau([J_4]+[J_1])&
\bar\tau([J_1]-[J_4])&\bar\tau([J_4]+[J_1])&\tau([J_1]-[J_4])
&11&\phm 1&\phm 1&\phm 1&\phm 1\\
{}[J_{12}]&0&[J_3]&[J_2]&0&0&2[J_5]+[J_2]&\tau[J_3]+\tau[J_2]&
\bar\tau([J_2]-[J_3])&\bar\tau([J_3]+[J_2])&\tau([J_2]-[J_3])
&12&\phm\tau&-\bar\tau&\phm\bar\tau&-\tau\\
{}[J_{13}]&0&[J_2]&[J_3]&0&0&2[J_5]+[J_3]&\tau[J_2]+\tau[J_3]&
\bar\tau([J_3]-[J_2])&\bar\tau([J_2]+[J_3])&\tau([J_3]-[J_2])
&13&\phm\tau&\phm\bar\tau&\phm\bar\tau&\phm\tau\\
{}[J_{14}]&0&[J_1]&[J_4]&0&0&2[J_5]+[J_4]&\tau[J_1]+\tau[J_4]&
\bar\tau([J_4]-[J_1])&\bar\tau([J_1]+[J_4])&\tau([J_4]-[J_1])
&14&\phm 1&-1&\phm 1&-1\\
{}[J_{15}]&0&0&[J_5]&0&0&3[J_5]&\tau[J_5]&
\bar\tau[J_5]&\bar\tau[J_5]&\tau[J_5]
&15&\phm 0&\phm 0&\phm 0&\phm 0\\
{}[J_{16}]&0&0&[J_4]&[J_1]&0&4[J_5]-[J_4]&\tau[J_4]+[J_1]&
\bar\tau[J_4]-[J_1]&\bar\tau[J_4]+[J_1]&\tau[J_4]-[J_1]
&16&-1&\phm 1&-1&\phm 1\\
{}[J_{17}]&0&0&[J_3]&[J_2]&0&4[J_5]-[J_3]&\tau[J_3]+[J_2]&
\bar\tau[J_3]-[J_2]&\bar\tau[J_3]+[J_2]&\tau[J_3]-[J_2]
&17&-\tau&-\bar\tau&-\bar\tau&-\tau\\
{}[J_{18}]&0&0&[J_2]&[J_3]&0&4[J_5]-[J_2]&\tau[J_2]+[J_3]&
\bar\tau[J_2]-[J_3]&\bar\tau[J_2]+[J_3]&\tau[J_2]-[J_3]
&18&-\tau&\phm\bar\tau&-\bar\tau&\phm\tau\\
{}[J_{19}]&0&0&[J_1]&[J_4]&0&4[J_5]-[J_1]&\tau[J_1]+[J_4]&
\bar\tau[J_1]-[J_4]&\bar\tau[J_1]+[J_4]&\tau[J_1]-[J_4]
&19&-1&-1&-1&-1\\
{}[J_{20}]&0&0&0&[J_5]&0&4[J_5]&[J_5]&
-[J_5]\phm&[J_5]&-[J_5]\phm
&20&\phm 0&\phm 0&\phm 0&\phm 0\\
{}[J_{21}]&0&0&0&[J_4]&[J_1]&4[J_5]+[J_1]&[J_4]&
-[J_4]\phm&[J_4]&-[J_4]\phm
&21&\phm 1&\phm 1&\phm 1&\phm 1\\
{}[J_{22}]&0&0&0&[J_3]&[J_2]&4[J_5]+[J_2]&[J_3]&
-[J_3]\phm&[J_3]&-[J_3]\phm
&22&\phm\tau&-\bar\tau&\phm\bar\tau&-\tau\\
{}[J_{23}]&0&0&0&[J_2]&[J_3]&4[J_5]+[J_3]&[J_2]&
-[J_2]\phm&[J_2]&-[J_2]\phm
&23&\phm\tau&\phm\bar\tau&\phm\bar\tau&\phm\tau\\
{}[J_{24}]&0&0&0&[J_1]&[J_4]&4[J_5]+[J_4]&[J_1]&
-[J_1]\phm&[J_1]&-[J_1]\phm
&24&\phm 1&-1&\phm 1&-1\\
{}[J_{25}]&0&0&0&0&[J_5]&5[J_5]&0&
0&0&0&25&\phm 0&\phm 0&\phm 0&\phm 0\\\hline
\end{array}\hspace{-2cm} \]
\[ \renewcommand{\arraystretch}{1.1}
\setlength{\arraycolsep}{0.8mm}\hspace{-2cm}
\begin{array}{|c||c|c|c|c|c|c|c|c|c|c|c|c|c|c|c|c|c|c|c|c|}
  \hline &
s_0\hat s_1&s_1\hat s_1&s_2\hat s_1&s_3\hat s_1&s_4\hat s_1&
s_0\hat s_2&s_1\hat s_2&s_2\hat s_2&s_3\hat s_2&s_4\hat s_2&
s_0\hat s_3&s_1\hat s_3&s_2\hat s_3&s_3\hat s_3&s_4\hat s_3&
s_0\hat s_4&s_1\hat s_4&s_2\hat s_4&s_3\hat s_4&s_4\hat s_4 \\ \hline
{}[J_1]&
1&1&\phm 1&\phm 1&\phm 1&
1&\phm 1&\phm 1&\phm 1&\phm 1&
1&\phm 1&\phm 1&1&\phm 1&
1&\phm 1&\phm 1&\phm 1&\phm 1 \\
{}[J_2]&
2&\tau&-\bar\tau&\phm\bar\tau&-\tau&
2&\phm\tau&-\bar\tau&\phm\bar\tau&-\tau&
2&\phm\tau&-\bar\tau&\bar\tau&-\tau&
2&\phm\tau&-\bar\tau&\phm \bar\tau&-\tau\\
{}[J_3]&
3&\tau&\phm\bar\tau&\phm\bar\tau&\phm\tau&
3&\phm\tau&\phm\bar\tau&\phm\bar\tau&\phm\tau&
3&\phm\tau&\phm\bar\tau&\bar\tau&\phm\tau&
3&\phm\tau&\phm\bar\tau&\phm\bar\tau&\phm\tau\\
{}[J_4]&
4&1&-1&\phm 1&-1&
4&\phm 1&-1&\phm 1&-1&
4&\phm 1&-1&1&-1&
4&\phm 1&-1&\phm 1&-1\\
{}[J_5]&
5&0&\phm 0 &\phm 0 &\phm 0&
5&\phm 0&\phm 0&\phm 0&\phm 0&
5&\phm 0&\phm 0& 0 &\phm 0&
5&\phm 0&\phm 0&\phm 0&\phm 0\\
{}[J_6]&
4+\tau&\tau^2&-\bar\tau&\phm\tau^2&-\bar\tau&
4-\bar\tau&\phm\tau&-\bar\tau^2&\phm\tau&-\bar\tau^2&
4+\bar\tau&\phm\bar\tau^2&-\tau&\bar\tau^2&-\tau&
4-\tau&\phm\bar\tau&-\tau^2&\phm\bar\tau&-\tau^2\\
{}[J_7]&
3+2\tau&\tau^3&\phm\bar\tau^2&-\tau&-1&
3-2\bar\tau&\phm\tau^2&\phm\bar\tau^3&-1&-\bar\tau&
3+2\bar\tau&-\bar\tau&-1&\bar\tau^3&\phm\tau^2&
3-2\tau&-1&-\tau&\phm\bar\tau^2&\phm\tau^3\\
{}[J_8]&
2+3\tau&\tau^3&-\bar\tau^2&-\tau&\phm 1&
2-3\bar\tau&\phm\tau^2&-\bar\tau^3&-1&\phm\bar\tau&
2+3\bar\tau&-\bar\tau&\phm 1&\bar\tau^3&-\tau^2&
2-3\tau&-1&\phm\tau&\phm\bar\tau^2&-\tau^3\\
{}[J_9]&
1+4\tau&\tau^2&\phm\bar\tau&\phm\tau^2&\phm\bar\tau&
1-4\bar\tau&\phm\tau&\phm\bar\tau^2&\phm\tau&\phm\bar\tau^2&
1+4\bar\tau&\phm\bar\tau^2&\phm\tau&\bar\tau^2&\phm\tau&
1-4\tau&\phm\bar\tau&\phm\tau^2&\phm\bar\tau&\phm\tau^2\\
{}[J_{10}]&
5\tau&0&\phm 0&\phm 0&\phm 0&
-5\bar\tau&\phm 0&\phm 0&\phm 0&\phm 0&
5\bar\tau&\phm 0&\phm 0&0&\phm 0&
-5\tau&\phm 0&\phm 0&\phm 0&\phm 0\\
{}[J_{11}]&
5\tau&2\tau&\phm 0&\phm 2\tau&\phm 0&
-3\bar\tau&\phm 0&\phm 2\bar\tau&\phm 0&\phm 2\bar\tau&
5\bar\tau&\phm 2\bar\tau&\phm 0&2\bar\tau&\phm 0&
-3\tau&\phm 0&\phm 2\tau&\phm 0&\phm 2\tau\\
{}[J_{12}]&
5\tau&2\tau^2&\phm 0&-2&\phm 0&
-\bar\tau&\phm 0&-2\bar\tau^2&\phm 0&\phm 2&
5\bar\tau&-2&\phm 0&2\bar\tau^2&\phm 0&
-\tau&\phm 0&\phm 2&\phm 0&-2\tau^2\\
{}[J_{13}]&
5\tau&2\tau^2&\phm 0&-2&\phm 0&
\bar\tau&\phm 0&\phm 2\bar\tau^2&\phm 0&-2&
5\bar\tau&-2&\phm 0&2\bar\tau^2&\phm 0&
\tau&\phm 0&-2&\phm 0&\phm 2\tau^2\\
{}[J_{14}]&
5\tau&2\tau&\phm 0&\phm 2\tau&\phm 0&
3\bar\tau&\phm 0&-2\bar\tau&\phm 0&-2\bar\tau&
5\bar\tau&\phm 2\bar\tau&\phm 0&2\bar\tau&\phm 0&
3\tau&\phm 0&-2\tau&\phm 0&-2\tau\\
{}[J_{15}]&
5\tau&0&\phm 0&\phm 0&\phm 0&
5\bar\tau&\phm 0&\phm 0&\phm 0&\phm 0&
5\bar\tau&\phm 0&\phm 0&0&\phm 0&
5\tau&\phm 0&\phm 0&\phm 0&\phm 0\\
{}[J_{16}]&
1+4\tau&\tau^2&\phm\bar\tau&\phm\tau^2&\phm\bar\tau&
4\bar\tau-1&-\tau&-\bar\tau^2&-\tau&-\bar\tau^2&
1+4\bar\tau&\phm\bar\tau^2&\phm\tau&\bar\tau^2&\phm\tau&
4\tau-1&-\bar\tau&-\tau^2&-\bar\tau&-\tau^2\\
{}[J_{17}]&
2+3\tau&\tau^3&-\bar\tau^2&-\tau&\phm 1&
3\bar\tau-2&-\tau^2&\phm\bar\tau^3&\phm 1&-\bar\tau&
2+3\bar\tau&-\bar\tau&\phm 1&\bar\tau^3&-\tau^2&
3\tau-2&\phm 1&-\tau&-\bar\tau^2&\tau^3\\
{}[J_{18}]&
3+2\tau&\tau^3&\phm\bar\tau^2&-\tau&-1&
2\bar\tau-3&-\tau^2&-\bar\tau^3&\phm 1&\phm\bar\tau&
3+2\bar\tau&-\bar\tau&-1&\bar\tau^3&\phm \tau^2&
2\tau-3&\phm 1&\phm\tau&-\bar\tau^2&-\tau^3\\
{}[J_{19}]&
4+\tau&\tau^2&-\bar\tau&\phm\tau^2&-\bar\tau&
\bar\tau-4&-\tau&\phm\bar\tau^2&-\tau&\phm\bar\tau^2&
4+\bar\tau&\phm\bar\tau^2&-\tau&\bar\tau^2&-\tau&
\tau-4&-\bar\tau&\phm\tau^2&-\bar\tau&\phm\tau^2\\
{}[J_{20}]&
5&0&\phm 0&\phm 0&\phm 0&
-5&\phm 0&\phm 0&\phm 0&\phm 0&
5&\phm 0&\phm 0&0&\phm 0&
-5&\phm 0&\phm 0&\phm 0&\phm 0\\
{}[J_{21}]&
4&1&-1&\phm 1&-1&
-4&-1&\phm 1&-1&\phm 1&
4&\phm 1&-1&1&-1&
-4&-1&\phm 1&-1&\phm 1\\
{}[J_{22}]&
3&\tau&\phm\bar\tau&\phm\bar\tau&\phm\tau&
-3&-\tau&-\bar\tau&-\bar\tau&-\tau&
3&\phm\tau&\phm\bar\tau&\bar\tau&\phm\tau&
-3&-\tau&-\bar\tau&-\bar\tau&-\tau\\
{}[J_{23}]&
2&\tau&-\bar\tau&\phm\bar\tau&-\tau&
-2&-\tau&\phm\bar\tau&-\bar\tau&\phm\tau&
2&\phm\tau&-\bar\tau&\bar\tau&-\tau&
-2&-\tau&\phm\bar\tau&-\bar\tau&\phm\tau\\
{}[J_{24}]&
1&1&\phm 1&\phm 1&\phm 1&
-1&-1&-1&-1&-1&
1&\phm 1&\phm 1&1&\phm 1&
-1&-1&-1&-1&-1\\
{}[J_{25}]&
0&0&\phm 0&\phm 0&\phm 0&
\phm 0&\phm 0&\phm 0&\phm 0&\phm 0&
0&\phm 0&\phm 0&0&\phm 0&
\phm 0&\phm 0&\phm 0&\phm 0&\phm 0\\
\hline
\end{array}\hspace{-2cm} \]
}
The last $5$ columns of the first table above together with 
the second table give the species table for $a(\bZ/25)$.
\end{eg}

\section{Cyclic normal subgroups}%
\index{cyclic!normal subgroups}\index{normal subgroup, cyclic}

The methods of Sections~\ref{se:Frobgroup} and~\ref{se:cyclic-central}
can be combined to deal with cyclic normal subgroups, taking into
account the action of the normaliser. Let $P=\langle g\rangle \unlhd
G$ be a normal subgroup of order $p$, and let $C=C_G(P)$, a normal
subgroup of $G$ of index $m$ a divisor of $p-1$. Thus we have 
$G/C\cong \bZ/m$. Let $k$ be a field of
characteristic $p$ containing a primitive $2m$th root of unity.
And as in Section~\ref{se:Frobgroup}, it is convenient to make a central
extension of $G$ by an element of order two. In order to do this, we form the
pullback $\tilde G$ of $G\to\bZ/m$ and $\bZ/2m \to \bZ/m$:
\[ \xymatrix{&&1\ar[d] & 1\ar[d] \\
&& \bZ/2\ar[d]\ar@{=}[r] & \bZ/2 \ar[d] \\
1\ar[r] & C\ar[r]\ar@{=}[d] &\tilde G \ar[d]\ar[r] & \bZ/2m \ar[d]\ar[r] & 1 \\
1\ar[r] & C\ar[r] & G \ar[r]\ar[d] & \bZ/m \ar[r]\ar[d] & 1\\
&&1&1} \]
Let $h\in G$ be an element mapping to a generator of $G/C$,
let $hgh^{-1}=g^q$, and let 
\[ x=\sum_{1\le j\le p-1}g^j\in kP\le kG. \] 
Then we have $x^p=0$, $h^{2m}\in C\le \tilde G$, $hx=qxh$. Let $\eta$
be a square root of $q$ in $k$, and let $S_i$ ($i\in \bZ/2m$) be the
simple $kG$-module with a basis vector $v_i$ such that $C$ acts
trivially, and $hv_i=\eta^iv_i$. 
The functors $F_i$ of
Section~\ref{se:cyclic-central} is designed to pick out the socles of
the Jordan blocks of length $i$ of the action of 
$kP$. Tensoring with $S_2$ moves us down one radical layer of these
Jordan blocks, so to obtain a suitably symmetric definition, we should define
\[ F_i(M) = S_{-i} \otimes \frac{\Ker(x) \cap
    \im(x^{i-1})}{\Ker(x)\cap\im(x^i)}. \]
With this definition, as in Proposition~\ref{pr:cijkFiFj}, 
for $1\le k\le p-1$ we have
\[ F_k(M \otimes N) \cong \bigoplus_{i,j}c_{i,j,k}F_i(M)\otimes
  F_j(N). \]
with the same coefficients $c_{i,j,k}$ as before. 

\begin{theorem}
We have $p-1$ algebra homomorphisms $\hat s_i\colon a_\bC(\tilde G)\to
a_\bC(\tilde G/P)$
\[ \hat s_i \colon [M] \mapsto \sum_{k=1}^{p-1}
  2\cos(ik\pi/p)[F_k(M)] \]
with $0<i<p$. They are continuous with respect to the norm, and extend
to give maps of Banach algebras $\hat s_i\colon \hat a(\tilde G) \to
\hat a(\tilde G/P)$.
\end{theorem}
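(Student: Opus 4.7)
The plan is to mirror the proof of Theorem~\ref{th:sellhat}, adapted to the setting where $P$ is only normal (not central) in $\tilde G$. The three key steps are an equivariant K\"unneth-type decomposition of $F_k$ on tensor products, the resulting multiplicativity of $\hat s_i$, and a boundedness estimate that yields continuity and the extension to the Banach completion.

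First I would establish, for $1\le k\le p-1$, a decomposition
\[ F_k(M\otimes N)\;\cong\;\bigoplus_{i,j=1}^{p-1}c_{i,j,k}\,F_i(M)\otimes F_j(N) \]
as $k(\tilde G/P)$-modules, where the $c_{i,j,k}$ are the structure constants of $a(P)$. The underlying linear isomorphism is Proposition~\ref{pr:cijkFiFj} applied after restriction to $P$. Upgrading to $\tilde G/P$-equivariance is the content of this step: the relation $hxh^{-1}=qx$ with $q=\eta^{2d}$ shifts the $h$-eigenvalue by $\eta^{-2d}$ at each application of $x$, so the socle of a length-$k$ Jordan block carries a cumulative shift of $\eta^{-2d(k-1)}$ relative to its top. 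The twist by $S_{-k}$ built into the definition of $F_k$ is what balances these shifts, and I would verify equivariance by reducing to $M=J_a\otimes S_b$, $N=J_{a'}\otimes S_{b'}$ and tracking $h$-weights through the identification explicitly on each summand. This bookkeeping is the step I expect to be the main obstacle, as it requires careful compatibility between the weight shifts on both sides of the K\"unneth isomorphism.

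Granted the equivariant K\"unneth formula, multiplicativity of $\hat s_i$ follows formally from the ring-homomorphism property of the corresponding species of $a(P)$: substituting the decomposition into $\hat s_i([M\otimes N])$ and collecting terms yields $\hat s_i([M])\hat s_i([N])$ provided the coefficients satisfy $\sum_{k=1}^{p-1}\alpha_k^{(i)}\,c_{j,\ell,k}=\alpha_j^{(i)}\alpha_\ell^{(i)}$, which is precisely the species identity of Theorem~\ref{th:cyclic-species} (the $k=p$ term vanishes because the relevant species is core bounded). Additivity and preservation of the identity are immediate from the definition.

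For continuity, writing $x=\sum_I a_I[M_I]$ I would estimate
\[ \|\hat s_i(x)\|\;\le\;\sum_I|a_I|\sum_{k=1}^{p-1}|2\cos(ik\pi/p)|\,\dim F_k(M_I)\;\le\;\sum_I|a_I|\dim M_I\;=\;\|x\|, \]
using $|2\cos(ik\pi/p)|\le 2\le k$ for $k\ge 2$ and the elementary fact that the socle layers of all Jordan blocks of $x$ on $M_I$ account for its total dimension, so that $\sum_k k\dim F_k(M_I)\le\dim M_I$. By Lemma~\ref{le:bd=cts}, $\hat s_i$ is continuous on $a_\bC(\tilde G)$, and by density of $a_\bC(\tilde G)$ in $\hat a(\tilde G)$ it extends uniquely to a continuous algebra homomorphism $\hat a(\tilde G)\to\hat a(\tilde G/P)$.
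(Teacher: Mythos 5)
Your route matches the paper's: the equivariant K\"unneth decomposition of $F_k$ over tensor products (which the paper asserts just before the theorem, with the same structure constants $c_{i,j,k}$ as for $a(\bZ/p)$) yields multiplicativity, and a boundedness estimate gives continuity, just as in Theorem~\ref{th:sellhat}. The paper packages it slightly differently --- it first produces a homomorphism $\hat s\colon[M]\mapsto\sum_{k=1}^{p-1}s([J_k])[F_k(M)]$ for every non-Brauer species $s$ of $a(\bZ/p\rtimes\bZ/2m)$ and then reads off, from the presentation $\bZ[X,Y]/(Y^{2m}-1,(X-Y-Y^{-1})f_p(X))$ of Theorem~\ref{th:Frobgroup}, that $s([J_k])$ depends only on $s(X)=2\cos(i\pi/p)$ with $0<i<p$, so that only $p-1$ distinct maps $\hat s_i$ arise --- but the substance is the same as your Step~2.

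There is, however, a concrete gap, coming from taking the displayed coefficient $2\cos(ik\pi/p)$ at face value. The coefficient of $[F_k(M)]$ has to be the value of a core-bounded species of $a(\bZ/p)$ on $[J_k]$, namely $s_i([J_k])=f_k(2\cos(i\pi/p))=\sin(ik\pi/p)/\sin(i\pi/p)$; the paper's proof always writes it as $s([J_k])$, never as an explicit trigonometric expression, and the two are not equal. This matters for both of your remaining steps. The identity $\sum_{k}\alpha_k^{(i)}c_{j,\ell,k}=\alpha_j^{(i)}\alpha_\ell^{(i)}$ that you invoke is precisely the statement that $(\alpha_k^{(i)})_k$ are species values: it holds for $f_k(2\cos(i\pi/p))$, but fails for $2\cos(ik\pi/p)$ (already $\alpha_1^{(i)}$ must equal $1$ for the unit to be sent to the unit, and $2\cos(i\pi/p)\neq 1$). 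Your continuity bound rests on $|\alpha_k^{(i)}|\le k$, which is the inequality $|\sin(ik\pi/p)/\sin(i\pi/p)|\le k$ and holds for the species values, but fails at $k=1$ for $|2\cos(i\pi/p)|$. With the corrected coefficient, both of those steps go through exactly as in Theorem~\ref{th:sellhat}.
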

\begin{proof}
As in the proof of Theorem~\ref{th:sellhat},
if $s$ is a non-Brauer species of $a(\bZ/p\rtimes\bZ/2m)$, the map
\[ \hat s \colon [M] \mapsto \sum_{k=1}^{p-1} s([J_k])[F_k(M)] \]
defines an algebra homomorphism $\hat s\colon a_\bC(\tilde G) \to
a_\bC(\tilde G/P)$, which is the identity on the subalgebra
$a_\bC(\tilde G/P)\subseteq a_\bC(\tilde G)$. 
This is continuous with respect to the
norm, and so it extends to give a map of Banach algebras $\hat s\colon
\hat a(\tilde G) \to \hat a(\tilde G/P)$.

If $s$ is a non-Brauer species of $a(\bZ/p\rtimes\bZ/2m)$, then the
map $\hat s$ only depends on the value of $s$ on the elements $[J_k]$.
We are in the situation where $d=1$ in
Theorem~\ref{th:Frobgroup}, and so we have
\[  a(\bZ/p\rtimes\bZ/2m)\cong\bZ[X,Y]/(Y^{2m}-1,(X-Y-Y^{-1})f_p(X)). \]
The element $[J_k]$ corresponds to $f_k(X)$, which is in the subring
generated by $X$. The $2(p-1)m$ non-Brauer species $s_{i,j}$ are given by
$X \mapsto \zeta_{2p}^i+\zeta_{2p}^{-i}=2\cos(i\pi/p)$, $Y\mapsto \zeta_{2m}^j$,
with $0<i<p$, $0\le j<2m$. The value of $s_{i,j}$ on the elements
$[J_k]$ therefore only depends on $i$, and we write $\hat s_i$ for the
common value of the $\hat s_{i,j}$.
\end{proof}

This theorem may be used in order to construct all the species of the
Frobenius group\index{group!Frobenius}\index{Frobenius!group}
$a(\bZ/p^n \rtimes \bZ/m)$ with $m$ coprime to $p$, and show that it
is semisimple.\index{semisimple} 
As in that case, we do need one more ring homomorphism 
\[ \hat s_0\colon a(\bZ/p^{n+1}\rtimes \bZ/2m) \to a(\bZ/p^n\rtimes
\bZ/2m) \]
as in Lemma~\ref{le:s0hat}, constructed in a similar way.
Even though the tensor products are more complicated than in the cyclic
case, working modulo the ideal spanned by the
modules $J_{p^n} \otimes S_i$, the tensor product relations~\eqref{eq:JJ}
still hold. In the proof, instead of preserving dimension, we have to
preserve Brauer species, and so the map is given by
\begin{multline*} \hat s_0\colon [J_{2bp^n\pm r}] \to [S_{-d((2b-1)p^n\pm r)} \oplus 
S_{-d((2b-3)p^n\pm r)} \oplus \dots \oplus S_{-d(3p^n\pm r)} \oplus S_{-d(p^n\pm r)} \\ 
\oplus S_{d(p^n\pm r)} \oplus S_{d(3p^n\pm r)}\oplus \dots
\oplus S_{d((2b-3)p^n\pm r)} \oplus S_{d((2b-1)p^n\pm r)}][J_{p^n}]
\pm [J_r]
\end{multline*}
and $\hat s_0\colon [S_i] \mapsto [S_i]$.
The $2mp^n$ species of $a(\bZ/p^n\rtimes \bZ/2m)$ are then given by
\[ s_{\ell_0,j}\hat s_{\ell_1}\dots \hat s_{\ell_{n-1}} \]
with $0\le \ell_i\le p-1$ for $0\le i\le n-1$, and with $0\le j<2m$.
Restricting to the range $0\le j < m$ gives the species for $a(\bZ/p^n\rtimes\bZ/m)$.

\begin{rk}
All these species satisfy $s([M^*]) = \overline{s([M])}$ for all
modules $M$. It follows that $a(\bZ/p^n\rtimes \bZ/m)$ is a symmetric
Banach $*$-algebra,%
\index{symmetric!Banach $*$-algebra}%
\index{Banach!star@$*$-algebra!symmetric}
see Section~\ref{se:symm-rep-ring}.
\end{rk}

\section{An integral example}\label{se:Z2Z/4}%
\index{integral representation}\index{cyclic!group of order $4$}%
\index{group!cyclic of order $4$}

In modular representation theory of finite groups, finite
representation type implies that the representation ring is
semisimple.\index{semisimple} 
Here we give an example to show that this no longer holds
in integral representation theory. Let $\bZ_2$ denote the ring of $2$-adic
integers, and
consider the group ring
$\bZ_2 G$, where $G=\bZ/4$, the cyclic group of order four. 
Troy~\cite{Troy:1961a}, Roiter~\cite{Roiter:1960a} showed that there are nine
isomorphism classes of indecomposable finitely generated $\bZ_2$-free
$\bZ_2 G$-modules. Reiner~\cite{Reiner:1965a} denotes the basis
elements of the representation ring corresponding to these
indecomposable modules 
$c_1,\dots,c_9$, and computes the tensor products, which are as in the
following table.\par
{\tiny
\[ \begin{array}{c|cccccccc} 
c_1 & c_2 & c_3 & c_4 & c_5 & c_6 & c_7 & c_8 & c_9 \\ \hline
c_2 & c_1 & c_3 & c_4 & c_6 & c_5 & c_7 & c_8 & c_9 \\
c_3 & c_3 & 2c_4 & 2c_3 & c_4 + c_9 & c_4 + c_9 & 
c_3 + c_4 + c_9 & c_3 + c_4 + c_9 & 2c_9  \\
c_4 & c_4 & 2c_3 & 2c_4 & c_3 + c_9 & c_3 + c_9 &
c_3 + c_4 + c_9 & c_3 + c_4 + c_9 & 2c_9 \\
c_5 & c_6 & c_4 + c_9 & c_3 + c_9 & c_1 + 2c_9 & c_2 + 2c_9 &
c_8 + 2c_9 & c_7 + 2c_9 & 3c_9 \\
c_6 & c_5 & c_4 + c_9 & c_3 + c_9 & c_2 + 2c_9 & c_1 + 2c_9 &
c_8 + 2c_9 & c_7 + 2c_9 & 3c_9 \\
c_7 & c_7 & c_3 + c_4 + c_9 & c_3 + c_4 + c_9 & c_8 + 2c_9 & 
c_8 + 2c_9 & c_7 + c_8 + 2c_9  & c_7 + c_8 + 2c_9 & 4c_9 \\
c_8  & c_8 & c_3 + c_4 + c_9 & c_3 + c_4 + c_9 & c_7 + 2c_9 &
c_7 + 2c_9 & c_7 + c_8 + 2c_9 & c_7 + c_8 + 2c_9 & 4c_9 \\
c_9 & c_9 & 2c_9 & 2c_9 & 3c_9 & 3c_9 & 4c_9 & 4c_9 & 4c_9
\end{array} \]
}
The representation ring $a(\bZ_2G)$ and its representation ideals 
are displayed in the following diagram:
\[ \xymatrix@C=-4mm{a(\bZ_2G)\ar@{-}[d] \\
a_{\max}(\bZ_2G) =\langle c_3,c_4,c_7,c_8,c_9\rangle
\ar@{-}[d]^{\leftarrow\text{The nil radical lives here}} \\
\langle c_3,c_4,c_9\rangle\ar@{-}[d] \\
a_{\proj}(\bZ_2G)=\langle c_9\rangle} \]
The element $c_7 - c_8$ squares to zero, and generates the
nil radical. The quotient is semisimple, with eight species given by
the following table, where we have reordered the indecomposables to
reflect the structure of the representation ideals.
\[ \setlength{\arraycolsep}{1mm}
\renewcommand{\arraystretch}{1.3}
\begin{array}{|c|cccccccc|} \hline
c_1 & \phm 1\phm & \phm 1\phm & \phm 1\phm & 
\phm 1\phm & \phm 1\phm & \phm 1\phm & \phm 1\phm & \phm 1\phm \\
c_2 & 1 & 1 & 1 & 1 & 1 & 1 & -1\phm & -1\phm \\
c_5 & 3 & 1 & -1\phm & 1 & 1 & -1\phm & 1 & -1\phm \\
c_6 & 3 & 1 & -1\phm & 1 & 1 & -1\phm & -1\phm & 1 \\
c_7 & 4 & 2 & 0 & 2 & 0 & 0 & 0 & 0\\
c_8 & 4 & 2 & 0 & 2 & 0 & 0 & 0 & 0\\
c_3 & 2 & 2 & -2\phm & 0 & 0 & 0 & 0 & 0 \\
c_4 & 2 & 2 & 2 & 0 & 0 & 0 & 0 & 0 \\
c_9 & 4 & 0 & 0 & 0 & 0 & 0 & 0 & 0 \\ \hline
\end{array} \]
It is fairly easy to see that the role of the element $\rho$ is played
by $c_9$, which is the only projective module. 
The dimension function is the first column of numbers in the table,
and is the only Brauer species. Every module is self-dual, and since
all the entries in the table above are real, it follows that
$a(\bZ_2G)$ is a symmetric representation ring, 
see Section~\ref{se:symm-rep-ring}.%
\index{symmetric!representation ring}%
\index{representation!ring!symmetric}

\begin{rk}
In the same paper, Reiner~\cite{Reiner:1965a} shows that whenever $G$
is a cyclic group of order $p^n$ with $n\ge 2$, there is a non-zero
nilpotent element in $a(\bZ_pG)$. This result is extended in
Reiner~\cite{Reiner:1966a}.

The representation type of $\bZ_p G$ is finite if and only if the
Sylow $p$-subgroups of $G$ are trivial, or cyclic of order $p$ or
$p^2$, see Heller and
Reiner~\cite{Heller/Reiner:1962a,Heller/Reiner:1963a}. 
Integral representations of the dihedral group of order $2p$ 
are described in Lee~\cite{Lee:1964a}.%
\index{dihedral group!of order $2p$}\index{group!dihedral of order $2p$}
It would be interesting to know the tensor products of integral 
representations in cyclic and
dihedral cases of finite representation type.
\end{rk}

\section{\texorpdfstring{The group $SL(2,q)$}
{The group SL(2,q)}}\index{SL@$SL(2,q)$}\label{se:SL2q}

Let $q=p^m$ be a power of a prime $p$, and let $k$ be a field containing
$\bF_q$, and let $G=SL(2,q)$. 
In this section, we examine the two dimensional natural module $M$ for
$SL(2,q)$ over $k$. The goal is to show that
$\npj_G(M)=2\cos(\pi/q)$. 
On the way to this, we shall show that the
subring of $a(G)$ generated by summands of tensor powers of $M$ is
isomorphic to a ring of algebraic integers $\bZ[2\cos(\pi/q)]$, 
which in turn is the real
subring of the cyclotomic integers\index{cyclotomic!integers}%
\index{integers, cyclotomic} $\bZ[\zeta_q]$ where
$\zeta_q=e^{2\pi\bi/q}$.

We shall use the theory of tilting modules\index{tilting module}  $T(n)$
for $SL(2,\bar k)$, which turn out to be the direct summands of
the natural module $L(1)$.
A general discussion of tilting modules for reductive groups
may be found in Donkin~\cite{Donkin:1993a}, to which we
refer for general background.
There are also relevant discussions of
summands of tensor powers of the natural $SL(2,q)$-modules
in Alperin~\cite{Alperin:1976b} and
Craven~\cite{Craven:2013a}. 

The simple $SL(2,\bar k)$-modules $L(n)$ are indexed by their highest
weight, which in this case is an integer $n\ge 0$. In
particular, $L(0)$ is the trivial module, $L(1)$ is the natural
two dimensional module, and for $0\le n\le p-1$ we have $L(n)\cong
S^n(L(1))$, the symmetric 
powers\index{symmetric!powers of natural module} 
of the natural module.

Steinberg's tensor product 
theorem\index{Steinberg's tensor product theorem}%
\index{tensor product!theorem, Steinberg's}
states that if
$n=\sum_{j=0}^{m-1}n_jp^j$ with $0\le n_j\le p-1$
then 
\[ L(n)\cong \bigotimes_{j=0}^{m-1}F^j(L(n_j)). \]
The restriction of the modules $L(n)$ to $SL(2,q)$ for $0\le n<q$, 
which we continue to denote $L(n)$, form
a complete set of irreducible modules for $SL(2,q)$.

From~\cite{Donkin:1993a}, we know that the tilting module $T(n)$ 
(Donkin's notation is $M(\lambda)$) is the unique indecomposable
summand of $L(1)^{\otimes n}$ with $n$ as a highest weight. A module is a
direct sum of tilting modules if and only if it is a direct summand of
a direct sum of tensor powers of $L(1)$. Tilting modules are
determined by their weights.  

\begin{theorem}
Let $L(1)$ be the natural two dimensional module for $SL(2,q)$ as
above. Then $L(1)$ is algebraic, and we have 
\[ \npj(L(1))=2\cos\pi/q. \] 
More generally, if $1\le
j\le p-1$ then 
\[ \npj(L(j))=\sin((j+1)\pi/q)/\sin(\pi/q). \]
\end{theorem}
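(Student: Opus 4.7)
My plan is to use tilting theory to identify the subring of $a(G)/\langle\fX_\proj\rangle$ generated by $[L(1)]$ with $\bZ[X]/(f_q(X))$, and then read off $\npj_G$ by a Perron--Frobenius argument.

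The first step is to describe the summands of tensor powers of $L(1)$. Since $L(1)=T(1)$ and tilting modules are closed under tensor products and direct summands, every such summand is a tilting module $T(n)$. The key fact I would invoke from Donkin's theory is that, restricted to $SL(2,q)$, the tilting module $T(n)$ is projective precisely when $n\ge q-1$ (one way to see this is that $T(n)$ is injective as a module for the $m$-th Frobenius kernel when $n\ge q-1$, and this transfers to projectivity over $SL(2,q)$). Hence modulo $\fX_\proj$ the only nonzero indecomposable classes arising from tensor powers of $L(1)$ are $[T(0)],[T(1)],\dots,[T(q-2)]$, and I claim they form a $\bZ$-basis for the subring $R$ of $a(G)/\langle\fX_\proj\rangle$ that they generate.

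Second, I would establish the recurrence $[L(1)]\cdot[T(n)] = [T(n-1)] + [T(n+1)]$ in $R$ for $1\le n\le q-3$, together with the boundary relation $[L(1)]\cdot[T(q-2)] = [T(q-3)]$ modulo projectives. For $n\le p-2$ one has $T(n)=L(n)=S^n(L(1))$ and the formula is classical Clebsch--Gordan; for larger $n$ it follows from the decomposition $L(1)\otimes T(n)\cong T(n+1)\oplus T(n-1)$ in the tilting category of $SL(2,\bar k)$, obtained by matching highest weights and dimensions, together with the fact that at $n=q-2$ the summand $T(q-1)$ is projective and drops out. By induction, using the recurrence defining $f_j$ in Definition~\ref{def:fj}, this gives $[T(n)]=f_{n+1}([L(1)])$, and the boundary relation becomes $f_q([L(1)])=0$. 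Comparing ranks, $R\cong\bZ[X]/(f_q(X))$, proving that $L(1)$ is algebraic modulo $\fX_\proj$.

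Third, I apply Perron--Frobenius. By Lemma~\ref{le:fpX}\,(iv) the roots of $f_q(X)$ are the distinct real numbers $2\cos(k\pi/q)$ for $1\le k\le q-1$, so $R_\bC$ is semisimple with species $s_k\colon[L(1)]\mapsto 2\cos(k\pi/q)$. The matrix of multiplication by $[L(1)]$ in the basis $\{[T(n)]\}_{0\le n\le q-2}$ is the symmetric tridiagonal matrix with $1$'s on the sub- and superdiagonals; it is nonnegative and irreducible, with Perron eigenvalue $2\cos(\pi/q)$ and Perron eigenvector $(\sin((n+1)\pi/q)/\sin(\pi/q))_{n}$. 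Since $R_\bC$ is finite dimensional, it is closed in $\hat a_\proj(G)$, so by Proposition~\ref{pr:Gelfand} the spectral radius of $[L(1)]$ in $\hat a_\proj(G)$ agrees with its spectral radius in $R_\bC$, namely $2\cos(\pi/q)$; by Theorem~\ref{th:spec-radius} this equals $\npj_G(L(1))$. For $1\le j\le p-1$ we have $[L(j)]=[T(j)]=f_{j+1}([L(1)])$ as elements of $R$, and multiplication by this nonnegative element is again represented by a nonnegative matrix with the same Perron eigenvector; its Perron eigenvalue is $f_{j+1}(2\cos(\pi/q))=\sin((j+1)\pi/q)/\sin(\pi/q)$ by~\eqref{eq:fj2costheta}, giving $\npj_G(L(j))=\sin((j+1)\pi/q)/\sin(\pi/q)$.

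The hard part will be the tilting-theoretic input in the first two steps: pinning down that $T(n)$ is projective over $SL(2,q)$ exactly when $n\ge q-1$, and justifying the tensor product decomposition $L(1)\otimes T(n)=T(n+1)\oplus T(n-1)$ for $n\ge p-1$, including the crucial boundary case $n=q-2$ where the recurrence terminates. For $q=p$ both are straightforward, but for $q>p$ one must either quote Donkin's results on good filtrations and Frobenius kernels or bootstrap using Steinberg's tensor product theorem.
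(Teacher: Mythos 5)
Your overall strategy matches the paper's: work in the tilting subring, show $f_q([L(1)])=0$ modulo projectives, and extract $\npj(L(1))$ as the Perron eigenvalue $2\cos(\pi/q)$. But the second step as you've written it is wrong, and the error is precisely in the ``hard part'' you flagged: the decomposition
\[
L(1)\otimes T(n)\;\cong\;T(n+1)\oplus T(n-1)
\]
\emph{fails} as soon as $n\ge p-1$. Already at $n=p-1$ one has $L(1)\otimes T(p-1)\cong T(p)$ with no $T(p-2)$ summand (check dimensions: $\dim T(p)=2p=2\cdot p$, and $T(p-2)$ would add $p-1$ more). For $p=2$, $q=4$ this gives $L(1)\otimes L(1)\cong T(2)$ and $L(1)\otimes T(2)\cong T(3)\oplus 2T(1)$, so the matrix of multiplication by $[L(1)]$ on the basis $\{[T(0)],[T(1)],[T(2)]\}$ is $\bigl(\begin{smallmatrix}0&0&0\\ 1&0&2\\ 0&1&0\end{smallmatrix}\bigr)$, which is neither symmetric nor tridiagonal with $1$'s, and the identification $[T(n)]=f_{n+1}([L(1)])$ breaks ($[T(2)]=[L(1)]^2$, not $[L(1)]^2-1$). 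The mistake is conflating the tilting module $T(n)$ with the Weyl/dual Weyl module $\nabla(n)$: the three-term recurrence you want is a character identity $(t+t^{-1})f_{n+1}(t+t^{-1})=f_{n+2}(t+t^{-1})+f_n(t+t^{-1})$, which records the $\nabla$-filtration of $T(1)\otimes T(n)$, not its tilting decomposition. The two coincide only for $n\le p-2$, where $T(n)=\nabla(n)=L(n)$.

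The fix — and this is what the paper does — is to exploit the ring embedding $a_\tilt(SL(2,\bar k))\hookrightarrow\bZ[t,t^{-1}]$ by characters and work there. Under this embedding $[L(1)]\mapsto t+t^{-1}$, so $f_q([L(1)])\mapsto f_q(t+t^{-1})=t^{q-1}+t^{q-3}+\cdots+t^{-q+1}$, which factors by Steinberg's tensor product theorem as $\prod_{j=0}^{m-1}f_p(t^{p^j}+t^{-p^j})$ and is the character of $L(q-1)=T(q-1)$. Since tilting modules are determined by their characters, $f_q([L(1)])=[T(q-1)]$ in $a_\tilt$, and $T(q-1)$ is the Steinberg module, projective over $SL_2(q)$; hence $f_q([L(1)])=0$ in $a(G)/a(G,1)$. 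Minimality of $f_q$ is established not via a rank count on $\{[T(n)]\}$ (which would require knowing each $[T(n)]$, $0\le n\le q-2$, is nonzero modulo projectives — a statement you'd still have to prove) but via Lemma~\ref{le:fpX}: the irreducible factors of $f_q$ correspond to the Steinberg tensor factors $F^j(L(p-1))$ of $L(q-1)$, and no proper sub-product of these is projective. Your Perron--Frobenius endgame is fine in spirit, but since the matrix is not the one you wrote and is not obviously irreducible, you should appeal to the abstract Theorem~\ref{th:PF} rather than to explicit eigenvector formulas. The final answer $\npj(L(1))=2\cos(\pi/q)$ and $\npj(L(j))=\sin((j+1)\pi/q)/\sin(\pi/q)$ is of course correct once $f_q$ is identified as the minimal polynomial.
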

\begin{proof}
Let $a_\tilt(SL(2,\bar k))$ be the subring of the representation ring of
rational $SL(2,\bar k)$-modules generated by the tilting modules. Then 
$a_\tilt(SL(2,\bar k))$ is isomorphic to the subring of
$\bZ[t,t^{-1}]$ generated by $t+t^{-1}$, with the powers of $t$
representing the non-negative weights, and $L(1)$ corresponding to
$t+t^{-1}$. 

Let $f_j(t)$ be the polynomials defined in
Definition~\ref{def:fj}. Then we have 
\[ f_j(t+t^{-1})=t^{j-1}+t^{j-3}+\dots+t^{-j+3}+t^{-j+1} \]
and
\[ (t+t^{-1})f_j(t+t^{-1})=f_{j+1}(t+t^{-1})+f_{j-1}(t+t^{-1}). \]
In particular, 
\begin{align*} 
f_q(t+t^{-1})&=
t^{q-1}+t^{q-3}+\dots+t^{-q+3}+t^{-q+1} \\
&=\prod_{j=1}^m(t^{p^{j-1}(p-1)}+t^{p^{j-1}(p-3)}+\dots+t^{-p^{j-1}(p-1)})\\
&=\prod_{j=1}^m
f_p(t^{p^j}+t^{-p^j}) 
\end{align*}
is the character of the
Steinberg module $L(q-1)$ for $SL(2,q)$. This a projective module of dimension
$q$. It follows that in $a(G)/a(G,1)$, we have $f_q[L(1)]=0$. In
particular, by Lemma~\ref{le:alg-mod-proj}, $[L(1)]$ is algebraic in
$a(G)$. Now  by Lemma~\ref{le:fpX}, the irreducible factors of
$f_q(X)$ exactly correspond to the Steinberg tensor product factors
of 
\[ L(q-1)=\bigotimes_{j=0}^{m-1}F^j(L(p-1)). \] 
No smaller tensor
product of these modules is projective, so $f_q$ is the minimal
polynomial of $L(1)$ in $a(G)/a(G,1)$.  Again using
Lemma~\ref{le:fpX}, the 
largest of the roots of $f_q(X)$ is $2\cos(\pi/q)$.
Applying Theorem~\ref{th:PF}, it follows that $\npj(L(1))=2\cos\pi/q$.
If $1\le j\le p-1$ then $[L(j)]=f_{j+1}[L(1)]$ and so 
\begin{equation*}
\npj(L(j))=f_{j+1}(2\cos\pi/q)=\sin((j+1)\pi/q)/\sin(\pi/q).
\qedhere
\end{equation*}
\end{proof}

\begin{conj}
If $M$ is a $kG$-module with $\npj(M)<2$ then for some integer $q\ge
2$ we have $\npj(M)=2\cos(\pi/q)$.
\end{conj}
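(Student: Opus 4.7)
The plan is to reduce the conjecture to Kronecker's theorem on algebraic integers with bounded Galois orbit, via a Perron--Frobenius argument, the main hurdle being an algebraicity hypothesis. By Theorem~\ref{th:E}, I would replace $G$ by an elementary abelian $p$-subgroup $E$ on which the maximum $\npj_E(M)$ is achieved, so it suffices to treat $\fa = a(E)$ with $\fX = \fX_\proj$. The endpoints are already accounted for: $\npj_E(M)=0$ (the projective case) corresponds to $q=2$, the case $\npj_E(M)=1$ to $q=3$ by Theorem~\ref{th:endotriv} together with Dade's classification of endotrivials over $E$ and Lemma~\ref{le:Omega-k}, and Theorems~\ref{th:sqrt2} and~\ref{th:1+sqrt2} similarly pin down the region near $\sqrt{2}=2\cos(\pi/4)$.

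The key input I would try to establish is that $[M]$ is algebraic modulo $\fX_\proj$ whenever $\npj_E(M) < 2$. Granted this, Lemma~\ref{le:algebraic}\,(v) supplies a representation subring $\fa' \subseteq a(E)$ with $\fa'/\langle \fX_\proj\rangle$ of finite rank and containing every non-projective indecomposable summand of every tensor power of $M$. Left multiplication by $[M]$ on this finite-rank quotient is encoded by a non-negative integer matrix $A$, and by Theorem~\ref{th:spec-radius} together with the remark following Proposition~\ref{pr:Gelfand}, the spectral radius of $A$ equals $\npj_E(M)$. The characteristic polynomial of $A$ lies in $\bZ[X]$, so by Gauss's lemma the minimal polynomial of $\npj_E(M)$ over $\bQ$ divides it; every Galois conjugate of $\npj_E(M)$ therefore appears as an eigenvalue of $A$ and hence satisfies $|\sigma(\npj_E(M))| \le \npj_E(M) < 2$.

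Kronecker's theorem then forces $\npj_E(M) = \zeta + \zeta^{-1}$ for some root of unity $\zeta$, and replacing $\zeta$ by the Galois conjugate of largest real part we may take $\zeta = e^{2\pi\bi/N}$, giving $\npj_E(M) = 2\cos(2\pi/N)$. The Perron--Frobenius condition $|\sigma(\npj_E(M))| \le \npj_E(M)$ now excludes odd $N$: for instance, for $N$ an odd prime, $s = (N-1)/2$ is coprime to $N$ and yields the conjugate $-2\cos(\pi/N)$, whose absolute value strictly exceeds $2\cos(2\pi/N)$. A short case analysis covering odd $N$ and $N\equiv 2\pmod 4$ shows that $N$ must in fact be divisible by $4$ or equal to $6$, and in every surviving case $2\cos(2\pi/N)$ takes the form $2\cos(\pi/q)$ with $q=N/2\ge 2$ an integer.

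The principal obstacle is the algebraicity step: proving that $[M]$ is algebraic modulo $\fX_\proj$ whenever $\npj_E(M) < 2$. This is a relative of Alperin's long-standing algebraicity problem and appears to be genuinely hard. A natural attack is to argue by contradiction: if the $\bZ$-subalgebra of $a(E)/\langle\fX_\proj\rangle$ generated by $[M]$ has infinite rank, one should try to extract an infinite family of non-projective indecomposables appearing in tensor powers of $M$ and use iterated lower-bound arguments in the spirit of Theorem~\ref{th:sqrt2}, Theorem~\ref{th:1+sqrt2}, and Proposition~\ref{pr:alpha}, together with their duals, to force $\cc_n^\proj(M)$ to grow at least like $2^n$, contradicting $\npj_E(M) < 2$. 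The striking parallel with the Jones classification of subfactor indices below $4$, where the allowed values are exactly $4\cos^2(\pi/q)$ and correspond to ADE Coxeter graphs, suggests that the indecomposable summands of tensor powers of $M$ should assemble into a finite ADE-type graph precisely when $\npj_E(M) < 2$; realising this correspondence concretely, perhaps by building a bipartite principal graph whose vertices are the non-projective indecomposable summands and whose incidence numbers come from the matrix coefficients of multiplication by $[M]$ and $[M^*]$, is the heart of the difficulty.
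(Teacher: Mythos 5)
The statement in question is labeled a conjecture in the paper, and the paper supplies no proof --- it is an open problem. So there is no "paper's proof" to compare against; what you have written is a strategy, and the appropriate thing is to assess it on its merits. You are right to flag the algebraicity step as the central obstruction: without knowing that $[M]$ is algebraic modulo $\fX_\proj$, there is no finite integer matrix $A$ to work with, and this is indeed a close relative of Alperin's long-open question on algebraic modules. Given that input, the chain you describe (Lemma~\ref{le:algebraic}\,(v) to get a finite-rank subquotient, then Theorem~\ref{th:spec-radius} and the remark after Proposition~\ref{pr:Gelfand} to identify $\npj_E(M)$ with the spectral radius of the nonnegative integer matrix $A$, then Perron--Frobenius to see that $\npj_E(M)$ is itself an eigenvalue and that the minimal polynomial of $\npj_E(M)$ divides the integral characteristic polynomial of $A$) is correct as far as it goes.

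However, there is a second, logically independent gap which you blend into the algebraicity problem rather than isolating. From the bound $|\sigma(\npj_E(M))|<2$ on all Galois conjugates you cannot invoke Kronecker's theorem to conclude $\npj_E(M)=\zeta+\zeta^{-1}$ with $\zeta$ a root of unity. The version of Kronecker you need applies to \emph{totally real} algebraic integers with all conjugates in $(-2,2)$; a modulus bound alone is not enough. For example, the real root of $X^3-X-1$ is an algebraic integer all of whose conjugates have modulus strictly less than $2$, but it is not $2\cos(\pi/q)$ for any $q$. The matrix $A$ of multiplication by $[M]$ on the finite-rank quotient has no a priori reason to be symmetric, or even to have a real spectrum. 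You would need either a separate argument that $\npj_E(M)$ is totally real (perhaps via a trace form compatible with the $*$-operation modulo $\fX_\proj$, not merely $\fX_{\max}$), or to carry out concretely the bipartite-graph construction you sketch at the end, producing a \emph{symmetric} nonnegative integer matrix with norm $\npj_E(M)$ and then appealing to the ADE classification. Your final paragraph acknowledges that this construction is hard, but presents it as part of the algebraicity obstacle; it should be recorded as a distinct gap, since it persists even if algebraicity is granted.

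A small further slip: the closing case analysis claims that $N$ must be divisible by $4$ or equal to $6$. In fact every even $N$ survives the domination test $|\sigma(\npj_E(M))|\le\npj_E(M)$, not just those cases: for $N=10$ one gets $q=5$ (the golden ratio case), for $N=14$ one gets $q=7$, and so on. Your final conclusion that the surviving values are exactly $2\cos(\pi/q)$ with $q=N/2\ge 2$ is correct, but the intermediate claim about which $N$ survive is not.
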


It is even plausible that if $\npj(M)=2\cos(\pi/q)$ then $q$ is a
power of the characteristic $p$ of the coefficient field $k$.
We know of no counterexamples to this statement.

\section{The Klein four group}\index{Klein four group}%
\index{elementary abelian!group of order $4$}%
\index{species!of $a(\bZ/2\times\bZ/2)$}

Let $G=\bZ/2\times\bZ/2$ and $k$ an algebraically closed field of characteristic two. 
It was shown in Conlon~\cite{Conlon:1965a} that elements of
he representation ring $a_\bC(G)$ are separated by species (which he calls
$G$-characters) $s\colon a_\bC(G)\to\bC$, and therefore $a_\bC(G)$ is
semisimple.  The species are described there, and more explicitly 
in Benson and Parker~\cite{Benson/Parker:1984a},
and we repeat the description here.

The set of species for $a_\bC(G)$ falls naturally into three subsets:
\begin{enumerate}
\item The dimension.
  \item A continuous set of species parametrised by the non-zero
    complex numbers $z\in\bC\setminus\{0\}$.
  \item A discrete set of species parametrised by the set of ordered
    pairs $(N,\lambda)$ with $N>0$ in $\bZ$ and $\lambda\in\bP^1(k)$,
the projective line over $k$.\index{projective!line}\index{P@$\bP^1(k)$}
\end{enumerate}

The set of indecomposable $kG$-modules also falls naturally into three
subsets:
\begin{enumerate}
\item The projective indecomposable module of dimension four.
\item The syzygies of the trivial module $\Omega^m(k)$, $m\in\bZ$,
of dimension $2|m|+1$.
\item A set of representations parametrised by the set of ordered
  pairs $(n,\lambda)$ with $n>0$ in $\bZ$ and $\lambda\in\bP^1(k)$, of
  dimension $2n$.
\end{enumerate}

Define infinite matrices $A$ and $B$ as follows.
\[ \renewcommand{\arraystretch}{1.2}
    \begin{array}{c|c|ccccccc}
     \multicolumn{2}{l}{A}& N&\to  & & & & \\ \cline{3-7}
     \multicolumn{1}{}{} &  & 1 & 2 & 3 & 4 & 5 & \cdots \\ \cline{2-7}
     n & 1& 2 & 0 & 0 & 0 & 0 & \\
     {\downarrow} &2 & 2 & 2 & 0 & 0 & 0 & \\
     & 3 & 2 & 2 & 2 & 0 & 0 & \\
     & 4 & 2 & 2 & 2 & 2 & 0 & \\
     & 5 & 2 & 2 & 2 & 2 & 2 & \\
     \multicolumn{2}{r}{\vdots\,} & & & & & & \ddots
\end{array}\qquad
\begin{array}{c|c|ccccccc}
     \multicolumn{2}{l}{B}& N&\to  & & & & \\ \cline{3-7}
     \multicolumn{1}{}{} &  & 1 & 2 & 3 & 4 & 5 & \cdots \\ \cline{2-7}
     n & 1& \sqrt 2 & -\sqrt 2 & 0 & 0 & 0 & \\
     {\downarrow} &2 & 2 & 2 & 0 & 0 & 0 & \\
     & 3 & 2 & 2 & 2 & 0 & 0 & \\
     & 4 & 2 & 2 & 2 & 2 & 0 & \\
     & 5 & 2 & 2 & 2 & 2 & 2 & \\
     \multicolumn{2}{r}{\vdots\,} & & & & & & \ddots
\end{array} \]
Then the representation table is as follows.\par
{\small
\[ \renewcommand{\arraystretch}{1.2}
\setlength{\arraycolsep}{1.3mm}
 \begin{array}{c|ccccccccc}
 \text{Parameters} & \dim & z & (N,\infty) & (N,0) & (N,1) &
 (N,\lambda_1) & (N,\lambda_2) & (N,\lambda_3) & \cdots \\ \cline{1-9}
 \text{(Projective)} & 4 & 0 & 0 & 0 & 0 & 0 & 0 & 0 \\
 m & 2|m|+1 & z^m & 1 & 1 & 1 & 1 & 1 & 1 \\
 (n,\infty) & 2n & 0 & A & 0 & 0 & 0 & 0 & 0 \\
 (n,0) & 2n & 0 & 0  & A & 0 & 0 & 0 & 0 \\
 (n,1) & 2n & 0 & 0 & 0 & A & 0 & 0 & 0 \\
 (n,\lambda_1) & 2n & 0 & 0 & 0 & 0 & B & 0 & 0 \\
 (n,\lambda_2) & 2n & 0 & 0 & 0 & 0 & 0 & B & 0 \\
 (n,\lambda_3) & 2n & 0 & 0 & 0 & 0 & 0 & 0 & B \\
 \multicolumn{7}{l}{\qquad\ \vdots }
   \end{array} \]
}

Thus there are three special points $\infty,0,1\in \bP^1$ where the matrix
$A$ is used, and for the rest of the points the matrix $B$ is used.
The members of the continuous family of species with $|z|\ne 1$ are not
dimension bounded; the rest of the species are. 
There is a single Brauer species\index{Brauer species}\index{species!Brauer} 
$\dim$, which is dimension bounded but not
core bounded; the rest of the dimension bounded species are core
bounded. So the set of core bounded species is $S^1 \cup
(\bP^1(k)\times\bZ_{>0})$. The weak* topology on this may be described as
follows. The subset $S^1$ has the usual topology inherited from $\bC$.
The subset $\bP^1(k)\times\bZ_{>0}$ is discrete, but its closure is the
one point compactification, using the point $1\in S^1$. So the space
$\Struct(G)$ is a wedge of a circle with the one point
compactification of the discrete space $\bP^1(k)\times\bZ_{>0}$.

\section{\texorpdfstring
{The alternating group $A_4$}
{The alternating group A₄}}%
\index{alternating group $A_4$}\index{group!alternating $A_4$}

Let $k$ be an algebraically closed field of characteristic two, and let $G$ be the
alternating group $A_4$. Let $V_4$ be the normal subgroup of $G$ of
index three, isomorphic to the Klein four group.\index{Klein four group}
The indecomposable $kG$-modules are described in
Conlon~\cite{Conlon:1965a}
in terms of those of $kV_4$; see also Conlon~\cite{Conlon:1966a}
and the appendix to Benson~\cite{Benson:1986a}.
Let $\bF_4=\{0,1,\omega,\bar\omega\}\subseteq k$, and
 write $k$, $\omega$ and $\bar\omega$
for the three one dimensional representations where a generator $h$ for
$G/V_4\cong\bZ/3$ goes to $1$, $\omega$, $\bar\omega$ respectively. 
We also have an action of $G/V_4$ on $\bP^1(k)$, in which $h$ sends
$\lambda$ to $\lambda^h=1/(1+\lambda)$. The fixed points of this action are
$\omega$ and $\bar\omega$.

The set of indecomposable
$kG$-modules falls naturally into three subsets:
\begin{enumerate}
\item The projective indecomposables $P_k$, $P_\omega$ and
  $P_{\bar\omega}$, each of dimension four.
\item The syzygies of the simple modules $\Omega^n(k)$,
  $\Omega^n(\omega)$ and $\Omega^n(\bar\omega)$, each of which
  restrict to $\Omega^n(k)$ as a $kV_4$-module.
\item For each orbit of
  $G/V_4$ on $\bP^1(k)\setminus\{\omega,\bar\omega\}$ and each $n\in\bZ_{>0}$ there is an
  indecomposable $kG$-module of dimension $6n$ which restricts to the
  sum of the indecomposable $kV_4$-modules corresponding to
  $(n,\lambda)$, $(n,1+1/\lambda)$, $(1,1/(1+\lambda))$; 
for each $\lambda\in\{\omega,\bar\omega\}$ and each $n\in\bZ_{>0}$  there are three
  $kG$-modules of dimension $2n$, 
restricting to the $kV_4$-module corresponding to $(n,\lambda)$.
\end{enumerate}

We define one more infinite matrix $C$ as follows.
\[ \renewcommand{\arraystretch}{1.2}
    \begin{array}{c|c|rrrrrrc}
     \multicolumn{2}{l}{C}& N&\to  & & & & & \\ \cline{3-8}
     \multicolumn{1}{}{} &  & 1 & 2 & 3 & 4 & 5 & 6 & \cdots \\ \cline{2-8}
     n & 1& \sqrt 2 & -\sqrt 2 & 0 & 0 & 0 & 0 & \\
     {\downarrow} &2 & 2 & 2 & 0 & 0 & 0 & 0 & \\
     & 3 & -1 & -1 & -1 & 0 & 0 & 0 & \\
     & 4 & -1 & -1 & -1 & -1 & 0 & 0 & \\
     & 5 & 2 & 2 & 2 & 2 & 2 & 0 \\
     & 6 & -1 & -1 & -1 & -1 & -1 & -1 \\
     \multicolumn{2}{r}{\vdots\,} & & & & & & & \ddots
\end{array} \]
After the first row, each column repeats with period three where it is non-zero.
Then the representation table is as follows. As in the appendix
to~\cite{Benson:1986a}, we have used the Atlas
conventions~\cite{Atlas} to illustrate the relationship with the
tables for the Klein four group.\par
{\tiny
\[ \setlength{\arraycolsep}{0.5mm}
\renewcommand{\arraystretch}{1.2}
 \begin{array}{|c|cc|ccccccccccccc|} \hline
 \text{Params} & \dim & z & (N,\infty) & (N,0) & (N,1) & 
 (N,\omega) & (N,\bar\omega) &
 (N,\lambda) & (N,\lambda^h) & (N,\lambda^{h^2}) & \text{fus} & h & z
   & (N,\omega) & (N,\bar\omega) \\ \hline
 \text{(Proj)} & 4 & 0 & 0 & 0 & 0 & 0 & 0 & 0 & 0 & 0 & :& 1&0&0&0\\
 m & 2|m|+1 & z^m & 1 & 1 & 1 & 1 & 1 & 1 & 1 & 1 & : & \ep & z^m&1&1
   \\ \hline
 (n,\infty) & 2n & 0 & A &  &  &&&  &  &  & 
\multirow{3}{*}{\rule[-4pt]{0.7pt}{8.5ex}} & 0 & 0 & 0 & 0 \\
 (n,0) & 2n & 0 &   & A &  &&&  & \text{\Large$0$} & & & & & &  \\
 (n,1) & 2n & 0 &  &  & A &&&  &  & & & &&&  \\
 (n,\omega) & 2n &0&& && B & & & && : & \ep & 0 & C & 0  \\
 (n,\bar\omega) & 2n & 0 &&&&&B &&&& : & \ep & 0 & 0 & C \\
 (n,\mu) & 2n & 0 &  &  &  &&& B\delta_{\lambda,\mu} & 
 &  & \multirow{3}{*}{\rule[-4pt]{0.7pt}{8.5ex}} & 0 & 0 & 0 & 0 \\
 (n,\mu^h) & 2n & 0 &  & \text{\Large$0$} &  &&&  &
B\delta_{\lambda,\mu} & & & & & & \\
 (n,\mu^{h^2}) & 2n & 0 &  &  &  &&&  &  & B\delta_{\lambda,\mu} & & &
   & & \\ \hline
   \end{array}\medskip \]
}
Here, $\ep\in\{-1,0,1\}$ is congruent to the dimension modulo three.
Just as in the case of the Klein four group, the structure space
$\Delta(G)$ has a discrete part and a continuous part. The continuous
part is $\Delta_{\max}(G)$, and consists of three disjoint circles, corresponding to the columns
headed ``$z$'' (the one on the right side is really two, one for each
non-trivial character of the quotient $G/V_4$).
All but the second row of this table (which is really an infinite set
of rows) represent elements of
$a(G,\max)$, on which these columns take the value zero.
The closure of the discrete part is again its one point
compactification, attached at the basepoint of one of the three circles.

Expanding out the table for the quotient $a_{\max}(G)$
from Atlas format to full notation, we obtain the following table:
\[ \begin{array}{|c|ccc|} \hline
\Omega^m(k) & z^m & z^m & z^m \\
\Omega^m(\omega) & z^m & \omega z^m & \bar\omega z^m \\
\Omega^m(\bar\omega) & z^m & \bar\omega z^m & \omega z^m \\ \hline
\end{array} \]
This is exactly the character table for $\bZ\times \bZ/3$. This is
because $a_{\max}(G)$ is isomorphic to the group ring of 
$\PPic_{\max}(G)\cong\bZ\times \bZ/3$, a group with generators
$\Omega(k)$ and $\omega$.

\section{\texorpdfstring{Dihedral $2$-groups}
{Dihedral 2-groups}}\index{dihedral group!of order $2^n$}%
\index{group!dihedral of order $2^n$}

The indecomposable modules for the dihedral groups $D_{2^n}$ ($n\ge 3$) were
classified by Ringel~\cite{Ringel:1975a}. Let $k$ be a field of
characteristic two, let
\[ G=D_{2^n}=\langle x,y\mid x^2=1,\,y^2=1,\,(xy)^{2^{n-1}}=1\rangle, \]
and let $X=x-1$, $Y=y-1$ as elements of $kG$. Then
\[ kG=k\langle X,Y\rangle/(X^2,Y^2,(XY)^{2^{n-2}}-(YX)^{2^{n-2}}). \]

The modules come in two types,
called \emph{strings}\index{string module}\index{module!string} and
\emph{bands}.\index{band module}\index{module!band} 
The ones of odd dimension are string modules. 

The string modules $M(C)$ correspond to words
$C=w_1 w_2 \dots w_{m}$ where the $w_i$
alternate between
$X^{\pm 1}$ and $Y^{\pm 1}$. The dimension of the module is $m+1$.
Thus for example the word $X^{-1}YXYX^{-1}Y^{-1}$ gives a
module with schema
\[ \xymatrix{\bul\ar[r]^X & \bul & \bul\ar[l]_Y & \bul\ar[l]_X &
\bul \ar[l]_Y \ar[r]^X & \bul \ar[r]^Y & \bul} \]
For a particular order of dihedral group, there is also a restriction
on the number of consectutive letters which are all direct or all
inverse. The module corresponding to a given word
$w_1 w_2 \dots w_m$ has a $k$-basis $v_0,v_1,v_2,\dots,v_m$.
The elements $X$ and $Y$ in $kG$ act in the manner indicated by the
schema, sending each basis either to an adjacent basis element or to
zero. In the example, we have 
\begin{gather*}
X \colon\quad v_0 \mapsto v_1,\quad 
v_1 \mapsto 0,\quad v_2\mapsto 0,\ \quad 
v_3\mapsto v_2,\quad v_4\mapsto v_5,\quad
v_5\mapsto 0,\ \quad v_6\mapsto 0, \\ 
 Y \colon\quad v_0\mapsto 0,\ \quad
v_1\mapsto 0,\quad v_2\mapsto v_1,\quad 
v_3\mapsto 0,\ \quad v_4\mapsto v_3,\quad
v_5\mapsto v_6,\quad v_6\mapsto 0.
\end{gather*}
Modules coming from two different words are isomorphic if and only if
one word is the inverse of the other. To invert a word, reverse the letters and invert
each one. So for example the inverse of the word above is $YXY^{-1}X^{-1}Y^{-1}X$.

The band modules $M(C,\phi)$ are similar, except that the word has to have even
length, and the beginning and end of
the word are linked to make a cycle. Instead of putting one basis
element at each vertex, we take a vector space $V$ and an
indecomposable automorphism $\phi\colon V\to V$, and we put a copy of
$V$ at each vertex. The arrows are identity maps, but the two end
vertices are identified using $\phi$. So for example the word above gives us a
schema
\[ \xymatrix{V\ar[r]^X & V & V \ar[l]_Y & 
V\ar[l]_X & V \ar[l]_Y\ar[r]^X &  V \ar `r[r]^{\quad Y} `d[llllld] `[lllll]^\phi  [lllll]
&\\ &&&&&&    } \]
The word $C$ is not allowed to be a power of a smaller word, as this would be
absorbed into making the vector space $V$ larger. Modules $M(C,\phi)$
and $M(C',\phi')$ are isomorphic if and only if either $C$ and $C'$
differ by a rotation and $\phi=\phi'$, or $C^{-1}$ and $C'$ differ by a
rotation and $\phi^{-1}=\phi'$.

The band modules all have even dimension. So the odd dimensional
modules are string modules for words of even length. Inverting the
word if necessary, we may assume that it starts with $X^{\pm 1}$ and
ends with $Y^{\pm 1}$, and then we don't need to bother about
equivalent words. Thus the odd dimensional modules are of the form
$M(C)$ with $C=X^{\pm 1}\dots Y^{\pm 1}$. This includes the empty
word, which we take to corrspond to the trivial module.

\begin{lemma}\label{le:D2nodd}
If $M$ is an odd dimensional indecomposable $kD_{2^n}$-module then
$M{\da_{\langle x\rangle}}$ is a direct sum of a one dimensional
trivial module and a projective module.
\end{lemma}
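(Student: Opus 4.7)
The plan is to restrict $M$ to $H = \langle x\rangle \cong \bZ/2$ and directly read off the decomposition from the schema of $M(C)$. The restricted algebra $kH = k[X]/(X^2)$ has only two indecomposable modules: the trivial module $k$ (dimension $1$) and the regular module $kH$ (dimension $2$, projective). So the restriction $M{\downarrow_H}$ is completely determined by the Jordan structure of $X$ acting on $M$, and we just need to count the number of Jordan blocks of each size.

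First I would handle the trivial case $M=k$ (the empty word), which is obviously a single trivial summand. For the non-trivial case, $M = M(C)$ is a string module with $C = w_1\cdots w_m$ alternating $X^{\pm 1}$ and $Y^{\pm 1}$, and with a basis $v_0,\dots,v_m$ on which the $w_i$ act in the manner of the schema. Since $\dim M = m+1$ is odd, $m$ is even, and therefore $w_1$ and $w_m$ are of \emph{different} letter types. Replacing $C$ with $C^{-1}$ if necessary (which gives an isomorphic module), I may assume $w_1$ is $X^{\pm 1}$ and $w_m$ is $Y^{\pm 1}$.

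Now I would partition the basis $\{v_0,\dots,v_m\}$ according to the action of $X$. Each letter $w_i$ equal to $X^{\pm 1}$ gives an arrow of the schema between $v_{i-1}$ and $v_i$; on these two basis vectors the element $X$ acts exactly as on the regular module $kH$, so together they span a projective summand of $M{\downarrow_H}$. Since the word alternates, every interior vertex $v_i$ ($1 \le i \le m-1$) is adjacent to exactly one $X$-arrow and one $Y$-arrow, and the two endpoints $v_0$, $v_m$ are adjacent to $w_1$ and $w_m$ respectively; so by our normalisation $v_0$ is paired through $w_1$ but $v_m$ is adjacent only to $w_m = Y^{\pm 1}$, so $Xv_m = 0$ and $v_m$ is not in the image of $X$. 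Thus $v_m$ spans a trivial $kH$-summand, and the $m/2$ $X$-arrows pair off the other $m$ basis vectors into $m/2$ projective summands.

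Conclusion: $M{\downarrow_H} \cong k \oplus (m/2) \cdot kH$, a single one-dimensional trivial summand plus a projective module. The main (and essentially only) subtlety is the normalisation step ensuring exactly one endpoint is stranded, which rests on the parity observation that $\dim M$ odd forces $w_1$ and $w_m$ to be of different types; everything else is a direct reading of the schema.
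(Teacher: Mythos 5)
Your proof is correct and follows the paper's own argument: the paper has already normalised the words for odd-dimensional string modules to start with $X^{\pm 1}$ and end with $Y^{\pm 1}$, and its one-line proof then observes, exactly as you do, that the rightmost vertex $v_m$ gives the trivial summand while the $X$-arrows pair off $v_0,\dots,v_{m-1}$ into free $k\langle x\rangle$-summands. You have simply spelled out in more detail the parity observation underlying the normalisation.
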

\begin{proof}
This follows immediately from the description above. The one
dimensional summand corresponds to the right hand vertex. The
remaining pairs of vertices give free summands as modules for
$k\langle x\rangle=k\langle X\rangle$.
\end{proof}

The next two theorems come from Archer~\cite{Archer:2008a}.

\begin{theorem}
If $M$ and $N$ are odd dimensional indecomposable $kD_{2^n}$-modules
then $M\otimes N$ has a unique odd dimensional indecomposable summand.
\end{theorem}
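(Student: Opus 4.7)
The plan is to restrict everything to the cyclic subgroup $H = \langle x \rangle$ and count trivial $kH$-summands. By Lemma~\ref{le:D2nodd}, the restriction of any odd dimensional indecomposable $kG$-module decomposes as one copy of the trivial module plus a free $kH$-module, so we may write $M{\da_H} \cong k \oplus F_M$ and $N{\da_H} \cong k \oplus F_N$ with $F_M,F_N$ free. Since $kH$ is a Hopf algebra, the tensor product of a free $kH$-module with any $kH$-module is free, so
\[ (M \otimes N){\da_H} \cong k \oplus F_M \oplus F_N \oplus (F_M \otimes F_N), \]
which is a single copy of the trivial module plus a free $kH$-module. In particular $(M \otimes N){\da_H}$ contains exactly one trivial $kH$-summand.

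Now decompose $M \otimes N = \bigoplus_i L_i$ into indecomposable $kG$-modules, and for each $i$ let $c_i$ denote the multiplicity of the trivial module in $L_i{\da_H}$. The computation above gives $\sum_i c_i = 1$, so the theorem will follow once I establish that $c_i = 1$ when $L_i$ is odd dimensional and $c_i$ is even when $L_i$ is even dimensional. The odd case is immediate from Lemma~\ref{le:D2nodd} applied to $L_i$.

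For the even case, I would go through the Ringel classification. An even dimensional string module $M(C)$ has $C$ of odd length, and because the letters strictly alternate between $X^{\pm 1}$ and $Y^{\pm 1}$, the first and last letters of $C$ are of the same type. A direct inspection of the schema shows that if both end letters are $X^{\pm 1}$ then the $X$-arrows pair up all $|C|+1$ vertices and $c_i = 0$, whereas if both end letters are $Y^{\pm 1}$ then exactly the two extreme vertices fail to be $X$-adjacent and $c_i = 2$. For a band module $M(C,\phi)$, the word $C$ is cyclic of even length with strictly alternating letters, so every vertex sits between one $X$-letter and one $Y$-letter; hence every vertex is $X$-adjacent, and $c_i = 0$. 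In all subcases $c_i$ is even.

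To finish, combine parity with non-negativity. Each odd dimensional summand contributes $c_i = 1$ and each even dimensional summand contributes an even $c_i \ge 0$, so the number of odd dimensional summands in $M \otimes N$ has the same parity as $\sum_i c_i = 1$, and is also bounded above by $\sum_i c_i = 1$; hence it is exactly one. The main obstacle is the classification step verifying that no even dimensional indecomposable contributes an odd number of trivial $kH$-summands; the band case in particular relies crucially on the alternation condition in Ringel's classification, without which a vertex could lie between two $Y$-letters and produce an uncontrolled extra trivial summand.
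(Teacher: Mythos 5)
Your proposal is correct, and the governing idea --- restrict to $H = \langle x\rangle$, count trivial $kH$-summands, and apply Lemma~\ref{le:D2nodd} --- is precisely the one the paper's (very terse) proof points at. But the detour through Ringel's classification in the even-dimensional case is doing no real work, and your closing remark about the ``main obstacle'' misdiagnoses where the weight of the argument lies. Over $kH \cong k[X]/(X^2)$ every module is a sum of size-$1$ and size-$2$ Jordan blocks, so the number of trivial summands $c_i$ of $L_i{\da_H}$ automatically satisfies $c_i \equiv \dim L_i \pmod 2$ with no appeal to the string/band structure at all; the alternation condition you worry about is irrelevant to the parity statement you need. Even more directly, once you have $\sum_i c_i = 1$ and $c_i = 1$ for each odd-dimensional summand, you get the upper bound of one from nonnegativity, and the lower bound of one simply because $\dim(M\otimes N) = \dim M \cdot \dim N$ is odd, forcing at least one odd-dimensional indecomposable summand. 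Either observation replaces your case analysis of string and band modules; the classification buys the extra fact that $c_i \in \{0,2\}$ for even-dimensional $L_i$, but that refinement is not needed for the theorem.
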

\begin{proof}
This follows by restricting to $\langle x\rangle$ and using Lemma~\ref{le:D2nodd}.
\end{proof}

It follows from this theorem that the isomorphism classes of 
odd dimensional indecomposable
$kD_{2^n}$-modules form an abelian group, equal to
$\PPic_{\max}(a(kD_{2^n}))$\index{Tmax@$\PPic_{\max}(a(kD_{2^n}))$}  
(see Section~\ref{se:endotriv}).
The product of $[M]$ and
$[N]$ in this group is the isomorphism class of the 
unique odd dimensional summand of $M\otimes N$. The inverse of $[M]$
is $[M^*]$.

\begin{theorem}
This group is torsion free.
\end{theorem}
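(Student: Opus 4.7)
The plan is to construct a group homomorphism from $\PPic_{\max}(a(kD_{2^n}))$ into a torsion-free abelian group, with kernel that is itself visibly torsion free; torsion freeness of the whole group then follows from the elementary fact that an abelian extension of torsion-free groups is torsion free (any $n$-torsion element maps to $n$-torsion in the torsion-free image, hence to zero, hence lies in the torsion-free kernel, hence is zero).

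By Ringel's classification recalled above, every odd-dimensional indecomposable $kD_{2^n}$-module is a string module $M(C)$ for an admissible word $C$ of even length in the alphabet $\{X^{\pm 1},Y^{\pm 1}\}$ with alternating letter types, normalised so that the first letter is of $X$-type and the last is of $Y$-type. To each such word $C=w_1w_2\cdots w_{2m}$ I would associate the invariant $\sigma(C)=(a(C),b(C))\in\bZ^2$, where $a(C)$ is the signed count of the $X$-type letters in $C$ (each such letter contributing $\pm 1$ according to whether it is direct or inverse) and $b(C)$ is the signed count of the $Y$-type letters, with signs chosen to be compatible with the word-inversion equivalence $M(C)\cong M(C^{-1})$. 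The empty word (giving $M(\emptyset)=k$) yields $\sigma(k)=(0,0)$, and the duality $M(C)^{*}\cong M(C^{-1})$ gives $\sigma$ odd under the involution $[M]\mapsto [M^*]$, matching $[M(C)]^{-1}=[M(C)^*]$ in $\PPic_{\max}$.

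The crucial step, which is also the main obstacle, is to verify that $\sigma$ descends to a group homomorphism $\sigma\colon\PPic_{\max}(a(kD_{2^n}))\to\bZ^2$, i.e., that whenever $M(C'')$ is the unique odd-dimensional indecomposable summand of $M(C)\otimes M(C')$ (whose existence is guaranteed by Archer's preceding theorem), one has $\sigma(C'')=\sigma(C)+\sigma(C')$. This demands a careful walk through Archer's explicit combinatorial rule~\cite{Archer:2008a} for decomposing the tensor product of two string modules into strings and bands, pinpointing the unique odd-dimensional summand among the string summands, and showing that all cancellations of letters in the combinatorial extraction of $C''$ come in balanced pairs consisting of a letter and its inverse, so that the signed exponent counts $a$ and $b$ are preserved.

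Once the homomorphism $\sigma$ is in hand, I expect its kernel to be an infinite cyclic subgroup (generated by a Heller-like element whose string has balanced signed exponent counts), hence torsion free. The image lies in $\bZ^2$ and is therefore torsion free as well, and the extension argument above then finishes the proof. If it turns out that the kernel is not already torsion free by such an elementary description, the plan would be to refine $\sigma$ by adjoining further invariants of $C$ (for instance, the number of sign-changes in the letter pattern modulo suitable integers) until an injective invariant into some $\bZ^r$ is obtained.
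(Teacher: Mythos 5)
The paper offers no proof of this theorem: it attributes it to Archer~\cite{Archer:2008a} and moves on, so there is no in-paper argument to compare with. Judged on its own terms, your write-up is a research plan rather than a proof, because the two steps it rests on are both left open. You identify the claim that the signed exponent count $\sigma$ descends to a group homomorphism on $\PPic_{\max}(a(kD_{2^n}))$ as ``the crucial step, which is also the main obstacle'' and say it ``demands a careful walk through Archer's explicit combinatorial rule,'' but you do not carry that walk out. Without it there is no reason to believe that the letters discarded in extracting the odd-dimensional string summand of $M(C)\otimes M(C')$ always cancel in inverse pairs, which is the only thing that would make $\sigma$ additive; at minimum you should test the claim on a concrete case such as $\Omega(k)\otimes\Omega(k)$ for $D_8$ before trusting it.

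The second step fares no better. You assert ``I expect its kernel to be an infinite cyclic subgroup'' without argument and then immediately hedge that if this fails ``the plan would be to refine $\sigma$ by adjoining further invariants of $C$ \dots{} until an injective invariant into some $\bZ^r$ is obtained.'' That fallback is exactly the theorem you are trying to prove (an injection into some $\bZ^r$ is a torsion-free embedding), so the argument becomes circular at that point. In fact the kernel is very unlikely to be cyclic: for each even length there are many inequivalent admissible words with both signed counts zero, so $\ker\sigma$ plausibly has infinite rank, and showing it torsion free is the original problem shifted down one level. There is also a smaller wrinkle in the set-up: your normalisation (first letter of $X$-type, last of $Y$-type) already picks out a unique word for each module, so ``signs chosen compatible with $M(C)\cong M(C^{-1})$'' is not a constraint you get to impose afterwards; what you actually need, and should state and verify directly, is that $\sigma$ applied to the word of $M^*$ equals $-\sigma(C)$, so that inverses in $\PPic_{\max}$ go to inverses in $\bZ^2$.
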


\begin{rk}
Zemanek~\cite{Zemanek:1973a} showed that there are non-zero nilpotent elements in
$a(D_{2^n})$ ($n\ge 3$); see also Benson and
Carlson~\cite{Benson/Carlson:1986a}, 
Heldner~\cite{Heldner:1994a}. 
So we cannot hope to separate elements of
$a(D_{2^n})$ using species, as we did in the case of the Klein four
group.

The papers of Herschend~\cite{Herschend:2007a,Herschend:2010a} study a
different tensor product on representations of dihedral group algebras.
\end{rk}

\section{\texorpdfstring{Semidihedral $2$-groups}
{Semidihedral 2-groups}}\index{semidihedral groups}

Let $k$ be a field of characteristic two, and let
\[ G = SD_{2^n} =\langle x,y\mid x^2=1,\,y^{2^{n-1}}=1,\,
yx=xy^{2^{n-2}-1}\rangle. \]
In Section~3 of Bondarenko and Drozd~\cite{Bondarenko/Drozd:1982a}, 
an explicit isomorphism is given between the quotient by the one
dimensional socle, $kG/\Soc(kG)$,
and the algebra $\Lambda_{2^{n-1}-1}$ where
\begin{equation*} 
\Lambda_m = k\langle X,Y\rangle/(X^3,Y^2,X^2-(YX)^mY). 
\end{equation*}
Since every non-projective indecomposable $kG$-module has $\Soc(kG)$
in the kernel, classification of the indecomposable $kG$-modules
amounts to classification of the indecomposable $\Lambda_{2^{n-1}-1}$-modules.
The indecomposable $\Lambda_m$-modules for $m\ge 1$ were
classified by Crawley-Boevey~\cite{Crawley-Boevey:1989a}; see also Gei\ss~\cite{Geiss:1999a}.
They have a description in terms 
similar to the strings and bands described in the last section, but
more complicated. There
are four types, called asymmetric strings, symmetric strings,
asymmetric bands, and symmetric bands. The asymmetric and symmetric
bands have even dimension. If 
$M$ is an odd dimensional indecomposable $kG$-module 
then $M$ is an asymmetric
or symmetric string module corresponding to a word of even length.
In particular, just as in the dihedral case, the restriction of an odd
dimensional module to the two dimensional subalgebra generated by 
$Y$ is trivial plus free. So we get the following theorem.

\begin{theoremqed}
If $M$ and $N$ are odd dimensional indecomposable $kSD_{2^n}$-modules
then $M \otimes N$ has a unique odd dimensional indecomposable summand.
\end{theoremqed}

Again we deduce that the isomorphism classes of odd dimensional
$kSD_{2n}$-modules form an abelian group, equal to $\PPic_{\max}(a(kSD_{2^n}))$.
But this time it is not torsion free. There is a 
self-dual module $M=\Lambda_{2^{n-1}-1}/\langle YX\rangle$ of
dimension $2^{n-1}+1$, with simple socle, such that $M\otimes M\cong k\oplus kG$.
Thus $[M]^2=\one$ in this group.

\section{\texorpdfstring{Finite $2$-groups}{Finite 2-groups}}

The following conjecture appears in \cite{Benson:2020a}.

\begin{conj}\label{conj:char2tensors}
Let $G$ be a finite $2$-group and $k$ an algebraically closed field of
characteristic $2$. If $M$ is an indecomposable
$kG$-module of odd dimension then $M\otimes M^*$ is a direct sum of
$k$ and indecomposable modules of dimension divisible by four.
\end{conj}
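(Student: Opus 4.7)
The plan is to start with an observation that turns the conjecture into a question purely about the ``trace-zero'' part. Since $\dim M$ is odd and $k$ has characteristic $2$, $\dim M$ is invertible in $k$, so the trace map $\tr\colon \End_k(M)\to k$ is $G$-equivariantly split by $\lambda\mapsto (\dim M)^{-1}\lambda\cdot\id_M$. Combined with the isomorphism of $kG$-modules $M\otimes M^*\cong\End_k(M)$, this gives a decomposition
\[ M\otimes M^*\cong k\cdot\id_M\;\oplus\;\sl(M),\qquad \sl(M)=\{f\in\End_k(M)\mid \tr f=0\}, \]
and the conjecture becomes the assertion that every indecomposable summand of $\sl(M)$ has dimension divisible by four.

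For a finite $2$-group $G$ in characteristic $2$, the only indecomposable modules of dimension not divisible by four are (up to isomorphism) the trivial module $k$ and the two-dimensional modules $M_H=k[G/H]$, where $H$ ranges over the index-two subgroups of $G$ (since any two-dimensional indecomposable is uniserial with both composition factors $k$, hence classified by a line in $H^1(G,k)=\Hom(G,k)$). So I would split the argument into two parts: (a) show that $\sl(M)$ has no trivial summand, and (b) show that $\sl(M)$ has no summand isomorphic to any $M_H$. Part (a) is equivalent to $\dim\underline{\End}_{kG}(M)=1$, since $\underline{\End}_{kG}(M)\cong\hat H^0(G,\End_k(M))\cong k\oplus\hat H^0(G,\sl(M))$; part (b) is equivalent, by Frobenius reciprocity, to a statement about $\End_{kH}(M{\downarrow_H})$.

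The natural strategy is induction on $|G|$, using a central involution $z\in G$. Since $z$ is central, multiplication by $z$ is a $kG$-endomorphism of $M$ with square the identity, so in the local ring $\End_{kG}(M)$ we may write $z=1+n$ with $n^2=0$. If $n=0$, then $M$ is inflated from the $2$-group $G/\langle z\rangle$ and the result follows by induction (the decomposition of $M\otimes M^*$ for $G/\langle z\rangle$ inflates to one for $G$, and inflation preserves indecomposability of modules on which $\langle z\rangle$ acts trivially). If $n\ne 0$, I would analyse the odd-dimensional $k(G/\langle z\rangle)$-module $M_0=\ker n/\im n$ and the action of $G$ on $\End_k(M)$ via conjugation by $z$, which is $f\mapsto f+[n,f]+nfn$, in order to relate $\hat H^0(G,\sl(M))$ and $\Hom_{kG}(M_H,\sl(M))$ to analogous data over $G/\langle z\rangle$.

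The hard part, which is why this is only a conjecture, is part (b): while the trace form on $\End_k(M)$ is $G$-invariant and, restricted to $\sl(M)$, non-degenerate (because $\dim M$ is odd), and while this gives $\sl(M)^*\cong\sl(M)$, self-duality alone does not forbid two-dimensional summands, which are themselves self-dual. The decisive obstacle is to show that for any index-two subgroup $H\le G$ the natural map
\[ \End_{kH}(M{\downarrow_H})\;\longrightarrow\;\Hom_{kG}(M_H,\sl(M)) \]
hits only non-split maps, i.e., always lands in the radical. One natural avenue I would pursue is to use the fact that $M{\downarrow_H}$ is again odd-dimensional, decompose it into indecomposables, and apply the inductive hypothesis to each of its indecomposable summands; the cross-terms arising from distinct summands of $M{\downarrow_H}$ are what makes this step delicate, and I would expect this to be where any honest attempt at the conjecture actually stands or falls.
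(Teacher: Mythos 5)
This is stated in the paper as an open conjecture, attributed to \cite{Benson:2020a}, and verified there only for cyclic, dihedral, and semidihedral $2$-groups; the paper offers no proof, so your candid acknowledgment that the strategy does not close the argument is appropriate. Your opening reduction is correct: since $\dim M$ is odd, the trace gives a $G$-equivariant decomposition of $M\otimes M^*\cong\Hom_k(M,M)$ into $k\cdot\mathrm{id}$ and its trace-zero complement $N$, and the conjecture becomes the assertion that every indecomposable summand of $N$ has dimension divisible by four.

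The next step contains the fatal gap. You claim that for a finite $2$-group $G$ over an algebraically closed field of characteristic two, the only indecomposables of dimension not divisible by four are the trivial module and the permutation modules $k[G/H]$ with $[G:H]=2$. This is false in two ways. First, over an algebraically closed (hence infinite) field the two-dimensional indecomposables are parametrised by $\bP(\Hom(G,k))$, as your own parenthetical notes, and this set is infinite whenever the Frattini quotient of $G$ has rank at least two; only the finitely many $\bF_2$-rational lines correspond to permutation modules $k[G/H]$. Second, and more seriously, there is no dimension bound at all. Already for $\bZ/4$ the Jordan block $J_3$ is a three-dimensional indecomposable; for the Klein four group the syzygies $\Omega^{\pm n}(k)$ realise every odd dimension $2n+1\ge 3$ and the band modules realise every even dimension $2n$; dihedral and semidihedral $2$-groups have string and band modules in every dimension (see Chapter~\ref{ch:eg}). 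Consequently, your reduction of the conjecture to ruling out only trivial and $k[G/H]$ summands of $N$ misses almost all of the indecomposables that would have to be excluded, and the proposed split into parts (a) and (b) cannot succeed as framed.

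A secondary issue: the justification you give for the equivalence in (a), via $\hat H^0(G,N)=0$, is not actually an equivalence. Vanishing of $\hat H^0(G,N)$ is strictly stronger than $N$ having no trivial summand --- for instance $\hat H^0(G,\Omega k)\ne 0$ even though $\Omega k$ has no trivial summand. The correct statement, that the number of trivial summands of $M\otimes M^*$ equals the dimension of the stable endomorphism algebra of $M$, does hold, but the reason is the non-degeneracy of the trace pairing $(\alpha,\gamma)\mapsto \mathrm{tr}(\alpha\gamma)$ on stable endomorphisms (cf.\ \cite{Benson/Carlson:1986a}), not the Tate cohomology computation you give.
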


The conjecture is true in the case of cyclic groups, dihedral groups,
and semidihedral groups.

Given this conjecture, the isomorphism classes of indecomposables of
odd dimension form a discrete abelian group, equal to $\PPic(a(G))$, 
and $a_{\max}(G)$ is
its group algebra, weighted with the function $M\mapsto\dim\core_{\max}(M)$. 

\section{Some Hopf algebras}%
\index{Sweedler's Hopf algebra}\index{Hopf algebra!Sweedler's}

Whenever the modular representation ring of a finite group is finite
dimensional, it is semisimple. We saw in Section~\ref{se:Z2Z/4} that
this is not the case for integral representation rings. We shall see
in this section that it is also
not true for
finite dimensional Hopf algebras over a field. To illustrate this, we 
examine some generalisations of 
Hopf algebras introduced by Earl Taft~\cite{Taft:1971a}, 
which are neither commutative nor
cocommutative. They are similar to the group algebras of the Frobenius
groups studied in Section~\ref{se:Frobgroup}, but sufficiently
different that we find it worthwhile to spell out the details.
The end result is that the radical is contained in the ideal of
projective modules, but is non-zero.

We begin with the smallest case, which is 
Sweedler's four dimensional 
Hopf algebra; see page 89--90 of Sweedler's book,
as well as Remark~5.8 in Cibils~\cite{Cibils:1993a}
and Remark~1.5.6 in Montgomery~\cite{Montgomery:1993a}.
Let $k$ be a field of characteristic not equal to two, and consider
the $k$-algebra with a vector
space basis consisting of elements $1$, $g$, $x$ and $gx$.
The multiplication is given by $g^2=1$,  $x^2=0$, $xg=-gx$.
The comultiplication\index{comultiplication} is given by
$\Delta(g)=g\otimes g$, $\Delta(x)=1\otimes
x+x\otimes g$. The counit\index{counit} is given by $\ep(g)=1$, $\ep(x)=0$,
and the antipode\index{antipode} 
is given by $S(g)=g$, $S(x)=gx$, $S(gx)=-x$, with $S^4=1$.

This Hopf algebra has four isomorphism classes of
indecomposables. There are two simples, $S_0$ and $S_1$, and their
projective covers $P_0$ and $P_1$. The tensor products are given by
the following table:
\[ \renewcommand{\arraystretch}{1.2}
\begin{array}{|c|cccc|} \hline
&[S_0]&[S_1]&[P_0]&[P_1] \\ \hline
{}[S_0]&[S_0]&[S_1]&[P_0]&[P_1] \\
{}[S_1]&[S_1]&[S_0]&[P_1]&[P_0] \\
{}[P_0]&[P_0]&[P_1]&[P_0]+[P_1]&[P_0]+[P_1] \\
{}[P_1]&[P_1]&[P_0]&[P_0]+[P_1]&[P_0]+[P_1] \\ \hline
\end{array}\medskip \]
Note, in particular, that the representation ring is commutative.
Its radical is generated by $[P_0]-[P_1]$. There are three species, given by the
following table.
\[ \renewcommand{\arraystretch}{1.2}
\begin{array}{|c|crc|} \hline
&s_1&s_2&s_3\\ \hline
{}[S_0]&1&1&1\\
{}[S_1]&1&-1&1\\
{}[P_0]&0&0&\sqrt{2}\\
{}[P_1]&0&0&\sqrt{2}\\ \hline
\end{array} \]
The representation ideals $\fX_{\max}$ and $\fX_{\proj}$ are equal, 
and consist of the projectives $[P_0]$ and $[P_1]$.
So there is only one possible non-zero choice for a representation
ideal $\fX$, namely $\fX=\fX_{\max}=\fX_{\proj}$.
We have $\npj(S_0)=\npj(S_1)=1$, $\npj(P_0)=\npj(P_1)=0$.\medskip

The family of generalised Taft algebras 
$H_{m,n}(q)$\index{H@$H_{m,n}(q)$}~\cite{Taft:1971a}%
\index{Taft algebra}\index{generalised!Taft algebra}%
\index{Hopf algebra!Taft}
has the Sweedler four dimensional algebra as the case
$H_{2,2}(-1)$.  Their representation rings were studied in
\cite{Chen/vanOystaeyen/Zhang:2014a,
Huang/vanOystaeyen/Yang/Zhang:2014a,
Li/Zhang:2013a,Radford:1975a,
Yuan/Yao/Li:2018a}. They
were constructed as examples of 
Hopf algebras whose antipode has arbitrarily large finite (even) order. 
Let $k$ be a field having a primitive 
$m$th root of unity $\eta$, with $m$ an integer at least two; in
particular, we assume that $k$ has characteristic coprime to $m$.
Let $n\ge 2$ be a divisor of $m$, and let $q=\eta^{d}$, a
primitive $n$th root of unity, where $d=m/n$.
The algebra $H_{m,n}(q)$ is generated over $k$ by elements
$h$ and $x$ satisfying $h^m=1$, $x^n=0$, $hx=qxh$.
The comultiplication is given by
$\Delta(h)=h\otimes h$, $\Delta(x)=1\otimes x + x \otimes h$, 
the counit is given by $\ep(h)=1$, $\ep(x)=0$. The
antipode is given by $S(h)=h^{-1}$, $S(x)=-xh^{-1}$, and has
order $2n$. For reasons similar to those given in
Section~\ref{se:Frobgroup}, we consider the double cover 
$H_{2m,n}(q)$ of $H_{m,n}(q)$ first and then
identify $a(H_{m,n}(q))$ as a subring of $a(H_{2m,n}(q))$.

So we now suppose that the field $k$ has a primitive $2m$th root of
unity $\eta$, the integer $n\ge 2$ divides $m$, and $q=\eta^{2d}$ is a primitive
$n$th root of unity, where $d=m/n$.
The Hopf algebra $H_{2m,n}(q)$ over $k$ has
generators $h$ and $x$ satisfying $h^{2m}=1$, $x^n=0$, $hx=qxh$.
The comultiplication, counit and antipode are as before.

The algebra $H_{2m,n}(q)$ has $2m$ isomorphism classes of simple modules $S_i$,
$i\in\bZ/2m$, all one dimensional,
corresponding to the characters of the subgroup
generated by $h$. Letting $v_i$ be a basis element for $S_i$,
we have $hv_i=\eta^iv_i$ and $xv_i=0$. The space
$\Ext^1_{H_{2m,n}(q)}(S_i,S_j)$ is one dimensional if $j=i+2d$ and zero
dimensional otherwise. The projective
indecomposable modules are uniserial of length $n$, with composition
factors (from top to bottom) of $P_i$ being $S_i$, $S_{i+2d}$, $S_{i+4d}$, \dots,
$S_{i-2d}$. So $H_{2m,n}(q)$ is a Frobenius algebra, but not a
symmetric algebra.
Every indecomposable module is a quotient of a projective
indecomposable module. We write $J_j$ ($1\le j\le n$)
for the indecomposable module of length $j$ with composition factors
$S_{-d(j-1)}$, $S_{-d(j-3)}$, \dots, $S_{d(j-1)}$. A complete list of the
$2mn$ isomorphism classes of indecomposable $\tilde
H_{m,n}(q)$-modules is given by the modules $J_j \otimes S_i$ with
$1\le j\le n$, $0\le i<2m$. The representation ring $a(H_{m,n}(q))$ is
commutative, even though this Hopf algebra is not quasitriangular.
As in Section~\ref{se:Frobgroup}, we have
\[ J_2\otimes J_j \cong \begin{cases}
J_2 & j=1 \\
J_{j+1} \oplus J_{j-1} & 2\le j\le n-1 \\
(J_n \otimes S_d) \oplus (J_n\otimes S_{-d}) & j=n.
\end{cases} \]

\begin{theorem}
We have
\[ a(H_{2m,n}(q)) \cong \bZ[X,Y]/(Y^{2m}-1,
  (X-Y^d-Y^{-d})f_n(X)) \]
where $X$ corresponds to $J_2$ and $Y$ corresponds to $S_1$, and the
polynomials $f_i$ are described in Definition~\ref{def:fj}. This ring
has a basis consisting of the $X^iY^j$ with $0\le i<n$ and $0\le j<2m$.
\end{theorem}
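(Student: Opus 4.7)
The plan is to adapt the proof of Theorem~\ref{th:Frobgroup} essentially verbatim, since the tensor product structure of indecomposable $H_{2m,n}(q)$-modules matches that of modules for the Frobenius group $\bZ/p\rtimes\bZ/2m$ in the relevant way.

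First I would define a ring homomorphism $\varphi\colon \bZ[X,Y]\to a(H_{2m,n}(q))$ by $\varphi(X)=[J_2]$ and $\varphi(Y)=[S_1]$. The relation $Y^{2m}-1$ is killed because $S_1$ is a one-dimensional module corresponding to the character $h\mapsto \eta$ of order $2m$, so $S_1^{\otimes 2m}\cong S_0=\one$. Next, using the recursion $f_{j+1}(X)=Xf_j(X)-f_{j-1}(X)$ from \eqref{eq:Xfj} together with the tensor product rule $J_2\otimes J_j\cong J_{j+1}\oplus J_{j-1}$ for $2\le j\le n-1$, an easy induction shows $\varphi(f_j(X))=[J_j]$ for $1\le j\le n$. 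Then the boundary case $J_2\otimes J_n\cong(J_n\otimes S_d)\oplus(J_n\otimes S_{-d})$ translates to
\[ \varphi(X)\varphi(f_n(X))=\varphi(Y^d+Y^{-d})\varphi(f_n(X)), \]
so $\varphi\bigl((X-Y^d-Y^{-d})f_n(X)\bigr)=0$. Hence $\varphi$ factors through the quotient $R=\bZ[X,Y]/(Y^{2m}-1,(X-Y^d-Y^{-d})f_n(X))$.

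Second I would show that $R$ is spanned as a $\bZ$-module by the $2mn$ monomials $X^iY^j$ with $0\le i<n$ and $0\le j<2m$. The relation $Y^{2m}=1$ reduces every $Y$-exponent to the range $0\le j<2m$. For the $X$-direction, the relation $(X-Y^d-Y^{-d})f_n(X)=0$ is monic of degree $n$ in $X$ (its leading term is $X^n$, since $f_n(X)$ is monic of degree $n-1$ by Lemma~\ref{le:fpX}(i)), so it expresses $X^n$ as a $\bZ[Y,Y^{-1}]$-linear combination of $1,X,\dots,X^{n-1}$, and iterating reduces every $X$-exponent to the range $0\le i<n$.

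Finally I would identify the image of this spanning set in $a(H_{2m,n}(q))$ with the basis of indecomposables. Under $\varphi$, the element $X^iY^j$ maps to $[J_2]^i[S_j]$, and an upper-triangular change of basis (over $\bZ$) using $[J_r]=f_r([J_2])$ expresses $[J_2]^i[S_j]$ in terms of the $[J_{i+1}\otimes S_j]$. Since the $2mn$ indecomposables $\{J_{i+1}\otimes S_j:0\le i<n,\ 0\le j<2m\}$ are linearly independent in $a(H_{2m,n}(q))$, the map $\varphi$ sends the $2mn$ spanning monomials of $R$ to a $\bZ$-basis of its image, forcing $\varphi$ to be injective on the span, hence an isomorphism onto $a(H_{2m,n}(q))$, and simultaneously showing that the monomials $X^iY^j$ form a $\bZ$-basis of $R$.

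The main obstacle is bookkeeping rather than any deep issue: one must confirm that the single monic relation $(X-Y^d-Y^{-d})f_n(X)=0$ does all the reducing needed on the $X$-side without introducing further identifications among the $2mn$ monomials — in other words, that the rank of $R$ does not drop below $2mn$. This is guaranteed by the existence of the surjection $\varphi$ onto a free $\bZ$-module of rank exactly $2mn$, so the rank count goes through cleanly once the relations have been verified.
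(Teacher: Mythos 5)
Your proof is correct and follows essentially the same route as the paper, which simply refers back to the arguments of Theorem~\ref{th:aG/aG1}, Remark~\ref{rk:aG}, and Theorem~\ref{th:Frobgroup} — namely, verify the two relations hold under $X\mapsto[J_2]$, $Y\mapsto[S_1]$ using the tensor rule $J_2\otimes J_j\cong J_{j+1}\oplus J_{j-1}$ (giving $f_j([J_2])=[J_j]$ by induction) together with the boundary rule at $j=n$, check that the relations reduce $R$ to at most $2mn$ spanning monomials, and compare with the rank $2mn$ of $a(H_{2m,n}(q))$ via the unipotent change of basis between $\{[J_2]^i[S_j]\}$ and $\{[J_{i+1}\otimes S_j]\}$. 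The details you fill in are exactly those the paper leaves implicit.
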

\begin{proof}
This follows from the above relations, as in Theorem~\ref{th:aG/aG1},
Remark \ref{rk:aG} and Theorem~\ref{th:Frobgroup}. The element
$f_j(X)$ again corresponds to $[J_j]$.
\end{proof}

Our next task is to identify the species and radical of this representation ring.

\begin{lemma}\label{le:fnY+Y^-1}
In $\bZ[Y,Y^{-1}]$, for $j\ge 0$ we have 
\[ (Y-Y^{-1})f_j(Y+Y^{-1})=Y^j-Y^{-j}. \]
\end{lemma}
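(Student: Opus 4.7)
The plan is to prove this by induction on $j$, using the defining recurrence
$$Xf_j(X) = f_{j+1}(X) + f_{j-1}(X) \qquad (j \geq 2)$$
from Definition \ref{def:fj}, together with the boundary values $f_1(X) = 1$ and $f_2(X) = X$. Setting $f_0(X) = 0$ (consistent with the recurrence applied at $j=1$, which gives $X = f_2(X) + f_0(X)$), the base cases are immediate: for $j = 0$ both sides vanish, and for $j = 1$ we have $(Y-Y^{-1}) \cdot 1 = Y - Y^{-1}$.

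For the inductive step, I assume the identity holds for $j-1$ and $j$, and I substitute $X = Y + Y^{-1}$ into the recurrence to obtain
$$f_{j+1}(Y+Y^{-1}) = (Y+Y^{-1}) f_j(Y+Y^{-1}) - f_{j-1}(Y+Y^{-1}).$$
Multiplying through by $(Y - Y^{-1})$ and using the induction hypothesis on the two terms on the right gives
\begin{align*}
(Y-Y^{-1}) f_{j+1}(Y+Y^{-1})
&= (Y+Y^{-1})(Y^j - Y^{-j}) - (Y^{j-1} - Y^{-(j-1)}) \\
&= Y^{j+1} + Y^{j-1} - Y^{-(j-1)} - Y^{-(j+1)} - Y^{j-1} + Y^{-(j-1)} \\
&= Y^{j+1} - Y^{-(j+1)},
\end{align*}
closing the induction.

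There is no genuine obstacle here; the argument is a direct telescoping calculation. The only mild subtlety is fixing the convention $f_0(X) = 0$ so that the recurrence extends down to $j = 1$, which is essentially forced by the requirement that the identity hold at $j = 0$. One could alternatively motivate the formula heuristically via the substitution $Y = e^{\bi\theta}$, under which the identity becomes $2\bi\sin\theta \cdot \sin(j\theta)/\sin\theta = 2\bi\sin(j\theta)$ by \eqref{eq:fj2costheta}, but since we are working in $\bZ[Y, Y^{-1}]$ the induction above is the cleanest route.
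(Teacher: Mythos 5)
Your proof is correct and follows essentially the same route as the paper's: a direct induction on $j$ using the recurrence $f_{j+1}(X) = Xf_j(X) - f_{j-1}(X)$ and a telescoping computation. Your explicit attention to the convention $f_0(X)=0$ (which makes the recurrence apply already at the case producing $f_2$, and which is consistent with $f_j=U_{j-1}(X/2)$ and the usual $U_{-1}=0$) is a slight tightening of the paper's argument, which implicitly glosses over the case $j=2$; otherwise the two proofs coincide.
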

\begin{proof}
We prove this by induction on $j$, the cases $j=0$ and $j=1$ being
trivial to verify. For the inductive step, with $j\ge 2$, we have 
\begin{align*}
(Y-Y^{-1})f_{j+1}(Y+Y^{-1})&=(Y-Y^{-1})((Y+Y^{-1})f_j(Y+Y^{-1})\\
&\qquad{}-f_{j-1}(Y+Y^{-1}))\\
&=(Y+Y^{-1})(Y^j-Y^{-j})-(Y^{j-1}-Y^{-j+1}) \\
&=Y^{j+1}-Y^{-j-1}. 
\qedhere
\end{align*}
\end{proof}

\begin{lemma}\label{le:div-by-X-Y-Y^-1}
In $\bZ[X,Y,Y^{-1}]$ the element $(Y^d-Y^{-d})f_n(X)-(Y^m-Y^{-m})$ is
divisible by $X-Y^d-Y^{-d}$.
\end{lemma}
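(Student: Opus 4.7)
The plan is to treat the expression as a polynomial in the variable $X$ over the Laurent polynomial ring $\bZ[Y,Y^{-1}]$, and then reduce the divisibility question to a substitution check that is handled by the previous lemma.

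First, I would view $(Y^d-Y^{-d})f_n(X)-(Y^m-Y^{-m})$ as a polynomial in $X$ with coefficients in $R=\bZ[Y,Y^{-1}]$. Since $X-(Y^d+Y^{-d})$ is monic of degree one in $X$, the usual polynomial division algorithm over the ring $R$ applies and gives a unique expression
\[
(Y^d-Y^{-d})f_n(X)-(Y^m-Y^{-m})=(X-Y^d-Y^{-d})\,q(X,Y)+r(Y),
\]
where $q(X,Y)\in\bZ[X,Y,Y^{-1}]$ and the remainder $r(Y)\in R$ is independent of $X$.

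Second, I would identify $r(Y)$ by substituting $X=Y^d+Y^{-d}$. Setting $Z=Y^d$ in Lemma~\ref{le:fnY+Y^-1} gives
\[
(Y^d-Y^{-d})f_n(Y^d+Y^{-d})=Y^{dn}-Y^{-dn}=Y^m-Y^{-m},
\]
using $m=dn$. Therefore, after substitution the left-hand side of the displayed equation vanishes, so $r(Y)=0$.

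Hence $(Y^d-Y^{-d})f_n(X)-(Y^m-Y^{-m})=(X-Y^d-Y^{-d})\,q(X,Y)$, proving divisibility in $\bZ[X,Y,Y^{-1}]$. There is no real obstacle here: the only thing to be careful about is that division by $X-(Y^d+Y^{-d})$ genuinely works over $R$, which is immediate from monicity in $X$. All the arithmetic content sits in Lemma~\ref{le:fnY+Y^-1}, which does exactly the computation needed after the substitution $Z=Y^d$.
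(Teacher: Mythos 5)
Your proof is correct and follows the same route as the paper: substitute $X = Y^d + Y^{-d}$, invoke Lemma~\ref{le:fnY+Y^-1} (with $Y$ replaced by $Y^d$) to see the expression vanishes, and conclude by the factor theorem, which is exactly the monic-division-with-remainder argument you spell out. The only cosmetic point is that you introduce the symbol $Z=Y^d$ and then never use it; just say ``substituting $Y^d$ for $Y$ in Lemma~\ref{le:fnY+Y^-1}.''
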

\begin{proof}
It follows from Lemma~\ref{le:fnY+Y^-1} that 
\[ (Y^d-Y^{-d})f_n(Y^d+Y^{-d})=Y^m-Y^{-m}. \]
Now use the factor theorem.
\end{proof}

\begin{prop}\label{pr:JaH2mnq}
The element $(Y^d-Y^{-d})f_n(X)$ squares to zero in the representation
ring $a(H_{2n,m}(q))$.
\end{prop}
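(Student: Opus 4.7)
The plan is to combine the two immediately preceding lemmas with the defining relation $Y^{2m}=1$. The key observation is that in the ring $a(H_{2m,n}(q)) \cong \bZ[X,Y]/(Y^{2m}-1,\,(X-Y^d-Y^{-d})f_n(X))$, the relation $Y^{2m}=1$ forces $Y^m = Y^{-m}$, so the element $Y^m - Y^{-m}$ is \emph{already zero} in the quotient. Everything else in the proposition is designed to reduce the square in question to a multiple of $Y^m - Y^{-m}$.

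More precisely, I would first invoke Lemma~\ref{le:div-by-X-Y-Y^-1}, which gives an identity
\[
(Y^d-Y^{-d})f_n(X) = (Y^m-Y^{-m}) + (X-Y^d-Y^{-d})\,g(X,Y)
\]
in $\bZ[X,Y,Y^{-1}]$ for some polynomial $g$. Next I would multiply both sides by $(Y^d-Y^{-d})f_n(X)$ and pass to $a(H_{2m,n}(q))$. The term $(X-Y^d-Y^{-d})g(X,Y)\cdot(Y^d-Y^{-d})f_n(X)$ vanishes in the quotient because $(X-Y^d-Y^{-d})f_n(X)=0$ there. Thus
\[
\bigl((Y^d-Y^{-d})f_n(X)\bigr)^2 = (Y^m-Y^{-m})(Y^d-Y^{-d})f_n(X),
\]
and since $Y^m - Y^{-m} = Y^{-m}(Y^{2m}-1) = 0$ in the quotient, the right-hand side is zero, proving the proposition.

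The main obstacle is essentially bookkeeping: one must make sure the intermediate computations take place in $\bZ[X,Y,Y^{-1}]$ (where Lemma~\ref{le:div-by-X-Y-Y^-1} is stated) but the final conclusion is drawn in the quotient ring. Since $Y$ is a unit in $a(H_{2m,n}(q))$ (with inverse $Y^{2m-1}$), passage between $\bZ[X,Y]$ and $\bZ[X,Y,Y^{-1}]$ modulo the relations is harmless. Alternatively, one could avoid Lemma~\ref{le:div-by-X-Y-Y^-1} entirely by using the relation $Xf_n(X)=(Y^d+Y^{-d})f_n(X)$ to reduce $f_n(X)^2$ to $f_n(Y^d+Y^{-d})\,f_n(X)$ inside the quotient, and then applying Lemma~\ref{le:fnY+Y^-1} with $Z=Y^d$ to rewrite $(Y^d-Y^{-d})f_n(Y^d+Y^{-d})$ as $Y^m-Y^{-m}$; this route yields the same conclusion by the same final step $Y^m=Y^{-m}$.
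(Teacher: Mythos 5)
Your proof is correct and follows essentially the same route as the paper's: both invoke Lemma~\ref{le:div-by-X-Y-Y^-1} to express $(Y^d-Y^{-d})f_n(X)$ as $(Y^m-Y^{-m})$ plus a multiple of $(X-Y^d-Y^{-d})$, then kill the square using the two defining relations $Y^{2m}=1$ and $(X-Y^d-Y^{-d})f_n(X)=0$. The only cosmetic difference is that the paper first uses $Y^m-Y^{-m}=0$ to deduce divisibility by $(X-Y^d-Y^{-d})$ and then squares, whereas you square first and eliminate the two terms one at a time; these are the same argument.
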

\begin{proof}
Since $Y^m-Y^{-m}$ is zero in $a(H_{2n,m}(q))$, 
it follows from Lemma~\ref{le:div-by-X-Y-Y^-1} that  the element
$(Y^d-Y^{-d})f_n(X)$ is divisible by $(X-Y^d-Y^{-d})$. Hence its square
is divisible by $(X-Y^d-Y^{-d})f_n(X)$, 
which is zero in $a(H_{2n,m}(q))$.
\end{proof}

\begin{theorem}\label{th:aH2nmq}
The nil radical of $a(H_{2m,n}(q))$ is generated by the element
$(Y^d-Y^{-d})f_n(X)$, and has $\bZ$-rank $2(m-d)$.
There are $2(mn-m+d)$ species $s_{i,j}$ of $a(H_{2m,n}(q))$, and they are
given by
\begin{align*} 
X & \mapsto 
\zeta_{2n}^i+\zeta_{2n}^{-i} = 2\cos (i\pi/n)  \\
Y & \mapsto \zeta_{2m}^j.
\end{align*}
where $0\le i \le n$, $0\le j < 2m$, and if $j$ is divisible by $n$
then $i \equiv j \pmod{2n}$. 

The ideal of projective modules is generated by $f_n(X)$. There are  
 $2d$ Brauer species, which are the species $s_{i,j}$ with $j$ divisible by $n$.
The quotient 
\[ a_{\proj}(H_{2m,n}(q))=a(H_{2m,n}(q))/(f_n(X)) \] 
is semisimple. 
\end{theorem}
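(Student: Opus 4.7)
The plan is to build the proof directly on the presentation $a(H_{2m,n}(q))\cong\bZ[X,Y]/(Y^{2m}-1,(X-Y^d-Y^{-d})f_n(X))$ established just above the theorem. A ring homomorphism to $\bC$ sends $(X,Y)$ to a pair $(\alpha,\beta)$ with $\beta^{2m}=1$ and $(\alpha-\beta^d-\beta^{-d})f_n(\alpha)=0$. Writing $\beta=\zeta_{2m}^j$ and splitting on which factor of the second relation vanishes, the case $f_n(\alpha)=0$ gives $\alpha=2\cos(i\pi/n)$ for $1\le i\le n-1$ by Lemma~\ref{le:fpX}, and the case $\alpha=\beta^d+\beta^{-d}=2\cos(j\pi/n)$ adds the two extra species at $i\in\{0,n\}$ that show up precisely when $n\mid j$ (the constraint $i\equiv j\pmod{2n}$). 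A direct count yields $(n-1)(2m-2d)+n\cdot 2d=2(mn-m+d)$ species, exactly matching the claim.

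Next I would identify the projective ideal and the Brauer species. Under the isomorphism, the projective indecomposables $P_i=J_n\otimes S_i$ correspond to $f_n(X)Y^i$, so the projective ideal is simply $(f_n(X))$. A species is Brauer iff it does not vanish on $(f_n(X))$, iff $f_n(\alpha)\ne 0$, iff $\alpha=\pm 2$, which under the parameterisation forces $n\mid j$; this gives exactly $2d$ Brauer species. For semisimplicity of $a_\proj(H_{2m,n}(q))=\bZ[X,Y]/(Y^{2m}-1,f_n(X))$, its complexification factors as $\bC[X]/(f_n(X))\otimes_\bC\bC[Y]/(Y^{2m}-1)$; both polynomials have distinct roots (Lemma~\ref{le:fpX}), so this is a product of copies of $\bC$, hence semisimple.

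For the nil radical, Proposition~\ref{pr:JaH2mnq} already gives that $r=(Y^d-Y^{-d})f_n(X)$ squares to zero; let $J=(r)$ be the ideal it generates. The key computation is to show $J$ has $\bZ$-rank $2(m-d)$ by analysing the filtration $J\subseteq V\subseteq a$, where $V=f_n(X)\cdot a$. Using $Xf_n(X)\equiv(Y^d+Y^{-d})f_n(X)\pmod{(r_1)}$, the map $\bZ[Y]/(Y^{2m}-1)\to V$, $v\mapsto vf_n(X)$, is surjective, and it is injective since the same map becomes injective after tensoring with $\bC$ (in each CRT component of $a_\bC$, $f_n(X)$ has nonzero image). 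Thus $V\cong\bZ[Y]/(Y^{2m}-1)$. Since $Y^d$ is a unit, $J=(Y^d-Y^{-d})V=(Y^{2d}-1)V$, so $V/J\cong\bZ[Y]/(Y^{2d}-1)$, which is $\bZ$-free of rank $2d$. Then $a/J$ is an extension of the free module $a/V=a_\proj$ (rank $2m(n-1)$) by $V/J$ (rank $2d$), hence free of rank $2mn-2(m-d)$; consequently $J$ has rank $2(m-d)$ and is pure in $a$. Separately, a CRT decomposition of $a_\bC\cong\prod_{j}\bC[X]/((X-2\cos(j\pi/n))f_n(X))$ shows that each factor is semisimple for $n\mid j$ and has a one-dimensional nilpotent part otherwise, giving $\dim_\bC N(a_\bC)=2(m-d)$, which equals $\mathrm{rank}_\bZ N(a)$ (as nil radicals commute with extension of scalars from $\bZ$ to $\bC$ in characteristic zero). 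Since $J\subseteq N(a)$, both have rank $2(m-d)$, and $a/J$ is torsion-free, we conclude $N(a)=J$.

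The main obstacle is the last integrality step: not just that $J$ and $N(a)$ have the same rank, but that $N(a)/J$ has no torsion. The argument above sidesteps the apparent difficulty by avoiding any direct saturation computation, instead exhibiting $a/J$ as an extension of two manifestly $\bZ$-free modules via the cleverly chosen intermediate module $V$ and the observation $(Y^d-Y^{-d})=Y^{-d}(Y^{2d}-1)$. Verifying freeness of $V$ itself via the $\bC$-comparison is the technical heart of the proof.
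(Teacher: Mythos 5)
Your proof is correct and follows the paper's broad outline: Proposition~\ref{pr:JaH2mnq} gives that the ideal $J$ generated by $r=(Y^d-Y^{-d})f_n(X)$ lies in the nil radical; one computes the $\bZ$-rank of $J$ and $a/J$; one enumerates the species from the presentation; and one identifies the projective ideal, the Brauer species, and the semisimple quotient. The point of divergence is the reverse inclusion, that the nil radical is contained in $J$. The paper handles this implicitly: the $2(mn-m+d)$ species are linearly independent (Lemma~\ref{le:species-indep}) and all factor through the rank-$2(mn-m+d)$, $\bZ$-free ring $a/J$, forcing $(a/J)\otimes\bC$ to be a product of copies of $\bC$ and hence reduced. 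You instead compute the complex dimension of the nil radical of $a_\bC$ directly via the CRT decomposition along $Y^{2m}-1$, getting $2(m-d)$, and pair this with your $V$-filtration argument that $a/J$ is $\bZ$-free. The filtration is a genuine gain in explicitness: the paper asserts a $\bZ$-basis for $a/J$ (the $X^iY^j$ with $0\le i<n$, $0\le j<2m$, except $0\le j<2d$ when $i=n-1$) without verifying independence, whereas your chain $J\subseteq V=(f_n(X))\subseteq a$ together with $V\cong\bZ[Y]/(Y^{2m}-1)$ and $J=(Y^{2d}-1)V$ makes both the freeness and the rank transparent, and avoids any separate saturation check when you conclude that the nil radical equals $J$. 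Both routes are valid. One wording caveat: the phrase ``the two extra species at $i\in\{0,n\}$ that show up precisely when $n\mid j$'' might be read as both appearing for each such $j$, whereas exactly one does (the $i$ with $i\equiv j\pmod{2n}$); your arithmetic $(n-1)(2m-2d)+n\cdot 2d$ does correctly implement the one-per-$j$ count.
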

\begin{proof}
By Proposition~\ref{pr:JaH2mnq}, $(Y^d-Y^{-d})f_n(X)$ is in the
radical. Consider the quotient
$a(H_{2m,n}(q))/((Y^d-Y^{-d})f_n(X))$. Since $X-Y^d-Y^{-d}$ annihilates 
$(Y^d-Y^{-d})f_n(X)$, we have
\[ X(Y^d-Y^{-d})f_n(X) = (Y^d+Y^{-d})(Y^d-Y^{-d})f_n(X). \]
We also have
\[ (Y^{2(m-d)} + Y^{2(m-2d)} + \dots + Y^{2d} + 1)
(Y^d-Y^{-d})f_n(X)=Y^{-d}(Y^{2m}-1)f_n(X)=0. \]
So the ideal generated by $(Y^d-Y^{-d})f_n(X)$ is the $\bZ$-span of
the elements 
\[ Y^i(Y^d-Y^{-d})f_n(X) \] 
with $0\le i < 2(m-d)$. The
quotient therefore has rank $2(mn-m+d)$, and has a $\bZ$-basis
consisting of the $X^iY^j$ with $0\le i < n$, $0\le j < 2m$, such
that if $i=n-1$ then $0\le j <2d$. 

For the species, we must satisfy the two relations $Y^{2m}=1$ and
\[ (X-Y^d-Y^{-d})f_n(X)=0. \] 
The first implies that $Y \mapsto
\zeta_{2m}^j$ with $0\le j < 2m$. Then the second relation becomes
$(X-\zeta_{2n}^j-\zeta_{2n}^{-j})f_n(X)=0$. The roots of $f_n(X)=0$
are $X \mapsto \zeta_{2n}^i + \zeta_{2n}^{-i}$ with $0<i<n$. So the
product has a repeated root unless $\zeta_{2n}^j=\pm 1$, namely $j$ is
divisible by $n$. In that case, $X \mapsto 2$ if $j$ is divisible by
$2n$ and $X\mapsto -2$ otherwise. This accounts for the cases $i=0$
and $i=n$. The element $f_n(X)=[J_n]$ generates the projectives, and
so the Brauer species are the ones where $f_n(X)$ does not go to
zero. This is the case where $j$ is divisible by $n$.
\end{proof}

To go down from $H_{2m,n}(q)$ to $H_{m,n}(q)$, we use the 
Dickson polynomials\index{Dickson polynomials}
$\Dick_i(y,z)$\index{E@$\Dick_j(y,z)$} given in Definition~\ref{def:Fiyz}.

\begin{theorem}
We have
\[ a(H_{m,n}(q)) = a(H_{2m,n}(q)/\langle h^m\rangle) \cong
\bZ[y,z]/(y^m-1,(z-y^d-1)\Dick_{n-1}(y^d,z)) \] 
with $y=Y^2$ and $z=XY^d$. 
This is a complete intersection of $\bZ$-rank $mn$, with a $\bZ$-basis
consisting of the monomials $y^iz^j$ with $0\le i<m$, $0\le j<n$.

The nil radical of $a(H_{m,n}(q))$ is generated by the element
$(y^d-1)\Dick_{n-1}(y,z)$, which squares to zero. There are $mn-m+d$ 
species $s_{i,j}$ of $a(H_{m,n}(q))$, and they are given by
\begin{align*}
y & \mapsto \zeta_m^j \\
z & \mapsto \zeta_{2n}^{j+i} + \zeta_{2n}^{j-i} 
\end{align*}
where $0\le i \le n$, $0\le j < m$, and if $j$ is divisible by $n$
then  $i\equiv j \pmod{2n}$.

The ideal of projective modules is generated by $\Dick_{n-1}(y^d,z)$, and
the $d$ Brauer species are the species $s_{i,j}$ with $j$ divisible by $n$. The
quotient 
\[ a_{\proj}(H_{m,n}(q))=a(H_{m,n}(q))/(\Dick_{n-1}(y^d,z)) \] 
is semisimple.
\end{theorem}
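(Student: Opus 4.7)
The plan is to descend from the theorem about $a(H_{2m,n}(q))$ proved immediately before, in exactly the same way that Theorem~\ref{th:aZprtimesZm} is derived from Theorem~\ref{th:Frobgroup}. Since $H_{m,n}(q)=H_{2m,n}(q)/\langle h^m\rangle$, I would first identify $a(H_{m,n}(q))$ with the subring of $a(H_{2m,n}(q))$ spanned by classes of indecomposables on which $h^m$ acts trivially. The central element $h^m$ acts on $S_i$ as $(-1)^i$ and on $J_j$ (and hence on $J_j\otimes S_i$) as $(-1)^{d(j-1)+i}$, so the admissible indecomposables are the $J_j\otimes S_i$ with $d(j-1)+i$ even, giving exactly $mn$ of them. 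Using the parity statements in Lemma~\ref{le:fpX}, together with the substitutions $X^2=z^2y^{-d}$ and, for odd powers of $X$, $X=zy^{-d/2}$ absorbed into an appropriate power of $y$, one checks that each class $f_j(X)Y^i$ with $d(j-1)+i$ even can be written as a polynomial in $y=Y^2$ and $z=XY^d$ modulo $y^m=1$. Hence $y$ and $z$ generate the subring.

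The relation $y^m=1$ is just $Y^{2m}=1$. For the other relation, the substitution $\Dick_{n-1}(Y^{2d},XY^d)=Y^{d(n-1)}f_n(X)$ (from Definition~\ref{def:Fiyz} applied to the recursion) gives
\[
(z-y^d-1)\Dick_{n-1}(y^d,z)=Y^{d(n-1)}(XY^d-Y^{2d}-1)f_n(X)=Y^m(X-Y^d-Y^{-d})f_n(X),
\]
which is zero in $a(H_{2m,n}(q))$ by the relation proved there. This produces a surjection from $\bZ[y,z]/(y^m-1,(z-y^d-1)\Dick_{n-1}(y^d,z))$ onto $a(H_{m,n}(q))$. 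The source is a complete intersection with $\bZ$-basis $\{y^iz^j:0\le i<m,\,0\le j<n\}$ of rank $mn$, matching the target; the surjection is therefore an isomorphism.

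For the nil radical, the plan is to parallel Lemma~\ref{le:div-by-X-Y-Y^-1} and Proposition~\ref{pr:JaH2mnq}. The key computation is the Dickson identity $\Dick_{n-1}(y^d,y^d+1)=1+y^d+y^{2d}+\cdots+y^{(n-1)d}=(y^m-1)/(y^d-1)$, which follows from $\Dick_j(y,y+1)=\sum_{k=0}^{j}y^k$ (provable by induction using the recursion $\Dick_j=z\Dick_{j-1}-y\Dick_{j-2}$, or equivalently from $U_j(\cos\theta)=\sin((j+1)\theta)/\sin\theta$). Multiplying by $(y^d-1)$ gives $(y^d-1)\Dick_{n-1}(y^d,y^d+1)=y^m-1$, so by the factor theorem $(y^d-1)\Dick_{n-1}(y^d,z)-(y^m-1)$ is divisible by $(z-y^d-1)$ in $\bZ[y,z]$. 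Writing $(y^d-1)\Dick_{n-1}(y^d,z)=(z-y^d-1)h$ in the ring (after killing $y^m-1$), the square is $(z-y^d-1)^2h^2=(y^d-1)h\cdot(z-y^d-1)\Dick_{n-1}(y^d,z)=0$. This is the element that generates the nil radical.

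Finally, for the species I would classify ring homomorphisms to $\bC$: they must send $y\mapsto\zeta_m^j$ with $0\le j<m$ and $z$ to a root of $(z-\zeta_m^{dj}-1)\Dick_{n-1}(\zeta_m^{dj},z)=0$. Using $\zeta_{2m}^d=\zeta_{2n}$, the second factor has roots $z=\zeta_{2n}^{j+i}+\zeta_{2n}^{j-i}$ for $0<i<n$, while the first factor gives $z=\zeta_{2n}^{2j}+1$; these coincide (for a unique $i\in(0,n)$ with $i\equiv\pm j\pmod{2n}$) precisely when $j\not\equiv 0\pmod n$. Counting yields $dn+(m-d)(n-1)=mn-m+d$ species. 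The projective ideal is generated by $\Dick_{n-1}(y^d,z)$, which pulls back to $Y^{d(n-1)}f_n(X)$ and whose product with the appropriate $y^a$ gives each projective indecomposable $J_n\otimes S_i$; the Brauer species are exactly the $d$ species on which $\Dick_{n-1}(y^d,z)\ne 0$, namely those with $j\equiv 0\pmod n$. Killing this ideal removes the Brauer species, leaving $mn-m$ species for a ring of $\bZ$-rank $mn-m$, so the quotient is semisimple. The main obstacle I anticipate is the nil-radical step, since it depends on recognising the non-obvious Dickson telescoping $\Dick_{n-1}(y^d,y^d+1)=(y^m-1)/(y^d-1)$; everything else is essentially bookkeeping transported along the quotient $H_{2m,n}(q)\to H_{m,n}(q)$.
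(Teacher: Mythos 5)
Your proposal is correct and follows essentially the same route as the paper, which simply cites the earlier analogues: the ring-structure computation from Theorem~\ref{th:aZprtimesZm} and the nil-radical/species data from Theorem~\ref{th:aH2nmq}. You helpfully make explicit what the paper leaves implicit: the parity computation identifying the $J_j\otimes S_i$ with $d(j-1)+i$ even as the $mn$ indecomposables lying in the subring $a(H_{m,n}(q))\subset a(H_{2m,n}(q))$, and the Dickson telescope
\[
\Dick_{n-1}(y^d,y^d+1)=1+y^d+\cdots+y^{(n-1)d}=\frac{y^m-1}{y^d-1},
\]
which gives a pleasantly self-contained substitute for transporting Lemma~\ref{le:div-by-X-Y-Y^{-1}} through the change of variables. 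Your species count $dn+(m-d)(n-1)=mn-m+d$ and the identification of the $d$ Brauer species are both right.

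Two points to tidy. First, you consistently use $\Dick_{n-1}(y^d,z)$ in the nil-radical generator, whereas the theorem statement says $\Dick_{n-1}(y,z)$; yours is the correct form (it is a typo in the statement: all other occurrences have $y^d$, and the pullback of $(Y^d-Y^{-d})f_n(X)$ under $y=Y^2$, $z=XY^d$ is $Y^{-m}(y^d-1)\Dick_{n-1}(y^d,z)$). Second, your factor-theorem argument shows that $(y^d-1)\Dick_{n-1}(y^d,z)$ squares to zero, hence lies in the nil radical, but you assert without further argument that it \emph{generates} the nil radical. To close this, use the rank computation as in the proof of Theorem~\ref{th:aH2nmq}: the ideal is annihilated by $1+y^d+\cdots+y^{(n-1)d}$ and so has $\bZ$-rank $m-d$, the quotient therefore has rank $mn-m+d$, and this matches your species count, so the quotient is semisimple and the ideal is indeed the entire nil radical. (Alternatively one can invoke that the nilradical of a subring is the intersection of the subring with the nilradical of $a(H_{2m,n}(q))$, and identify that intersection.) The phrase ``$X=zy^{-d/2}$'' in your generation argument is a little loose, since $Y^{-d}$ need not be an integral power of $y$, but the point you are after is sound: under the parity constraint $d(j-1)+i$ even, the exponent of $Y$ left over after extracting an appropriate power of $z$ is even, so the class does lie in $\bZ[y,y^{-1},z]$, and modulo $y^m=1$ it lies in $\bZ[y,z]$.
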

\begin{proof}
The proof of the first part is the same as the proof of
Theorem~\ref{th:aZprtimesZm}. The second part follows from Theorem~\ref{th:aH2nmq}.
\end{proof}

\begin{rk}
The Taft algebras $H_n(q)$ are the case of the generalised Taft
algebras $H_{m,n}(q)$ where $m=n$ and $d=1$.
\end{rk}

\section{Some open problems}

Throughout this section, $G$ is a finite group, $k$ is a field of
characteristic $p$, and $M$ is a $kG$-module. Some of the following questions
come from~\cite{Benson:2020a,Benson/Symonds:2020a}. 
Others have been around for a while.

\begin{question}\label{qu:MM*}
If $M$ is a $kG$-module, is $\npj_G(M\otimes M^*)=\npj_G(M)^2$?
\end{question}

\begin{question}
Is $\hat a(G)$ a symmetric 
Banach $*$-algebra?\index{symmetric!Banach $*$-algebra}%
\index{Banach!star@$*$-algebra!symmetric} 
If so, then by Proposition~\ref{pr:symm-rep-ring}, 
we can deduce that Question~\ref{qu:MM*} has a positive answer.
More generally, develop a good way of testing whether a representation
ring is symmetric.\index{symmetric!representation ring}%
\index{representation!ring!symmetric}
Example~\ref{eg:ordinary=>symmetric} shows that ordinary
representation rings are symmetric.
\end{question}

\begin{question}
If $1<\npj_G(M)< 2$, is $\npj_G(M)=2\cos\pi/n$ for some integer $n\ge 4$? If
this holds, is $n$ a power of $p$?
\end{question}

\begin{question}\label{qu:alg-int}
Is $\npj_G(M)$ an algebraic integer?
\end{question}

\begin{question}
Do the numbers $c_n^G(M)$ satisfy a linear recurrence relation with
constant coefficients, for all sufficiently large values of $n$? 
This is
equivalent to asking whether the generating fuction
\[ f(t)=\sum_{n=0}^\infty c_n^G(M)t^n \] 
is the power series expansion of a rational function of $t$.
If so, then Question~\ref{qu:alg-int} has a positive answer. 
\end{question}

\begin{question} 
Let $G$ be a finite $2$-group and $k$ an algebraically closed field of
characteristic $2$. If $M$ is an indecomposable
$kG$-module of odd dimension then 
Conjecture~\ref{conj:char2tensors} states that $M\otimes M^*$ is a direct sum of
$k$ and indecomposable modules of dimension divisible by four. Is this true?
\end{question}

\begin{question}
The indecomposable modules for the dihedral and semidihedral groups in
characteristic two are known. How do their tensor products decompose?
Compute the radical of the representation ring. Is it nilpotent?
\end{question}

\begin{question}
What are the indecomposable modules for the quaternion groups in
characteristic two? We know that the group algebra has tame
representation type, but a classification of the modules is not known.
\end{question}

\begin{question}
Are there nilpotent elements in the representation ring of 
an elementary abelian $2$-group $(\bZ/2)^r$
in characteristic two, when $r\ge 3$? The same question may be asked
of the representation ring of an exterior algebra of rank at least
three in any characteristic, regarded as a finite supergroup
scheme.\index{finite!supergroup scheme}
\end{question}

\bibliographystyle{amsplain}
\bibliography{../repcoh}
\printindex
\end{document}